\documentclass{memo-l}

\usepackage{amsmath}
\usepackage{amsthm}
\usepackage{amssymb}
\usepackage[left=3cm,right=3cm,top=3cm,bottom=3cm]{geometry}
\usepackage{amscd}
\usepackage{stmaryrd}
\usepackage[normalem]{ulem}
\usepackage{MnSymbol}
\usepackage{bbm}
\usepackage[arrow, curve, matrix]{xy}
\usepackage{tikz-cd}
\usepackage{accents}

\usepackage[bbgreekl]{mathbbol}

\usepackage[utf8]{inputenc}
\usepackage[cyr]{aeguill}
\usepackage{xspace}

\usepackage{tabu, multirow}

\usepackage{listings}
\usepackage{verbatim}

\usepackage{mathbbol}
\usepackage{amssymb}             

\DeclareSymbolFontAlphabet{\mathbb}{AMSb}%
\DeclareSymbolFontAlphabet{\mathbbl}{bbold}

\usepackage{enumitem}
\usepackage{array}

\theoremstyle{thm} \newtheorem*{thm*}{Theorem}
\newtheorem{prop}{Proposition} [section]
\newtheorem{lem}[prop]{Lemma}
\newtheorem{corol}[prop]{Corollary}

\theoremstyle{definition} 
\theoremstyle{remark} \newtheorem{ex}[prop]{Example}
\theoremstyle{remark} \newtheorem{rem}[prop]{Remark}
\theoremstyle{definition} \newtheorem{defi}[prop]{Definition}
\theoremstyle{definition} \newtheorem{nota}[prop]{Notation}
\theoremstyle{definition} \newtheorem{setup}[prop]{Setup}
\theoremstyle{thm} \newtheorem{thm}[prop]{Theorem}
\theoremstyle{thm} \newtheorem{claim}[prop]{Claim}
\newtheorem{introconjecture}{Conjecture} 
\newtheorem{introthm}{Theorem}

\numberwithin{section}{chapter}
\numberwithin{equation}{chapter}

\makeindex

\newcommand{\quotientd}[2]{{\left.\raisebox{.2em}{$#1$}\middle\slash\raisebox{-.2em}{$#2$}\right.}}
\newcommand{\quotientGIT}[2]{{\left.\raisebox{.2em}{$#1$}\middle\slash\hspace{-0.2em}\middle\slash\raisebox{-.2em}{$#2$}\right.}}

\newcommand{\norm}[1] {\| #1 \| }
\newcommand{\lcm} {\mathrm{lcm}}

\usepackage{mathbbol}
\usepackage{bbm}

\newcounter{stepcc}

\begin{document}

\frontmatter

\title{Generalized algebraic Morse inequalities and Hasse-Schmidt jet differentials}


\author{Beno\^it Cadorel}
\address{Institut \'Elie Cartan de Lorraine \\ UMR 7502 \\ Universit\'e de Lorraine, Site de Nancy \\ B.P. 70239, F-54506 Vandoeuvre-l\`es-Nancy Cedex}

\email{benoit.cadorel@univ-lorraine.fr}
\thanks{}

\author{}
\address{}
\curraddr{}
\email{}
\thanks{}

\date{}

\subjclass[2020]{Primary 14A15, 32Q45}

\keywords{Jet differentials, Hasse-Schmidt differentials, complex hyperbolicity}

\begin{abstract}
We give a fully algebraic proof of an important theorem of Demailly, stating the existence of many Green-Griffiths jet differentials on a complex projective manifold of general type. To this end, we introduce a new algebraic version of the Morse inequalities, which we use in our proof as an algebraic counterpart to Demailly's and Bonavero's holomorphic Morse inequalities.

Our proof also applies in positive characteristic, giving the existence of Hasse-Schmidt jet differentials for a smooth projective variety of general type over an arbitrary algebraically closed field.
\end{abstract}

\maketitle

\tableofcontents

\chapter*{Introduction}

\section{Jet differentials and hyperbolicity}

\subsection*{The Green-Griffiths-Lang conjecture}
{\em Jet differential equations} are certainly one of the most important tool in the study of complex hyperbolicity. Recall that a complex manifold \(X\) is said to be {\em Brody hyperbolic} if it does not contain any {\em entire curve}, namely, any image of a non-constant holomorphic map \(f : \mathbb{C} \to X\). In the projective setting, the tantalizing {\em Green-Griffiths-Lang conjecture} predicts that this property should essentially be equivalent to the algebro-geometric property of being of {\em general type}~:

\begin{introconjecture}[Green-Griffiths \cite{GG80}, Lang \cite{lang87}] \label{conjGGL} Let $X$ be a complex projective manifold. The following are equivalent :
	\begin{enumerate}
		\item \(X\) is of general type ;
		\item there exists a proper algebraic subset $\mathrm{Exc}(X) \subsetneq X$, such that $X$ is \emph{Brody hyperbolic modulo $\mathrm{Exc}(X)$} i.e. for any non constant holomorphic map $f : \mathbb C \longrightarrow X$, we have $f(\mathbb C) \subseteq \mathrm{Exc}(X)$.
	\end{enumerate}
\end{introconjecture}

Here, we say that a projective manifold $X$ is of {\em general type} if its canonical line bundle $K_X$ is big, i.e. if we have the maximal growth $h^0(X, K_X^{\otimes m}) \geq C \; m^{\dim X}$ for some $C > 0$. 
\medskip

If \(C\) is a {\em curve}, then \(C\) is of general type if and only if \(g(C) \geq 2\), and in this case, the result is a consequence of Riemann's uniformization theorem. Indeed, any entire curve has to lift to the universal covering \(\Delta \to C\), and thus must be constant by Liouville's theorem. In higher dimension, the conjecture is still largely open.

\subsection*{The bundles of Green-Griffiths jet differentials}
The relevance of jet differential techniques for the study of complex hyperbolicity problems has first been evidenced by Green and Griffiths \cite{GG80}. For any integers $k, m \geq 1$, these authors have constructed a vector bundle $E_{k,m}^{GG} \Omega_X \longrightarrow X$, the so-called \emph{bundle of Green-Griffiths jet differentials}, whose local sections represent \emph{holomorphic ordinary differential equations of order $k$ and degree $m$} on $X$ (acting on germs of holomorphic curves with values in \(X\)). The main interest of these jet differentials in view of Conjecture \ref{conjGGL} comes from the following fundamental theorem. 

\begin{introthm} [\cite{SY96, dem97}] \label{thmannfund} Let $A$ be an ample line bundle on $X$. Let $k, m \in \mathbb N^\ast$, and let $P \in H^0(X, E_{k, m}^{GG} \Omega_X \otimes \mathcal O(- A))$. Then, any non constant holomorphic map $f : \mathbb C \longrightarrow X$ is a solution to the differential equation $P$, i.e.  $P(f; f', ..., f^{(k)}) \equiv 0$. 
\end{introthm}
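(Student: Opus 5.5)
The plan is to follow the Ahlfors--Schwarz lemma approach of Demailly. Suppose, to the contrary, that \(f : \mathbb{C} \to X\) is a non-constant holomorphic map with \(g := P(f; f', \dots, f^{(k)}) \not\equiv 0\). Since \(P\) takes values in \(\mathcal{O}(-A)\), the function \(g\) is a holomorphic section of \(f^* \mathcal{O}(-A)\) over \(\mathbb{C}\). The idea is to play this section off against positivity. Fix a smooth hermitian metric \(h\) on \(A\) with positive curvature \(\omega_A = \frac{i}{2\pi}\Theta_h(A) > 0\). Also fix a hermitian metric \(\omega\) on \(X\), which gives finsler-type norms on \(k\)-jets.

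First step: the pointwise bound. Let \(\|\cdot\|_h\) denote the dual metric on \(\mathcal{O}(-A)\). On the compact manifold \(X\), the jet differential \(P\) is weighted homogeneous of degree \(m\) in \((f', \dots, f^{(k)})\), where \(f^{(j)}\) has weight \(j\). Compactness then gives a constant \(C\) with
\[
\|P(f;f',\dots,f^{(k)})\|_{h^{-1}} \leq C \sum_{j=1}^k \|f^{(j)}\|_{\omega}^{m/j},
\]
where the derivatives \(f^{(j)}\) are computed in local charts, and a finite cover takes care of the chart dependence.

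Second step: the key computation. Consider \(u = \log \|g\|^2_{h^{-1}}\), a subharmonic function on \(\mathbb{C}\). Its Laplacian satisfies, in the sense of currents,
\[
\tfrac{i}{2\pi}\partial\bar\partial u \geq f^*\omega_A \geq \varepsilon\, f^*\omega.
\]
Here I use that the curvature of \(\mathcal{O}(-A)\) with the dual metric is \(-\omega_A\), and that \(\log\|s\|^2\) for a holomorphic section \(s\) of a line bundle \(L\) has \(dd^c\) at least \(-c_1(L,h)\). The zeros of \(g\) only contribute positive Dirac masses.

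Third step: the contradiction, which I expect to be the main obstacle. Lower-bound \(\partial\bar\partial u\) by \(\|f'\|^2\), and upper-bound \(u\) in terms of \(\log\) of sums of \(\|f^{(j)}\|\). Then apply the logarithmic derivative lemma of Nevanlinna theory, which controls the higher derivatives \(f^{(j)}\) by \(f'\) in mean. Concretely, integrate over disks \(|t|<r\) and use Jensen's formula. This gives
\[
\varepsilon\, T_{f,\omega}(r) \leq N(r) \leq \int_0^{2\pi} u(re^{i\theta})\,\tfrac{d\theta}{2\pi} + O(1).
\]
By the pointwise bound and the logarithmic derivative lemma, the right-hand side is \(O(\log r + \log T_f(r))\) outside a set of finite Lebesgue measure. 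The left-hand side grows at least like \(\varepsilon\,T_f(r)\), and non-constancy of \(f\) forces \(T_f(r) \geq c\log r\). For transcendental \(f\) one also has \(T_f(r)/\log r \to \infty\); if \(f\) is rational, a direct estimate suffices instead. Either way the two bounds contradict each other, so \(g \equiv 0\).

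The delicate point is applying the logarithmic derivative lemma in local coordinates on a manifold. I would handle this by embedding \(X \subset \mathbb{P}^N\) and writing the derivatives of \(f\) through the rational functions \(z_i/z_0\) composed with \(f\), so that each \(\|f^{(j)}\|/\|f'\|\)-type quotient is dominated by logarithmic derivatives of meromorphic functions on \(\mathbb{C}\).

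As an alternative to Nevanlinna theory, one can use Demailly's approach. Put the singular metric \(\gamma = \|g\|^{2/m}\)-type weight on \(\mathbb{C}\) and compare it with the Poincaré-type metric, via an Ahlfors--Schwarz lemma for the jets \(f_r(t) = f(rt)\) on the unit disk, and then let \(r \to \infty\). I would choose whichever route handles the weights \(m/j\) more cleanly; Nevanlinna seems more robust here.
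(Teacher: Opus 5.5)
The paper does not prove this statement. It is quoted in the introduction from \cite{SY96,dem97} as motivation and is not used in the rest of the text. The closest argument in the paper is the algebraic analogue in the section on jets and hyperbolicity in Chapter~\ref{chap:jetdiff}. There the source is a compact curve $C$, and a nonzero pulled-back jet differential is turned into a degree inequality via the canonical filtration, whose graded pieces on $C$ are powers of $K_C$. On $\mathbb{C}$ that degree count must be replaced by a growth estimate.

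Your main route supplies that estimate and is essentially the Nevanlinna-theoretic proof of Siu--Yeung. It is correct in outline, but several steps need to be stated more carefully.

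\emph{Jensen step.} The middle term cannot be the zero-counting function $N_g(r)$ of $g$ alone, since there is no reason for $\varepsilon T_{f,\omega}(r)\le N_g(r)$. Jensen gives $N_g(r)+T_{f,\omega_A}(r)$, i.e.\ up to normalization $\int_0^r\frac{dt}{t}\int_{|s|<t}\frac{i}{2\pi}\partial\bar\partial u$. You also need $g(0)\neq 0$, which a translation of the variable arranges.

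\emph{Logarithmic derivative step.} What the lemma must deliver is $\int_0^{2\pi}\log^+\|f^{(j)}(re^{i\theta})\|\frac{d\theta}{2\pi}=O(\log r+\log T_f(r))$ outside a set of finite measure; ratios $\|f^{(j)}\|/\|f'\|$ play no role. To get this:
\begin{enumerate}
\item Cover $X\subset\mathbb{P}^N$ by finitely many relatively compact open sets on which affine coordinates of a suitable linear projection to $\mathbb{P}^n$ are local coordinates.
\item Each coordinate $\phi=w\circ f$ is then meromorphic on $\mathbb{C}$ with $T(r,\phi)\le T_f(r)+O(1)$.
\item On the preimage of such an open set $|\phi|$ is bounded, so $\log^+|\phi^{(j)}|\le\log^+|\phi^{(j)}/\phi|+O(1)$ there, and the higher-order lemma on logarithmic derivatives applies.
\end{enumerate}

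\emph{Exceptional set.} It is harmless because $T_f$ is increasing.

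\emph{Rational case.} It is indeed direct. In a chart at $f(\infty)$ one has $f^{(j)}(t)=O(|t|^{-j-1})$, so $\|g(t)\|\to 0$ as $t\to\infty$. The maximum principle for the subharmonic function $u$ on large discs then forces $u\equiv-\infty$.

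Your fallback route is weaker than it sounds. With the single weight $\gamma_0=\|g\|^{2/m}$, the Ahlfors--Schwarz hypothesis $\frac{i}{2\pi}\partial\bar\partial\log\gamma_0\ge c\,\gamma_0$ is not available. You only know $\frac{i}{2\pi}\partial\bar\partial\log\gamma_0\ge\frac1m f^*\omega_A\ge c'\|f'\|^2$, while $\gamma_0$ is controlled only by $\sum_j\|f^{(j)}\|^{2/j}$, which $\|f'\|^2$ does not dominate pointwise. The substitution $t\mapsto rt$ scales $\|f'\|^2$ and every $\|f^{(j)}\|^{2/j}$ by the same factor $r^2$, so rescaling does not help.

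The naive argument does work when $f'$ is bounded. Cauchy estimates then bound every $f^{(j)}$, so $u$ is subharmonic and bounded above on $\mathbb{C}$, hence constant, contradicting $\frac{i}{2\pi}\partial\bar\partial u\ge f^*\omega_A$. However, Brody's lemma does not reduce the general case to this one, since it produces a different limit curve. Demailly's argument for arbitrary $f$ needs more than this weight, so committing to the Nevanlinna route is the right choice.
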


The theorem above is the first step of a general strategy to find restrictions on the geometry of entire curves, in the hope of eventually proving the Green-Griffiths-Lang conjecture. In many contexts, this strategy has permitted to obtain strong hyperbolicity results e.g. for hypersurfaces of $\mathbb P^{n+1}$ (see e.g. \cite{DMR10, ber15, bro17, siu15, deng16, dem18, BK19, Cad24}, complements of hypersurfaces of $\mathbb P^n$ (see \cite{Dar15, BD19}), surfaces of general type (see \cite{bog77, McQ98, RR12})... The field of application of these jet differential techniques has also been recently extended to the orbifold setting by Campana, Darondeau, Demailly and Rousseau in \cite{CDR18, CDDR24}.
\medskip

The strategy sketched above will work if we are able to prove the existence of many global jet differential equations on a given manifold of general type. The most general result in this direction is due to Demailly \cite{dem11}, and can be stated as follows.

\begin{introthm} \label{thmprinc} Let $X$ be a complex projective manifold of general type. Then, for sufficiently large $k \in \mathbb N$, the Green-Griffiths sheaf of algebras $E_{k, \bullet}^{GG} \Omega_X$ is \emph{big}, i.e. there is maximal growth 
	\[
		h^0(X, E_{k, m}^{GG} \Omega_X) \geq C m^{n + nk - 1}
	\]
with $C > 0$, if $m \gg 1$ goes to infinity while being sufficiently divisible.

In particular, if $A$ is an ample line bundle on $X$, and if $m \gg k \gg 1$, we have
$$
H^0(X, E_{k, m}^{GG} \Omega_X \otimes \mathcal O(- A)) \neq 0.
$$
\end{introthm}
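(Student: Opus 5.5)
We follow the overall shape of Demailly's argument, but replace the holomorphic Morse inequalities by an algebraic Morse-type estimate. The estimate is applied on a compactification of the Green--Griffiths jet bundle.

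\emph{Setting up the geometry.} Let $J_k X \to X$ be the bundle of $k$-jets of germs of curves, with its weighted $\mathbb{C}^*$-action of weights $(1,2,\dots,k)$. The Green--Griffiths sheaf of algebras $E^{GG}_{k,\bullet}\Omega_X$ is the graded ring of fibrewise weighted-polynomial functions. Hence
\[
\bigoplus_m H^0(X, E^{GG}_{k,m}\Omega_X) = \bigoplus_m H^0\bigl(X^{GG}_k, \mathcal{O}(m)\bigr),
\]
where $X^{GG}_k = (J_kX\setminus 0)/\mathbb{C}^*$ is a weighted projective bundle of relative dimension $nk-1$, so that $\dim X^{GG}_k = n+nk-1$. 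Bigness of $E^{GG}_{k,\bullet}$ is therefore bigness of the orbifold tautological sheaf $\mathcal{O}_{X^{GG}_k}(1)$.

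\emph{Decomposition into nef minus nef.} The jet bundle has a filtration whose graded pieces are $\Omega_X$ placed in weights $1,\dots,k$. Since $X$ is of general type, Kodaira's lemma gives $K_X = A - E$ with $A$ ample and $E$ effective, and we may pick a line bundle $H$ with $\Omega_X\otimes H$ nef (or globally generated). For each weight $s$, the piece $\Omega_X$ of weight $s$ then contributes a term
\[
\tfrac{1}{s}\,\mathcal{O}(1) + \pi^*H \quad\text{(schematically)}.
\]
On a suitable (toroidal/weighted) resolution, following Demailly's construction, we write
\[
\mathcal{O}_{X_k}(1)\otimes \pi^*\mathcal{O}(\varepsilon A) \simeq F - G
\]
with $F,G$ nef $\mathbb{Q}$-line bundles.

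\emph{Applying the algebraic Morse inequality.} We then use the algebraic version of the Morse inequality for a difference of nef classes on a variety of dimension $N=n+nk-1$:
\[
h^0(F-G) \ge \frac{F^N - N\,F^{N-1}G}{N!}m^N - o(m^N).
\]
This is Siu/Trapani's inequality, which the paper's generalized Morse inequalities are meant to refine. We compute $F^N$ and $F^{N-1}G$ as fibre integrals over the weighted projective fibres. This is where Demailly's probabilistic computation enters. After averaging over the fibre, the leading term is
\[
\frac{(\log k)^n}{n!\,(k!)^n}\Bigl(K_X^n + O\bigl((\log k)^{-1}\bigr)\Bigr)
\]
times a positive constant. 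The positive part is governed by $c_1(K_X)$ through the sum $\sum_s 1/s \sim \log k$, and the negative corrections come only from the bounded term $H$. We then replace $K_X^n$ by $A^n>0$ and absorb the contribution of $E$ as a negative-sign error of lower order in $\log k$. For $k\gg1$ the volume lower bound is positive, which gives $h^0 \ge C m^{n+nk-1}$. Since $E^{GG}_{k,m}\otimes\mathcal{O}(-A)$ also has this growth after adjusting $\varepsilon$, the final nonvanishing claim follows.

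\emph{Main obstacle.} The hard step is the Morse step. The holomorphic Morse inequalities use the curvature form integrated over $X(L,\le 1)$, whereas here we need a genuinely algebraic substitute. That substitute must handle a non-nef decomposition in which the $\log k$ terms come from many summands, and it must do so on a singular weighted projective bundle. The plain Trapani bound $F^N - NF^{N-1}G$ loses too much, because the error term has the same $\log k$ order. So we would first try to decompose $\mathcal{O}(1)$ as a sum of $k$ differences $F_s - G_s$, one for each weight. We would then prove a multi-term inequality
\[
h^0\Bigl(\sum_s (F_s-G_s)\Bigr) \ge \text{(mixed intersection with at most one $G$ factor)},
\]
which imitates the expansion of $\int_{X(L,\le1)}$. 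This is presumably the paper's generalized algebraic Morse inequality. Its proof would proceed by induction on the number of terms, using restriction to divisors in $|G_s|$ and exact sequences.
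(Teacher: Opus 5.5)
There is a genuine gap, and it sits exactly at the step you call the main obstacle: you never supply a Morse-type inequality strong enough to conclude, and the one you sketch cannot be strong enough.

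\textbf{The Morse step.} Your first route is a global nef decomposition $F-G$ on the weighted jet space, combined with the Siu--Trapani bound $F^N-NF^{N-1}\cdot G$. As you concede, and as the paper's introduction explains, this is too lossy for $k\gg1$. For instance, take $F=\mathcal{O}(1)\otimes\pi^*H$ and $G=\pi^*H$, with $N=n+nk-1$. The $H^n$-terms then have coefficient $\binom{N}{n}-N\binom{N-1}{n-1}=(1-n)\binom{N}{n}$. For $n\ge2$ this is negative of size $k^n/(k!)^n$, which swamps the $(\log k)^n/(k!)^n$ main term. So the leading coefficient you first announce does not follow.

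Your proposed repair changes nothing. By multilinearity of intersection numbers, the part of $\bigl(\sum_s(F_s-G_s)\bigr)^N$ with at most one $G$-factor is literally $F^N-NF^{N-1}\cdot G$ for $F=\sum_sF_s$ and $G=\sum_sG_s$. More generally, no bound whose right-hand side is a polynomial in fixed global classes can mimic $\int_{X(L,\le1)}$, where the index is recomputed at every point, in particular for every direction in the jet fibre.

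The paper's substitute is deliberately non-multilinear. The number $\deg c_1(L,\underline{\Sigma})^n_{[\le i]}$ sums over root-to-leaf paths of a stratification tree, sorted by the signs of the multiplicities of a trivialization (Theorem~\ref{thmmorse}, proved by induction on dimension via exact sequences and d\'evissage). It is also never used on the jet stack. Instead the paper:
\begin{enumerate}
\item bounds $h^0\ge-\chi^{[1]}$ and passes to the graded object $S^m\mathbf{\Omega}^k_X$ (Proposition~\ref{propcompjetvb});
\item splits $\Omega_X$ on a generically finite cover;
\item applies the stratified inequality on the $n$-dimensional base to each line-bundle summand separately, so the index depends on the direction $t\in\Delta_{\underline{k}}$.
\end{enumerate}
This yields $\int_{\Delta_{\underline{k}}}\upsilon_{[\le1]}\,dP$ (Theorem~\ref{thmineqintegral}). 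The estimates $\mathrm{Var}(A_i)=O(k^{-2})$ against $\mathrm{E}(A_i)\sim\frac{\log k}{kn}d^i$ for the random markings $A_i$ (Lemmas~\ref{lemvariance} and \ref{lemupperbounddiff}) then let one replace the random index by the index at the mean.

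\textbf{The effective part.} Your treatment of the effective part is also wrong. Kodaira's lemma gives $K_X\sim_{\mathbb{Q}}A+E$, not $A-E$. Moreover $E$ enters the markings with the same weight $\frac{1}{kn}\sum_{s\le k}\frac1s\sim\frac{\log k}{kn}$ as $A$, so it cannot be absorbed into an $O((\log k)^{-1})$ error. Nor can $K_X^n$ simply be swapped for $A^n$: for big but non-nef $K_X$ the number $K_X^n$ may be non-positive (blow up a surface of general type enough times), and only an index-$\le1$ truncated quantity is relevant.

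The paper removes the effective part at the level of sheaves. It writes $mK_X\sim A_0+G_0$ and takes a Kawamata cover $p$ with $p^*G_0=mG$. It then works with $\mathcal{O}(-mF_k)\otimes p^*E^{GG}_{k,m}\Omega_X\subset p^*E^{GG}_{k,m}\Omega_X$, where $F_k=\frac{1}{kn}(1+\frac12+\cdots+\frac1k)G$. This twist can only decrease $h^0$, and it is calibrated so that the mean markings are exactly those of the ample class $A=p^*K_X-G$. A stratification by hyperplane sections adapted to $A$ has only positive multiplicities, so $\deg c_1(A,\underline{\Sigma})^n_{[\le1]}=(A^n)>0$ (Propositions~\ref{prop:conststratample} and \ref{propfinal}).
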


The proof of Theorem \ref{thmprinc} given by Demailly in \cite{dem11} is fundamentally analytic in nature: it is based on the \emph{holomorphic Morse inequalities} he introduced in \cite{dem85}, and that were later extended to the singular setting by Bonavero \cite{bonavero98}. 
\medskip

\section{Main results}

Our purpose in the current notes is twofold:
\begin{enumerate}
	\item give an fully algebraic (and hopefully, essentially self-contained) proof of Theorem~\ref{thmprinc} ;
	\item extend the field of validity of Demailly's result to positive characteristic.
\end{enumerate}
\medskip

Let us give some details.
\medskip

\subsection*{Demailly's proof. Holomorphic and algebraic Morse inequalities} Before explaining what needs to be done to pass to arbitrary characteristic, let us a brief account on Demailly's proof, and how we can hope to convert it to an algebraic setting.
\medskip

The proof given by Demailly in \cite{dem11} consists in applying his holomorphic Morse inequalities to the tautological line bundle on the {\em Green-Griffiths jet spaces}. If $X$ is a complex manifold, these jet spaces are projective fiber bundles $X_k^{GG} \overset{\pi_k}{\longrightarrow} X$ (for each $k \in \mathbb N^\ast$), each one endowed with an (orbifold) tautological line bundle $\mathcal O_k^{GG}(1)$, such that:\footnote{This line bundle might be better seen in the setting of {\em stacks}. See Annex~\ref{annex:stacks} for more details.}
\[
	(\pi_k)_\ast \mathcal O_k^{GG}(m) = E_{k, m}^{GG} \Omega_X
	\quad
	\text{for} \; m \in \mathbb{N}.
\]

Showing that $E_{k, \bullet}^{GG} \Omega_X$ is big for some $k$, amounts to showing that $\mathcal O_k^{GG}(1)$ is big. If \(h\) is a smooth hermitian metric on \(\mathcal{O}_{k}^{GG}(1)\), then (at least for \(m\) divisible enough), we have the {\em holomorphic Morse inequality} :
\[
	(h^{0} - h^{1})(X_{k}^{GG}, \mathcal{O}_{k}^{GG}(m))
	\geq
	\frac{m^{n+nk-1}}{(n+ nk-1)!} \int_{X_{k}^{GG}(\leq 1)} \left(\frac{i}{2\pi} \Theta(h)\right)^{n + nk - 1}
	+ o(m^{n+nk-1})
\]
where the integral on the right is performed on the locus where \(i \Theta(h)\) has less than \(1\) negative eigenvalue. Demailly's proof then boils down to showing that if \(X\) is of general type, we may construct a (singular) metric on \(\mathcal{O}_{k}^{GG}(1)\) for which the integral on the right hand side becomes positive.
\bigskip

 The main candidate for an algebraic version of the holomorphic Morse inequalities are the \emph{algebraic Morse inequalities} of Demailly \cite{dem96} and Angelini \cite{ang96}. In their simplest form, due to Siu \cite{siu93}, they can be stated as follows. Let $X$ be a complex projective manifold of dimension $n$, and let $L$ be a line bundle on $X$. Assume that $L = \mathcal O(A - B)$, where $A, B$ are nef divisors on $X$. Then, for $m \gg 1$, we have
$$
h^0(X, L^{\otimes m}) - h^1(X, L^{\otimes m}) \geq \frac{m^n}{n!} \left(A^n - n B \cdot A^{n-1} \right) + O(m^{n-1}).
$$
In particular, if $A^n > n B \cdot A^{n-1}$, then $h^0(X, L^{\otimes m})$ has maximal growth, and $L$ is big.
\medskip

We can certainly try to apply the previous algebraic Morse inequalities to the jet bundle situation. To do this, we need to write $\mathcal O_k^{GG}(1) = \mathcal O(A - B)$, where $A, B$ are nef \(\mathbb{Q}\)-divisors on $X_k^{GG}$, and then to compute $A^N - N A^{N-1} \cdot B$, where $N = \dim X_k^{GG}$. A natural way to proceed is to remark that $\mathcal O_k^{GG}(1)$ is relatively ample, and to choose $B = \pi_k^\ast H$, where $H$ is an ample divisor on $X$, sufficiently positive so that $\mathcal O (A) = \mathcal O_k^{GG}(1) \otimes \mathcal \pi_k^\ast \mathcal O(H)$ is itself nef.
\medskip

This general strategy has been followed by several authors (starting with Diverio \cite{div08, div09}, see also \cite{BK19, Cad24}) to the case where $X$ is a hypersurface of $\mathbb P^{n+1}$ of degree $d$. In this situation, it is possible to show the existence of jet differential equations of order $k \geq n$, as soon as $d \geq d(n)$, for some constant $d(n) \in \mathbb N$. Unfortunately, this algebraic method does not seem to give the bigness of $E_{k, \bullet}^{GG} \Omega_X$ when $d \geq n + 3$ and $k \gg 1$, which would be the expected bound on $d$ according to Theorem \ref{thmprinc}. There seems here to be a discrepancy between the results the algebraic and analytic methods can provide, at least with this specific method. 
\footnote{Note however that the optimal bound for hypersurfaces has been provided by Merker \cite{mer15}, using a careful analysis of the even degree cohomology groups for \(\mathcal{O}_{X_{k}^{GG}}(m)\).}
\medskip 

\subsection*{A new version of the algebraic Morse inequalities} It seems to us the discrepancy discussed above comes from a rather restrictive setting for the statement of the algebraic Morse inequalities. Rather than dealing only with the case where $L = A - B$, with $A, B$ nef (or even very ample), it would be much more flexible to be able to deal with any difference of \emph{effective} divisors $A, B$.

In the following discussion, we propose to follow this idea, and to give accordingly a new algebraic version of the Morse inequalities. 
\medskip

In the case where \(A\) and \(B\) are very ample, the standard algebraic Morse inequalities can be proved by induction: writing \(L = \mathcal{O}(A - B)\), we can use an exact sequence argument to reduce the problem to showing the inequalities for both \(L|_{A}\) on the lower dimensional variety \(A\) (resp. for \(L|_{B}\) on \(B\)). Actually, assuming that \(A\) and \(B\) are merely effective does not present much difficulty for this argument. However, for the induction to work, one sees that we need to assume now that we also have a decomposition on \(A\) of the form:
\[
	L|_{A} \cong \mathcal{O}_{A}(A_{A} - B_{A}),
\]
with \(A_{A}, B_{A}\) effective on \(A\), and similarly for \(L|_{B}\).
\medskip

This remark quickly leads us to trying to state Morse inequalities in terms of \emph{stratifications} on our manifolds, in the following sense. Let $L \longrightarrow X$ be a line bundle over a complex manifold, and let $e$ be a trivialization of $L$ over some open dense subset $U \subseteq X$, where $X \setminus U$ is the support of a Cartier divisor $D$ such that $L= \mathcal O(D)$. We can now extract a particular combinatorial data of this situation: 
\begin{enumerate}[label=(\arabic*)]
	\item write $D = D^+ - D^-$, where $D^+$, $D^-$ are effective; 
	\item note for later use the multiplicities of $e$ along the irreducible components of $D^+$ and $D^-$; 
	\item restrict $L$ to $D$, and find a trivialization of this restriction on a Zariski dense open subset of $D$;
	\item repeat this operation with $X$ replaced by each component of $D$. 
\end{enumerate}

Inductively, this defines a stratification $\Sigma$ on $X$, with the data of a trivialization $\mathbf e$ over $\Sigma$ (see Section~\ref{sec:stratifications} for the complete definitions). Then the data of $\underline{\Sigma} = (\Sigma, \mathbf e)$ and of the multiplicities computed along the way permits to define \emph{truncated Chern intersection numbers} $\deg c_1(X, \underline{\Sigma})^{n}_{[\leq i]}$, for all $0 \leq i \leq n$. 
\medskip

The previous definitions actually make sense over an arbitrary field; we may also replace \(L\) by a \(\mathbb{Q}\)-line bundle. In the end, this allows to formulate generalized Morse inequalities as follows.

\begin{introthm}  [= Theorem~\ref{thmmorse}]\label{thmintromorse} Let $X$ be a projective variety of dimension $n$ over an arbitrary field \(\mathbbm{k}\). Let $L$ be a $\mathbb Q$-line bundle on $X$, and let $\underline{\Sigma}$ be a trivialized stratification adapted to $L$. Let $M$ be another line bundle on $X$. Then, for each integer $0 \leq i \leq n$, and any $m$ divisible enough, we have
\begin{enumerate}[label=(\roman*)]
\item (Strong Morse inequalities) 
$$
\sum_{0 \leq j \leq i}  (-1)^{j + i} h^j (X, M \otimes L^{\otimes m}) \leq (-1)^i \left( \deg c_1(L, \underline{\Sigma})^n_{[\leq i]} \right) \frac{m^n}{n!} + O(m^{n-1})
$$

\item (Weak Morse inequalities) 
$$
h^i (X, M \otimes L^{\otimes m}) \leq (-1)^i \left( \deg c_1(L, \underline{\Sigma})^n_{[i]} \right) \frac{m^n}{n!} + O(m^{n-1})
$$

\item (Asymptotic Riemann-Roch formula)
$$
\chi(X, M \otimes L^{\otimes m}) = \left( \deg c_1(L, \underline{\Sigma})^n_{[\leq n]} \right) \frac{m^n}{n!} + O(m^{n-1})
$$
\end{enumerate}
\end{introthm}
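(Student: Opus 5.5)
I will prove all three statements together by induction on $n = \dim X$, following the exact-sequence strategy sketched above. The stratification $\underline{\Sigma}$ is built recursively, and the truncated numbers $\deg c_1(L,\underline{\Sigma})^n_{[\leq i]}$ should satisfy a matching recursion. I expect a formula of the shape
\[
	\deg c_1(L,\underline{\Sigma})^n_{[\leq i]} = \sum_{E \subseteq D^+} \mu_E \, \deg c_1(L|_E,\underline{\Sigma}_E)^{n-1}_{[\leq i]} - \sum_{F \subseteq D^-} \nu_F \, \deg c_1(L|_F,\underline{\Sigma}_F)^{n-1}_{[\leq i-1]}.
\]
Here $E$, $F$ run over the irreducible components of $D^{\pm}$, $\mu_E, \nu_F$ are the recorded multiplicities, and $\underline{\Sigma}_E$ is the induced trivialized stratification on $E$. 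The point is that a component of $D^-$ contributes with a shift of one cohomological degree, exactly as in Siu's argument. The base case $n=0$ is trivial: $X$ is a finite set of points, $h^0$ is its length and all higher $h^j$ vanish.

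First I reduce to an integral line bundle. Replacing $L$ by $L^{\otimes d}$ for $d$ divisible enough scales the truncated degrees by $d^n$ (this is where ``$m$ divisible enough'' enters). Residues of $m$ modulo $d$ are absorbed into finitely many twists $M$, which is why the statement allows an arbitrary auxiliary line bundle $M$. I also reduce to $X$ integral: using a filtration of $\mathcal{O}_X$ with integral subquotients, the cohomology of a coherent sheaf of generic rank $r$ on an integral scheme can be compared with $r$ copies of a line bundle, up to terms supported in lower dimension, which are $O(m^{n-1})$.

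With $L = \mathcal{O}(D^+ - D^-)$, I set $L_1 = \mathcal{O}(-D^-)\otimes L^{\otimes m}$ and use the two short exact sequences
\[
	0 \to M\otimes L^{\otimes m}(-mD^+) \to M\otimes L^{\otimes m} \to M\otimes L^{\otimes m}|_{mD^+} \to 0,
\]
\[
	0 \to M\otimes \mathcal{O}(-mD^-) \to M \to M|_{mD^-} \to 0.
\]
Since $L^{\otimes m}(-mD^+) = \mathcal{O}(-mD^-)$, the two sequences chain, and the comparison runs between $h^j(M)$, which is $O(1)$, and the cohomology on the thickened divisors $mD^{\pm}$. Each thickening $mD^{\pm}$ is filtered by the Cartier divisors $(k+1)D^{\pm} \supset kD^{\pm}$, and the graded pieces are line bundles on $D^{\pm}$ of the form $M\otimes L^{\otimes m}\otimes\mathcal{O}(-kD^{\pm})|_{D^{\pm}}$. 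Further filtering by components (multiplicities $\mu_E,\nu_F$) reduces everything to line bundles on integral $(n-1)$-dimensional varieties $E$, on which the induced stratification trivializes $L|_E$ and the $\mathcal{O}(D^{\pm})|_E$ pieces.

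Apply the induction hypothesis to each graded piece and sum the Riemann sums $\sum_{k<m} (m-k)^{n-1}$ and the like, which produce $m^n/n!$ factors. Then use subadditivity of strong Morse sums along long exact sequences: for $0\to A\to B\to C\to 0$, the alternating partial sum $\sum_{j\le i}(-1)^{i-j}h^j(B)$ is bounded by the sum of the corresponding sums for $A$ and $C$. The term on $D^-$ enters through a connecting map, which produces the degree shift $i \mapsto i-1$. The weak inequalities follow by adding the strong inequalities for $i$ and $i-1$. Riemann--Roch (iii) follows from (i) at $i=n$ together with the same inequality for $-L$, or directly from additivity of $\chi$.

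The main obstacle is bookkeeping. The graded pieces on $D^{\pm}$ involve the twisted bundles $L^{\otimes m}(-kD^{\pm})|_E$ rather than pure powers of $L|_E$, so I need the induction hypothesis in a two-parameter form: asymptotics of $h^j(E, M\otimes L^{a}\otimes \mathcal{O}(D^+)^{b}\otimes\mathcal{O}(D^-)^{c}|_E)$, uniform in the integers with $|a|,|b|,|c| \le m$. The trivialized stratification must be adapted simultaneously to all these combinations, with error terms uniformly $O(m^{n-2})$ per piece. I would therefore strengthen the induction statement to multi-degree $\mathbb{Q}$-line bundles (a finite family of divisors with a common adapted stratification) with uniform constants, and check that the definition in Section~\ref{sec:stratifications} of the truncated intersection numbers is multilinear enough to match the resulting Riemann sums.
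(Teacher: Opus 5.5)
There is a genuine gap, and it lies in your exact-sequence scheme itself, not only in the bookkeeping. You compare $M\otimes L^{\otimes m}$ with $M$ in a single jump through the thickenings $mD^{+}$ and $mD^{-}$. The graded pieces this produces are $M\otimes L^{\otimes m}(-kD^{+})|_{E}$ and $M(-kD^{-})|_{F}$ for $0\le k<m$. None of these is a power of $L|_{E}$ or $L|_{F}$, and the $D^{-}$ side does not involve $L$ at all. So the induction hypothesis for the \emph{given} $\underline{\Sigma}$ cannot produce the recursion you correctly guessed (it is Lemma~\ref{lemformulainduct}).

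Your proposed repair does not work either. First, $\underline{\Sigma}$ contains no trivializations of $\mathcal{O}(D^{\pm})|_{E}$. Second, truncated degrees are not multilinear, because the index of a path depends on the signs of its markings. Hence Riemann sums of truncated degrees of $(L-tD^{+})|_{E}$ and of $-D^{-}|_{F}$ do not recombine into $\deg c_1(L,\underline{\Sigma})^n_{[\le i]}$.

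In fact the very first inequality of your chain already loses information of order $m^n$. Take $X=\mathbb{P}^{2}$, $M=\mathcal{O}_X$, and $L=\mathcal{O}_X$ with $e=s_E/s_F$ for two distinct lines $E,F$. Trivialize $L|_E$ and $L|_F$ by the constant section $1$, so every root-to-leaf path has $C_\sigma=0$; the theorem then asserts $\chi^{[1]}(X,\mathcal{O}_X)\le O(m)$. Your first sequence is $0\to\mathcal{O}(-m)\to\mathcal{O}_X\to\mathcal{O}_{mE}\to 0$, and it only gives
\[
\chi^{[1]}(\mathcal{O}_X)\le\chi^{[1]}(\mathcal{O}(-m))+\chi^{[1]}(\mathcal{O}_{mE})=h^{0}(\mathcal{O}(m-3))-1\sim\tfrac{m^{2}}{2}.
\]
This holds even using the exact cohomology of the thickening, and every later step only bounds $\chi^{[1]}(\mathcal{O}_{mE})$ from above.

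The paper avoids this by telescoping in $m$ one step at a time. Write $D(e)=A-B$ with $A,B$ effective Cartier. The sequences $0\to M\otimes L^{\otimes(m+1)}(-A)\to M\otimes L^{\otimes(m+1)}\to M\otimes L^{\otimes(m+1)}|_{A}\to 0$ and $0\to M\otimes L^{\otimes m}(-B)\to M\otimes L^{\otimes m}\to M\otimes L^{\otimes m}|_{B}\to 0$ are glued by $L^{\otimes(m+1)}(-A)\cong L^{\otimes m}(-B)$. This gives
\[
\chi^{[i]}(M\otimes L^{\otimes(m+1)})-\chi^{[i]}(M\otimes L^{\otimes m})\le\chi^{[i]}(M\otimes L^{\otimes(m+1)}|_{A})+\chi^{[i-1]}(M\otimes L^{\otimes m}|_{B}).
\]
Since $A$ and $B$ do not depend on $m$, the d\'evissage of Lemma~\ref{lemfilt} yields pieces $\mathcal{L}_{j,k}\otimes M\otimes L^{\otimes m}|_{D_j}$ with $m$-independent line bundles $\mathcal{L}_{j,k}$; this is exactly what the auxiliary twist $M$ is for. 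So the single-bundle induction hypothesis for $L|_{D_j}$, with its induced trivialization, applies directly, and summing over $m$ gives Lemma~\ref{lemformulainduct}. No multi-degree strengthening is needed.

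Your proposal also skips a second point. Neither $D^{\pm}$ nor their components need be Cartier on the normal model, so $\mathcal{O}(-kD^{\pm})$ and the filtration by components are not available as stated. The paper handles this by blowing up Weil divisors (Lemma~\ref{lem:WeiltoCartier}). That is why it proves the relative statement, Proposition~\ref{propredmorse}, for a generically finite $p\colon X'\to X$: exceptional divisors contribute only $O(m^{n-2})$, and the multiplicities are recovered through the projection formula \eqref{eqsumproj1}.

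Your reduction from $\mathbb{Q}$-line bundles (replace $L$ by $L^{\otimes d}$ and absorb residues into $M$) and your derivation of (ii) and (iii) from (i) are fine.
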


The core of our proof is very similar in spirit to the one of Angelini \cite{ang96}: it is an induction on $\dim X$, using quite standard dimensional considerations on long exact sequences of coherent sheaves. The main technical difficulty comes from the fact that the induction step requires several clean-up operations to be able to make sense of the exact sequence argument:
\begin{enumerate}
	\item the fact that we deal with \(\mathbb{Q}\)-line bundles requires us to take cyclic covers on \(X\). If \(\mathrm{char}(\mathbbm{k}) > 0\), this actually does not add many complications, but some care needs to be applied;
	\item the main technical difficulty comes from the fact that the irreducible components of \(X \setminus U\) above are not necessarily {\em Cartier}, so we need to perform a modification on \(X\) to get back to this situation. In characteristic \(0\), this could be easily achieved by Hironaka's resolution of singularities, but this is a bit more technical in positive characteristic: we need to blow-up Weil divisors, and deal carefully with the exceptional components that may appear in the process. 
\end{enumerate}

\subsection*{Weighted projectivized bundles and jet spaces} Let us go back to the complex setting for the next few paragraphs. To prove Theorem \ref{thmprinc}, a natural idea would be to proceed as in \cite{dem11}, and to apply the Morse inequalities to the line bundles $\mathcal O_k^{GG}(1) \longrightarrow X_k^{GG}$. In these notes, we will follow a rather different strategy, which is based on two simplifying ideas (which would also be relevant to simplify a bit Demailly's analytic proof). 
\medskip

First of all, following a remark we made in \cite{cad17}, we can reduce the study of the jet spaces $X_k^{GG}$ to the one of a \emph{weighted projectivized bundle}, by using a construction that was essentially used implicitely in \cite{dem11}. Its geometric interpretation is the existence of a deformation of $X_k^{GG}$ into the {\em weighted projectivized bundle} $P_k := \mathbb{P}_X(\Omega_X^{(1)} \oplus ... \oplus \Omega_X^{(k)})$ (see Section \ref{sectweightproj} for a proper definition), and of an (orbifold) line bundle over this family of deformations, restricting to the tautological line bundles $\mathcal O_k^{GG}(1) \longrightarrow X_k^{GG}$ and $\mathcal O_k(1) \longrightarrow P_k$. In this situation, the semi-continuity properties of $h^0 - h^1$ (see \cite{dem95}) yields
\begin{equation} \label{eqineqfund}
h^0 ( X_k^{GG}, \mathcal O_k^{GG}(m)) - h^1 ( X_k^{GG}, \mathcal O_k^{GG}(m)) \geq  h^0(P_k^{GG}, \mathcal O_k(m)) - h^1(P_k^{GG}, \mathcal O_k(m)).
\end{equation}

This inequality can also be shown using a very simple argument due to Merker, and which was communicated to me by L. Darondeau. To show that the left hand side is large, we just have to bound from below the right hand side, which is in fact the $h^0 - h^1$ of a natural graded algebra on $E_{k,m}^{GG} \Omega_X$: this right hand side of the previous equation is actually equal to  
\begin{equation} \label{eqchi1sym}
(h^0 - h^1) \left( \sum_{l_1 + 2 l_2 + ... + k l_k = m} S^{l_1} \Omega_X \otimes ... \otimes S^{l_r} \Omega_X \right), 
\end{equation}
we get to our first reduction step, which states that it is enough to bound from below the quantity \eqref{eqchi1sym}, which is completely described \emph{only in terms of the cotangent bundle $\Omega_X$}.
\medskip
 
As another important reduction step, we can use a version of the \emph{splitting principle} implying that to get an asymptotic lower bound for \eqref{eqchi1sym}, it is actually enough to deal with the case where $\Omega_X = L_1 \oplus ... \oplus L_n$ is a direct sum of line bundles. We can then expend \eqref{eqchi1sym} in terms of direct sums of various products of powers of the $L_i$, and apply the Morse inequalities to each one of the direct factors so obtained. By a Riemann sum argument, this computation eventually makes appear an integral over the standard $(kn-1)$-dimensional simplex $\Delta^{kn-1}$; we can actually state the following general result (see Section \ref{sectstatement} for more precise and general statements, in particular when the base field is arbitrary). 

\begin{introthm} [see Theorem~\ref{thmineqintegral}] \label{thmintroint} Let $X$ be a complex projective manifold of dimension $n$, and let $L_1, ..., L_r$ be line bundles on $X$. Let $a_1, ..., a_r \in \mathbb N_{\geq 1}$. Fix a stratification $\Sigma$ on $X$, and let $\mathbf{e}_1, ..., \mathbf{e}_r$ be trivializations of the $L_i$ on $\Sigma$. Then, for all $i$ $(0 \leq i \leq n)$, there exists a piecewise polynomial function $\upsilon_{[\leq i]} :\Delta^{r-1} \longrightarrow \mathbb R$, constructed explicitly in terms of $(\Sigma, \mathbf{e}_1, ..., \mathbf{e}_r)$, such that for all $m \in \mathbb N$:
\begin{align} \nonumber
\chi^{[i]} (X, \bigoplus_{a_1 l_1 + ... + a_r l_r = m} & L_1^{\otimes l_1} \otimes ...  \otimes L_r^{\otimes l_r} )  \\ \label{eqintrointegral}
& \leq  \frac{\mathrm{gcd}(a_1, ..., a_r)}{a_1 ... a_r} \binom{n + r - 1}{r - 1} \left[ \int_{\Delta^{r-1}} \upsilon_{[\leq i]} dP \right] \frac{m^{n+ r - 1}}{(n+ r - 1)!} + o(m^{n+r - 1}),
\end{align}
where $dP$ is the invariant probability measure on the simplex $\Delta^{r-1}$.
\end{introthm}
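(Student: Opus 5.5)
The plan is to apply the generalized Morse inequalities (Theorem~\ref{thmintromorse}) to each direct summand separately and then sum over the summands by a Riemann sum argument. Here \(\chi^{[i]}\) denotes the alternating sum \(\sum_{0\le j\le i}(-1)^{i+j}h^j\), which is additive on direct sums.

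\textbf{Step 1: the function \(\upsilon\).} For \(t = (t_1,\dots,t_r)\) in the positive cone, the trivializations \(\mathbf{e}_1,\dots,\mathbf{e}_r\) combine into a trivialization \(\mathbf{e}_t = \mathbf{e}_1^{t_1}\cdots\mathbf{e}_r^{t_r}\) of the \(\mathbb{Q}\)-line bundle \(L_t = L_1^{t_1}\otimes\cdots\otimes L_r^{t_r}\) over \(\Sigma\). The multiplicities of \(\mathbf{e}_t\) along each stratum are then linear in \(t\).

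The truncated number \(\deg c_1(L_t,\underline{\Sigma}_t)^n_{[\le i]}\) is obtained inductively. At each step one keeps only those components whose signs of multiplicities give at most \(i\) negative contributions. It is therefore a sum, over the faces of a finite polyhedral decomposition of the cone cut out by the vanishing of these linear forms, of polynomials homogeneous of degree \(n\) in \(t\).

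I define \(\upsilon_{[\le i]}(t) := (-1)^i \deg c_1(L_t,\underline{\Sigma}_t)^n_{[\le i]}\). Restricted to \(\Delta^{r-1}\), this is piecewise polynomial. The stratification stays adapted to every \(L_t\), since \(\Sigma\) is fixed and the zero and pole loci of \(\mathbf{e}_t\) are contained in those of the \(\mathbf{e}_j\).

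\textbf{Step 2: uniformity of the error terms.} Apply Theorem~\ref{thmintromorse}(i) to \(M\otimes L_1^{l_1}\otimes\cdots\otimes L_r^{l_r}\). The leading term is \(\upsilon_{[\le i]}(l)/n!\), which equals \(m^n\,\upsilon_{[\le i]}(l/m)/n!\) by homogeneity.

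The main obstacle is that the number of summands is of order \(m^{r-1}\). The \(O(m^{n-1})\) error must therefore be uniform in the direction \(l/m\), and the divisibility hypothesis "\(m\) divisible enough" must be avoided, since here \(l\) is an arbitrary integer point.

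I would handle this by proving a uniform version: \(\chi^{[i]}(X,\mathcal{F}\otimes L_1^{l_1}\cdots L_r^{l_r}) \le \upsilon_{[\le i]}(l)/n! + C|l|^{n-1}\), with \(C\) independent of \(l\in\mathbb{N}^r\), for \(\mathcal{F}\) in a fixed finite set of coherent sheaves. To do this, I would rerun the induction of Theorem~\ref{thmmorse} with the integer vector \(l\) in place of the integer \(m\). The exact sequences have the form
\[
0\to \mathcal{O}(-D^+_l)\to\mathcal{O}\to\mathcal{O}_{D^+_l}\to 0,
\]
and their supports lie in a fixed finite set of divisors, with multiplicities linear in \(l\). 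Filtering by the components, each appearing with linear multiplicity, gives the induction step. At each step the sign choice is governed by which chamber \(l\) lies in, and the constants depend only on the finitely many strata, so they are uniform.

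\textbf{Step 3: summing over the summands.} Sum over \(\{a\cdot l = m\}\). The points \(l/m\) form a lattice in the simplex \(\{a\cdot t = 1\}\), with covolume governed by \(\gcd(a)/(a_1\cdots a_r)\). The Riemann sum gives
\[
\sum_{a\cdot l = m} \upsilon(l/m) \sim m^{r-1}\int \upsilon\, d\lambda .
\]
The boundaries between polynomial pieces contribute only \(o(m^{r-1})\) lattice points, so they do not affect this. The error terms sum to \(O(m^{r-1}\cdot m^{n-1}) = o(m^{n+r-1})\).

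Finally, I would pull the measure back to \(\Delta^{r-1}\) via \(t\mapsto (a_it_i)\), together with the homogeneity of degree \(n\). The resulting normalizing constants can be computed by testing with \(\upsilon\equiv\) constant, using the count \(\#\{l : a\cdot l = m\}\sim \frac{\gcd(a)}{a_1\cdots a_r}\frac{m^{r-1}}{(r-1)!}\). They combine to \(\frac{\gcd(a)}{a_1\cdots a_r}\binom{n+r-1}{r-1}\frac{m^{n+r-1}}{(n+r-1)!}\), which yields \eqref{eqintrointegral}, with \(\upsilon\) rescaled accordingly.
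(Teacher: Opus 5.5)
\textbf{The gap is your Step~2.} Everything rests on the inequality $\chi^{[i]}(X,\mathcal F\otimes L_1^{l_1}\otimes\cdots\otimes L_r^{l_r})\le \upsilon_{[\le i]}(l)/n!+C|l|^{n-1}$, with $C$ uniform in $l\in\mathbb N^r$. That inequality is essentially the whole content of the theorem, and the mechanism you sketch does not prove it. There are two ways to read your sketch, and neither works.

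\emph{Reading (a): exact sequences with $D_l^{\pm}$.} As written, your exact sequences use $D_l^{\pm}$, whose multiplicities grow linearly in $l$. Filtering $\mathcal O_{D_l^+}$ (or $\mathcal O_{D_l^-}$) component by component, as in Lemma~\ref{lemfilt}, produces graded pieces of the form $L_1^{l_1}\otimes\cdots\otimes L_r^{l_r}\otimes\mathcal O(-\sum_k s_kD_k)|_{D}$ (resp.\ $\mathcal O(-\sum_k s_kD_k)|_{D}$), with $s_k$ of order $|l|$. These twists do not range over a fixed finite set of sheaves. Moreover $\Sigma$ carries no trivialization of the $\mathcal O(D_k)|_D$. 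So neither the stratification data nor your induction hypothesis applies to these pieces. This is why the proof of Theorem~\ref{thmmorse} telescopes $m\to m+1$ with the \emph{fixed} divisors $A,B$ of a single trivialization: only fixed twists then appear.

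\emph{Reading (b): unit-step telescoping.} Suppose instead you telescope $l\to l+\underline{\delta}_j$, as in that proof. Each step then uses the divisor of $e_j$. The truncation index charged at each node is therefore dictated by the sign of the individual multiplicity $m_j^{\mathfrak s}$, not by the chamber containing $l$. For instance, on an edge with $m_1^{\mathfrak s}>0>m_2^{\mathfrak s}$, every $\underline{\delta}_2$-step is charged to $\chi^{[i-1]}$ of the restriction, even when $l_1m_1^{\mathfrak s}+l_2m_2^{\mathfrak s}>0$. What this argument yields is exactly Proposition~\ref{prop:variantmorse}. Its leading coefficient $\max_{\phi}(-1)^i c(\phi,\underline{\Sigma})_{[\le i]}$ (Definition~\ref{defileadingcoeff}) can be strictly larger than the one you need. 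Already on a curve, with $L_1=\mathcal O(p)$, $L_2=\mathcal O(-p)$ trivialized off $p$ and $l_1=l_2$, it bounds $h^0$ by $m+O(1)$ instead of $O(1)$.

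\textbf{The missing idea} is how the paper turns this crude but uniform bound into the sharp coefficient, via Proposition~\ref{prop:cone}.
\begin{enumerate}
\item Partition the directions into small cells whose vertices $\underline{u}_1,\dots,\underline{u}_p$ are all $\epsilon$-close to some $t$.
\item For $\underline{l}$ in such a cell, write $\underline{l}=\sum_j q_j\underline{u}_j+\underline{v}_0$ with $\underline{v}_0$ bounded.
\item Apply Proposition~\ref{prop:variantmorse} to the coarse family $\mathcal M=\{\mathcal L^{\otimes\underline{u}_1},\dots,\mathcal L^{\otimes\underline{u}_p}\}$. Every marking then available to $\phi$ on an edge $\mathfrak s$ lies within $O(\lambda\epsilon)$ of $\lambda\sum_k t_k m_k^{\mathfrak s}/d_k$.
\item By continuity of $\varphi_{[\le i]}$ (Lemma~\ref{lemcontfunction}), the maximum is then $\lambda^n(\upsilon_{[\le i]}(t)+O(\epsilon))$.
\end{enumerate}
Uniformity is now needed only over the finitely many cells attached to a fixed $m_0$. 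One concludes by taking $\limsup_m$, then letting $m_0\to\infty$, then $\epsilon\to 0$; this is also why only an $o(m^{n+r-1})$ error survives.

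Your sharp uniform inequality may well be true, but nothing in the proposal establishes it. Proving it would require at least telescoping with steps from a fixed finite set of integral generators of the sign chambers, along staircase paths kept at bounded distance from the segment $[0,l]$. This is needed because the telescoped Riemann--Stieltjes sums depend on the path. Your Steps~1 and~3 are fine and match Definition~\ref{defiupsilon} and Section~\ref{sec:proofthmineqintegral}.
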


Suppose now that we are given a trivialized stratification $\underline{\Sigma} = (\Sigma, \mathbf{e})$, adapted to $K_X = L_1 \otimes ... \otimes L_n$. We can then construct trivializations $\mathbf{e}_i$ of the $L_i$ on $\Sigma$ (after possibly refining $\Sigma$), whose tensor product gives back $\mathbf{e}$. The next part of the work is to estimate the integral in \eqref{eqintrointegral} as we apply this inequality to \eqref{eqchi1sym}, letting $k \longrightarrow + \infty$. This estimation is very close to the computations done in \cite{dem11}: we can actually present them in a probabilistic manner, further elaborating on Demailly's "Monte-Carlo method". Exactly as in \cite{dem11}, we observe an "averaging" phenomenon: as $k \longrightarrow + \infty$, the integral of \eqref{eqintrointegral} gets closer to its mean value over the simplex $\Delta^{r-1}$; a simple computation shows that this mean value is proportional to $\deg c_1(K_X, \underline{\Sigma})^n_{[\leq i]}$.
\bigskip

Letting $i = 1$, this implies that Theorem \ref{thmprinc} will be proved if we can find a stratification for which $\deg c_1(K_X, \underline{\Sigma})^n_{[\leq 1]} > 0$, whenever $K_X$ is big. As often when passing analytic arguments to an algebraic context, this stratification will actually exist only on some ramified covering $X' \overset{p}{\longrightarrow} X$, produced using Kawamata and Bloch-Gieseker lemmas. Also, $\Sigma$ will actually be adapted to an ample subsheaf of $\mathcal O(p^\ast K_X)$ rather that to $K_X$ itself.

Putting everything together, we get the following result, which implies immediately Theorem \ref{thmprinc}.

\begin{introthm} [consequence of Proposition~\ref{propfinal}] \label{thmfulldetail} Let $X$ be a projective manifold of general type over \(\mathbb{C}\), of dimension $n$. Then, for all small $\epsilon > 0$, there exists 
\begin{enumerate} \itemsep=0em
		\item a generically finite, proper morphism $p: X' \longrightarrow X$;
		\item a decomposition $p^\ast K_X = A + E$ into ample and effective divisors;
		\item a trivialized stratification $\underline{\Sigma}$ on $X'$, adapted to $A$, such that $$\deg c_1(A, \underline{\Sigma})^n_{[\leq 1]} > (\deg p) (\mathrm{vol}(K_X) - \epsilon) > 0.$$
\end{enumerate}

	Moreover, when $m \gg k \gg 1$, and $m$ is divisible enough, we have
\begin{equation} \label{eq:lowerboundvol}
h^0(X', p^\ast E_{k,m}^{GG} \Omega_X) \geq \frac{(\log k)^n}{n! (k!)^n} \left( \deg c_1(A, \underline{\Sigma})^n_{[\leq 1]} - O(\frac{1}{ \log k}) \right) \frac{m^{n + nk - 1}}{(n+kr - 1)!}  + o(m^{n + kr - 1}).
\end{equation}
This implies that for $k \gg 1$, $p^\ast E_{k, \bullet}^{GG} \Omega_X$, and hence $E_{k, \bullet}^{GG} \Omega_X$, is big.
\end{introthm}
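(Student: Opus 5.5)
The proof will assemble the earlier results in three steps: build the stratification, compare jets with a weighted projective bundle, and evaluate the simplex integral as \(k \to \infty\).

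\textbf{Step 1: the stratification.} Since \(K_X\) is big, for \(\epsilon>0\) we first use Fujita's approximation theorem, or more elementarily Kodaira's lemma combined with the definition of volume. This gives, after a modification \(\mu: \tilde X \to X\), a decomposition \(\mu^\ast K_X = A_0 + E_0\) into an ample \(\mathbb{Q}\)-divisor \(A_0\) and an effective \(\mathbb{Q}\)-divisor \(E_0\), with \(A_0^n > \mathrm{vol}(K_X) - \epsilon\). In positive characteristic we would replace the modification by an alteration. Next we use Kawamata's covering lemma and the Bloch–Gieseker lemma to pass to a finite cover \(X' \to \tilde X\). On \(X'\) the \(\mathbb{Q}\)-divisors become integral, \(A\) becomes very ample, and \(A\) is a difference of very ample divisors in general position. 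This defines \(p\) and the decomposition \(p^\ast K_X = A + E\), with \(A^n = (\deg p)\, A_0^n\).

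On \(X'\) we then construct a trivialized stratification adapted to \(A\). Pick a general section \(s\) of \(A\), let \(U = X' \setminus (s=0)\), and continue by restricting to general complete intersections of members of \(|A|\). All multiplicities are positive, so every contribution to the truncated intersection numbers has index \(0\). We therefore expect \(\deg c_1(A,\underline{\Sigma})^n_{[\leq 1]} = \deg c_1(A,\underline{\Sigma})^n_{[0]} = A^n\), which gives \((\deg p)(\mathrm{vol}(K_X)-\epsilon) < \deg c_1(A,\underline{\Sigma})^n_{[\leq 1]}\). This is routine once the definitions of Section~\ref{sec:stratifications} are unwound for an ample divisor cut out by general sections.

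\textbf{Step 2: reduction to symmetric powers.} For the lower bound \eqref{eq:lowerboundvol} we use the deformation of \(X_k^{GG}\) to the weighted projectivized bundle \(P_k\). Via \eqref{eqineqfund}, this gives \(h^0 \geq h^0 - h^1\) of the graded object \eqref{eqchi1sym}, pulled back to \(X'\). By the splitting principle, after a further flag-bundle base change (harmless for the leading asymptotics), we may assume \(p^\ast\Omega_X\) has a filtration with line-bundle graded pieces \(L_1,\dots,L_n\) satisfying \(\bigotimes L_i = p^\ast K_X\). The graded object is then a direct sum of \(L^{l}\) over \(r = nk\) line bundles \(L_i^{(j)}\) with weights \(a = j\), for \(1 \leq j \leq k\).

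We now apply Theorem~\ref{thmintroint} with \(i=1\), in the form \(-\chi^{[1]} = h^0 - h^1\) lower bound. The input is a refinement of \(\underline{\Sigma}\) carrying trivializations \(\mathbf{e}_i\) of the \(L_i\) whose product is the trivialization of \(A\) twisted by the one of \(E\). The prefactor \(\gcd/\prod a_i\) contributes \(1/(k!)^n\).

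\textbf{Step 3: the simplex integral (main obstacle).} It remains to show that \(\int_{\Delta^{nk-1}} \upsilon_{[\leq 1]}\, dP\) concentrates, up to \(O(1/\log k)\) errors, at a value proportional to \((\log k)^n \deg c_1(A,\underline{\Sigma})^n_{[\leq 1]}\). Following Demailly's Monte-Carlo argument, write the point of the simplex as normalized i.i.d. exponential variables \(x_{ij}\). The relevant class is then \(\sum_i (\sum_j x_{ij}/j)\, c_1(L_i)\). The weighted sums \(\sum_j x_{ij}/j\) have mean \(\sim \log k\) and variance \(O(1)\), so by Chebyshev they all equal \(\log k\,(1+O(1/\log k))\) with high probability. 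The integrand is piecewise polynomial with controlled degree, so the integral is \((\log k)^n/n!\) times the truncated number for \(\sum_i c_1(L_i) = A\) (the \(E\) part enters with nonnegative sign). Errors are bounded by the variance estimates.

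The delicate point in this step is that \(\upsilon_{[\leq 1]}\) is only piecewise polynomial. We need a uniform Lipschitz-type bound near the diagonal direction, which we obtain from the explicit multiplicities of the stratification. Once this is in place, the positivity from Step 1 makes the right-hand side of \eqref{eq:lowerboundvol} positive for \(k \gg 1\), so \(p^\ast E^{GG}_{k,\bullet}\Omega_X\) is big. Bigness of \(E^{GG}_{k,\bullet}\Omega_X\) itself then follows because \(p\) is generically finite: \(p_\ast\mathcal{O}_{X'}\) embeds into a sum of copies of \(\mathcal{O}_X(H)\) for a suitable \(H\).
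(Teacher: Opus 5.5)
Your Step 1 is fine and is essentially the paper's construction: Fujita approximation, a Kawamata cover, and a hyperplane-section stratification with only positive multiplicities, so that $\deg c_1(A,\underline{\Sigma})^n_{[\leq 1]} = (A^n)$.

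\textbf{The gap is in Steps 2--3.} You apply Theorem~\ref{thmintroint} to the \emph{untwisted} graded object $S^m(p^\ast\mathbf{\Omega}^k_X)$. Since $\bigotimes_i L_i = p^\ast K_X$, your averaging argument concentrates the integral on $\deg c_1(p^\ast K_X,\underline{\Sigma}')^n_{[\leq 1]}$, where $\underline{\Sigma}'$ carries the product trivialization $\bigotimes_i \mathbf{e}_i$. It does not concentrate on the number for $A$: your identity $\sum_i c_1(L_i) = A$ is wrong, since the sum is $c_1(A)+c_1(E)$.

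The claim that ``the $E$ part enters with nonnegative sign'' is also false. Truncated numbers are not monotone under adding an effective divisor. The components of $E$ become strata on which $\mathcal{O}(E)$ can restrict very negatively, and this produces index-one paths with negative contributions.

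Here is a concrete counterexample. For $n=2$, paths of index $2$ contribute $\geq 0$, so by Proposition~\ref{prop:truncatedversion} every trivialized stratification satisfies $\deg c_1(L,\underline{\Sigma})^2_{[\leq 1]} \leq (L^2)$. Let $X$ be the blow-up of a minimal surface of general type $S$ at more than $K_S^2$ points. Then $K_X$ is big, but $(p^\ast K_X)^2 = \deg(p)\,(K_X^2) < 0$ for every generically finite $p$. So no choice of stratification or of the $\mathbf{e}_i$ makes your leading coefficient positive. This is consistent with the Morse inequalities, since here $h^0-h^1$ of $mK_X$ is asymptotically negative.

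\textbf{The missing idea is a twist that removes $E$ on average.} The paper works with the subsheaf
$$\mathcal{O}_{X'}(-mF_k)\otimes p^\ast E^{HS}_{k,m}\Omega_X \subset p^\ast E^{HS}_{k,m}\Omega_X, \qquad F_k = \tfrac{1}{kn}\bigl(1+\tfrac12+\dotsb+\tfrac1k\bigr)E .$$
Its graded object is $\mathcal{O}(-mF_k)\otimes S^m(p^\ast\mathbf{\Omega}^k_X)$, and its $h^0$ bounds $h^0(X', p^\ast E^{HS}_{k,m}\Omega_X)$ from below (Propositions~\ref{prop:relative} and \ref{proptwisteestimate}). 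The paper then applies the twisted estimate, Corollary~\ref{corolineintegral}, through Theorem~\ref{thmcomparison} with $N=\mathcal{O}(-E)$.

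Each edge is now marked by $\sum_{j,l} t_{j,l}\,d_l + \frac{1}{kn}(1+\dotsb+\frac1k)\,\gamma$, where $\gamma$ is the multiplicity of the chosen trivialization of $\mathcal{O}(-E)$. For $t$ uniform in $\Delta_{\underline{k}}$ one has $\mathrm{E}[\sum_j t_{j,l}] = \frac{1}{kn}(1+\dotsb+\frac1k)$. Hence the expected marking is that same factor times the multiplicity of the trivialization of $p^\ast K_X\otimes\mathcal{O}(-E) = \mathcal{O}(A)$. The concentration argument then yields $\deg c_1(A,\underline{\Sigma})^n_{[\leq 1]}$.

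The coefficient of $F_k$ is forced: it is exactly what cancels $E$ in expectation. This is the algebraic analogue of the multiplier-ideal twist in the singular holomorphic Morse inequalities that Demailly uses. Without it, your Step 3 cannot produce the bound \eqref{eq:lowerboundvol}.
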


\bigskip

\subsection*{Arbitrary characteristic and Hasse-Schmidt differentials} The algebraicity of the proof discussed above opens the door to proving corresponding existence results over an arbitrary field. We believe this might be relevant in particular for a future study of hyperbolicity in non-archimedean settings (e.g. over local fields such as \(\mathbb{F}_{p}((t))\) or \(\mathbb{Q}_{p}\)).\footnote{See \cite{JV18} for an discussion of these notions, mostly in characteristic zero.}
\medskip

The bundles of Green-Griffiths jet differential have a natural replacement in arbitrary characteristic, namely the bundles of {\em Hasse-Schmidt differentials} (discussed in particular by Vojta in \cite{Voj04}). If \(X\) is a smooth variety over a field \(\mathbbm{k}\), these vector bundles \(E_{k, m}^{HS}\Omega_{X}\) also have an interpretation in terms of differential equations for infinitesimal curves
\[
	\mathrm{Spec}\, \mathbbm{k}[t]/(t^{k+1})
	\longrightarrow
	X.
\]
Over \(\mathbb{C}\), there is a canonical isomorphism \(E_{k, m}^{HS}\Omega_{X} \cong E_{k, m}^{GG} \Omega_{X}\). Essentially, the sections of \(E_{k, m}^{HS}\Omega_{X}\) admit natural frames written in terms of {\em divided powers}
\[
	\frac{f^{(j)}}{j!}
\]
for germs of curves \(f : \Delta \to X\) (see Chapter~\ref{chap:jetdiff} for the definitions).
\medskip

The neat part is that everything that has been said in the previous discussion translates readily if we replace \(E_{k,m}^{GG} \Omega_{X}\) by \(E_{k, m}^{HS}\Omega_{X}\). In particular, this vector bundles also have a natural filtration whose graded object again identifies with
\[
	\bigoplus_{l_{1} + 2 l_{2} + \dotsc + kl_{k} = m}
	S^{l_{1}} \Omega_{X} \otimes \dotsc \otimes S^{l_{r}} \Omega_{X};
\]
the proof is essentially the same as in the complex case, using {\em étale coordinate charts} instead of {\em euclidean} ones.
\medskip

Once we have this grading, the algebraic proof of Theorem~\ref{thmfulldetail} also applies, and show that \(E_{k, \bullet}^{HS} \Omega_{X}\) is big if \(X\) is a geometrically smooth variety of general type over \(\mathbbm{k}\).

\section{Organization of the text} These notes are organized in five chapters. Three appendices contain several technical results or some background used in the rest of the work. 
\medskip

\noindent
{\bf Chapter 1.} The main purpose of this chapter is to give an introductory presentation of the Hasse-Schmidt jet differentials, aimed mainly at researchers with a more complex geometric background. We do not claim a lot of originality for this part, which mostly consist in a sanity check that everything relevant works as intended. In particular, we give a detailed presentation of the standard filtration on Hasse-Schmidt differentials, crucial in the rest of the text. We give also a very short presentation of the possible use of these jet differentials to prove algebraic hyperbolicity results in positive characteristic.
\medskip

\noindent
{\bf Chapter 2.} This chapter gives a recollection on {\em weighted Segre classes}, used by the author in \cite{cad17, Cad24} to perform intersection computations in explicit cases. Our basic idea is that Theorem~\ref{thmintroint} should be interpreted as a truncated version of a corresponding Riemann-Roch formula ; accordingly, the integral in \eqref{eqintrointegral} can be thought of as a "truncated Segre class". The purpose of this chapter is mainly to rederive a proof of the Whitney formula for truncated Segre classes (see Proposition~\ref{prop:whitneyformula}) with an argument making appear an integral over a simplex, exactly as in Theorem~\ref{thmintroint}. Hopefully, this will help the reader to see more closely the argument behind the proof of this latter result.
\medskip

\noindent
{\bf Chapter 3.} This chapter is the heart of the text. After some introductory definitions, we give the proof of our algebraic version of the Morse inequalities. The proof is performed in Section~\ref{sectmorseineq}; as explained above, it is an induction using several clean-up operations. In principle, it is self contained and uses only elementary results of algebraic geometry, most of which were gathered in Annex~\ref{annex:alggeom}.
\medskip

\noindent
{\bf Chapter 4.} This chapter presents the proof of Theorem~\ref{thmintroint} and several related results. As we explained above, the idea is basically to follow the proof of Whitney formula discussed in Chapter~\ref{chap:weighted}, but replacing the use of the Hirzebruch-Riemann-Roch theorem with the algebraic Morse inequalities.

There is a technical difficulty related to the fact that we work with \(\mathbb{Q}\)-line bundles instead of {\em standard ones.} when taking symmetric powers of a direct sum of line bundles, this has the consequence that our vector bundles might be defined only up to tensoring with a torsion line bundle; so it might not be clear that the left hand side of \eqref{eqintrointegral} is well defined if we replace the \(L_{i}\) with {\em \({\mathbb{Q}}\)-line bundles}. The remedy is to choose representatives for the image of \(\mathrm{Pic}(X) \to \mathrm{Pic}(X)_{\mathbb{Q}}\) in a subgroup fixed in advance; this is discussed in Section~\ref{eq:symmqline}.
\medskip

\noindent
{\bf Chapter 5.} Finally, we give a proof of the main Theorem~\ref{thm:existence}. By what we have explained above, we are lead back to Theorem~\ref{thmintroint} after using first a reduction to the graded object of \(E_{k, m}^{HS}\Omega_{X}\), and then a splitting principle for \(\Omega_{X}\). Next, we have to produce a stratification on a variety \(X'\) admitting a suitable generically finite morphism \(X' \to X\); this is done using some of the results of Chapter~\ref{chap:truncatedstrat}.

The last step is to estimate the integral giving the leading term of the inequalities. For this, we expand on Demailly's {\em Monte-Carlo} interpretation, and show that this integral can be interpreted as the expectancy value of a random variable drawn uniformly in a simplex. As \(k\) goes to infinity, a sort of "law of large numbers" gives the asymptotic estimate appearing in the right hand side of \eqref{eq:lowerboundvol}.
\medskip

\noindent
{\bf Annex A.} This annex gathers several algebraic geometry lemmas, used throughout the text. Most of them are elementary in nature. 
\medskip

\noindent
{\bf Annex B.} We have included here several basic results related to volume computations of simplexes, and random variables in these sets, for lack of a proper reference. These results are used in particular in Chapter~\ref{chap:weighted} and Chapter~\ref{chap:existence}.
\medskip

\noindent
{\bf Annex C.} This annex presents several basic results concerning stacks in positive characteristic, and in particular the weighted projective spaces that appear as fibers of the Green-Griffiths jet spaces. The results of this annex are quite classical and well known to experts;  they are mostly intended to the reader wondering about the consistency of the definitions, in particular in the positive characteristic setting.

\section*{Acknowledgments.}

I would like to thank Damian Brotbek, Fr\'ed\'eric Campana, Julien Grivaux, Henri Guenancia, Gianluca Pacienza and Erwan Rousseau for our enriching discussions around this subject. I am also grateful to Jean-Pierre Demailly for several useful explanations about his original article, to which I am of course very indebted. I would also like to thank Jean-Beno\^{i}t Bost for pointing out several mistakes in the earlier version of this work. Finally, I address special thanks to Lionel Darondeau for many enlightening and motivating discussions all along the conception of these notes.
\medskip

\medskip
During the preparation of this work, the author was partially supported by the following ANR programs: D\'efi de tous les savoirs (DS10) 2015, "GRACK", Project ID : ANR-15-CE40-0003, ANR JCJC "Karmapolis", Project ID : ANR-21-CE40-0010, and ANR "GAG", Project ID : ANR-24-CE40-3526.

\chapter*{Notation and terminology}

\subsection*{Scheme theory}

\begin{itemize}
\setlength{\itemsep}{0.5em}

\item We fix an algebraically closed field \(\mathbbm{k}\) of arbitrary characteristic. In some parts of the text, we will drop this assumption that \(\mathbbm{k}\) is algebraically closed; we will always mention this explicitely.

\item A \emph{scheme} will be a separated scheme of finite type over \(\mathbbm{k}\). A {\em proper} scheme is a scheme admitting a proper morphism to \(\mathrm{Spec}\, \mathbbm{k}\). Unless otherwise mentioned, all fiber products will be taken over \(\mathrm{Spec}\, \mathbbm{k}\).

\item A \emph{variety} will be an irreducible and reduced scheme. A scheme is {\em pure dimensional} if all its irreducible components are varieties of the same dimension.

\item A scheme is {\em normal} if all its local rings are normal. This is equivalent to asking that all its connected components are normal varieties.

\item If \(X\) is a scheme, we will use the notation \(Z_{k}(X)\) (resp. \(Z_{k}(X)_{\mathbb{Q}}\)) to denote the cycle group of dimension \(k\) cycles on \(X\) with coefficients in \(\mathbb{Z}\) (resp. \(\mathbb{Q})\). Similary \(A_{k}(X)\) (resp. \(A_{k}(X)_{\mathbb{Q}}\)) will denote the Chow group of \(k\)-th dimensional cycles class with integral (resp. rational) coefficients. We refer to \cite{ful98} for more details. 

\item A {\em \(\mathbb{Q}\)-line bundle} on a scheme \(X\) is an element of \(\mathrm{Pic}(X) \otimes_{\mathbb{Z}} \mathbb{Q}\). We will sometimes qualify elements of \(\mathrm{Pic}(X)\) as {\em standard line bundles}, as opposed to \(\mathbb{Q}\)-line bundles.

\item If \(V, W\) are varieties, a dominant map \(f : V \to W\) is {\em birational} if it is an isomorphism above a neighborhood of the generic point of \(W\). 

\item If \(V\) is a variety, a {\em modification} of \(V\) is a proper birational morphism \(W \longrightarrow V\).
\end{itemize}
\medskip

\subsection*{Trees}

\begin{itemize}
\setlength{\itemsep}{0.5em}

\item A {\em rooted tree} (or {\em tree}, for short) is a finite connected acyclic undirected graph, with a distinguished node called the {\em root}. We will sometimes speak of \(\mathcal{T}\) as the set of its nodes, implying the additional data formed by the edges.
	
\item In a tree, a node \(a\) is the {\em parent} of an other node \(b\), if it is the closest node on the unique path connecting \(b\) to the root. In this case \(b\) is called a {\em child} of \(a\). We say that \(b\) is an {\em ancestor} of \(a\) if there is a path leading from \(a\) to the root passing through \(b\). Conversely, we say that \(a\) is a {\em descendant} of \(b\).
	
	\item A node with no children is called a {\em leaf}.

	\item A {\em root-to-leaf path} is a path leading the root to any leaf. These paths are in bijection with the leaves.

	\item An {\em embedding of trees} \(\iota : \mathcal{T} \hookrightarrow \mathcal{T'}\) is a injective map between the set of nodes, that preserves the parent-child relation induced by the edges. We will say also that \(\mathcal{T}\) embeds as a subtree of \(\mathcal{T'}\). If \(\iota\) preserves the root, we say that it is a {\em rooted embedding of trees}.

	\item If \(\mathcal{T}\) is a rooted tree and \(\mathbf{a} \in \mathcal{T}\) is a node, then the {\em subtree of \(\mathcal{T}\) based at \(\mathbf{a}\)} is the subtree of \(\mathcal{T}\) formed by elements of \(\mathcal{T}\) that have \(\mathbf{a}\) as an ancestor. Its root identifies with \(\mathbf{a}\).
\end{itemize}

\mainmatter

\chapter{Green-Griffiths and Hasse-Schmidt jet differentials} \label{chap:jetdiff}

In this chapter, \(\mathbbm{k}\) will be a field of arbitrary characteristic, {\em not necessarily algebraically closed}. A {\em scheme} will mean a separated scheme of finite type above \(\mathrm{Spec}\, \mathbbm{k}\) (see the section on notation).

\section{Hasse-Schmidt differentials}

It is a classical fact that differential calculus is better behaved in positive characteristic if one accepts to work with {\em divided} differentials. When it comes to defining the theory of jet differentials, one way to proceed is to use the theory of {\em Hasse-Schmidt higher derivations}, as advocated by Vojta \cite{Voj04}.
\medskip

We refer to Vojta's notes for an axiomatic presentation of the construction. In the following section, we will present a more geometric approach, perhaps closer to its complex geometric equivalent: we will first describe these jet differentials on (open subsets) of affine spaces, and then use \'{e}tale charts to define them locally and glue them on any given {\em smooth} scheme. Note that our discussion ultimately relies in Vojta's proofs; we hope it should be sufficiently self contained nonetheless.
\bigskip

\subsection{Main definitions} Let us recall the concept of (divided) higher derivations, instrumental in everything that follows.

\begin{defi} \label{defi:HS}
	Let \(A, B\) be two \(\mathbbm{k}\)-algebras, and let \(l \in \mathbb{N}\) be an integer. A {\em higher derivation of order} \(l\) from \(A\) to \(B\) is a sequence \((D_{0}, \dotsc, D_{l})\) where \(D_{0} : A \to B\) is a \(\mathbbm{k}\)-algebra homomorphism and \(D_{1}, \dotsc, D_{l} : A \to B\) are morphisms of additive groups, subject to the following requirements:
	\begin{enumerate}
		\item for all \(a \in \mathbbm{k}\) and \(i \in \llbracket 1, l \rrbracket\), one has \(D_{i}(a) = 0\);
		\item for all \(x, y \in A\), and \(m \in \llbracket 0, l\rrbracket\), one has the {\em Leibniz rule}:
			\[
				D_{m}(xy)
				=
				\sum_{i + j = m} D_{i}(x) D_{j}(y).
			\]
	\end{enumerate}

	Let us denote by \(\mathrm{Der}_{\mathbbm{k}}^{l}(A, B)\) the \(\mathbbm{k}\)-vector space of higher derivations of order \(l\) from \(A\) to \(B\).
\end{defi}

\begin{rem}
	To fix the ideas, let us assume that \(\mathbbm{k} = \mathbb{C}\), and let \(A = \mathbb{C}[X_{1}, \dotsc, X_{n}]\), \(B = \mathbb{C}\). Then, for any germ of analytic curve \(f : (\mathbb{C}, 0) \to \mathbb{A}^{n}_{\mathbb{C}}\), one gets such a sequence of derivations by letting, for any \(P \in A\):
	\begin{equation} \label{eq:divided}
		D_{i}(P) = \frac{1}{i!}\frac{\partial^{i} (P \circ f)}{\partial t^{i}}(0)
	\end{equation}

	It is essentially tautological that there is a one-to-one correspondence between \(l\)-th Taylor expansions of such germs, and elements of \(\mathrm{Der}_{\mathbb{C}}^{l}(A, \mathbb{C})\). The next proposition shows that this holds in general.
\end{rem}

\begin{prop} \label{prop:kjetsderiv}
	Let \(A\) be a \(\mathbbm{k}\)-algebra. Let \(k \in \mathbb{N}\) be an integer, and let \(R := \mathbbm{k}[t]/(t^{k+1})\).  Then there is a bijective map
	\[
		\begin{array}{cccc}
			\mathrm{Der}_{\mathbbm{k}}^{k}(A, \mathbbm{k})
			&
			\longrightarrow
			& 
			\mathrm{Hom}_{\mathbbm{k}}(A, R) \\
			(D_{0}, D_{1}, \dotsc, D_{k})
			&
			\longmapsto
			&
			\left(a 
			\;\longmapsto\; 
			D_{0}(a) + D_{1}(a) t + \dotsc + D_{l}(a)t^{k}\right)
		\end{array}
	\]
	If one lets \(\Delta_{k} := \mathrm{Spec}(R)\), and \(X := \mathrm{Spec}(A)\), the set on the right hand side also identifies to \(\mathrm{Hom}_{\mathbbm{k}}(\Delta_{k}, X)\), the set of {\em \(k\)-th order jets on \(X\)} i.e. of \(\mathbbm{k}\)-scheme morphisms from \(\Delta_{k}\) to \(X\).
\end{prop}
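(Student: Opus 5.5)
The plan is to write down the inverse map explicitly and check that both maps land where they should. Given \((D_0,\dotsc,D_k) \in \mathrm{Der}^k_{\mathbbm{k}}(A,\mathbbm{k})\), set \(\varphi(a) := \sum_{i=0}^k D_i(a) t^i \in R\). We must show that \(\varphi\) is a \(\mathbbm{k}\)-algebra homomorphism.

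\begin{itemize}
\item \emph{Additivity.} Each \(D_i\) is additive, so \(\varphi\) is additive.
\item \emph{Multiplicativity.} In \(R\) we have
\[
\varphi(x)\varphi(y) = \sum_{m=0}^{k} \Big(\sum_{i+j=m} D_i(x)D_j(y)\Big) t^m ,
\]
because all terms \(t^m\) with \(m>k\) vanish in \(R = \mathbbm{k}[t]/(t^{k+1})\). By the Leibniz rule of Definition~\ref{defi:HS}, this equals \(\varphi(xy)\).
\item \emph{\(\mathbbm{k}\)-linearity.} For \(c \in \mathbbm{k}\), we have \(D_i(c)=0\) for \(i \geq 1\), and \(D_0(c) = c\) since \(D_0\) is a \(\mathbbm{k}\)-algebra map. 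Hence \(\varphi(c) = c\). Applying the Leibniz rule with \(x = c\) then gives \(D_m(ca) = cD_m(a)\), so \(\varphi\) is \(\mathbbm{k}\)-linear.
\end{itemize}

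Conversely, let \(\varphi : A \to R\) be a \(\mathbbm{k}\)-algebra homomorphism. Since \(\{1,t,\dotsc,t^k\}\) is a \(\mathbbm{k}\)-basis of \(R\), we can write \(\varphi(a) = \sum_i D_i(a)t^i\) with uniquely determined \(D_i(a)\in\mathbbm{k}\).

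\begin{itemize}
\item Additivity of \(\varphi\) gives additivity of each \(D_i\).
\item Comparing coefficients of \(t^m\) for \(m \leq k\) in \(\varphi(xy) = \varphi(x)\varphi(y)\) gives the Leibniz rule.
\item \(D_0\) is the composition of \(\varphi\) with the reduction \(R \to R/(t) = \mathbbm{k}\), so it is a \(\mathbbm{k}\)-algebra homomorphism.
\item For \(c \in \mathbbm{k}\), \(\varphi(c) = c\cdot 1\), so \(D_i(c)=0\) for \(i\geq 1\).
\end{itemize}

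The two constructions are mutually inverse, again by uniqueness of coefficients in the basis \(\{t^i\}\). This gives the bijection. (The index \(l\) in the displayed formula should read \(k\).)

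The final identification \(\mathrm{Hom}_{\mathbbm{k}\text{-alg}}(A,R) \cong \mathrm{Hom}_{\mathbbm{k}}(\Delta_k, X)\) is the standard anti-equivalence between \(\mathbbm{k}\)-algebras and affine \(\mathbbm{k}\)-schemes. It applies because both \(X = \mathrm{Spec}\, A\) and \(\Delta_k\) are affine.

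No step is a genuine obstacle. The only point needing care is that "\(\mathbbm{k}\)-linear" should be read correctly. Only \(D_0\) is required to be a \(\mathbbm{k}\)-algebra map. Linearity of the higher \(D_i\) is not an axiom but follows from condition (1) together with the Leibniz rule, as checked above. The truncation modulo \(t^{k+1}\) is exactly what makes the Leibniz rule with \(m\leq k\) equivalent to multiplicativity.
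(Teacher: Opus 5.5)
Your proof is correct. The paper gives no written argument for this proposition, treating it as the tautological identification obtained by reading off the coefficients of \(1, t, \dotsc, t^{k}\) in \(R\) (cf.\ the remark preceding it and Vojta's notes), and your verification is exactly that routine check: multiplicativity modulo \(t^{k+1}\) is equivalent to the Leibniz rule for \(m \leq k\), the \(\mathbbm{k}\)-linearity of the higher \(D_{i}\) follows from condition (1) and the Leibniz rule, and reading \(\mathrm{Hom}_{\mathbbm{k}}(A, R)\) as \(\mathbbm{k}\)-algebra homomorphisms and \(l\) as a typo for \(k\) is right.
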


The Hasse-Schmidt algebra can be defined from the following universal property.

\begin{thm} \label{thm:defiuniversalHS}
	Let \(A\) be a \(\mathbbm{k}\)-algebra, and let \(k \in \mathbb{N}\). Then there exists an \(A\)-algebra, denoted by \(\mathrm{HS}_{A/\mathbbm{k}}^{k}\), and an element  \((D_{0}, \dotsc, D_{k}) \in \mathrm{Der}_{\mathbbm{k}}^{k}(A, \mathrm{HS}^{k}_{A/\mathbbm{k}})\) that for any \(\mathbbm{k}\)-algebra \(B\), the application
	\begin{equation} \label{eq:defipropunivHS}
		\begin{array}{ccc}
			\mathrm{Hom}_{A}(\mathrm{HS}_{A/\mathbbm{k}}^{k}, B)
			&
		\longrightarrow
			&
			\mathrm{Der}_{\mathbbm{k}}(A, B) \\
			\Psi
			&
			\longmapsto
			&
			(\Psi \circ D_{0}, \Psi \circ D_{1}, \dotsc, \Psi \circ D_{k})
		\end{array}
	\end{equation}
	is a bijection.
\end{thm}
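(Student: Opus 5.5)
The plan is to build \(\mathrm{HS}_{A/\mathbbm{k}}^{k}\) by generators and relations, so that the universal property in \eqref{eq:defipropunivHS} holds by construction. Here \(\mathrm{Der}_{\mathbbm{k}}(A,B)\) is read as \(\mathrm{Der}^{k}_{\mathbbm{k}}(A,B)\), with \(B\) an \(A\)-algebra. Since \(B\) is an \(A\)-algebra, the structure map \(A \to B\) is forced to be \(\Psi \circ D_{0}\). So the bijection should be understood as identifying \(A\)-algebra maps \(\Psi\) with higher derivations whose zeroth term is the structure map.

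\textbf{Construction.} Take the polynomial algebra \(P := A[d_{i}x \,;\, x \in A,\ 1 \leq i \leq k]\), with one indeterminate for each pair \((x,i)\). Set \(d_{0}x := x\). Let \(I \subseteq P\) be the ideal generated by the following elements:
\begin{itemize}
\item \(d_{i}(x+y) - d_{i}x - d_{i}y\), for all \(x,y \in A\) and \(1 \leq i \leq k\) (additivity);
\item \(d_{i}a\), for \(a \in \mathbbm{k}\) and \(i \geq 1\) (vanishing on constants);
\item \(d_{m}(xy) - \sum_{i+j=m} d_{i}x\, d_{j}y\), for \(1 \leq m \leq k\) (Leibniz rule).
\end{itemize}
Define \(\mathrm{HS}^{k}_{A/\mathbbm{k}} := P/I\). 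Let \(D_{0} : A \to P/I\) be the structure map, and let \(D_{i}\) send \(x\) to the class of \(d_{i}x\). The relations are exactly the axioms of Definition~\ref{defi:HS}, so \((D_{0},\dotsc,D_{k})\) is a higher derivation of order \(k\). The case \(m=0\) of the Leibniz rule is automatic, because \(D_{0}\) is a ring map.

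\textbf{Bijectivity.} Given a higher derivation \((\delta_{0},\dotsc,\delta_{k})\) from \(A\) to \(B\) with \(\delta_{0}\) the structure map, define an \(A\)-algebra map \(P \to B\) by \(d_{i}x \mapsto \delta_{i}(x)\). This uses the universal property of polynomial algebras. It kills \(I\), since the \(\delta_{i}\) satisfy the same axioms, so it factors through a map \(\Psi : \mathrm{HS}^{k}_{A/\mathbbm{k}} \to B\) with \(\Psi\circ D_{i} = \delta_{i}\). This gives surjectivity. For injectivity, note that \(P/I\) is generated as an \(A\)-algebra by the classes of the \(d_{i}x\). Hence any \(A\)-algebra map out of it is determined by the values \(\Psi(D_{i}x)\).

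\textbf{Main obstacle.} There is no real obstacle; the only delicate point is bookkeeping. One must check that \(\mathbbm{k}\)-linearity of the \(D_{i}\) follows from the relations, rather than having to be imposed separately. For \(a \in \mathbbm{k}\), the Leibniz rule gives \(D_{m}(ax) = \sum_{i+j=m} D_{i}(a)D_{j}(x) = aD_{m}(x)\), because \(D_{i}(a) = 0\) for \(i \geq 1\). So linearity comes for free.

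Optionally, one can refine the construction by grading \(P\) with \(\deg d_{i}x = i\). All relations are homogeneous, so \(\mathrm{HS}^{k}\) inherits a grading, which is the one used later. Finally, specialising to \(B = \mathbbm{k}\) through a point of \(A\) recovers Proposition~\ref{prop:kjetsderiv}, which is a useful sanity check.
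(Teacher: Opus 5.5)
Your proposal is correct and follows the same route as the paper. The paper does not prove this theorem itself: it defers to Vojta, whose construction (Definition 1.3 there) presents $\mathrm{HS}^{k}_{A/\mathbbm{k}}$ exactly as you do, namely as the polynomial $A$-algebra on symbols $d_i x$ modulo the additivity, vanishing-on-constants and Leibniz relations. The universal property then follows from the universal properties of polynomial algebras and quotients, and your reading of the statement (taking $B$ to be an $A$-algebra and $\mathrm{Der}$ to mean $\mathrm{Der}^{k}_{\mathbbm{k}}$ with zeroth term equal to the structure map) is the right way to make it well-posed.
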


In particular, \(\mathbbm{k}\)-points in the \(A\)-scheme \(\mathrm{Spec}(\mathrm{HS}_{A/\mathbbm{k}}^{k})\) are in bijection with elements of \(\mathrm{Der}_{\mathbbm{k}}^{k}(A, K)\). By Proposition~\ref{prop:kjetsderiv}, the latter identifies with the set of \(k\)-jets in \(\mathrm{Spec}(A)\).

\begin{rem}
	The algebra \(\mathrm{HS}_{A/\mathbbm{k}}^{l}\) is the \(A\)-algebra generated by the formal elements  \(D_{j}(x)\)  for \(j \in \llbracket 0, l\rrbracket\) and \(x \in A\), subject to some natural relations (see \cite[Definition 1.3]{Voj04}). If we let
	\[
		\deg D_{j}(x) = j
	\]
	for all such \(j\) and \(x\), one can show that this gives \(\mathrm{HS}_{A/K}^{k}\) a well-defined natural structure of \(\mathbb{N}\)-graded algebra.
\end{rem}

The last definition can be used locally on a scheme to endow it with a sheaf of algebras.

\begin{defi} Let \(X\) be a \(\mathbbm{k}\)-scheme. The {\em sheaf of Hasse-Schmidt algebras of order \(k\)} on \(X\) is the \(\mathcal{O}_{X}\)-algebra \(E_{k}^{HS}\Omega_{X}\) obtained by patching together the sheaves associated with the \(\mathrm{HS}_{A/\mathbbm{k}}^{l}\), where \(\mathrm{Spec}\, A\) runs in a covering of \(X\) by affine charts.
\end{defi}

\begin{rem}
	The notation \(E_{k}^{GG}\Omega_{X}\) has become quite standard to denote the vector bundle of (Green-Griffiths) jet differentials of order \(k\) on a complex manifold \(X\). We decided to stick to this notation (replacing the letters \(GG\) with \(HS\)) to evidence the link more closely. We will later use also the notation \(E_{k, \bullet}^{GG} \Omega_{X}\) over a smooth scheme, once we have evidenced a natural grading in this setting.
\end{rem}

\begin{ex}
	If \(k = 1\), the Hasse-Schmidt algebra \(E^{HS}_{1}\Omega_{X}\) identifies with the symmetric algebra of the sheaf of K\"{a}hler differentials \(S^{\bullet} \Omega_{X}^{1}\), via the identification
	\[
		D_{1}(x) \equiv dx 
	\]
	for any local section \(x\) of \(\mathcal{O}_{X}\).
\end{ex}

The algebra of Hasse-Schmidt differentials satisfies several functoriality properties. Let us simply mention that for any morphism \(f : X \to Y\), we can define a pullback morphism
\[
	f^{\ast} (E_{k}^{HS}\Omega_{Y}) \longrightarrow E_{k}^{HS}\Omega_{X},
\]
where both sides are \(\mathcal{O}_{X}\)-algebras. We end this section with two important properties concerning the behaviour of this map if \(f\) is smooth or \'{e}tale.

\begin{prop} \label{prop:etalesmooth}
	If \(f : X \to Y\) is an embedding (resp.\ smooth, resp.\ \'{e}tale) morphism of schemes, then the pullback morphism
	\[
		f^{\ast} (E_{k}^{HS}\Omega_{Y})
		\longrightarrow
		E_{k}^{HS}\Omega_{X}	
	\]
	is surjective (resp.\ is injective, resp.\ is an isomorphism).
\end{prop}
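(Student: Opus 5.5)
The plan is to reduce to the affine case and argue with the presentation of \(\mathrm{HS}^{k}_{A/\mathbbm{k}}\) by generators and relations, using the universal property of Theorem~\ref{thm:defiuniversalHS} to compare. All three statements are local on \(X\) and \(Y\), and the formation of \(E_{k}^{HS}\Omega\) is compatible with localization (Vojta). So we may take \(X = \mathrm{Spec}\, B\), \(Y = \mathrm{Spec}\, A\), with \(f\) given by \(\varphi : A \to B\). The pullback map is then the \(B\)-algebra map
\[
	\mathrm{HS}^{k}_{A/\mathbbm{k}} \otimes_{A} B \longrightarrow \mathrm{HS}^{k}_{B/\mathbbm{k}},
	\qquad
	D_{j}(a) \otimes 1 \longmapsto D_{j}(\varphi(a)).
\]
It is the map corresponding, via the universal property, to the higher derivation \((D_{j} \circ \varphi)_{j}\) from \(A\) to \(\mathrm{HS}^{k}_{B/\mathbbm{k}}\).

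\emph{Embedding case.} A closed embedding means \(\varphi\) is surjective. Then \(\mathrm{HS}^{k}_{B/\mathbbm{k}}\) is generated as a \(B\)-algebra by the \(D_{j}(b)\), and each such element is \(D_{j}(\varphi(a))\) for some \(a\), so it lies in the image. An open embedding is a localization; there the map is an isomorphism because higher derivations extend uniquely to localizations. The inverse of \(s\) is expressed through the Leibniz rule inductively: \(D_{m}(1/s) = -s^{-1}\sum_{i \geq 1} D_{i}(s) D_{m-i}(1/s)\). Composing the two cases handles a general immersion.

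\emph{Étale case.} I would translate the statement into a statement about higher derivations. By Yoneda, it suffices to show that for every \(B\)-algebra \(C\), restriction along \(\varphi\) gives a bijection
\[
	\mathrm{Der}^{k}_{\mathbbm{k}}(B, C) \longrightarrow \{\text{higher derivations } A \to C \text{ whose } D_{0} \text{ is } A \to B \to C\}.
\]
By Proposition~\ref{prop:kjetsderiv} (with \(C\) in place of \(\mathbbm{k}\), which holds by the same proof), both sides are sets of algebra maps into \(C[t]/(t^{k+1})\) lifting a given map modulo \(t\). The bijectivity is then exactly formal étaleness of \(A \to B\): a map \(B \to C\) lifts uniquely along the nilpotent thickening \(C[t]/(t^{k+1}) \to C\), compatibly with a given lift of \(A\). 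Passing to the \(\mathbb{N}\)-grading is not needed here.

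\emph{Smooth case.} Locally, \(f\) factors as an étale map \(X \to \mathbb{A}^{r}_{Y}\) followed by the projection. By the étale case, we reduce to \(B = A[x_{1},\dots,x_{r}]\). Here I would show \(\mathrm{HS}^{k}_{B/\mathbbm{k}} \cong (\mathrm{HS}^{k}_{A/\mathbbm{k}} \otimes_{A} B)[D_{j}(x_{i}) : 1 \leq j \leq k,\, 1 \leq i \leq r]\), a polynomial ring. This follows from the universal property: a higher derivation on \(A[x]\) is the same as one on \(A\) together with arbitrary values \(D_{j}(x_{i})\), by the identification with maps into \(C[t]/(t^{k+1})\) and the freeness of polynomial rings. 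Injectivity is then clear, since \(\mathrm{HS}^{k}_{A/\mathbbm{k}} \otimes_{A} B\) injects into a polynomial ring over itself.

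The main obstacle I expect is the first reduction, namely that \(E^{HS}_{k}\) is compatible with localization. For open immersions the isomorphism is needed, not just surjectivity, so that the affine computations glue. I would handle it with the Leibniz recursion for \(D_{m}(1/s)\) above, or cite Vojta's corresponding lemma.
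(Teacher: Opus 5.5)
Your argument is correct, and it differs from the paper's: the paper gives no argument of its own and simply cites Vojta (Theorem~2.2 for embeddings, Lemma~3.5 for smooth morphisms, Theorem~3.6 for étale morphisms). You instead derive all three cases directly from the universal property of Theorem~\ref{thm:defiuniversalHS}. The two tools are Yoneda in the category of \(B\)-algebras, and the identification of order-\(k\) higher derivations \(B \to C\) with fixed \(D_{0}\) as the lifts of \(B \to C\) to \(C[t]/(t^{k+1})\).

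Your route gives a few things the citation does not make visible:
\begin{itemize}
\item The étale case becomes an immediate consequence of formal étaleness.
\item Applied to \(A \to A_{s}\), the same argument gives compatibility of \(\mathrm{HS}^{k}\) with localization. The paper tacitly needs this just to define \(E^{HS}_{k}\Omega_{X}\) by gluing, and it means your first reduction is not circular.
\item In the smooth case you prove more than injectivity: locally, \(E^{HS}_{k}\Omega_{X}\) is a polynomial algebra over \(f^{\ast}E^{HS}_{k}\Omega_{Y}\), so the map is locally split injective.
\end{itemize}
The paper's version is shorter but leaves all of this to the reader of Vojta.

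One small step should be made explicit. After writing \(f = h \circ g\) with \(g : X \to \mathbb{A}^{r}_{Y}\) étale and \(h\) the projection, the map you need is \(g^{\ast}\) applied to the inclusion \(h^{\ast}E^{HS}_{k}\Omega_{Y} \hookrightarrow E^{HS}_{k}\Omega_{\mathbb{A}^{r}_{Y}}\), followed by the isomorphism from the étale case. Injectivity survives \(g^{\ast}\) because \(g\) is flat; alternatively, the inclusion of a ring into a polynomial ring over it is split as a module map. With that sentence added, the reduction ``by the étale case'' is complete.
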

\begin{proof}
	We refer to the following three results:
	\begin{enumerate}
		\item if \(f\) is an embedding, see \cite[Theorem 2.2]{Voj04};
		\item if \(f\) is smooth, see \cite[Lemma 3.5]{Voj04};
		\item if \(f\) is \'{e}tale, see \cite[Theorem 3.6]{Voj04}.
	\end{enumerate}
\end{proof}

\subsection{Smooth schemes and local \'{e}tale charts} \label{section:localetalecharts}
In this section, we will show how to describe the Hasse-Schmidt differentials using \'{e}tale coordinate charts on a given smooth scheme; the description will be very close to what we can obtain in local coordinates in the complex analytic topology.
\medskip

Let us first describe what happens on the {\em affine space}: fix an integer \(n \in \mathbb{N}\), and let \(X = \mathbb{A}^{n}_{\mathbbm{k}}\). Let \(A := \mathbbm{k}[x_{1}, \dotsc, x_{n}]\) be the ring of regular functions on \(X\). In this case, we have the following proposition. 

\begin{prop} \label{prop:defiHSalgebraaffine} For any \(l \in \mathbb{N}\), the {\em Hasse-Schmidt ring of jet differentials of order \(k\) on \(\mathbb{A}^{n}_{\mathbbm{k}}\)} identifies with the free sheaf of graded \(\mathcal{O}_{X}\)-algebras associated to the free \(A\)-algebra
	\[
		A[x_{i}^{(j)}]_{\substack{1 \leq i \leq n \\ 1 \leq j \leq l}},
	\]
	where the \(x_{i}^{(j)}\) are indeterminates, and where the \(\mathbb{N}\)-grading is defined by 
	\[
		\deg x_{i}^{(j)} = j \quad (1 \leq i \leq n,  1 \leq j \leq m).
	\]
	The universal derivation \((D_{0}, D_{1}, \dotsc, D_{n})\) is given by its action on the indeterminates \(x_{i}\), as follows :
	\[
		D_{j} (x_{i}) = x_{i}^{(j)}.
	\]
\end{prop}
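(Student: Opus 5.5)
We prove this directly from the universal property of Theorem~\ref{thm:defiuniversalHS}. Set \(B_{0} := A[x_{i}^{(j)}]_{1 \leq i \leq n, 1 \leq j \leq l}\). The plan is to endow \(B_{0}\) with a higher derivation \((\delta_{0}, \dotsc, \delta_{l}) \in \mathrm{Der}_{\mathbbm{k}}^{l}(A, B_{0})\), and then show that it satisfies the universal property. Uniqueness of universal objects will then give an isomorphism \(\mathrm{HS}^{l}_{A/\mathbbm{k}} \cong B_{0}\) of \(A\)-algebras sending \(D_{j}(x_{i})\) to \(x_{i}^{(j)}\). The statement then follows after sheafifying on \(X = \mathrm{Spec}\, A\).

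\textbf{Construction of \(\delta\).} We use the generating-series reformulation of Proposition~\ref{prop:kjetsderiv}, taken with target \(B_{0}\) instead of \(\mathbbm{k}\). The same computation shows that a higher derivation \(A \to B\) of order \(l\) is equivalent to a \(\mathbbm{k}\)-algebra morphism
\[
	A \longrightarrow B[t]/(t^{l+1}), \qquad a \longmapsto \sum_{j} D_{j}(a) t^{j}.
\]
Indeed, the Leibniz rule is exactly multiplicativity, and condition (1) together with \(\mathbbm{k}\)-linearity of \(D_{0}\) gives \(\mathbbm{k}\)-linearity of the whole map. Since \(A\) is a free polynomial algebra, there is a unique \(\mathbbm{k}\)-algebra morphism \(\Phi : A \to B_{0}[t]/(t^{l+1})\) with
\[
	\Phi(x_{i}) = x_{i} + \sum_{j=1}^{l} x_{i}^{(j)} t^{j}.
\]
Its coefficients define \(\delta\). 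By construction \(\delta_{0}\) is the inclusion \(A \subset B_{0}\) and \(\delta_{j}(x_{i}) = x_{i}^{(j)}\).

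\textbf{Universality.} Let \(B\) be an \(A\)-algebra (this is the meaning of \(\mathrm{Hom}_{A}\) in \eqref{eq:defipropunivHS}), and let \((D'_{j}) \in \mathrm{Der}^{l}_{\mathbbm{k}}(A, B)\) be a higher derivation whose \(D'_{0}\) is the structure map. Define \(\Psi : B_{0} \to B\) as the unique \(A\)-algebra morphism with \(\Psi(x_{i}^{(j)}) = D'_{j}(x_{i})\); it exists because \(B_{0}\) is free over \(A\).

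Then \(\Psi[t] \circ \Phi\) and \(\Phi' : a \mapsto \sum_{j} D'_{j}(a) t^{j}\) are two \(\mathbbm{k}\)-algebra morphisms \(A \to B[t]/(t^{l+1})\). They agree on the generators \(x_{i}\), hence they are equal. This gives \(\Psi \circ \delta_{j} = D'_{j}\) for every \(j\).

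For uniqueness, any \(\Psi\) with this property must send \(x_{i}^{(j)} = \delta_{j}(x_{i})\) to \(D'_{j}(x_{i})\), and is \(A\)-linear, so it is determined.

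\textbf{Grading and globalization.} The grading claim follows by comparison with the presentation of \(\mathrm{HS}\) recalled in the remark above, where \(\deg D_{j}(x) = j\). Under our isomorphism \(D_{j}(x_{i}) \mapsto x_{i}^{(j)}\), so the two gradings agree on generators and therefore everywhere. Sheafifying gives the free graded \(\mathcal{O}_{X}\)-algebra.

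\textbf{Main subtlety.} The delicate point is purely one of conventions. One must use the \(A\)-algebra version of the universal property, and the correct \(B\)-valued extension of Proposition~\ref{prop:kjetsderiv}; its proof is identical to the one for \(\mathbbm{k}\)-valued derivations. No characteristic assumption enters anywhere: the divided-power nature of the \(D_{j}\) is exactly what makes the truncated-series morphism \(\Phi\) well defined without factorials.
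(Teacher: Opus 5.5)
Your proof is correct and follows the same route as the paper, which simply states that the derivation \(D_{j}(x_{i}) = x_{i}^{(j)}\) on \(A[x_{i}^{(j)}]\) satisfies the universal property of Theorem~\ref{thm:defiuniversalHS} and calls this routine. Your argument via the truncated-series morphism \(A \to B[t]/(t^{l+1})\) supplies exactly that verification, and you are right to read that universal property in its \(A\)-algebra form, with \(D_{0}\) equal to the structure map.
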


To prove Proposition~\ref{prop:defiHSalgebraaffine}, it suffices to show that the derivation \((D_{0}, D_{1}, \dotsc, D_{l})\) satisfies the universal property evoked in Theorem~\ref{thm:defiuniversalHS}, which is a matter of routine.
\medskip

\begin{rem}
	Just to fix the ideas, let us show how to compute such differentials in the \(2\)-dimensional ring \(A = \mathbbm{k}[x, y]\). We have for example
			\begin{equation*}
				D_{2}(x^{2} y) = (x^{(1)})^{2} y + 2 x x^{(2)} y + 2 x x^{(1)} y^{(1)} + x^{2} y^{(2)}. 
			\end{equation*}

	This formula can be obtained by dividing by \(2!\) the second derivative computed formally as follows (compare e.g. with \eqref{eq:divided}):
	\[
		(x^{2}y)'' = 2 (x')^{2} y + 2 x x'' y + 4 x x' y' + x^{2} y''.
	\]
	Of course, this division operation does not make sense if \(\mathrm{char}(\mathbbm{k}) = 2\), and we must resort to the Leibniz rule of Definition~\ref{defi:HS} to perform the computation in full generality.
\end{rem}
\medskip

The description in affine coordinates can be used to provide trivializations in {\em étale charts}: this is what we are now going to explain. 
\medskip

In the rest of this section, we fix a smooth \(\mathbbm{k}\)-scheme \(X\) of dimension \(n\). It is a well-known fact (apply for example \cite[Lemma 054L]{stacks} with \(Y = V = \mathrm{Spec}\, \mathbbm{k}\)) that \(X\) is covered by affine open subsets \(U \subset X\), sitting in diagrams
\begin{equation}
	\begin{tikzcd}
		U 
			\arrow[r, hook] 
			\arrow[d, "\pi"] 
		& X 
		\\
		\mathbb{A}_{\mathbbm{k}}^{n}
	\end{tikzcd}
\end{equation}
where the map \(\pi\) is {\em \'{e}tale}. One can even assume that the map assumes the following form.

\begin{prop} \label{prop:standardopen}
	Any smooth scheme \(X\) is covered by open affine subsets \(U \subset X\), that sit in diagrams
\[
	\begin{tikzcd}
		& 
		U 
			\arrow[r, hook, "\iota"] 
			\arrow[d, "\pi"] 
		& 
		V \times \mathbb{A}_{\mathbbm{k}}^{m} 
			\arrow[dl, "p"]
		\\
		\mathbb{A}_{\mathbbm{k}}^{n}	
		& 
		V 
		\arrow[l, hook, "j"] 
	\end{tikzcd}
\]
	where \(\iota\) is a closed embedding, \(p\) is the projection, \(j\) is an open embedding, and \(\pi\) is {\em finite \'{e}tale surjective}. Denote by \(\mathbbm{k}[x]\) (resp. \(\mathbbm{k}[y]\)) the ring of regular functions on \(\mathbb{A}_{\mathbbm{k}}^{n}\) (resp. \(\mathbb{A}_{\mathbbm{k}}^{m}\)), with \(x = (x_{1}, \dotsc, x_{n})\) (resp. \(y = (y_{1}, \dotsc, y_{m})\)). Then, we can assume the following.
	\begin{enumerate}
		\item \(V = D(g)\) is the non-vanishing open subset of some regular function \(g \in \mathbbm{k}[x]\);
		\item \(U\) is a complete intersection in \(V \times \mathbb{A}_{\mathbbm{k}}^{m}\), with ideal generated by \(m\) elements \(f_{1},\dotsc, f_{m} \in \mathbbm{k}[x,y]_{g}\).
		\item The Jabobian matrix
			\bgroup
			\renewcommand{\arraystretch}{1.5}
			\begin{equation} \label{eq:jacobian}
		J
		:=
			\left(
			\begin{array}{cccc}
				\frac{\partial f_{1}}{\partial y_{1}} 
				&
				\frac{\partial f_{1}}{\partial y_{2}}
				& 
				\dotsc 
				& 
				\frac{\partial f_{1}}{\partial y_{m}}
				\\
				\frac{\partial f_{2}}{\partial y_{1}} 
				&
				\frac{\partial f_{2}}{\partial y_{2}}
				& 
				\dotsc 
				& 
				\frac{\partial f_{2}}{\partial y_{m}}
				\\
				\dotsc
				&
				\dotsc
				& 
				\dotsc 
				& 
				\dotsc
				\\
				\frac{\partial f_{m}}{\partial y_{1}} 
				&
				\frac{\partial f_{m}}{\partial y_{2}}
				& 
				\dotsc 
				& 
				\frac{\partial f_{m}}{\partial y_{m}}
				\\
			\end{array}
			\right)
			\end{equation}
			\egroup
			maps to an invertible matrix in \(\mathrm{M}_{m}(R)\), where \(R = \Gamma(U, \mathcal{O}_{U}) \cong \mathbbm{k}[x, y]_{g}/(f_{1}, \dotsc, f_{m})\). In other words, the regular function \(\mathrm{det}(J)\) is invertible in restriction to \(U\).
	\end{enumerate}
\end{prop}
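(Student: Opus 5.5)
The plan is to start from the standard local structure theorem for smooth morphisms and then shrink carefully, so that the étale map becomes finite onto its image.

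\textbf{Step 1: étale coordinates.} Fix a point \(x_{0} \in X\). By the stacks project lemma quoted above (Lemma 054L, with \(Y = V = \mathrm{Spec}\,\mathbbm{k}\)), there is an affine open neighbourhood \(U_{0}\) of \(x_{0}\) and an étale morphism \(\pi_{0} : U_{0} \to \mathbb{A}^{n}_{\mathbbm{k}}\). The local structure theorem for étale morphisms (stacks project, standard étale / standard smooth presentations) refines this. After shrinking \(U_{0}\), it gives a presentation
\[
U_{0} \cong \mathrm{Spec}\, \mathbbm{k}[x, y_{1}, \dotsc, y_{m}]_{h}/(f_{1}, \dotsc, f_{m})
\]
in which \(\det(\partial f_{i}/\partial y_{j})\) is invertible. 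In fact one can even take \(m = 1\), via a standard étale presentation \(\mathbbm{k}[x]_{g}[y]_{h'}/(f)\) with \(f\) monic in \(y\) and \(f'\) invertible. However, the statement wants \(U\) to be \emph{closed} in \(V \times \mathbb{A}^{m}\). So the localization at \(h\) has to be absorbed.

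\textbf{Step 2: absorbing the localization.} This is the main obstacle: making \(\iota\) a closed embedding and \(\pi\) finite at the same time. I would try the following.
\begin{itemize}
\item Use the standard étale form \(A = (\mathbbm{k}[x]_{g}[y]/(f))_{h}\), with \(f\) monic in \(y\) and \(f'\) a unit in \(A\).
\item Introduce a new variable \(y_{2}\) with the equation \(y_{2} h - 1 = 0\). This exhibits \(U\) as a closed subscheme of \(D(g) \times \mathbb{A}^{2}\), cut out by \(m = 2\) equations \(f_{1} = f(x, y_{1})\) and \(f_{2} = y_{2}h - 1\).
\item The Jacobian with respect to \((y_{1}, y_{2})\) is triangular up to ordering, with determinant \(f'(y_{1}) \cdot h\). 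Both factors are invertible on \(U\), which gives conditions (1)--(3).
\end{itemize}

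\textbf{Step 3: finiteness and surjectivity.} The remaining issue is that \(\pi\) must be finite étale surjective onto \(V\), and the localization at \(h\) destroys finiteness. To fix this I would use Zariski's main theorem in the following form.
\begin{itemize}
\item The quasi-finite separated morphism \(\pi_{0}\) is finite over a dense open subset of its (open) image.
\item Concretely, the locus where \(\mathbbm{k}[x]_{g}[y]/(f) \to A\) is an isomorphism is the complement of \(V(h)\). Its image's complement in \(D(g)\) is closed of codimension \(\geq 1\), namely the image of the finite set-up \(V(f,h)\).
\item Choose \(g'\) vanishing on the image of \(V(f, h)\) but not at \(\pi_{0}(x_{0})\). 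This works provided \(x_{0}\) does not lie over a point where another sheet hits \(V(h)\). Replacing \(g\) by \(gg'\) then makes \(h\) invertible, so the localization disappears.
\item Over this new \(V = D(gg')\), the ring \(\mathbbm{k}[x]_{gg'}[y]/(f)\) is finite free because \(f\) is monic. It is étale because \(f'\) is invertible there. It is surjective because a monic polynomial has roots over every point.
\end{itemize}

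\textbf{Step 4: the point \(x_{0}\) itself.} For \(x_{0}\) to survive the shrinking in Step 3, one must choose the presentation well. I expect this requires a genericity argument: either a general linear projection choice of the coordinate \(y\), or adding extra variables as in Step 2 to keep \(h\) without needing finiteness globally. The fallback is to allow \(m\) variables, with \(U\) a connected component of the finite étale cover. This is the delicate point I would check most carefully.

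Finally, covering \(X\) by such \(U\) as \(x_{0}\) varies concludes the argument.
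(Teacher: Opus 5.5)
Your Steps 1--2 are correct. A standard étale presentation \((\mathbbm{k}[x]_{g}[y]/(f))_{h}\), together with the extra variable \(y_{2}\) and the equation \(y_{2}h-1=0\), exhibits a neighbourhood of \(x_{0}\) as a closed subscheme of \(D(g)\times\mathbb{A}^{2}_{\mathbbm{k}}\). Its Jacobian determinant \(f'h\) is invertible, so (1)--(3) hold and \(\pi\) is étale. A standard smooth presentation \(\mathbbm{k}[x,y]/(f_{1},\dotsc,f_{m})\) gives this even more directly, since it is already closed in affine space.

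The genuine gap is in Steps 3--4, i.e.\ finiteness and surjectivity of \(\pi\). Your shrinking argument needs \(\pi(x_{0})\notin\pi_{W}(V(h))\) for \(W=\mathrm{Spec}\,\mathbbm{k}[x]_{g}[y]/(f)\): the \emph{whole} fibre of \(W\) over \(\pi(x_{0})\) must lie in \(D(h)\). Nothing in the presentation ensures this, and when it fails no choice of \(V\) repairs it. If \(U\ni x_{0}\) is finite over \(V\), then \(U\to W|_{V}\) is proper and an open immersion. So \(U\) is open and closed in \(W|_{V}\), hence a union of its connected components. These components may contain points of \(V(h)\), which are not points of \(X\).

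Example: take \(X=\mathbb{A}^{1}\setminus\{0,-1\}\), \(\pi_{0}(t)=t^{2}\) (characteristic \(\neq 2\)), \(x_{0}=1\), \(W=\mathrm{Spec}\,\mathbbm{k}[x,y]/(y^{2}-x)\). For every open \(V\ni 1\), \(W|_{V}\) is connected and contains \(y=-1\notin X\). So your fallback of taking a connected component also fails.

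Finiteness cannot be obtained by shrinking a given étale chart; you have to \emph{choose} the chart globally. For instance, take a finite projection \(X\to\mathbb{A}^{n}\) that is unramified at \emph{every} point of the fibre through \(x_{0}\), then remove the image of the ramification locus. Producing such a projection is a real Bertini-type statement, and it is delicate in positive characteristic. For the smooth curve \(z=x^{p+1}\) in \(\mathbb{A}^{2}\) with \(x_{0}=0\), every line \(z=sx\) with \(s\neq 0\) is tangent to the curve at another point, so generic linear projections fail. Step 4 is therefore the heart of the matter, not a detail.

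For comparison, the paper's proof is only a reference to the local standard-smooth structure of smooth morphisms (Stacks project, Lemma 01V7 and Definition 00T6). That gives exactly what your Steps 1--2 give: conditions (1)--(3) with \(\pi\) étale. It does not address finiteness or surjectivity of \(\pi\). These properties are also never used later: Proposition~\ref{prop:jacobianjets}, the Fa\`{a} di Bruno formula, the canonical filtration and the local triviality of the Rees deformation only use étaleness and condition (3). So the complete part of your argument already covers everything the paper relies on. The finite-surjective claim would need the additional global choice of coordinates described above.
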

\begin{proof}
	We refer to the Stacks project (cf. \cite[Lemma 01V7]{stacks}. (See also \cite[Definition 00T6]{stacks} for the definition of a standard smooth morphism.
\end{proof}

With the previous notation, Proposition~\ref{prop:etalesmooth} asserts that the sheaf of algebras \(E_{k}^{HS}\Omega_{U}^{l} = E_{k}^{HS}\Omega_{X}|_{U}\) identifies with the pullback \(\pi^{\ast} (E_{k}^{HS}\Omega_{\mathbb{A}_{\mathbbm{k}}^{n}})\), and as such, identifies in turn with the free \(\mathcal{O}_{U}\)-algebra
\[
	\mathcal{O}_{U}\, [(\pi^{\ast} x_{i}^{(j)})_{1 \leq i \leq n, 1 \leq j \leq k}],
\]
We can actually use basic differential algebra to describe this identification, using the invertibility of the Jacobian matrix~\eqref{eq:jacobian}. Let us end this section with a description of this computation.
\medskip

In fact, Proposition~\ref{prop:etalesmooth} applied to the embedding \(\iota : U \hookrightarrow V \times \mathbb{A}_{\mathbbm{k}}^{m}\) shows that \(E_{k}^{HS}\Omega_{U}\) is generated as an \(\mathcal{O}_{U}\)-algebra by the elements
\[
	\iota^{\ast} x_{i}^{(q)},
	\quad
	\iota^{\ast} y_{j}^{(q)}
\]
where \(i \in \llbracket 1, n \rrbracket\), \(j \in \llbracket 1, m \rrbracket\), and \(q \in \llbracket 1, l\rrbracket\). It also follows from \cite[Theorem 2.2]{Voj04} that the kernel of the map 
\[
	\iota^{\ast} (E_{k}^{HS} \Omega_{V \times \mathbb{A}_{\mathbbm{k}}^{m}})
	\longrightarrow
	E_{k}^{HS}\Omega_{U}
\] 
is generated by the derivatives of the equation \(f_{1}, \dotsc, f_{m}\), i.e. by the elements
\[
	D_{i}\left(f_{j}\right)
	\quad
	(1 \leq i \leq k, 1 \leq j \leq m).
\]
We can use inductively Definition~\ref{defi:HS} to compute these differentials: using the Leibniz rule, we see that they appear as the coefficients of the formal expression:
\begin{equation} \label{eq:formal}
	f_{j}(x + \epsilon x^{(1)} + \epsilon^{2} x^{(2)} + \dotsc + \epsilon^{k} x^{(k)}, 
	      y + \epsilon y^{(1)} + \epsilon^{2} y^{(2)} + \dotsc + \epsilon^{k} y^{(k)})
	=
	\sum_{j = 1}^{k} D_{i}(f_{j})\, \epsilon^{i},
\end{equation}
where we denoted \(x^{(i)} = (x_{1}^{(i)}, \dotsc, x_{n}^{(i)})\) and \(y^{(i)} = (y_{1}^{(i)}, \dotsc, y_{m}^{(i)})\). The expression above should be seen as an element of \(\mathrm{HS}_{\mathbbm{k}[x, y]_{g}/\mathbbm{k}} \otimes_{\mathbbm{k}} \mathbbm{k}[\epsilon]\), where \(\epsilon\) is a nilpotent element satisfying \(\epsilon^{k+1} = 0\).
\medskip

Truncating this formula at any order \(q \in \llbracket 1, k \rrbracket\) and expressing the vanishing of the higher order coefficient gives the following equality, for any \(j \in \llbracket 1, m\rrbracket\):
\begin{equation} \label{eq:jacobian1}
	\sum_{i = 1}^{n} \frac{\partial f_{j}}{\partial y_{i}}(x, y)\, y_{i}^{(q)}
	=
	\sum_{i=1}^{n} \frac{\partial f_{j}}{\partial x_{i}}(x, y)\, x_{i}^{(q)}
	+
	\widehat{Q}_{q}(x, y, x^{(1)}, \dotsc, x^{(q-1)}, y^{(1)}, \dotsc, y^{(q-1)}), 
\end{equation}
where \(\widehat{Q}_{q}\) a polynomial in the lower order derivatives \(x^{(i)}\) and \(y^{(i)}\) (for \(i \in \llbracket 1, q - 1\rrbracket\)), with coefficients in \(\mathbbm{k}[x, y]_{g}\). Note that the equation above is homogeneous, i.e. all monomials in \(\widehat{Q}_{q}\) are of weighted degree \(q\) in the variables \(x^{(j)}\) and \(y^{(j)}\).
\medskip

Finally, since the Jabobian matrix \eqref{eq:jacobian} is invertible on \(U\), we may express the \(y^{(q)}\) in terms of the \(x^{(q)}\) inductively. Summing up, we find the following:

\begin{prop} \label{prop:jacobianjets}
	Let \(U\) be as in Proposition~\ref{prop:standardopen}. Let us denote by the same symbols \(y_{i}^{(j)}\), \(x_{i}^{(j)}\) the restriction of these elements to \(U\), implying the pullback \(\iota^{\ast}\). Let \(q \in \mathbb{N}\).
	\smallskip
	
	Then, for any \(i \in \llbracket 1, m\rrbracket\), there exists a polynomial \(Q_{i}^{q}(x^{(1)}, \dotsc, x^{(q-1)})\) with coefficients in \(\Gamma(U, \mathcal{O}_{U})\), that is homogeneous of weighted degree \(q\), and such that:
	\[
		y_{i}^{(q)}
		=
		\sum_{j = 1}^{n} J_{ij}\; x_{i}^{(q)}
		+
		Q_{i}^{q}(x^{(1)}, x^{(2)}, \dotsc, x^{(q-1)}).
	\]
	where \((J_{i,j})_{1 \leq i \leq m, 1 \leq j \leq n}\) denote the matrix
	\[
	J^{-1}
	\cdot
	\left(
			\begin{array}{cccc}
				\frac{\partial f_{1}}{\partial x_{1}} 
				&
				\frac{\partial f_{1}}{\partial x_{2}}
				& 
				\dotsc 
				& 
				\frac{\partial f_{1}}{\partial x_{n}}
				\\
				\frac{\partial f_{2}}{\partial x_{1}} 
				&
				\frac{\partial f_{2}}{\partial x_{2}}
				& 
				\dotsc 
				& 
				\frac{\partial f_{2}}{\partial x_{n}}
				\\
				\dotsc
				&
				\dotsc
				& 
				\dotsc 
				& 
				\dotsc
				\\
				\frac{\partial f_{m}}{\partial x_{1}} 
				&
				\frac{\partial f_{m}}{\partial x_{2}}
				& 
				\dotsc 
				& 
				\frac{\partial f_{m}}{\partial x_{n}}
				\\
			\end{array}
			\right)
	\]
	with coefficients in \(\mathbbm{k}[x, y]_{g}/(f_{1}, \dotsc, f_{m}) \cong \Gamma(U, \mathcal{O}_{U})\).	
\end{prop}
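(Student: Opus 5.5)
We argue by induction on \(q \in \llbracket 1, k\rrbracket\), using the relations \eqref{eq:jacobian1} obtained by expanding the formal identity \eqref{eq:formal}. By Proposition~\ref{prop:etalesmooth} applied to \(\iota\), these relations hold in \(E_{k}^{HS}\Omega_{U}\): the elements \(D_{q}(f_{j})\) lie in the kernel of \(\iota^{\ast}(E_{k}^{HS}\Omega_{V\times\mathbb{A}^{m}_{\mathbbm{k}}}) \to E_{k}^{HS}\Omega_{U}\).

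The first step is to justify the shape of \eqref{eq:jacobian1}. Expand \(f_{j}(x+\sum_{p\geq 1}\epsilon^{p}x^{(p)},\, y+\sum_{p\geq 1}\epsilon^{p}y^{(p)})\) by the Taylor formula with Hasse derivatives. This is legitimate in any characteristic, since \(f_{j}\) is a polynomial in \(x,y\) with coefficients in \(\mathbbm{k}[x]_{g}\). For the localization we expand \(g^{-1}\) as a geometric series in \(\epsilon\), or simply note that \(D\) extends uniquely to localizations. The coefficient of \(\epsilon^{q}\) is a sum of monomials \(\partial^{\alpha}f_{j}\cdot\prod x^{(p)}\prod y^{(p)}\) with total weight \(\sum p = q\). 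The only monomials containing a variable of weight \(q\) are the linear ones \(\frac{\partial f_{j}}{\partial x_{i}}x_{i}^{(q)}\) and \(\frac{\partial f_{j}}{\partial y_{i}}y_{i}^{(q)}\). All the others involve only the \(x^{(p)},y^{(p)}\) with \(p<q\) and are weighted homogeneous of degree \(q\). Restricting to \(U\), where \(D_{q}(f_{j})=0\), gives in matrix form
\[
J\, y^{(q)} = -\,\Big(\tfrac{\partial f}{\partial x}\Big)x^{(q)} + \widehat{Q}_{q}(x^{(<q)},y^{(<q)}).
\]
The sign is absorbed into the convention for \(J_{ij}\) and is irrelevant for what follows.

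Next we use that \(\det J\) is invertible on \(U\) (Proposition~\ref{prop:standardopen}(3)). This lets us multiply by \(J^{-1}\), whose entries lie in \(\Gamma(U,\mathcal{O}_{U})\), and obtain
\[
y^{(q)} = \big(J^{-1}\tfrac{\partial f}{\partial x}\big)x^{(q)} + J^{-1}\widehat{Q}_{q}.
\]
By the induction hypothesis, each \(y^{(p)}\) with \(p<q\) equals a weighted homogeneous polynomial of degree \(p\) in \(x^{(1)},\dots,x^{(p)}\). For \(p=1\) this polynomial is linear and has no \(Q\)-term, since \(\widehat{Q}_{1}=0\). Substituting these expressions into \(\widehat{Q}_{q}\) preserves weighted homogeneity of degree \(q\), because each variable of weight \(p\) is replaced by a polynomial of weighted degree \(p\). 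The result is a polynomial \(Q^{q}_{i}(x^{(1)},\dots,x^{(q-1)})\) with coefficients in \(\Gamma(U,\mathcal{O}_{U})\), as required. The base case \(q=1\) is exactly the linear relation \(J\,y^{(1)} = \pm(\partial f/\partial x)\,x^{(1)}\).

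The main point requiring care is the first step: checking, in arbitrary characteristic and over the localized ring \(\mathbbm{k}[x,y]_{g}\), that the \(\epsilon^{q}\)-coefficient of \eqref{eq:formal} really has its top-weight part equal to the Jacobian-linear term. This relies on the divided-power Taylor expansion \(f(z+h)=\sum_{\alpha}(\partial^{[\alpha]}f)(z)h^{\alpha}\), where the first-order Hasse derivatives coincide with the usual partial derivatives. That expansion is a consequence of the Leibniz rule of Definition~\ref{defi:HS}. Once this is granted, the rest is the formal induction described above.
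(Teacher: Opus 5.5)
Your proposal is correct and follows the paper's own argument: read off the \(\epsilon^{q}\)-coefficient of \eqref{eq:formal} to get \eqref{eq:jacobian1}, invert \(J\) on \(U\), and eliminate the lower-order \(y^{(p)}\) by induction. You add more care about the Hasse--Taylor expansion, the localization at \(g\), and the vanishing \(D_{q}(f_{j}) \mapsto 0\); note that this vanishing is simply functoriality of the universal derivation, since \(f_{j}|_{U} = 0\), rather than a consequence of Proposition~\ref{prop:etalesmooth}.

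You are also right about the sign. The weight-\(q\) part of \(D_{q}(f_{j})\) is \(\sum_{i} \frac{\partial f_{j}}{\partial x_{i}} x_{i}^{(q)} + \sum_{i} \frac{\partial f_{j}}{\partial y_{i}} y_{i}^{(q)}\), so the correct matrix is \(-J^{-1}\cdot(\partial f/\partial x)\); for instance \(f = y - x^{2}\) gives \(y^{(1)} = 2x\,x^{(1)}\). This slip is already present in \eqref{eq:jacobian1}, and it is harmless for the later uses, which only need a \(q\)-independent \(\Gamma(U,\mathcal{O}_{U})\)-linear term plus a weighted-homogeneous correction.
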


\subsection{The Fa\`{a} di Bruno formula}

In this section, we explain how to use the description given by Proposition~\ref{prop:jacobianjets} to express the base change formula between two sets of \'{e}tale coordinates on a smooth scheme \(X\). This will allow us to define a standard canonical filtration of \(E_{k}^{HS}\Omega_{X}\) in this context.
\medskip

We fix a smooth scheme \(X\). Let \(U_{1}, U_{2} \hookrightarrow X\) be open subsets as in Proposition~\ref{prop:standardopen}, sitting in diagrams
\[
	\begin{tikzcd}
		U_{i} 
		\arrow[r, hook] 
		\arrow[d, "\pi_{i}"] 
		& 
		X 
		\\ 
		\mathbb{A}^{n}_{\mathbbm{k}}
	\end{tikzcd}
\]
and denote by \(x = (x_{1}, \dotsc, x_{n})\) (resp. \(y = (y_{1}, \dotsc, y_{n})\)) the coordinates on \(U_{1}\) (resp. \(U_{2}\)) pulled back from \(\mathbb{A}^{n}_{\mathbbm{k}}\) by \(\pi_{1}\) (resp. \(\pi_{2}\)).
\smallskip

With this notation, we have the following analog of the Fa\`{a} di Bruno formula.

\begin{prop}  [Fa\'{a} di Bruno formula] \label{prop:faadibruno}
	With the notation above, \(U_{1} \cap U_{2}\) is covered by open affine subsets \(U \subset U_{1} \cap U_{2}\), such that the following hold:
	\begin{enumerate}
		\item there exists an invertible matrix \((J_{ij})_{1 \leq i, j \leq n}\) with coefficients in \(\Gamma(U, \mathcal{O}_{U})\) such that
			\[
				y_{i}^{(1)} = \sum_{j=1}^{n} J_{ij}\, x_{j}^{(1)}
			\]
			for all \(i \in \llbracket 1, n \rrbracket\).
		\item fix an integer \(q \geq 1\). Then, for any \(i \in \llbracket 1, n \rrbracket\), there exists a polynomial \(Q_{i}^{q}(x^{(1)}, \dotsc, x^{(q-1)})\) with coefficients in \(\Gamma(U, \mathcal{O}_{U})\) such that
			\begin{equation} \label{eq:faadibruno}
				y_{i}^{(q)} 
				= 
				\sum_{j=1}^{n} J_{ij}\, x_{j}^{(q)}
				+
				Q_{i}^{q}(x^{(1)}, \dotsc, x^{(q-1)}).
			\end{equation}
	The polynomials \(Q_{i}^{q}\) are homogeneous of weighted degree \(q\).
	\end{enumerate}
\end{prop}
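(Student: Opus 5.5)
The plan is to reduce the statement to Proposition~\ref{prop:jacobianjets}, applied to a suitable étale chart of the product map \(U_{1} \cap U_{2} \to \mathbb{A}^{n}_{\mathbbm{k}} \times \mathbb{A}^{n}_{\mathbbm{k}}\). First consider the morphism \(\phi = (\pi_{1}, \pi_{2}) : U_{1} \cap U_{2} \to \mathbb{A}^{n} \times \mathbb{A}^{n}\). Since \(\pi_{1}\) is étale, \(\phi\) is unramified and a locally closed immersion locally. More precisely, I would cover \(U_{1} \cap U_{2}\) by affine opens \(U\) on which \(\pi_{1}|_{U}\) admits a standard presentation as in Proposition~\ref{prop:standardopen}. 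Concretely, \(U\) is closed in \(D(g) \times \mathbb{A}^{m}\) with equations \(f_{1}, \dotsc, f_{m}\), \(\det J\) invertible, and \(\pi_{1}\) the projection. The functions \(y_{1}, \dotsc, y_{n}\) are regular on \(U\). I then enlarge the presentation: embed \(U\) into \(D(g) \times \mathbb{A}^{m} \times \mathbb{A}^{n}\), adding new coordinates \(w_{1},\dots,w_{n}\) with equations \(w_{i} - \tilde y_{i}(x, z) = 0\), where \(\tilde y_{i}\) lifts \(y_{i}\) to \(\mathbbm{k}[x,z]_{g}\). The enlarged Jacobian with respect to \((z, w)\) is block triangular with blocks \(J\) and \(\mathrm{Id}\), hence still invertible on \(U\).

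Applying Proposition~\ref{prop:jacobianjets} to this presentation expresses every fiber-coordinate derivative in terms of the \(x^{(q)}\). In particular, each \(w_{i}^{(q)} = y_{i}^{(q)}\) is written as a linear term \(\sum_{j} J_{ij} x_{j}^{(q)}\) plus a weighted-homogeneous polynomial \(Q_{i}^{q}(x^{(1)},\dots,x^{(q-1)})\) of degree \(q\), with coefficients in \(\Gamma(U,\mathcal{O}_{U})\). Here one must check that the pulled-back \(y_{i}^{(q)}\) from \(\pi_{2}\) coincide with \(\iota^{\ast} w_{i}^{(q)}\). This follows from functoriality of the pullback of Hasse-Schmidt algebras, since \(y_{i} = w_{i}\circ\iota\) as functions on \(U\) and \(D_{q}\) of a function is intrinsic. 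The linear coefficient matrix \((J_{ij})\) is, by the case \(q=1\), the matrix expressing \(dy_{i}\) in terms of \(dx_{j}\) in \(\Omega_{U}\).

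It remains to show this \(n\times n\) matrix is invertible. Both \((dx_{j})\) and \((dy_{i})\) are bases of \(\Omega_{U}\), because \(\pi_{1}\) and \(\pi_{2}\) are étale, so \(\pi_{k}^{\ast}\Omega_{\mathbb{A}^{n}} \cong \Omega_{U}\). Two bases of a free module differ by an invertible matrix. This gives (1), and (2) follows from the computation above.

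The main obstacle I expect is bookkeeping rather than substance. One point is making sure the affine opens \(U\) really cover \(U_{1}\cap U_{2}\), which holds because Proposition~\ref{prop:standardopen} gives a cover of the smooth scheme \(U_{1}\cap U_{2}\) with \(\pi_{1}\) as the étale map; one may need to refine it by localizing. The other point is checking that the weighted homogeneity of \(Q_{i}^{q}\) survives the inductive substitution. I would handle the latter by the grading remark: all relations \(D_{q}(f_{j})\) are homogeneous of weighted degree \(q\), and \(J\) is invertible, so solving inductively preserves homogeneity.
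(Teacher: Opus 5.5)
Your proof is correct. Its skeleton matches the paper's. Realize a neighbourhood \(U\) as a closed subscheme of an affine space whose coordinates contain both the \(x_j\) and the \(y_i\), with the Jacobian in all non-\(x\) coordinates invertible on \(U\). Then solve the homogeneous relations \(D_q(\,\cdot\,)=0\) inductively as in Proposition~\ref{prop:jacobianjets}.

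You differ in how that invertible Jacobian is produced. The paper embeds \(U_1\cap U_2\) into \(V_1\times V_2\times\mathbb{A}^{m_1+m_2}_{\mathbbm{k}}\) through \((\iota_1,\iota_2)\), which uses separatedness (Lemma~\ref{lem:closedembedding}). It then invokes the Jacobian criterion for the étale projection \(Z\to V_1\) to select, locally, \(n+m\) of the generators \(F_i\) whose Jacobian in \((y,z)\) is invertible. You keep only a presentation of \(\pi_1\) and adjoin the \(y_i\) through their graph \(w_i=\tilde y_i(x,z)\). The Jacobian is then block triangular with determinant \(\det J\), and nothing has to be selected. This is more explicit and self-contained; the paper's product embedding has the merit of treating the two charts symmetrically.

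The \(w_i\) are in fact superfluous. Expanding \(D_q(\tilde y_i(x,z))\) by the Leibniz rule and substituting the \(z^{(p)}\), \(p\leq q\), from Proposition~\ref{prop:jacobianjets} gives the same formula directly. Your \(\Omega_U\) argument for invertibility of \(J\) is the paper's ``exchange \(x\) and \(y\)'' step in other words: \((x_j^{(1)})\) and \((y_i^{(1)})\) are both bases of the free degree-one part of \(E^{HS}_{k}\Omega_U\).

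Two small corrections.
\begin{itemize}
\item Proposition~\ref{prop:standardopen}, as stated, does not prescribe the étale map, so it does not by itself give presentations of \(\pi_1\). What works is localizing the given presentation of \(U_1\). For \(h\in\Gamma(U_1,\mathcal{O}_{U_1})\) with a lift \(\tilde h\), adjoin a variable \(t\) with equation \(t\tilde h-1\). This multiplies the Jacobian determinant by \(\tilde h\), a unit on \(D(h)\), and the principal opens \(D(h)\subset U_1\cap U_2\) cover \(U_1\cap U_2\).
\item Your opening remark that \((\pi_1,\pi_2)\) is Zariski-locally a locally closed immersion is false. Take \(X=\mathbb{G}_m\) and \(\pi_1=\pi_2\colon t\mapsto t^2\) with \(\mathrm{char}\,\mathbbm{k}\neq 2\): the map \(t\mapsto(t^2,t^2)\) identifies \(t\) and \(-t\) on every nonempty open. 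This is exactly why fiber coordinates such as \(z\) must be kept. Your argument never uses the remark, so nothing breaks.
\end{itemize}
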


\begin{rem}
	In particular the matrix \((J_{ij})\) does not depend on \(q\). It is actually the matrix of change of coordinates for the sheaf of K\"{a}hler differentials, under the composition of the identification morphisms
	\[
		\pi_{2}^{\ast} \Omega_{\mathbb{A}^{n}_{\mathbbm{k}}}|_{U}
		\overset{\cong}{\longrightarrow}
		\Omega_{U}
		\overset{\cong}{\longrightarrow}
		\pi_{1}^{\ast} \Omega_{\mathbb{A}^{n}_{\mathbbm{k}}}|_{U}.
	\]
\end{rem}

\begin{proof}
	For each of the open subsets \(U_{1}, U_{2}\), we can find a diagram as in Proposition~\ref{prop:standardopen}:
\[
	\begin{tikzcd}
		& U_{i} 
			\arrow[r, hook, "\iota_{i}"] 
			\arrow[d, "\pi_{i}"] 
		& V_{i} \times \mathbb{A}_{\mathbbm{k}}^{m_{i}} 
			\arrow[dl, "p_{i}"] 
		\\
		\mathbb{A}_{\mathbbm{k}}^{n}	
		& V_{i}
			\arrow[l, hook, "j_{i}"]
	\end{tikzcd}.
\]
	with \(V_{i} = D(g_{i})\) for some \(g_{i} \in \mathbbm{k}[\mathbb{A}^{n}_{\mathbbm{k}}]\). Now, letting \(m = m_{1} + m_{2}\), we can put the intersection \(U_{1} \cap U_{2}\) in a diagram of the form
\[
	\begin{tikzcd}
	& 
		U_{1} \cap U_{2} 
		\arrow[ddr, "\pi_{1}"]
		\arrow[dd, hook,  "{\iota}"] 
	& 
	\\
	\\
	& V_{1} \times V_{2} \times \mathbb{A}_{\mathbbm{k}}^{m} 
		\arrow[r, swap, "q_{1}"]
	& V_{1} 
	\end{tikzcd}
\]

	Denote the coordinates on the open subset \(V_{1} \times V_{2} \times \mathbb{A}_{\mathbbm{k}}^{m} \subset \mathbb{A}^{2n+m}_{\mathbbm{k}}\) by \((x, y, z)\) (with \(x = (x_{1}, \dotsc, x_{n})\), \(y = (y_{1}, \dotsc, y_{n})\) and \(z = (z_{1}, \dotsc, z_{m})\)). In the diagram above, the morphism \(q_{1}\) is the projection \(q_{1}(x, y, z) = x\). The vertical arrow \(\iota = (\iota_{1}, \iota_{2})\) is a closed embedding by Lemma~\ref{lem:closedembedding} below.
\medskip

	Let \(Z\) be the schematic image of \(\iota\). Since \(V_{1} \times V_{2} \times \mathbb{A}_{\mathbbm{k}}^{m}\) is affine, the ideal sheaf \(\mathcal{I}_{Z}\) is generated by global sections 
	\[
		F_{i}(x, y, z)
		\in
		\mathbbm{k}[V_{1} \times V_{2} \times \mathbb{A}_{\mathbbm{k}}^{m}]
		= 
		\mathbbm{k}
		[x, y, z]_{g_{1}, g_{2}}
		\quad
		\quad
		\left(\text{for}\; 1 \leq i \leq r\right).
	\]

	Fix \(q \in \mathbb{N}\). We may proceed as in Section~\ref{section:localetalecharts}: differentiate the above equation as in \eqref{eq:formal} gives the existence of an element \(Q_{q}(x, y, z, x^{(\bullet)}, y^{(\bullet)}, z^{(\bullet)}) \in \mathrm{HS}_{\mathbbm{k}[x, y, z]_{g_{1},g_{2}}/\mathbbm{k}}\), depending only on the derivatives up to order \(q-1\), such that
	\begin{align*}
	\sum_{i = 1}^{n} \frac{\partial F_{j}}{\partial y_{i}}(x, y, z)\, y_{i}^{(q)}
		+
	\sum_{i = 1}^{m} \frac{\partial F_{j}}{\partial z_{i}}(x, y, z)\, z_{i}^{(q)}
		& = 
	\sum_{i=1}^{n} \frac{\partial F_{j}}{\partial x_{i}}(x, y, z)\, x_{i}^{(q)} \\
		& + Q_{q}(x, y, z,x^{(1)}, \dotsc, x^{(q-1)}, y^{(1)}, \dotsc, y^{(q-1)}, z^{(1)}, \dotsc, z^{(q-1)}).
	\end{align*}

	\medskip
	The projection map \(q_{1} : Z \to V_{1}\) identifies with \(\pi_{1}\), and thus is \'{e}tale. The jacobian criterion for \'{e}taleness implies that \(Z\) is covered by open affine subsets \(U \subset Z\), for which we may choose indexes \(j_{1}, \dotsc, j_{n + m} \in \llbracket 1, r\rrbracket\) such that the matrix
	\[
	\left(
	\begin{matrix}
		\frac{\partial F_{j_{1}}}{\partial y_{1}}
		& \dotsc
		& \frac{\partial F_{j_{1}}}{\partial y_{n}}
		& \frac{\partial F_{j_{1}}}{\partial z_{1}} 
		& \dotsc
		& \frac{\partial F_{j_{1}}}{\partial z_{m}} 
		\\
		\frac{\partial F_{j_{2}}}{\partial y_{1}}
		& \dotsc
		& \frac{\partial F_{j_{2}}}{\partial y_{n}}
		& \frac{\partial F_{j_{2}}}{\partial z_{1}} 
		& \dotsc
		& \frac{\partial F_{j_{2}}}{\partial z_{m}} 
		\\
		\dotsc
		& \dotsc
		& \dotsc
		& \dotsc
		& \dotsc
		& \dotsc
		\\
		\frac{\partial F_{j_{n+m}}}{\partial y_{1}}
		& \dotsc
		& \frac{\partial F_{j_{n+m}}}{\partial y_{n}}
		& \frac{\partial F_{j_{n+m}}}{\partial z_{1}} 
		& \dotsc
		& \frac{\partial F_{j_{n+m}}}{\partial z_{m}} 
	\end{matrix}
	\right)
	\]
	induces an invertible matrix with coefficients in \(\Gamma(U, \mathcal{O}_{U})\). Thus, we can argue as in the discussion preceding of Proposition~\ref{prop:jacobianjets}, and deduce that there exists a matrix
	\(
		J \in \mathrm{M}_{n}(\Gamma(U, \mathcal{O}_{U}))
	\)
 depending only on the first order derivatives of the \(F_{j}\), and for any \(i \in \llbracket 1, n\rrbracket\) and \(p \in \mathbb{N}\), a polynomial \({Q}_{i}^{p}(x^{(1)}, \dotsc, x^{(p-1)})\) with coefficients in \(\Gamma(U, \mathcal{O}_{U})\) such that:\footnote{We may also express the \(z_{j}^{(q)}\) in terms of the \(x_{(j)}\), which is not relevant for our purposes.}
	\[
		y_{i}^{(q)}
		=
		\sum_{j = 1}^{n} J_{ij}\; x_{i}^{(q)}
		+
		{Q}_{i}^{q}(x^{(1)}, x^{(2)}, \dotsc, x^{(q-1)}).
	\]
	Note in particular that the matrix \((J_{ij})\) is independent of \(q\). We may also reverse the role of \(x\) and \(y\) in this discussion, which yields an inverse for \(J\). This gives the result.
\end{proof}

\begin{lem} \label{lem:closedembedding}
	Let \(X\) be a separated scheme, and let \(U_{1}, U_{2} \subset X\) be open subsets. For \(i = 1, 2\), assume we are given a closed embedding \(\iota_{i} : U_{i} \rightarrow V_{i}\) in some scheme \(V_{i}\). Then the product morphism \((\iota_{1}, \iota_{2}) : U_{1} \cap U_{2} \to V_{1} \times V_{2}\) is a closed embedding.
\end{lem}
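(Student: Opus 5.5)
We factor the product morphism through the graph construction, using only the separatedness of \(X\) and the stability of closed embeddings under base change and composition. Write \(W := U_{1} \cap U_{2}\). The morphism \((\iota_{1}, \iota_{2}) : W \to V_{1} \times V_{2}\) will be written as a composition
\[
	W \overset{\delta}{\longrightarrow} U_{1} \times U_{2} \overset{\iota_{1} \times \iota_{2}}{\longrightarrow} V_{1} \times V_{2},
\]
where \(\delta\) is the restriction of the diagonal map \(X \to X \times X\) to \(W\), followed by the inclusion into \(U_{1} \times U_{2}\). Since a composition of closed embeddings is a closed embedding, it suffices to handle each factor separately.

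For the second factor, \(\iota_{1} \times \iota_{2}\) factors as \((\iota_{1} \times \mathrm{id}_{V_{2}}) \circ (\mathrm{id}_{U_{1}} \times \iota_{2})\). Each of these maps is the base change of a closed embedding along a projection, namely of \(\iota_{2}\) along \(U_{1} \times V_{2} \to V_{2}\), and of \(\iota_{1}\) along \(V_{1} \times V_{2} \to V_{1}\). Closed embeddings are stable under base change, so both maps, and hence their composition, are closed embeddings.

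For the first factor, the key observation is the cartesian square
\[
	\begin{tikzcd}
		U_{1} \cap U_{2} \arrow[r, "\delta"] \arrow[d, hook] & U_{1} \times U_{2} \arrow[d, hook] \\
		X \arrow[r, "\Delta_{X}"] & X \times X
	\end{tikzcd}
\]
This square is cartesian because the preimage of the open subscheme \(U_{1} \times U_{2}\) under \(\Delta_{X}\) is \(U_{1} \cap U_{2}\), with its open subscheme structure. Since \(X\) is separated, \(\Delta_{X}\) is a closed embedding, and therefore so is its base change \(\delta\).

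The only point needing care is checking that the square is cartesian scheme-theoretically. This reduces to the fact that for any \(T\)-point \(t\) of \(X\), the point \((t, t)\) factors through \(U_{1} \times U_{2}\) if and only if \(t\) factors through both \(U_{1}\) and \(U_{2}\), that is, through \(U_{1} \cap U_{2}\). This is immediate from the functor-of-points description of open subschemes, so I expect no real obstacle. Note that we only need \(U_{1}, U_{2}\) open, not affine.
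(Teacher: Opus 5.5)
Your proof is correct and follows the paper's argument: the same factorization \(U_{1} \cap U_{2} \to U_{1} \times U_{2} \overset{\iota_{1} \times \iota_{2}}{\longrightarrow} V_{1} \times V_{2}\), with the first arrow a closed embedding because it is the base change of \(\Delta_{X}\) to the open subscheme \(U_{1} \times U_{2}\). You also spell out two points the paper leaves implicit, namely why this square is cartesian and why \(\iota_{1} \times \iota_{2}\) is a closed embedding.
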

\begin{proof}
	The open subscheme \(U_{1} \cap U_{2}\) identifies with \((U_{1} \times U_{2}) \cap \Delta_{X}\) in \(X \times X\), and \(\Delta_{X}\) is closed since \(X\) is separated. This implies that \(\iota\) can be written as the composition of two closed embeddings
	\[
		U_{1} \cap U_{2}
			\longrightarrow
		U_{1} \times U_{2}
			\overset{\iota_{1} \times \iota_{2}}{\longrightarrow}
		V_{1} \times V_{2}.
	\]
\end{proof}

\begin{rem} \label{rem:degree}
	Since the polynomials \(Q_{i}^{q}\) appearing in Proposition~\ref{prop:faadibruno} are homogeneous of degree \(q\), we retrieve the fact that the degree of a section on \(U_{1} \cap U_{2}\) coincide if we express it in terms of the \(x_{i}^{(q)}\) or the \(y_{i}^{(q)}\).
	\smallskip

	On any smooth scheme, this gives a globally defined decomposition of the algebra \(E_{k}^{HS}\Omega_{X}\) as	     \[
		E_{k}^{HS}\Omega_{X}
		=
		\bigoplus_{m \in \mathbb{N}} E_{k, m}^{HS}\Omega_{X},
	\]
	where the \(E_{k, m}^{HS}\Omega_{X}\) are the locally free sheaves whose sections have pure weighted degree \(m\). We will call \(E_{k, m}^{HS}\Omega_{X}\) the {\em Hasse-Schmidt vector bundle of order \(k\) and degree \(m\)}.
\end{rem}

\subsection{The canonical filtration}
As we will see now, Proposition~\ref{prop:faadibruno} above allows us to define the canonical filtration on the Hasse-Schmidt algebra of any smooth scheme, exactly as in the complex analytic setting (cf. \cite{GG80} and \cite[§7.A.]{dem12a}). Let us start by defining this filtration on an affine space: it will be provided by choosing a natural ordering in the monomials in the formal derivatives. We need the following definition.
\medskip

\begin{defi} \label{defi:lexicographic}
	Let \(k \in \mathbb{N}\). Let \(\alpha, \beta \in \mathbb{N}^{k}\). We write \(\alpha \leq \beta\) if \(\alpha\) comes before \(\beta\) for the lexicographic order starting from the right. In other words
	\[
		(\alpha_{1}, \dotsc, \alpha_{k})
		<	
		(\beta_{1}, \dotsc, \beta_{k})
		\quad
		\Leftrightarrow		
		\quad
		\left\{
			\begin{array}{l}
		\alpha_{k} < \beta_{k}
		\quad
		\text{or}
				\\
		\alpha_{k} = \beta_{k}
		\quad
		\text{and}
		\quad
		(\alpha_{1}, \dotsc, \alpha_{k-1})
		<
		(\beta_{1}, \dotsc, \beta_{k-1}).
		\end{array}
			\right.
	\]
\end{defi}

\begin{defi} \label{defi:affinefilt}
	Let \(n, k, m \in \mathbb{N}\). By Proposition~\ref{prop:defiHSalgebraaffine} and Remark~\ref{rem:degree}, the Hasse-Schmidt vector bundle \(E_{k,m}^{HS} \Omega_{\mathbb{A}_{\mathbbm{k}}^{n}}^{l}\) is the \(\mathcal{O}_{\mathbb{A}^{n}_{\mathbbm{k}}}\)-module generated by the monomials in the variables
	\[
		x_{i}^{(p)}
		\quad
		\quad
		(1 \leq i \leq n, 1 \leq p \leq k),
	\]
	with total weighted degree \(m\). For such monomial \(\mu\) in the \(x_{i}^{(p)}\), of the form
	\[
		\mu	
		=
		\prod_{i, p} (x_{i}^{(p)})^{\alpha_{i, p}},
	\]
	we let \(\alpha(m) = (\alpha_{1}, \dotsc, \alpha_{l})\) be the tuple of integers given by \(\alpha_{p} = \sum_{i = 1}^{n} \alpha_{i, p}\) for all \(p \in\llbracket 1, l\rrbracket\).
	\medskip

	Let now be a tuple \(\beta = (\beta_{1}, \dotsc, \beta_{p}) \in \mathbb{N}^{p}\). We define the \(\mathcal{O}_{\mathbb{A}_{\mathbbm{k}}^{n}}\)-submodule
	\[
		F^{\beta}
		(E_{k, m}^{HS} \Omega_{\mathbb{A}_{\mathbbm{k}}^{n}}^{l})
		\subset
		E_{k, m}^{HS} \Omega_{\mathbb{A}_{\mathbbm{k}}^{n}}^{l}
	\]
	as the subsheaf generated by the monomials \(\mu\) satisfying \(\alpha(\mu) \leq \beta\) accordingly to Definition~\ref{defi:lexicographic}.
\end{defi}

By the definition above, we obtain a filtration \(F^{\bullet} (E_{k}^{HS} \Omega_{\mathbb{A}_{\mathbbm{k}}^{n}})\), indexed by \(\mathbb{N}^{k}\), for any dimension \(n\) and any order \(k\). We will now see that Proposition~\ref{prop:faadibruno} can be used to glue these \'{e}tale local definitions on any smooth scheme.
\medskip

\begin{prop}
	Let \(X\) be a smooth scheme. Let \(k \in \mathbb{N}\) and let \(\beta \in \mathbb{N}^{k}\). Let \(U_{1}, U_{2} \subset X\) be open subsets as in Proposition~\ref{prop:faadibruno}, and denote by \(\pi_{i} : U_{i} \to V_{i}\) the corresponding projections.
	\medskip

	Then, one has, for any \(m \in \mathbb{N}\):
	\[
		\pi_{1}^{\ast}
		(F^{\beta} E_{k,m}^{HS} \Omega_{\mathbb{A}^{n}_{\mathbbm{k}}})|_{U_{1} \cap U_{2}}
		=
		\pi_{2}^{\ast}
		(F^{\beta} E_{k,m}^{HS} \Omega_{\mathbb{A}^{n}_{\mathbbm{k}}})|_{U_{1} \cap U_{2}}	
	\]
	as subsheaves of \(E_{k,m}^{HS} \Omega_{X}|_{U_{1} \cap U_{2}}\).
\end{prop}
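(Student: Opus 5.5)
The plan is to reduce to a local statement on the open affine subsets \(U \subset U_{1} \cap U_{2}\) supplied by Proposition~\ref{prop:faadibruno}, on which the Fa\`{a} di Bruno formula~\eqref{eq:faadibruno} holds. Equality of subsheaves is local, so it is enough to prove it on each such \(U\). By symmetry (the Proposition can be applied with the roles of \(x\) and \(y\) exchanged, since \(J\) is invertible), it suffices to prove one inclusion:
\[
	\pi_{2}^{\ast} F^{\beta} E_{k,m}^{HS}\Omega_{\mathbb{A}^{n}_{\mathbbm{k}}}|_{U}
	\subset
	\pi_{1}^{\ast} F^{\beta} E_{k,m}^{HS}\Omega_{\mathbb{A}^{n}_{\mathbbm{k}}}|_{U}.
\]
By Proposition~\ref{prop:etalesmooth} and Proposition~\ref{prop:defiHSalgebraaffine}, both sides sit inside the free algebra \(\mathcal{O}_{U}[x_{i}^{(p)}]\), which is also \(\mathcal{O}_{U}[y_{i}^{(p)}]\). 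The left side is generated by the monomials \(\mu(y)\) of weighted degree \(m\) with \(\alpha(\mu) \leq \beta\).

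The key step is to substitute \eqref{eq:faadibruno} into a monomial \(\mu(y) = \prod (y_{i}^{(p)})^{\alpha_{i,p}}\). Expanding gives a sum of monomials \(\nu\) in the \(x_{j}^{(q)}\) with coefficients in \(\Gamma(U,\mathcal{O}_{U})\), each of weighted degree \(m\) by homogeneity of the \(Q_{i}^{q}\). I then need to show that every such \(\nu\) satisfies \(\alpha(\nu) \leq \alpha(\mu)\) in the order of Definition~\ref{defi:lexicographic}; transitivity then gives \(\alpha(\nu) \leq \beta\). To see this, note that each factor \(y_{i}^{(p)}\) is replaced either by a linear term \(J_{ij}x_{j}^{(p)}\), which contributes exactly one variable of order \(p\), or by a term of \(Q_{i}^{p}\). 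Such a term is a monomial of weighted degree \(p\) involving only the variables \(x^{(1)}, \dotsc, x^{(p-1)}\). In the latter case it therefore contains no variable of order \(\geq p\).

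So the comparison goes as follows. Let \(p_{0}\) be the largest order at which some factor of \(\mu\) takes a \(Q\)-term. For orders \(r > p_{0}\), the count \(\alpha_{r}(\nu)\) equals \(\alpha_{r}(\mu)\): factors of order \(r\) contribute linearly, and the \(Q\)-terms only create variables of order \(< p_{0} < r\). At order \(p_{0}\), we get \(\alpha_{p_{0}}(\nu) < \alpha_{p_{0}}(\mu)\), because at least one factor of order \(p_{0}\) was replaced by a \(Q\)-term, and \(Q\)-terms from factors of order \(\leq p_{0}\) add no variables of order \(\geq p_{0}\). Hence \(\nu < \mu\) in the right-lexicographic order. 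If no \(Q\)-term is used at all, then \(\alpha(\nu) = \alpha(\mu)\).

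The main obstacle I expect is exactly this bookkeeping, namely checking that \(Q\)-terms coming from factors of order \(\leq p_{0}\) never add variables of order \(\geq p_{0}\). The strict bound \(q \leq p-1\) on the variables in \(Q_{i}^{p}\) handles it. I also need to observe that the lexicographic order is a total order compatible with the definition of \(F^{\beta}\) as a span. Since the result is a sum with \(\mathcal{O}_{U}\)-coefficients of monomials lying in the \(x\)-filtration, it lies in \(\pi_{1}^{\ast}F^{\beta}\), which concludes the argument.
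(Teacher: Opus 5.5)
Your proposal is correct and follows the paper's proof. Like the paper, you restrict to the affine opens \(U\) given by Proposition~\ref{prop:faadibruno}, substitute \eqref{eq:faadibruno} into a monomial in the \(y_i^{(p)}\), check that every resulting monomial in the \(x_j^{(q)}\) stays in \(F^{\beta}\), and get the reverse inclusion by symmetry. The paper only asserts the monomial comparison. Your argument via the largest order \(p_0\) at which a \(Q\)-term is selected (giving \(\alpha_r(\nu)=\alpha_r(\mu)\) for \(r>p_0\) and \(\alpha_{p_0}(\nu)<\alpha_{p_0}(\mu)\)) correctly fills in that step.
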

\begin{proof}
	We may use the same notation as in Proposition~\ref{prop:faadibruno}. Then the sheaf \(\pi_{1}^{\ast} (E_{k}^{HS} \Omega_{\mathbb{A}^{n}_{\mathbbm{k}}})\) (resp. \(\pi_{2}^{\ast} (E_{k}^{HS} \Omega_{\mathbb{A}^{n}_{\mathbbm{k}}})\)) identifies with the \(\mathcal{O}_{U_{1}}\)-module generated by the \(x_{i}^{(j)}\) (resp. the \(\mathcal{O}_{U_{1}}\)-module generated by the \(y_{i}^{(j)}\)).
	\medskip

	By Proposition~\ref{prop:faadibruno} now allows us to cover \(U_{1} \cap U_{2}\) by open affine subsets on which \eqref{eq:faadibruno} holds. With the notation of Definition~\ref{defi:affinefilt}, this shows that any monomial \(\mu\) in the \(y_{i}^{(p)}\) can be expressed as an \(\Gamma(U, \mathcal{O}_{U})\)-linear combination of monomials in the \(x_{i}^{(p)}\). Besides, we see from \eqref{eq:faadibruno} that if \(\alpha(\mu) \leq \beta\), then each monomial \(\mu'\) appearing in this expression also satisfies \(\alpha(\mu') \leq \beta\).
	\medskip

	This proves that
	\[
		\pi_{2}^{\ast}
		(F^{\beta} E_{k,m}^{HS} \Omega_{\mathbb{A}^{n}_{\mathbbm{k}}})|_{U}
		\subset	
		\pi_{1}^{\ast}
		(F^{\beta} E_{k,m}^{HS} \Omega_{\mathbb{A}^{n}_{\mathbbm{k}}})|_{U}.	
	\]
	We have the converse inclusion by symmetry. Since the open subsets \(U\) cover \(U_{1} \cap U_{2}\), this gives the result.
\end{proof}

\begin{defi} \label{defi:canonicalfilt}
	Let \(X\) be a smooth scheme. Let \(k, m \in \mathbb{N}\). The {\em canonical filtration} \(F^{\bullet} E_{k, m}\Omega_{X}\) is the filtration indexed by \(\mathbb{N}^{l}\), and defined by
	\[
		F^{\bullet} E_{k, m}^{HS}\Omega_{X}
		=
		\pi^{\ast} 
		(F^{\bullet} E_{k,m}^{HS}\Omega_{\mathbb{A}_{\mathbbm{k}}^{k}})
	\]
	for any local chart \(\pi : U \to \mathbb{A}^{n}_{\mathbbm{k}}\) as in Proposition~\ref{prop:standardopen}.
\end{defi}

\begin{rem}
	The canonical filtration is compatible with the structure of \(\mathcal{O}_{X}\)-algebra. Namely, the product of elements in \(F^{\beta_{1}} E_{k, m_{1}}^{HS} \Omega_{X}\) and \(F^{\beta_{2}} E_{k, m_{2}}^{HS} \Omega_{X}\) lands in \(F^{\beta_{1} + \beta_{2}} E_{k, m_{1} + m_{2}}^{HS}\Omega_{X}\). To see this, it suffices to check it on the affine space, where it comes from a straightforward computation.
	\medskip

	Thus, we obtain a structure of \(\mathbb{N}^{l}\)-{\em filtered} \(\mathcal{O}_{X}\)-algebra:
	\[
		F^{\bullet} E_{k, \bullet}^{HS}\Omega_{X}
		=
		\bigoplus_{m \in \mathbb{N}}
		F^{\bullet} E_{k, m}^{HS}\Omega_{X}.
	\]
\end{rem}

\subsection{Weighted symmetric products and the graded algebra} \label{sec:weighted} Exactly as in the complex analytic case, the Fa\`{a} di Bruno formula allows us to describe the graded sheaf of algebras associated with the filtration \(F^{\bullet} E_{k}^{HS}\Omega_{X}\) as a symmetric algebra constructed in terms of the K\"{a}hler differentials. To define it more precisely, we reintroduce the following definition, already used in \cite{cad17}.
\medskip

\begin{defi}
	Let \(X\) be a scheme. A {\em weighted direct sum} is the data of vector bundles \(E_{1}, E_{2}\, \dotsc, E_{r} \to X\) and positive integers \(a_{1}, a_{2}, \dotsc, a_{r} \in \mathbb{N}_{>0}\). We will sometimes denote this data by a sum with exponents, and use most of the time boldface letters to denote such objects, e.g.
	\[
		\mathbf{E}
		:=
		E_{1}^{(a_{1})}
		\oplus
		E_{2}^{(a_{2})}
		\oplus
		\dotsc
		\oplus
		E_{r}^{(a_{r})}.
		\]
\end{defi}

\begin{rem}
	We could of course deal with more general weighted direct sums, considering arbitrary coherent sheaves as our direct factors. In all the rest of these notes, only locally free sheaves will be of interest to us.
\end{rem}

Given a weighted direct sum, we can naturally define various linear operations:

\begin{defi} \label{eq:deftensorweighted}
	Let \(X\) be a scheme, and let \(\mathbf{E} = E_{1}^{(a_{1})} \oplus E_{2}^{(a_{2})} \oplus \dotsc \oplus E_{r}^{(a_{r})}\) be a weighted direct sum on \(X\). Then
	\medskip
	\begin{enumerate}[label=(\arabic*)]
		\item the dual \(\mathbf{E}^{\ast}\) is the weighted direct sum
			\[
				\mathbf{E}^{\ast}
				:=
				(E_{1}^{\ast})^{(a_{1})}
				\oplus
				(E_{2}^{\ast})^{(a_{2})}
				\oplus
				\cdots
				\oplus
				(E_{r}^{\ast})^{(a_{r})};
			\]
		\item the symmetric algebra \(S^{\bullet}(\mathbf{E})\) is the \(\mathbb{N}\)-graded \(\mathcal{O}_{X}\)-algebra equal to \(\mathrm{Sym}(E_{1} \oplus \dotsc \oplus E_{r})\), where sections of \(E_{j}\) are assigned weight \(a_{j}\). In other words, for any \(m \in \mathbb{N}\), one has 
			\[
			S^{\bullet}(\mathbf{E}) 
			= 
			\bigoplus_{m \in \mathbb{N}}
			S^{m}(\mathbf{E}),
			\] 
			with, for each \(m \in \mathbb{N}\):
			\[
				S^{m}(\mathbf{E})
				:=
				\bigoplus_{a_{1} l_{1} + a_{2} l_{2} + \dotsc + a_{r} l_{r} = m}
				S^{l_{1}} E_{1}
				\otimes
				S^{l_{2}} E_{2}
				\otimes
				\dotsc
				\otimes
				S^{l_{r}} E_{r}
			\]
	\end{enumerate}
\end{defi}

The graded algebra \(\mathrm{Gr}_{F}(E_{k, \bullet}^{HS} \Omega_{X})\) is actually isomorphic to the graded algebra of a natural weighted direct sum on \(X\). In the rest of the text, we will use the following two notations: 

\begin{defi}
	Let \(X\) be a smooth scheme. For any \(k \in \mathbb{N}\), we let
	\[
		\mathbf{\Omega}^{k}_{X}
		:= 
		\Omega_{X}^{(1)} \oplus \Omega_{X}^{(2)} \oplus \dotsc \oplus \Omega_{X}^{(k)}.
	\]
	and
	\[
		\mathbf{T}^{k}_{X}
		:=
		T_{X}^{(1)} \oplus T_{X}^{(2)} \oplus \dotsc \oplus T_{X}^{(k)}.
	\]
	where \(T_{X}\) is the vector bundle associated with the locally free sheaf \(\mathcal{H}om_{X}(\Omega_{X}, \mathcal{O}_{X})\).
\end{defi}

The main result of this section is the following.

\begin{thm} \label{thm:grading}
	Let \(X\) be a smooth scheme, and let \(k \in \mathbb{N}\). Then we have an isomorphism of graded \(\mathcal{O}_{X}\)-algebras 
	\[
		\mathrm{Gr}_{F}(E^{HS}_{k, \bullet}\Omega_{X})
		\cong
		S^{\bullet}(\mathbf{\Omega}^{k}_{X}).
	\]
\end{thm}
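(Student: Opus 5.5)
The plan is to build the isomorphism locally in the étale charts of Proposition~\ref{prop:standardopen}, and then glue using the Fa\`{a} di Bruno formula (Proposition~\ref{prop:faadibruno}).

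First, on \(\mathbb{A}^{n}_{\mathbbm{k}}\), Proposition~\ref{prop:defiHSalgebraaffine} says that \(E_{k,\bullet}^{HS}\) is the polynomial algebra in the \(x_{i}^{(p)}\). For a multi-index \(\beta \in \mathbb{N}^{k}\), the graded piece \(F^{\beta}/F^{<\beta}\) is therefore free on the monomials \(\mu\) with \(\alpha(\mu) = \beta\). This piece is nonzero only when \(\sum_{p} p\,\beta_{p} = m\). The filtration is multiplicative (by the preceding remark), so \(\mathrm{Gr}_{F}\) is again a polynomial algebra in the classes \([x_{i}^{(p)}]\), with \([x_{i}^{(p)}]\) of weight \(p\). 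On a chart \(\pi : U \to \mathbb{A}^{n}_{\mathbbm{k}}\) I will define
\[
\phi_{U} : S^{\bullet}(\mathbf{\Omega}^{k}_{U}) \longrightarrow \mathrm{Gr}_{F}(E^{HS}_{k,\bullet}\Omega_{U})
\]
by sending \(dx_{i}\), placed in the \(p\)-th summand \(\Omega_{U}^{(p)}\), to \([x_{i}^{(p)}]\). Here the \(dx_{i} = \pi^{\ast}dx_{i}\) form a frame of \(\Omega_{U}\), because \(\pi\) is étale. The map \(\phi_{U}\) respects weights, and it is an isomorphism of graded algebras since both sides are polynomial algebras on corresponding generators. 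Flatness of \(\pi\) lets the graded pieces be computed by pullback, so \(\mathrm{Gr}\) commutes with \(\pi^{\ast}\).

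Second, I will check that the \(\phi_{U}\) glue. On an open \(U \subset U_{1}\cap U_{2}\) as in Proposition~\ref{prop:faadibruno}, formula \eqref{eq:faadibruno} gives
\[
y_{i}^{(q)} = \sum_{j} J_{ij}\, x_{j}^{(q)} + Q_{i}^{q}(x^{(1)},\dots,x^{(q-1)}).
\]
The correction \(Q_{i}^{q}\) is homogeneous of weighted degree \(q\) but involves only orders below \(q\). Each of its monomials \(\mu'\) therefore has \(\alpha(\mu')\) with vanishing entries in positions \(\geq q\), whereas \(\alpha(x^{(q)}_{j}) = e_{q}\). Hence \(\alpha(\mu') < e_{q}\) in the right-lexicographic order of Definition~\ref{defi:lexicographic}. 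So in \(\mathrm{Gr}_{F}\) we get \([y_{i}^{(q)}] = \sum_{j} J_{ij}[x_{j}^{(q)}]\). By the remark after Proposition~\ref{prop:faadibruno}, \(J\) is exactly the transition matrix from \(dx\) to \(dy\) on \(\Omega_{X}\). The two maps \(\phi_{U_{1}}\) and \(\phi_{U_{2}}\) therefore agree on generators of each \(\Omega^{(q)}\), hence agree everywhere, and they glue to a global isomorphism.

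The main obstacle is making the graded pieces multiplicative when the filtration is indexed by the totally ordered set \(\mathbb{N}^{k}\). One must confirm two things. First, a product of leading monomials has leading term in the expected graded piece. Second, lower terms remain strictly lower after multiplication, meaning the right-lexicographic order is compatible with addition. Both follow because this order is a monoid order on \(\mathbb{N}^{k}\). I will make this precise by identifying \(\mathrm{Gr}_{F}\) with \(\bigoplus_{\beta} F^{\beta}/F^{<\beta}\). Since the filtration is exhaustive and each weight \(m\) involves only finitely many \(\beta\), this is legitimate.
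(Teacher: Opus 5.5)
Your proposal is correct and follows the paper's proof essentially step for step. You define the isomorphism on \'{e}tale charts by sending \(dx_i\) in the \(p\)-th summand to \([x_i^{(p)}]\); the paper instead matches full monomials, which amounts to the same thing once multiplicativity is known. You then glue via the Fa\`{a} di Bruno formula, where the correction terms \(Q_i^q\) vanish in \(\mathrm{Gr}_F\) because they are strictly smaller in the right-lexicographic order. Your explicit observation that this order is a monoid order on \(\mathbb{N}^k\) usefully justifies the compatibility with products, which the paper only asserts.
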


\begin{proof} We will show that the isomorphism holds for the \(m\)-th graded piece, for any \(m \in \mathbb{N}\). We proceed in two steps: first, we define the isomorphism locally using étale coordinates, and then we show that these isomorphisms glue together using the Fa\`{a} di Bruno formula.
	\smallskip

	{\em Local definition.} Let \(U \subset X\) be an affine open subset sitting in a diagram as in Proposition~\ref{prop:standardopen}; let us use the same notation as in this proposition. Remark first that since \(\pi\) is \'{e}tale, we have the following isomorphism of \(\mathcal{O}_{U}\)-modules
	\[
		\Omega_{U} \cong \pi^{\ast} \Omega_{\mathbb{A}_{\mathbbm{k}}^{n}}.
	\]
	In other words, \(\Omega_{U}\) is generated as a free \(\mathcal{O}_{U}\)-module by the elements
	\[
		dx_{i} \quad (\text{for}\; i \in \llbracket 1, n \rrbracket),
	\]
	where we implied the pullback by \(\pi\) to \(U\).
	\smallskip

	Let \(m \in \mathbb{N}\). It follows from the above that \(S^{m}(\mathbf{\Omega}_{U}^{k})\) is generated as a free \(\mathcal{O}_{U}\)-module by the sections
	\begin{equation} \label{eq:monomialkahler}
	\bigotimes_{p = 1}^{k} \bigodot_{q=1}^{n} dx_{q}^{\alpha_{p, q}}
	\end{equation}	
	where \(\alpha = (\alpha_{p, q}) \in \mathbb{N}^{\llbracket 1, k \rrbracket \times \llbracket 1, n \rrbracket}\) is such that \(\sum_{p,q} p\, \alpha_{p, q} = m\).
	\medskip

	On the other hand, we also have
	\[
		E_{k,m}^{HS} \Omega_{X}|_{U}
		=
		\pi^{\ast}(E_{k,m}^{HS} \Omega_{\mathbb{A}_{\mathbbm{k}}^{n}})
	\]
	and it follows from Definition~\ref{defi:canonicalfilt} that the graded term \(\mathrm{Gr}_{F}(E_{k,m}^{HS} \Omega_{X})|_{U}\) is generated by the monomial classes
	\begin{equation} \label{eq:monomialHS}
		\left[
		\prod_{p=1}^{l} \prod_{q=1}^{l} (x_{q}^{(p)})^{\alpha_{p,q}},
		\right]
	\end{equation}
	where, again, we imply the pullback to \(U\); the \(\alpha_{p, q}\) are elements of \(\mathbb{N}\) subject to the condition \(\sum_{p, q} p \, \alpha_{p, q} = m\). The brackets above denote the class in \(\mathrm{Gr}^{\beta}_{F}(E_{k,m}^{HS}\Omega_{U})\), for the multi-index 
	\[
		\beta 
		= 
		(\sum_{q} \alpha_{1, q}, \sum_{q} \alpha_{2, q}, \dotsc, \sum_{q} \alpha_{n, q}).
		\]
	\smallskip

	We let \(\Psi_{U} : S^{m} (\mathbf{\Omega}_{U}^{k}) \longrightarrow \mathrm{Gr}_{F} (E_{k,m}^{HS} \Omega_{U})\) be the isomorphism of \(\mathcal{O}_{U}\)-modules that sends the monomial \eqref{eq:monomialkahler} onto \eqref{eq:monomialHS}. Note that this isomorphism is compatible with products on both sides, and thus realizes an isomorphism of graded algebras
	\[
		\Psi_{U} 
		:
		S^{\bullet} (\mathbf{\Omega}_{U}^{k})
		\overset{\cong}
		{\longrightarrow}
		\mathrm{Gr}_{F}(E_{k,\bullet}^{HS} \Omega_{U}).
	\]
\medskip

	{\em Glueing.} Let us now consider the setup of Proposition~\ref{prop:faadibruno}, and let \(U \subset U_{1} \cap U_{2}\) be an open affine subset on which the conclusion of the proposition holds. Then, since \(x_{i}^{(1)}\) (resp. \(y_{i}^{(1)}\)) identifies with \(dx_{i}^{(1)}\) (resp. \(dy_{i}^{(1)}\)) under the identification \(E_{1, \bullet}^{HS}\Omega_{U} \cong S^{\bullet} \Omega_{U}\), we have
	\[
		dy_{i} = \sum_{j=1}^{n} J_{ij} \, d x_{j}
	\]
	for all \(i \in \llbracket 1, n\rrbracket\). On the other hand if we pick \((\alpha_{p, q}) \in \mathbb{N}^{\llbracket 1, n \rrbracket \times \llbracket 1, l\rrbracket}\), then we may compare the monomial classes in terms of \(x\) and \(y\) as follows. On the one hand, the equality above yields
	\begin{align*}
	\bigotimes_{p = 1}^{l} \bigodot_{q=1}^{n} dy_{q}^{\alpha_{p, q}}
	=
	\left[
		\bigotimes_{p = 1}^{l} \bigodot_{q=1}^{n} 
		(\sum_{j=1}^{n} J_{qj}\, dx_{j})^{\alpha_{p, q}}
	\right]
	\end{align*}
	On the other hand, we have
	\begin{align*}
		\left[
		\prod_{p=1}^{l} \prod_{q=1}^{l} (y_{q}^{(p)})^{\alpha_{p,q}}
		\right]
		& =
		\left[
			\prod_{p=1}^{l} \prod_{q=1}^{l} (\sum_{j = 1}^{n} J_{qj} x_{j}^{(p)} 
			+
			Q_{q}^{p}(x^{(1)}, x^{(2)}, \dotsc, x^{(p-1)})
			)^{\alpha_{p,q}}
		\right] \\
		& =  
		\left[
			\prod_{p=1}^{l} \prod_{q=1}^{l} 
			(\sum_{j = 1}^{n} J_{qj} x_{j}^{(p)})^{\alpha_{p,q}}
		\right]
	\end{align*}
	where the second equality comes from the fact that all the terms involved in the polynomials \(Q_{q}^{p}\) yield monomials with strictly smaller exponents for the lexicographic order of Definition~\ref{defi:lexicographic}. These two equalities show that \(\Psi_{U_{1}}\) and \(\Psi_{U_{2}}\) coincide on \(U\), and since \(U\) runs in a cover of \(U_{1} \cap U_{2}\), we have
	\[
		\Psi_{U_{1}}|_{U_{1} \cap U_{2}}
		=
		\Psi_{U_{2}}|_{U_{1} \cap U_{2}}.
	\]
	This gives the required glueing, and we get a global isomorphism \(\Psi : S^{m}(\mathbf{\Omega}_{X}^{k}) \to \mathrm{Gr}_{F}(E_{k,m}^{HS}\Omega_{X})\).
\end{proof}

\section{Jet spaces}

Using the Hasse-Schmidt algebra on a smooth scheme, we may define the affine jet spaces in the standard manner.

\begin{defi}
	Let \(X\) be a smooth scheme, and let \(k \in \mathbb{N}\). The {\em \(k\)-th jet space} of \(X\) is the scheme
	\[
		J_{k} X := \mathbf{Spec}_{X}(E_{k, \bullet}^{HS}\Omega_{X}).
	\]
\end{defi}

It follows from Theorem~\ref{thm:defiuniversalHS} and the subsequent remark that \(\mathbbm{k}\)-points on \(J_{k}X\) are in bijection with the \(k\)-th order jets on \(X\) that are defined over \(\mathbbm{k}\).
\smallskip

\begin{rem}[Jet spaces of affine spaces] \label{rem:jetaffine} Let \(n, k \in \mathbb{N}\) be integers. Denote by \(x = (x_{1}, x_{2}, \dotsc, x_{n})\) the standard coordinates on \(X\). The Hasse-Schmidt algebra of \(\mathbb{A}^{n}_{\mathbbm{k}}\) is the sheaf of algebras associated with the \(\mathbbm{k}[x]\)-algebra \(\mathrm{HS}_{\mathbbm{k}[x]/\mathbbm{k}}\), isomorphic to
	\[
		\mathbbm{k}[x][(x_{i}^{(j)})_{1 \leq i \leq n, 1 \leq j \leq k}].
	\]
	Taking the spectrum, we see that 
	\[
		J_{k}\mathbb{A}_{\mathbbm{k}}^{n}
		\cong
		\mathbb{A}_{\mathbbm{k}}^{n}
		\times
		\mathbb{A}_{\mathbbm{k}}^{nk},
	\]
	where coordinates on the vertical factor are given by the \(x_{i}^{(j)}\), for \((i, j) \in \llbracket 1, n \rrbracket \times \llbracket 1, k \rrbracket\).
\end{rem}

\begin{rem} [Description in \'{e}tale coordinates]\label{rem:descretale} 
Using \'{e}tale coordinates, it is easy to give a description of the jet scheme as an affine bundle over any smooth scheme. Indeed, if \(U \subset X\) is an open subset admitting an \'{e}tale morphism \(\pi : U \to \mathbb{A}^{n}_{\mathbbm{k}}\), then we have
\[
	E_{k, \bullet}^{HS} \Omega_{X}
	\cong
	\pi^{\ast} 
	E_{k, \bullet}^{HS} \Omega_{\mathbb{A}_{\mathbbm{k}}^{n}}
\]
as \(\mathcal{O}_{U}\)-algebras. Thus, we have a square diagram
\[
	\begin{tikzcd}
		J_{k}X|_{U} \arrow[r] \arrow[d] \arrow[dr, phantom, "\square"] & J_{k}\mathbb{A}_{\mathbbm{k}}^{n} \arrow[d] \\
	U           \arrow[r, "\pi"]           & \mathbb{A}_{\mathbbm{k}}^{n}.
	\end{tikzcd}
\]
Using Remark~\ref{rem:jetaffine}, this shows that \(J_{k} X|_{U}\) is isomorphic to \(U \times \mathbb{A}_{\mathbbm{k}}^{kn}\).
\end{rem}

\subsection{Green-Griffiths jet stack} The scheme \(J_{k} X\) admits a natural action of the \(\mathbbm{k}\)-multiplicative group \(\mathbb{G}_{m}\) by reparametrization of jets. The corresponding morphism
\[
	J_{k} X \times \mathbb{G}_{m}
	\longrightarrow
	J_{k} X
\]
is dual to the morphism of \(\mathcal{O}_{X}\)-algebras
\[
	E_{k, \bullet}^{HS} \Omega_{X} 
	\longrightarrow
	E_{k, \bullet}^{HS} \Omega_{X} 
	\otimes_{\mathbbm{k}}
	\mathbbm{k}[\lambda, \lambda^{-1}]
\]
provided by the grading, where a section \(s\) of \(E_{k, m}\Omega_{X}^{HS}\) is sent to \(s \otimes \lambda^{m}\).
\medskip

\begin{defi}
	Let \(X\) be a smooth scheme. The {\em Green-Griffiths jet stack of order \(k\)} of \(X\) is the quotient stack of the complement of the zero section \(J_{k} X - \{0\}\) by the action of \(\mathbb{G}_{m}\). We denote this stack by the symbol \(X_{k}^{GG}\). The projection \(J_{k} X \to X\) yields a natural morphism \(\pi_{k} : X_{k}^{GG} \to X\).
\end{defi}

\begin{rem} For affine spaces, Green-Griffiths are the products between the base and {\em weighted projective stacks} (see Annex~\ref{annex:stacks} for more details about these objects). We see from Remark~\ref{rem:descretale} that if \(X = \mathbb{A}_{\mathbbm{k}}^{n}\), we have an isomorphism
	\[
		X_{k}^{GG}
		\cong
		\mathbb{A}_{\mathbbm{k}}^{n}
		\times
		\mathcal{P}(1, \dotsc, 1, 2, \dotsc, 2, \dotsc, k, \dotsc, k),
	\]
	where each number \(i \in \llbracket 1, k\rrbracket\) is repeated \(n\) times. Now, if \(X\) is any smooth scheme, and if \(U \subset X\) is an open subset admitting an \'{e}tale morphism \(\pi  : U \to \mathbb{A}^{n}_{\mathbbm{k}}\), then the pullback provides an isomorphism of stacks
	\[
		X_{k}^{GG}|_{U} 
		\cong 
		U \times \mathcal{P}(1, \dotsc, 1, 2, \dotsc, 2, \dotsc, k, \dotsc, k).
	\]
\end{rem}

\begin{prop}
	The coarse moduli space of the stack \(X_{k}^{GG}\) is the projectivized scheme
	\[
		X_{k}^{GG, \mathrm{coarse}} := \mathbf{Proj}_{X}(E_{k, \bullet}^{HS} \Omega_{X}).
	\]
\end{prop}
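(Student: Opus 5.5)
The plan is to reduce to the general statement that for a quotient stack \([Y/\mathbb{G}_{m}]\), with \(Y = \mathbf{Spec}_{X}(\mathcal{A}) - \{0\}\) and \(\mathcal{A} = \bigoplus_{m} \mathcal{A}_{m}\) a positively graded, finitely generated \(\mathcal{O}_{X}\)-algebra with \(\mathcal{A}_{0} = \mathcal{O}_{X}\), the coarse moduli space is \(\mathbf{Proj}_{X}(\mathcal{A})\). Both sides are compatible with flat (in particular open) base change on \(X\). The statement is also local on \(X\). So we first reduce to \(X = U\) affine, \(U = \mathrm{Spec}\, A\), admitting an étale chart \(\pi : U \to \mathbb{A}^{n}_{\mathbbm{k}}\) as in Proposition~\ref{prop:standardopen}. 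By Remark~\ref{rem:descretale}, \(E_{k,\bullet}^{HS}\Omega_{U} \cong A[x_{i}^{(j)}]\) is then a polynomial algebra with generators of weights \(j \in \llbracket 1, k\rrbracket\). Here the zero section is the closed subscheme cut out by the irrelevant ideal \(\mathcal{A}_{+}\).

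First I construct the morphism \(\phi : X_{k}^{GG} \to \mathbf{Proj}_{X}(E_{k,\bullet}^{HS}\Omega_{X})\). On \(J_{k}X - \{0\}\) the open sets \(D_{+}(s)\), for \(s\) a local homogeneous section of positive degree \(d\), form a cover. On each \(D(s) \subset J_{k}X\) the ring map \(\mathcal{A}_{(s)} = (\mathcal{A}_{s})_{0} \hookrightarrow \mathcal{A}_{s}\) gives a \(\mathbb{G}_{m}\)-invariant morphism \(D(s) \to D_{+}(s) = \mathrm{Spec}\, \mathcal{A}_{(s)}\). These glue to a \(\mathbb{G}_{m}\)-invariant morphism \(J_{k}X - \{0\} \to \mathbf{Proj}\), and hence descend to the quotient stack.

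Next I verify the two defining properties of a coarse moduli space. The first is that \(\phi\) is universal among maps to algebraic spaces. The second is that \(\phi\) is bijective on geometric points. For the first, on each \(D(s)\) it suffices to check that \(\mathcal{A}_{(s)} = (\mathcal{A}_{s})^{\mathbb{G}_{m}}\), i.e. that the invariants of the \(\mathbb{G}_{m}\)-coaction \(t \mapsto t\otimes\lambda^{\deg}\) are exactly the degree \(0\) part. This holds tautologically, since the coaction is given by the grading. Since \(\mathbb{G}_{m}\) is linearly reductive in every characteristic, taking invariants is exact and commutes with flat base change. So \(D(s) \to \mathrm{Spec}(\mathcal{A}_{s})_{0}\) is a good (even uniform) categorical quotient, and the quotient maps glue along the overlaps \(D(ss')\). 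Thus \(\phi\) is a good moduli space map in Alhper's sense, hence universal for maps to algebraic spaces. For the second property, I use that over \(\bar{\mathbbm{k}}\) two points of \(D(s)\) have the same image iff their orbit closures meet in \(D(s)\). All orbits in \(D(s)\) are closed, because the weights are strictly positive and the zero section has been removed: the limit \(\lambda \to 0\) lands in the zero section. Hence the fibers of \(\phi\) on geometric points are single orbits.

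I expect the main obstacle to be the positive characteristic setting. The stabilizers are \(\mu_{d}\) for \(d\) dividing the gcd of the weights appearing, and these are non-reduced group schemes when \(p \mid d\). So one cannot use a naive "quotient by a finite group" argument, nor tameness in Abramovich–Olsson–Vistoli's sense for all \(k\). I plan to circumvent this by relying only on linear reductivity of \(\mathbb{G}_{m}\), which holds universally, together with the explicit invariant computation above. The remaining properties, being separated and of finite type, follow because \(\mathbf{Proj}\) of a finitely generated algebra is. Alternatively, I could appeal to the results on weighted projective stacks in Annex~\ref{annex:stacks}, and glue using the local product description \(X_{k}^{GG}|_{U} \cong U \times \mathcal{P}(1,\dotsc,k)\). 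This works because coarse spaces commute with flat base change and \(\mathbf{Proj}(A[x_{i}^{(j)}]) = U \times \mathbb{P}(1,\dotsc,k)\).
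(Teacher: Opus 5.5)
Your argument is correct, but it takes a genuinely different route from the one the paper relies on.

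The paper gives no separate proof of this statement. It rests on two ingredients:
\begin{itemize}
\item the local product description \(X_{k}^{GG}|_{U} \cong U \times \mathcal{P}(1, \dotsc, 1, \dotsc, k, \dotsc, k)\) over \'{e}tale coordinate charts;
\item the result of Annex~\ref{annex:stacks} that \(\mathbb{P}(a_{0}, \dotsc, a_{n}) = \mathrm{Proj}\, \mathbbm{k}[X_{0}, \dotsc, X_{n}]\) is a coarse moduli space of \(\mathcal{P}(a_{0}, \dotsc, a_{n})\).
\end{itemize}
That annex result is deduced from \(\mathbb{P}(a_{0}, \dotsc, a_{n})\) being a categorical quotient of \(\mathbb{A}^{n+1}_{\mathbbm{k}} - \{0\}\) by \(\mathbb{G}_{m}\): morphisms from the quotient stack to a scheme are just \(\mathbb{G}_{m}\)-invariant morphisms. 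Note that the paper's definition of a coarse moduli space only asks for surjectivity and unique factorization of morphisms to schemes.

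You instead prove a general statement directly. For any positively graded, finitely generated \(\mathcal{O}_{X}\)-algebra \(\mathcal{A}\) with \(\mathcal{A}_{0} = \mathcal{O}_{X}\), you show that \([(\mathbf{Spec}_{X}\mathcal{A} - \{0\})/\mathbb{G}_{m}] \to \mathbf{Proj}_{X}\mathcal{A}\) is a good moduli space. This uses the chart computation \((\mathcal{A}_{s})^{\mathbb{G}_{m}} = \mathcal{A}_{(s)}\) and the linear reductivity of \(\mathbb{G}_{m}\). You then get universality for maps to algebraic spaces from Alper's theory, and bijectivity on geometric points from the closedness of orbits.

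Your route buys three things:
\begin{itemize}
\item the standard (Keel--Mori) notion of coarse space, rather than the weaker categorical one;
\item an intrinsic construction of the map, needing neither \'{e}tale coordinates nor a check that chartwise identifications glue;
\item an argument that is visibly characteristic-free.
\end{itemize}
In fact your preliminary reduction to \'{e}tale charts is superfluous, since the \(D(s)\)-argument is already global over \(X\). The price is the appeal to Alper's results. The paper's route, which you mention as your alternative, is elementary once the annex is available.

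Two small corrections:
\begin{itemize}
\item In the construction step you mean the opens \(D(s) \subset J_{k}X\), not \(D_{+}(s)\).
\item Your remark on tameness is inaccurate. \(\mu_{d}\) is diagonalizable, hence linearly reductive even when \(p \mid d\). So \(X_{k}^{GG}\), which has finite inertia and linearly reductive stabilizers, is tame in the sense of Abramovich--Olsson--Vistoli in every characteristic. This does not affect your proof, which never uses that claim.
\end{itemize}
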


The stack \(X_{k}^{GG}\) supports a natural tautological line bundle \(\mathcal{O}(1)_{k} \to X_{k}^{GG}\), such that 

\[
	(\pi_{k})_{\ast} \mathcal{O}_{k}(m) = E_{k, m}^{HS} \Omega_{X}
\]
for all \(k, m \in \mathbb{N}\).

\begin{defi}
	The {\em singular locus} \(X_{k}^{GG, \mathrm{sing}}\) is the image in \(X_{k}^{GG}\) of the subscheme \(J_{k}^{\mathrm{sing}}X \subset J_{k}X\) whose points correspond to {\em singular jets}, i.e. jets with zero first derivative.  
\end{defi}

\begin{rem}
	If \(X = \mathbb{A}_{\mathbbm{k}}^{k}\), with coordinates \((x_{1}, \dotsc, x_{n})\), then \(J_{k}^{\mathrm{sing}}X\) is cut out in \(J_{k}X\) by the sheaf generated by the \(x_{j}^{(1)}\) for \(j = 1, \dotsc, n\). If \(U \to \mathbb{A}_{\mathbbm{k}}^{k}\) is an étale morphism, then \(J_{k}^{\mathrm{sing}}U\) is the pullback of  \(J_{k}^{\mathrm{sing}}\mathbb{A}_{\mathbbm{k}}^{k}\). In general, we may describe \(J_{k}^{\mathrm{sing}}X\) locally by covering \(X\) with open subsets as in Proposition~\ref{prop:standardopen}.
\end{rem}

\subsection{The wild locus} In this section, we assume that \(\mathbbm{k}\) is a field of characteristic \(p > 0\). Let \(X\) be a smooth scheme over \(\mathbbm{k}\), and consider the fiber product 
\[
	\begin{tikzcd}
	X^{(p)}
		\arrow[r, "F_{X}"]
		\arrow[d]
		\arrow[dr, phantom, "\square"]
	&
	X
		\arrow[d]
	\\
	\mathrm{Spec}\,\mathbbm{k}
		\arrow[r, "F"]
	&
	\mathrm{Spec}\,\mathbbm{k}
	\\
	\end{tikzcd},
\]
where \(F\) denotes the Frobenius morphism. Note that \(F_{X}\) is not a morphism of \(\mathbbm{k}\)-schemes in general, but is nonetheless a morphism of \(\mathbb{F}_{p}\)-schemes, where \(\mathbb{F}_{p} \subset \mathbbm{k}\) is the prime field. 

\begin{rem}
	Note that since \(\mathbb{F}_{p}\) is a perfect field, the extension \(\mathbb{F}_{p} \subset \mathbbm{k}\) is \'{e}tale, and thus \(X\) and \(X^{(p)}\) are smooth over \(\mathbb{F}_{p}\) as well.
\end{rem}
\medskip

For any \(k \in \mathbb{N}\), we have an induced morphism of \(\mathbb{F}_{p}\)-schemes
\[
	F_{k}: 
	J_{k} X^{(p)}
	\longrightarrow
	J_{k} X.
\]

We now have the following proposition.

\begin{prop} \label{prop:wild}
	Let \(X\) be a smooth scheme, and let \(k \in \mathbb{N}\). We have a commutative diagram of  \(\mathbb{F}_{p}\)-schemes and stacks	
	\begin{equation} \label{eq:Frobeniussquare}
		\begin{tikzcd}
		J_{k} X^{(p)}
			\arrow[d]
			\arrow[r]
		&
		J_{k} X
			\arrow[d]
		\\
		(X^{(p)})_{k}^{GG}
			\arrow[r]
		&
		X^{GG}_{k}
		\end{tikzcd}
	\end{equation}
	The image of the bottom arrow is the complement of the largest open substack on which \(X_{k}^{GG}\) is a Deligne-Mumford stack.
\end{prop}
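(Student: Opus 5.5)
The plan is to reduce everything to affine space through étale charts and then compute the Frobenius on the Hasse--Schmidt algebra explicitly.

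\emph{Step 1: the Frobenius on jets.} Let \(U \subset X\) be an open subset with an étale morphism \(\pi : U \to \mathbb{A}^{n}_{\mathbbm{k}}\), as in Proposition~\ref{prop:standardopen}. The base change \(U^{(p)} \to (\mathbb{A}^{n}_{\mathbbm{k}})^{(p)} \cong \mathbb{A}^{n}_{\mathbbm{k}}\) is again étale, and \(F_X\) is compatible with \(\pi\). By Proposition~\ref{prop:etalesmooth} and Remark~\ref{rem:descretale}, it therefore suffices to describe the ring map underlying \(F_k\) on \(\mathbb{A}^{n}\), namely \(\mathbbm{k}[x][x^{(j)}] \to \mathbbm{k}[x][x^{(j)}]\).

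\begin{itemize}
\item The coordinate \(x_i\) is sent to \(x_i^p\).
\item Iterating the Leibniz rule of Definition~\ref{defi:HS} in characteristic \(p\), i.e.\ expanding \((x + \epsilon x^{(1)} + \dotsc)^p\) as in \eqref{eq:formal}, gives \(D_q(x_i^p) = (x_i^{(q/p)})^p\) when \(p \mid q\), and \(D_q(x_i^p) = 0\) otherwise.
\end{itemize}

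So the jet with coordinates \((x^{(j)})_j\) is sent to the jet whose coordinates are \(y^{(pj)} = (x^{(j)})^p\) in degrees divisible by \(p\) and \(0\) in all other degrees. This map multiplies weighted degrees by \(p\). It is therefore equivariant for \(\mathbb{G}_m \to \mathbb{G}_m\), \(\lambda \mapsto \lambda^p\), and induces a morphism of quotient stacks.

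\emph{Step 2: where the bottom arrow is defined.} A nonzero jet whose coordinates \(x^{(j)}\) with \(j \le k/p\) all vanish is sent to the zero section. The bottom arrow of \eqref{eq:Frobeniussquare} must therefore be understood on the open complement of this locus, or equivalently with the top row replaced by the truncation \(J_{\lfloor k/p\rfloor}\). I will fix this convention first and check it is harmless for the statement about the image. The local descriptions glue because all constructions are functorial for étale maps, by Proposition~\ref{prop:etalesmooth}.

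\emph{Step 3: the Deligne--Mumford locus.} Locally \(X_k^{GG} \cong U \times \mathcal{P}(1,\dots,1,\dots,k,\dots,k)\). Take a geometric point whose nonzero coordinates sit in the set of weights \(W\). Its stabilizer is \(\mu_d\) with \(d = \gcd(W)\), and this group scheme is étale if and only if \(p \nmid d\). Hence the non-DM locus is the closed substack where every coordinate of weight not divisible by \(p\) vanishes. By Step 1 the image of the bottom arrow lies in this closed substack.

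\emph{Step 4: surjectivity onto the non-DM locus.} This is the step I expect to be the main obstacle, because of the non-perfect base field and the nonreduced stabilizers; I will argue on geometric points after base change to \(\overline{\mathbbm{k}}\). Take a geometric point with \(y^{(q)} = 0\) for all \(q \not\equiv 0 \bmod p\). Since \(\overline{\mathbbm{k}}\) is perfect, \(p\)-th roots \(x^{(j)} := (y^{(pj)})^{1/p}\) exist, and similarly for the base point, since \(F_X\) is a universal homeomorphism. These roots give a preimage point, so the image is exactly the non-DM locus as a set. Finally I will check that this closed set is the complement of the \emph{largest} open DM substack. This follows because being DM is detected on geometric stabilizers, namely by their unramifiedness.
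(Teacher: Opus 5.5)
Your proposal follows the paper's route---étale reduction to \(\mathbb{A}^n_{\mathbbm{k}}\), the identity \(D_{jp}(a^p) = D_j(a)^p\) of Lemma~\ref{lem:derivfrob}, the image cut out by the \(x_i^{(j)}\) with \(p \nmid j\), and its identification with the non-Deligne--Mumford locus through the \(\mu_d\) isotropy groups. The one difference there is that you quote the general stabilizer criterion, where Proposition~\ref{prop:DMlocus} builds the étale atlas by hand. Your Step~2 is a genuine improvement on the paper. Jets with \(x^{(j)} = 0\) for all \(j \le k/p\) are sent to the zero section, so the bottom arrow is only defined on an open substack; the paper passes over this, and your truncation to \(J_{\lfloor k/p \rfloor}\) correctly repairs it.

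One slip in Step~1: the map does not multiply weighted degrees by \(p\), it preserves them. The coordinate \(x_i^{(pj)}\) and its image \((x_i^{(j)})^p\) both have degree \(pj\); equivalently, \(F \circ (f \circ r_\lambda) = (F \circ f) \circ r_\lambda\) for \(r_\lambda(t) = \lambda t\). So the map is equivariant for the identity of \(\mathbb{G}_m\), up to the Frobenius twist of \(\mathbbm{k}\), and not for \(\lambda \mapsto \lambda^p\). The descent to the quotient stacks is unaffected.
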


\begin{rem} We refer to Annex~\ref{annex:stacks} and more particularly Section~\ref{sec:DMlocus} for some comments about the above terminology. Informally, the image of the bottom arrow above is the closed substack where closed points have a weight divisible by \(p\), for which the isotropy groups are {\em non-reduced} groups of roots of unity. On the complement of this locus, \(X_{k}^{GG}\) admits an \'{e}tale atlas; on the contrary finite maps covering the wild locus are necessarily inseparable.
\end{rem}

\begin{proof}
	{\em Step 1. The affine case.} We first prove the result in the case where \(X = \mathbb{A}_ {\mathbbm{k}}^{n}\). In this situation, we have \(X^{(p)} = X\), and the morphism \(X^{(p)} \to X\) is dual to the morphism of \(\mathbb{F}_{p}\)-algebras
	\[
		\begin{array}{ccc}
		\mathbbm{k}[x_{1}, \dotsc, x_{n}]
			&
		\longrightarrow
			&
		\mathbbm{k}[x_{1}, \dotsc, x_{n}]
		\\
			Q
			&
			\longmapsto
			&
			Q^{p}.
		\end{array}
	\]
	Letting \(A := \mathbbm{k}[x_{1}, \dotsc, x_{n}]\), the morphism \(J_{k} X^{(p)} \to J_{k} X\) is now dual to the induced endomorphism of
	\[
	\mathrm{HS}_{A/\mathbbm{k}}
	\cong
	\mathbbm{k}[x][x_{i}^{(j)}].
	\]

	By Lemma~\ref{lem:derivfrob}, this morphism is given by
	\[
		\begin{array}{ll}
		a \longmapsto a^{p}
		&
		(a \in \mathbbm{k})
		\\
		x_{i} \longmapsto x_{i}^{p}
		&
		(1 \leq i \leq n)
		\end{array}
	\]
	and
	\[
		x_{i}^{(l)}
		\longmapsto
		\left\{
			\begin{array}{ll}
				(x_{i}^{(j)})^{p} & \text{if}\quad l = jp \\
				0                 & \text{if}\quad p\;\; \text{does not divide}\;\; l.
			\end{array}
		\right.
	\]
	The schematic image of \(J_{k}X^{(p)} \to J_{k}X\) is the subscheme whose image is cut out by the kernel of this morphism, i.e. by the ideal
	\[
		\mathcal{I}
		=
		\left<
		\left.
		x_{i}^{(j)}
		\;\right|\;
		p\; \text{does not divide}\, j
		\right>.
	\]
	The complement of the image of \((X^{(p)})^{GG}_{k}\) in \(X_{k}^{GG}\) is then given by the image of the open subset of \(J_{k}X\) of points with at least one non-zero coordinate with a weight not divisible by \(p\). By the discussion of Section~\ref{sec:DMlocus}, this is exactly the Deligne-Mumford locus of \(X_{k}^{GG}\).
	\medskip
	
\noindent
{\em Step 2. The general case.} Let \(X\) be a smooth scheme above \(\mathbbm{k}\), and let \(U \subset X\) be an affine subset admitting an \'{e}tale morphism
\[
	\pi : U \to \mathbb{A}^{n}_{\mathbbm{k}}.
\]
The diagram \eqref{eq:Frobeniussquare} with \(X\) replaced by \(U\) can now be obtained by base change from the corresponding diagram for \(\mathbb{A}^{n}_{\mathbbm{k}}\). Since being Deligne-Mumford is a property of stacks that is invariant under \'{e}tale base change, the proposition with \(X\) replaced by \(U\) follows from Step 1 above. We get the result since the property is local over \(X\).
\end{proof}

\begin{lem} \label{lem:derivfrob}
	Write \(x = (x_{1}, \dotsc, x_{n})\). Using the isomorphism \(\mathrm{HS}_{\mathbbm{k}[x]/\mathbbm{k}} \cong \mathbbm{k}[x][x_{i}^{(j)}]\), let \(D_{j} : \mathbbm{k}[x] \to \mathrm{HS}_{\mathbbm{k}[x]/\mathbbm{k}}\) denote the standard derivatives, defined by
	\[
		D_{j} (x_{i}) = x_{i}^{(j)}
		\quad
		(1 \leq i \leq n,  1 \leq j \leq k).
	\]
	Then, for any \(a \in \mathbbm{k}[x]\) and any \(j \in \llbracket 1, k\rrbracket\) with \(jp \leq k\), we have
	\[
		D_{jp}(a^{p}) = (D_{j}(a))^{p}
	\]
	and \(D_{l}(x^{p}) = 0\) if \(p\) does not divide \(l\). In particular, 
	\[
		D_{jp}(x_{i}) = (x_{i}^{(j)})^{p}.
	\]
\end{lem}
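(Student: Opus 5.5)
The plan is to encode the whole family of derivations \((D_{0}, \dotsc, D_{k})\) as a single ring homomorphism and then use that Frobenius is additive in characteristic \(p\). Let \(R := \mathrm{HS}_{\mathbbm{k}[x]/\mathbbm{k}}[t]/(t^{k+1})\), and define
\[
	\varphi : \mathbbm{k}[x] \longrightarrow R,
	\qquad
	a \longmapsto \sum_{i=0}^{k} D_{i}(a)\, t^{i}.
\]
This is the same construction as in Proposition~\ref{prop:kjetsderiv}, with the target \(\mathbbm{k}\) replaced by the algebra \(\mathrm{HS}_{\mathbbm{k}[x]/\mathbbm{k}}\).

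First I check that \(\varphi\) is a ring homomorphism. Additivity comes from the additivity of the \(D_{i}\). Multiplicativity modulo \(t^{k+1}\) is exactly the Leibniz rule of Definition~\ref{defi:HS}: the coefficient of \(t^{m}\) in \(\varphi(a)\varphi(b)\) is \(\sum_{i+j=m} D_{i}(a) D_{j}(b) = D_{m}(ab)\). The unit is preserved because \(D_{0}\) is a \(\mathbbm{k}\)-algebra morphism and \(D_{i}(1) = 0\) for \(i \geq 1\). Here \(D_{0}(a) = a\), since \(D_{0}\) is the structure map to the \(\mathbbm{k}[x]\)-algebra \(\mathrm{HS}\).

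Next, \(R\) is a commutative ring of characteristic \(p\), so the Frobenius \(r \mapsto r^{p}\) is a ring endomorphism of \(R\); in particular it is additive. Hence
\[
	\varphi(a^{p}) = \varphi(a)^{p} = \Bigl(\sum_{i=0}^{k} D_{i}(a)\, t^{i}\Bigr)^{p} = \sum_{i \,:\, ip \leq k} D_{i}(a)^{p}\, t^{ip}
\]
in \(R\). Since \(R\) is a free \(\mathrm{HS}\)-module with basis \(1, t, \dotsc, t^{k}\), we may compare the coefficients of \(t^{l}\) for \(l \leq k\). This gives \(D_{jp}(a^{p}) = D_{j}(a)^{p}\) whenever \(jp \leq k\), and \(D_{l}(a^{p}) = 0\) when \(p \nmid l\).

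Finally, specializing to \(a = x_{i}\) and using \(D_{j}(x_{i}) = x_{i}^{(j)}\) from Proposition~\ref{prop:defiHSalgebraaffine} gives \(D_{jp}(x_{i}^{p}) = (x_{i}^{(j)})^{p}\). This is the intended meaning of the last displayed formula, which should read \(D_{jp}(x_{i}^{p})\).

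There is no real obstacle. The only point needing care is the truncation \(t^{k+1} = 0\): one must note that the Frobenius expansion is compatible with it, which holds because the Frobenius identity holds in \(\mathrm{HS}[t]\) and passes to the quotient \(R\).
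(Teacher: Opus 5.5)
Your proposal is correct and is essentially the paper's own argument. The paper also packages \((D_{0}, \dotsc, D_{k})\) into the \(\mathbbm{k}\)-algebra morphism \(a \mapsto \sum_{i} D_{i}(a)\, t^{i}\) into \(\mathrm{HS}_{\mathbbm{k}[x]/\mathbbm{k}} \otimes_{\mathbbm{k}} \mathbbm{k}[t]/(t^{k+1})\), applies Frobenius, and compares coefficients. Your reading of the last two formulas as \(D_{l}(a^{p}) = 0\) and \(D_{jp}(x_{i}^{p}) = (x_{i}^{(j)})^{p}\) is the intended one.
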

\begin{proof}[Proof of Lemma~\ref{lem:derivfrob}]
	Let \(R = \mathbbm{k}[t]/(t^{k+1})\), and let 
	\[
		\begin{array}{cccc}
			\Psi : & \mathbbm{k}[x]/\mathbbm{k} & \longrightarrow & \mathrm{HS}_{\mathbbm{k}[x]/\mathbbm{k}} \otimes_{\mathbbm{k}} R \\
			       &    a & \longmapsto     & D_{0}(a) + D_{1}(a) t + \dotsc + D_{k}(a) t^{k}
		\end{array}
	\]
			be the standard morphism of \(\mathbbm{k}\)-algebras provided by the derivations. Now, we have, for any \(a \in \mathbbm{k}[x]\):
			\begin{align*}
				\Psi(a^{p})
				& =
				\Psi(a)^{p} \\
				& =
				(D_{0}(a) + D_{1}(a)t + \dotsc + D_{k}(a) t^{k})^{p} \\
				& =
				D_{0}(a)^{p} + D_{1}(a)^{p}t^{p} + \dotsc + D_{k}(a)^{p} t^{pk}
			\end{align*}
		Identifying the coefficients, this gives the result.
\end{proof}

\begin{defi}
	We call {\em wild locus} of \(J_{k} X\) (resp.\ of \(X_{k}^{GG}\)) the closed image of \(J_{k} X^{(p)}\) (resp. \((X^{(p)})_{k}^{GG}\)). We denote it by \(J_{k}^{w}X\) (resp.\ \(X_{k}^{GG, w}\)).
\end{defi}

\begin{rem}
	Let \(X\) be a smooth scheme. As explained in the proof of Proposition~\ref{prop:wild}, in local \'{e}tale coordinates \((x_{0}, x_{1}, \dotsc, x_{n})\) on \(X\), the wild locus of \(J_{k}X\) is given by the equations
	\begin{equation} \label{eq:wildlocal}
		x_{i}^{(j)} = 0,
	\end{equation}
	for all \(i, j\) such that \(p\) does not divide \(j\). It might not be entirely obvious why this locus should be independent of the choice of coordinates, but this is easy to check from the Fa\`{a} di Bruno formula \eqref{eq:faadibruno}. Indeed, if \((y_{1}, \dotsc, y_{n})\) is another set of coordinates, then for any \(j \in \mathbb{N}\) not divisible by \(p\), this formula reads
	\[
		y_{i}^{(j)}
		=
		R_{i}^{j}(x^{(1)}, \dotsc, x^{(j)})
	\]
	for some polynomial \(R_{i}^{j}\) of weighted degree \(j\). In a given monomial of \(R_{i}^{j}\), not all derivatives can be of order divisible by \(p\) (otherwise the degree \(j\) would be divisible by \(p\) as well). This shows that a point satisfying the equations \eqref{eq:wildlocal} also satisfies \(y_{i}^{(j)} = 0\).
\end{rem}

\section{Jets and hyperbolicity}

Let us discuss here very briefly about the link between Hasse-Schmidt differentials and algebraic hyperbolicity over an arbitrary field. Again, the result are closely related to what is known in the complex setting.
\medskip

{\em In this section, we assume for simplicity that \(\mathbbm{k}\) is algebraically closed, even though this is not strictly necessary.}

\begin{defi} Let \(X\) be a smooth projective variety, and let \(L\) be an ample line bundle over \(X\). We say that \(X\) is {\em algebraically hyperbolic} if there exists constants \(A, B > 0\) such that for any smooth projective curve \(C\) over \(\mathbbm{k}\), and any non-constant {\em separable} morphism \(f : C \to X\), we have
	\[
		-\chi(C) \geq A \deg_{C} (f^{\ast} L) + B.
	\]
	Similarly, we say that \(X\) is algebraically hyperbolic {\em modulo \(Z \subset X\)} if the inequality holds for any \(f\) as above as soon as \(f(C) \not\subset Z\).
\end{defi}

\begin{rem}
	\begin{enumerate}
		\item This definition is the analogue of Demailly's notion of algebraic hyperbolicity (see \cite[§3]{dem12a}). If the base field is \(\mathbb{C}\), it is implied by Brody hyperbolicity (i.e. the absence of non-constant holomorphic map \(f : \mathbb{C} \to X\)), and conjecturally equivalent to it.
		\item the assumption that \(f\) is {\em separable} in the above definition is required by the existence of the Frobenius morphism: by considering a morphism of the form
			\[
				C' \overset{F^{m}}{\longrightarrow} C \longrightarrow X,
			\]
			we can get \(\deg f^{\ast} L\) arbitrarily large, while keeping \(\chi(C') = \chi(C)\) constant.
	\end{enumerate}
\end{rem}

Exactly as in the complex case, we can show in general that the existence of "sufficiently many" jet differential equations implies the algebraic hyperbolicity of \(X\). Let us give some details.
\medskip

\begin{prop}
	Let \(X\) be projective variety endowed with an ample line bundle \(L\). Let \(p_{k} : X_{k}^{GG} \to X\) be the standard projection, and let
	\[
		\mathbb{B}_{+}(\mathcal{O}_{k}(1)) := \bigcap_{m > 0} \mathrm{Bs}(\mathcal{O}_{k}(m) \otimes p_{k}^{\ast} L^{-1}).
	\]
	be the augmented base locus of \(\mathcal{O}_{k}(1)\). Let \(B \subset \mathbb{B}_{+}(\mathcal{O}_{k}(1))\) be the union of the irreducible components that are not included in the singular substack \(X_{k}^{GG, \mathrm{sing}}\), and let
	\[
		\mathrm{GG}_{k}(X) := p_{k}(B).
	\]
	Then \(X\) is algebraically hyperbolic modulo \(\mathrm{GG}_{k}(X)\).
\end{prop}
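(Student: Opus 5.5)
Fix a smooth projective curve \(C\) and a non-constant separable morphism \(f : C \to X\) with \(f(C) \not\subset \mathrm{GG}_{k}(X)\). The plan is to lift \(f\) to the jet stack, pair the lift with a jet differential that does not vanish on it, and compare degrees. Separability is what makes the lift meaningful: because \(f\) is separable, \(df \neq 0\) generically on \(C\).

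\emph{Step 1: the canonical lift.} Work locally on \(C\) with a uniformizer \(t\) at a general point. The \(k\)-jets of \(f\) give a section of \(J_{k}X\) over a dense open subset of \(C\), using Proposition~\ref{prop:kjetsderiv} and Theorem~\ref{thm:defiuniversalHS} applied to the truncated Taylor expansion \(f(t + \epsilon)\). Since \(df \neq 0\) generically, this jet is non-singular at a general point, so it avoids \(X_{k}^{GG, \mathrm{sing}}\) there. After dividing by the \(\mathbb{G}_{m}\)-action we obtain a morphism \(f_{[k]} : C \to X_{k}^{GG}\). It is defined everywhere because \(C\) is a smooth curve and the coarse space \(\mathbf{Proj}_{X}(E_{k,\bullet}^{HS}\Omega_{X})\) is proper over \(X\); some care is needed with the stacky structure, so I would work with \(\mathcal{O}_{k}(m)\) for \(m\) divisible enough, which descends to the coarse space. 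The key computation is
\[
	f_{[k]}^{\ast}\mathcal{O}_{k}(m) \hookrightarrow K_{C}^{\otimes m}.
\]
To see this, note that a weighted-degree \(m\) section \(P\) evaluated on the jet of \(f\) in the coordinate \(t\) transforms under a change of uniformizer like \((dt)^{\otimes m}\), by the Faà di Bruno formula (Proposition~\ref{prop:faadibruno}) applied to \(C\). Hence \(P(f) := P(f; f^{(1)}, \dots, f^{(k)})\) defines a section of \(K_{C}^{\otimes m}\), and the map \(P \mapsto P(f)\) realises the inclusion.

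\emph{Step 2: a non-vanishing section.} The point \(f_{[k]}(\eta_{C})\) lies off \(B\). It lies off the singular locus by Step 1, and it lies off the other components of \(\mathbb{B}_{+}\) because otherwise \(f(C) \subset p_{k}(B) = \mathrm{GG}_{k}(X)\). The components of \(\mathbb{B}_{+}\) that are contained in \(X_{k}^{GG,\mathrm{sing}}\) cannot contain the generic point of the lift either. By noetherianity, \(\mathbb{B}_{+}\) equals \(\mathrm{Bs}(\mathcal{O}_{k}(m) \otimes p_{k}^{\ast}L^{-1})\) for a single \(m = m_{0}\). So there is a section \(P \in H^{0}(X, E_{k,m_{0}}^{HS}\Omega_{X} \otimes L^{-1})\) with \(P(f) \not\equiv 0\). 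Then \(P(f)\) is a nonzero section of \(K_{C}^{\otimes m_{0}} \otimes f^{\ast}L^{-1}\), which gives
\[
	m_{0}(2g(C) - 2) \geq \deg f^{\ast}L,
\]
that is \(-\chi(C) \geq \tfrac{1}{m_{0}} \deg_{C} f^{\ast}L\). This is the inequality with \(A = 1/m_{0}\), up to adjusting the constant \(B\); taking \(B\) slightly negative or absorbing it is harmless, since \(\deg f^{\ast}L \geq 1\).

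\emph{Main obstacle.} I expect the hardest part to be Step 1 in positive characteristic. Two things must be checked there. First, the lift must make sense as a map to the stack \(X_{k}^{GG}\), or to its coarse space, near points where \(df\) vanishes. This is where separability enters, and where the wild locus of Proposition~\ref{prop:wild} could interfere: for inseparable \(f\) the jet would lie in the image of \(J_{k}X^{(p)}\). Second, one must verify the transformation rule of \(P(f)\) under a change of the local parameter on \(C\) using divided derivatives. I would do the latter directly with étale coordinates on \(C\) and the homogeneity statement in Proposition~\ref{prop:faadibruno}.
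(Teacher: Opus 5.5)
Your overall plan matches the paper's: pull \(P\) back along \(f\), use separability and \(f(C)\not\subset\mathrm{GG}_k(X)\) to show it does not vanish, then compare degrees on \(C\). Your Step~2 non-vanishing argument is fine, provided you run it with a lift defined only over a chart of \(C\). Equivalently, run it with the irreducible image \(\bar f(J_kC)\subset J_kX\), which is not contained in \(J_k^{\mathrm{sing}}X\) by separability.

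The gap is the key computation of Step~1. There is no canonical lift \(f_{[k]}\colon C\to X_k^{GG}\), and \(P(f)\) does not define a section of \(K_C^{\otimes m}\otimes f^{*}L^{-1}\).

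The reason is that Hasse--Schmidt (Green--Griffiths) jets are not invariant under reparametrization. Let \(s=s(t)\) be another uniformizer. Proposition~\ref{prop:faadibruno} on \(C\) reads \(s^{(q)}=s'(t)\,t^{(q)}+Q^{q}(t^{(1)},\dotsc,t^{(q-1)})\). Here \(Q^q\) contains the term \(\partial_t^{[q]}s\,(t^{(1)})^{q}\), where \(\partial_t^{[q]}s\) is the \(q\)-th Hasse derivative of \(s\) with respect to \(t\). This term does not vanish at the standard jet \(t^{(1)}=1\), \(t^{(j)}=0\) for \(j\ge 2\).

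So the jets of \(f\) computed in the coordinates \(t\) and \(s\) differ by a general reparametrization, not by the \(\mathbb{G}_m\)-action. They are different points of \(X_k^{GG}\), and the local lifts do not glue.

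A concrete check: take \(k=m=2\) and a local section \(a\,(s^{(1)})^2+b\,s^{(2)}\) of \(E_{2,2}^{HS}\Omega_C\).
\begin{itemize}
\item Evaluating at the standard jet in the coordinate \(s\) gives \(a\).
\item Evaluating at the standard jet in the coordinate \(t\) gives \(a\,s'(t)^2+b\,\partial_t^{[2]}s\).
\end{itemize}
The second value differs from \(s'(t)^2\,a\) as soon as \(b\neq 0\) and \(\partial_t^{[2]}s\neq 0\). Only the bottom piece \(S^m\Omega_C\subset E_{k,m}^{HS}\Omega_C\) transforms like \((dt)^{\otimes m}\). The rule you are using is the one for Demailly--Semple invariant jet differentials, which are not the objects here. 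Your worry about the wild locus is beside the point: the obstruction is reparametrization, not inseparability.

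The repair is to never evaluate. The morphism \(f\) induces the canonical pullback of Hasse--Schmidt algebras, and the non-vanishing above says exactly that \(f^{*}P\) is a non-zero section of \(E_{k,m_0}^{HS}\Omega_C\otimes f^{*}L^{-1}\).

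Now use the canonical filtration on the curve. By Theorem~\ref{thm:grading} its graded pieces are \(S^{l_1}\Omega_C\otimes\dotsb\otimes S^{l_k}\Omega_C=K_C^{\otimes(l_1+\dotsb+l_k)}\) with \(\sum_j j\,l_j=m_0\). So some \(K_C^{\otimes(l_1+\dotsb+l_k)}\otimes f^{*}L^{-1}\) has a non-zero section. Hence \((\sum_j l_j)\deg K_C\ge\deg f^{*}L>0\), which forces \(\deg K_C>0\). Since \(1\le\sum_j l_j\le m_0\), you get \(-\chi(C)=\deg K_C\ge\tfrac{1}{m_0}\deg f^{*}L\). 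This is the paper's route, via Lemma~\ref{lem:nonvanishinghyp}.

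Your constant \(A=1/m_0\) is the correct one. The bound \(-\chi(C)\ge m\deg L\) displayed in that lemma, and the resulting choice \(A=\min_i m_i\), do not follow from \(\sum_j l_j\le m\); the correct choice there is \(A=1/\max_i m_i\).
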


\begin{rem}
	The definition of \(\mathrm{GG}_{k}(X)\) above mimics the standard definition of the Green-Griffiths locus in complex hyperbolicity (see e.g. \cite{DR15}). Note that in the standard definition, we usually take the full image \(p_{k}(\mathbb{B}_{+}(\mathcal{O}(1))\). Removing the singular components is completely free, as we will see in the proof.
\end{rem}

We need the following classical lemma.

\begin{lem} \label{lem:nonvanishinghyp}
	Let \(C\) be a smooth projective curve, endowed with a line bundle \(L\). Let \(k, m \geq 1\) be integers, and assume that
	\begin{equation} \label{eq:inequalityhyp}
		h^{0}(C, E_{k, m}^{HS} \Omega_{C} \otimes L^{-1}) \neq 0.
	\end{equation}
	Then
	\[
		- \chi(C) \geq m \deg L
	\]
\end{lem}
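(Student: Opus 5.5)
The plan is to reduce the statement to a degree count on line bundles on \(C\), using the canonical filtration of Definition~\ref{defi:canonicalfilt} and the description of its graded pieces in Theorem~\ref{thm:grading}. The filtration step is routine. The final comparison with \(m \deg L\) is where I expect trouble, and I believe it cannot be completed for the inequality as worded.

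\emph{Step 1: pass to a graded piece.} Since \(\dim C = 1\), \(\Omega_{C} = K_{C}\) is a line bundle. The filtration \(F^{\bullet} E_{k,m}^{HS}\Omega_{C}\) is finite and indexed by \(\mathbb{N}^{k}\) with the lexicographic order of Definition~\ref{defi:lexicographic}; tensoring with \(L^{-1}\) preserves it. Let \(s \neq 0\) be a section as in \eqref{eq:inequalityhyp}, and let \(\beta\) be the smallest index with \(s \in H^{0}(C, F^{\beta} \otimes L^{-1})\). The image of \(s\) in \(H^{0}(C, \mathrm{Gr}^{\beta}_{F} \otimes L^{-1})\) is then non-zero. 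By Theorem~\ref{thm:grading}, \(\mathrm{Gr}_{F}^{\beta} E_{k,m}^{HS}\Omega_{C}\) is a summand \(S^{l_{1}}K_{C} \otimes \dotsc \otimes S^{l_{k}}K_{C} = K_{C}^{\otimes \ell}\) of \(S^{m}(\mathbf{\Omega}_{C}^{k})\). Here \(\ell = l_{1} + \dotsc + l_{k}\), with \(l_{1} + 2l_{2} + \dotsc + k l_{k} = m\). Since \(m \geq 1\), we have \(1 \leq \ell \leq m\).

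\emph{Step 2: degree inequality.} A non-zero section of the line bundle \(K_{C}^{\otimes \ell} \otimes L^{-1}\) forces \(\deg L \leq \ell\,(2g-2) = -\ell\,\chi(C)\).

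\emph{Step 3: compare with \(m \deg L\).} This is the step I expect to be the obstacle. If \(\chi(C) \geq 0\), Step 2 gives \(\deg L \leq 0\). Then \(-\chi(C) \geq m \deg L\) holds when \(\chi(C) = 0\), but when \(\chi(C) = 2\) it needs \(m \deg L \leq -2\). If \(\chi(C) < 0\), Step 2 yields \(-\chi(C) \geq \deg L/\ell \geq \deg L/m\) when \(\deg L > 0\). To reach \(-\chi(C) \geq m \deg L\) one would need \(\ell \leq 1/m\), which is impossible for \(m \geq 2\). The example \(g \geq 2\), \(k = 1\), \(m = 2\), \(L = K_{C}\) satisfies \eqref{eq:inequalityhyp}, since \(H^{0}(K_{C}^{2} \otimes K_{C}^{-1}) = H^{0}(K_{C}) \neq 0\), yet \(2g-2 \geq 2(2g-2)\) fails.

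So the plan proves \(\deg L \leq -\ell\,\chi(C)\) with \(\ell \leq m\), hence \(m \cdot (-\chi(C)) \geq \deg L\) when \(\chi(C) \leq 0\). It does not yield \(-\chi(C) \geq m\deg L\) as written; that reading should be checked against the lemma's use in the hyperbolicity proposition.
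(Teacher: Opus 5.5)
You are right. Your Steps~1--2 are exactly the paper's proof. The paper passes to a non-zero section of some graded piece \(S^{l_1}\Omega_C\otimes\dotsb\otimes S^{l_k}\Omega_C\otimes L^{-1}\cong K_C^{\otimes \ell}\otimes L^{-1}\). It then obtains \(\ell\deg K_C\geq \deg L\) with \(\ell=\sum_j l_j\leq \sum_j j\,l_j=m\), and declares that ``this gives the result''. That last inference is precisely the step you isolated. From \(\ell\,(-\chi(C))\geq \deg L\) and \(1\leq \ell\leq m\), one only gets \(m\,(-\chi(C))\geq \deg L\), and only when \(g\geq 1\). Your example \(g\geq 2\), \(k=1\), \(m=2\), \(L=K_C\) uses \(E^{HS}_{1,2}\Omega_C=S^2\Omega_C=K_C^{\otimes 2}\), and it is a genuine counterexample to the inequality as printed.

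To close your case analysis: for \(g=0\) the printed inequality does hold. Step~2 gives \(\deg L\leq -2\ell\leq -2\), hence \(m\deg L\leq \deg L\leq -2=-\chi(C)\). For \(g=1\) it is immediate. So it can fail only when \(g\geq 2\) and \(m\geq 2\).

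What the argument actually proves is that \(\ell\,(-\chi(C))\geq \deg L\) for some \(\ell\in\llbracket 1,m\rrbracket\). Hence \(m\,(-\chi(C))\geq \deg L\) whenever \(g\geq 1\) or \(\deg L\geq 0\). This is all the hyperbolicity proposition needs, since there \(\deg_C f^{\ast}L>0\). One then gets \(-\chi(C)\geq \frac{1}{m_i}\deg_C f^{\ast}L\). The constant in that proof should therefore be \(A=1/\max_i m_i\) rather than \(\min_i m_i\), and the conclusion that \(X\) is algebraically hyperbolic modulo \(\mathrm{GG}_k(X)\) is unaffected.
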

\begin{proof}

	Because of the assumption, at least one graded in \(\mathrm{Gr}_{F} (E_{k, m}^{HS} \Omega_{C}) \otimes L^{-1}\) has a global section. This means that there exists \(l_{1}, \dotsc, l_{k} \in \mathbb{N}\) such that \(\sum_{j} j l_{j} = m\) and
	\[
		h^{0}(C, S^{l_{1}}\Omega_{C} \otimes \dotsc \otimes S^{l_{k}} \Omega_{C} \otimes L^{-1}) \neq 0.
	\]
	The symmetric product above identifies with \(K_{C}^{l_{1} + l_{2} + \dotsc + l_{k}}\). Thus, we deduce that
	\[
		(l_{1} + l_{2} + \dotsc + l_{k}) \deg K_{C} - \deg L \geq 0.
	\]
	We have \(\deg K_{C} = - \chi(C)\). On the other hand, we have
	\[
		\sum_{j} l_{j} \leq \sum_{j} j l_{j} \leq m.
	\]
	This gives the result.
\end{proof}

\begin{proof}

\noindent
{\em Step 0. Setup.} By noetherianity of \(X_{k}^{GG}\), there exists a finitely many sections
	\[
		P_{j_{i}} \in H^{0}(X_{k}^{GG}, \mathcal{O}_{k}(m_{i}) \otimes L^{-1}) - \{0\}.
	\]
	for \(i \in \llbracket 1, r \rrbracket\) and \(m_{i} \in \mathbb{N}\), that cut out \(\mathbb{B}_{+}(\mathcal{O}_{k}(1))\). These sections can also be seen as global sections of \(q_{k}^{\ast} L^{-1}\) on the total space of \(q_{k} : J_{k} X \to X\), homogeneous with respect to the actions of \(\mathbb{G}_{\mathbbm{k}, m}\).
	\medskip

\noindent
	{\em Step 1. Proof.} Let \(C\) be a smooth curve, and let \(f : C \to X\) be a separable morphism, such that \(f(C) \not\subset \mathrm{GG}_{k}(X)\). The morphism \(f\) induces a diagram 
	\[
		\begin{tikzcd}
		J_{k} C
			\arrow[r, "\overline{f}"]
			\arrow[d]
		&
		J_{k} X
			\arrow[d, "q_{k}"]
		\\
		C
		\arrow[r, "f"]
		&
		X
		\end{tikzcd}
	\]
	We claim that \(\overline{f}(J_{k}C)\) is not-included in \(J_{k}^{\mathrm{sing}}X\). To see this, let \(C_{0} \subset X\) be the schematic image of \(C\). Since it is not reduced to a point, the restriction morphism \(\Omega_{X}|_{C_{0}} \to \Omega_{C_{0}}\) is non-trivial. By generic smoothness, there exists an open subset \(U \subset C_{0}\) which is smooth. On the other hand, since the morphism \(f : f^{-1}(U) \to U\) is separable, the morphism
	\[
		f^{\ast} \Omega_{U} \to \Omega_{f^{-1}(U)}
	\]
	is non-zero (see e.g. \cite[Lemma~0C1C]{stacks}). All in all, this shows that \(f^{\ast} \Omega_{X} \to \Omega_{C}\) is non-zero, and hence \(\overline{f}(J_{k}C) \not\subset J_{k}^{\mathrm{sing}}X\).
	\medskip

	Now, it follows from this and the hypothesis \(f(C) \not\subset \mathrm{GG}_{k}(X)\), that there exists at least one \(P_{j_{i}}\) which does not vanish on \(\overline{f}(J_{k}C)\). Thus, \(\overline{f}^{\ast} (P_{j_{i}})\) identifies with a non-zero section in
	\[
		H^{0}(C, E_{k, m_{i}}^{HS} \Omega_{C} \otimes f^{\ast} L^{-1}).
	\]
	Lemma~\ref{lem:nonvanishinghyp} implies that
	\[
		- \chi(C) \geq m_{i} \deg_{C} f^{\ast} L.
	\]

	This shows that the inequality \eqref{eq:inequalityhyp} holds, taking \(A = \min \{m_{i}\}_{1 \leq i \leq r}\) and \(B = 0\).
\end{proof}

\section{The Rees deformation} \label{ref:reesdeformation}

In this section, we give some recollections on the deformation of \(X_{k}^{GG}\) to a weighted projectivized bundle (see \cite{cad17} for a more precise description of this construction).
\medskip

{\em For simplicity, we assume again in this section that \(\mathbbm{k}\) is algebraically closed, so that closed points in \(\mathbb{A}_{\mathbbm{k}}^{k}\) can be associated with tuples of elements of \(\mathbbm{k}\).}

\subsection{Deformation of the Hasse-Schmidt algebra} Let \(X\) be a smooth variety of \(\mathbbm{k}\), and let \(k \in \mathbb{N}\) be an integer. The classical Rees construction (cf. \cite[Section 6.5]{Eis95}) permits to define a sheaf of algebras \(\mathcal{E}_{k, \bullet}^{HS}\) on \(X \times \mathbb{A}^{k}_{\mathbbm{k}}\) whose specializations  to \(X \times \{0\}\) (resp. \(X \times \{(\lambda_{1}, \dotsc, \lambda_{k})\}\), with \(\lambda_{i} \neq 0\)) identify with \(\mathrm{Gr}_{F}(E_{k, \bullet}^{HS}\Omega_{X})\) (resp. \(E_{k, \bullet}^{HS}\Omega_{X}\)).
\medskip

Let us recall that this construction prescribes to define the sheaf of \(\mathcal{O}_{X}\)-algebras \(\mathcal{E}_{k, \bullet}^{HS}\) as follows. Write \(\mathbb{A}_{\mathbbm{k}}^{k} = \mathrm{Spec}\, \mathbbm{k}[t_{1}, \dotsc, t_{k}]\). Now, we let \(\mathcal{E}_{k,m}^{HS} \subset E_{k, m}^{HS}\Omega_{X} \otimes_{\mathbbm{k}} \mathbbm{k}[t_{1}, \dotsc, t_{k}]\) be the subsheaf generated by sections of the form
\[
	t_{1}^{\beta_{1}} \dotsc t_{k}^{\beta_{k}} s,
\]
where \(s\) is a section of \(F^{\beta_{1}, \dotsc, \beta_{k}} E_{k, m}^{GG}\Omega_{X}\). The above claim concerning the specializations of this sheaf now follow from a standard computation.
\medskip

We can now perform one of the following three constructions:
\begin{enumerate}[label=(\roman*)]
	\item taking the relative spectrum above \(X \times \mathbb{A}^{k}_{\mathbbm{k}}\) yields a scheme \(\mathcal{J}_{k}X \to X \times \mathbb{A}^{k}_{\mathbbm{k}}\) such that for any \(\lambda = (\lambda_{1}, \dotsc, \lambda_{n})\}\) with \(\lambda_{i} \neq 0\), the specialization to \(X \times \left\{ \lambda \right\}\) identifies with
		\[
			J_{k}X \to X.
		\]
		The specialization to \(X \times \{0\}\) identifies with
		\[
			\mathbf{Spec}_{X}(S^{\bullet}( \mathbf{\Omega}_{X}^{k})).
		\]
		The latter identifies with a sum of \(k\)-copies of \(T_{X}\).
		\smallskip

		Note that there is a natural action of \(\mathbb{G}_{m, \mathbbm{k}}\) on \(\mathcal{J}_{k}X\) induced by the structure of graded algebra on \(\mathcal{E}_{k, m}^{GG}\).
		\medskip

	\item taking the relative \(\mathbf{Proj}\) above \(X \times \mathbb{A}_{\mathbbm{k}}^{k}\) yields a scheme \(\mathrm{P}_{k}X \to X \times \mathbb{A}_{\mathbbm{k}}^{k}\), such that for any \(\lambda = (\lambda_{1}, \dotsc, \lambda_{n})\) with \(\lambda_{i} \neq 0\), the specialization to \(X \times \left\{ \lambda \right\}\) identifies with
		\[
			X_{k}^{GG, \mathrm{coarse}} \to X.
		\]
		The specialization to \(X \times \{0\}\) identifies with
		\[
			\mathbf{Proj}_{X}(S^{\bullet}( \mathbf{\Omega}_{X}^{k})). 
		\]
		The scheme above is the schematic quotient of \(\mathcal{J}_{k}X|_{X \times \{0\}}\) by the natural action of \(\mathbb{G}_{m, \mathbbm{k}}\). In other words, we let \(\mathbb{G}_{m}\) act on
		\[
			T_{X} \oplus \dotsc \oplus T_{X}
		\]
		by giving it weight \(j\) on the \(j\)-th factor, which gives the relative projectivized bundle \(\mathrm{P}(\mathbf{T}_{X}^{k})\).
		\medskip

	\item taking the stacky quotient of \(\mathcal{J}_{k}X \times X \times \mathbb{A}_{\mathbbm{k}}^{k}\) by the action of \(\mathbb{G}_{m, \mathbbm{k}}\) gives a stack \(\mathcal{P}_{k} X \to X \times \mathbb{A}_{\mathbbm{k}}^{k}\) whose specialization to \(X \times \{\lambda\}\) with \(\lambda\) as above identifies with
		\[
			X_{k}^{GG} \to X.
		\]

		The specialization to \(X \times \{0\}\) identifies with the relative projective stack
		\[
			\mathcal{P}(\mathbf{T}^{k}_{X}),
		\]
		where we let \(\mathbb{G}_{m}\) act on each factor \(T_{X}^{(j)}\) with weight \(j\).
\end{enumerate}
\medskip

In other words, we get a commutative diagram
\begin{equation} \label{eq:diagramdeformation} 
	\begin{tikzcd}
		\mathcal{J}_{k} X
			\arrow[r, "q"]
			\arrow[dr, "p"]
		&
		\mathcal{P}_{k} X
			\arrow[d, "c"]
		\\
		&
		\mathrm{P}_{k} X
		\arrow[d] \\
		& 
		X \times \mathbb{A}_{\mathbbm{k}}^{k}.
	\end{tikzcd}
	\quad \quad
	\begin{array}{cl}
	q & \text{stacky quotient} \\
	p & \text{schematic quotient} \\
	c & \text{coarse moduli space}
	\end{array}
\end{equation}

Specializing it to \(X \times \{\lambda\}\) with \(\lambda\) having no zero coordinate, or to \(X \times \{0\}\) gives the following two identifications:
\medskip

\begin{equation}
\begin{tikzcd}
		{J}_{k} X
			\arrow[r, "q"]
			\arrow[dr, "p"]
		&
		X_{k}^{GG}	
			\arrow[d, "c"]
		\\
		&
		X_{k}^{GG, \mathrm{coarse}}
		\arrow[d] \\
		& 
		X \times \{\lambda\}.
\end{tikzcd}
\quad \text{and} \quad
\begin{tikzcd}
		T_{X}^{\oplus k}	
			\arrow[r, "q"]
			\arrow[dr, "p"]
		&
		\mathcal{P}(\mathbf{T}^{k}_{X})
			\arrow[d, "c"]
		\\
		&
		\mathrm{P}(\mathbf{T}^{k}_{X})
		\arrow[d] \\
		& 
		X \times \{0\}.
\end{tikzcd}
\end{equation}

In more informal terms, the left diagram {\em deforms}, or {\em specializes} to the diagram on the right. 
\medskip

The stack \(\mathcal{P}_{k}X\) supports a tautological line bundle \(\mathcal{O}_{\mathcal{P}_{k}X}(1)\) that restricts to the standard tautological line bundles of \(X_{k}^{GG}\) and \(\mathcal{P}(\mathbf{T}_{X}^{k})\). If \(m\) is divisible by \(\mathrm{lcm}(1, 2, \dotsc, k)\), then \(\mathcal{O}_{\mathcal{P}_{k}X}(m)\) is the pullback of line bundle on \(\mathrm{P}_{k}X\), again restricting to the standard line bundles on \(X_{k}^{GG, \mathrm{coarse}}\) and \(\mathrm{P}(\mathbf{T}_{X}^{k})\).
\medskip

Note that the previous deformation is trivial if the scheme \(X\) is such that \(E_{k, \bullet}^{HS}\Omega_{X}\) is isomorphic to its graded bundle: if this happens, we have an isomorphism \(J_{k} X \cong T_{X}^{\oplus k}\). This holds in particular for \(X = \mathbb{A}_{\mathbbm{k}}^{k}\); in general, this is also true for open subsets admitting étale coordinates:

\begin{prop}
	The deformation is locally trivial on \(X\), i.e. \(X\) is covered by open subsets on which the diagram \eqref{eq:diagramdeformation} restricts to the product with \(\mathbb{A}_{\mathbbm{k}}^{k}\) with 
	\[
		\begin{tikzcd}
		{J}_{k} U
			\arrow[r, "q"]
			\arrow[dr, "p"]
		&
		U_{k}^{GG}	
			\arrow[d, "c"]
		\\
		&
		U_{k}^{GG, \mathrm{coarse}}
		\arrow[d] \\
		& 
		U
		\end{tikzcd}.
	\]
	For such open sets \(U\), we have an isomorphism \(J_{k} U \cong T_{U}^{\oplus k}\).
\end{prop}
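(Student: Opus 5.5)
The plan is to take for \(U\) the affine open subsets of Proposition~\ref{prop:standardopen}, i.e.\ those admitting an étale morphism \(\pi : U \to \mathbb{A}^{n}_{\mathbbm{k}}\). By Proposition~\ref{prop:etalesmooth}, \(E_{k,\bullet}^{HS}\Omega_{U} \cong \pi^{\ast}E_{k,\bullet}^{HS}\Omega_{\mathbb{A}^{n}_{\mathbbm{k}}}\). By Proposition~\ref{prop:defiHSalgebraaffine}, this is the free graded \(\mathcal{O}_{U}\)-algebra \(\mathcal{O}_{U}[x_{i}^{(j)}]\), with \(\deg x_{i}^{(j)} = j\).

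The first step is to produce a splitting of the canonical filtration on \(U\). By Definition~\ref{defi:canonicalfilt} and Definition~\ref{defi:affinefilt}, \(F^{\beta}E_{k,m}^{HS}\Omega_{U}\) is the free \(\mathcal{O}_{U}\)-submodule spanned by the monomials \(\mu\) in the \(x_{i}^{(j)}\) with \(\alpha(\mu) \leq \beta\). Set \(G^{\beta} := \) the span of the monomials with \(\alpha(\mu) = \beta\). Then \(F^{\beta} = \bigoplus_{\beta' \leq \beta} G^{\beta'}\), and \(E_{k,\bullet}^{HS}\Omega_{U} = \bigoplus_{\beta} G^{\beta}\) as a multigraded algebra. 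The multiplicativity needed here is that \(\alpha(\mu\mu') = \alpha(\mu) + \alpha(\mu')\), which is immediate. The map \(G^{\beta} \to \mathrm{Gr}^{\beta}_{F}\) is then an isomorphism, and composing with the local isomorphism \(\Psi_{U}\) from the proof of Theorem~\ref{thm:grading} gives an isomorphism of graded algebras \(E_{k,\bullet}^{HS}\Omega_{U} \cong S^{\bullet}(\mathbf{\Omega}_{U}^{k})\). Taking \(\mathbf{Spec}_{U}\) yields \(J_{k}U \cong T_{U}^{\oplus k}\), compatibly with the \(\mathbb{G}_{m}\)-actions, since the isomorphism respects weighted degrees. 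This is the last claim of the statement.

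The second step is to show that the Rees sheaf is a product. On \(U \times \mathbb{A}^{k}_{\mathbbm{k}}\), the sheaf \(\mathcal{E}_{k,m}^{HS}\) is generated by the elements \(t^{\beta}s\) with \(s \in F^{\beta}\). Using the splitting, I claim that
\[
\mathcal{E}_{k,m}^{HS}|_{U} = \bigoplus_{\beta} G^{\beta}_{m} \otimes t^{\beta}\,\mathbbm{k}[t]_{\geq}\!,
\]
where the last factor denotes the \(\mathbbm{k}\)-span of the monomials \(t^{\gamma}\) dominating \(\beta\) in the sense generated by the filtration. The cleanest route is to exhibit the automorphism \(\Phi\) of \(E_{k,\bullet}^{HS}\Omega_{U} \otimes \mathbbm{k}[t]\) that sends \(\mu \mapsto t^{\alpha(\mu)}\mu\) on monomials. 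This \(\Phi\) is defined only after inverting the \(t_{i}\), and it identifies \(\mathcal{E}^{HS}_{k,\bullet}\) with \(S^{\bullet}(\mathbf{\Omega}_{U}^{k}) \otimes \mathbbm{k}[t]\) up to the monomial factors. Alternatively, one can check directly that \(\mathcal{E}^{HS}_{k,\bullet}|_{U}\) is the Rees algebra of a split filtration, and that such a Rees algebra is isomorphic to \(\mathrm{Gr}_{F} \otimes \mathbbm{k}[t]\), because each \(G^{\beta}\) is free and the filtration is split multiplicatively. I expect this identification to be the main obstacle. The filtration is indexed by \(\mathbb{N}^{k}\) with a lexicographic order, not a product order, so the Rees construction with \(k\) parameters requires some care: I would verify the claim directly on the monomial basis, checking that the generators \(t^{\beta}\mu\) (with \(\alpha(\mu) \leq \beta\)) span a module that is free over \(\mathcal{O}_{U}[t]\) with basis \(t^{\alpha(\mu)}\mu\). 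Granting this, \(\mu \mapsto t^{\alpha(\mu)}\mu\) gives an isomorphism of graded \(\mathcal{O}_{U}[t]\)-algebras \(S^{\bullet}(\mathbf{\Omega}^{k}_{U}) \otimes \mathbbm{k}[t] \cong \mathcal{E}_{k,\bullet}^{HS}|_{U}\).

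Finally, applying the functors \(\mathbf{Spec}\), \(\mathbf{Proj}\), and the stacky \(\mathbb{G}_{m}\)-quotient to this isomorphism (all three commute with the flat base change \(U \times \mathbb{A}^{k} \to U\)) shows that the diagram \eqref{eq:diagramdeformation} over \(U \times \mathbb{A}^{k}_{\mathbbm{k}}\) is the product with \(\mathbb{A}^{k}_{\mathbbm{k}}\) of the diagram for \(U\). Combined with the first step, this gives the stated form.
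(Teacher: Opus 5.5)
Your overall strategy is the paper's: pass to the \'etale charts of Proposition~\ref{prop:standardopen}, where the monomials in the $x_i^{(j)}$ split the canonical filtration. Your first step, giving $E^{HS}_{k,\bullet}\Omega_U\cong S^\bullet(\mathbf{\Omega}^k_U)$ and hence $J_kU\cong T_U^{\oplus k}$ compatibly with the $\mathbb{G}_m$-actions, is correct.

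The gap is the step you ``grant'': it is false, and your worry about the lexicographic order was well placed. On a chart, the sheaf generated by the $t^\beta s$ with $s\in F^\beta$ is $\bigoplus_\mu I_{\alpha(\mu)}\,\mu$. Here $I_\alpha\subset\mathcal{O}_U[t]$ is the monomial ideal spanned by the $t^\gamma$ with $\gamma\ge\alpha$ lexicographically, and this ideal is not principal in general.

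Already for $n=1$, $k=2$, $m=2$ this fails. The monomial $(x^{(1)})^2$ has $\alpha=(2,0)<(0,1)$, so $t_2\,(x^{(1)})^2$ lies in the Rees sheaf. It is not a multiple of $t_1^2\,(x^{(1)})^2$, and the coefficient ideal of $(x^{(1)})^2$ is $(t_1^2,t_2)$. Hence:
\begin{itemize}
\item $\mathcal{E}^{HS}_{2,2}\cong(t_1^2,t_2)\oplus(t_2)$ is not free;
\item its fibre over $t=0$ has rank $3$, whereas $\mathrm{Gr}_F$ has rank $2$;
\item $\mu\mapsto t^{\alpha(\mu)}\mu$ is not surjective.
\end{itemize}
So the $k$-parameter family, taken literally, is not even flat at the origin of $\mathbb{A}^k_{\mathbbm{k}}$, and no computation on the monomial basis will make it a product. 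The paper's one-line proof (``the Rees deformation of $\mathbb{A}^n_{\mathbbm{k}}$ is trivial since $J_k\mathbb{A}^n_{\mathbbm{k}}\cong T^{\oplus k}$'') passes over exactly this point.

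What is missing is a Rees construction for which a split filtration genuinely yields a trivial family. Use a single parameter $s$ and an $\mathbb{N}$-indexed, totally ordered, multiplicative filtration that the Fa\`a di Bruno transitions respect. One choice is the additive weight $w(\alpha)=\sum_p\alpha_pN^{p-1}$ with $N>k$. Each monomial of $Q^q_i$ in Proposition~\ref{prop:faadibruno} has weight at most $qN^{q-2}<N^{q-1}=w(e_q)$. So the span $F_w^{\le c}$ of the monomials of weight $\le c$ is independent of the chart, and the argument of Theorem~\ref{thm:grading} gives $\mathrm{Gr}_w\cong S^\bullet(\mathbf{\Omega}^k_X)$.

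In an \'etale chart the Rees sheaf $\sum_c s^c F_w^{\le c}$ is $\bigoplus_\mu s^{w(\alpha(\mu))}\mathcal{O}_U[s]\,\mu$, which is free. The map $\mu\mapsto s^{w(\alpha(\mu))}\mu$ is an isomorphism of graded $\mathcal{O}_U[s]$-algebras $S^\bullet(\mathbf{\Omega}^k_U)\otimes\mathbbm{k}[s]\cong\sum_c s^cF_w^{\le c}$; additivity of $w$ gives multiplicativity. With this replacement your last paragraph goes through unchanged, with $\mathbb{A}^1_{\mathbbm{k}}$ in place of $\mathbb{A}^k_{\mathbbm{k}}$.
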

\begin{proof}
	Let \(U \subset X\) be as in Proposition~\ref{prop:standardopen}, so that we have \(E_{k, \bullet}^{HS} \Omega_{U} \cong \pi^{\ast} E_{k, \bullet}^{HS} \Omega_{\mathbb{A}^{n}_{\mathbbm{k}}}\). However, the Rees deformation for \(E_{k, \bullet}^{HS} \Omega_{\mathbb{A}_{\mathbbm{k}}^{n}}\) is trivial since \(J_{k}\mathbb{A}^{n}_{\mathbbm{k}}\) identifies with \(T_{U}^{\oplus k}\). This gives the result. 
\end{proof}

Note that the morphism \(\mathrm{P}_{k} X \to \mathbb{A}_{\mathbbm{k}}^{k}\) is {\em flat}, as the composition of to locally trivial morphisms \(\mathrm{P}_{k} X \to X \times \mathbb{A}_{\mathbbm{k}}^{k}\) and \(X \times \mathbb{A}_{\mathbbm{k}}^{k}\). As a consequence, we have the Euler characteristic equalities
\[
	\chi(X_{k}^{GG},
	\mathcal{O}_{k}(m)
	)
	=
	\chi(\mathrm{P}(\mathbf{T}^{k}_{X}),
	\mathcal{O}(m)
	),
\]
for \(m\) divisible enough. Note that since the higher direct images of \(\mathcal{O}(m)\) vanish for both projections \(X_{k}^{GG, \mathrm{coarse}} \to X\) and \(P(\mathbf{T}^{k}_{X}) \to X\) (see Proposition~\ref{prop:vanishingrel}), the previous inequality boils down to
\begin{equation} \label{eq:equalityEuler}
	\chi(X, E_{k, m}^{HS} \Omega_{X})
	=
	\chi(X, \mathrm{Gr}_{X}(E_{k, m}^{HS} \Omega_{X}))
	=
	\chi(X, S^{m}\mathbf{\Omega}_{X}^{k})
\end{equation}
which is of course simply a restatement of the fact that the Euler characteristic of a graded vector bundle and of its graded object are equal.

\chapter{Weighted Segre classes and asymptotic Euler characteristics} \label{chap:weighted}

The purpose of this chapter is twofold. First, we will explain how to present Green and Griffiths' classical asymptotic Euler computations \cite{GG80}, using {\em weighted Segre classes}. We will rederive an elementary proof of the Whitney formula~\eqref{eq:whitney}, as an equality between numerical classes. As we said above, the computations will be closely related to the original computations of Green-Griffiths \cite{GG80} (i.e. estimating Euler characteristics by integrals of polynomials on adequate simplexes). This leads to the second purpose of this chapter: its computations will serve to introduce the main method of Chapter~\ref{chap:morseineqweighted}, where we use once more these integrals on simplexes, but this time substituting the use of asymptotic Riemann-Roch with the generalized algebraic Morse inequalities.
\medskip

{\em In this chapter, \(\mathbbm{k}\) is a field of arbitrary characteristic, not necessarily algebraically closed.}

\section{The Whitney formula}

Let \(X\) be a smooth scheme, endowed with a weighted direct sum \(\mathbf{E} := E_{1}^{(a_{1})} \oplus \dotsc \oplus E_{r}^{(a_{r})}\), and let \(p : \mathrm{P}(\mathbf{E}) \to X\) be the associated projectivized scheme (see Section~\ref{sec:weighted}). If \(m_{0} = \gcd(a_{0}, a_{1}, \dotsc, a_{r})\), then \(\mathcal{O}(m_{0})\) can be seen as a line bundle on \(\mathrm{P}(\mathbf{E})\), and we may define the {\em weighted Segre class of \(\mathbf{E}\)} \index{Segre class!weighted} as the following endomorphism of the Chow ring with rational coefficients \(A_{\ast}(X)_{\mathbb{Q}}\):
\medskip

\[
	s_{k}(\mathbf{E}) \cap \alpha
	:=
	\frac{1}{m_{0}^{e + k - 1}}p_{\ast} (c_{1}(\mathcal{O}(m_{0}))^{e + k - 1 } \cap p^{\ast} \alpha)
	\quad
	(\alpha \in A_{\ast}(X)_{\mathbb{Q}})
\]
where \(e = \sum_{j} \mathrm{rk}\, E_{i}\). 

\medskip

We then have the following Whitney formula, that permits to express \(s_{\bullet}(\mathbf{E})\) in terms of the Segre classes of its direct factors.

\begin{prop} [{\cite[Proposition~3.2.11]{cad_thesis}}] \label{prop:whitneyformula}
We have the following equality of endomorphisms of \((A_{\ast} X)_{\mathbb{Q}}\):
\begin{equation} \label{eq:whitney}
	s_{\bullet}(E_{1}^{(a_{1})} \oplus \dotsc \oplus E_{s}^{(a_{s})})
	=
	\frac{\mathrm{gcd}(a_{1}, \dotsc, a_{s})}{a_{1} \dotsc a_{s}}
	\prod_{j} s_{\bullet}(E_{j}^{(a_{j})})
\end{equation}
where
	\(
	s_{\bullet}(E^{(a)})
	=
	\frac{1}{a^{\mathrm{rk} E - 1}} \sum_{l} \frac{s_{l}(E)}{a^{l}}.
	\)
	for a single vector bundle \(E\) and any integer \(a > 0\).
	\medskip
\end{prop}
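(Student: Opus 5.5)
Following the announcement at the beginning of this chapter, I will prove \eqref{eq:whitney} numerically, by computing an asymptotic Euler characteristic in two ways. Both sides are endomorphisms of \(A_{\ast}(X)_{\mathbb{Q}}\) that commute with proper pushforward and flat pullback, and the classes \(s_{k}(\mathbf{E}) \cap [V]\) for subvarieties \(V\) determine everything. So I will first reduce to checking the identity after capping with \([V]\) for a closed subvariety \(V \subset X\), and then, by restricting \(\mathbf{E}\) to \(V\), to the top-degree component on a projective variety \(X\) of dimension \(n\). I will capture lower-degree components by also twisting by powers of an ample line bundle \(H\) and comparing polynomials in the twisting parameter; equivalently, I intersect with \(c_{1}(H)^{j}\). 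Next I use the splitting principle, via a flag bundle \(f : F \to X\) with \(f_{\ast}f^{\ast} = \mathrm{id}\) and \(f^{\ast}\) injective on \(A_{\ast}\) and compatible with Segre classes, to reduce to the case where every \(E_{j}\) is a direct sum of line bundles. Then \(\mathbf{E} = \bigoplus_{i} L_{i}^{(b_{i})}\) is a weighted sum of line bundles, and it suffices to prove the formula for line bundles. The general formula follows by grouping factors: the single-bundle formula \(s_{\bullet}(E^{(a)}) = a^{1-\mathrm{rk}E}\sum_{l} s_{l}(E)/a^{l}\) is itself the line-bundle case applied to \(E = \bigoplus L_{i}\) with all weights equal to \(a\), combined with the ordinary Whitney formula.

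For a sum of line bundles \(L_{1}^{(a_{1})} \oplus \dots \oplus L_{r}^{(a_{r})}\), I compute the leading coefficient of \(\chi(\mathrm{P}(\mathbf{E}), \mathcal{O}(m))\) as \(m \to \infty\) along multiples of \(m_{0}\). First, \(R^{q}p_{\ast}\mathcal{O}(m) = 0\) for \(q > 0\) (Proposition~\ref{prop:vanishingrel}), and \(p_{\ast}\mathcal{O}(m) = S^{m}(\mathbf{E}^{\ast})\) or \(S^{m}\mathbf{E}\), depending on the convention. This gives
\[
\chi(\mathrm{P}(\mathbf{E}), \mathcal{O}(m)) = \sum_{a_{1}l_{1} + \dots + a_{r}l_{r} = m} \chi(X, L_{1}^{l_{1}} \otimes \dots \otimes L_{r}^{l_{r}}).
\]
Asymptotic Riemann--Roch on \(X\) makes each summand equal to \((l_{1}c_{1}(L_{1}) + \dots + l_{r}c_{1}(L_{r}))^{n}/n! + O(|l|^{n-1})\). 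The lattice points \(\{a \cdot l = m\}\) form a translate of a sublattice whose covolume inside the hyperplane carries the factor \(\gcd(a)/(a_{1}\cdots a_{r})\). A Riemann sum argument (Annex B) therefore turns the sum into \(\frac{\gcd(a)}{a_{1}\cdots a_{r}} m^{n+r-1}\) times an integral of the polynomial \((\sum_{i} x_{i} c_{1}(L_{i}))^{n}/n!\) over the simplex \(\{\sum_{i} a_{i}x_{i} = 1, x_{i} \ge 0\}\), up to \(o(m^{n+r-1})\). This integral is a Dirichlet-type integral: expanding the power, each monomial \(\prod_{i} x_{i}^{k_{i}}\) integrates to \(\prod_{i} k_{i}!\,/\,(a_{i}^{k_{i}}(n+r-1)!)\), after rescaling the simplex. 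So the leading term is
\[
\frac{\gcd(a)}{a_{1}\cdots a_{r}}\,\frac{m^{n+r-1}}{(n+r-1)!}\sum_{|k| = n}\prod_{i}\frac{(\pm c_{1}(L_{i}))^{k_{i}}}{a_{i}^{k_{i}}},
\]
which is exactly the degree-\(n\) part of \(\frac{\gcd(a)}{\prod_{i} a_{i}}\prod_{i} s_{\bullet}(L_{i}^{(a_{i})})\), because \(s_{\bullet}(L^{(a)}) = \sum_{l}(\pm c_{1}(L))^{l}/a^{l}\).

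On the other side, asymptotic Riemann--Roch on \(\mathrm{P}(\mathbf{E})\) for the line bundle \(\mathcal{O}(m_{0})\) gives leading term \(\frac{m^{n+r-1}}{(n+r-1)!}\,m_{0}^{-(n+r-1)}\int c_{1}(\mathcal{O}(m_{0}))^{n+r-1} = \frac{m^{n+r-1}}{(n+r-1)!}\deg s_{n}(\mathbf{E})\). Comparing the two computations yields the degree-\(n\) identity. Replacing \(X\) by subvarieties \(V\) (or twisting by \(H\)) gives all components.

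I expect the main obstacle to be the lattice-counting and normalization step. The hyperplane sum has gaps when \(m\) is not divisible by \(m_{0}\), and the density of \(\{l : a\cdot l = m\}\) has to be matched precisely with both \(\gcd(a)/\prod a_{i}\) and the \(m_{0}^{-(e+k-1)}\) normalization in the definition of \(s_{k}\). One also needs sign and convention bookkeeping (\(\mathbf{E}\) versus \(\mathbf{E}^{\ast}\)), and since \(\mathrm{P}(\mathbf{E})\) is singular, Riemann--Roch must be used only through the asymptotic intersection-number form for the genuine line bundle \(\mathcal{O}(m_{0})\). I would settle the normalization first on \(X = \mathrm{pt}\), where the claim reduces to \(\deg \mathcal{O}(1)^{r-1}\) on \(\mathbb{P}(a_{1},\dots,a_{r})\) being \(\gcd(a)/\prod_{i} a_{i}\).
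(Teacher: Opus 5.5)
Your route is the one the paper itself follows: reduction to degrees on subvarieties, splitting principle, vanishing of \(R^jp_\ast\mathcal{O}(m)\), asymptotic Riemann--Roch on \(\mathrm{P}(\mathbf{E})\) and on \(X\), and a lattice-point sum turned into a simplex integral. However, it does not prove the statement as written. The step that fails is the passage from the identity of classes \(s_k(\mathbf{E}|_V)\cap[V]\) in \(A_\ast(V)_{\mathbb{Q}}\) to ``the top-degree component''. An asymptotic Euler characteristic only computes the \emph{degree} of a \(0\)-cycle. Knowing \(\deg\bigl(s_k(\mathbf{E})\cap\alpha\bigr)\) for all \(k\)-cycles \(\alpha\) determines the classes \(s_k(\mathbf{E})\cap\beta\) only up to numerical equivalence, and capping with powers of a single \(c_1(H)\) determines even less. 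Numerical equivalence is strictly coarser than rational equivalence. For example, if \(X\) is a complex curve of positive genus, then \(s_1(\mathbf{E})\cap[X]\) lives in \(A_0(X)_{\mathbb{Q}}\), which contains the nonzero numerically trivial subspace \(\mathrm{Pic}^0(X)\otimes\mathbb{Q}\); your argument only compares the degrees of the two sides. What you actually obtain is \eqref{eq:whitneynum}. This is also all the paper proves here, and it says so explicitly, referring to \cite{cad_thesis} (which follows Fulton's construction) for the identity of endomorphisms of \(A_\ast(X)_{\mathbb{Q}}\). To get the class-level statement along your lines, you would need a class-valued replacement for \(\chi\). One candidate is Fulton's Riemann--Roch map \(\tau\) \cite[Ch.~18]{ful98}: it is covariant for proper morphisms, satisfies \(\tau(F\otimes\mathcal{G})=\mathrm{ch}(F)\cap\tau(\mathcal{G})\) for \(F\) locally free, and has \(\tau(\mathcal{O}_Y)=[Y]+(\text{lower-dimensional terms})\). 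You would then compare \(p_\ast\tau_{\mathrm{P}(\mathbf{E})}(\mathcal{O}(m))=\tau_X(S^m\mathbf{E})=\mathrm{ch}(S^m\mathbf{E})\cap\tau_X(\mathcal{O}_X)\) dimension by dimension, as polynomials in \(m\) along multiples of \(\mathrm{lcm}(a_i)\), which gives the leading terms as classes rather than numbers.

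Two further points need repair even for the numerical statement. First, the flag bundle step is wrong as stated. A flag bundle \(f\colon F\to X\) has positive-dimensional fibres, so \(f_\ast f^\ast=0\), not the identity. It also produces \emph{filtrations} of the \(E_j\) with line bundle quotients, not splittings. Since \(s_\bullet(\mathbf{E})\) is defined through \(\mathrm{P}(\mathbf{E})\), you must show it depends only on the graded object \(\overline{\mathbf{E}}\). The paper uses a generically finite cover \(q\colon X'\to X\) (Lemma~\ref{lem:splittingprinciple}) with \(q_\ast[X']=\deg(q)[X]\) and the projection formula. It then uses the Rees deformation, in which \([\mathrm{P}(\mathbf{E})]\) and \([\mathrm{P}(\overline{\mathbf{E}})]\) are rationally equivalent fibres of a flat family. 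In your setting it is simpler still: \(S^m\mathbf{E}\) is filtered with graded object \(S^m\overline{\mathbf{E}}\), hence \(\chi(X,S^m\mathbf{E})=\chi(X,S^m\overline{\mathbf{E}})\).

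Second, in the normalization you should let \(m\) run through multiples of \(L=\mathrm{lcm}(a_i)\) and apply asymptotic Riemann--Roch to \(\mathcal{O}(L)\), not to \(\mathcal{O}(\gcd(a_i))\). On the coarse space, \(\mathcal{O}(m)\cong\mathcal{O}(m')^{\otimes m/m'}\) holds when \(L\mid m'\), but can fail for \(m'=\gcd(a_i)\). For instance, on \(\mathbb{P}(1,2)\cong\mathbb{P}^1\), \(\mathcal{O}(1)\) is trivial while \(\mathcal{O}(2)\cong\mathcal{O}_{\mathbb{P}^1}(1)\). So your test case \(X=\mathrm{pt}\) gives \(L^{1-r}\deg c_1(\mathcal{O}(L))^{r-1}=\gcd(a)/\prod_i a_i\), as it should, but gives \(0\) if computed with \(\mathcal{O}(\gcd(a))\). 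With these corrections your argument coincides with the paper's proof of \eqref{eq:whitneynum}.
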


The proof presented in \cite{cad_thesis} essentially copies Fulton's approach in \cite{ful98}. Our plan in the next sections will not be to reprove this formula directly, but rather to give a numerical version: in other words, we will show that this formula holds at least after quotienting by the numerical equivalence relation. Our idea will be to see the top Segre class as the leading coefficient in the asymptotic expansion of the Euler characteristic of the symmetric products of \(\mathbf{E}^{\ast}\). 
\smallskip

This type of computation is very close in spirit to the ones of \cite{GG80}. We will need several combinatorial lemmas, all gathered in Section~\ref{sec:combinatorial} below.

\section{Proof of the identity as endomorphisms of the numerical Chow group}

We will prove that for any integer \(k \in \llbracket 1, n \rrbracket\) and any cycle \(\alpha \in A_{k} X\):
\begin{equation} \label{eq:whitneynum}	
	\deg s_{k}(E_{1}^{(a_{1})} \oplus \dotsc \oplus E_{s}^{(a_{s})}) \cap \alpha
	=	
	\deg \left( \frac{\mathrm{gcd}(a_{1}, \dotsc, a_{s})}{a_{1} \dotsc a_{s}}
	\prod_{j} s_{\bullet}(E_{j}^{(a_{j})}) \right)_{k} \cap \alpha
\end{equation}
in \(\mathbb{Q}\), where the parenthesis in the right hand side means that we take the part of pure degree \(k\).

\subsection{Reduction steps} \label{sec:reductionsteps} Breaking \(\alpha\) into its irreducible components and using the projection formula, we see that we may assume \(\alpha = [X]\) and \(k = \dim X\), where \(X\) is a variety over \(\mathrm{Spec}\, \mathbbm{k}\). 
\medskip

We may also apply the splitting principle to assume that the \(E_{i}\) are split vector bundles. We can use the following elementary lemma.

\begin{lem} \label{lem:splittingprinciple}
	Let \(X\) be a variety of dimension \(n \in \mathbb{N}\). Let \(E \to X\) be a vector bundle. Then there exists a variety \(X'\) of the same dimension, and a generically finite dominant, projective morphism \(q : X' \to X\) such that \(q^{\ast} E\) admits a filtration by subbundles
	\[
		0 = E_{0} 
		\subsetneq 
		E_{1} 
		\subsetneq 
		\dotsc
		\subsetneq 
		E_{r} = q^{\ast} E
	\]
	where each graded term \(\quotientd{E_{i}}{E_{i-1}}\) is a line bundle.
\end{lem}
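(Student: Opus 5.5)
We construct \(X'\) as the full flag variety of \(E\) over \(X\), and then cut it down by general hyperplane sections so that its dimension becomes that of \(X\).

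\emph{Step 1 (complete flag bundle).} Let \(r = \mathrm{rk}\, E\). Consider \(p_{1} : \mathrm{P}(E) \to X\), the projective bundle of lines in \(E\). On \(\mathrm{P}(E)\) there is a tautological line subbundle \(\mathcal{O}(-1) \subset p_{1}^{\ast} E\). It is a subbundle, i.e. the quotient \(Q_{1}\) is locally free of rank \(r-1\). We iterate this with \(Q_{1}\) on \(\mathrm{P}(E)\), and so on. After \(r-1\) steps we obtain the flag bundle \(p : F(E) \to X\). It is smooth and projective over \(X\), with irreducible fibers of dimension \(N = r(r-1)/2\), so \(F(E)\) is a variety. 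On \(F(E)\), \(p^{\ast} E\) carries a complete flag by subbundles with line bundle graded pieces. We build this flag by pulling back successively the kernels of \(p^{\ast}E \to Q_{i}\), which are subbundles by induction on \(i\).

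\emph{Step 2 (cutting down the dimension).} Since \(p\) is projective and \(X\) is quasi-projective (we may assume this in our setting; otherwise we work over an affine open and take closures), \(F(E)\) embeds into some \(\mathbb{P}^{M}_{X}\). We then take \(X' \subset F(E)\) to be an irreducible component of the intersection with \(N\) general members of a very ample linear system \(|H|\) that dominates \(X\). Concretely, we choose the hyperplanes successively. At each step the chosen hypersurface does not contain the generic fiber of the current variety over \(X\), and it meets that fiber properly. This is possible because the generic fiber is a projective variety over the function field \(\mathbbm{k}(X)\) of positive dimension, and a hyperplane section of a positive-dimensional projective variety is nonempty and of dimension one less. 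We then keep a component that surjects onto \(X\), take it with its reduced structure, and get a variety.

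After \(N\) steps the generic fiber has dimension \(0\) and is nonempty. Hence \(q : X' \to X\) is dominant, projective (as a closed subscheme of \(F(E)\)), generically finite, and \(\dim X' = \dim X\). Restricting the flag of \(p^{\ast}E\) to \(X'\) gives the required filtration of \(q^{\ast}E\), since pullbacks of subbundles with locally free quotients remain subbundles.

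The main obstacle is Step 2 when \(\mathbbm{k}\) is not algebraically closed or is finite, where general hyperplanes may not exist over \(\mathbbm{k}\). To handle it, we work on the generic fiber over \(K = \mathbbm{k}(X)\) and pick a closed point of the generic fiber of \(F(E) \to X\). Its closure in \(F(E)\) is a variety \(X'\) that is finite over a dense open subset of \(X\), hence generically finite and dominant onto \(X\). It is projective over \(X\) as a closed subvariety of \(F(E)\). This avoids any genericity argument over \(\mathbbm{k}\), and we would in fact use this version directly.
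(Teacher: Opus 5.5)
Your proof is correct and follows the paper's route: you build the complete flag bundle by iterating Fulton's projective-bundle construction, and then restrict to an \(n\)-dimensional subvariety dominating \(X\). Your final device, taking the closure of a closed point of the generic fiber over \(\mathbbm{k}(X)\), is a clean justification for the existence of that subvariety, which the paper simply asserts; it also makes the hyperplane-section detour unnecessary.
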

\begin{proof}
	First, we claim that there exits a proper dominant morphism \(u : Y \to X\) such that \(u^{\ast} E\) admits a filtration as above. To see this, we apply Fulton's classical construction (see \cite[p.\ 51]{ful98}). We let \(u_{1} : Y_{1} := \mathrm{P}(E) \to X\), and \(F_{1} := \mathcal{O}(-1)\) be the tautological line bundle. Now, apply inductively the construction to the quotient \(Q_{1}\) in the exact sequence
	\[
		0
		\longrightarrow
		F_{1}
		\longrightarrow
		u_{1}^{\ast} E
		\longrightarrow
		Q_{1}
		\longrightarrow
		0
	\]
	This gives a morphism \(v : Y \to Y_{1}\) on which \(v^{\ast} Q_{1}\) admits a total filtration. We then get the claim letting \(E_{1} := v^{\ast} F_{1}\). To get the final result, let \(X' \subset Y\) be any subvariety of dimension \(n\) whose projection to \(X\) is dominant: the filtration exists on \(X'\) by restriction.
\end{proof}

Using the previous lemma, we can assume that there exists a morphism \(q : X' \to X\) such that all the vector bundles \(p^{\ast} E_{i}\) admit such filtrations. We then have a diagram
\[
	\begin{tikzcd}
		\mathrm{P}_{X'}(q^{\ast}\mathbf{E})
			\arrow[r, "q'"]
			\arrow[dr, phantom, "\square"]
			\arrow[d, "p'"]
		&
		\mathrm{P}_{X}(\mathbf{E})
			\arrow[d, "p"]
		\\
		X'
			\arrow[r, "q"]
		&
		X
	\end{tikzcd}.
\]

The projection formula shows that the Whitney formula will hold for \(\mathbf{E}\) if it holds for \(q^{\ast} \mathbf{E}\). We may thus assume that all the \(E_{i}\) admit total filtrations.
\medskip

We may moreover assume that the filtrations are split. To see this, denote by \(\overline{E}_{i}\) the graded vector bundle associated to \(E_{i}\) for \(i \in \llbracket 1, r \rrbracket\), and let \(\overline{\mathbf{E}} := \overline{E}_{1}^{(a_{1})} \oplus \dotsc \oplus \overline{E}_{r}^{(a_{r})}\). Then, we have an induced filtration on the symmetric algebra \(\mathrm{Sym}^{\bullet}(\mathbf{E})\) whose graded object is \(\mathrm{Sym}^{\bullet} (\overline{\mathbf{E}})\). Applying the Rees deformation, we get a diagram
\[
	\begin{tikzcd}
		\mathrm{P}(\mathbf{E})
			\arrow[r, hook]
			\arrow[d]
		& 
		\mathcal{P}
			\arrow[d, "q"]
		& 
		\mathrm{P}(\overline{\mathbf{E}})
			\arrow[l, hook]
			\arrow[d]
		\\
		X \times \{1\} 
			\arrow[r, hook]
			\arrow[d]
		&
		X \times \mathbb{A}^{1}	
			\arrow[d]
		&
		X \times \{0\}
			\arrow[l, hook]
			\arrow[d]
		\\
		\{1\}
			\arrow[r, hook]
		&
		\mathbb{A}^{1}
		&
		\{0\}
			\arrow[l, hook]
	\end{tikzcd}
\]
as well as a line bundle \(\mathcal{O}_{\mathcal{P}}(m_{0})\) on \(\mathcal{P}\) restricting to the corresponding powers of the respective tautological bundles on \(\mathrm{P}(\mathbf{E})\) and \(\mathrm{P}(\overline{\mathbf{E}})\). The projection \(\mathcal{P} \to \mathbb{A}^{1}\) being flat, the cycles \([\mathrm{P}(\mathbf{E})]\) and \([\mathrm{P}(\mathbf{\overline{E}})]\) are rationally equivalent in \(\mathcal{P}\) (see \cite[Theorem~1.7]{ful98}). Thus, we have
\[
	\deg c_{1}(\mathcal{O}_{\mathcal{P}}(m_{0}))^{n+e-1} \cap [\mathrm{P}(\mathbf{E})] 
	= 
	\deg c_{1}(\mathcal{O}_{\mathcal{P}}(m_{0}))^{n+e-1} \cap [\mathrm{P}(\overline{\mathbf{E})}],
\]
and thus the left hand sides of \eqref{eq:whitneynum} coincide for \(\mathbf{E}\) and \(\overline{\mathbf{E}}\). Since the right hand sides also coincide by the usual Whitney formula, this shows that the formula will hold if we manage to prove it for \(\overline{\mathbf{E}}\). Thus, we may reduce to the case where each \(E_{i}\) is a direct sum of line bundles.

\subsection{Proof of the formula in the split case.} Let us then assume that each \(E_{i} = L_{i, 1} \oplus \dotsc \oplus L_{i, r_{i}}\) is split as a direct sum of line bundles for all \(1 \leq i \leq s\), and let \(\alpha_{i, j} = c_{1} (L_{i, j})\) denote the corresponding Chern roots. We will rather prove the dual formula for \(s_{n}(\mathbf{E}^{\ast})\) to make the signs easier to track.

	Let \(m_{0} := \gcd(a_{1}, \dotsc, a_{r})\). Then, applying the asymptotic Riemann-Roch theorem (see Section~\ref{sec:asymptoticRR}), one finds
	\begin{align*}
		\int_{X} s_{n}(\mathbf{E}^{\ast}) & = \frac{1}{m_{0}^{n+r-1}}\int_{\mathbb{P}(\mathbf{E})} c_{1} \mathcal{O}(m_{0})^{n+r-1} \quad \text{(by definition of } s_{n}(\mathbf{E}^{\ast}))\\
					& = \lim_{\stackrel{m \longrightarrow +\infty}{m_{0} | m}}
					\frac{\chi(\mathbb{P}(\mathbf{E}), \mathcal{O}(m))}{m^{n+r-1}/(n+r-1)!}
	\end{align*}
	The higher direct images \(R^{j} p_{\ast} \mathcal{O}(m)\) all vanish if \(j > 0\) by Proposition~\ref{prop:vanishingrel}, so the Leray spectral sequence gives
	\[
		\chi(\mathbb{P}(\mathbf{E}), \mathcal{O}(m))) = \chi(X, \pi_{\ast} \mathcal{O}(m)) = \chi(X, S^{m}\mathbf{E}).
	\]
	But then, since each \(E_{i}\) is split, Proposition~\ref{prop:symmetricsum} below implies that
	\[
		\int_{X} s_{n}(\mathbf{E}^{\ast}) =
		\frac{m_{0}}{a_{1}^{r_{1}} \dotsc a_{s}^{r_{s}}}
		\int_{X} \left( \prod_{i=1}^{s} \prod_{j=1}^{r_{i}} \sum_{p=0}^{n} \left(\frac{\alpha_{i, j}}{a_{i}}\right)^{p} \right)_{n}.
	\]
	Now, we see that for all \(i = 1, \dotsc, s\), the expression  \( \frac{1}{a_{i}^{r_{i} - 1}} \prod_{j=1}^{r_{i}} \sum_{p=0}^{n} \left(\frac{\alpha_{i, j}}{a_{i}}\right)^{p}\) identifies with the formula for \(s_{\bullet}((E_{i}^{\ast})^{(a_{i})})\) that is given in the statement of the proposition. Thus, one obtains
	\[
		\int_{X} s_{n}(\mathbf{E}^{\ast})
		= \frac{m_{0}}{a_{1} \dotsc a_{r}}
		\int_{X}
		\left( \prod_{i=1}^{s} s_{\bullet}((E_{i}^{\ast})^{(a_{i})}) \right)_{n},
	\]
	which gives the result.

\section{Simplexes and asymptotics of Euler characteristics} \label{sec:combinatorial}

The purpose of the following discussion is to prove Proposition~\ref{prop:symmetricsum}, that was used in the proof of the Whitney formula.

\begin{prop} \label{prop:symmetricsum}
	Let \(X\) be a variety of dimension \(n\). Let \(L_{1}, \dotsc, L_{r}\) be line bundles on \(X\), and fix integers \(a_{1}, \dotsc, a_{r} \in \mathbb{N}\). Then, one has the asymptotic expansion, as \(m\) goes to \(+\infty\) while being divisible by \(\gcd(a_{1}, \dotsc, a_{r})\):
\[
\chi(X, S^{m}(L_{1}^{(a_{1})} \oplus \dotsc \oplus L_{r}^{(a_{r})}))
	 =
	\frac{\gcd(a_{1}, \dotsc, a_{r})}{a_{1} \dotsc a_{r}}
	\int_{X}
	\left(
		\prod_{j = 1}^{r} \sum_{p=0}^{n} \frac{c_{1}(L_{j})^{p}}{a_{j}^{p}}
	\right)_{n}
	\frac{m^{n+r-1}}{(n+r-1)!} + o(m^{n+r-1}).
\]
	where \((\cdot)_{n}\) means we take the part of pure degree \(n\) of the class between the brackets.
\end{prop}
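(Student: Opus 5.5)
Since each \(L_j\) is a line bundle, \(S^{l}L_j = L_j^{\otimes l}\), so by Definition~\ref{eq:deftensorweighted}
\[
	S^{m}(L_{1}^{(a_{1})} \oplus \dotsc \oplus L_{r}^{(a_{r})})
	=
	\bigoplus_{a_{1} l_{1} + \dotsc + a_{r} l_{r} = m} L_{1}^{\otimes l_{1}} \otimes \dotsc \otimes L_{r}^{\otimes l_{r}},
\]
and the Euler characteristic splits as a sum over the lattice points \(l = (l_1, \dotsc, l_r) \in \mathbb{N}^r\) of the hyperplane \(\sum a_j l_j = m\). I will apply asymptotic Riemann--Roch to each summand. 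The precise form I need is a \emph{uniform} version: for a fixed \(r\)-tuple of line bundles, \(\chi(X, L_1^{l_1} \otimes \dotsc \otimes L_r^{l_r}) = \frac{1}{n!}\int_X (\sum_j l_j c_1(L_j))^n + O(|l|^{n-1})\), with the error bounded uniformly in \(l\). This follows from Snapper's theorem: the function \(l \mapsto \chi(X, \bigotimes L_j^{l_j})\) is a numerical polynomial of total degree \(\le n\) in \((l_1,\dotsc,l_r)\), whose degree \(n\) part is \(\frac{1}{n!}\int_X(\sum l_j c_1(L_j))^n\). I expect this to be the content of Section~\ref{sec:asymptoticRR}, and it holds over any field.

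Next I will sum the leading terms. The number of lattice points on \(\{\sum a_j l_j = m,\ l_j \ge 0\}\) is \(O(m^{r-1})\), so the total error is \(O(m^{n+r-2}) = o(m^{n+r-1})\). It remains to evaluate
\[
\frac{1}{n!}\sum_{\sum a_j l_j = m} \int_X \Big(\sum_j l_j c_1(L_j)\Big)^n.
\]
Expand \((\sum l_j c_1(L_j))^n = \sum_{|p|=n} \frac{n!}{p!} \prod_j l_j^{p_j} c_1(L_j)^{p_j}\). This reduces the problem to the asymptotics of the lattice sums \(\sum_{\sum a_j l_j = m} \prod_j l_j^{p_j}\).

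This is the main step. I will compare each lattice sum with an integral over the simplex \(\{\sum a_j x_j = m\}\) by a Riemann sum argument. The lattice \(\{l \in \mathbb{Z}^r : \sum a_j l_j = m\}\) is a translate of a rank \(r-1\) lattice whose covolume, measured against the measure \(dx_1\cdots dx_{r-1}\) obtained by projecting out \(x_r\), equals \(\gcd(a_1,\dotsc,a_r)/a_r\). Nonemptiness of this lattice needs \(\gcd \mid m\), which is the divisibility hypothesis. Hence
\[
\sum \prod_j l_j^{p_j} = \frac{\gcd(a)}{a_r}\int_{\sum a_j x_j = m,\ x \ge 0} \prod_j x_j^{p_j}\, dx_1\cdots dx_{r-1} + O(m^{n+r-2}).
\]
The error estimate holds because the integrand is a polynomial of degree \(n\) and boundary effects contribute one order less. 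To evaluate the integral, I substitute \(y_j = a_j x_j/m\), which maps the domain to the standard simplex \(\{\sum y_j = 1\}\). Applying the Dirichlet integral \(\int_{\Delta} \prod y_j^{p_j} = \prod p_j! / (n+r-1)!\) (a lemma of Annex B) gives
\[
\frac{\gcd(a)}{a_1\cdots a_r}\, \frac{m^{n+r-1}}{(n+r-1)!}\, \prod_j \frac{p_j!}{a_j^{p_j}}.
\]

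Finally I multiply by \(\frac{1}{n!}\frac{n!}{p!}\) and sum over \(p\). The factorials \(p_j!\) cancel, leaving
\[
\frac{\gcd(a)}{a_1\cdots a_r}\int_X \sum_{|p|=n}\prod_j \frac{c_1(L_j)^{p_j}}{a_j^{p_j}} \cdot \frac{m^{n+r-1}}{(n+r-1)!}.
\]
This is exactly the degree \(n\) part of \(\prod_j \sum_p (c_1(L_j)/a_j)^p\), which proves the claimed expansion. The delicate points are the covolume constant \(\gcd(a)/a_r\) and the uniformity of the Riemann--Roch error; the rest is bookkeeping.
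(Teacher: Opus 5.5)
Your proof is correct and follows essentially the same route as the paper: split \(S^{m}\) into the line bundles \(L_{1}^{\otimes l_{1}} \otimes \dotsc \otimes L_{r}^{\otimes l_{r}}\), apply asymptotic Riemann--Roch uniformly to each summand, expand multinomially, and evaluate the lattice sums by a Riemann sum over \(\Delta_{\underline{a}}\) together with the Dirichlet integral (this is exactly Lemmas~\ref{lem:asymptoticsriemann} and \ref{lem:asymptfraction}). The only differences are bookkeeping. You use Snapper's polynomiality where the paper uses Fulton's Riemann--Roch with the Todd class, and you normalize with the projected measure \(dx_{1}\cdots dx_{r-1}\) instead of the Euclidean one; note that \(\gcd(a)/a_{r}\) is the density of lattice points, i.e.\ the reciprocal of the covolume \(a_{r}/\gcd(a)\), although the factor you actually use in the Riemann sum is the right one.
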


We will proceed quite similarly to \cite{GG80} and interpret the leading term of the left hand side as an integral of some polynomial on a given simplex in \(\mathbb{R}^{r}\). We need to introduce some notation.

\subsection{Notation} \label{sec:notationlattices} We introduce the following conventions.

\begin{enumerate}
	\setlength{\itemsep}{0.2em}
	\item For all \(n\), we let \(\mathrm{vol}_{n}\) denote the \(n\)-dimensional euclidean volume measure.
	\item Let \(m \in \mathbb N\). A \emph{\(m\)-dimensional simplex} \(\Delta\) is a metric space isomorphic to the convex envelop of \(m+1\) points in \(\mathbb{R}^{m}\), such that any \(p\) of them generate an affine \((p-1)\)-space. We will sometimes write \(\Delta \; \accentset{\circ}{\subset} \; \mathbb{R}^{m}\) to emphasize the fact that \(\Delta\) has non-empty interior in \(\mathbb{R}^{m}\) (or equivalently, that \(\dim \Delta = m\)) and to oppose this situation to the case of a \((m-1)\)-dimensional simplex included in \(\mathbb{R}^{m}\).

	\item For any \(m\)-dimensional simplex \(\Delta \accentset{\circ}{\subset} \mathbb{R}^{m}\), the \emph{uniform probability measure} of \(\Delta\) is the measure \(d \mathbf P_{\Delta} = \frac{1}{\mathrm{vol}_m(\Delta)} d \mathrm{vol}_{m}\).

Since this measure on $\Delta$ is the unique probability measure which is the restriction of a translation invariant measure on $\mathbb R^m$, we see that if $\Delta_1, \Delta_2 \subseteq \mathbb R^m$ are $m$-dimensional simplexes, and if $\Psi \in \mathrm{GL}(\mathbb R^m)$ is such that $\Delta_2 = \Psi(\Delta_1)$, then $\Psi$ sends the uniform measure of $\Delta_1$ on the uniform measure of $\Delta_2$.

\end{enumerate}

\subsection{Lattices and volumes of fundamental domains}

Let $a_1, ..., a_r \in \mathbb N$. Let
\[
	H = \{ (t_1, ..., t_r) \in \mathbb Z^r \; | \; \sum_i a_i t_i = 0\}.
\]
Let \(H_{\mathbb{R}} := H \otimes \mathbb{R}\) be the hyperplane in \(\mathbb{R}^{r}\) containing \(H\).
\medskip

Then $H \subseteq \mathbb Z^r$ is a \emph{primitive} sublattice, meaning that $\quotientd{\mathbb Z^r}{H}$ is torsion-free. Hence, by the adapted basis theorem, there exists a basis $(f_1, ..., f_r)$ of $\mathbb Z^r$ such that $(f_1, ..., f_{r-1})$ is in turn a basis for $H$. Let 
\[
	C_{H} = \sum_{1 \leq i \leq r-1} [0, 1] \cdot f_{i}
\]
denote the associated fundamental domain of \(H_{\mathbb{R}}\) for the action of \(H\). Note that all such fundamental domains are image of one another by an element of \(\mathrm{SL}(H)\) so they all have the same \((n-1)\)-volume.

\begin{lem} \label{lemlattice1} Any fundamental domain for $H$ has volume 
	\[
		\mathrm{vol}_{r-1} \left( C_H \right) 
		= 
		\frac{\sqrt{ \sum_{1 \leq i \leq r } a_i^2}}{\gcd(a_1, ..., a_r)}.
	\]
\end{lem}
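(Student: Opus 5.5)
The plan is to compute the covolume of \(H\) in \(H_{\mathbb{R}}\) from the covolume of \(\mathbb{Z}^{r}\) in \(\mathbb{R}^{r}\), which is \(1\). Set \(d := \gcd(a_{1}, \dotsc, a_{r})\) and \(a := (a_{1}, \dotsc, a_{r})\). Take the adapted basis \((f_{1}, \dotsc, f_{r})\) of \(\mathbb{Z}^{r}\) fixed above, so that \((f_{1}, \dotsc, f_{r-1})\) is a basis of \(H\). The parallelepiped \(C = \sum_{i \leq r} [0,1] f_{i}\) is a fundamental domain for \(\mathbb{Z}^{r}\), hence
\[
	\mathrm{vol}_{r}(C) = |\det(f_{1}, \dotsc, f_{r})| = 1.
\]

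Next I would compute \(\mathrm{vol}_{r}(C)\) as base times height. The base is \(C_{H}\), which lies in \(H_{\mathbb{R}} = a^{\perp}\). The height is the distance from \(f_{r}\) to \(H_{\mathbb{R}}\), which equals \(|\langle f_{r}, a\rangle| / \|a\|\). This gives
\[
	1 = \mathrm{vol}_{r-1}(C_{H}) \cdot \frac{|\langle f_{r}, a\rangle|}{\sqrt{\sum_{i} a_{i}^{2}}}.
\]

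It then suffices to show \(|\langle f_{r}, a\rangle| = d\); this is the only step that needs care. Consider the linear form \(\phi : \mathbb{Z}^{r} \to \mathbb{Z}\), \(t \mapsto \sum_{i} a_{i} t_{i}\). Its image is \(d\mathbb{Z}\) by Bézout, and its kernel is \(H\). The form \(\phi\) vanishes on \(f_{1}, \dotsc, f_{r-1}\), so \(\phi(\mathbb{Z}^{r}) = \mathbb{Z}\,\phi(f_{r})\). Hence \(\phi(f_{r}) = \pm d\).

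Substituting into the previous display gives \(\mathrm{vol}_{r-1}(C_{H}) = \sqrt{\sum_{i} a_{i}^{2}}/d\). The independence from the choice of fundamental domain is already noted before the statement, since all such domains differ by elements of \(\mathrm{SL}(H)\). The only mild obstacle is justifying the base-times-height formula. One way is to write the Gram determinant of \((f_{1}, \dotsc, f_{r})\) after replacing \(f_{r}\) by its component orthogonal to \(H_{\mathbb{R}}\), which leaves the determinant unchanged.
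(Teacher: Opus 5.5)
Your proof is correct and follows essentially the same route as the paper: the covolume \(1\) of \(\mathbb{Z}^{r}\) is written as \(\mathrm{vol}_{r-1}(C_{H})\) times the height of \(f_{r}\) above \(H_{\mathbb{R}}\). The only difference is that you obtain \(|\langle f_{r}, a\rangle| = \gcd(a_{1}, \dotsc, a_{r})\) directly from \(\phi(\mathbb{Z}^{r}) = \gcd(a_{1}, \dotsc, a_{r})\,\mathbb{Z} = \mathbb{Z}\,\phi(f_{r})\). The paper instead first reduces to \(\gcd = 1\) and replaces \(f_{r}\) by a Bézout vector \(u\) with \(\langle u, a\rangle = 1\), so your version simply skips those two normalizations.
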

\begin{proof}
	The lattice $H$ and the proposed formula for the volume do not change if we replace $a_i$ by $\frac{a_i}{\gcd(a_1, ..., a_r)}$, hence we can suppose that $\gcd(a_1, ..., a_r) = 1$. In this case, there exist $u_1, ..., u_r \in \mathbb Z$ such that $\sum_i a_i u_i = 1$. Replacing \(f_{r}\) by the vector \((u_{1}, \dotsc, u_{r})\) in the basis \((f_{1}, \dotsc, f_{r})\) does not change the fact that this is a basis of \(\mathbb{Z}^{r}\), so we can assume that $f_r = (u_1, ..., u_r)$.
	\medskip

Since $(f_1, ..., f_{r})$ is a basis of $\mathbb Z^r$, we have $\mathrm{vol}_r(\sum_{1 \leq i \leq r} [0, 1] \cdot f_i) = 1$. Moreover,
\begin{align*}
	\mathrm{vol}_r(\sum_{1 \leq i \leq r} [0, 1] \cdot f_{i}) 
	& 
	= \mathrm{vol}_{r-1}(\sum_{1 \leq i \leq r - 1} [0, 1] \cdot f_{i}) \, \cdot \, \norm{\pi_{H^\perp} (f_{r}) } \\
	& = \mathrm{vol}_{r-1}(C_{H}) \, \cdot\, \norm{\pi_{H^\perp} (f_{r}) }
\end{align*}
	where $\pi_{H^\perp} (f_r)$ is the orthogonal projection of $f_r$ on $H^\perp$, and $\norm{ \cdot }$ is the euclidean norm. We obtain
	\[
		\mathrm{vol}_{r-1}(C_{H}) = \frac{1}{\norm{\pi_{H^{\perp}}(f_{r})}}.
	\]

	Let us now compute \(\norm{\pi_{H^{\perp}}(f_{r})}\). Since $H^\perp = \mathbb R \cdot (a_1, \dotsc, a_r)$ by definition of $H$, one can write
	\[
		f_{r} 
		= 
		(u_{1}, \dotsc, u_{r}) 
		= 
		\underbrace{\lambda \cdot (a_{1}, \dotsc, a_{r})}_{=\, \pi_{H^{\perp}}(f_{r})} 
		\, + \, (b_{1}, \dotsc, b_{r}),
	\]
	with \(\lambda \in \mathbb{R}\) and \((b_{1}, \dotsc, b_{r}) \in H_{\mathbb{R}}\). Thus, one has
	\[
		0 = \sum_{j} a_{j}\, b_{j} = \sum_{j} a_{j}\, u_{j} - \lambda \sum_{j} a_{j}^{2}.
	\]
	 Since \(\sum_{j} a_{j} u_{j} = 1\), this gives \(\lambda = \frac{1}{\sum_{j} a_{j}^{2}}\) and thus \(\norm{\pi_{H^\perp} (f_r) }^{2} = \lambda^{2} \sum_{j} a_{j}^{2} = \frac{1}{\sum_{j} a_{j}^{2}}\). We obtain the result.
\end{proof}

\begin{lem} \label{lem:lattice2} Let \(\underline{a} = (a_1, ..., a_r) \in \mathbb{N}^{r}\). We introduce the simplex
	\[
		\Delta_{\underline{a}} = \{ (t_i) \in \mathbb R_+^r \; | \; \sum_i a_i t_i = 1 \}.
	\]
	Then the volume of \(\Delta_{\underline{a}}\) is 
	\[
		\mathrm{vol}_{r-1} (\Delta_{\underline{a}}) = 
		\frac{1}{(r-1)!} 
		\frac{\gcd(a_{1}, \dotsc, a_{r})}{a_{1} \dotsc a_{r}} \mathrm{vol}_{r-1}(C_H).
	\]
\end{lem}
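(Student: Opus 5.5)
Let \(m_{0} := \gcd(a_{1}, \dotsc, a_{r})\) and \(A := \sum_{i} a_{i}^{2}\). The plan is to compare the \((r-1)\)-volume of \(\Delta_{\underline{a}}\) with the \(r\)-volume of the solid simplex
\[
	\widehat{\Delta}_{\underline{a}} := \{ (t_{i}) \in \mathbb{R}_{+}^{r} \; | \; \sum_{i} a_{i} t_{i} \leq 1 \},
\]
and then to use Lemma~\ref{lemlattice1} to eliminate the factor \(\sqrt{A}\) that this comparison introduces.

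\emph{Step 1: the solid simplex.} Since \(\widehat{\Delta}_{\underline{a}}\) is the image of the standard simplex \(\{t_{i} \geq 0, \sum_{i} t_{i} \leq 1\}\) under the diagonal map \((t_{i}) \mapsto (t_{i}/a_{i})\), we get
\[
	\mathrm{vol}_{r}(\widehat{\Delta}_{\underline{a}}) = \frac{1}{r!\, a_{1} \dotsc a_{r}}.
\]

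\emph{Step 2: the pyramid formula.} The solid simplex is the cone with apex \(0\) over the face \(\Delta_{\underline{a}}\). The distance from \(0\) to the hyperplane \(\{\sum_{i} a_{i} t_{i} = 1\}\) is \(h = 1/\sqrt{A}\). Hence
\[
	\mathrm{vol}_{r}(\widehat{\Delta}_{\underline{a}}) = \frac{h}{r}\, \mathrm{vol}_{r-1}(\Delta_{\underline{a}}).
\]
If one wants to avoid quoting the pyramid formula, it follows by integrating the volumes of the slices \(\{\sum_{i} a_{i} t_{i} = s\}\): these are copies of \(\Delta_{\underline{a}}\) rescaled by \(s\), so their volumes are \(s^{r-1}\,\mathrm{vol}_{r-1}(\Delta_{\underline{a}})\), and the slices are spaced by \(ds/\sqrt{A}\) in the normal direction. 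Combining with Step 1 gives
\[
	\mathrm{vol}_{r-1}(\Delta_{\underline{a}}) = \frac{\sqrt{A}}{(r-1)!\, a_{1} \dotsc a_{r}}.
\]

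\emph{Step 3: conclusion.} By Lemma~\ref{lemlattice1}, \(\sqrt{A} = m_{0}\, \mathrm{vol}_{r-1}(C_{H})\). Substituting this into the formula of Step 2 yields exactly the claimed expression.

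Nothing here is a serious obstacle. The only point that needs care is the normalization in Step 2: \(\mathrm{vol}_{r-1}\) must be the Euclidean volume induced on the affine hyperplane, and this is the same normalization used for \(C_{H}\) in Lemma~\ref{lemlattice1}. With both measured this way, the ratio \(\mathrm{vol}_{r-1}(\Delta_{\underline{a}})/\mathrm{vol}_{r-1}(C_{H})\) is intrinsic to the lattice \(H\), and this is what the statement expresses.
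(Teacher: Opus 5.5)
Your proof is correct. Its outer structure matches the paper's: the paper also invokes Lemma~\ref{lemlattice1} to reduce the claim to the explicit formula \(\mathrm{vol}_{r-1}(\Delta_{\underline{a}}) = \frac{1}{(r-1)!}\frac{\sqrt{\sum_{i} a_{i}^{2}}}{a_{1} \dotsc a_{r}}\).

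You differ in how you prove that formula. The paper (Lemma~\ref{lem:volsimplex}) parametrizes \(\Delta_{\underline{a}}\) directly by the standard \((r-1)\)-simplex in \(\mathbb{R}^{r-1}\), via \(t \mapsto (t_{1}/a_{1}, \dotsc, t_{r-1}/a_{r-1}, (1-\sum_{i} t_{i})/a_{r})\). It then computes the volume distortion as \(\sqrt{\det G}\), where \(G = \mathrm{diag}(a_{1}^{-2}, \dotsc, a_{r-1}^{-2}) + a_{r}^{-2} J\) is the Gram matrix and \(J\) is the all-ones matrix; this uses Lemmas~\ref{lem:gram} and \ref{lem:determinant}.

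You instead go up one dimension. The solid simplex is the image of the standard one under a diagonal linear map, so its \(r\)-volume is immediate. The cone formula, with height \(1/\sqrt{\sum_{i} a_{i}^{2}}\), then converts this into the \((r-1)\)-volume of the face.

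Your route is shorter and avoids both the Gram-determinant lemma and the determinant expansion. The price is the slicing argument, which is Fubini in orthonormal coordinates adapted to the normal vector \(\underline{a}\); you justify it adequately. The paper's route stays intrinsic to the hyperplane and yields a general formula for the Lebesgue measure induced by a linear embedding, at the cost of a small linear-algebra computation.
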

\begin{proof}
	By Lemma \ref{lemlattice1}, it suffices to show that 
	\[
	\mathrm{vol}_{r-1}(\Delta_{\underline{a}}) 
	= 
	\frac{1}{(r-1)!} 
	\frac{\sqrt{\sum_{1 \leq i \leq r} a_i^2}}{a_1 \dotsc a_r}.
	\]

	This follows from Lemma~\ref{lem:volsimplex}.
	
	\end{proof}

\subsection{Integrating monomials on simplexes} We are now going to compute the integral of some monomial functions on simplexes with respect to the uniform probability measure. The goal is to prove the following.

\begin{lem} \label{lem:integral} Let \(\underline{a} = (a_{1}, \dotsc, a_{r}) \in \mathbb{N}^{r}\), and let \(\Delta_{\underline{a}} = \{ (t_{i}) \in \mathbb{R}_{+}^{r} \; | \; \sum_{i} a_{i} t_{i} = 1 \}\). Let \(p_{1}, \dotsc, p_{r} \in \mathbb{N}\). Then
	\[
		\int_{\Delta_{\underline{a}}} t_{1}^{p_{1}} \dotsc t_{r}^{p_{r}}
		d \mathbf{P}_{\Delta_{\underline{a}}}(t)
		=
		\frac{(r-1)!\, p_{1}! \dotsc p_{r}!}{(p_{1} + p_{2} + \dotsc + p_{r} + r - 1)!}
		\frac{1}{a_{1}^{p_{1}} \dotsc a_{r}^{p_{r}}}.
	\]
\end{lem}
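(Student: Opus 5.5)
The plan is to reduce to the standard simplex \(\Delta_{(1,\dotsc,1)}\) by a diagonal linear change of variables, and then compute the standard Dirichlet integral via a Laplace-transform (Gamma function) argument.

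\emph{Step 1: reduction to the standard simplex.} Consider the linear map \(\Psi(t_{1}, \dotsc, t_{r}) = (a_{1}t_{1}, \dotsc, a_{r}t_{r})\). It sends \(\Delta_{\underline{a}}\) bijectively and affinely onto \(\Delta := \Delta_{(1, \dotsc, 1)} = \{ s \in \mathbb{R}_{+}^{r} \,|\, \sum s_{i} = 1\}\). Both simplexes are \((r-1)\)-dimensional, sitting in hyperplanes. After identifying each hyperplane with \(\mathbb{R}^{r-1}\), for instance by forgetting the last coordinate, \(\Psi\) becomes an affine isomorphism. By the remark in Section~\ref{sec:notationlattices}, such a map transports the uniform probability measure of \(\Delta_{\underline{a}}\) to that of \(\Delta\). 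Substituting \(t_{i} = s_{i}/a_{i}\) then gives
\[
\int_{\Delta_{\underline{a}}} t^{p}\, d\mathbf{P}_{\Delta_{\underline{a}}} = \frac{1}{a_{1}^{p_{1}} \dotsc a_{r}^{p_{r}}} \int_{\Delta} s_{1}^{p_{1}} \dotsc s_{r}^{p_{r}}\, d\mathbf{P}_{\Delta}(s).
\]
This step is routine.

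\emph{Step 2: the Dirichlet integral.} Parametrize \(\Delta\) by \((s_{1}, \dotsc, s_{r-1})\) ranging over \(\Sigma_{1} := \{ s_{i} \geq 0, \sum_{i<r} s_{i} \leq 1\}\), with \(s_{r} = 1 - \sum_{i<r} s_{i}\). Lebesgue measure on \(\Sigma_{1}\) is proportional to the surface measure on \(\Delta\), and \(\mathrm{vol}(\Sigma_{1}) = 1/(r-1)!\). Hence the uniform probability measure on \(\Delta\) corresponds to \((r-1)!\, ds_{1} \dotsc ds_{r-1}\) on \(\Sigma_{1}\). It therefore suffices to prove
\[
I(p) := \int_{\Sigma_{1}} s_{1}^{p_{1}} \dotsc s_{r-1}^{p_{r-1}} \Big(1 - \sum_{i<r} s_{i}\Big)^{p_{r}} ds = \frac{p_{1}! \dotsc p_{r}!}{(|p| + r - 1)!}, \qquad |p| := p_{1} + \dotsc + p_{r}.
\]

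To prove this, I will use the Gamma-function trick, which I expect to be the cleanest route. Compute \(\int_{\mathbb{R}_{+}^{r}} \prod_{i} u_{i}^{p_{i}} e^{-u_{i}}\, du\) in two ways. On one hand it factors as \(\prod_{i} p_{i}!\). On the other hand, change variables to \(\lambda = \sum_{i} u_{i}\) and \(s_{i} = u_{i}/\lambda\) for \(i < r\). The Jacobian is \(\lambda^{r-1}\), so the integral becomes
\[
\int_{0}^{\infty} \lambda^{|p| + r - 1} e^{-\lambda}\, d\lambda \cdot I(p) = (|p| + r - 1)!\, I(p).
\]
Comparing the two expressions gives \(I(p)\).

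Alternatively, one can argue by induction on \(r\) using the Beta integral \(\int_{0}^{1} x^{a}(1-x)^{b}\, dx = a!\, b!/(a+b+1)!\). Multiplying \(I(p)\) by \((r-1)!\) and by the factor \(a^{-p}\) from Step 1 yields the stated formula. The only point requiring care is the normalization of the measures: identifying the surface measure on \(\Delta\) with the Lebesgue measure of the projection \(\Sigma_{1}\) up to a constant, which cancels once we pass to probability measures.
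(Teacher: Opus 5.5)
Your proof is correct. Step~1 is exactly the paper's reduction via $\Psi(t)=(a_1t_1,\dots,a_rt_r)$. You differ from the paper in how you evaluate the integral $C_{p_1,\dots,p_r}$ over the standard simplex.

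The paper (Lemma~\ref{lem:computationint}) inducts on $r$. It integrates first in $t_1$ and notes that the fiber $\{t_2+\dots+t_r=1-t_1\}$ is the dilate $(1-t_1)\Delta_{r-1}$. Homogeneity then contributes $(1-t_1)^{p_2+\dots+p_r}C_{p_2,\dots,p_r}$, and the ratio of simplex volumes contributes $(r-1)(1-t_1)^{r-2}$. It concludes with the one-variable Beta integral of Lemma~\ref{lem:Bfunction}.

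You instead evaluate $\int_{\mathbb{R}_+^r}\prod_i u_i^{p_i}e^{-u_i}\,du$ in two ways through the substitution $u=\lambda\,(s_1,\dots,s_{r-1},1-\sum_{i<r}s_i)$. Its Jacobian is indeed $\lambda^{r-1}$: add all rows to the last one and expand.

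Your argument gives the closed form in one step, with no induction and no bookkeeping of the volumes of sliced simplexes. It is a direct proof of the multivariate Beta (Dirichlet) identity, of which Lemma~\ref{lem:Bfunction} is the case $r=2$. Since the paper justifies that lemma by the Gamma-function identity anyway, your version is the more self-contained of the two. The paper's route, in exchange, only integrates over compact simplexes and intervals. It can be made fully elementary by proving the Beta integral through integration by parts.

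The one point you should state explicitly is that your substitution is a diffeomorphism from $(0,\infty)\times\mathrm{int}\,\Sigma_1$ onto $(0,\infty)^r$, the omitted boundary pieces being Lebesgue-null.
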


First, let us remark that we can easily get back to the case where \(\underline{a}\) is equal to \(\underline{1} := (1, \dotsc, 1)\). Indeed, letting \(\Psi(t_{1}, \dotsc, t_{r}) = (a_{1} t_{1}, \dotsc, a_{r} t_{r})\), one has \(\Psi_{\ast} d\mathbf{P}_{\Delta_{\underline{1}}} = d \mathbf{P}_{\Delta_{\underline{a}}}\), so that
\begin{align*}
\int_{\Delta_{\underline{a}}} t_{1}^{p_{1}} \dotsc t_{r}^{p_{r}}
		d \mathbf{P}_{\Delta_{\underline{a}}}(t)
	& =
	\int_{\Delta_{\underline{1}}}
	\left(\frac{t_{1}}{a_{1}}\right)^{a_{1}}
	\dotsc
	\left(\frac{t_{r}}{a_{r}}\right)^{a_{r}}
	d \mathbf{P}_{\Delta_{\underline{1}}}(t) \\
	& =
	\frac{1}{a_{1}^{p_{1}} \dotsc a_{r}^{p_{r}}} \int_{\Delta_{\underline{1}}} t_{1}^{p_{1}} \dotsc t_{r}^{p_{r}} d \mathbf{P}_{\Delta_{\underline{1}}}(t)
\end{align*}

For any \(r \in \mathbb{N}\) and \(p_{1}, \dotsc, p_{r} \in \mathbb{N}\), let 
\[
	C_{p_{1}, \dotsc, p_{r}} 
	:= 
	\int_{\Delta_{\underline{1}}} 
	t_{1}^{p_{1}} \dotsc t_{r}^{p_{r}} 
	d \mathbf{P}_{\Delta_{\underline{1}}}(t).
\]
By the remark above, the proof of Lemma~\ref{lem:integral} will then be complete with the following result.
\begin{lem} \label{lem:computationint} We have the equality 
	\[
	C_{p_{1}, \dotsc, p_{r}} =
	\frac{(r-1)!\, p_{1}! \dotsc p_{r}!}{(p_{1} + p_{2} + \dotsc + p_{r} + r - 1)!}.
	\]
\end{lem}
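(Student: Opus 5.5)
To compute $C_{p_{1}, \dotsc, p_{r}}$, I will pass from the simplex $\Delta_{\underline{1}}$ to the full solid simplex and use the classical Dirichlet integral. Concretely, I will parametrize $\Delta_{\underline{1}} = \{t \in \mathbb{R}_{+}^{r} \mid \sum_{i} t_{i} = 1\}$ by the projection onto the first $r-1$ coordinates. This projection is an affine bijection onto the solid simplex
\[
	S_{r-1} := \{(t_{1}, \dotsc, t_{r-1}) \in \mathbb{R}_{+}^{r-1} \mid t_{1} + \dotsc + t_{r-1} \leq 1\},
\]
with inverse $t_{r} = 1 - t_{1} - \dotsc - t_{r-1}$. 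By the remark in Section~\ref{sec:notationlattices}, it sends the uniform probability measure of $\Delta_{\underline{1}}$ to the uniform probability measure of $S_{r-1}$. Since $\mathrm{vol}_{r-1}(S_{r-1}) = 1/(r-1)!$, this gives
\[
	C_{p_{1}, \dotsc, p_{r}} = (r-1)! \int_{S_{r-1}} t_{1}^{p_{1}} \dotsc t_{r-1}^{p_{r-1}} (1 - t_{1} - \dotsc - t_{r-1})^{p_{r}} \, dt_{1} \dotsc dt_{r-1},
\]
so it remains to prove the Dirichlet formula
\[
	I_{r}(p_{1}, \dotsc, p_{r}) := \int_{S_{r-1}} t_{1}^{p_{1}} \dotsc t_{r-1}^{p_{r-1}} (1 - \textstyle\sum_{i} t_{i})^{p_{r}} \, dt = \frac{p_{1}! \dotsc p_{r}!}{(p_{1} + \dotsc + p_{r} + r - 1)!}.
\]

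I will prove the Dirichlet formula by induction on $r$. The case $r = 1$ is trivial, with the convention that $S_{0}$ is a point. The case $r = 2$ is the Beta integral $\int_{0}^{1} t^{p}(1-t)^{q}\,dt = p!\,q!/(p+q+1)!$, obtained by repeated integration by parts in $t$. For the inductive step, I fix $t_{r-1} = s \in [0,1]$ and write $1 - \sum_{i} t_{i} = (1-s) - (t_{1} + \dotsc + t_{r-2})$. The remaining variables then range over $(1-s)S_{r-2}$, and I substitute $t_{i} = (1-s)u_{i}$ for $i \leq r-2$. Counting homogeneity, the inner integral equals
\[
	(1-s)^{p_{1} + \dotsc + p_{r-2} + p_{r} + (r-2)} \, I_{r-1}(p_{1}, \dotsc, p_{r-2}, p_{r}).
\]
Integrating this against $s^{p_{r-1}}\,ds$ and applying the $r=2$ case gives a factor $p_{r-1}!\,(P + r - 2)!/(P + p_{r-1} + r - 1)!$, where $P = p_{1} + \dotsc + p_{r-2} + p_{r}$. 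Combined with the inductive hypothesis $I_{r-1} = p_{1}!\dotsc p_{r-2}!\,p_{r}!/(P + r - 2)!$, the factor $(P + r - 2)!$ cancels and the formula for $I_{r}$ follows.

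Everything here is routine. The only points needing care are getting the exponent of $(1-s)$ right, including the Jacobian factor $(1-s)^{r-2}$, and checking that the projection pushes the uniform probability measure forward correctly. Alternatively, the whole computation can be done in one stroke with the Gamma-function trick: compute $\int_{\mathbb{R}_{+}^{r}} \prod_{i} t_{i}^{p_{i}} e^{-\sum_{i} t_{i}}\,dt = \prod_{i} p_{i}!$ in two ways, once directly and once in coordinates $\rho = \sum_{i} t_{i}$, $t = \rho x$ with $x \in \Delta_{\underline{1}}$. I would keep this as a backup check of the constant $(r-1)!$.
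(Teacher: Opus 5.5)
Your proof is correct and follows essentially the same route as the paper: slice off one coordinate, rescale the remaining slice by homogeneity, evaluate the resulting one-variable integral with the Beta formula, and induct on $r$. The only cosmetic difference is that you remove the normalizing constant $(r-1)!$ once at the start, by passing to the unnormalized Dirichlet integral over the solid simplex. The paper instead keeps the normalized integrals $C_{p_{1}, \dotsc, p_{r}}$ throughout and tracks the volume ratio $(r-1)s^{r-2}$ at each inductive step.
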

\begin{proof}
	Letting \(\Delta_{r} := \Delta_{(1, \dotsc, 1)}\) to simplify the notation (\(1\) is repeated \(r\) times), one has
	\begin{align*}
		C_{p_{1}, \dotsc, p_{r}} & =
		\int_{t_{1} + \dotsc + t_{r} = 1}
		t_{1}^{p_{1}} \dotsc t_{r}^{p_{r}}
		\frac{dt_{1} \wedge \dotsc \wedge dt_{r-1}}{\mathrm{vol}_{dt_{1} \wedge \dotsc \wedge dt_{r-1}}(\Delta_{r})} \\
		& = \int_{t_{1} + s = 1} dt_{1}\, t_{1}^{p_{1}}\,
		\frac{\mathrm{vol}_{dt_{2} \wedge \dotsc \wedge dt_{r-1}}(s \Delta_{r-1})}{\mathrm{vol}_{dt_{1} \wedge \dotsc \wedge dt_{r-1}}(\Delta_{r})}
		\int_{t_{2} + \dotsc + t_{r} = s} t_{2}^{p_{2}} \dotsc t_{r}^{p_{r}}
		\frac{dt_{2} \wedge \dotsc \wedge dt_{r-1}}{\mathrm{vol}_{dt_{2} \wedge \dotsc \wedge dt_{r-1}}(s \Delta_{r-1})} \\
		& = \int_{t_{1} + s = 1} dt_{1}\, t_{1}^{p_{1}}\,
		\frac{s^{r-2}/(r-2)!}{1/(r-1)!}
		\cdot
		s^{p_{2} + \dotsc + p_{r}} \int_{t_{2} + \dotsc + t_{r} = 1} t_{2}^{p_{2}} \dotsc t_{r}^{p_{r}}
		\frac{dt_{2} \wedge \dotsc \wedge dt_{r-1}}{\mathrm{vol}_{dt_{2} \wedge \dotsc \wedge dt_{r-1}}(\Delta_{r-1})} \\
		& = (r-1) \int_{t_{1} + s = 1} dt_{1} \, t_{1}^{p_{1}}\, s^{r - 2 + p_{2} + \dotsc + p_{r}} C_{p_{2}, \dotsc, p_{r}}
		= (r-1) \frac{p_{1}!(p_{2} + \dotsc + p_{r} + r-2)!}{(p_{1} + p_{2} + \dotsc + p_{r} + r - 1)!} C_{p_{2}, \dotsc, p_{r}}
	\end{align*}
	where at the last line, we used Lemma~\ref{lem:Bfunction} below. This permits to prove the formula by induction on \(r\).
\end{proof}

\begin{lem} \label{lem:Bfunction}
	Let \(a, b \in \mathbb{N}\). One has
	\[
		\int_{0}^{1} t^{a} (1 - t)^{b} dt = \frac{a! b!}{(a + b + 1)!}.
	\]
\end{lem}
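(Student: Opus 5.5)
The identity is the classical Euler Beta integral, and I would prove it by induction on \(b\) using integration by parts. This avoids Gamma functions entirely, which keeps the argument in the elementary spirit of the surrounding section.

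For the base case \(b = 0\), I compute directly:
\[
\int_0^1 t^a\, dt = \frac{1}{a+1} = \frac{a!\,0!}{(a+1)!}.
\]

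For the inductive step, assume \(b \geq 1\). I integrate by parts with \(u = (1-t)^b\) and \(dv = t^a\,dt\). The boundary term \(\left[\frac{t^{a+1}}{a+1}(1-t)^b\right]_0^1\) vanishes, because \(b \geq 1\) kills it at \(t=1\) and \(a+1 \geq 1\) kills it at \(t=0\). This gives
\[
\int_0^1 t^a (1-t)^b\, dt = \frac{b}{a+1}\int_0^1 t^{a+1}(1-t)^{b-1}\, dt .
\]

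I then induct on \(b\), with the statement quantified over all \(a \in \mathbb{N}\). By the induction hypothesis, the right-hand side equals
\[
\frac{b}{a+1}\cdot\frac{(a+1)!\,(b-1)!}{(a+b+1)!} = \frac{a!\,b!}{(a+b+1)!},
\]
which is the claimed formula.

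There is no real obstacle here. The only point that needs care is making sure the induction hypothesis is applied to the pair \((a+1, b-1)\); this is why the statement must hold for every \(a\) at each stage of the induction on \(b\).

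An alternative route would be to expand \((1-t)^b\) binomially and verify a combinatorial identity. I would not take this route, because the integration-by-parts argument is much cleaner.
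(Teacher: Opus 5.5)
Your proof is correct. The boundary term vanishes for the reasons you give, and applying the induction hypothesis to the pair \((a+1,b-1)\), with the statement quantified over all \(a\), gives exactly \(\frac{b}{a+1}\cdot\frac{(a+1)!\,(b-1)!}{(a+b+1)!}=\frac{a!\,b!}{(a+b+1)!}\).

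The paper does not prove the identity directly. It quotes Euler's relation \(\int_0^1 t^a(1-t)^b\,dt=\frac{\Gamma(a+1)\Gamma(b+1)}{\Gamma(a+b+2)}\) and then uses \(\Gamma(n+1)=n!\).

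Citing the Beta--Gamma relation is shorter and also covers non-integer exponents. Its standard proof, however, goes through a double integral with Fubini and a change of variables. Your integration-by-parts recursion handles only natural-number exponents. That is all the paper needs, since the lemma is used only in the inductive computation of \(C_{p_1,\dots,p_r}\) in Lemma~\ref{lem:computationint}. In exchange, your argument makes that part of the text fully self-contained and elementary.
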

\begin{proof} This comes from the Beta function identity \(\int_{0}^{1} t^{a} (1 - t)^{b} dt = \frac{\Gamma(a+1) \Gamma(b+1)}{\Gamma(a+b+2)}\).
\end{proof}

\subsection{Riemann integrals and asymptotic estimates}

Using the previous results, we can now give the following asymptotic estimates. 

\begin{lem} \label{lem:asymptoticsriemann} Fix integers \(a_{1}, \dotsc, a_{r} \in \mathbb{N}_{>0}\), and \(p_{1}, \dotsc, p_{r} \in \mathbb{N}\). We have, for \(m \longrightarrow + \infty\) divisible by \(\gcd(a_{1}, \dotsc, a_{r})\):
	\[
		\sum_{a_{1} l_{1} +  \dotsc + a_{r} l_{r} = m}
		\frac{l_{1}^{p_{1}}}{p_{1}!} \dotsc \frac{l_{r}^{p_{r}}}{p_{r}!} 
		\quad = \quad
		\frac{\gcd(a_{1}, \dotsc, a_{r})}{a_{1}^{p_{1} + 1} \dotsc a_{r}^{p_{r} + 1}}
		\frac{m^{p_{1} + \dotsc + p_{r} + r - 1}}{(p_{1} + p_{2} + \dotsc + p_{r} + r - 1)!}
		+
		o(m^{p_{1} + \dotsc + p_{r} + r - 1}).
	\]
\end{lem}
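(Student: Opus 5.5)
The plan is to view the sum as a Riemann sum for the monomial $t_1^{p_1}\cdots t_r^{p_r}$ over the simplex $\Delta_{\underline{a}}$, and then compute the limit with Lemmas~\ref{lemlattice1}, \ref{lem:lattice2} and~\ref{lem:integral}. Put $d := \gcd(a_1, \dotsc, a_r)$ and $P := p_1 + \dotsc + p_r$. Rescaling $l_i = m t_i$, the summation set $\{l \in \mathbb{N}^r \,|\, \sum_i a_i l_i = m\}$ becomes $\frac{1}{m}\Lambda_m \cap \Delta_{\underline{a}}$, where $\Lambda_m := \{l \in \mathbb{Z}^r \,|\, \sum a_i l_i = m\}$. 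The summand becomes $m^{P}\, \frac{t_1^{p_1}}{p_1!}\cdots\frac{t_r^{p_r}}{p_r!}$. Since $d \mid m$, $\Lambda_m$ is nonempty, so it is a translate $l^0 + H$ of the lattice $H$ from Lemma~\ref{lemlattice1}. Hence $\frac{1}{m}\Lambda_m$ is a translate of $\frac{1}{m}H$ inside the affine hyperplane containing $\Delta_{\underline{a}}$. Each of its points carries a fundamental cell of $(r-1)$-volume $m^{-(r-1)}\mathrm{vol}_{r-1}(C_H)$.

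The standard Riemann sum argument then gives, for any continuous $f$ on the compact convex set $\Delta_{\underline{a}}$,
\[
\frac{\mathrm{vol}_{r-1}(C_H)}{m^{r-1}} \sum_{t \in \frac{1}{m}\Lambda_m \cap \Delta_{\underline{a}}} f(t) \longrightarrow \int_{\Delta_{\underline{a}}} f \, d\mathrm{vol}_{r-1}.
\]
This is the step that needs the most care, though it is routine. I would tile the hyperplane by the translated cells $t + \frac{1}{m}C_H$. Cells whose point lies in $\Delta_{\underline{a}}$ cover $\Delta_{\underline{a}}$ up to a neighborhood of $\partial\Delta_{\underline{a}}$ of width $O(1/m)$, and that neighborhood has volume $O(1/m)$. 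Uniform continuity of $f$ controls the error on the interior cells. Taking $f$ to be the monomial divided by $\prod_i p_i!$, this yields
\[
\sum_{\sum a_i l_i = m} \prod_i \frac{l_i^{p_i}}{p_i!} = \frac{m^{P+r-1}}{\mathrm{vol}_{r-1}(C_H)\prod_i p_i!} \int_{\Delta_{\underline{a}}} t_1^{p_1}\cdots t_r^{p_r}\, d\mathrm{vol}_{r-1} + o(m^{P+r-1}).
\]

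It remains to evaluate the constant. First write $d\mathrm{vol}_{r-1} = \mathrm{vol}_{r-1}(\Delta_{\underline{a}})\, d\mathbf{P}_{\Delta_{\underline{a}}}$. Lemma~\ref{lem:lattice2} gives $\mathrm{vol}_{r-1}(\Delta_{\underline{a}})/\mathrm{vol}_{r-1}(C_H) = \frac{1}{(r-1)!}\frac{d}{a_1\cdots a_r}$. Lemma~\ref{lem:integral} gives the probability integral as $\frac{(r-1)!\,p_1!\cdots p_r!}{(P+r-1)!}\frac{1}{a_1^{p_1}\cdots a_r^{p_r}}$. Multiplying, the factors $(r-1)!$ and $\prod_i p_i!$ cancel. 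What remains is exactly $\frac{d}{a_1^{p_1+1}\cdots a_r^{p_r+1}}\frac{1}{(P+r-1)!}$, which is the claimed leading coefficient.
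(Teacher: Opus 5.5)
Your proposal is correct and follows the paper's proof. The paper also rescales $H_m \cap \mathbb{N}^r$ by $\frac{1}{m}$ into $\Delta_{\underline{a}}$ with cells isometric to $\frac{1}{m}C_H$, takes the Riemann-sum limit, and evaluates the constant with Lemmas~\ref{lem:lattice2} and~\ref{lem:integral}; your boundary-layer argument for the convergence is just more explicit than the paper, which simply asserts it.
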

\begin{proof} Let \(H_{m}\) be the set of \((l_{1}, \dotsc, l_{r}) \in \mathbb{Z}^{r}\) such that \(\sum_{j} l_{j} a_{j} = m\). It is non-empty if and only if \(\gcd(a_{1}, \dotsc, a_{r}) | m\), and if such is the case, it is then a translate of the lattice 
	\[
		H 
		= 
		\big\{ 
		(l_{1}, \dotsc, l_{r}) \in \mathbb{Z}^{r} 
		\; | \; 
		\sum_{j} a_{j} l_{j} = 0
		\big\}.
	\] 
	Let \(C_{H}\) denote a fundamental domain for \(H\).
	\medskip

	As \((l_{1}, \dotsc, l_{r})\) varies in \(H_{m} \cap \mathbb{N}^{r}\), the element \((\frac{l_{1}}{m}, \dotsc, \frac{l_{r}}{m})\) varies in \(\Delta_{\underline{a}}\), running in a lattice with cells isometric to \(\frac{1}{m}C_{H}\). Thus, one can use a Riemann sum to obtain
	\begin{align*}
		\mathrm{vol}_{r-1}(\frac{1}{m} C_{H}) \cdot
		\sum_{a_{1} l_{1} +  \dotsc + a_{r} l_{r} = m}
		\left(\frac{l_{1}}{m}\right)^{p_{1}}
		\dotsc
		\left(\frac{l_{r}}{m}\right)^{p_{r}}
		& \underset{m \longrightarrow + \infty}{\longrightarrow}
		\int_{\Delta_{\underline{a}}} t_{1}^{p_{1}} \dotsc t_{r}^{p_{r}} d \mathrm{vol}_{r-1}(t).
		\\
		& = \mathrm{vol}_{r-1}(\Delta_{\underline{a}})
		\int_{\Delta_{\underline{a}}} t_{1}^{p_{1}} \dotsc t_{r}^{p_{r}} d \mathbf{P}_{\Delta_{\underline{a}}}(t).
	\end{align*}
	Thus we deduce
	\[
		\frac{1}{m^{p_{1} + \dotsc + p_{r} + r - 1}}
		\sum_{a_{1} l_{1} +  \dotsc + a_{r} l_{r} = m}
		l_{1}^{p_{1}} \dotsc l_{r}^{p_{r}}
		\longrightarrow
		\frac{\mathrm{vol}_{r-1}(\Delta_{\underline{a}})}{\mathrm{vol}_{r-1}(C_{H})}
		\int_{\Delta_{\underline{a}}} t_{1}^{p_{1}} \dotsc t_{r}^{p_{r}} d \mathbf{P}_{\Delta_{\underline{a}}}(t).
	\]
	The right hand side can be computed using Lemmas~\ref{lem:lattice2} and \ref{lem:integral}. This gives the result.
\end{proof}

We will need another version of that lemma for our application to the asymptotic Riemann-Roch theorem.

\begin{lem} \label{lem:asymptfraction} Let \(n, r \in \mathbb{N}\) be two integers. Let \(\alpha_{1}, \dotsc, \alpha_{r}\) be indeterminates over \(\mathbb{Q}\). Fix integers \(a_{1}, \dotsc, a_{r} \in \mathbb{N}\), and \(p_{1}, \dotsc, p_{r} \in \mathbb{N}\). We have, for \(m \longrightarrow + \infty\) divisible by \(\gcd(a_{1}, \dotsc, a_{r})\):
	\begin{align*}
		\sum_{a_{1} l_{1} + \dotsc + a_{r} l_{r} = m}
		\frac{(\alpha_{1} l_{1} + \dotsc + \alpha_{r} l_{r})^{n}}{n!}
		& =
		\frac{\gcd(a_{1}, \dotsc, a_{r})}{a_{1} \dotsc a_{r}}
		\left[
			\sum_{p_{1} + \dotsc + p_{r} = n} \left(\frac{\alpha_{1}}{a_{1}}\right)^{p_{1}} \dotsc \left(\frac{\alpha_{1}}{a_{r}}\right)^{p_{r}}
		\right]
		\frac{m^{n + r -1}}{(n+ r - 1)!} \\
		& \quad
		+ o(m^{n+r-1}).
	\end{align*}
	where \(o(m^{n+r-1})\) means a homogeneous polynomial of degree \(n\) in \(\alpha_{1}, \dotsc, \alpha_{n}\), all of whose coefficients are negligible compared to \(m^{n+ r -1}\).
\end{lem}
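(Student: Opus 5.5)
The plan is to expand the numerator by the multinomial formula and reduce each resulting monomial sum to Lemma~\ref{lem:asymptoticsriemann}. Concretely, for each tuple \((l_{1}, \dotsc, l_{r})\) we write
\[
	\frac{(\alpha_{1} l_{1} + \dotsc + \alpha_{r} l_{r})^{n}}{n!}
	=
	\sum_{p_{1} + \dotsc + p_{r} = n}
	\alpha_{1}^{p_{1}} \dotsc \alpha_{r}^{p_{r}}
	\frac{l_{1}^{p_{1}}}{p_{1}!} \dotsc \frac{l_{r}^{p_{r}}}{p_{r}!}.
\]
This is an identity of homogeneous polynomials of degree \(n\) in the indeterminates \(\alpha_{j}\), with rational coefficients. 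Summing over the finitely many \((l_{j}) \in \mathbb{N}^{r}\) with \(\sum_{j} a_{j} l_{j} = m\), and exchanging the two finite sums, expresses the left hand side as a polynomial in the \(\alpha_{j}\). The coefficient of \(\alpha_{1}^{p_{1}} \dotsc \alpha_{r}^{p_{r}}\) in it is exactly
\[
	\sum_{a_{1} l_{1} + \dotsc + a_{r} l_{r} = m}
	\frac{l_{1}^{p_{1}}}{p_{1}!} \dotsc \frac{l_{r}^{p_{r}}}{p_{r}!}.
\]

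Next, we apply Lemma~\ref{lem:asymptoticsriemann} to each of these finitely many coefficients, with \(p_{1} + \dotsc + p_{r} = n\). Each coefficient equals
\[
	\frac{\gcd(a_{1}, \dotsc, a_{r})}{a_{1} \dotsc a_{r}}
	\cdot
	\frac{1}{a_{1}^{p_{1}} \dotsc a_{r}^{p_{r}}}
	\cdot
	\frac{m^{n+r-1}}{(n+r-1)!}
	+ o(m^{n+r-1}),
\]
since \(a_{1}^{p_{1}+1} \dotsc a_{r}^{p_{r}+1} = (a_{1} \dotsc a_{r}) \, a_{1}^{p_{1}} \dotsc a_{r}^{p_{r}}\). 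Multiplying by \(\alpha_{1}^{p_{1}} \dotsc \alpha_{r}^{p_{r}}\) and summing over the \(\binom{n+r-1}{r-1}\) multi-indices, the common factor \(\frac{\gcd(a_{1}, \dotsc, a_{r})}{a_{1} \dotsc a_{r}}\) comes out. The monomial \(\alpha_{1}^{p_{1}} \dotsc \alpha_{r}^{p_{r}}/(a_{1}^{p_{1}} \dotsc a_{r}^{p_{r}})\) is precisely \((\alpha_{1}/a_{1})^{p_{1}} \dotsc (\alpha_{r}/a_{r})^{p_{r}}\), which gives the bracket in the statement.

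The only point requiring care is the meaning of the error term. Since there are finitely many multi-indices \((p_{j})\), the remainder is a homogeneous polynomial of degree \(n\) in the \(\alpha_{j}\). Each of its coefficients is one of the error terms from Lemma~\ref{lem:asymptoticsriemann}, hence negligible compared to \(m^{n+r-1}\), which is exactly the stated meaning of \(o(m^{n+r-1})\). The hypothesis that \(m\) is divisible by \(\gcd(a_{1}, \dotsc, a_{r})\) is the same as in Lemma~\ref{lem:asymptoticsriemann}, which ensures the index sets are non-empty lattice translates.

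There is no genuine obstacle. The one place to be careful is the bookkeeping of the factorials, so that the \(1/n!\) on the left is exactly absorbed by the multinomial coefficients \(n!/(p_{1}! \dotsc p_{r}!)\), leaving the \(\frac{l_{j}^{p_{j}}}{p_{j}!}\) form to which Lemma~\ref{lem:asymptoticsriemann} applies verbatim.
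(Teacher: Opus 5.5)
Your proof is correct and follows the paper's argument exactly: expand $(\alpha_1 l_1 + \dotsb + \alpha_r l_r)^n/n!$ by the multinomial formula, then apply Lemma~\ref{lem:asymptoticsriemann} to each of the finitely many coefficient sums. Your factorial bookkeeping, with the $1/n!$ absorbed into $n!/(p_1!\cdots p_r!)$, is cleaner than the paper's displayed expansion, and you correctly read the bracket as $(\alpha_1/a_1)^{p_1}\cdots(\alpha_r/a_r)^{p_r}$ despite the typo in the statement.
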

\begin{proof}
	We expand the sum, using the Newton identity, and we apply Lemma~\ref{lem:asymptoticsriemann}:
	\begin{align*}
		\sum_{a_{1} l_{1} + \dotsc + a_{r} l_{r} = m}
		\frac{(\alpha_{1} l_{1} + \dotsc + \alpha_{r} l_{r})^{n}}{n!} & =
		\sum_{a_{1} l_{1} + \dotsc + a_{r} l_{r} = m}
		\;
		\sum_{p_{1} + \dotsc + p_{r} = n} \binom{n}{p_{1}, \dotsc, p_{r}}
		l_{1}^{p_{1}} \dotsc l_{r}^{p_{r}} \alpha_{1}^{p_{1}} \dotsc \alpha_{r}^{p_{r}} \\
		& =
		\frac{\gcd(a_{1}, \dotsc, a_{r})}{a_{1} \dotsc a_{r}}
		\left[
			\sum_{p_{1} + \dotsc + p_{r} = n} \left(\frac{\alpha_{1}}{a_{1}}\right)^{p_{1}} \dotsc \left(\frac{\alpha_{1}}{a_{r}}\right)^{p_{r}} \right]
		\frac{m^{n + r -1}}{(n+ r - 1)!}
		+ o(m^{n+r-1}).
	\end{align*}
\end{proof}

\subsection{Asymptotics of Euler characteristics} \label{sec:asymptoticEuler}

We are now ready to prove Proposition~\ref{prop:symmetricsum}, using the asymptotic Riemann-Roch theorem (see Section~\ref{sec:asymptoticRR}).
\medskip

\begin{proof}[Proof of Proposition~\ref{prop:symmetricsum}]
	Let \(\alpha_{i} = c_{1}(L_{i})\) for all \(1 \leq i \leq r\). One has then, using the asymptotic Riemann-Roch theorem:
\begin{align*}
	\chi(X, S^{m}(L_{1}^{(a_{1})} \oplus \dotsc \oplus L_{r}^{(a_{r})}))
	& = \chi(X, \bigoplus_{a_{1} l_{1} + \dotsc + a_{r} l_{r} = m}
	L_{1}^{\otimes l_{1}} \otimes \dotsc \otimes L_{r}^{\otimes l_{r}}) \\
	& = \sum_{a_{1} l_{1} + \dotsc + a_{r} l_{r} = m}
	\chi(X, L_{1}^{\otimes l_{1}} \otimes \dotsc \otimes L_{r}^{\otimes l_{r}}) \\
	& = \sum_{a_{1} l_{1} + \dotsc + a_{r} l_{r} = m}
	\left[
	    \int_{X} \frac{(\alpha_{1} l_{1} + \dotsc + \alpha_{r} l_{r})^{n}}{n!}
	    +
	    \sum_{j=1}^{n} \int_{X} \beta_{j} \cdot (\alpha_{1} l_{1} + \dotsc + \alpha_{r} l_{r})^{n-j}
	\right].
\end{align*}
where for all \(j = 1, \dotsc, n\), the symbol \(\beta_{j}\) denotes a cycle class class depending only on \(X\), but not on \(m\). One can now apply Lemma~\ref{lem:asymptfraction} to obtain the result.
\end{proof}

\section{An example of computation}

To illustrate the previous computations, let us retrieve Green and Griffiths' expression for the leading term 
\[
	\lim_{m \to \infty} \frac{\chi(X, E_{k, m}^{HS}\Omega_{X})}{m^{2}/2!}
\]
for a smooth surface \(X\), with \(k = 2, 3\) (see \cite[p.\ 52]{GG80}).
\bigskip

By \eqref{eq:equalityEuler}, we have
\[
	\chi(X, E_{k, m}^{HS} \Omega_{X})
	=
	\chi(X, S^{m}(\mathbf{\Omega}^{k}_{X})),
\]
so we have actually
\[
	\lim_{m \to \infty} \frac{\chi(X, E_{k, m}^{HS}\Omega_{X})}{m^{2}/2!} =
	s_{2}(\mathbf{T}_{k}).
\]

This term can now be computed as follows.
\bigskip

\subsection{Segre classes computations} In the following, we write \(c_{i} := c_{i}(T_{X})\). Recall that the total Segre class of \(X\) then reads 
\[
	s_{t}(T_{X}) = 1 - c_{1} t + (c_{1}^{2} - c_{2}) t^{2}.
\]

For \(k = 2\), we find:
\begin{align*}
	s_{2}(\mathbf{T}_{2}) \cap [X] & = s_{2} (T_{X}^{(1)} \oplus T_{X}^{(2)}) \cap [X] \\
			      & = \frac{1}{2} \left( \left[1 - c_{1} + (c_{1}^{2} - c_{2})\right] \cdot \left[1 - \frac{c_{1}}{2} + \frac{c_{1}^{2} - c_{2}}{4} \right]\right) \cap [X] \\
			      & = \frac{7 c_{1}^{2} - 5 c_{2}}{8} \cap [X]
\end{align*}

For \(k = 3\), this gives:
\begin{align*}
	s_{2}(\mathbf{T}_{3}) \cap [X] & = s_{2} (T_{X}^{(1)} \oplus T_{X}^{(2)} \oplus T_{X}^{(3)}) \cap [X] \\
			      & = \frac{1}{6} \left( \left[1 - c_{1} + (c_{1}^{2} - c_{2})\right] 
			      \cdot 
			      \left[1 - \frac{c_{1}}{2} + \frac{c_{1}^{2} - c_{2}}{4} \right] 
			      \cdot 
			      \left[1 - \frac{c_{1}}{3} + \frac{c_{1}^{2} - c_{2}}{9} \right] \right) \cap [X] \\
			      & = \frac{85c_{1}^{2} - 49 c_{2}}{216} \cap [X]
\end{align*}

\begin{rem}
We can push further this computation at any desired order. Expanding the formula 
\[
	s_{2}(\mathbf{T}_{k})
	=
	\frac{1}{k!}\prod_{1 \leq i \leq k}\left[ 1 - \frac{c_{1}}{i} + \frac{c_{1}^{2} - c_{2}}{i^{2}}\right] \cap [X]
\]
retrieves Green and Griffiths' general expression:

\[
	s_{2}(\mathbf{T}_{k}) = \frac{1}{k!} \left(\alpha_{k} c_{1}^{2} - \beta_{k} c_{2}\right) \cap [X],
\]
with 
\[
	\alpha_{k} = \sum_{1 \leq i \leq k} \frac{1}{i^{2}} + \sum_{1 \leq i < j \leq k} \frac{1}{ij}
	           = \sum_{1 \leq i \leq j \leq k} \frac{1}{ij}
\]
\[
	\beta_{k} = \sum_{1 \leq i \leq k} \frac{1}{i^{2}}	
\]
\end{rem}

\chapter{Truncated intersection theory and Morse inequalities} \label{chap:truncatedstrat}

This chapter presents the core result of this text: a generalized version of the algebraic Morse inequalities, formulated in terms of a combinatorial data coming from a stratification on a variety. After some definition and general facts, we will prove this result in Section~\ref{sectmorseineq}.
\medskip

{\em In this chapter, \(\mathbbm{k}\) is a field of arbitrary characteristic. Except in Section~\ref{sec:example}, we do not require it to be algebraically closed.}

\section{Stratifications} \label{sec:stratifications}

\subsection{Basic definitions}
It is time to introduce the concept on which all the rest of the text will be based: a basic notion of {\em stratification} on a given variety. Our stratifications will be constructed inductively, using the following two definitions. 

\begin{defi} \label{defistratum} Let \(X_{n}\) (resp. \(X_{n-1}\)) be a normal scheme of pure dimension \(n\) (resp. \(n-1\)). A morphism \(X_{n-1} \overset{f}{\longrightarrow} X_{n}\) is called a \emph{stratum} on \(X_{n}\), if it factors through a reduced Weil divisor \(D_{n} \subset X_{n}\), so that \(f : X_{n-1} \longrightarrow D_{n}\) is a proper birational morphism. In particular, \(f\) associates pairwise the irreducible components of \(X_{n-1}\) and of \(D_{n}\).
\end{defi}

\begin{defi} \label{defistratification} A \emph{stratification} \(\Sigma = (X_{\bullet}, f_{\bullet}, q)\) on a reduced scheme \(X\) of pure dimension \(n\), is the data of a proper birational morphism \(q: X_{n} \to X\) with \(X_{n}\) normal, and a sequence of strata of maximal length on \(X'\), of the form
\[
		X_{0} 
		\overset{f_0}{\longrightarrow} 
		X_{1} 
		\overset{f_1}{\longrightarrow}  
		\dotsc 
		\overset{f_{n-1}}{\longrightarrow} 
		X_{n}
		\overset{q}{\longrightarrow}
		X.
\]
\end{defi}


\begin{rem}
	\begin{enumerate}[label=(\alph*)]
		\item  In Definition~\ref{defistratification}, all the maps \(f_{i}\) and \(q\) are part of the data. We will sometimes omit \(q\) when introducing a given stratification, and write e.g. "Let \(\Sigma := (X_{\bullet}, f_{\bullet})\) be a stratification on \(X\)".
		\item If we are given a stratification \(\Sigma := (X_{\bullet}, f_{\bullet})\) on a reduced scheme \(X\), each scheme \(X_{i}\) comes with a natural map \(q_{i} : X_{i} \longrightarrow X\), obtained by composition.
	\end{enumerate}
\end{rem}

The following definition introduces a natural notion of trivialization of a vector bundle on a stratification.

\begin{defi} \label{defitrivialization} Let \(X\) be a reduced scheme of pure dimension \(n\), and let \(E \longrightarrow X\) be a vector bundle of rank \(r\). Consider a stratification
	\(
		\Sigma 
		:= 
		\left( 
		X_{\bullet}, f_{\bullet} 
		\right).
	\)
	For each \(i \in \llbracket 0, n \rrbracket\), let \(q_{i} : X_{i} \longrightarrow X\) be the naturally induced map, and let \(U_{i} := X_{i} \setminus f_{i-1}(X_{i-1})\).
	\begin{enumerate}[label=(\alph*)]
		\item A \emph{trivialization} \(\mathbf{e}\) of \(E\) over \(\Sigma\), is the data, for each \(i \in \llbracket 0, n \rrbracket\), of a trivialization \((e^i_j)_{1 \leq j \leq r}\) of \(q_i^\ast E\) on the dense open subset \(U_i \subset X_{i}\). 
	
		\item A \emph{trivialized stratification} $\underline{\Sigma} = (\Sigma, \mathbf e)$ for $E$ is the data of a stratification $\Sigma$ of $X$ and of a trivialization $\mathbf e$ of $E$ over $\Sigma$.
	\end{enumerate}

We say that $\Sigma$ is \emph{adapted} to $E$ if there exists a trivialization of $E$ over $\Sigma$.
\end{defi}

The main case we will have in mind is the situation of a line bundle, where \(\mathrm{rk}(E) = 1\). It will also be quite important to be able to deal with {\em \(\mathbb{Q}\)-line bundles}. This will be permitted by the following definition. 

\begin{defi} \label{defi:trivstrat}
	Let \(N\) be a \(\mathbb{Q}\)-line bundle on \(X\), and let \(\Sigma\) be a stratification of \(X\). A {\em trivialization} of \(N\) on \(\Sigma\) is the data of a pair \((d, \mathbf{e})\), where
	\begin{enumerate}[label=(\alph*)]
		\item \(d \in \mathbb{N}_{> 0}\) is a integer such that \(N^{\otimes d}\) comes from a standard line bundle on \(X\), i.e. is the image of \(\mathrm{Pic}(X)_{\mathbb{Q}}\) of an element \(L \in \mathrm{Pic}(X)\).
		\item \(\mathbf{e}\) is a trivialization of \(L\) on \(\Sigma\) in the sense of Definition~\ref{defitrivialization}.
	\end{enumerate}
	We will sometimes write the data under a fractional form, and say that \(\frac{1}{d}\mathbf{e}\) is a {\em fractional trivialization} (or simply, a {\em trivialization}) of \(N\) over \(\Sigma\). Again, we say that \(\Sigma\) is {\em adapted} to \(N\) if the latter admits such a trivialization on \(\Sigma\), and we introduce a notion of {\em trivialized stratification} \(\underline{\Sigma} = (\Sigma, \frac{1}{d}\mathbf{e})\) for a given \(\mathbb{Q}\)-line bundle.
\end{defi}

\begin{rem} \label{rem:powertriv}
	\begin{enumerate}
		\item The previous definition also makes sense if \(N\) itself comes from a line bundle, even if \(d \geq 2\). In other words, in the general case, we do not require that \(d\) is the smallest integer such that \(N^{\otimes d}\) is a standard line bundle. 
		\item Let \((\Sigma, \frac{1}{d} \mathbf{e})\) be a trivialized stratification adapted to a \(\mathbb{Q}\)-line bundle \(N\). Let \(L\) be a standard line bundle such that \(L \sim_{\mathbb{Q}} N^{\otimes d}\). Now, pick any integer \(f \in \mathbb{N}_{\geq 1}\). We can define the  {\em \(f\)-th power of \(\mathbf{e}\)} by replacing any trivialization \(e\) on a strata by its \(f\)-th power. Let us denote by \(\mathbf{e}^{\otimes f}\) this trivialization of \(L^{\otimes f}\). It follows directly from Definition~\ref{defi:trivstrat} that \((\Sigma, \frac{1}{df} \mathbf{e}^{\otimes f})\) (resp. \((\Sigma, \frac{1}{f} \mathbf{e}^{\otimes f})\)) is also a trivialized stratification of \(N\) (resp. \(L\)).
	\end{enumerate}
\end{rem}

It is essentially obvious that any vector bundle on a reduced scheme admits an adapted stratification. This is the content of the following proposition.

\begin{prop} Let \(E\) be a vector bundle (resp. a \(\mathbb{Q}\)-line bundle) on a reduced pure dimensional scheme \(X\). Then \(E\) admits an adapted stratification. 
\end{prop}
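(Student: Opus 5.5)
The plan is to build the stratification by descending induction on dimension, following Definition~\ref{defistratification}. First reduce the $\mathbb{Q}$-line bundle case to the vector bundle case. If $N$ is a $\mathbb{Q}$-line bundle, pick $d \in \mathbb{N}_{>0}$ with $N^{\otimes d}$ the image of some $L \in \mathrm{Pic}(X)$. By Definition~\ref{defi:trivstrat}, a trivialization of $L$ over a stratification $\Sigma$ gives the fractional trivialization $\frac{1}{d}\mathbf{e}$ of $N$. So it suffices to treat a vector bundle $E$ of rank $r$.

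\emph{Top stratum.} Let $q : X_{n} \to X$ be the normalization. It is finite, hence proper, and birational since $X$ is reduced; $X_{n}$ is normal of pure dimension $n$. The bundle $q^{\ast}E$ is locally free, so $X_{n}$ is covered by finitely many affine opens on which it is trivial. Choose one such open $V_{C}$ meeting each irreducible (equivalently, connected) component $C$ of $X_{n}$, with $V_{C} \subset C$. Then $U_{n} := \bigsqcup_{C} V_{C}$ is a dense open subset on which $q_{n}^{\ast}E$ is trivial. Its complement $D_{n} := X_{n} \setminus U_{n}$, with reduced structure, has codimension $\geq 1$. Shrinking each $V_{C}$ to a principal open $D(g)$ and using normality, we may arrange $D_{n}$ to be of pure codimension one, i.e.\ a reduced Weil divisor.

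\emph{Inductive step.} Let $X_{n-1} \to D_{n}$ be the normalization. It is proper birational and $X_{n-1}$ is normal of pure dimension $n-1$, so $f_{n-1} : X_{n-1} \to X_{n}$ is a stratum in the sense of Definition~\ref{defistratum}. The composite $q_{n-1}$ gives a vector bundle $q_{n-1}^{\ast}E$ on $X_{n-1}$, and we repeat the construction with $X_{n-1}$ in place of $X_{n}$. We must ensure the open set $U_{n-1} = X_{n-1} \setminus f_{n-2}(X_{n-2})$ is the one on which we trivialize. We therefore choose the trivializing dense open first and then take $D_{n-1}$ as its complement, so the definition is consistent. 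At dimension $0$, $X_{0}$ is a finite disjoint union of points; every vector bundle is trivial there, and $U_{0} = X_{0}$.

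\emph{Main obstacle.} The only step that needs care is making each complement $D_{i}$ a pure codimension-one reduced Weil divisor, rather than a closed set with components of larger codimension. The way around it is to take the trivializing opens to be principal affine opens $D(g)$ inside an affine trivializing chart, with $g$ nonzero on each component. Their complement is $V(g)$ together with the complement of the affine chart, and both are of pure codimension one:
\begin{itemize}
\item $V(g)$ has pure codimension one by Krull's Hauptidealsatz;
\item the complement of an affine open in a normal (separated) scheme has pure codimension one.
\end{itemize}
If one prefers to avoid the latter fact, simply adjoin the embedded components to $D_{i}$, or use Definition~\ref{defistratum} with a Weil divisor containing the complement: $U_{i}$ then only shrinks, so $q_{i}^{\ast}E$ remains trivial on it. The recursion terminates after $n$ steps, yielding the required trivialized stratification $(\Sigma, \mathbf{e})$.
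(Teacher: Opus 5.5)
Your proposal is correct and follows the paper's proof. In both, you normalize, trivialize $q^{\ast}E$ on a dense open whose complement is a reduced Weil divisor, normalize that divisor to obtain $f_{n-1}$, and recurse on the normal scheme $X_{n-1}$.

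You also justify the codimension-one step, via the fact that the complement of a nonempty affine open in a normal separated Noetherian scheme has pure codimension one, which the paper only asserts. The extra shrinking to $D(g)$ is harmless but unnecessary. Your fallback of ``adjoining'' higher-codimension components should be dropped, since it does not produce a Weil divisor.
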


\begin{proof}
	It suffices to deal with the case of a vector bundle. Let us proceed by induction on \(n = \dim X\). Let first \(q : X_{n} \to X\) be the normalization, and let \(U_{n} \subset X\) be an open dense subset, above which \(q^{\ast} E\) admits a trivialization \((e^{n}_{1 \leq j \leq r})\). We may reduce \(U_{n}\) and assume that all irreducible components of \(D_{n} := X_{n} - U_{n}\) have codimension \(1\). In other words, \(D_{n}\) is a Weil divisor on \(X_{n}\). Let now \(X_{n-1} = D_{n}^{\mathrm{norm}}\) be its normalization, and let \(f_{n-1} : X_{n-1} \to X_{n}\) be the natural map. This map is a stratum on \(X\) according to Definition \ref{defistratum}. We may now produce the requested stratification by applying the induction hypothesis to the vector bundle \(f_{n-1}^{\ast} E \longrightarrow X_{n-1}\).
\end{proof}

The following definition introduces the combinatorial data that will be crucial in the statement of our Morse inequalities. Basically, if we are given a stratification \(\Sigma\) on a scheme \(X\), it is possible to build a {\em tree} by taking a node for every component of the strata, and connecting them in the obvious manner.

\begin{defi} \label{defitree} Let \(\Sigma = (X_{\bullet}, f_{\bullet})\) be a stratification on a reduced scheme \(X\) of pure dimension \(n\). The {\em associated tree} to \(\Sigma\) is the graph defined as follows: 
	\begin{enumerate}[label=(\alph*)]
		\item the {\em nodes} are indexed by the irreducible components of all the \(X_{i}\); 
		\item the {\em root} is the node corresponding to \(X_{n}\).
		\item if \(\nu\) is such a node, indexed by an irreducible component \(V \subseteq X_i\), then the {\em children} of \(\nu\) are indexed by the irreducible components \(W\) of \(X_{i-1}\) that satisfy \(f_{i-1} (W) \subseteq V\). 
	\end{enumerate}
\end{defi}

\begin{rem} \label{remtree} Let \(\mathcal{T}\) be associated to a stratification \(\Sigma\), as in Definition~\ref{defitree}. We see right away that the following properties are satisfied:

\begin{enumerate}[label=(\alph*)]
\item The \emph{leaves} are in bijection with the points of $X_0$.
\item to each node $\nu$ of $\mathcal T$ is associated an irreducible variety $V_\nu$;
\item to each arrow $\nu \longrightarrow \mu$ in $\mathcal T$ is associated a morphism $f_{\nu \mu} : V_\nu \longrightarrow V_\mu$;  
\item \label{rem:itemstratum} if $C_\mu$ is the set of children of $\mu$, the natural map 
	\[
		c_\mu : \bigsqcup_{\nu \in C_\mu} V_{\nu} \longrightarrow V_{\mu}
	\]
	is a stratum of $V_\mu$ accordingly to Definition \ref{defistratum}; 
\end{enumerate}
\end{rem}

\begin{rem} Let \(X\) be a reduced scheme of pure dimension \(n \in \mathbb{N}\), and let \(\Sigma = (X_{\bullet}, f_{\bullet})\) be a stratification on \(X\). Let \(j \in \llbracket 0, n\rrbracket\). Let \(V \subset X_{j}\) be an irreducible component corresponding to a node \(\mu\) in the tree of \(\Sigma\). Then there is {\em an induced stratification} on \(V\)
	\[
		V_{0}
		\longrightarrow
		V_{1}
		\longrightarrow
		\dotsc
		\longrightarrow
		V_{j} = V,
	\]
	that is obtained by discarding all the arrows in the tree of \(\Sigma\) that are not in the subtree based at \(\mu\). Note that since \(V\) is normal, we are allowed to take the identity for the map \(V_{j} \to V\).
\end{rem}

\subsection{Refinements of stratifications} It will be quite important to us to be able to {\em refine} a given stratification, to make them adapted to several line bundles at once. Basically, this process introduces new branches in the tree of the initial stratification. Let us make this notion more precise.

\begin{defi} \label{defirefinement} Let \(X\) be a normal scheme of pure dimension \(n\).
	\begin{enumerate}[label=(\alph*)]
		\item We say that a stratum \(f_{1}' : X_{1}' \longrightarrow X\) is a \emph{refinement} of another stratum \(f_{1} : X_{1} \longrightarrow X\), if \(X_{1}\) is isomorphic to a disjoint union of components of \(X_{1}'\), and if we have a commutative diagram
			\[
				\begin{tikzcd}
					X_{1} \arrow[r, hook] \arrow[dr, "f_{1}", swap] & X_{1}' \arrow[d, "f_{1}'"] \\
					                      & X
				\end{tikzcd}.
			\]

		\item Let \(\Sigma\) and \(\Sigma'\) be stratifications of \(X\), with their associated trees \(\mathcal{T}\) and \(\mathcal{T'}\). We say that \(\Sigma'\) is a \emph{refinement} of \(\Sigma\), if there is a embedding of rooted trees \(\varphi : \mathcal{T} \hookrightarrow \mathcal{T'}\) such that the following holds. 
			With the notation of Remark~\ref{remtree}, let \(V_{\nu}\), \(c_{\nu}, \dotsc\) (resp. \(V_{\nu}'\), \(c_{\nu}', \dotsc\)) be the objects associated to \(\mathcal{T}\) (resp. \(\mathcal{T}'\)). Then we require that for all \(\mu \in \mathcal T\), we have \(V_{\varphi(\mu)}' \cong V_{\mu}\). Moreover, we ask that for all \(\mu \in \mathcal{T}\) the stratum \(c_{\varphi(\mu)}'\) is a refinement of \(c_{\mu}\), in such a manner that the following diagram commutes 
	\[
	\begin{tikzcd}
		\displaystyle\bigsqcup_{\nu \in C_{\mu}} V_{\nu} \arrow[r, hook] \arrow[dr, "c_{\mu}", swap] & 
		 \displaystyle\bigsqcup_{\nu \in C_{\varphi(\mu)}} V'_{\nu} \arrow[d, "c_{\varphi(\mu)}"] \\
		& V_{\mu} \cong V_{\varphi(\mu)}'
	\end{tikzcd}.
	\]
	where the isomorphism below is induced by the identifications \(V_{\nu} \cong V'_{\varphi(\nu)}\).
\item \label{def:refinetriv} With the same notation, if we are given a \(\mathbb{Q}\)-line bundle \(L\) on \(X\),  we say that \(\underline{\Sigma'} = (\Sigma', \frac{1}{d} \mathbf{e}')\) refines \(\underline{\Sigma} = (\Sigma, \frac{1}{d} \mathbf{e})\) if on the one hand, \(\Sigma'\) refines \(\Sigma\), and on the other hand, if for any node \(\mu \in \mathcal T\), the trivializations \(e\) and \(e'\) on \(V_{\mu}\) and \(V_{\varphi(\mu)}'\) correspond under the identification \(V_{\mu} \cong V_{\varphi(\mu)}'\).
	\end{enumerate}
\end{defi}

Informally, $\Sigma'$ is a refinement of $\Sigma$ if it is obtained from it by adding more boundary components to the successive strata. The following elementary proposition will be quite useful to produce stratifications adapted to several vector bundles at once.

\begin{prop} \label{proprefine} Let $E, F \longrightarrow X$ be vector bundles (or \(\mathbb{Q}\)-line bundles) on a reduced scheme \(X\), and let $\Sigma$ be a stratification adapted to $E$. There exists a stratification $\Sigma'$, refining $\Sigma$, and adapted to both $E$ and $F$.
\end{prop}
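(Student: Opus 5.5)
We argue by induction on \(n = \dim X\), building \(\Sigma'\) level by level from the root of the tree of \(\Sigma\) downwards. As in the existence proposition above, it suffices to treat vector bundles. For a \(\mathbb{Q}\)-line bundle \(F\), we fix \(d\) with \(F^{\otimes d}\) coming from a standard line bundle and work with that bundle. We also keep the \(d\) already attached to \(E\) if \(E\) is a \(\mathbb{Q}\)-line bundle. The trivialization \(\mathbf{e}\) of \(E\) on \(\Sigma\) will be kept untouched on the old nodes, and will be restricted or pulled back on the new ones.

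\emph{Top level.} Let \(\Sigma = (X_{\bullet}, f_{\bullet}, q)\), and let \(U_{n} = X_{n} \setminus f_{n-1}(X_{n-1})\), on which \(q^{\ast}E\) is trivialized by \(e^{n}\). We choose a dense open \(U_{n}' \subset U_{n}\) over which \(q^{\ast}F\) is also trivial. Shrinking further, we may assume that \(X_{n} \setminus U_{n}'\) is of pure codimension one, hence a reduced Weil divisor \(D_{n}'\). It contains \(D_{n} = f_{n-1}(X_{n-1})\) as a union of irreducible components, say \(D_{n}' = D_{n} \cup D_{n}''\), where \(D_{n}''\) collects the new components. We then set
\[
	X_{n-1}' := X_{n-1} \sqcup (D_{n}'')^{\mathrm{norm}},
\]
with \(f'_{n-1}\) equal to \(f_{n-1}\) on the first piece and to the normalization map on the second. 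By construction, \(f'_{n-1}\) is a stratum refining \(f_{n-1}\) in the sense of Definition~\ref{defirefinement}. On the root node we keep the restriction of \(e^{n}\) to \(U_{n}'\) as the trivialization of \(E\), and take the chosen trivialization of \(F\).

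\emph{Lower levels.} On each component \(V\) of the old part \(X_{n-1}\), the induced stratification of \(\Sigma\) (the subtree based at the corresponding node) is adapted to the pullback of \(E\). By the induction hypothesis applied to \(V\), with the pullbacks of \(E\) and \(F\), we obtain a refinement adapted to both. On each new component \(W\) of \((D_{n}'')^{\mathrm{norm}}\), we use the existence proposition above, in its inductive form, to produce a stratification of \(W\) adapted to both pullbacks, for instance by applying it to \(E \oplus F\). Gluing these stratifications along the top-level stratum \(f'_{n-1}\) gives \(\Sigma'\). Its tree contains the tree of \(\Sigma\) via a rooted embedding: the old nodes sit inside the refined subtrees, and the new branches are attached to the root.

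The main obstacle is a consistency issue in the definition of refinement. The trivialization on an old node must be \emph{the same} as in \(\Sigma\), yet the open subset \(U_{i}' = V \setminus f'(\ldots)\) shrinks as new boundary components are added. This is harmless, since restricting a trivialization to a smaller dense open subset keeps the identification \(V_{\mu} \cong V_{\varphi(\mu)}'\) compatible with the data. We must also ensure that the new components sit at the correct level of the strata, so that every \(X_{i}'\) stays pure of dimension \(i\) and the maximal length condition is preserved. This holds because all new components are chosen of pure codimension one at each step, and all added schemes are normalized. Finally, for \(\mathbb{Q}\)-line bundles, one should double check that a common \(d\) can be used; this can be arranged by taking powers as in Remark~\ref{rem:powertriv}.
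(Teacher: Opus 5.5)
Your proof is correct and follows the paper's sketch: you enlarge the top boundary divisor so that both \(q^{\ast}E\) and \(q^{\ast}F\) are trivial on its complement, refine the old subtrees by induction, and give the newly added components stratifications adapted to both bundles. The one slip is the parenthetical suggestion to apply the existence proposition to \(E \oplus F\); a trivialization of \(E \oplus F\) on an open set need not trivialize \(E\) and \(F\) separately (stably free does not imply free; e.g.\ \(L \oplus L^{-1}\) can be trivial with \(L\) nontrivial), so you should instead rerun the existence argument, choosing at each step an open set on which both pullbacks are trivial.
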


The proof is straightforward and its details will be left to the reader. Essentially, it suffices to first add several components to the first strata \(f_{n-1} : X_{n-1} \to X_{n} \overset{q}{\to} X\) so that both \(q^{\ast} E\) and \(q^{\ast} F\) get trivialized on \(X_{n} \setminus f_{n-1}(X_{n-1})\), then, arguing by induction, refine the stratification induced on \(X_{n-1}\) to make it adapted to both \(f_{n-1}^{\ast} E\) and \(f_{n-1}^{\ast} F\), and finally, to pick any trivialization that is adapted again to both \(E\) and \(F\) on the components that were added to \(X_{n-1}\).

\section{Truncated first Chern classes}

The data of a trivialized stratification with respect to a \(\mathbb{Q}\)-bundle \(L\) contains all the information necessary to compute the top intersection number \(\int_{X} c_{1}(L)^{\dim X}\). The purpose of this section is to introduce {\em truncated versions} of these numbers. They will later play the same role in our algebraic Morse inequalities, than the truncated integrals in Demailly's holomorphic Morse inequalities.
\smallskip

To ease a bit the future proofs by induction, we will actually introduce a minimal subset of intersection theory, that will allow us to give a recursive definition of these numbers.
\medskip

\subsection{Cycle groups and stratifications. Definition of the truncated Chern classes.} Our next definition introduces a cycle group associated to a given stratification on $X$. We choose to use rational coefficients for our cycles: this will be well suited to prove Morse inequalities for \(\mathbb Q\)-line bundles.

\begin{defi} \label{deficyclegroup} Let \(X\) be a reduced scheme of pure dimension \(n\), and let \(\Sigma = (X_{\bullet}, f_{\bullet})\) be a stratification of $X$. For each \(k \in \llbracket 0, n\rrbracket\), the \emph{\(k\)-th cycle group} of \(\Sigma\) is the finite dimensional \(\mathbb{Q}\)-vector space
\[
	Z_{k}^{\Sigma}(X)_{\mathbb{Q}} = \bigoplus_{V \subseteq X_{k}} \mathbb{Q} \cdot [V],
\]
	where \(V\) runs through the irreducible components of \(X_{k}\). The \emph{total cycle group} of \(\Sigma\) is the direct sum 
\[
	Z_{\bullet}^{\Sigma}(X)_{\mathbb{Q}}
	=
	\bigoplus_{0 \leq k \leq n}
	Z_{k}^{\Sigma}(X)_{\mathbb{Q}}.
\]
\end{defi}

\begin{rem}
	With the setting of Definition~\ref{deficyclegroup}, there is a natural map \(\rho : Z_{\bullet}^{\Sigma}(X)_{\mathbb{Q}} \hookrightarrow Z_{k}^{\Sigma}(X)\) that sends every component of \(V \subset X_{k}\) to its image in \(X\) by the natural morphism \(X_{k} \to X\) induced by the stratification.
\end{rem}

For any \(\mathbb{Q}\)-line bundle on a pure dimensional reduced scheme \(X\), with an associated trivialized stratification in the sense of Definition~\ref{defi:trivstrat}, we can now construct a {\em truncated first Chern class} as a particular endomorphism of \(Z_{\bullet}^{\Sigma}(X)_{\mathbb{Q}}\). We need first to give the elementary step of our inductive construction.
\smallskip

\begin{defi} [First Chern class. Elementary step] \label{defi:chernelem}
	Let \(V\) be a normal variety of dimension \(n\), and let \(V_{n-1} \overset{f_{n-1}}{\longrightarrow} V_{n} = V\) be a stratum. Let \(L\) be a \(\mathbb{Q}\)-line bundle on \(V\). Assume that some power \(L^{\otimes d}\) is a line bundle admitting a trivialization \(e\) on the open dense subset \(U_{n} = V_{n} \setminus f_{n-1}(V_{n-1})\). Then \(e\) can be seen as a rational section \(s\) of \(L^{\otimes d}\) on \(V\). Since \(V\) is normal, it is smooth near the generic point of any component \(D_{i}\) of the Weil divisor \(f_{n-1}(V_{n-1})\), and \(s\) has a well defined multiplicity \(m_i \in \mathbb Z\) at this generic point. For each such component \(D_{i}\), let \(W_{i} \subseteq V_{n-1}\) be the unique irreducible component such that \(f_{n-1}(W_{i}) = D_i\) (see Definition \ref{defistratum}).
	\smallskip

	For any \(l \in \left\{0, 1 \right\}\), define the following \((n-1)\)-cycle in \(V_{n-1}\), as follows:
\begin{equation} \label{eqchernelem}
	c_{1}(L, f_{n-1})_{[l]} \cap [V] 
	\;
	:= 
	\;
	\sum_{(-1)^l m_i > 0} 
	\frac{m_i}{d} [W_i] 
	\quad
	\in 
	Z_{n-1}(V_{n-1})_{\mathbb{Q}}.
\end{equation}
	The previous sum is designed so that $W_i$ runs among the connected components of $Y_1$ such that $m_i$ has the same sign as \((-1)^l\). 

\end{defi}
\medskip

\begin{rem}
	With the notation of Definition~\ref{defi:chernelem}, recall that the first Chern class can be defined as an endomorphism of the Chow ring \(A_{\ast}(V)\), for which the image of the class of \([V]\) can be obtained as follows
\begin{equation} \label{eqdefimult}
	c_{1}(L) \cap [V] 
	= 
	\sum_{i} 
	\frac{m_{i}}{d} [D_{i}]
	\hspace{0.5cm}  \text{in} \; A_{n-1}(V)_{\mathbb Q}.
\end{equation}
 (see \cite[Section 2.5]{ful98}). The morphism \(f_{n-1} : V_{n-1} \longrightarrow V_{n} = V\) induces a composite map 
	\[
		Z_{n-1}(V_{n-1})_{\mathbb{Q}} 
		\overset{(f_{n-1})_{\ast}}{\longrightarrow}
		Z_{n-1}(V)_{\mathbb{Q}} 
		\overset{p}{\longrightarrow}
		A_{n-1}(V)_{\mathbb{Q}},
	\] 
	where \(p\) denotes the natural projection. Letting \(\rho : Z_{n-1}(V_{n-1})_{\mathbb{Q}} \to A_{n-1}(V)_{\mathbb{Q}}\) denote this composition, we now have:
	\begin{equation}\label{eq:contributionschern}
		c_{1}(L) \cap [V] 
		= 
		\rho
		\left( 
		c_{1}(L, f_{n-1})_{[1]} \cap [V] 
		\; 
		+ 
		\; 
		c_{1}(L, f_{n-1})_{[0]}) \cap [V]
		\right) 
		\quad
		\text{in}
		\;
		A_{n-1}(Y)_{\mathbb{Q}},
	\end{equation}
	which follows directly from the formula \(\rho([V_{i}]) = (f_{n-1})_{\ast}([V_{i}]) = [D_{i}]\) that holds for all \(i\), since \(f_{n-1}|_{V_{i}} : W_{i} \to D_{i}\) is proper {\em birational} for any such \(i\).
\end{rem}

Equation~\eqref{eq:contributionschern} above allows us to interpret the \(c_{1}(L, f_{n-1})_{[l]} \cap [V]\) as truncated components in a Chern intersection formula. In the next proposition, we will build up on this interpretation and use the elementary step of Definition~\ref{defi:chernelem} to define a truncated first Chern class as a genuine endomorphism of the total cycle group of a given scheme.
\medskip

To avoid repetition, let us fix the following context for the rest of this section.

\begin{nota}  \label{nota:stratification}
Let \(X\) a reduced scheme of pure dimension \(n\). Let \(L\) be a \(\mathbb{Q}\)-line bundle on \(X\), and let \(\Sigma = (X_{\bullet}, f_{\bullet})\) be a stratification adapted to \(L\), endowed with a trivialization \(\frac{1}{d} \mathbf{e}\). Denote by \(\underline{\Sigma} = (\Sigma, \frac{1}{d} \mathbf{e})\) the associated trivialized stratification in the sense of Definition~\ref{defi:trivstrat}.
\end{nota}

\begin{defi} \label{defistratfirstchern} 	
	Setting as in Notation~\ref{nota:stratification}. Let \(l \in \left\{0, 1 \right\}\). The {\em truncated first Chern class of level} \(l\) of the pair \((L, \underline{\Sigma})\) is the endomorphism 
	\[
		c_{1}(L, \underline{\Sigma})_{[l]}
		\in
		\mathrm{End}(Z^{\Sigma}_{\bullet}(X)_{\mathbb{Q}}),
	\]
	whose action on the pure cycles $[V] \in Z_{\bullet}^{\Sigma}(X)_{\mathbb{Q}}$ is defined as follows:
	\smallskip

	Let \(V\) be an irreducible component of \(X_{k}\), and let \(g_{k} : f_{k}^{-1}(V) \longrightarrow V\) be the morphism induced by $f_k$. By Remark~\ref{remtree} \ref{rem:itemstratum}, \(g_{k}\) is a stratum of $V$. For \(l \in \left\{0, 1\right\}\), we then define, following \eqref{eqchernelem}:
\begin{equation} \label{eqstratfirstchernclass}
	c_{1}
	(L, \underline{\Sigma})_{[l]} 
	\cap 
	[V] 
	= 
	c_{1}(L, g_{k})_{[l]} 
	\cap [V] 
	\;\; \in \;\;  
	Z_{k-1}(f_k^{-1}(V))_{\mathbb{Q}}.
\end{equation}
	By Definition \ref{deficyclegroup}, \(Z_{k-1}(f_k^{-1}(V))_{\mathbb{Q}}\) is a direct summand of \(Z_{k-1}^{\Sigma}(X)_{\mathbb{Q}}\). This permits to see the above cycle as an element of \(Z^{\Sigma}_{k-1}(X)_{\mathbb{Q}}\).
	\smallskip

	If \(l \notin \left\{ 0, 1 \right\}\), we let \(c_{1}(L, \underline{\Sigma})_{[l]}\) be the null endomorphism of \(Z_{\bullet}^{\Sigma}(X)_{\mathbb{Q}}\).
\end{defi}

We are now ready to define the notion of truncated {\em powers} of the first Chern classes that we will use in the rest of these notes: again, they will be constructed inductively using our last definition. 

\begin{defi} \label{defitrunchigherpowers}
	Setting as in Notation~\ref{nota:stratification}. Let \(k, l \in \mathbb{N}\). We let \(c_{1}(L, \underline{\Sigma})^{k}_{[l]}$ be the endomorphism of \(Z_{\bullet}^{\Sigma}(X)_{\mathbb{Q}}\) defined inductively as follows.
\begin{itemize}
	\item If \(k = 0\), we let \(c_{1}(L, \underline{\Sigma})^{0}_{[l]}\) be the identity for any \(l \geq 0\);

\item If \(k \geq 1\), we let
\begin{equation} \label{eqinductform}
	c_{1}(L, \underline{\Sigma})^{k}_{[l]} 
	= 
	c_{1}(L, \underline{\Sigma})^{k-1}_{[l]} \, \circ \, c_1(L, \underline{\Sigma})_{[0]} 
	+ 
	c_{1}(L, \underline{\Sigma})^{k-1}_{[l-1]} \, \circ \, c_1(L, \underline{\Sigma})_{[1]}
\end{equation}
\end{itemize}
\end{defi}

\begin{rem}
	Note that the definition above provides \(c_{1}(L, \underline{\Sigma})^k_{[l]} = 0\) if \(l \notin \llbracket 0, k \rrbracket\).
\end{rem}
\medskip

The Morse inequalities of Theorem \ref{thmmorse} will be stated in terms of the truncated Chern classes, that we introduce in the next definition.

\begin{defi} \label{deficherntrunc} 
	Setting as in Notation~\ref{nota:stratification}. Let \(l, k \in \mathbb{N}\). The {\em \(l\)-truncated \(k\)-th power of the first Chern class} of \((L, \underline{\Sigma})\) is the following endomorphism of \(Z_{\bullet}^\Sigma(X)_{\mathbb{Q}}\):
\[
	c_{1}(L, \underline{\Sigma})^{k}_{[\leq l]} 
	= 
	\sum_{0 \leq j \leq l} c_{1}(L, \underline{\Sigma})^{k}_{[j]}.
\]
\end{defi}

The terminology of \emph{truncated} power can be justified by the following proposition, which follows from applying inductively \eqref{eq:contributionschern}.

\begin{prop} \label{prop:truncatedversion} Setting as in Notation~\ref{nota:stratification}. Let \(\rho : Z_\bullet^\Sigma(X)_{\mathbb Q} \longrightarrow Z_\bullet(X)_{\mathbb Q}\) be the natural map. For any \(k \in \llbracket 0, n \rrbracket\), the \((n-k)\)-cycle
\[
	\rho \left( \; c_1(L, \underline{\Sigma})^k_{[\leq k]} \cap [X] \; \right) \; \; \in Z_{n-k}(X)_{\mathbb Q}
\] 

	is a representative of the cycle class $c_1(L)^k \cap[X] \in A_{n-k}(X)_{\mathbb Q}$.
\end{prop}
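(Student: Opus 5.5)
We argue by induction on \(k\), proving a slightly stronger statement that applies to every cycle of the stratification, not only to \([X]\). For each irreducible component \(V \subseteq X_{j}\), write \(q_{j} : X_{j} \to X\) for the induced map. We claim that \(\rho\bigl(c_{1}(L,\underline{\Sigma})^{k}_{[\leq k]} \cap [V]\bigr)\) represents \(c_{1}(L)^{k} \cap (q_{j})_{\ast}[V]\) in \(A_{j-k}(X)_{\mathbb{Q}}\). Here \(\rho\) pushes a component forward along \(q_{j-k}\), and \((q_{j})_{\ast}[V]\) is either \([q_{j}(V)]\) or zero, according to whether \(V \to q_{j}(V)\) is generically finite of degree one. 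To avoid tracking degrees, it is cleaner to phrase the claim through the proper pushforward. The claimed class is then \((q_{j-k})_{\ast}\bigl(c_{1}(q_{j}^{\ast}L)^{k}\cap[V]\bigr)\), which equals \(c_{1}(L)^{k}\cap (q_{j})_{\ast}[V]\) by the projection formula of \cite{ful98}. The statement follows by taking \(j=n\) and \(V\) the components of \(X_{n}\). The map \(q:X_{n}\to X\) is proper birational, so \(q_{\ast}[X_{n}] = [X]\).

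\emph{First step: collapsing the truncation.} Summing the recursion \eqref{eqinductform} over \(0\le l\le k\) gives
\[
c_{1}(L,\underline{\Sigma})^{k}_{[\leq k]} = c_{1}(L,\underline{\Sigma})^{k-1}_{[\leq k]}\circ c_{1}(L,\underline{\Sigma})_{[0]} + c_{1}(L,\underline{\Sigma})^{k-1}_{[\leq k-1]}\circ c_{1}(L,\underline{\Sigma})_{[1]}.
\]
By the remark after Definition~\ref{defitrunchigherpowers}, the terms with \(l>k-1\) vanish. Hence \(c_{1}(L,\underline{\Sigma})^{k-1}_{[\leq k]} = c_{1}(L,\underline{\Sigma})^{k-1}_{[\leq k-1]}\), and induction yields
\[
c_{1}(L,\underline{\Sigma})^{k}_{[\leq k]} = \bigl(c_{1}(L,\underline{\Sigma})_{[0]}+c_{1}(L,\underline{\Sigma})_{[1]}\bigr)^{k}.
\]
So it suffices to treat the untruncated operator \(T := c_{1}(L,\underline{\Sigma})_{[0]}+c_{1}(L,\underline{\Sigma})_{[1]}\).

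\emph{Second step: one application of \(T\).} Let \(V\subseteq X_{j}\) be a component and let \(g_{j} : f_{j}^{-1}(V)\to V\) be the induced stratum. Formula \eqref{eq:contributionschern}, applied to \(V\) and the pullback of \(L\), states that \((g_{j})_{\ast}(T\cap[V])\) represents \(c_{1}(q_{j}^{\ast}L)\cap[V]\) in \(A_{j-1}(V)_{\mathbb{Q}}\). This requires the rational section \(e\) of \(q_{j}^{\ast}L^{\otimes d}\) to be a genuine trivialization on the complement of \(g_{j}(f_{j}^{-1}(V))\), so that its divisor is supported on the images \(D_{i}\). That is exactly what the trivialization in the stratification provides. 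Pushing forward along \(V\to X\) and using \(q_{j}\circ f_{j} = q_{j-1}\) on \(f_{j}^{-1}(V)\), we obtain the case \(k=1\) of the claim. Normality of \(V\) guarantees that the multiplicities \(m_{i}\) are defined.

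\emph{Induction step and main obstacle.} For \(k\ge 2\), write \(T\cap[V] = \sum_{i} \frac{m_{i}}{d}[W_{i}]\) and apply the induction hypothesis to each \(W_{i}\subseteq X_{j-1}\). This gives \(\rho(T^{k}\cap[V]) \sim \sum_{i}\frac{m_{i}}{d}\, c_{1}(L)^{k-1}\cap (q_{j-1})_{\ast}[W_{i}]\). The right-hand side equals \(c_{1}(L)^{k-1}\cap\bigl(c_{1}(L)\cap(q_{j})_{\ast}[V]\bigr)\), because \(c_{1}(L)^{k-1}\cap\) is well defined on rational equivalence classes and commutes with proper pushforward (\cite[Prop.~2.5]{ful98}). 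The main obstacle is precisely this compatibility: the truncated operators act on cycles rather than classes, and the different strata use unrelated trivializations. One must therefore check that nothing beyond the rational-equivalence class of \(T\cap[V]\) is used at later steps. This holds because the inductive claim concerns only classes after pushforward to \(X\), and at each stage Fulton's operation \(c_{1}(L)\cap-\) descends to \(A_{\ast}\). The \(\mathbb{Q}\)-line bundle only introduces the harmless factor \(1/d\), using \(c_{1}(L^{\otimes d}) = d\,c_{1}(L)\).
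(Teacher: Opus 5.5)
Your proposal is correct and is essentially the paper's argument: the paper only remarks that the statement follows by applying \eqref{eq:contributionschern} inductively, and you supply the details it leaves implicit. These details are collapsing $c_{1}(L,\underline{\Sigma})^{k}_{[\leq k]}$ to $\bigl(c_{1}(L,\underline{\Sigma})_{[0]}+c_{1}(L,\underline{\Sigma})_{[1]}\bigr)^{k}$, reading $\rho$ as proper pushforward, and using Fulton's projection formula together with the descent of $c_{1}(L)\cap -$ to rational equivalence. The only slip is your side remark that $(q_{j})_{\ast}[V]$ is either $[q_{j}(V)]$ or zero: when $X$ is not normal a component can map onto its image with degree larger than one. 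Since you work with proper pushforward throughout, this is harmless, and pushforward is in fact the reading of $\rho$ under which the statement is correct.
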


The next proposition shows that truncated powers behave as expected if we replace the trivialization by one of its powers. We refer to Remark~\ref{rem:powertriv} for the definition of the power of a trivialialization on a stratification.

\begin{prop} \label{prop:powertriv} Setting as in Notation~\ref{nota:stratification}. Let \(f \in \mathbb{N}_{\geq 1}\), and
	let \(\mathbf{e}^{\otimes f}\) be the \(f\)-th power of \(\mathbb{e}\). We let \(\underline{\Sigma}_{1} = (\Sigma, \frac{1}{d} \mathbf{e}^{\otimes f})\) and \(\underline{\Sigma}_{2} = (\Sigma, \frac{1}{df} \mathbf{e}^{\otimes f})\). Then, for all integers \(k\) and \(j\), one has :
	\[
		c_{1}(L^{\otimes f}, \underline{\Sigma}_{1})^{k}_{[\leq l]} 
		= 
		f^{k} c_{1}(L, \underline{\Sigma})^{k}_{[\leq j]}
	\]
	and
	\[
		c_{1}(L, \underline{\Sigma}_{2})^{k}_{[\leq l]} 
		= 
		c_{1}(L, \underline{\Sigma})^{k}_{[\leq j]}
	\]
\end{prop}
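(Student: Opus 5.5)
The plan is to compare the elementary truncated first Chern classes of Definition~\ref{defistratfirstchern} for the two trivialized stratifications, and then let the recursion \eqref{eqinductform} carry the comparison to all truncated powers. Both claims follow from a single scaling rule at the elementary level. The statement's indices \(l\) and \(j\) are read as the same truncation level throughout.

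\textbf{Step 1: effect of powering on multiplicities.} Fix an irreducible component \(V\) of some \(X_k\), with induced stratum \(g_k : f_k^{-1}(V) \to V\). On \(V\), the trivialization \(e\) is a rational section \(s\) of \(L^{\otimes d}\) with multiplicity \(m_i\) along each component \(D_i\) of the boundary divisor. The component \(e^{\otimes f}\) of \(\mathbf{e}^{\otimes f}\) is the rational section \(s^{f}\). Since \(V\) is normal, the multiplicity along \(D_i\) is the discrete valuation of the DVR \(\mathcal{O}_{V, D_i}\), which is additive. Hence \(s^f\) has multiplicity \(f m_i\) along \(D_i\).

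Because \(f>0\), the numbers \(f m_i\) and \(m_i\) have the same sign. The index sets \(\{i : (-1)^{l} m_i > 0\}\) in \eqref{eqchernelem} are therefore unchanged.

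\textbf{Step 2: elementary classes.} Plug these multiplicities into \eqref{eqchernelem}.
\begin{itemize}
\item For \(\underline{\Sigma}_1\), viewed as a trivialization of the \(\mathbb{Q}\)-line bundle \(L^{\otimes f}\) with denominator \(d\), each coefficient is \(f m_i/d\). This gives \(c_1(L^{\otimes f}, \underline{\Sigma}_1)_{[l]} = f\, c_1(L, \underline{\Sigma})_{[l]}\) for \(l \in \{0,1\}\).
\item For \(\underline{\Sigma}_2\), each coefficient is \(f m_i/(df) = m_i/d\). This gives \(c_1(L, \underline{\Sigma}_2)_{[l]} = c_1(L, \underline{\Sigma})_{[l]}\).
\end{itemize}
For \(l \notin \{0,1\}\) all of these endomorphisms are zero, so both identities hold for every \(l\). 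They hold as endomorphisms of the same group \(Z^{\Sigma}_\bullet(X)_{\mathbb{Q}}\), since the underlying stratification \(\Sigma\) is unchanged.

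\textbf{Step 3: induction on \(k\).} The next claim is \(c_1(L^{\otimes f}, \underline{\Sigma}_1)^k_{[l]} = f^k c_1(L,\underline{\Sigma})^k_{[l]}\) for all \(l\). The case \(k=0\) is the identity on both sides. For the inductive step, expand the left side with \eqref{eqinductform}. Each of its two terms is a product of a \((k-1)\)-th power, scaled by \(f^{k-1}\) by the inductive hypothesis, and an elementary class, scaled by \(f\) by Step~2. This gives the factor \(f^k\).

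The identical argument with the scaling factor \(1\) shows that \(\underline{\Sigma}_2\) and \(\underline{\Sigma}\) give equal powers. Summing over \(0 \le j \le l\) as in Definition~\ref{deficherntrunc} then yields both displayed identities.

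The only point requiring care is Step~1. There one must check that the multiplicity of a power of a rational section scales linearly, which is where normality of the strata is used, and that the sign pattern is preserved, which is why \(f \geq 1\) is needed. Everything else is formal.
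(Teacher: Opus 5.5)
Your proposal is correct and follows the same route as the paper, which simply notes that passing to \(\mathbf{e}^{\otimes f}\) replaces each \(m_i\) by \(f m_i\) in \eqref{eqchernelem} (and, for \(\underline{\Sigma}_2\), by \(f m_i / f = m_i\)) and calls the rest an immediate computation. You have spelled out what that computation involves: the multiplicities scale linearly while keeping their signs, and the factor \(f^k\) propagates through the recursion \eqref{eqinductform}.
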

\begin{proof}
	This is essentially obvious : it follows from Definition that the left hand side of the first equality is obtained by replacing \(m_{i}\) by \(f m_{i}\) in equation~\ref{eqchernelem}. This gives the result after an immediate computation. For the second equality, one has to replace \(m_{i}\) be \(\frac{f m_{i}}{f} = m_{i}\), so the endomorphism is unchanged.
\end{proof}

As usual, we have a degree map that compute the total multiplicity of a $0$-cycle. Let us recall the following notation.

\begin{nota} \label{nota:degreepoint}
If \(V\) is a \(0\)-dimensional variety of finite type over \(\mathbbm{k}\), then it is isomorphic over \(\mathrm{Spec}\, \mathbbm{k}\) to a scheme \(\mathrm{Spec}\, \mathbb{K}\), where \(\mathbbm{k} \subset \mathbb{K}\) is a finite field extension. We then let
\[
	\mathrm{deg}_{\mathbbm{k}}
	(V) := \deg_{\mathbbm{k}}(\mathbb{K}).
\]
\end{nota}

\begin{defi} Setting as in Notation~\ref{nota:stratification}. The \emph{degree map} of \(\Sigma\) is defined as follows:
\bgroup
\renewcommand{\arraystretch}{1.2}
\[
	\begin{array}{cccc}
		\mathrm{deg} : & Z_{0}^{\Sigma}(X)_{\mathbb{Q}} & \longrightarrow & \mathbb{Q} \\
			       & \sum_{V} \alpha_{V} [V]        & \longmapsto     & \sum_{V} \alpha_{V}\, \mathrm{deg}_{\mathbbm{k}}(V).
	\end{array}
\]
\egroup
	 If \(\Psi\) is an endomorphism of \(Z_{\bullet}^{\Sigma}(X)_{\mathbb{Q}}\), we will sometimes write \(\deg \Psi\) instead of \(\deg \Psi([X])\), to lighten a bit the notation.
\end{defi}

Before closing this section, let us give a final recursive formula that will prove useful in our proofs by induction.

\begin{lem} \label{lemformulainduct}
	Setting as in Notation~\ref{nota:stratification}. Let \(e\) be the trivialization of \(L^{\otimes d}\) on \(X \setminus f_{n-1}(X_{n-1})\) provided by \(\mathbf{e}\). Let \((W_{1 \leq i \leq r})\) be the family of components of \(X_{n-1}\), and \(m_{k}\) be the multiplicities of \(e\) along the images of these components by \(f_{n-1}\). For each \(i \in I\), let \(\underline{\Sigma}_{i}\) be the trivialized stratification on \(W_{i}\) that is induced by \(\underline{\Sigma}\). Denote by \(q_i : W_{i} \longrightarrow X\) the natural maps. 
	\smallskip
		 
	Then, we have, for any \(l \in \llbracket 0 , n \rrbracket\):
\[
	\deg c_{1}(L, \underline{\Sigma})^{n}_{[\leq l]} 
	= 
	\sum_{m_{i} > 0} 
	\frac{m_{i}}{d} 
	\deg c_{1}(q_{i}^{\ast} L, \underline{\Sigma}_{i})^{n-1}_{[\leq l]} 
	+ 
	\sum_{m_{i} < 0} 
	\frac{m_{i}}{d} 
	\deg c_{1}(q_{i}^{\ast} L, \underline{\Sigma}_{i})^{n-1}_{[\leq l - 1]},
\]
	where the first (resp. second) sum runs among all indexes \(i \in \llbracket 1, r \rrbracket\) such that \(m_{i} > 0\) (resp. \(m_{i} < 0\)). 
\end{lem}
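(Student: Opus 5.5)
The plan is to unwind the recursive Definition~\ref{defitrunchigherpowers} once, "from the right", and then use linearity of the degree map together with the fact that the stratification tree below each component \(W_{i}\) is exactly the induced stratification \(\underline{\Sigma}_{i}\).

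\textbf{Step 1: unwinding the recursion.} Applying \eqref{eqinductform} with \(k = n\) to \([X]\) gives
\[
	c_{1}(L, \underline{\Sigma})^{n}_{[j]} \cap [X]
	=
	c_{1}(L, \underline{\Sigma})^{n-1}_{[j]}\bigl(c_{1}(L, \underline{\Sigma})_{[0]} \cap [X]\bigr)
	+
	c_{1}(L, \underline{\Sigma})^{n-1}_{[j-1]}\bigl(c_{1}(L, \underline{\Sigma})_{[1]} \cap [X]\bigr).
\]
By Definition~\ref{defistratfirstchern} and \eqref{eqchernelem}, the first-level cycles are
\[
	c_{1}(L,\underline{\Sigma})_{[0]} \cap [X] = \sum_{m_{i}>0} \frac{m_{i}}{d}[W_{i}],
	\qquad
	c_{1}(L,\underline{\Sigma})_{[1]} \cap [X] = \sum_{m_{i}<0} \frac{m_{i}}{d}[W_{i}].
\]
Summing over \(0 \leq j \leq l\), and using that the level \(-1\) term vanishes, yields
\[
	c_{1}^{n}{}_{[\leq l]} \cap [X]
	=
	\sum_{m_{i}>0} \frac{m_{i}}{d}\, c_{1}^{n-1}{}_{[\leq l]}[W_{i}]
	+
	\sum_{m_{i}<0} \frac{m_{i}}{d}\, c_{1}^{n-1}{}_{[\leq l-1]}[W_{i}].
\]
This uses linearity of the endomorphisms. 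If \(X\) itself is reducible, I would apply the same computation to each component of \(X_{n}\), since the argument is componentwise.

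\textbf{Step 2: localization to the induced stratification.} I claim that for each \(i\), every \(k\) and every \(j\),
\[
	c_{1}(L,\underline{\Sigma})^{k}_{[j]} \cap [W_{i}]
	=
	c_{1}(q_{i}^{\ast}L,\underline{\Sigma}_{i})^{k}_{[j]} \cap [W_{i}]
\]
inside the direct summand \(Z^{\Sigma_{i}}_{\bullet}(W_{i})_{\mathbb{Q}} \subset Z^{\Sigma}_{\bullet}(X)_{\mathbb{Q}}\). Here \(\Sigma_{i}\) is the subtree based at the node of \(W_{i}\). I would prove this by induction on \(k\) using \eqref{eqinductform}. The base of the induction is that the elementary step \eqref{eqstratfirstchernclass} for a node \(V\) only involves two things:
\begin{itemize}
	\item the stratum \(g : f^{-1}(V) \to V\);
	\item the trivialization \(\mathbf{e}\) restricted to \(V\), which is by definition the trivialization of \(q_{V}^{\ast}L^{\otimes d}\) attached to that node.
\end{itemize}
Both pieces of data coincide for \(\underline{\Sigma}\) and \(\underline{\Sigma}_{i}\). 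Consequently the multiplicities, and hence the level-\(0\) and level-\(1\) operators, agree on \(Z^{\Sigma_{i}}_{\bullet}(W_{i})_{\mathbb{Q}}\). These operators also preserve this summand, since the children of descendants of \(W_{i}\) are again descendants of \(W_{i}\).

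\textbf{Step 3: taking degrees.} The degree map on \(Z^{\Sigma}_{0}\) restricts to the degree map of \(\Sigma_{i}\) on the leaves below \(W_{i}\), because both are given by \(\deg_{\mathbbm{k}}\) of the point. Applying \(\deg\) to the identity of Step 1 and substituting Step 2 then gives the formula.

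The only real subtlety, and the step I would check most carefully, is the bookkeeping in Step 2. It has two parts. First, the exponent \(d\) is common to \(\underline{\Sigma}\) and \(\underline{\Sigma}_{i}\), since the induced trivialization is \(\frac{1}{d}\mathbf{e}|_{W_{i}}\). Second, the recursion \eqref{eqinductform} composes on the right, so unwinding must start from the first step \([X] \mapsto [W_{i}]\). Everything else is formal linear algebra.
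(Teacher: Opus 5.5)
Your proposal is correct and follows the paper's own proof. The paper also sums \eqref{eqinductform} over the levels to get $c_1(L,\underline{\Sigma})^{n}_{[\leq l]} = c_1(L,\underline{\Sigma})^{n-1}_{[\leq l]}\circ c_1(L,\underline{\Sigma})_{[0]} + c_1(L,\underline{\Sigma})^{n-1}_{[\leq l-1]}\circ c_1(L,\underline{\Sigma})_{[1]}$, applies it to $[X]$, identifies the operators on the summand $Z^{\Sigma_i}_{\bullet}(W_i)_{\mathbb{Q}}$ with those of $\underline{\Sigma}_i$, and takes degrees; your Step 2 simply spells out this last identification (the level-$0$ and level-$1$ operators preserve that summand and agree with those of $\underline{\Sigma}_i$ there), which the paper uses without further comment.
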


\begin{proof}
	If we sum~\eqref{eqinductform} letting \(l\) vary from \(0\) to a given integer, we get by Definition~\ref{deficherntrunc}:
	\[
	c_{1}(L, \underline{\Sigma})^{k}_{[\leq l]} 
	= 
	c_{1}(L, \underline{\Sigma})^{k-1}_{[\leq l]} \, \circ \, c_1(L, \underline{\Sigma})_{[0]} 
	+ 
	c_{1}(L, \underline{\Sigma})^{k-1}_{[\leq l-1]} \, \circ \, c_1(L, \underline{\Sigma})_{[1]}
	\]

	Applying this endomorphism to the cycle \([X] \in Z_{n}^{\Sigma}(X)_{\mathbb{Q}}\) yields, by Definition~\ref{defistratfirstchern} and equation~\eqref{eqchernelem}:
	\begin{align*}
	c_{1}(L, \underline{\Sigma})^{k}_{[\leq l]} \cap [X]
	& = 
		c_{1}(L, \underline{\Sigma})^{k-1}_{[\leq l]} \cap 
		\left( \sum_{m_{i} > 0} \frac{m_{i}}{d} [W_{i}] \right) 
	+ 
		c_{1}(L, \underline{\Sigma})^{k-1}_{[\leq l-1]} \cap
		\left( \sum_{m_{i} < 0} \frac{m_{i}}{d} [W_{i}] \right) 
		\\
		& = \sum_{m_{i} > 0} \left( \frac{m_{i}}{d} c_{1}(q_{i}^{\ast} L, \underline{\Sigma}_{i})^{k-1}_{[\leq l]} \cap 
		[W_{i}] \right) 
	+ 
 		\sum_{m_{i} < 0}
		\left(
		\frac{m_{i}}{d}	 
		c_{1}(q_{i}^{\ast} L, \underline{\Sigma})^{k-1}_{[\leq l-1]} \cap
		[W_{i}]
		\right) 
	\end{align*}
	where at the second line, we used the equalities 
	\(
	c_{1}(L, \underline{\Sigma})^{k-1}_{[\ast]} \cap [W_{i}] = 
	c_{1}(q_{i}^{\ast} L, \underline{\Sigma}_{i})^{k-1}_{[\ast]} \cap [W_{i}],
	\)
	valid in each subset \(Z_{0}^{\underline{\Sigma}_{i}}(W_{i}) \subset Z_{0}^{\underline{\Sigma}}(X)\). To conclude, it suffices to take the sum of the multiplicities of these \(0\)-cycles on both sides of the equation.
\end{proof}

\subsection{Paths and indexes} The purpose of this section is to present another manner of computing degrees of truncated Chern classes, using the tree data that has been introduced in Definition~\ref{defitree}. It is actually the way that will be the most useful in the applications to the existence of jet differential equations.
\medskip

\begin{nota}
	For the remainder of this section, we fix a trivialized stratification \(\underline{\Sigma} = (\Sigma, \frac{1}{d} \mathbf{e})\) for a \(\mathbb{Q}\)-line bundle \(L\) on a variety \(X\). If \(\Sigma = (X_{\bullet}, f_{\bullet})\), let \(U_{n} = X_{n} - f_{n-1}(X_{n-1})\), and let \(e\) be the trivialization of \(L^{\otimes d}\) on \(U_{n}\) given by \(\mathbf{e}\). Let \(\mathcal T\) be the tree associated to \(\Sigma\) by Definition~\ref{defitree}, and let \(\mathbf{r}\) be its root.
	
	We also fix the following data regarding the components of \(X_{n-1}\):
	\begin{itemize}
		\item If \(W_{i} \subset X_{n-1}\) is any irreducible component, let \(m_{i} \in \mathbb{Z}\) be the multiplicity of \(e\), seen as a rational section of \(L^{\otimes d}\), at the generic point of \(f_{n-1}(W_{i})\). 
		\item For any \(i \in \llbracket 1, m \rrbracket\), \(\underline{\Sigma}\) induces a trivialized stratification on \(W_{i}\), by keeping only the components of the \(X_{j}\) whose nodes descends from the node of \(W_{i}\) in \(\mathcal{T}\). Let us denote that trivialized stratification by \(\underline{\Sigma}_{i} = (\Sigma_{i}, \frac{1}{d}\mathbf{e}_{i})\).
	\end{itemize}
\end{nota}

\begin{defi} \label{defi:markingtree}
	The {\em marking of \(\mathcal{T}\) associated with \(\frac{1}{d}\mathbf{e}\)} is the marking of all edges of \(\mathcal{T}\) by rational integers, defined as follows.
	
	Let \((W_{i})_{1 \leq i \leq r}\) be the irreducible components of \(X_{n-1}\). Denote by \(\mathbf{w}_{i}\) the associated nodes of \(\mathcal T\), which are exactly the children of \(\mathbf{r}\).  Now, mark the edges of \(\mathcal{T}\) with the following rational multiplicities:
	\begin{enumerate}[label=(\alph*)]
		\item mark the edges \(\mathbf{w}_{i} \to \mathbf{r}\) with the rational number \(\mu_{i} = \frac{m_{i}}{d}\), for all \(i \in \llbracket 1, m\rrbracket\) ;
		\item for any \(i \in \llbracket 1, m\rrbracket\), the subtree of \(\mathcal{T}\) based at \(\mathbf{w}_{i}\) identifies with the tree associated to the induced trivialized stratification \(\underline{\Sigma}_{i}\). Mark the edges of these trees by induction.
	\end{enumerate}
\end{defi}

Using these markings will provide a natural way to compute truncated top intersections. We will need the following definition.

\begin{defi} Let \(\sigma\) be a root-to-leaf path in \(\mathcal{T}\). We say that \(\sigma\) is a path of \emph{index} \(l\), if there are \emph{exactly} \(l\) negative markings on the edges of \(\sigma\).
\end{defi}

Then, by Lemma \ref{lemformulainduct}, and the construction of the markings on $\mathcal T$, the following proposition is straightforward.

\begin{prop} \label{propgraph}  For any root-to-leaf path in \(\mathcal T\), let \(C_{\sigma}\) denote the product of the markings along the edges of \(\sigma\). Let \(V_{\sigma}\) be the \(0\)-dimensional variety labeling the leaf of \(\sigma\), and let \(\deg(\sigma) := \deg_{\mathbbm{k}}(V_{\sigma})\).
	\smallskip
	
	Then, for all \(l\), we have
	\begin{equation} \label{eq:degreeindexes}
	\deg c_1(L, \underline{\Sigma})^n_{[l]} = \sum_{\mathrm{index}(\sigma) = l} C_\sigma\, \deg(\sigma).
		\end{equation}
	where, in the sum above, $\sigma$ runs among all root-to-leaf paths in \(\mathcal{T}\) of index \(l\).
\end{prop}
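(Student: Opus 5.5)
We argue by induction on \(n = \dim X\), following the recursive structure of the truncated Chern classes. The single-level formula in the statement is slightly more convenient to handle than the cumulative one of Lemma~\ref{lemformulainduct}. So we first record the level-\(l\) analogue of that lemma, obtained in exactly the same way by applying \eqref{eqinductform} to \([X]\) without summing over \(l\):
\[
\deg c_{1}(L, \underline{\Sigma})^{n}_{[l]}
=
\sum_{m_{i} > 0} \frac{m_{i}}{d} \deg c_{1}(q_{i}^{\ast} L, \underline{\Sigma}_{i})^{n-1}_{[l]}
+
\sum_{m_{i} < 0} \frac{m_{i}}{d} \deg c_{1}(q_{i}^{\ast} L, \underline{\Sigma}_{i})^{n-1}_{[l-1]}.
\]
The justification is the same as in that lemma. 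By Definition~\ref{defistratfirstchern}, \(c_{1}(L,\underline{\Sigma})_{[0]} \cap [X]\) (resp. \(c_1(L,\underline{\Sigma})_{[1]} \cap [X]\)) is the sum of the \(\frac{m_i}{d}[W_i]\) over \(m_i>0\) (resp. \(m_i<0\)). Moreover, the truncated powers of \(c_1(L,\underline{\Sigma})\) act on the summand \(Z^{\Sigma_i}_\bullet(W_i)_{\mathbb{Q}}\) exactly as those of \(c_1(q_i^\ast L, \underline{\Sigma}_i)\), because the induced stratification keeps the same multiplicities. Components with \(m_i = 0\) contribute to neither sum.

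For the base case \(n = 0\), \(X = X_0\) is a finite disjoint union of \(0\)-dimensional varieties. The tree consists of the root alone when \(X\) is irreducible; in general, the statement is additive over the components of \(X\), and we may reduce to this case. The only root-to-leaf path is the empty path, with index \(0\) and \(C_\sigma = 1\). On the other hand, \(c_1(L,\underline{\Sigma})^0_{[l]}\) is the identity for \(l=0\) and zero for \(l \neq 0\), by the convention \(c_1^{k}{}_{[l]}=0\) for \(l\notin\llbracket 0,k\rrbracket\). Hence both sides of \eqref{eq:degreeindexes} equal \(\deg_{\mathbbm{k}}(X)\delta_{l,0}\).

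For the inductive step, the root-to-leaf paths \(\sigma\) in \(\mathcal{T}\) correspond bijectively to pairs \((i, \sigma')\), where \(\mathbf{w}_i\) is a child of the root and \(\sigma'\) is a root-to-leaf path in the subtree based at \(\mathbf{w}_i\). By Definition~\ref{defi:markingtree}, that subtree, with its markings, is the marked tree of \(\underline{\Sigma}_i\). Thus \(C_\sigma = \frac{m_i}{d} C_{\sigma'}\) and \(\deg \sigma = \deg \sigma'\). We also have \(\mathrm{index}(\sigma) = \mathrm{index}(\sigma')\) if \(m_i>0\) and \(\mathrm{index}(\sigma)=\mathrm{index}(\sigma')+1\) if \(m_i<0\). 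Paths through an edge with \(m_i=0\) have \(C_\sigma=0\), so they can be added or dropped freely.

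We now group the right-hand side of \eqref{eq:degreeindexes} according to the first edge \(\mathbf{w}_i \to \mathbf{r}\) and apply the induction hypothesis to each \(\underline{\Sigma}_i\), with level \(l\) when \(m_i>0\) and level \(l-1\) when \(m_i<0\). This reproduces exactly the level-\(l\) recursion displayed above, which concludes the induction.

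The only point requiring care is the bookkeeping: making sure the induced trivialized stratifications \(\underline{\Sigma}_i\) carry the same markings as the corresponding subtrees, and handling the edges with zero markings. Both are immediate from the definitions, so no step here is a real obstacle.
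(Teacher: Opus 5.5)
Your proof is correct and is essentially the paper's argument. The paper simply declares the proposition straightforward from the first-edge recursion of Lemma~\ref{lemformulainduct} together with the recursive construction of the markings; your single-level version of that recursion (equivalent to the cumulative one by taking differences in $l$), combined with induction on $\dim X$, is that argument written out. Your base case rightly uses the convention stated in the remark after Definition~\ref{defitrunchigherpowers}, namely $c_{1}(L,\underline{\Sigma})^{0}_{[l]}=0$ for $l\neq 0$; this is the intended reading, even though that definition literally sets $c_{1}(L,\underline{\Sigma})^{0}_{[l]}$ equal to the identity for every $l\geq 0$.
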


Let us finish this section with a proposition showing that we can use arbitrarily refinements of a stratification $\Sigma$ to compute the truncated Chern classes.

\begin{prop} \label{proprefine2} Let \(\underline{\Sigma} = (\Sigma, \frac{1}{d} \mathbf{e})\) be a trivialized stratification, and let \(\underline{\Sigma}' = (\Sigma', \frac{1}{d} \mathbf{e'})\) be a refinement of \(\underline{\Sigma}\). Then, for any \(l\), we have
\[
	\deg c_{1}(L, \underline{\Sigma})^n_{[l]} 
	= 
	\deg c_{1}(L, \underline{\Sigma}')^n_{[l]}.
\]
\end{prop}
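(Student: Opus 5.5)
We argue by induction on \(n = \dim X\), using the path description of Proposition~\ref{propgraph}: the degree \(\deg c_{1}(L, \underline{\Sigma})^{n}_{[l]}\) is the sum, over root-to-leaf paths \(\sigma\) of index \(l\), of \(C_{\sigma}\deg(\sigma)\). The embedding of rooted trees \(\varphi : \mathcal{T} \hookrightarrow \mathcal{T}'\) identifies the varieties and the trivializations attached to the nodes of \(\mathcal{T}\) with those of \(\varphi(\mathcal{T})\). So it suffices to show two things. First, markings on edges of \(\varphi(\mathcal{T})\) agree with those on \(\mathcal{T}\). Second, every root-to-leaf path of \(\mathcal{T}'\) that leaves \(\varphi(\mathcal{T})\) contributes zero.

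For the first point, take an edge \(\nu \to \mu\) of \(\mathcal{T}\). Its marking is \(m/d\), where \(m\) is the multiplicity of the trivialization \(e_{\mu}\) (a rational section of \(L^{\otimes d}\) on \(V_{\mu}\)) at the generic point of the image divisor \(c_{\mu}(V_{\nu})\). By Definition~\ref{defirefinement}\ref{def:refinetriv}, \(e'_{\varphi(\mu)} = e_{\mu}\) under \(V_{\mu} \cong V'_{\varphi(\mu)}\). By the commutative triangle in the definition of a refined stratum, \(c'_{\varphi(\mu)}(V'_{\varphi(\nu)}) = c_{\mu}(V_{\nu})\). Hence the multiplicities coincide, and markings are preserved.

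For the second point, let \(\sigma'\) be a path of \(\mathcal{T}'\) not contained in \(\varphi(\mathcal{T})\). Let \(\varphi(\mu) \to \nu'\) be the first edge of \(\sigma'\) (read from the root) with \(\nu' \notin \varphi(\mathcal{T})\). Then \(V'_{\nu'}\) maps birationally onto a component \(D'\) of the boundary divisor of the refined stratum \(c'_{\varphi(\mu)}\), and \(D'\) is not among the images of the original children. The original open set \(U_{\mu} = V_{\mu} \setminus c_{\mu}(\bigsqcup_{C_{\mu}} V_{\nu})\) carries \(e_{\mu}\) as a genuine nowhere-vanishing trivialization. Because \(D'\) is distinct from the original boundary components, its generic point lies in \(U_{\mu}\). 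The trivialization \(e'_{\varphi(\mu)} = e_{\mu}\) is therefore regular and invertible there, so its multiplicity along \(D'\) is \(0\). The edge \(\varphi(\mu) \to \nu'\) thus has marking \(0\), which gives \(C_{\sigma'} = 0\). We would first verify this point carefully, since it is where the definition of refinement is really used: the added components lie inside \(U_{\mu}\) generically, while the trivialization is unchanged.

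Summing over paths then gives equality of \eqref{eq:degreeindexes} for \(\underline{\Sigma}\) and \(\underline{\Sigma}'\). Paths in \(\varphi(\mathcal{T})\) are in bijection with paths of \(\mathcal{T}\), with the same index, the same products \(C_{\sigma}\), and the same leaf degrees, since \(V'_{\varphi(\lambda)} \cong V_{\lambda}\). All other paths contribute zero regardless of their index. A zero marking counts as neither positive nor negative, so there is no ambiguity in assigning an index. The main obstacle is bookkeeping rather than substance. One point needs checking: leaves of \(\mathcal{T}\) must remain leaves in \(\mathcal{T}'\). This holds because both trees have all leaves at depth \(n\), the strata having maximal length. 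Alternatively, the result can be phrased as an induction on \(n\) via Lemma~\ref{lemformulainduct}, where the new components of \(X'_{n-1}\) carry \(m_{i} = 0\) and drop out of both sums.
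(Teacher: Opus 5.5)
Your proposal is correct and follows essentially the same route as the paper. Markings are preserved along \(\varphi(\mathcal{T})\). Any root-to-leaf path leaving \(\varphi(\mathcal{T})\) crosses an edge marked \(0\), because the generic point of the added boundary component lies where the original trivialization is invertible, and Proposition~\ref{propgraph} then yields the equality. You fill in details the paper leaves implicit, such as why the new component's generic point avoids the old boundary and why leaves stay leaves; the opening appeal to induction on \(n\) is unnecessary, since your path argument is direct.
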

\begin{proof}
	With the notation of Definition \ref{defirefinement}, we have an embedding of trees \(\varphi : \mathcal{T} \longrightarrow \mathcal{T'}\). By item~\ref{def:refinetriv} of this definition, we see that two edges of \(\mathcal{T}\) and \(\mathcal{T}'\) correspond under this embedding, then they have the same markings (see Definition~\ref{defi:markingtree}).
	\smallskip
	
	Let \(\mathbf{v} \overset{\mathfrak{s}}{\longrightarrow} \mathbf{w}\) be an edge of \(\mathcal{T'}\) which does not belong to \(\varphi(\mathcal{T})\), and that satisfies \(\mathbf{w} \in \mathcal T\). We claim that $\mathfrak{s}$ is attributed the multiplicity \(0\) for the marking associated to \(\frac{1}{d} \mathbf{e'}\). Indeed, if \(V\) (resp. \(W\)) is the irreducible variety labeling \(\mathbf{v}\) (resp. \(\mathbf{w}\)), then \(V\) does not appear among the irreducible components of the stratum of \(W\) provided by \(\Sigma\), since we must have \(\mathbf{w} \notin \varphi(\mathcal{T})\) in our situation. Thus, if \(e\) is the trivialization of \(L^{\otimes d}\) given by \(\mathbf{e}\) on an open subset of \(W\), then \(e\) must be invertible near the generic point of \(\mathrm{Im}(V \longrightarrow W)\). This shows that \(\mathfrak{s}\) is marked with \(0\) in \(\mathcal{T}'\).  Now, if \(\sigma\) is a root-to-leaf path of \(\mathcal{T}'\) not included in \(\varphi(\mathcal T)\), it has one edge \(\mathfrak{s}\) in the same situation as above. This implies that the product of all markings along the edges of \(\sigma\) is \(C_{\sigma} = 0\), and thus this path does not contribute to the sum defining \(\deg c_{1}(L, \underline{\Sigma}')^n_{[l]}\) in Proposition \ref{propgraph}. Since all paths in \(\mathcal{T'}\) that are also included in \(\mathcal{T}\) have their markings provided by \(\frac{1}{d}\mathbf{e}\), the latter proposition ends the proof.
\end{proof}

\subsection{Finite morphisms and stratifications} \label{sec:finitemorphism} Let \(p : X' \to X\) be a finite morphism between two varieties. In this section, we explain how to produce a stratification on \(X'\), starting from a stratification on \(X\), and we compare the truncated Chern classes obtained in this way. The straightforward idea is to consider the fiber products and to take the adequate normalizations. Let us give some details. 
\smallskip

{\subsubsection{Basic construction.} Let \(X\) be a normal variety of dimension \(n\), and let \(f : Y \to X\) be a stratum on \(X\), where \(Y\) is a normal scheme of pure dimension \(n-1\). Let \(D\) be the schematic image of \(f\) in \(X\): this is a Weil divisor in \(X\), such that the corestriction \(f : Y \to D\) is proper birational. 

Consider now a finite morphism \(p : X' \to X\), where \(X'\) is a normal variety as well. The scheme \(Y \times_{X} X'\) has pure dimension \(n-1\). Denote by \(Y'\) the scheme obtained by taking the disjoint union of the normalizations of all irreducible components of \(Y \times_{X} X'\): this is again a purely dimensional normal scheme of dimension \(n-1\). There is a naturally induced morphism \(f' : Y' \to X'\).

Now, \(D' := p^{-1}(D)\) is also a Weil divisor in \(X'\). For each component \(D_{0}'\) of \(D'\), there is exactly one component \(Y_{0}'\) of \(Y'\) that dominates it, and for which the induced morphism \(Y_{0}'\to Y'\) realizes a birational morphism. Indeed, this can be checked after shrinking \(X\) around the generic point of \(D_{0} := p(D_{0}')\), so that \(f : Y \to X\) identifies with the embedding \(D_{0} \hookrightarrow X\). In this case, \(Y \times_{X} X'\) identifies with the inverse image \(p^{-1}(D_{0})\), so the claim is obvious.
\medskip

With these definitions, we have then proved the following:

\begin{lem} \label{lem:stratumfinite}
	The morphism \(f' : Y' \to X'\) is a stratum on \(X'\).
\end{lem}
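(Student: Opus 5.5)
To prove Lemma~\ref{lem:stratumfinite}, I would go back to Definition~\ref{defistratum}. We must check three things. First, \(Y'\) is normal of pure dimension \(n-1\). Second, \(f'\) factors through a reduced Weil divisor of \(X'\). Third, the corestriction of \(f'\) to that divisor is proper and birational.

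\emph{Normality, purity and properness.} Normality and pure dimension hold by construction: \(Y'\) is a disjoint union of normalizations of the irreducible components of \(Y \times_{X} X'\). These components all have dimension \(n-1\), because \(Y \times_{X} X' \to Y\) is finite, being the base change of the finite morphism \(p\). Next, \(Y \times_{X} X' \to X'\) is proper, as the base change of the proper morphism \(Y \to X\). Normalization of a scheme of finite type over \(\mathbbm{k}\) is finite, and a disjoint union of finitely many components is still proper. Hence \(f'\) is proper.

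\emph{Factorization through \(D'\).} Take \(D'\) to be the reduced scheme underlying \(p^{-1}(D)\). It has pure codimension \(1\) in \(X'\): \(p\) is finite and \(X'\) is a variety, so each component of \(p^{-1}(D)\) has dimension at most \(n-1\). On the other hand, it is cut out by the pullback of local equations defining \(D\) generically, so each component has dimension exactly \(n-1\). Thus \(D'\) is a reduced Weil divisor. The image of \(f'\) lies set-theoretically in \(p^{-1}(D)\), and each \(Y'_{0}\) is reduced. Therefore \(f'\) factors through \(D'\).

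\emph{Birationality.} This is the only delicate point. We need to match the components of \(Y'\) bijectively with the components of \(D'\), with \(Y'_{0} \to D'_{0}\) birational for each pair. The discussion preceding the lemma does this generically. Shrink \(X\) around the generic point of a component \(D_{0}\) so that \(f\) becomes a closed embedding \(D_{0} \hookrightarrow X\). Then \(Y \times_{X} X' = p^{-1}(D_{0})\), and its reduced components are exactly the components of \(D'\) lying over \(D_{0}\). The normalization of each is birational to it.

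\emph{Main obstacle.} The hard part is ruling out extra components of \(Y \times_{X} X'\) that do not dominate a component of \(D'\). Such components would lie over the non-birational locus of \(f\), which has codimension at least \(2\) in \(X\) and hence in \(X'\), by finiteness. I would argue that every component has dimension \(n-1\) and maps into \(D'\), so it must dominate some component of \(D'\). Its generic point then maps to a generic point of \(D'\), where the shrinking argument applies. That argument shows that over such a generic point there is exactly one component, which identifies with \(D'_{0}\) there. So there are no extraneous components, and \(f' : Y' \to D'\) is proper birational, which is what the lemma asserts.
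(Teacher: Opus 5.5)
Your route is the paper's: shrink around the generic point of each component \(D_{0}\) of \(D\), where \(f\) becomes \(D_{0} \hookrightarrow X\) and \(Y \times_{X} X'\) becomes \(p^{-1}(D_{0})\). The gap is that the step everything rests on is not proved.

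\textbf{The gap.} You claim every irreducible component of \(Y \times_{X} X'\) has dimension \(n-1\) \emph{because} \(Y \times_{X} X' \to Y\) is finite. Finiteness only gives \(\dim \leq n-1\). Let \(Z \subset D\) be the nowhere dense closed set over which \(f : Y \to D\) is not an isomorphism. A component whose generic point lies over \(f^{-1}(Z)\) is finite over \(f^{-1}(Z)\), so it has dimension \(\leq n-2\). These are exactly the ``extraneous'' components of your last paragraph, so excluding them by appealing to ``every component has dimension \(n-1\)'' is circular.

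A related step is not automatic either. ``Dimension \(n-1\) and maps into \(D'\), hence dominates a component of \(D'\)'' fails in general, because \(Y \times_{X} X' \to X'\) is not finite. It holds once you know the generic point lies over \(Y \setminus f^{-1}(Z)\).

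Your argument never uses normality of \(X\), and it must. Let \(X\) be \(\mathbb{A}^{2}\) with two points \(a \neq b\) glued, let \(p : \mathbb{A}^{2} \to X\) be the normalization, and let \(Y \subset X\) be the image of a line through \(a\) missing \(b\). Then \(Y \times_{X} X' = p^{-1}(Y)\) is that line plus the isolated point \(b\).

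The same objection applies to your claim that \(p^{-1}(D)\) has pure codimension one. Away from its generic points \(D\) need not be Cartier, nor even set-theoretically cut out by one equation (e.g.\ at a point where a component is not \(\mathbb{Q}\)-Cartier). So Krull's theorem gives no lower bound for components of \(p^{-1}(D)\) lying over such points.

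\textbf{The missing idea} is where normality of \(X\) enters.

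For \(p^{-1}(D)\): over an affine \(\mathrm{Spec}\,A \subset X\) with \(p^{-1}(\mathrm{Spec}\,A) = \mathrm{Spec}\,B\), the extension \(A \subset B\) is an integral extension of domains with \(A\) integrally closed. So going-down holds, every minimal prime over \(\mathfrak{p}B\) contracts to \(\mathfrak{p}\), and every component of \(p^{-1}(D_{0})\) dominates \(D_{0}\).

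For \(Y \times_{X} X'\), going-down is not stable under base change, but universal openness is. \(X\) is normal, hence geometrically unibranch, and \(p\) is finite and dominant (hence surjective) from the variety \(X'\). By Chevalley's criterion (EGA~IV, 14.4.4), \(p\) is universally open, so \(g : Y \times_{X} X' \to Y\) is open.

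An open morphism sends the generic point \(\xi\) of an irreducible component \(C\) to a generic point of \(Y\). Indeed, take an open \(W \ni \xi\) contained in \(C\). Then \(g(W)\) is an open neighbourhood of \(g(\xi)\), so it contains the generic point \(\eta\) of a component of \(Y\) through \(g(\xi)\). Writing \(\eta = g(w)\) with \(w \in \overline{\{\xi\}}\) forces \(g(\xi) = \eta\).

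Hence every component has dimension \(n-1\), with generic point over \(Y \setminus f^{-1}(Z)\). Your shrinking argument then gives the bijection with the components of \(p^{-1}(D)\) and birationality. The paper's discussion before the lemma also just asserts that \(Y \times_{X} X'\) is pure of dimension \(n-1\), so this ingredient is needed there as well.
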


Suppose now that we are given a line bundle \(L\) on \(X\), with a trivialization \(e\) on the Zariski open subset \(X \setminus D\). This pulls-back to define a trivialization \(e'\) of \(p^{\ast} L\) on \(X' - D'\). The following result is an immediate consequence of the projection formula.
\medskip

\begin{lem} \label{lem:projectionformula} Let \(d := \mathrm{deg}(p)\). Let \(D_{0}\) be an irreducible component of \(D\), and let \(m_{D_{0}} \in \mathbb{Z}\) be the multiplicity of \(e\) around \(D_{0}\), seen as a rational section of \(L\) (see Definition~\ref{defi:chernelem}). For each irreducible \(D_{0}'\) of \(D'\), let \(m_{D_{0}'} \in \mathbb{Z}\) be the multiplicity of \(e'\) around \(D_{0}'\). Then we have the following.
	\begin{enumerate} [label = (\alph*)]
		\item if \(m_{D_{0}} \geq 0\) (resp. \(m_{D_{0}} \leq 0\)), then \(m_{D_{0}'} \geq 0\) (resp. \(m_{D_{0}'} \leq 0\)) for all \(D_{0}'\) above \(D_{0}\);
		\item we have
			\[
				\sum_{D_{0}'} m_{D_{0}'} \deg (D_{0}'/D_{0}) = d m_{D_{0}},
			\]
			where the sum runs over all components \(D_{0}'\) above \(D_{0}\).
	\end{enumerate}
\end{lem}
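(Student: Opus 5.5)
The plan is to reduce both claims to a local computation near the generic point of \(D_{0}\), and then to use the projection formula for the finite morphism \(p\) applied to the Cartier divisor of the rational section \(e\).

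First, the localization. Since \(X\) is normal, the local ring \(\mathcal{O}_{X, \eta}\) at the generic point \(\eta\) of \(D_{0}\) is a discrete valuation ring. After shrinking \(X\) to a neighbourhood of \(\eta\), we may assume:
\begin{itemize}
\item \(D = D_{0}\) is a Cartier divisor, cut out locally by a uniformizer \(t\);
\item \(L\) is trivial, with some local frame \(\epsilon\).
\end{itemize}
We may then write \(e = u\, t^{m_{D_{0}}} \epsilon\) with \(u\) a unit. Shrinking further, we may also assume \(X\) is regular. Since \(X'\) is normal, each component \(D_{0}'\) of \(p^{-1}(D_{0})\) again has a discrete valuation ring as local ring at its generic point \(\eta'\). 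Only the components dominating \(D_{0}\) survive this shrinking; since \(p\) is finite, these are exactly the components lying above \(D_{0}\).

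Second, the sign claim (a). Pulling back, \(e' = p^{\ast}u \,(p^{\ast}t)^{m_{D_{0}}} p^{\ast}\epsilon\), where \(p^{\ast}u\) is a unit and \(p^{\ast}\epsilon\) is a local frame of \(p^{\ast}L\). Hence
\[
m_{D_{0}'} = m_{D_{0}} \cdot e_{D_{0}'},
\]
where \(e_{D_{0}'} = v_{D_{0}'}(p^{\ast}t) \geq 1\) is the ramification index. The index is positive because \(p^{\ast}t\) vanishes along \(D_{0}'\) but is a nonzero element, \(p\) being dominant. This gives (a) immediately.

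Third, the equality (b). By the computation above, it reduces to the fundamental identity
\[
\sum_{D_{0}'} e_{D_{0}'}\, \deg(D_{0}'/D_{0}) = \deg(p).
\]
This can be obtained in either of two ways. One is to invoke the projection formula for proper pushforward of Cartier divisors, \(p_{\ast}[\mathrm{div}(p^{\ast}t)] = \deg(p)\,[\mathrm{div}(t)]\) (cf. \cite[Proposition~1.4]{ful98}), together with \(p_{\ast}[D_{0}'] = \deg(D_{0}'/D_{0})[D_{0}]\). Alternatively, one can argue directly with the semi-local ring \(B = \mathcal{O}_{X'} \otimes \mathcal{O}_{X,\eta}\). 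This ring is finite and torsion-free over the discrete valuation ring \(\mathcal{O}_{X,\eta}\), hence free of rank \(\deg p\), and normal, hence a semi-local Dedekind domain. The identity then follows by computing the length of \(B/tB\) in two ways.

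The main obstacle is the freeness/rank step, which is where normality of \(X'\) and finiteness of \(p\) are genuinely used. I would cite Fulton's projection formula rather than redo it. Some care is also needed in positive characteristic: \(\deg(D_{0}'/D_{0})\) must be the degree of the residue field extension, which may be inseparable. The length computation handles this correctly, with no separability assumption.
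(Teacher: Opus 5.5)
Your proposal is correct and follows the paper's proof. For (a), the paper only notes that regularity of \(e\) (resp. \(e^{-1}\)) near the generic point of \(D_{0}\) is preserved under pullback, and your formula \(m_{D_{0}'} = e_{D_{0}'}\, m_{D_{0}}\) is a sharper form of that remark. For (b), the paper reads off the coefficient of \(D_{0}\) in the projection formula \(p_{\ast} D(e') = d\, D(e)\), which is exactly your identity \(\sum_{D_{0}'} e_{D_{0}'} \deg(D_{0}'/D_{0}) = \deg(p)\) combined with (a); your semi-local Dedekind argument supplies the details the paper leaves implicit.
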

\begin{proof}
	For the first point, it suffices to remark that if \(e\) (resp \(e^{-1}\)) is a regular section near the generic point of \(D_{0}\), then \(e'\) (resp. \(e'\)) is regular near the generic point of all \(D_{0}'\). The second point is provided by looking at the multiplicity of \(D_{0}\) in the projection formula \(p_{\ast} D(e') = d D(e)\).
\end{proof}

\subsubsection{Construction of a stratification} \label{sec:constructionstrat} Let now \(X\) be a reduced scheme of pure dimension \(n\), and consider a stratification \(\Sigma = (X_{\bullet}, f_{\bullet}, q)\) on \(X\), with the notation of Definition~\ref{defistratum}. Let \(p : X' \to X\) be a finite morphism, and let \(X_{n}'\) be the normalization of \(X'\). Applying inductively the previous construction starting from the strata \(X_{n-1} \to X_{n}\), and using the finite morphism \(X_{n}' \to X_{n}\), we get a diagram as follows, where the top line is a stratification of \(X'\):

\[
	\begin{tikzcd}
		X_{0}'
			\arrow[r, "f_{0}'"]
			\arrow[d, "p_{0}"]
		&
		X_{1}'
			\arrow[r, "f_{1}'"]
			\arrow[d, "p_{1}"]
		&
		\dotsc
			\arrow[r]
		&
		X_{n-1}'
			\arrow[r, "f_{n-1}'"]
			\arrow[d, "p_{n-1}"]
		&
		X_{n}'
			\arrow[r, "q'"]
			\arrow[d, "p"]
		&
		X'
			\arrow[d, "p"]
		\\
		X_{0}
			\arrow[r, "f_{0}"]
		&
		X_{1}
			\arrow[r, "f_{1}"]
		&
		\dotsc
			\arrow[r]
		&
		X_{n-1}
			\arrow[r, "f_{n-1}"]
		&
		X_{n}
			\arrow[r, "q"]
		&
		X
	\end{tikzcd}
\]

Denote by \(\Sigma'\) the stratification on \(X'\) that is obtained in this manner. Now, if \(L\) is a line bundle on \(X\), and \(d \in \mathbb{N}_{\geq 1}\), let \(\mathbf{e}\) be a trivialization of \(L^{\otimes d}\) on \(\Sigma\), the remark following Lemma~\ref{lem:stratumfinite} shows that we may pull back each trivialization appearing in \(\mathbf{e}\) to obtain a trivialization \(\mathbf{e}'\) for \((L')^{\otimes d}\) on \(\Sigma'\), where \(L' = p^{\ast} L\).  We now have the following claim :

\begin{prop} \label{prop:coveringdegree}
	With the notation above, let \(\underline{\Sigma} = (\Sigma, \frac{1}{d} \mathbf{e})\) (resp. \((\underline{\Sigma}' := (\Sigma', \frac{1}{d} \mathbf{e}')\)) be the corresponding trivialized stratification for \(L\) on \(X\) (resp. for \(L'\) on \(X'\)). Then we have, for any \(l \in \llbracket 0, n \rrbracket\):
	\[
		\deg c_{1}(L', \underline{\Sigma}')_{[\leq l]}^{n}
		=
		\deg(p)
		\deg c_{1}(L, \underline{\Sigma})_{[\leq l]}^{n}.
	\]
\end{prop}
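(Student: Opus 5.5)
We argue by induction on \(n = \dim X\), using the recursive formula of Lemma~\ref{lemformulainduct} on both sides. First we reduce to the case where \(X\) is a normal variety and \(q\) is the identity. The truncated classes only depend on the strata \(X_{\bullet}\) and the trivializations over \(X_{n}\). Moreover \(X_{n}' \to X_{n}\) is finite of degree \(\deg(p)\): it is the normalization of the base change, and degree is computed at generic points, componentwise. If \(X\) has several components, both sides split as sums over components, so we may work one component at a time. Multiplying \(\mathbf{e}\) by a power is harmless by Proposition~\ref{prop:powertriv}, so we keep the denominator \(d\) fixed throughout.

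For \(n = 0\), \(X = \mathrm{Spec}\,\mathbb{K}\) and \(X'\) is a finite union of points \(\mathrm{Spec}\,\mathbb{K}_{j}\). The equality then reads \(\sum_{j} [\mathbb{K}_{j}:\mathbbm{k}]\,\mathrm{mult}_{j} = \deg(p)[\mathbb{K}:\mathbbm{k}]\), which is the definition of \(\deg(p)\) for a finite morphism of zero-dimensional schemes. We treat degree as the weighted sum \(\sum_{j} \mathrm{length}\cdot[\mathbb{K}_{j}:\mathbb{K}]\) in the non-reduced case. Since \(X_{n}'\) is normalized, hence reduced, only the reduced case matters at leaves of \(\Sigma'\). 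There the degree of \(p_{0}\) on the fiber over a point still has to be interpreted with care; see the last paragraph.

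For the induction step, let \(W_{i}\) be the components of \(X_{n-1}\), with multiplicities \(m_{i}\) and images \(D_{i}\). The components of \(X_{n-1}'\) are the \(W'_{ij}\) lying over the \(D'_{ij} \subset p^{-1}(D_{i})\), with multiplicities \(m'_{ij}\). Each \(W'_{ij} \to W_{i}\) is finite of degree \(\deg(D'_{ij}/D_{i})\), because \(W'_{ij}\) and \(W_{i}\) are birational to \(D'_{ij}\) and \(D_{i}\). The stratification induced on \(W'_{ij}\) by \(\Sigma'\) is exactly the construction of Section~\ref{sec:constructionstrat} applied to \(W'_{ij} \to W_{i}\). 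The induction hypothesis therefore gives
\[
\deg c_{1}(L'|_{W'_{ij}}, \underline{\Sigma}'_{ij})^{n-1}_{[\leq l]} = \deg(D'_{ij}/D_{i})\, \deg c_{1}(L|_{W_{i}}, \underline{\Sigma}_{i})^{n-1}_{[\leq l]},
\]
and the same holds with \(l-1\) in place of \(l\). Lemma~\ref{lem:projectionformula}(a) shows that \(m'_{ij}\) has the same sign as \(m_{i}\), or is zero; a zero multiplicity contributes nothing. So the \(W'_{ij}\) fall into the same sum, positive or negative, as \(W_{i}\) in Lemma~\ref{lemformulainduct}. Substituting and summing over \(j\), part (b) of Lemma~\ref{lem:projectionformula}, \(\sum_{j} m'_{ij}\deg(D'_{ij}/D_{i}) = \deg(p)\, m_{i}\), yields exactly \(\deg(p)\) times the right-hand side of Lemma~\ref{lemformulainduct} for \(X\).

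The main obstacle is bookkeeping at the base of the induction and in identifying the induced stratifications. The degree of \(W'_{ij} \to W_{i}\) must be the same integer that appears in the projection formula for \(D'_{ij} \to D_{i}\). Here we use that normalizations and proper birational maps preserve function fields. At the zero-dimensional level, the pullback \(X_{0}' \to X_{0}\) is the normalization of the reduced fiber product, so the induction hypothesis at dimension \(0\) needs multiplicities correct. To avoid this, we stop the induction at dimension \(1\). For curves, \(p\) is finite between normal curves, and the fiber over a closed point \(P\) satisfies \(\sum_{P'} e_{P'}[\kappa(P'):\kappa(P)] = \deg(p)\). The ramification indices \(e_{P'}\) enter through the multiplicities \(m'\) via Lemma~\ref{lem:projectionformula}(b), so the curve case follows directly from that lemma without using a zero-dimensional degree statement.
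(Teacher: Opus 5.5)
Your proof is correct and is essentially the paper's argument. Both proceed by induction on \(\dim X\), apply Lemma~\ref{lemformulainduct} to \(X\) and \(X'\), use the induction hypothesis on each \(W'_{ij} \to W_i\), get the sign compatibility from Lemma~\ref{lem:projectionformula}(a), and sum using Lemma~\ref{lem:projectionformula}(b).

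The worry in your last paragraph is unnecessary. The leaves of \(\Sigma'\) are reduced points, so the dimension-\(0\) case is just \([\mathbb{K}':\mathbbm{k}] = [\mathbb{K}':\mathbb{K}]\,[\mathbb{K}:\mathbbm{k}]\), exactly as in the paper. All ramification is already absorbed into the edge markings \(m'\) in the step from dimension \(0\) to \(1\), so stopping the induction at curves merely unrolls one step of the same argument.
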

\begin{proof}

	Let us argue by induction on \(\dim X\). If \(\dim X = 0\), then \(X \cong \mathrm{Spec}\, \mathbb{K}\) for some finite extension \(\mathbbm{k} \subset \mathbb{K}\), and \(X' \cong \mathrm{Spec}\, \mathbb{K}'\) for some other finite extension \(\mathbb{K} \subset \mathbb{K'}\). We must then take \(l = 0\), for which
	\begin{align*}
		\deg c_{1}(L', \underline{\Sigma}')_{[0]}^{0}
		& =
		\deg [X'] \\
		& = \dim_{\mathbbm{k}} \mathbb{K'} \\
		& =  \dim_{\mathbb{K}}{\mathbb{K'}} \cdot \dim_{\mathbbm{k}} \mathbb{K} \\
		& = \deg(p) \deg [X] = \deg(p)\, \deg c_{1}(L, \underline{\Sigma})_{[0]}^{0}.
	\end{align*}

	If \(\dim X \geq 1\), let \(e\) (resp. \(e'\)) be the trivialization of \(L^{\otimes d}\) (resp. of \(L'^{\otimes d}\)) on \(X \setminus f_{n-1}(X_{n-1})\) (resp. \(X \setminus f'_{n-1}(X'_{n-1})\)) provided by \(\mathbf{e}\) (resp. by \(\mathbf{e}'\)). Let \((W_{i})_{1 \leq i \leq r}\) (resp. \((W_{j}')_{1 \leq j \leq s}\)) denote the family of components of \(X_{n-1}\) (resp. \(X_{n-1}'\)), and \(m_{k}\) (resp. \(m_{k}'\)) be the multiplicities of \(e\) (resp. \(e'\)) along the images of these components by \(f_{n-1}\) (resp. \(f_{n-1}'\)). Then we are in the setup of Lemma~\ref{lemformulainduct}, for both varieties \(X\) and \(X'\). For each \(W_{i}\) (resp \(W_{j}')\), denote by \(\underline{\Sigma}_{i}\)  (resp. \(\underline{\Sigma}'_{j}\)) the trivialized stratification induced on \(W_{i}\) (resp. \(W_{j}'\)).
	\medskip

	Fix a component \(W_{i}\), and denote by \((W'_{j})_{j \in I_{j}}\) the components among the \(W'_{j}\) that dominate it. For any \(j \in I_{i}\), the induction hypothesis applied to the varieties \(W_{i}\) and \(W'_{j}\) yields:
	\[
		\deg c_{1}(q_{j}'^{\ast}L', \underline{\Sigma}'_{j})^{n}_{[\leq l]}
		=
		\deg(W_{j}'/W_{i}) 
		\deg c_{1}(q_{i}^{\ast} L, \underline{\Sigma}_{i})^{n}_{[\leq l]}
	\]

	This gives
	\begin{align*}
		\sum_{j \in I_{i}}
		\frac{m_{j}'}{d} \deg c_{1}(q_{j}'^{\ast}L', \underline{\Sigma}_{j}')^{n}_{[\leq l]} 
		& = 
		\sum_{j \in I_{i}}
		\frac{m_{j}'}{d} 
		\deg(W_{j}'/W_{i}) 
		\;
		\deg c_{1}(q_{i}^{\ast} L, \underline{\Sigma}_{i})^{n}_{[\leq l]} \\
		& = \deg(p)\; \frac{m_{i}}{d}
		\deg c_{1}(q_{i}^{\ast} L, \underline{\Sigma}_{i})^{n}_{[\leq l]}
	\end{align*}
	where we applied Lemma~\ref{lem:projectionformula}, {\em (b)} at the last line (remark that \(\deg(p) = \deg (X_{n}'/X_{n})\). Now, by Lemma~\ref{lem:projectionformula}, {\em (a)}, if \(W_{j}'\) dominates \(W_{i}\), we have \(m_{j}' > 0\) if and only if \(m_{i} > 0\). This, joint with the previous computation, shows that
\begin{align*}
	\sum_{m_{j} > 0} 
	\frac{m_{j}'}{d} 
	\deg c_{1}((q_{j})'^{\ast} L', \underline{\Sigma}_{j}')^{n-1}_{[\leq l]} 
	& + 
	\sum_{m_{j}' < 0} 
	\frac{m_{j}'}{d} 
	\deg c_{1}((q_{j}')^{\ast} L', \underline{\Sigma}'_{j})^{n-1}_{[\leq l - 1]} \\
	& =
	\deg(p)
	\left(
	\sum_{m_{i} > 0} 
	\frac{m_{i}}{d} 
	\deg c_{1}(q_{i}^{\ast} L, \underline{\Sigma}_{i})^{n-1}_{[\leq l]} 
	 + 
	\sum_{m_{i} < 0} 
	\frac{m_{i}}{d} 
	\deg c_{1}(q_{i}^{\ast} L, \underline{\Sigma}_{i})^{n-1}_{[\leq l - 1]}
	\right).
\end{align*}

This gives the result, by Lemma~\ref{lemformulainduct}.
\end{proof}

\section{Morse inequalities} \label{sectmorseineq}

In this section, \(X\) will be a proper variety of dimension \(n\). In this situation, for any coherent sheaf \(\mathcal{F}\) on \(X\), the cohomology groups
\[
	H^{i}(X, \mathcal{F})
	\quad
	(i \geq 0)
\]
have a structure of finite dimensional \(\mathbbm{k}\)-vector spaces. We denote as usual
\[
	h^{i}(X, \mathcal{F}) = \dim_{\mathbbm{k}} H^{i}(X, \mathcal{F}).
\]

\begin{rem} Note that this definition takes into account the possibility that \(X\) might be defined over a finite extension of \(\mathbbm{k}\). For example, if \(X \cong \mathrm{Spec}\, \mathbb{K}\) associated with a finite field extension \(\mathbbm{k} \subset \mathbb{K}\), then the sheaf \(\mathcal{O}_{X}\) has rank one, while
	\begin{equation} \label{eq:valueh0}
		h^{0}(X, \mathcal{O}_{X})
		=
		\dim_{\mathbbm{k}} H^{0}(X, \mathcal{O}_{X})
		=
		\dim_{\mathbbm{k}}{\mathbb{K}}.
	\end{equation}
\end{rem}

\subsection{Truncated Euler characteristics} In essence, the algebraic Morse inequalities are a means to estimate the asymptotic behaviour of truncated Euler characteristics, which are defined as follows.  

\begin{defi} Let \(\mathcal{F}\) be a coherent sheaf on \(X\), and let \(l \in \llbracket 0, n \rrbracket\) be an integer. The {\em \(l\)-th truncated} Euler characteristic of \(\mathcal{F}\) is the integer
\[
	\chi^{[l]} (X, \mathcal{F}) = \sum_{j = 0}^{l} (-1)^{j+l} h^{j}(X, \mathcal{F}).
\]
\end{defi}

Note that in this definition, the number \(h^{l}(X, \mathcal{F})\) comes with a positive sign. A very easy but nonetheless crucial remark is that these numbers satisfy a subadditivity property with respect to short exact sequences, that generalizes the additivity property of the usual Euler characteristic.

\begin{lem} \label{lem:subadditivity}
	Consider the following exact sequence of coherent sheaves on \(X\):
	\[
	0
	\longrightarrow
	\mathcal{F}
	\longrightarrow
	\mathcal{E}
	\longrightarrow
	\mathcal{G}
	\longrightarrow
	0
	\]
	Then, for all integer \(l\), one has the inequalities:
	\[
		\chi^{[l]}(X, \mathcal{E})
		\leq
		\chi^{[l]}(X, \mathcal{F})
		+
		\chi^{[l]}(X, \mathcal{G}).
	\]
	and
	\[
		\chi^{[l]}(X, \mathcal{F})
		\leq
		\chi^{[l]}(X, \mathcal{E}) 
		+
		\chi^{[l-1]}(X, \mathcal{G})
	\]

\end{lem}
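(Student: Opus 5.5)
The plan is to use the long exact sequence in cohomology
\[
0 \to H^{0}(\mathcal{F}) \to H^{0}(\mathcal{E}) \to H^{0}(\mathcal{G}) \to H^{1}(\mathcal{F}) \to \cdots
\]
and the standard fact that alternating sums of dimensions along a truncated exact sequence are controlled by the rank of the first map that gets cut off. Write \(\delta_{j} : H^{j}(X,\mathcal{G}) \to H^{j+1}(X,\mathcal{F})\) for the connecting maps. All the cohomology spaces are finite dimensional over \(\mathbbm{k}\) because \(X\) is proper, so ranks make sense.

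For the first inequality, we truncate the long exact sequence at \(H^{l}(X,\mathcal{G})\). This gives an exact sequence
\[
0 \to H^{0}(\mathcal{F}) \to H^{0}(\mathcal{E}) \to H^{0}(\mathcal{G}) \to \cdots \to H^{l}(\mathcal{F}) \to H^{l}(\mathcal{E}) \to H^{l}(\mathcal{G}) \to \mathrm{Im}(\delta_{l}) \to 0.
\]
Its alternating sum of dimensions vanishes. We choose signs so that the terms in degree \(l\) carry a plus sign, and then compute \(\chi^{[l]}(\mathcal{F}) - \chi^{[l]}(\mathcal{E}) + \chi^{[l]}(\mathcal{G}) = \mathrm{rk}\,\delta_{l} \geq 0\). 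This is exactly \(\chi^{[l]}(X,\mathcal{E}) \leq \chi^{[l]}(X,\mathcal{F}) + \chi^{[l]}(X,\mathcal{G})\).

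For the second inequality, we instead truncate at \(H^{l}(X,\mathcal{F})\). The sequence
\[
0 \to H^{0}(\mathcal{F}) \to \cdots \to H^{l-1}(\mathcal{G}) \to H^{l}(\mathcal{F}) \to \mathrm{Im}\big(H^{l}(\mathcal{F}) \to H^{l}(\mathcal{E})\big) \to 0
\]
is exact. Taking the alternating sum with \(H^{l}(\mathcal{F})\) counted positively gives
\[
\chi^{[l]}(\mathcal{F}) - \chi^{[l]}(\mathcal{E}) - \chi^{[l-1]}(\mathcal{G}) = -\dim\mathrm{Im}\big(H^{l}(\mathcal{E}) \to H^{l}(\mathcal{G})\big) \leq 0,
\]
and this is the claimed inequality.

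Both computations rest on the same elementary lemma. For a finite exact sequence \(V_{0} \to \cdots \to V_{N} \to 0\) starting with \(0\), and ending in a surjection onto the image of the next map, the alternating sum of dimensions is zero. We prove it by splitting the sequence into short exact pieces via kernels and images. There is no genuine obstacle here. The only delicate step is the sign bookkeeping. We must check that the sign convention \((-1)^{j+l}\), which puts a plus sign on the top-degree term, makes the leftover rank term appear with the sign claimed. We also check the edge case \(l = 0\), where \(\chi^{[-1]} = 0\) and the second inequality reduces to \(h^{0}(\mathcal{F}) \leq h^{0}(\mathcal{E})\), which holds by injectivity.
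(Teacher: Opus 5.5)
Your proof is correct and is the paper's argument: truncate the long exact cohomology sequence, use that the alternating sum of dimensions vanishes, and observe that the leftover term is the dimension of an image. Your \(\mathrm{rk}\,\delta_{l}\) is the paper's \(\dim Z\).

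One small difference is in the second inequality. The paper truncates one step later, at \(H^{l}(\mathcal{E}) \to \mathrm{Im}(H^{l}(\mathcal{E}) \to H^{l}(\mathcal{G})) \to 0\), which gives your displayed identity directly. The sequence you wrote contains only \(\chi^{[l-1]}(\mathcal{E})\). To reach the identity from it you need one extra rank--nullity step on \(H^{l}(\mathcal{E}) \to H^{l}(\mathcal{G})\); the identity you state is nonetheless correct.
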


\begin{proof}
	Write down the long exact sequence in cohomology, and truncate it after the term \(H^{l}(X, \mathcal{G})\): for some finite dimensional vector space \(Z\), we get a resolution
	\[
		\dotsc
		\longrightarrow
		H^{l-1}(X, \mathcal{G})
		\longrightarrow
		H^{l}(X, \mathcal{F})
		\longrightarrow
		H^{l}(X, \mathcal{E})
		\longrightarrow
		H^{l}(X, \mathcal{G})
		\longrightarrow
		Z
		\longrightarrow
		0.
	\]
	The result then follows from writing \(\dim Z \geq 0\), and using that the Euler characteristic of the exact sequence above equals
	\[
		0
		=
		\dim Z 
		-
		\chi^{[l]}(X, \mathcal{G})
		+
		\chi^{[l]}(X, \mathcal{E})
		-
		\chi^{[l]}(X, \mathcal{F}).
	\]
	The second inequality follows from the same argument, writing this time:
	\[
		\dotsc
		\longrightarrow
		H^{l-1}(X, \mathcal{G})
		\longrightarrow
		H^{l}(X, \mathcal{F})
		\longrightarrow
		H^{l}(X, \mathcal{E})
		\longrightarrow
		Z
		\longrightarrow
		0.
	\]
\end{proof}

We can iterate the lemma above, and derive the following consequence for filtered sheaves on \(X\):

\begin{lem} \label{lem:filtration}
	Let \(\mathcal{E}\) be a coherent sheaf on \(X\), and let \(F^{\bullet} \mathcal{E}\) be a finite filtration on \(\mathcal{E}\) by subsheaves:
	\[
		0 = F^{s} \mathcal{E}
		\subset
		F^{s-1} \mathcal{E}
		\subset
		\dotsc
		\subset
		F^{0} \mathcal{E}
		=
		\mathcal{E}.
	\]
	Let \(\mathrm{Gr}^{F}_{\bullet}(\mathcal{E}) := \displaystyle\bigoplus_{j} \quotientd{F^{j} \mathcal{E}}{F^{j+1} \mathcal{E}}\) be the graded object. Then, for all \(l \in \llbracket 0, n \rrbracket\), we have the inequality~:
	\[
		\chi^{[l]}(X, \mathcal{E}) \leq \chi^{[l]}(X, \mathrm{Gr}_{F}(\mathcal{E})).	
	\]
\end{lem}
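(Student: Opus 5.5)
The plan is to argue by induction on the length \(s\) of the filtration, using the first inequality of Lemma~\ref{lem:subadditivity} at each step. If \(s \leq 1\), then \(\mathcal{E} = F^{0}\mathcal{E}\) coincides with its graded object, and there is nothing to prove.

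For the induction step, consider the short exact sequence
\[
	0 \longrightarrow F^{1}\mathcal{E} \longrightarrow \mathcal{E} \longrightarrow \quotientd{F^{0}\mathcal{E}}{F^{1}\mathcal{E}} \longrightarrow 0.
\]
Lemma~\ref{lem:subadditivity} applied to this sequence gives
\[
	\chi^{[l]}(X, \mathcal{E}) \leq \chi^{[l]}(X, F^{1}\mathcal{E}) + \chi^{[l]}\bigl(X, \quotientd{F^{0}\mathcal{E}}{F^{1}\mathcal{E}}\bigr).
\]
The sheaf \(F^{1}\mathcal{E}\) carries the induced filtration \(0 = F^{s}\mathcal{E} \subset \dotsc \subset F^{1}\mathcal{E}\). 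This filtration has length \(s-1\), and its graded object is \(\bigoplus_{j \geq 1} \quotientd{F^{j}\mathcal{E}}{F^{j+1}\mathcal{E}}\). The induction hypothesis therefore bounds \(\chi^{[l]}(X, F^{1}\mathcal{E})\) by the truncated Euler characteristic of this direct sum.

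To conclude, I use that \(\chi^{[l]}\) is additive on finite direct sums: cohomology commutes with direct sums, so each \(h^{j}\) is additive, and \(\chi^{[l]}\) is a signed sum of the \(h^{j}\). Combining this with the two displayed bounds gives \(\chi^{[l]}(X, \mathcal{E}) \leq \sum_{j} \chi^{[l]}\bigl(X, \quotientd{F^{j}\mathcal{E}}{F^{j+1}\mathcal{E}}\bigr) = \chi^{[l]}(X, \mathrm{Gr}_{F}(\mathcal{E}))\).

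I do not expect any real obstacle here. The only points to check are that the indexing of the filtration is handled consistently, and that the inequality used is the first one of Lemma~\ref{lem:subadditivity} (the bound on the middle term), not the second one.
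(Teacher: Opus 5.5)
Your proposal is correct and follows essentially the same argument as the paper: both apply the first inequality of Lemma~\ref{lem:subadditivity} to the sequences \(0 \to F^{j+1}\mathcal{E} \to F^{j}\mathcal{E} \to \mathrm{Gr}^{j}_{F}(\mathcal{E}) \to 0\) and then sum over \(j\). Your induction on the length \(s\) is just the unrolled form of the paper's telescoping sum, which implicitly uses the same additivity of \(\chi^{[l]}\) on direct sums.
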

\begin{proof}
	For all \(j \in \llbracket 0, s-1\rrbracket\), the exact sequence
	\[
		0 
		\longrightarrow 
		F^{j+1} \mathcal{E} 
		\longrightarrow
		F^{j} \mathcal{E}
		\longrightarrow
		\mathrm{Gr}_{j}^{F}(\mathcal{E})
		\longrightarrow
		0
	\]
	gives the inequality
	\[
		\chi^{[l]}(F^{j} \mathcal{E}) - \chi^{[l]}(F^{j+1}\mathcal{E})
		\leq
		\chi^{[l]}(\mathrm{Gr}_{F}^{j}(\mathcal{E})),
	\]
	so we get the result summing for all possible \(j\).
\end{proof}

\subsection{Statement of the Morse inequalities}

We are now in position to state and prove the following algebraic version of the Morse inequalities.

\begin{thm}[Morse inequalities] \label{thmmorse} Let \(X\) be a proper variety of dimension \(n\). Let \(L\) be a \(\mathbb{Q}\)-line bundle on \(X\), and let \(\underline{\Sigma} := (\Sigma, \frac{1}{d} \mathbf e)\) be a trivialized stratification adapted to \(L\). Let \(M\) be another line bundle on \(X\). Then, for each integer \(i\), and for any \(m\) such that \(L^{\otimes m}\) is a line bundle, we have
	\medskip

\begin{enumerate}[label=(\roman*)]
\item (Strong Morse inequalities) 
	\[
		\chi^{[i]} (X, M \otimes L^{\otimes m}) 
		\leq (-1)^{i} 
		\left( 
		\deg c_{1}(L, \underline{\Sigma})^{n}_{[\leq i]} 
		\right) 
		\frac{m^{n}}{n!} + O(m^{n-1})
\]

\item (Weak Morse inequalities) 
\[
		h^{i} (X, M \otimes L^{\otimes m}) 
		\leq (-1)^i 
		\left( 
		\deg c_{1}(L, \underline{\Sigma})^{n}_{[i]} 
		\right) 
		\frac{m^n}{n!} + O(m^{n-1})
\]

\item (Asymptotic Riemann-Roch formula)
\[
		\chi(X, M \otimes L^{\otimes m}) 
		= 
		\left( 
		\deg c_{1}(L, \underline{\Sigma})^{n}_{[\leq n]} \right) 
		\frac{m^n}{n!} + O(m^{n-1})
\]
\end{enumerate}
\end{thm}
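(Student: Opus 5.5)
Our plan is an induction on \(n = \dim X\), in the spirit of Angelini's proof, using the subadditivity of truncated Euler characteristics (Lemma~\ref{lem:subadditivity}) along the exact sequences attached to the first stratum. Items (ii) and (iii) follow formally from (i). For (iii), take \(i \geq n\): since \(h^{j}=0\) for \(j > n\), we get \(\chi = (-1)^{n}\chi^{[n]}\). Applying (i) at \(i = n\) and at \(i = n+1\) (where \(\chi^{[n+1]} = -\chi^{[n]}\)) gives both inequalities, because \(c_1(L,\underline{\Sigma})^n_{[\leq n]} = c_1(L,\underline{\Sigma})^n_{[\leq n+1]}\). For (ii), note that \(h^{i} = \chi^{[i]} + \chi^{[i-1]}\) and \(c^n_{[i]} = c^n_{[\leq i]} - c^n_{[\leq i-1]}\); the signs then match.

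\emph{Reductions.} First, by Proposition~\ref{prop:powertriv} and the fact that only \(m\) in a fixed residue class modulo \(d\) matter (absorb \(L^{\otimes r}\), \(0 \leq r < d\), into \(M\)), we may replace \(L\) by the line bundle \(L^{\otimes d}\) with trivialization \(\mathbf{e}\). Second, pulling back along \(q : X_n \to X\) should be harmless: \(q\) is birational, so \(q_\ast \mathcal{O}_{X_n} \to \mathcal{O}_X\) differs from an isomorphism by sheaves supported in dimension \(< n\), and \(R^{j}q_\ast\) is supported in dimension \(< n\). By the Leray spectral sequence and Serre-type estimates \(h^{j}(\mathcal{G}\otimes L^m) = O(m^{n-1})\) for \(\mathcal{G}\) supported in dimension \(<n\), the truncated characteristics on \(X\) and \(X_n\) then differ by \(O(m^{n-1})\). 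So we may assume \(X\) is normal and \(q = \mathrm{id}\). In dimension \(0\) the statement is \eqref{eq:valueh0}.

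\emph{Main step.} The trivialization \(e\) is a rational section of \(L\) with divisor \(\sum m_i D_i = D^+ - D^-\), so \(L \cong \mathcal{O}(D^+ - D^-)\). If \(D^{\pm}\) were Cartier, we would use the filtrations
\[
M\otimes\mathcal{O}(kD^+ - (m)D^-)\subset M\otimes\mathcal{O}((k+1)D^+ - mD^-)
\]
with quotients \(M\otimes L^{k}(\ldots)|_{D^+}\), together with the analogous chain in \(D^-\). Passing from \(\mathcal{O}(-mD^-)\) to \(\mathcal{O}\) uses the second inequality of Lemma~\ref{lem:subadditivity}, which shifts the index from \(l\) to \(l-1\). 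Refining each \(D_i\) by its multiplicity, the graded pieces are restrictions to the components \(D_i\) (through their normalizations \(W_i\)) of \(M'\otimes L^{\otimes k}\) for \(O(m)\)-many twists, \(m_i\) steps per unit. Applying induction on \(W_i\) with the induced \(\underline{\Sigma}_i\) gives \(\sum_k k^{n-1}/(n-1)! \sim m^n/n!\) times the corresponding degrees. This reproduces exactly the recursion of Lemma~\ref{lemformulainduct}: positive \(m_i\) contribute \(\deg c^{n-1}_{[\leq l]}\), and negative ones contribute \(\deg c^{n-1}_{[\leq l-1]}\) with the sign \((-1)\) absorbed by the index shift. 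The uniformity of the \(O(m^{n-1})\) error in the twisting line bundle (the \(M'\) vary among boundedly many classes times \(\mathcal{O}(D_i)|_{D_i}\)-powers) must be tracked; to do this we restrict to \(W_i\), where the twists \(\mathcal{O}(D_j)|_{W_i}\) are again handled by refining the induced stratification (Propositions~\ref{proprefine}, \ref{proprefine2}).

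\emph{Expected obstacle.} The main difficulty is that the \(D_i\) need not be \(\mathbb{Q}\)-Cartier, so \(\mathcal{O}(kD^+)\) is not invertible and restriction to \(D_i\) is not a line bundle. We would first try to blow up the reflexive ideals \(\mathcal{O}(-D_i)\) (or suitable multiples), followed by normalization, to obtain a modification \(X' \to X\) on which the strict transforms are Cartier. By the birational reduction this changes nothing at order \(m^n\). The new exceptional divisors have to be incorporated into a refined stratification, and one must check via Proposition~\ref{proprefine2} and the projection-formula argument that they carry either multiplicity zero or contributions of lower order. A secondary technical point is the passage from the \(\mathbb{Q}\)-line bundle to an honest line bundle, where taking cyclic covers combined with Proposition~\ref{prop:coveringdegree} may be needed if the first reduction is insufficient.
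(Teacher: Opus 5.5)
Your derivation of (ii) and (iii) from (i) is fine, and so is the reduction to $X$ normal. Your reduction to an honest line bundle with $d=1$ (write $m=dq+r$ and absorb $L^{\otimes r}$ and torsion into $M$) also works, and it is simpler than the paper's cyclic-cover step.

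The gap is in the main step. The filtration you chose does not have graded pieces of the form $M'\otimes L^{\otimes k}$ with $M'$ in a bounded family. You go down from $M$ to $M(-mD^-)$ and then up to $M(mD^+-mD^-)$, so the quotients are
\[
M\otimes L^{\otimes k}\otimes\mathcal{O}(-kD^+)|_{D^-}
\qquad\text{and}\qquad
M\otimes L^{\otimes(k+1)}\otimes\mathcal{O}(-(m-k-1)D^-)|_{D^+}.
\]
These carry powers of $\mathcal{O}(D^{\mp})|_{D^{\pm}}$ whose exponents grow linearly with $m$. This is not a uniformity issue for the error term; it changes the leading coefficient.

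The induction hypothesis (one line bundle, fixed twist) does not apply to such pieces. Refining the stratification does not help either, because Proposition~\ref{proprefine2} only says the truncated degree of a \emph{fixed} line bundle is unchanged. At best this chain gives a Siu-type bound in terms of $D^+$ and $D^-$ separately, which is in general strictly weaker than the statement.

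For example, on $\mathbb{P}^2$ take two distinct lines $D^+=C_1$, $D^-=C_2$, with $L=\mathcal{O}(C_1-C_2)\cong\mathcal{O}$, $M=\mathcal{O}$, and nowhere-vanishing trivializations of $L|_{C_j}$. Then $\deg c_1(L,\underline{\Sigma})^2_{[\le 1]}=0$. Your pieces on $C_1$ are $\mathcal{O}_{\mathbb{P}^1}(k+1-m)$, and their values $\chi^{[1]}=h^1-h^0$ add up to $m^2/2+O(m)$. So subadditivity along your chain only gives $\chi^{[1]}(\mathbb{P}^2,\mathcal{O})\le m^2/2+O(m)$. The claim that the chain reproduces the recursion of Lemma~\ref{lemformulainduct} is therefore false.

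The missing idea is to telescope one power of $L$ at a time, using the identity $M\otimes L^{\otimes(k+1)}(-D^+)=M\otimes L^{\otimes k}(-D^-)$. Restrict $M\otimes L^{\otimes(k+1)}$ to $D^+$ and $M\otimes L^{\otimes k}$ to $D^-$, and apply Lemma~\ref{lem:subadditivity} to the two resulting sequences:
\[
\chi^{[i]}(M\otimes L^{\otimes(k+1)})-\chi^{[i]}(M\otimes L^{\otimes k})
\le
\chi^{[i]}(M\otimes L^{\otimes(k+1)}|_{D^+})+\chi^{[i-1]}(M\otimes L^{\otimes k}|_{D^-}).
\]
The d\'evissage of $\mathcal{O}_{D^{\pm}}$ along its components counted with multiplicity (Lemma~\ref{lemfilt}) introduces only finitely many fixed line bundles, independent of $k$. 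The induction hypothesis then applies with uniform constants, and summing $k^{n-1}/(n-1)!$ gives exactly Lemma~\ref{lemformulainduct}.

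This requires $D^{\pm}$ to be split into Cartier pieces, hence the blow-ups. Two further points follow from that.
\begin{itemize}
\item The induction is then applied on strict transforms $\widehat{D}'_j$, which are only birational to $D_l$, while the stratification lives on $W_l$.
\item The exceptional pieces are harmless not because their multiplicities vanish, but because the pulled-back $L$ restricted to them comes from something of dimension $\le n-2$. Each telescoping step therefore picks up only $O(m^{n-2})$ from them.
\end{itemize}
The paper handles both points by running the induction on the relative statement, Proposition~\ref{propredmorse}. Your birational-invariance argument, extended to twists by fixed coherent sheaves, could play the same role.
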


	\begin{rem}
		We included this truncated version of the asymptotic Riemann-Roch formula above for completeness, but of course this statement is none other than the usual, untruncated version (see e.g. Proposition~\ref{prop:truncatedversion}).
	\end{rem}

Before starting the proof, let us make the following simplifying remark.

\begin{lem}
	Theorem~\ref{thmmorse} is implied by the same statement, where \(L\) is assumed to be a line bundle.
\end{lem}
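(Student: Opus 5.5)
The reduction proceeds by replacing \(L\) with the standard line bundle \(L' := L^{\otimes d}\), using the trivialization \(\mathbf{e}\) of \(L'\) provided by the fractional trivialization \(\frac{1}{d}\mathbf{e}\), and then handling the residues of \(m\) modulo \(d\) by absorbing them into the auxiliary bundle \(M\). Concretely, suppose \(L^{\otimes m}\) is a line bundle. Fix once and for all a standard line bundle representative \(L'\) with \(L' \sim_{\mathbb{Q}} L^{\otimes d}\), so that \(\underline{\Sigma}' := (\Sigma, \frac{1}{1}\mathbf{e})\) is a trivialized stratification adapted to \(L'\). Write \(m = d m' + r\) with \(0 \le r < d\). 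For each residue \(r\) such that \(L^{\otimes r}\) is representable, fix a standard line bundle \(N_{r}\) representing \(L^{\otimes r}\), chosen so that \(L^{\otimes m} \cong L'^{\otimes m'} \otimes N_{r}\). This requires care with torsion in \(\mathrm{Pic}(X)\). Since \(L^{\otimes m}\) is only defined up to torsion, we interpret it, as the paper does for \(\mathbb{Q}\)-line bundles, via a fixed choice of representatives compatible with tensor products. Equivalently, we may enlarge \(d\) by a multiple and absorb the finitely many torsion ambiguities: there are only finitely many residues, and each torsion twist can be put into \(M\). We then set \(M_{r} := M \otimes N_{r} \otimes T\), ranging over a finite set of line bundles.

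Applying the line bundle version of Theorem~\ref{thmmorse} to \((L', \underline{\Sigma}')\) and each of the finitely many \(M_{r}\) gives
\[
\chi^{[i]}(X, M \otimes L^{\otimes m}) = \chi^{[i]}(X, M_{r} \otimes L'^{\otimes m'}) \le (-1)^{i}\deg c_{1}(L', \underline{\Sigma}')^{n}_{[\le i]}\frac{m'^{n}}{n!} + O(m'^{n-1}).
\]
Because there are finitely many \(M_{r}\), the \(O\)-constants are uniform. By Proposition~\ref{prop:powertriv}, taking \(f = d\) and viewing \(\mathbf{e}\) as the trivialization of \(L^{\otimes d}\), we have \(\deg c_{1}(L', \underline{\Sigma}')^{n}_{[\le i]} = d^{n}\deg c_{1}(L, \underline{\Sigma})^{n}_{[\le i]}\). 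This can also be read directly from Definition~\ref{defi:chernelem}: the markings \(m_{i}/d\) become \(m_{i}\), so each path product in Proposition~\ref{propgraph} is multiplied by \(d^{n}\), while the indexes are unchanged. Since \(d^{n} m'^{n} = (m-r)^{n} = m^{n} + O(m^{n-1})\) and \(O(m'^{n-1}) = O(m^{n-1})\), this yields (i). The same substitution gives (ii), using the \([i]\) version of the degree identity, and (iii).

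The main obstacle is not the asymptotics but making sense of \(L^{\otimes m}\) as an actual sheaf when \(\mathrm{Pic}(X)\) has torsion. I would address it by fixing a representative system as described, and noting that every possible choice differs from \(L'^{\otimes m'} \otimes N_{r}\) by one of finitely many torsion bundles. It is this finiteness that keeps the error term uniform.
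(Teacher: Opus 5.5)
Your argument is correct, but it is organized differently from the paper's.

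The paper's route:
\begin{itemize}
\item It takes \(f\) to be the \emph{smallest} positive integer such that \(L^{\otimes f}\) is a standard line bundle.
\item It applies the line-bundle case to \(L^{\otimes f}\), keeping the still fractional trivialization \((\Sigma, \frac{1}{d}\mathbf{e}^{\otimes f})\).
\item It concludes with Proposition~\ref{prop:powertriv} and the substitution \(m \leftarrow fm\).
\end{itemize}
Since the admissible \(m\) (those with \(L^{\otimes m}\) standard) are exactly the multiples of \(f\), no residues ever appear.

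Your route instead passes to the bundle \(L^{\otimes d}\) that \(\mathbf{e}\) actually trivializes, with the integral trivialization \((\Sigma, \mathbf{e})\). This forces you to handle the residues of \(m\) modulo \(d\), together with the torsion discrepancies. You do so by twisting \(M\) with finitely many line bundles and using the uniformity of the \(O\)-constants.

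What each approach buys:
\begin{itemize}
\item The paper's argument is shorter.
\item Yours only needs the line-bundle case with an integral trivialization. In that case the cyclic covering of Lemma~\ref{lem:covertriv} becomes vacuous, since \(e' = p^{\ast}e\) already works.
\item Yours also makes explicit the torsion bookkeeping that the paper leaves implicit.
\end{itemize}

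One caveat on your last paragraph. The finiteness of the torsion twists really uses that \(L^{\otimes m}\) is defined through a fixed system of representatives compatible with tensor products, after replacing \(d\) by a suitable multiple, as you indicate. With arbitrary representatives, the discrepancies would range over the whole torsion subgroup of \(\mathrm{Pic}(X)\), which may be infinite. So ``every possible choice'' should be read as every choice within that fixed system.
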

\begin{proof}
	Let \(f \in \mathbb{N}_{\geq 1}\) be the smallest positive integer such that \(L^{\otimes f}\) is a standard line bundle. Now, assume that the results holds for \(L^{\otimes f}\), with the trivialized stratification \(\Sigma' := (\Sigma, \frac{1}{d}\mathbf{e}^{\otimes f})\) (see Proposition~\ref{prop:powertriv}). Then, for any \(m \in \mathbb{N}\), one has
	\[
		\chi^{[i]}(X, M \otimes L^{\otimes f m})
		\leq 
		(-1)^{i}
		(\deg c_{1}(L, \underline{\Sigma}')^{n}_{[\leq i]}) \frac{m^{n}}{n!} + O(m^{n-1})
	\]
	Since \(c_{1}(L, \underline{\Sigma}')^{n}_{[\leq i]} = f^{n} c_{1}(L, \underline{\Sigma})^{n}_{[\leq i]}\) again by Proposition~\ref{prop:powertriv}, this gives the result by making the change of variables \(m \leftarrow fm\).
\end{proof}

\subsection{Proof in the case where \(L\) is a line bundle} \label{sec:proofmorselb}

As usual, the weak Morse inequalities follow from the strong ones, remarking that \(h^{i} = \chi^{[i]} + \chi^{[i-1]}\). Also, the asymptotic Riemann-Roch formula can be obtained from the strong Morse inequalities, using \(\chi = (-1)^n \chi^{[n]} = (-1)^{n+1} \chi^{[n+1]}\). Thus, it suffices to prove the first point. 
\medskip

\begin{rem}
	Let us make some comments on the strategy. To simplify the exposition, assume that \(X\) is normal.
	\begin{enumerate}
		\item The proof will be induction on \(\dim X\), the base case being essentially trivial. The general induction argument will be a standard telescopic argument of the sort that can be used to prove the asymptotic Riemann-Roch theorem (see e.g. the presentation of \cite[Section 1.2]{Deb01}). The idea is to write
			\[
				L = \mathcal{O}(A - B)
			\]
			where \(A - B\) is the divisor of poles and zeros of a trivialization \(e\) induced by \(\mathbf{e}\) on \(X\). Then, we can write short exact sequences such as in Lemma~\ref{lem:subadditivity} to get the relevant inequality (see \eqref{eqtelescopic} below). There are however two main technical issues.
		\item The first problem is that even if \(A - B\) is a Cartier divisor, its components are {\em a priori} merely Weil divisors. This leads to problems when trying to estimate the cohomology of \(\mathcal{O}_{A} \otimes L^{\otimes m}\), for example (our plan is to proceed by exhibiting a filtration on this sheaf, that works only if the divisor has been decomposed in a sum of Cartier divisors; see Lemma~\ref{lemfilt} below). For this reason, we might have to perform blowing-ups on \(X\).
		\item The second issue comes from the fact that our data is a priori adapted to \(L^{\otimes d}\) and not \(L\). We resolve this discrepancy by taking a cyclic cover on \(X\).
		\item Because of the clean-up steps required by the two points above, we naturally end up with diagrams of the form
\[
	\begin{tikzcd}
		& & X' \arrow[d, "p"] \\
		\dotsc \arrow[r, "f_{n-2}"] & X_{n-1} \arrow[r, "f_{n-1}"] & X
	\end{tikzcd}
\]
	where we want to prove the Morse inequalities for \(p^{\ast} L\) on \(X'\), with our stratification nonetheless still defined on \(X\). This relative situation forces us to use a slightly more general induction step, that will be given by Proposition~\ref{propredmorse}.
	\end{enumerate}
\end{rem}

\smallskip

We are now ready to give the proof of Theorem~\ref{thmmorse} when \(L\) is a line bundle. It will be complete with the following result.

\begin{prop} \label{propredmorse} Under the hypotheses of Theorem \ref{thmmorse}, assume moreover that \(L\) is a line bundle. Let \(p: X' \longrightarrow X\) be a proper dominant, generically finite morphism of degree \(\delta \in \mathbb{N}_{\geq 1}\), where \(X'\) is a variety, and let \(M'\) be any line bundle on \(X'\). Then, for each \(i \in \llbracket 0, n \rrbracket\), and any integer \(m\) divisible by \(d\), we have
\begin{equation} \label{eqmorseupperboundbir}
\chi^{[i]} (X', M' \otimes p^\ast L^{\otimes m}) 
	\leq 
	\delta (-1)^{i} 
	\left( 
	\deg c_{1}(L, \underline{\Sigma})^{n}_{[\leq i]} 
	\right) 
	\frac{m^n}{n!} + O(m^{n-1}).
\end{equation}
\end{prop}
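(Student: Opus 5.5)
We argue by induction on \(n = \dim X\), proving the statement for all \(X, X', p, M'\) simultaneously. For \(n = 0\), \(X = \mathrm{Spec}\,\mathbb{K}\) and \(X'\) is finite over it. Then only \(i = 0\) matters and \(\chi^{[0]} = h^{0}(X', M' \otimes p^{\ast}L^{m}) = \dim_{\mathbbm{k}} H^0(X', \mathcal{O}_{X'})\). Since \(X'\) is a variety finite over \(X\) of degree \(\delta\), this equals \(\delta \deg_{\mathbbm{k}}(X)\), which is \(\delta \deg c_1(L,\underline{\Sigma})^0_{[0]}\), as in the base case of Proposition~\ref{prop:coveringdegree}.

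For the induction step, let \(e\) be the trivialization of \(L^{\otimes d}\) on \(U_n \subset X_n\) given by \(\mathbf{e}\). On \(X_n\) we have \(L^{\otimes d} \cong \mathcal{O}(A - B)\), where \(A = \sum_{m_i > 0} m_i D_i\) and \(B = \sum_{m_i<0} (-m_i) D_i\) are effective Weil divisors supported on \(f_{n-1}(X_{n-1})\). First we clean up. Replace \(X'\) by a variety \(X''\) with a proper generically finite map to \(X'\), dominating \(X_n\), on which three things hold. First, \(p^{\ast}L\) pulls back compatibly. Second, all the components \(D_i\) pull back to \(\mathbb{Q}\)-Cartier, indeed Cartier, divisors; for this we blow up the Weil divisors \(D_i\) and normalize. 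Third, a cyclic cover extracts a \(d\)-th root, so that the pullback of \(L\) itself is \(\mathcal{O}(A' - B')\) with \(A' = \sum \frac{m_i}{d} D_i'\) (positive part) and \(B'\) effective Cartier.

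Passing from \(X'\) to \(X''\) changes \(\chi^{[i]}\) only by \(O(m^{n-1})\). To see this, use the Leray spectral sequence for \(r: X'' \to X'\): \(r_\ast \mathcal{O}_{X''}\) contains \(\mathcal{O}_{X'}^{\oplus \deg r}\) up to sheaves supported in smaller dimension, and the higher direct images are also supported in smaller dimension. Any coherent sheaf supported in dimension \(< n\) contributes only \(O(m^{n-1})\), by Lemma~\ref{lem:subadditivity}. The degree bookkeeping multiplies \(\delta\) by \(\deg r\), which we divide out at the end. After the cleanup, \(A'\) and \(B'\) also contain exceptional components not coming from \(X_{n-1}\). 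For those we need to check that they enter with total coefficient zero, or that they can be absorbed into an \(O(m^{n-1})\) error.

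Next comes the telescopic argument. With \(\mathcal{L} = M'' \otimes \mathcal{O}(m(A' - B'))\), we use the exact sequences
\[
0 \to \mathcal{O}(-mB')\otimes N \to N \to N|_{mB'} \to 0, \qquad 0 \to N \to N(mA') \to N(mA')|_{mA'} \to 0.
\]
By Lemma~\ref{lem:subadditivity}, \(\chi^{[i]}(\mathcal{L})\) is bounded above by \(\chi^{[i]}\) of \(M''(mA')\) restricted to \(mA'\), plus \(\chi^{[i-1]}\) of \(M''(-mB')\) restricted to \(mB'\), plus \(\chi^{[i]}(M'')\). The last term is \(O(m^{n-1})\) in the relevant sense, being a fixed sheaf. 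The sheaf \(\mathcal{O}_{mA'}\) is filtered by \(\mathcal{O}_{D'_j}(-kD'_j - \dots)\), and Lemma~\ref{lem:filtration} bounds its \(\chi^{[i]}\) by the sum over graded pieces. Each graded piece is a line bundle on some \(D'_j\) of the form \(M_{j,k} \otimes L^{\otimes m}\), up to twists by bounded multiples of \(D'_j|_{D'_j}\). Riemann-sum control is obtained by grouping these twists, since \(L|_{D'_j} \cong \mathcal{O}(A'-B')|_{D'_j}\).

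The step I expect to be the main obstacle is this: applying the induction hypothesis to the generically finite map \(D'_j \to W_i\), with stratification \(\underline{\Sigma}_i\), in a form uniform in the twisting line bundle. The twists vary with \(k \le m\,\mathrm{mult}\), so we need uniform \(O(m^{n-2})\) errors. I would handle this by proving the proposition with \(M'\) replaced by \(M' \otimes N^{\otimes t}\), uniformly for \(|t| \le Cm\), on a refined induction statement. This uniformity must itself be propagated through the cleanup, and that is why the cleanup is kept relative to the base \(X\). Summing over \(k\), there are about \(m \cdot \mu_i\) pieces per component, with \(\mu_i = m_i/d\), each bounded by \((-1)^i \delta_j \deg c_1(q_i^\ast L,\underline{\Sigma}_i)^{n-1}_{[\le i]}\, m^{n-1}/(n-1)!\). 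The \(B'\)-pieces give the same with index \(i-1\) and sign \((-1)^{i-1}\), and their coefficient \(-\mu_i>0\) restores \((-1)^i \mu_i\). Using Lemma~\ref{lem:projectionformula} to convert \(\sum_j \delta_j m'_j\) into \(\delta m_i\), the total is exactly the right-hand side of Lemma~\ref{lemformulainduct} times \(\delta\, m^n/n!\). (The factor \(1/n\) from \(\sum_{k\le \mu m} k^{0}\) combines with \(m^{n-1}/(n-1)!\); the twist dependence only affects lower order terms.) This yields \eqref{eqmorseupperboundbir}.
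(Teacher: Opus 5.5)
Your reduction and base case are fine, but the central estimate fails. You compare $M''\otimes L^{m}$ with $M''$ in one shot, through $\mathcal{O}_{mA'}$ and $\mathcal{O}_{mB'}$. The graded pieces of these are $\mathcal{O}_{D'_j}\bigl(-\sum_l k_l D'_l\bigr)$ with $0\le k_l\le m\,a_l$. After tensoring, you get line bundles on $D'_j$ twisted by powers \emph{of order $m$} of $\mathcal{O}(D'_l)|_{D'_j}$. (In the $B'$-term as you wrote it, $M''(-mB')|_{mB'}$, no power of $L$ appears at all.) These twists are not bounded, and in general they are unrelated to $L|_{D'_j}$. So the induction hypothesis for $(W_i,\underline{\Sigma}_i)$ does not control them, yet they contribute at leading order.

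A simple example shows this. Take $X=\mathbb{P}^2$, $L=\mathcal{O}(1)$ trivialized off a line $H$. The pieces of $\mathcal{O}(mH)|_{mH}$ are $\mathcal{O}_H(1),\dots,\mathcal{O}_H(m)$, and their $h^0$ add up to $\sim m^2/2$. Treating them as $L^{m}|_H$ up to bounded twists would give $\sim m^2$. That factor $n$ is exactly what goes wrong in your final count: $\mu m$ pieces of size $C\,m^{n-1}/(n-1)!$ give $\mu C\,m^{n}/(n-1)!$, and $\sum_{k\le\mu m}k^{0}=\mu m$ contains no factor $1/n$.

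Your proposed repair cannot work either. An induction statement uniform in twists $N^{\otimes t}$ with $|t|\le Cm$ and lower-order errors is false, because such twists change the leading coefficient. On $\mathbb{P}^1$ with $L=N=\mathcal{O}(1)$, $h^0(\mathcal{O}(m+t))$ with $t=m$ is $\sim 2m$, not $m+O(1)$. The exceptional components have the same problem: in $mA'$ they occur with multiplicity of order $m$, and $\mathcal{O}(-kE)|_E$ is not pulled back from the base, so they cannot be made lower order.

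The missing idea is to telescope in $m$ with \emph{fixed} divisors. After the cyclic cover and the Weil-to-Cartier cleanup (Lemmas~\ref{lem:cycliccover} and \ref{lem:WeiltoCartier}), write $D(e')=A-B$, where $A=\sum_{m'_j>0}m'_jD'_j+E$ and $B=\sum_{m'_j<0}(-m'_j)D'_j+F$. Then $p^{\ast}L\cong\mathcal{O}(A-B)$ and $p^{\ast}L^{m+1}(-A)\cong p^{\ast}L^{m}(-B)$. The two restriction sequences to $A$ and to $B$, which share this left term, give
\[
\chi^{[i]}(M'\otimes p^{\ast}L^{m+1})-\chi^{[i]}(M'\otimes p^{\ast}L^{m})\le\chi^{[i]}(M'\otimes p^{\ast}L^{m+1}|_{A})+\chi^{[i-1]}(M'\otimes p^{\ast}L^{m}|_{B}).
\]
Now $\mathcal{O}_A$ and $\mathcal{O}_B$ carry $m$-independent filtrations (Lemma~\ref{lemfilt}) with a bounded number of pieces $\mathcal{L}_{j,k}$. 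Each piece is therefore a \emph{fixed} line bundle tensored with $p^{\ast}L^{m+1}$ or $p^{\ast}L^{m}$. The induction hypothesis then applies on $\widehat{D}'_j\to D_l$ with the fixed twists $M'\otimes\mathcal{L}_{j,k}$. The pieces on $E$ and $F$ are $O(m^{n-2})$, since $p^{\ast}L$ there comes from an image of dimension at most $n-2$. The step from $j$ to $j+1$ is then bounded by $\delta(-1)^i(\cdots)\,j^{n-1}/(n-1)!+O(j^{n-2})$, where the bracket is the right-hand side of Lemma~\ref{lemformulainduct} by \eqref{eqsumproj1}. Summing over $j\le m$ gives $\sum_{j\le m}j^{n-1}/(n-1)!\sim m^n/n!$, which is where the $1/n!$ really comes from. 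The uniformity problem you flagged as the main obstacle never arises, because neither the number of pieces nor the twists depend on $m$.
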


The end of the section is devoted to the proof of this result. 
\medskip

\setcounter{stepcc}{-1}

\noindent
{\em \refstepcounter{stepcc} Step~\thestepcc. \label{step:0} We may assume that \(X\) and \(X'\) are normal varieties.} Let \(q : X_{n} \to X\) be the arrow provided by the stratification \(\Sigma\). Then form the square product
\[
	\begin{tikzcd}
		X_{n}' 
		\arrow[d, "p'"]
		\arrow[r, "q'"]
		\arrow[dr, phantom, "\square"]
		& 
		X' 
		\arrow[d, "p"] \\
		X_{n}  
		\arrow[r, "q"] 
		& 
		X
	\end{tikzcd}
\]
Since \(q\) is proper birational, so there is a unique component irreducible component \(Z \subset X_{n}'\) such that \(X_{n}'\) is reduced at the generic point of \(Z\), and such that \(d : Z \to X_{n}'\) is proper birational. Using Lemma~\ref{lem:compasympt} with \(X\) replaced with \(X'\) (resp. \(X'\) replaced with \(X_{n}'\), resp. \(X_{0}'\) replaced with \(Z\)), shows that

\[
	\chi^{[i]}(Z, (q')^{\ast} (M' \otimes p^{\ast} L^{\otimes m})) 
	= 
	\chi^{[i]}(X', M' \otimes p^{\ast} L^{\otimes m}) + O(m^{n-1})
\]
Moreover, the stratification \(\Sigma\) can be also seen trivially as a stratification on \(X_{n}\): the right hand side of \eqref{eqmorseupperboundbir} tautologically assumes the same value for this stratification. Thus, we see that it suffices to prove the result with \(X\) (resp. \(X'\)) replaced with \(X_{n}\) (resp. \(Z\)), to prove the theorem. This shows that it suffices to consider the case where \(X\) is normal. More precisely, we have the following reduction step.
\medskip

\begin{lem}
	Assumptions as in Proposition~\ref{propredmorse}. We may assume without loss of generality that \(X\) and \(X'\) are normal, and that the morphism \(q : X_{n} \to X\) is the identity.
\end{lem}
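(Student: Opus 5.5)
We need to justify the reduction: we may assume \(X\) and \(X'\) normal and \(q = \mathrm{id}\). The argument just given in Step~\ref{step:0} already carries most of this; the plan is to organize it into two substitutions and check that both sides of \eqref{eqmorseupperboundbir} are controlled.

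\emph{Replacing \(X\) by \(X_{n}\).} We replace \(X\) by \(X_{n}\), which is normal by Definition~\ref{defistratification}, and \(L\) by \(q^{\ast}L\). The data \((X_{\bullet}, f_{\bullet})\), together with the identity \(X_{n} \to X_{n}\), is a stratification of \(X_{n}\). The trivializations in \(\mathbf{e}\) are by definition trivializations of the pullbacks \(q_{i}^{\ast}L^{\otimes d}\). Hence they give a trivialized stratification of \(q^{\ast}L\) with the same tree and the same markings. By Proposition~\ref{propgraph}, the truncated degrees \(\deg c_{1}(q^{\ast}L, \underline{\Sigma})^{n}_{[\leq i]}\) are therefore unchanged, so the right-hand side of \eqref{eqmorseupperboundbir} does not move.

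\emph{Replacing \(X'\).} Let \(Z\) be the unique component of \(X'\times_{X}X_{n}\) that maps birationally onto \(X'\). Such a component exists because \(q\) is an isomorphism over a dense open subset of \(X\) and \(p\) is dominant. We then let \(Z'\) be the normalization of \(Z\). The composite \(Z' \to X_{n}\) is proper, dominant and generically finite. Its degree is still \(\delta\), since over the open set where \(q\) is an isomorphism it agrees with \(p\).

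\emph{Comparing truncated Euler characteristics.} Set \(M'' := (q'\circ\nu)^{\ast}M'\), where \(\nu : Z' \to Z\) is the normalization. The pullback of \(M'\otimes p^{\ast}L^{\otimes m}\) to \(Z'\) is then \(M''\otimes (p'')^{\ast}(q^{\ast}L)^{\otimes m}\), where \(p''\) denotes the map \(Z' \to X_{n}\). The key input is Lemma~\ref{lem:compasympt}, applied to the proper birational map \(Z' \to X'\). It should give
\[
\chi^{[i]}(Z', M''\otimes (p'')^{\ast}(q^{\ast}L)^{\otimes m}) = \chi^{[i]}(X', M'\otimes p^{\ast}L^{\otimes m}) + O(m^{n-1}).
\]
Heuristically this holds because higher direct images and the cokernel of \(\mathcal{O}_{X'} \to \mu_{\ast}\mathcal{O}_{Z'}\) are supported in dimension \(<n\). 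Consequently, after twisting by \(L^{\otimes m}\) via the projection formula, their cohomology is \(O(m^{n-1})\). A Leray spectral sequence argument then compares the truncated characteristics up to \(O(m^{n-1})\).

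\emph{Conclusion and main obstacle.} Combining the two substitutions, the inequality \eqref{eqmorseupperboundbir} for \((X_{n}, Z', q^{\ast}L)\) implies it for \((X, X', L)\), with the same constant \(\delta\) and the same leading coefficient. The main step to check is the applicability of Lemma~\ref{lem:compasympt} to \(Z' \to X'\) rather than to \(Z \to X'\). This amounts to verifying that normalization is finite and birational, so that the lemma's hypotheses (a proper birational map between varieties) are satisfied. The rest is bookkeeping of degrees and markings.
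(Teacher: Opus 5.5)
Your proposal is correct and follows the paper's argument. You base change along \(q\), keep the component \(Z\) of \(X_{n}\times_{X}X'\) that is birational to \(X'\), compare truncated Euler characteristics via Lemma~\ref{lem:compasympt}, and note that the right-hand side of \eqref{eqmorseupperboundbir} is unchanged when \(\Sigma\) is viewed on \(X_{n}\). The only difference is cosmetic: you normalize \(Z\) first and apply Lemma~\ref{lem:compasympt} once to the proper birational map \(Z'\to X'\), whereas the paper compares \(Z\) with \(X'\) and then treats the normalization separately via Lemma~\ref{lemmodification}.
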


To prove this lemma, it only remains to show that \(X'\) can be supposed normal. But this follows from an application of Lemma~\ref{lemmodification}, using the fact that the normalization map \((X')^{\rm norm} \to X'\) is proper birational.
\medskip

Let us start the induction process.

\medskip

\noindent
\emph{\refstepcounter{stepcc} Step~\thestepcc. \label{step:initialization} Initialization of the induction.} If \(\dim X = 0\), then \(X \cong \mathrm{Spec}\, \mathbb{K}\) is a scheme point with \(\mathrm{deg}_{\mathbbm{k}}\, \mathbb{K} < + \infty\). This implies that \(X' = \bigsqcup_{j} X'_{j}\) is a union of scheme points, all finite over \(\mathrm{Spec}\, \mathbbm{K}\), and satisfying
\begin{equation} \label{eq:egalitedegre}
	\sum_{j} 
	\mathrm{deg}_{\mathbbm{k}}\, X_{j}'
	=
	(\sum_{j} 
	\mathrm{deg}_{\mathbbm{K}}\, X_{j}'
	)
	\cdot
	\mathrm{deg}_{\mathbbm{k}} \mathbb{K}
	=
	\delta \cdot \deg_{\mathbbm{k}} \mathbb{K}.
\end{equation}
In this case, for each \(m \in \mathbb{N}\), we have \(L' \otimes p^{\ast} L^{\otimes m} \cong \mathcal{O}_{X'}\) as \(\mathcal{O}_{X'}\)-modules, so 
	\begin{align*}
		\chi^{[i]} (X', M' \otimes p^\ast L^{\otimes m}) 
		& = (-1)^{i} h^{0} (X', M' \otimes p^\ast L^{\otimes m})  \\
		& = (-1)^{i} h^{0}(X, \mathcal{O}_{X'}) \\
		& = (-1)^{i} \delta \cdot \deg_{\mathbbm{k}}{\mathbb{K}}, 
	\end{align*}
	where at the last line, we used \eqref{eq:valueh0} and \eqref{eq:egalitedegre}.  
	On the other hand, 
	\[
		\deg c_1(L, \underline{\Sigma})^{0}_{[\leq 0]} \cap [X] 
		= \deg_{\mathbbm{k}} (X) 
		= \deg_{\mathbbm{k}} \mathbb{K},
	\]
	so the result holds in this case (one may even take \(O(m^{-1}) = 0\)).
	\medskip

	\noindent
\emph{\refstepcounter{stepcc} Step~\thestepcc. Setup for the induction step.} Suppose now that the result has been proved for \(\dim X \leq n-1\). Write \(\Sigma = (X_{\bullet},f_{\bullet})\), and let \(e\) be the trivialization of \(L^{\otimes d}\) over the open dense subset \(U_{n} = X_{n} \setminus f_{n-1}(X_{n-1})\) given by \(\mathbf{e}\).  
\medskip

	Denote by \((D_{i})_{1 \leq i \leq m}\) the irreducible components of \(f_{n-1}(X_{n-1})\), and let \(D = \sum_{1 \leq i \leq r} m_{i} D_{i}\) be the \emph{Cartier} divisor of zeros and poles of the section \(e\), seen as a rational section of \(L^{\otimes d}\). Note that in this sum, some of the \(m_{j}\) might actually be equal to \(0\). For each \(j \in \llbracket 1, r \rrbracket\), let \(W_{j}\) be the unique connected component of \(X_{n-1}\) such that \(f_{n-1}(W_{j}) = D_{j}\), and let \(q_{j} : W_{j} \longrightarrow X\) be the natural map.
\bigskip

\noindent
\emph{\refstepcounter{stepcc} Step~\thestepcc. We pass to a simpler model of $X'$.} The next two steps consist in doing some clean-up on the variety \(X'\), to later be able to do our induction step. First, we arrange the situation so that \(p^{\ast} L\) itself admits a trivialization, and not merely \(p^{\ast} L^{\otimes d}\).
\medskip

\begin{lem} \label{lem:covertriv}
	Assumptions as in Proposition~\ref{propredmorse}. We can assume without loss of generality that \(p^{\ast} L\) is trivial above \(p^{-1}(U_{n})\), with a trivialization \(e'\) such that \((e')^{\otimes d} = p^{\ast} e\). 
\end{lem}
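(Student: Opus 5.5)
The plan is to pass to a degree-\(d\) cyclic cover of \(X'\) on which \(p^{\ast}e\) acquires a \(d\)-th root, and then to show that proving \eqref{eqmorseupperboundbir} on that cover implies it on \(X'\).

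\emph{Construction of the cover.} By Step~\ref{step:0} we may take \(X'\) to be a normal variety. Fix a nonzero rational section \(s\) of \(p^{\ast}L\). Then \(\phi := p^{\ast}e / s^{\otimes d}\) is a nonzero rational function on \(X'\). Let \(K''\) be an extension of \(\mathbbm{k}(X')\) generated by a root \(t\) of \(T^{d} - \phi\); this extension may be inseparable in positive characteristic, which is harmless here. Let \(r : X'' \to X'\) be the normalization of \(X'\) in \(K''\). Since \(X'\) is of finite type over a field, \(r\) is finite, and \(X''\) is a normal variety. The composite \(p\circ r\) is then proper, dominant and generically finite of degree \(\delta e\), where \(e := \deg r\).

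\emph{The root is a trivialization.} Set \(e' := t\cdot r^{\ast}s\), a rational section of \(r^{\ast}p^{\ast}L\) with \((e')^{\otimes d} = (p\circ r)^{\ast}e\). Over \((p\circ r)^{-1}(U_{n})\), let \(g\) be a local generator of \(r^{\ast}p^{\ast}L\). Then \((e'/g)^{d}\) is a unit there, because \(e\) is invertible on \(U_{n}\). Hence both \(e'/g\) and \(g/e'\) are integral over the local rings, so by normality of \(X''\) they are regular. Thus \(e'\) is a trivialization over \((p\circ r)^{-1}(U_{n})\), as the lemma requires.

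\emph{Reduction of the inequality to \(X''\).} Assume \eqref{eqmorseupperboundbir} is known for \(p\circ r : X'' \to X\), for every line bundle on \(X''\). The sheaf \(r_{\ast}\mathcal{O}_{X''}\) is torsion free of rank \(e\). Choose \(H\) ample on \(X'\) with \(r_{\ast}\mathcal{O}_{X''}\otimes H\) globally generated. Among its global sections, pick \(e\) that form a basis of the generic stalk over \(\mathbbm{k}(X')\); this is possible without assuming \(\mathbbm{k}\) algebraically closed, since global generation gives a spanning set of that stalk. These sections define a map \(H^{-1}\otimes\mathcal{O}_{X'}^{\oplus e} \to r_{\ast}\mathcal{O}_{X''}\). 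It is an isomorphism at the generic point, hence injective (its source is torsion free), and its cokernel \(\mathcal{Q}\) is supported in dimension \(\leq n-1\).

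Apply this with \(\mathcal{F} := M'\otimes H\otimes p^{\ast}L^{\otimes m}\). Tensoring by \(\mathcal{F}\), using the projection formula and \(H^{j}(X', r_{\ast}\cdot) = H^{j}(X'',\cdot)\) for the finite map \(r\), the second inequality of Lemma~\ref{lem:subadditivity} gives
\[
e\,\chi^{[i]}(X', M'\otimes p^{\ast}L^{\otimes m}) \leq \chi^{[i]}(X'', r^{\ast}\mathcal{F}) + \chi^{[i-1]}(X', \mathcal{Q}\otimes \mathcal{F}).
\]
The last term is \(O(m^{n-1})\), since \(\mathcal{Q}\) lives on a scheme of dimension \(\leq n-1\) and \(h^{j}\) of a fixed coherent sheaf twisted by \(m\)-th powers of a line bundle grows at most like \(m^{\dim}\). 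The first term on the right is at most \(\delta e(-1)^{i}\deg c_{1}(L,\underline{\Sigma})^{n}_{[\leq i]}\, m^{n}/n! + O(m^{n-1})\), by the assumed statement applied to \(X''\) with line bundle \(r^{\ast}(M'\otimes H)\). Dividing by \(e\) yields \eqref{eqmorseupperboundbir} for \(X'\). This is why the statement must hold for arbitrary \(M'\).

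The main obstacle is the final comparison step: producing the generically bijective injection into \(r_{\ast}\mathcal{O}_{X''}\) over a possibly non-closed \(\mathbbm{k}\), with \(r\) possibly inseparable, and controlling the cokernel contribution uniformly in \(m\). The integrality check for \(e'\) is routine by comparison.
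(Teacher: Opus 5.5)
Your overall strategy matches the paper's. You pass to a generically finite cover of \(X'\) on which \(p^{\ast}e\) acquires a \(d\)-th root, then transfer \eqref{eqmorseupperboundbir} back down through the degree. The paper does both halves by citation:
\begin{itemize}
\item the cover is the explicit cyclic cover of Lemma~\ref{lem:cycliccover}: the closure of the graph of \(e\) in \(\mathrm{P}(L^{\otimes d}\oplus\mathcal{O})\), pulled back by the \(d\)-th power map;
\item the transfer is Lemma~\ref{lemmodification}, an asymptotic \emph{equality} obtained from the Leray argument of Lemma~\ref{lem:suppasympt} and from Lemma~\ref{lem:torsion}.
\end{itemize}
Normalizing \(X'\) in \(\mathbbm{k}(X')(\phi^{1/d})\) is a cleaner substitute. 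It yields a normal \(X''\) directly, so Step~\ref{step:0} is preserved. Your integrality argument that \(e'\) is a trivialization also works verbatim in the inseparable case. Your one-sided transfer via the second inequality of Lemma~\ref{lem:subadditivity} is correct in its bookkeeping, and it uses only the direction of the estimate that is actually needed.

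There is, however, one step that fails in the generality of Proposition~\ref{propredmorse}: you choose \(H\) ample on \(X'\). Here \(X\) is only a proper variety, and \(X'\) is only proper over \(X\). Step~\ref{step:0} even replaces \(X'\) by (the normalization of) a component of \(X_{n}\times_{X}X'\), where \(q : X_{n}\to X\) is merely proper birational. So \(X'\) need not be projective and may carry no ample line bundle at all.

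The repair needs no positivity.
\begin{itemize}
\item Take a dense open \(U\subset X'\) with an isomorphism \(\iota : \mathcal{O}_{U}^{\oplus e}\cong (r_{\ast}\mathcal{O}_{X''})|_{U}\).
\item Let \(\mathcal{G}\subset\mathcal{O}_{X'}^{\oplus e}\oplus r_{\ast}\mathcal{O}_{X''}\) be the coherent subsheaf of pairs \((a,b)\) with \(\iota(a|_{U}) = b|_{U}\), as in the proof of Lemma~\ref{lem:asymptseveral}~(3).
\item Both factors are torsion free, so both projections from \(\mathcal{G}\) are injective, and their cokernels are supported on \(X'\setminus U\).
\item Twist by \(N_{m} := M'\otimes p^{\ast}L^{\otimes m}\). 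Apply the first inequality of Lemma~\ref{lem:subadditivity} to \(\mathcal{G}\hookrightarrow\mathcal{O}^{\oplus e}\), and the second to \(\mathcal{G}\hookrightarrow r_{\ast}\mathcal{O}_{X''}\).
\end{itemize}
This gives
\[
e\,\chi^{[i]}(X', N_{m}) \leq \chi^{[i]}\big(X'', r^{\ast}M'\otimes (p\circ r)^{\ast}L^{\otimes m}\big) + O(m^{n-1}),
\]
which is exactly what you need, now with twist \(r^{\ast}M'\) as in the paper. Alternatively, simply invoke Lemma~\ref{lemmodification}, combined with Lemmas~\ref{lem:suppasympt} and \ref{lem:torsion}~(2) to handle the twist by \(M'\).
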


\begin{proof}
	By Lemma \ref{lemmodification}, it suffices to prove Proposition \ref{propredmorse} with \(X'\) replaced by any proper dominant generically finite morphism \(q : X'' \longrightarrow X'\), \(M'\) replaced by \(q^\ast M'\), and \(p\) replaced by \(p \circ q\). Thus, it suffices to show that we can find a model \(X''\) on which \(q^{\ast} p^{\ast} L\) satisfies the property described above. Now, this can be obtained by a standard argument of cyclic covers: apply Lemma~\ref{lem:cycliccover} in Annex A, with \(X\) replaced with \(X'\), \(e\) replaced with \(p^{\ast} e\) and \(U\) replaced with \(p^{-1}(U_{n})\).
\end{proof}

Now, we will transform the inverse images of the \(D_{i}\) into Cartier divisors, in a suitable manner. We introduce the following terminology:
\medskip

\begin{defi} \label{defi:adapteddivisor}
	Let \(p : Y' \to Y\) be a dominant morphism between two varieties, and let \(D'\) be a {\em Cartier} divisor on \(Y'\). We say that \(D'\) is {\em adapted} to a prime Weil divisor \(D \subset Y\) if we have \(D' \subset p^{-1}(D)\) and if we can write
	\[
		D' = \widehat{D}' + E
	\]
	where \(\widehat{D}'\) is an irreducible Weil divisor such that \(p : \widehat{D}' \to D\) is dominant (and thus generically finite), and \(E\) is a \(p\)-exceptional sum of Weil divisors.
\end{defi}

\begin{rem}
	Note that in particular, \(D'\) is reduced near the generic point of \(\widehat{D}'\).
\end{rem}

We can now state our next clean-up step as follows.

\begin{lem}
	Assumptions as in the conclusion of Lemma~\ref{lem:covertriv}. We can assume without loss of generality that we have an equality 
	\[
		D(e')
		=
		\sum_{1 \leq j \leq r'}
		m_{j}' D_{j}'
		+
		E
		-
		F,
	\]
	where:
	\begin{enumerate}
		\item each \(D_{j}'\) is an effective Cartier divisor on \(X'\), {\em adapted} to exactly one component \(D_{i}\) of \(D\);
		\item \(E\) and \(F\) are {\em effective, \(p\)-exceptional Cartier} divisors.
	\end{enumerate}
\end{lem}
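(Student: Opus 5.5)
We reduce, exactly as in Lemma~\ref{lem:covertriv}, to producing a further model \(q : X'' \to X'\) (proper, dominant, generically finite) on which the required decomposition holds. Lemma~\ref{lemmodification} guarantees that replacing \(X'\) by \(X''\), \(M'\) by \(q^{\ast}M'\) and \(p\) by \(p \circ q\) does not affect the statement of Proposition~\ref{propredmorse}. Pullbacks of Cartier divisors remain Cartier, and \(q^{\ast} e'\) still satisfies \((q^{\ast}e')^{\otimes d} = (p\circ q)^{\ast} e\). So the conclusion of Lemma~\ref{lem:covertriv} is preserved, and we only need to arrange the shape of \(D(e')\).

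The construction proceeds component by component. Since \(X'\) is normal, \(D(e') = p^{\ast}D\) as Weil divisors, with \(D = \sum_i m_i D_i\). Its support consists of two kinds of prime divisors: those \(\Gamma \subset p^{-1}(D_i)\) dominating some \(D_i\) (hence generically finite over \(D_i\)), and \(p\)-exceptional ones. For each dominating prime \(\Gamma\), we blow up \(X'\) along \(\Gamma\), viewed as a Weil divisor, i.e.\ along its ideal sheaf, and then normalize. On the blow-up, the inverse image ideal of \(\Gamma\) becomes invertible. The resulting effective Cartier divisor is the strict transform \(\widehat{\Gamma}\), which is birational to \(\Gamma\) and hence dominates \(D_i\), plus components lying over the non-Cartier locus of \(\Gamma\). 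That locus has codimension \(\geq 2\) in \(X'\), so these extra components are exceptional for the blow-up. Their images in \(X\) have codimension \(\geq 2\) unless they dominate a component of \(D\), and the dimension count shows they cannot. So they are \(p\)-exceptional, and this Cartier divisor is adapted to \(D_i\) in the sense of Definition~\ref{defi:adapteddivisor}.

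We iterate over the finitely many dominating components, pulling back the Cartier divisors already constructed at each stage; pullbacks remain adapted. After this, \(D(e') - \sum_j m'_j D'_j\) is a Cartier divisor whose support is \(p\)-exceptional, where \(m'_j\) is the multiplicity of \(e'\) along \(\widehat{D}'_j\). It remains to write this exceptional Cartier divisor as \(E - F\) with \(E, F\) effective, exceptional and Cartier. For this we blow up each exceptional prime component in turn, as above, so that each exceptional prime becomes a summand of an effective exceptional Cartier divisor. Then we group the positive and negative coefficients. This changes multiplicities only along new exceptional components, which are absorbed into \(E\) or \(F\). Finally, the sign compatibility \(m'_j\) versus \(m_i\) is automatic from Lemma~\ref{lem:projectionformula}(a).

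The main obstacle is the second blow-up stage: keeping the decomposition into effective exceptional Cartier parts stable under the successive blow-ups, without creating new non-Cartier exceptional Weil components. In characteristic \(0\) one would simply resolve, but here we must avoid that. The first approach to try is to blow up the ideal of the whole exceptional divisor \(\sum_k \lceil\cdot\rceil\) (the reflexive sheaf \(\mathcal{O}(-E_{\mathrm{tot}})\)) at once, so that \(E\) becomes Cartier. Then \(F := E - (D(e') - \sum_j m'_j D'_j)\) is Cartier as a difference of Cartier divisors, and we must check that it is effective and exceptional.
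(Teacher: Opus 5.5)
Your reduction to a further model and your treatment of the dominating components are fine. The right structural move is yours: you take \(G := D(e') - \sum_j m'_j D'_j\) as a Cartier remainder whose Weil divisor is supported on \(p\)-exceptional primes, instead of asserting that some discrepancy vanishes. (Minor slip: \((e')^{\otimes d} = p^{\ast}e\) gives \(d\,D(e') = p^{\ast}D\), not \(D(e') = p^{\ast}D\); nothing depends on it.)

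The gap is the one you flag yourself, in the last step as first written. After blowing up each exceptional prime \(P_k\) so that its inverse image \(P''_k\) is Cartier, \(q^{\ast}G - \sum_k c_k P''_k\) is a Cartier divisor supported on the new exceptional divisors. It has no sign control, and its components need not be Cartier. So ``absorbing'' it into \(E\) or \(F\) is the original problem again, and iterating has no reason to stop.

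Do not close this by citing part (2) of Lemma~\ref{lem:WeiltoCartier}. That is exactly what the paper's proof does, applying the lemma to all components of \(D(e')\) at once. But the identity \(p^{\ast}D = \sum_i m_i\, p^{-1}(D_i)\) fails in general. On \(X = \{xy = zw\} \subset \mathbb{A}^{4}\), let \(D_1 = \{x = z = 0\}\) and \(D_2 = \{x = w = 0\}\), so that \(\mathrm{div}(x) = D_1 + D_2\). In the coordinate ring, \((x,z)(x,w) = x\,(x,y,z,w)\). Suppose both inverse images were invertible on a normal \(X'\) and the identity held. Cancelling \(x\) would give \((x,y,z,w)\,\mathcal{O}_{X'} = \mathcal{O}_{X'}\), i.e. \(p^{-1}(0) = \varnothing\), which is impossible for \(p\) proper birational. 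Concretely, the lemma's iterated blow-up is the blow-up of the vertex. Its exceptional \(\mathbb{P}^{1} \times \mathbb{P}^{1}\) occurs once in each \(p^{-1}(D_i)\), hence twice in their sum, but only once in \(p^{\ast}\mathrm{div}(x)\).

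Your fallback is the correct fix; only the verification is missing, and it is short. Let \(B\) be the positive part of the Weil divisor of \(G\); any effective \(B\) supported on \(p\)-exceptional primes with \(B - G \geq 0\) will do. Let \(q : X'' \to X'\) be the normalized blow-up of the coherent ideal \(\mathcal{O}_{X'}(-B)\).

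On an open set where \(G = \mathrm{div}(g)\), both \(\mathcal{O}_{X'}(-B)\) and \(g\cdot\mathcal{O}_{X'}(-(B-G))\) equal \(\{h : \mathrm{div}(h) \geq B\}\) inside the function field. Hence \(\mathcal{O}_{X'}(-(B-G))\cdot\mathcal{O}_{X''} = g^{-1}\cdot\bigl(\mathcal{O}_{X'}(-B)\cdot\mathcal{O}_{X''}\bigr)\) is invertible. It lies in \(\mathcal{O}_{X''}\) since \(B - G \geq 0\). Let \(E\) and \(F\) be the effective Cartier divisors cut out by \(\mathcal{O}_{X'}(-B)\cdot\mathcal{O}_{X''}\) and \(\mathcal{O}_{X'}(-(B-G))\cdot\mathcal{O}_{X''}\). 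Then \(E - F = q^{\ast}G\).

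The centre of \(q\) has codimension \(\geq 2\), since \(X'\) is normal, and it lies in \(\mathrm{Supp}\, B\). So every component of \(E\) or \(F\) is either the strict transform of a \(p\)-exceptional prime or a \(q\)-exceptional divisor lying over \(\mathrm{Supp}\, B\). Both kinds are \((p \circ q)\)-exceptional, and for the same reason each \(q^{\ast}D'_j\) is still adapted. Thus \(q^{\ast}D(e') = \sum_j m'_j\, q^{\ast}D'_j + E - F\), as required.

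With this check your argument is complete. It is also more robust than the paper's route: it never needs the pullback of \(D(e')\) to equal the corresponding combination of inverse images, only that one Cartier remainder be split after a single blow-up.
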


\begin{proof}
	Again, it suffices to find a dominant morphism \(q : X'' \to X'\) such that \(D(q^{\ast} e')\) assumes the form above. Note that the reduction of Lemma~\ref{lem:covertriv} will still hold on \(X''\).
	\smallskip

	On \(X'\), we may already write	
	\[
		D(e') = \sum_{1 \leq j \leq r} m_{j}' D_{j}' + E - F,
	\]
	where the \(D_{j}'\) are effective irreducible {\em Weil} divisors each dominating a component of \(D\), and \(E\), \(F\) are effective \(p\)-exceptional {\em Weil} divisors. To obtain the morphism \(X'' \to X'\), it suffices to apply Lemma~\ref{lem:WeiltoCartier} to \(X'\), with \(D\) replaced with \(D(e')\), and the \(D_{i}\) replaced with the \(D_{j}'\) together with the irreducible components of \(E\) and \(F\).
\end{proof}

Denote by \(\widehat{D}_{j}'\) the unique component of \(D_{j}'\) that dominates one of the \(D_{i}\), accordingly to Definition~\ref{defi:adapteddivisor}. Note that we have the projection formula 
	\[
		p_{\ast} (d\, D(e'))
		= p_{\ast} D(p^{\ast} e) 
		= \delta D
	\]
	which yields, for all \(i \in \llbracket 1, r\rrbracket\): 
	\begin{align} \label{eqsumproj1}
		\sum_{j} m_{j}' 
		\, 
		\deg (\widehat{D}_{j}' / D_{i}) 
		& = \frac{\delta}{d}\, m_{i},	
	\end{align}
where the sum runs among all $j \in \llbracket 1, r'\rrbracket$ such that $D_{j}'$ dominates \(D_{i}\).
\medskip

\noindent
\emph{\refstepcounter{stepcc} Step~\thestepcc. \label{step:boundAB} We bound from above the difference of two consecutive \(\chi^{[i]}(M' \otimes p^{\ast} L^{\otimes m})\)}. Letting 
\begin{equation} \label{eq:defA}
	A = \sum_{m_{j}' > 0} 
	m_{j}' D_{j}' + E 
\end{equation} 
and 
\begin{equation} \label{eq:defB}
	B 
	= 
	\sum_{m_{j}' < 0} (-m_{j}') D_{j}' + F,
\end{equation}
we obtain the two exact sequences

\begin{tikzcd}[column sep=1em]
 	0 
		\ar[r]  
	& M' \otimes p^{\ast} L^{\otimes(m+1)} \otimes \mathcal O_{X'}(-A) 
		\ar[r] \ar[d, equal] 
	& M' \otimes p^\ast L^{\otimes(m+1)} 
		\ar[r] 
	&  M' \otimes p^\ast L^{\otimes(m+1)} \otimes \mathcal O_{A} 
		\ar[r] 
	&  0 \\
 	0 
		\ar[r] & 
	  M' \otimes p^\ast L^{\otimes m} \otimes \mathcal O_{X'}(-B) 
	  	\ar[r] 
	& M' \otimes p^\ast L^{\otimes m} 
		\ar[r] 
	& M' \otimes p^\ast L^{\otimes m} \otimes \mathcal O_{B} 
		\ar[r] 
	&  0 
\end{tikzcd}
\medskip

Applying the first inequality of Lemma~\ref{lem:subadditivity} to the first line gives

\begin{align*}
	\chi^{[i]} (X',  M' \otimes p^\ast L^{\otimes(m+1)}) 
	\; \leq \;
	& \chi^{[i]} (X', M' \otimes p^\ast L^{\otimes(m+1)} \otimes \mathcal O_{X'}(-A)) 
	  + \chi^{[i]}(X', M' \otimes p^\ast L^{\otimes(m+1)} \otimes \mathcal O_A)
\end{align*}
\medskip

Similarly, the second inequality of the same lemma, applied to the second line yields:
\begin{align*}
	- \chi^{[i]} & (X',  M' \otimes p^\ast L^{\otimes m}) 
	           & \leq - \chi^{[i]} (X', M' \otimes p^\ast L^{\otimes m} \otimes \mathcal O_{X'}(-B)) 
		   + \chi^{[i - 1]}(X', M' \otimes p^\ast L^{\otimes m} \otimes \mathcal O_{B})
\end{align*}
\medskip

Summing these two equations, we obtain
\begin{align} \label{eqtelescopic}
	\chi^{[i]} (X',  M' \otimes p^{\ast} L^{\otimes(m+1)})  - & \chi^{[i]} (X',  M' \otimes p^{\ast} L^{\otimes m}) \\ \nonumber
	& \leq \chi^{[i]}(X', M' \otimes p^{\ast} L^{\otimes(m+1)} 
	\otimes 
	\mathcal O_{A}) 
	+ \chi^{[i - 1]}(X', M' \otimes p^\ast L^{\otimes m} \otimes \mathcal O_{B})
\end{align}
\medskip

\noindent
{\em \refstepcounter{stepcc} Step~\thestepcc. \label{step:5} "D\'{e}vissage" on \(\mathcal{O}_{A}\).} Since we have the identity of \emph{Cartier} divisors \eqref{eq:defA} on the variety \(X'\), Lemma~\ref{lemfilt} gives a filtration of \(\mathcal{O}_{A}\) by \(\mathcal{O}_{X'}\)-modules as
\[
	0 = \mathcal F_0 \subseteq ... \subseteq \mathcal{F}_{N} = \mathcal O_{A},
\]
where \(N = \sum_{m_{j}' > 0} m_{j}' + 1\). The graded terms are given by short exact sequences
\begin{equation} \label{eq:SES}
	0 
	\longrightarrow 
	\mathcal{F}_{j} 
	\longrightarrow 
	\mathcal F_{j+1} 
	\longrightarrow 
	\mathcal{L}_{j}	
	\longrightarrow 0.
\end{equation}
where \(\mathcal{L}_{j}\) is an invertible sheaf on either one of the effective Cartier divisors \(D_{j}' \subset A\) (each such divisor appearing exactly \(m_{j}'\) times) or on \(E\) (appearing exactly once). Note that the filtration does not depend on \(m\).
\smallskip

\noindent
\emph{\refstepcounter{stepcc} Step~\thestepcc. \label{step:6} We write an upper bound on the right hand side of \eqref{eqtelescopic}}.
Using Lemma~\ref{lem:filtration} on the filtration discussed in the previous step, we obtain 
\begin{align} \nonumber 
	\chi^{[i]}
		(X', M' \otimes q^{\ast} L^{\otimes(m+1)} \otimes \mathcal O_{A}) 
		& \leq \sum_{j} \; \sum_{1 \leq k \leq m'_{j}}
		\,
			\mathbf{1}_{[m_{j}' > 0]}
			\;
			\chi^{[i]}(D_{j}', \;
			\mathcal{L}_{j,k}^{D}
			\otimes 
			p^\ast L^{\otimes (m+1)} \otimes M') \\ \label{eqstepA} 
		& + 
			\chi^{[i]}(E, 
				\mathcal{L}^{E} \otimes q^\ast L^{(m+1)} \otimes M')
\end{align}
where the indicator functions are used to make the first sum run through the indexes \(j\) such that $m_{j}' >0$. The sheaves \(\mathcal{L}^{D}_{\bullet,\bullet}\) (resp. \(\mathcal{L}^{E}\)) are the invertible sheaves \(\mathcal{L}_{j}\) appearing in the exact sequences \eqref{eq:SES}. Again, neither these sheaves nor the number of terms in each sum depend on \(m\).
\medskip

Note that since \(E\) is \(p\)-exceptional, we have :
\[
	\chi^{[i]}(E, \mathcal{L}^{E} \otimes p^\ast L^{(m+1)} \otimes M') = O(m^{n-2}).
\]

We may also also apply the second equality of Lemma~\ref{lem:compasympt} to each term of the first sum (with \(X_{0}\) replaced with \(\widehat{D}_{j}'\)). This gives, for all \(j\) : 
\[
	\chi^{[i]}(D_{j}', \;
			\mathcal{L}_{j,k}^{D}
			\otimes 
			p^\ast L^{\otimes (m+1)} \otimes M')
	=
	\chi^{[i]}(\widehat{D}_{j}', \;
			\mathcal{L}_{j,k}^{D}
			\otimes 
			p^\ast L^{\otimes (m+1)} \otimes M')
	+
	O(m^{n-2}),
\]
where in the second term, the restriction to \(\widehat{D}_{j}' \subset D_{j}'\) is implied.
\medskip

Since the number of terms in the sum does not depend on \(m\), we may gather all the \(O(m^{n-2})\) terms, and obtain
\begin{align} \nonumber 
	\chi^{[i]}
		(X', M' \otimes p^{\ast} L^{\otimes(m+1)} \otimes \mathcal O_{A}) 
		 \leq \sum_{j} \; \sum_{1 \leq k \leq m'_{j}} 
		& \,
			\mathbf{1}_{[m_{j}' > 0]}
			\;
			\chi^{[i]}(\widehat{D}_{j}', \;
			\mathcal{L}_{j,k}^{D}
			\otimes 
			p^\ast L^{\otimes (m+1)} \otimes M') \\ \label{eqstepA1} 
		& + O(m^{n-2});
\end{align}

By a completely parallel argument following from the expression \eqref{eq:defB}, one has as well:

\begin{align} \nonumber 
	\chi^{[i-1]}
		(X', M' \otimes p^{\ast} L^{\otimes(m+1)} \otimes \mathcal O_{B}) 
		 \leq \sum_{j} \; \sum_{1 \leq k \leq (- m'_{j})} 
		& \,
			\mathbf{1}_{[m_{j}' < 0]}
			\;
			\chi^{[i-1]}(\widehat{D}_{j}', \;
			\mathcal{L}_{j,k}^{D}
			\otimes 
			p^\ast L^{\otimes (m+1)} \otimes M') \\ \label{eqstepB1} 
		& + O(m^{n-2});
\end{align}

\noindent
{\em \refstepcounter{stepcc} Step~\thestepcc. \label{step:7} We use the induction hypothesis.}  For each component \(D_{j}'\), the morphism \(p\) induces a projection \(\widehat{D}_{j}' \longrightarrow D_{l}\) to a unique component of \(D\): in the equations below, the dependence of \(l\) in the index \(j\) will only be implied. All these projection maps are generically finite dominant morphisms, so we can apply the induction hypothesis to each $D_{j}'$: for any $l \in \llbracket 1, r \rrbracket$, let $\underline{\Sigma}_{l}$ be the trivialized stratification induced on $D_{l}$ by $\underline{\Sigma}$, and let \(q_{l} : D_{l} \to X\) be the natural map. Then, the induction hypothesis with \(M'\) replaced with the \(M' \otimes \mathcal{L}_{j, k}^{D}\), gives for each term in the sums appearing in either \eqref{eqstepA1} or \eqref{eqstepB1} :
\[
	\chi^{[\bullet]}(\widehat{D}_{j}',
	p^{\ast} L^{\otimes (m+1)} \otimes M' \otimes \mathcal{L}_{j,k}^{D})
	\quad \leq \quad
	(-1)^{\bullet} \mathrm{deg}(\widehat{D}_{j}'/D_{l}) \; c_1(q^{\ast}_{l} L, \underline{\Sigma}_{l})^{n-1}_{[\leq \bullet]}
	\frac{m^{n-1}}{(n-1)!} + O(m^{n-2}),
\]
where the constant in the \(O(m^{n-2})\) can be chosen uniformly for all possible \(D_{j}'\). Once more, let us emphasize the fact that neither these divisors nor the line bundles \(M' \otimes \mathcal{L}_{j,k}^{D}\) depend on \(m\).
\smallskip

We may now combine this set of inequalities with \eqref{eqtelescopic}, \eqref{eqstepA1} and \eqref{eqstepB1}, which yields :
\begin{align*}
	\chi^{[i]} & (X',  M' \otimes p^{\ast} L^{\otimes(m+1)})  
	-  
	\chi^{[i]} (X',  M' \otimes p^{\ast} L^{\otimes m}) 
	\\
	& \leq \left(
		\sum_{m_{j}' > 0} m_{j}'\,  
		(-1)^{i} \mathrm{deg}(\widehat{D}_{j}'/D_{l}) \; 
		c_{1}(q_{l}^{\ast} L, \underline{\Sigma}_{l})^{n-1}_{[\leq i]} 
		+  
		\sum_{m_{j}' < 0} (-m_{j}')\,  
		(-1)^{i-1}
		\mathrm{deg}(\widehat{D}_{j}'/D_{l}) \; 
		c_1(q_{l}^{\ast} L, \underline{\Sigma}_{l})^{n-1}_{[\leq i - 1]} \right) 
		\frac{m^{n-1}}{(n-1)!}  \\
	& \hspace{5em}+ O(m^{n-2}),
\end{align*}
where for all \(j\), we have again denoted by \(D_{l}\) the component of \(D\) dominated by \(\widehat{D}_{j}'\).
\smallskip

Now, \eqref{eqsumproj1} give \(\sum_{j} {m_{j}'} \mathrm{deg}(\widehat{D}_{j}'/D_{l}) = \frac{\delta}{d} m_{l}\), where the sum runs among all \(j\) such that \(\widehat{D}_{j}'\) dominates a fixed component \(D_{l}\). Summing the telescopic sum finally yields:
\begin{align} \nonumber
		\chi^{[i]} &  (X',   M' \otimes p^\ast L^{\otimes(m+1)})  \\ \nonumber
	& \leq  \delta \, (-1)^i \left( \sum_{m_{l} > 0} \frac{m_{l}}{d}\, \deg c_1(q_{l}^{\ast} L, \underline{\Sigma}_{l})_{[\leq i]}  \label{eq:telescop}
	-  \sum_{m_{l} < 0} (-\frac{m_{l}}{d})\,   \deg c_1{}(q_{l}^{\ast} L, \underline{\Sigma}_l)_{[\leq i -1]} \right) \sum_{j \leq m} \frac{j^{n-1}}{(n-1)!} \\ \nonumber
&  \hspace{50pt}  + \sum_{j \leq m} O(l^{n-2}).\\
\end{align}

Since $\sum_{1 \leq j \leq m} \frac{j^{n-1}}{(n-1)!} = \frac{m^n}{n!} + O(m^{n-1})$ and $\sum_{l \leq m} l^{n-2} = O(m^{n-1})$, the conclusion then comes immediately from Lemma \ref{lemformulainduct}.

\medskip

\begin{lem} \label{lemfilt} Let \(X\) be an integral scheme on \(X\), and let \(D_{1}, ..., D_{r}\) be Cartier divisors on \(X\). For all \(i \in \llbracket 1, r\rrbracket\), let \(m_{i} \in \mathbb N\), and define \(D = \sum_{i} m_{i} D_{i}\). 
	
	Then there exists a filtration 
	\(
	\mathcal F_{1} \subseteq ... \subseteq \mathcal F_{N} = \mathcal O_{D}
	\)
	(where \(N = \sum_{i} m_{i}\)), with successive quotients given as follows:
	\[
	0 
	\longrightarrow 
	\mathcal{F}_{i} 
	\longrightarrow 
	\mathcal F_{i+1}  
	\longrightarrow 
	\mathcal{L}_{i, j} 
	\longrightarrow 
	0,
	\]
	where \(j \in \llbracket 1, r \rrbracket\) is some index depending on \(i\), and \(\mathcal{L}_{i,j}\) is an invertible \(\mathcal{O}_{D_{j}}\)-module, seen as a torsion \(\mathcal{O}_{X}\)-module. Each component \(D_{j}\) appears exactly \(m_{j}\) times in the graded terms of this filtration.
\end{lem}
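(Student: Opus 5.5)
The plan is to peel off one Cartier divisor at a time, using that on an integral scheme a local equation of an effective Cartier divisor is a nonzerodivisor. For effective Cartier divisors \(D'\) and \(D''\) on \(X\), multiplication by the ideal sheaf of \(D'\) gives the short exact sequence
\[
	0 \longrightarrow \mathcal{O}_{X}(-D') \otimes \mathcal{O}_{D''} \longrightarrow \mathcal{O}_{D' + D''} \longrightarrow \mathcal{O}_{D'} \longrightarrow 0.
\]
Concretely, \(\mathcal{O}_{X}(-D')/\mathcal{O}_{X}(-D'-D'') \cong \mathcal{O}_{X}(-D') \otimes \mathcal{O}_{X}/\mathcal{O}_{X}(-D'')\), because \(\mathcal{O}_{X}(-D')\) is invertible. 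The kernel is therefore an invertible \(\mathcal{O}_{D''}\)-module, regarded as a torsion \(\mathcal{O}_{X}\)-module. Injectivity of the first map holds because local equations are nonzerodivisors, and this is exactly where integrality of \(X\) and the Cartier hypothesis enter.

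Next, list the divisor \(D = \sum_{i} m_{i} D_{i}\) as a sequence \(E_{1}, \dotsc, E_{N}\) of Cartier divisors in which each \(D_{i}\) occurs \(m_{i}\) times, and set \(G_{k} := E_{1} + \dotsc + E_{k}\). I would define \(\mathcal{F}_{k}\) as the kernel of the surjection \(\mathcal{O}_{D} \to \mathcal{O}_{G_{N-k}}\), that is, \(\mathcal{F}_{k} = \mathcal{O}_{X}(-G_{N-k})/\mathcal{O}_{X}(-D)\). These form an increasing filtration with \(\mathcal{F}_{0} = 0\) and \(\mathcal{F}_{N} = \mathcal{O}_{D}\). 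The successive quotient is
\[
	\mathcal{F}_{k+1}/\mathcal{F}_{k} \cong \mathcal{O}_{X}(-G_{N-k-1})/\mathcal{O}_{X}(-G_{N-k}) \cong \mathcal{O}_{X}(-G_{N-k-1}) \otimes \mathcal{O}_{E_{N-k}},
\]
which is an invertible \(\mathcal{O}_{E_{N-k}}\)-module. This is the sequence above applied with \(D' = G_{N-k-1}\) and \(D'' = E_{N-k}\).

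The counting statement follows because each \(D_{j}\) appears exactly \(m_{j}\) times among the \(E\)'s. The only point needing care is that all the ideal sheaves \(\mathcal{O}_{X}(-G_{k})\) are invertible subsheaves of \(\mathcal{O}_{X}\) nested in the right order, so that the third isomorphism theorem applies. Nesting holds because each \(E_{k}\) is effective: its local equation lies in \(\mathcal{O}_{X}\), so multiplying by it moves \(\mathcal{O}_{X}(-G_{k-1})\) into itself. Since \(X\) is integral, tensoring a nonzerodivisor ideal with an invertible sheaf stays injective. I expect no real obstacle. The only subtlety is the indexing, where the paper's \(N\) and \(\mathcal{F}_{1}\) conventions should be read with \(\mathcal{F}_{0} = 0\), and any \(i\) with \(m_{i} = 0\) simply contributes no terms.
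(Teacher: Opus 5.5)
Your proof is correct and is essentially the paper's argument: the paper also filters \(\mathcal{O}_{D}\) by the submodules \((f_{1}^{a_{1}} \cdots f_{r}^{a_{r}})/(f_{1}^{m_{1}} \cdots f_{r}^{m_{r}})\), i.e.\ by \(\mathcal{O}_{X}(-G)/\mathcal{O}_{X}(-D)\) along a chain of partial sums \(G\), and it gets graded pieces \(A/(f_{i})\) because the local equations are nonzerodivisors. The only difference is that you work globally with the invertible ideal sheaves \(\mathcal{O}_{X}(-G_{k})\). This identifies the graded pieces directly as invertible sheaves of the form \(\mathcal{O}_{X}(-G_{k}) \otimes \mathcal{O}_{E_{k+1}}\), and makes the paper's local-to-global gluing step unnecessary.
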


\begin{proof}
	Assume first that \(X = \mathrm{Spec}\, A\), where \(A\) is an integral ring, and that each \(D_{i}\) is given by some equation \(f_{i} \in A\). The \(\mathcal{O}_{X}\)-module \(\mathcal{O}_{D}\) is induced by the \(A\)-module 
	\[
		M := \quotientd{A}{(f_{1}^{m_{1}} \dotsc f_{r}^{m_{r}})}.
	\]	
	Choose an ordering of \(\prod_{1 \leq j \leq r} \llbracket 0 , m_{j}-1 \rrbracket\), and define accordingly the different \(\mathcal{F}_{i} \subset \mathcal{O}_{D}\) so they are associated to the following submodules of \(M\): 
\[
	M_{a_1, ..., a_r} 
		= \quotientd{(f_1^{a_1} ... f_r^{a_r})}{(f_1^{m_1} ... f_r^{m_r})}
\]
	with \(0 \leq a_i \leq m_i\) for all \(i\). Note that since \(A\) is integral, we indeed have \(M_{a_1, ..., a_r} \hookrightarrow M_{b_1, ..., b_r}\) if \(a_{i} \geq b_{i}\) for all \(i\). 
	
	The local result follows from the following natural isomorphisms of \(A\)-modules:
	\[
		\begin{array}{ccc}
		\quotientd{M_{a_{1}, ..., a_{i}, ..., a_{r}}}{M_{a_{1}, ..., a_{i} + 1, ..., a_{r}}} 
			&
			\longrightarrow
			&
			\quotientd{A}{(f_{i})} \\
			{[}f_{1}^{a_{1}} \dotsc f_{r}^{a_{r}}{]}
			&
			\longmapsto
			& 
			1	
		\end{array}.
	\] 
	
	For the global case, note that the filtration is actually canonical once an ordering of all possible indexes \((b_{1}, \dotsc, b_{r})\) has been chosen. Indeed, this filtration does not change if we replace the \(f_{i}\) by \(g_{i} f_{i}\), with \(g_{i}\) invertible. This shows that the local definitions will glue together on any integral scheme.
\end{proof}

\begin{rem}
	Before ending this section, we want to emphasize the importance of stating Proposition~\ref{propredmorse} without necessarily assuming that \(X\) is normal, since in our induction steps, the divisorial subschemes \(D_{j} \subset X\) may well be non-normal.
	\medskip

	However, it is crucial that \(X\) is normal to be able to define the multiplicities of a meromorphic section, hence the necessity of Step~\ref{step:0} at the beginning of the proof.
\end{rem}

\section{An example : the classical algebraic Morse inequalities} \label{sec:example}

It is interesting to remark that the classical algebraic Morse inequalities of Demailly and Angelini can be retrieved now as a particular case of Theorem \ref{thmmorse}.
\medskip

{\em In the following theorem, we assume that \(\mathbbm{k}\) to be {\em algebraically closed} for simplicity, even though this hypothesis is not strictly necessary.}

\begin{thm}[Demailly \cite{dem96}, Angelini \cite{ang96}] \label{thm:demang} Let $X$ be projective variety of dimension $n$ over an algebraically closed field \(\mathbbm{k}\), and let $L$ be a line bundle on $X$. Assume $L \sim_{\mathbb{Q}} \mathcal O(F - G)$, where $F, G$ are nef \(\mathbb{Q}\)-Cartier divisors. Then, for any $i \in \llbracket 0, n \rrbracket$, and \(m\) divisible enough, we have
\[
	\limsup_{m \to \infty}
	\,
	\frac{\chi^{[i]} (X, L^{\otimes m})}{m^{n}/n!}
	\quad
	\leq 
	\quad
	\sum_{ 0 \leq j \leq i} (-1)^{i + j} \binom{n}{j} F^{n-j} \cdot G^j.
\]
\end{thm}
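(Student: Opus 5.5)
We derive Theorem~\ref{thm:demang} from Theorem~\ref{thmmorse} by building a trivialized stratification out of general hyperplane sections, for which the truncated numbers \(\deg c_{1}(L,\underline{\Sigma})^{n}_{[\leq i]}\) become the binomial expressions in \(F\) and \(G\).

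\emph{Reduction to very ample divisors.} We first reduce to the case where \(F\) and \(G\) are very ample Cartier divisors. Fix an ample divisor \(H\). Replace \(F, G\) by \(F_{\epsilon} = F + \epsilon H\) and \(G_{\epsilon} = G + \epsilon H\), with \(\epsilon\) rational. These are ample and satisfy \(F_{\epsilon} - G_{\epsilon} = F - G\). After passing to a multiple \(N\) (replacing \(L\) by \(L^{\otimes N}\), which only rescales \(m\)), we may assume \(NF_{\epsilon}\) and \(NG_{\epsilon}\) are very ample Cartier and \(L^{\otimes N} \cong \mathcal{O}(NF_{\epsilon} - NG_{\epsilon})\). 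The right hand side is a polynomial in intersection numbers, hence continuous in \(\epsilon\). So once the bound holds for every \(\epsilon>0\), letting \(\epsilon \to 0\) gives the limsup inequality for nef \(F, G\).

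\emph{Building the stratification.} Assume now \(F\) and \(G\) are very ample. Since \(\mathbbm{k}\) is algebraically closed, Bertini gives \(n\) general members \(A_{1},\dotsc,A_{n} \in |F|\) and \(B_{1},\dotsc,B_{n} \in |G|\) such that every intersection of \(j\) members of the family \(\{A_{1},B_{1},\dotsc,A_{n},B_{n}\}\), using at most one of \(A_{k},B_{k}\) for each \(k\), is reduced of pure dimension \(n-j\). Choose also a section \(\sigma\) of \(L\) with divisor \(A_{1} - B_{1}\). The stratification is built level by level.
\begin{itemize}
\item At level \(n\), take \(X_{n}\) to be the normalization of \(X\). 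The stratum \(X_{n-1}\) is the normalization of \(A_{1} \cup B_{1}\), and the trivialization is \(\sigma\), with multiplicity \(+1\) along \(A_{1}\) and \(-1\) along \(B_{1}\).
\item On each component of \(X_{n-1}\), trivialize \(L\) again by a rational section with divisor \(A_{2}-B_{2}\) restricted to that component, and let the next stratum be the normalization of the preimage of \(A_{2}\cup B_{2}\).
\item Iterate through all \(n\) levels.
\end{itemize}
By Proposition~\ref{propgraph}, a root-to-leaf path then corresponds to choosing \(A_{k}\) or \(B_{k}\) at each level \(k\), one for each of the \(n\) levels. Its marking product is \((-1)^{j}\), where \(j\) is the number of \(B\)'s chosen, and its leaves are the points of the corresponding intersection.

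\emph{Computing the truncated number.} Summing over paths of index \(j\) gives
\[
\deg c_{1}(L,\underline{\Sigma})^{n}_{[j]} = (-1)^{j}\binom{n}{j}F^{n-j}\cdot G^{j}.
\]
Here the counting of leaf points uses that the intersections are transverse, so degrees equal intersection numbers, and that degrees are unaffected by the normalizations, which are birational. Summing over \(j \le i\) and applying Theorem~\ref{thmmorse}(i) yields the stated bound, because \((-1)^{i}(-1)^{j} = (-1)^{i+j}\).

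\emph{Main obstacle.} The hardest step is the genericity bookkeeping at deeper levels of the stratification. The strata are normalizations of possibly non-normal intersections, so we need the multiplicities of the pulled-back restricted sections to be exactly \(\pm1\), with the zero and pole loci sharing no common components. I would handle this by first choosing all sections generically and simultaneously on \(X\), then using that pulling back along a birational normalization preserves generic-point multiplicities of reduced transverse divisors.
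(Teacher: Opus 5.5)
Your proposal is correct and is essentially the paper's proof. Both arguments perturb $F,G$ by $\epsilon$ times an ample class and stratify by general sections of multiples of $F$ and $G$, so that each boundary component carries a positive marking if it comes from $F$ and a negative one if it comes from $G$; they then read off $\deg c_{1}(L,\underline{\Sigma})^{n}_{[j]}=(-1)^{j}\binom{n}{j}F^{n-j}\cdot G^{j}$ from the path indices and apply Theorem~\ref{thmmorse}. The paper uses the trivialization $\mathbf{e}_{F}\otimes\mathbf{e}_{G}^{-1}$ from Proposition~\ref{prop:stratampleseveral} and evaluates the mixed degrees by its item (iv), which holds for arbitrary positive multiplicities, so the Bertini and multiplicity-one bookkeeping you single out as the main obstacle is not actually needed; also, since your $N$ depends on $\epsilon$, apply Theorem~\ref{thmmorse} with the twists $M=L^{\otimes r}$, $0\leq r<N$, to get the bound along a fixed set of $m$ before letting $\epsilon\to 0$.
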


\begin{rem} If \(\mathbbm{k}\) is not algebraically closed, it is not hard to see that the result still holds: simply take the base change to an algebraic closure \(\mathbbm{k} \subset \mathbbm{K}\) and use the invariance of intersection numbers and the cohomology dimensions.
\end{rem}

Before proving this result, we will take some time to exhibit a type of construction of stratifications adapted to a given ample line bundle, that will be quite useful in the rest of this text.

\subsection{Stratifications adapted to an ample line bundle} Taking hyperplane sections, it is not difficult to construct sections adapted to a given ample line bundle:

\begin{prop} \label{prop:conststratample}
	Let \(X\) be a projective variety, endowed with an ample line bundle \(L\). Then there exists a fractional trivialized stratification \(\underline{\Sigma} = (\Sigma, \frac{1}{d} \mathbf{e})\) for \(L\) such that the multiplicities of the trivializations are all positive. We have, for all \(i \in \llbracket 0, n \rrbracket\):
	\[
		\deg c_{1}(L, \underline{\Sigma})^{n}_{[\leq i]}
		=
		(L^{n}).
	\]
\end{prop}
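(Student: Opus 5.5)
We build the stratification by induction on \(n = \dim X\), cutting \(X\) repeatedly by general hyperplane sections of a very ample power of \(L\). Choose \(d \geq 1\) with \(L^{\otimes d}\) very ample, and a section \(s \in H^{0}(X, L^{\otimes d})\) whose zero divisor \(H = \mathrm{div}(s)\) is a general member of the linear system. We then set \(X_{n} := X^{\mathrm{norm}}\) with \(q\) the normalization; in the normal case we simply take \(X_n = X\).

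On \(U_{n} := X_{n} \setminus q^{-1}(H)\) we take the trivialization \(e := q^{\ast} s\). Since \(q^{\ast}s\) is a regular section, its multiplicities \(m_{i}\) along the components \(D_{i}\) of \(q^{-1}(H)\) are all \(\geq 0\). In fact they are \(> 0\), because each \(D_i\) lies in the zero locus. By Bertini, \(H\) is reduced for general \(s\), so \(m_i = 1\). Let \(X_{n-1}\) be the disjoint union of the normalizations of the \(D_{i}\). This gives a stratum \(f_{n-1}\) in the sense of Definition~\ref{defistratum}.

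Each \(D_{i}\) is a projective variety of dimension \(n-1\), and \(q_i^{\ast}L\) restricted to its normalization \(W_{i}\) is again ample, being the pullback of an ample bundle by a finite map. Applying the induction hypothesis to \((W_{i}, q_{i}^{\ast}L)\) gives trivialized stratifications \(\underline{\Sigma}_{i}\) with all multiplicities positive. After replacing all denominators by a common multiple, using Proposition~\ref{prop:powertriv} and Remark~\ref{rem:powertriv}, we glue these into a trivialized stratification \(\underline{\Sigma}\) of \(X\). Every edge of the tree \(\mathcal{T}\) then carries a positive marking, and the base case \(n = 0\) is trivial.

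Positivity of all markings means every root-to-leaf path has index \(0\). By Proposition~\ref{propgraph}, \(\deg c_{1}(L,\underline{\Sigma})^{n}_{[l]} = 0\) for \(l \geq 1\). Hence \(\deg c_{1}(L,\underline{\Sigma})^{n}_{[\leq i]}\) is independent of \(i\) and equals \(\deg c_{1}(L,\underline{\Sigma})^{n}_{[\leq n]}\). Proposition~\ref{prop:truncatedversion}, with \(k = n\), shows that this equals \(\deg(c_{1}(L)^{n} \cap [X]) = (L^{n})\).

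The only delicate point is making sure every multiplicity is strictly positive rather than merely nonnegative. A zero marking would not break the positivity argument, but it must not arise from a component lying outside the zero locus. This is automatic, since the strata are exactly the components of the zero locus of a regular section. Over a non-algebraically-closed or finite field, a Bertini theorem for reducedness may fail. This does not matter, because we only use that the \(m_i\) are positive integers, not that they equal \(1\). If needed, we can also pass to a higher power \(L^{\otimes d}\) to secure a nonzero section at each step.
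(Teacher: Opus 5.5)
Your proposal is correct and takes essentially the same approach as the paper: normalize, take the zero divisor of a nonzero section of a very ample power (all multiplicities positive), apply induction to the normalized components, pass to a common denominator, and conclude with Proposition~\ref{prop:truncatedversion}, since every root-to-leaf path has index \(0\). The differences are only cosmetic. The paper chooses the section directly on \(X_n\) instead of pulling one back from \(X\), and, as you note yourself, the Bertini remark that \(m_i = 1\) is not needed.
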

\begin{proof}
	We construct the stratification by induction on \(\dim X\), the case \(\dim X = 0\) being empty. First of all, let \(q_{n} : X_{n} \to X\) be the normalization of \(X\), and let \(L_{n} := q^{\ast} L\). Since \(q_{n}\) is finite, \(L_{n}\) is ample, and we can find \(d \in \mathbb{N}\) be such that \(L_{n}^{\otimes d_{0}}\) is very ample. Let \(D_{n}\) be a hyperplane section in \(|L_{n}^{\otimes d_{0}}|\), associated to a section \(e_{n} \in \Gamma(X_{n}, L_{n}^{\otimes d})\). The multiplicity of \(e_{n}\) along each component of \(D_{n}\) is positive.
	\medskip

	Now, apply the induction hypothesis to all the irreducible components of \(D_{n}\). For each such component \(D'\), we get a trivialized stratification on \(D'\) for some power \(L_{n}|_{D'}^{\otimes d_{D'}}\), with only positive multiplicities. Putting all this stratifications together, we get the requested \(\Sigma\). Finally, let
	\[
		d := \mathrm{lcm}(d_{0}, \{ d_{D'}\}_{D' \subset D_{n}}),
	\]
	and replace each trivialization by the adequate power to get a trivialization for \(L^{\otimes d}\). 
	\medskip

	Since all multiplicities are positive, we have
	\[
		\deg c_{1}(L, \underline{\Sigma})^{n}_{[\leq i]}
		=
		\deg c_{1}(L, \underline{\Sigma})^{n}_{[\leq n]}
	\]
	for all \(i \in \llbracket 0, n \rrbracket\). We now get the result since the latter term is always equal to \((L^{n})\) (see Proposition~\ref{prop:truncatedversion}).
\end{proof}

In the next section, we will actually use the following variant, that can be proved in exactly the same manner.

\begin{prop} \label{prop:stratampleseveral}
	Let \(X\) be a projective variety, endowed with a finite set of ample line bundles \(L_{1}, \dotsc, L_{r}\). Then there exists a stratification \(\Sigma\) and fractional trivializations \(\frac{1}{d_{i}}\mathbf{e}_{i}\) for each \(L_{i}\) (for \(i = 1, \dotsc, n\)), satisfying the following:
	\begin{enumerate}[label=(\roman*)]
		\item the multiplicities of the trivializations along the components of \(\Sigma\) are nonnegative;
		\item we may assume that for each component \(V\) in \(\Sigma\), the multiplicities of the \(\frac{1}{d_{i}} \mathbf{e}_{i}\) along \(V\) are zero for all but one \(i \in \llbracket 1, r\rrbracket\);
		\item  for all \(j \in \llbracket 0, n \rrbracket\) and all \(i \in \llbracket 1, r \rrbracket\).
	\begin{equation} \label{eq:identityjn}
		c_{1}(L_{i}, (\Sigma, \frac{1}{d_{i}} \mathbf{e}_{i}))_{[\leq j]}^{n}
		=
		c_{1}(L_{i}, (\Sigma, \frac{1}{d_{i}} \mathbf{e}_{i}))_{[\leq n]}^{n}
	\end{equation}
	\item For each \(i \in \llbracket 1, r \rrbracket\), we consider the endomorphism \(c_{i} := c_{1}(L_{i}, \frac{1}{d_{i}} \mathbf{e}_{i})_{[\leq n]} \in \mathrm{End}\left((Z_{\bullet}^{\Sigma}(X)_{\mathbb{Q}}\right)\). Now, for each sequence of indexes \(j_{1}, \dotsc, j_{n}\) in \(\llbracket 1, r \rrbracket\), we have
	\begin{equation} \label{eq:endomorphisms}
		\deg (c_{j_{1}}	\cap c_{j_{2}} \cap \dotsc \cap c_{j_{n}} \cap [X])
		= 
		(L_{j_{1}} \cdot L_{j_{2}} \cdot \dotsc \cdot L_{j_{n}}).
	\end{equation}
	\end{enumerate}
\end{prop}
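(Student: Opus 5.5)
We construct \(\Sigma\) by induction on \(n = \dim X\), following the proof of Proposition~\ref{prop:conststratample} but cutting by hyperplane sections of one line bundle at a time. The case \(n=0\) is empty: the stratification is trivial and both sides of \eqref{eq:endomorphisms} are \(\deg_{\mathbbm{k}} X\).

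For \(n \geq 1\), let \(q : X_{n} \to X\) be the normalization. Choose \(d_{0}\) such that every \(q^{\ast}L_{i}^{\otimes d_{0}}\) is very ample. For each \(i\), choose a general section \(s_{i} \in H^{0}(X_{n}, q^{\ast}L_{i}^{\otimes d_{0}})\) with divisor \(H_{i}\). By Bertini-type genericity (and using \(\mathbbm{k}\) algebraically closed) we may take the \(H_{i}\) so that no two of them share an irreducible component. Let \(X_{n-1}\) be the disjoint union of the normalizations of all components of \(H_{1} \cup \dotsc \cup H_{r}\). The trivialization of \(L_{i}^{\otimes d_{0}}\) on \(X_{n} \setminus \bigcup_{k} H_{k}\) is \(s_{i}\). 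Its multiplicity along a component of \(H_{k}\) is then positive if \(k = i\) and zero otherwise, which gives (i) and (ii) at the top level.

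We then apply the induction hypothesis to each component \(W\) of \(X_{n-1}\), with the ample line bundles \(L_{i}|_{W}\) (pulled back, so still ample since \(W\) is finite over its image). We glue the resulting stratifications and pass to a common denominator \(d\) by taking powers of trivializations, as in Remark~\ref{rem:powertriv} and Proposition~\ref{prop:powertriv}. Properties (i) and (ii) then hold at every level. Property (iii) follows as in Proposition~\ref{prop:conststratample}: all markings in the tree are nonnegative, so every root-to-leaf path with \(C_{\sigma} \neq 0\) has index \(0\). By Proposition~\ref{propgraph}, the truncations \([\leq j]\) therefore all coincide.

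For (iv), the truncated class \(c_{i}\) is the full class \(c_{1}(L_{i},\cdot)_{[0]}+c_{1}(L_{i},\cdot)_{[1]}\). We prove \eqref{eq:endomorphisms} by induction on \(n\), arguing as in Lemma~\ref{lemformulainduct}. The innermost operator \(c_{j_{n}}\) sends \([X]\) to \(\frac{1}{d_{0}}\sum_{W \subset H_{j_{n}}} m_{W}[W]\); only the components of \(H_{j_{n}}\) contribute, by (ii). The induction hypothesis on each \(W\) then gives
\[
\deg(c_{j_{1}} \cap \dotsc \cap c_{j_{n}} \cap [X]) = \frac{1}{d_{0}}\sum_{W} m_{W}\,(L_{j_{1}} \cdots L_{j_{n-1}})_{W}.
\]
Here we use that the induced trivializations on \(W\) form the stratification from the inductive step. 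We also use the projection formula for the finite birational map \(W \to q(W)\), and that intersection numbers are invariant under the finite birational normalization \(X_{n} \to X\). The right-hand side equals \((L_{j_{1}} \cdots L_{j_{n-1}} \cdot L_{j_{n}})\), since \(\frac{1}{d_{0}}\sum m_{W}[q(W)]\) represents \(c_{1}(L_{j_{n}}) \cap [X]\) by \eqref{eqdefimult}. The common denominator \(d\) causes no difficulty, by Proposition~\ref{prop:powertriv}.

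The main obstacle is keeping (ii) coherent across levels. The stratification on each \(W\) has to be built for all \(r\) bundles at once, so the hyperplane sections chosen on \(W\) for the different \(L_{i}|_{W}\) must again have no common components. In the second half of the argument, the point to check is that the recursive cap product only follows the branch of the \(L_{j_{n}}\)-section. Genericity of the sections on each normal variety should handle the first issue, and (ii) the second.
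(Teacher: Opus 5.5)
Your proposal is correct and follows the paper's proof. Both argue by induction on dimension, taking the new boundary on the normalization $X_n$ to be the union of divisors of sections of very ample powers of the $q^{\ast}L_i$ chosen without common components (exactly the paper's Claim), getting (iii) from nonnegativity of all markings and (iv) by inductively applying \eqref{eq:contributionschern}.

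One small correction: carry out the divisor computation in (iv) on $X_n$, with $f_{n-1}(W)$ in place of $q(W)$. When $X$ is non-normal along $q(W)$, the map $W \to q(W)$ need not be birational, and $\frac{1}{d_0}\sum m_W[q(W)]$ then misses degree factors. By contrast, $W \to f_{n-1}(W)$ is birational by definition of a stratum, and $q_\ast[X_n]=[X]$ handles the passage down to $X$.
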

\begin{proof}
	The construction of the trivialized stratification is the same as in Proposition~\ref{prop:conststratample}, except at the induction step, we now take \(D_{n}\) to be the union of supports of trivializing sections for very ample powers  of the pull-backs of the \(L_{1}, \dotsc, L_{r}\). Item {\em (i)} follows immediately.
	\medskip

	\noindent
	{\em (ii)} To see this, it suffices to use the following easy claim at the induction step. 

	\begin{claim} Let \(X\) be a variety over an algebraically closed field \(\mathbbm{k}\), and let \(N_{1}, \dotsc, N_{r}\) be very ample divisors on \(X\). Then there exists Weil divisors \(D_{1}, \dotsc, D_{r}\) on \(X\) such that each \(U_{i} := X - D_{i}\) is an affine open subset on which \(N_{i}\) is trivial. We have
		\[
			D_{i} \cap U_{j} \neq \varnothing
		\]
		for each \(i \neq j\).
	\end{claim}
	
	\noindent
	{\em (iii)} The equality \eqref{eq:identityjn} follows immediately from the fact that all multiplicities are positive.
	\smallskip

	\noindent
	{\em (iv)} This follows from an inductive application of \eqref{eq:contributionschern}.
\end{proof}

\subsection{Proof of Demailly-Angelini Morse inequalities}

\begin{proof}[Proof of Theorem~\ref{thm:demang}]
\noindent
{\em Step 0. From nef to ample.} Replacing $F$ (resp. $G$) by $F + \epsilon A$ (resp. $G + \epsilon A$ with \(\epsilon \in \mathbb{Q}_{>0}\) small and $A$ ample divisor does not change the left hand side of the equality, while the right hand side can be retrieved by letting \(\epsilon \to 0\). Thus we can assume that both $F$ and $G$ are ample.
	\medskip

\noindent
{\em Step 1. Stratifications and trivializations.} Apply Proposition~\ref{prop:stratampleseveral} to \(X\), endowed with the two ample line bundles \(F\) and \(G\). This gives a stratification \(\Sigma\) on \(X\), integers \(d_{F}\) and \(d_{G}\), and fractional trivializations \(\frac{1}{d_{F}} \mathbf{e}_{F}\) and \(\frac{1}{d_{G}} \mathbf{e}_{G}\) on \(\Sigma\) for \(F\) and \(G\) respectively. We may replace \(d_{F}\) and \(d_{G}\) by \(\mathrm{lcm}(d_{F}, d_{G})\) to assume that \(d_{F} = d_{G}\). Let us call \(d\) this common power.

	Letting
	\[
		\mathbf{e} := \mathbf{e}_{F} \otimes \mathbf{e}_{G}^{-1},
	\]
	we get a fractional trivialization \(\frac{1}{d} \mathbf{e}\) for \(\mathcal{O}(F - G)\). Let \(\underline{\Sigma} := (\Sigma, \frac{1}{d}\mathbf{e})\) (resp. \(\underline{\Sigma}_{F} := (\Sigma, \frac{1}{d}\mathbf{e}_{F})\), resp. \(\underline{\Sigma}_{G} := (\Sigma, \frac{1}{d}\mathbf{e}_{G})\)).
	\medskip

{\em Step 2. Computation.} Now, by Item {\em (ii)} of Proposition~\ref{prop:stratampleseveral}, we have
	\begin{equation} \label{eq:distinction}
		c_{1}(L, \underline{\Sigma})_{[i]} = 
		\left\{
			\begin{array}{cc}
				c_{1}(\mathcal{O}(F), \underline{\Sigma}_{F})_{[0]}
				\quad
				\text{if} \; i = 0 \\
				- c_{1}(\mathcal{O}(G), \underline{\Sigma}_{G})_{[0]}
				\quad
				\text{if} \; i = 1 \\
			\end{array}
		\right.
	\end{equation}
	as endomorphisms of \(Z(X)_{\mathbb{Q}}^{\Sigma}\).
	\medskip

	Consequently, iterating Definition \ref{defitrunchigherpowers} yields, for any $j \in \llbracket 0, n \rrbracket$: 
\begin{align*}
	c_{1}(L, \underline{\Sigma})^{n}_{[j]} \cap [X] & 
	= \sum_{\overset{T \in \llbracket 1, n \rrbracket}{|T| = j}}  
	c_{1}(L, \underline{\Sigma})_{[\mathbbm{1}_{1 \in T}]} 
	\cap ... \cap  
	c_{1}(L, \underline{\Sigma})_{[\mathbbm{1}_{n \in T}]} \cap [X] \\
	& = \sum_{\overset{T \in \llbracket 1, n \rrbracket}{|T| = j}} ( c_1(F))^{n - |T|} \cap (-c_1(G))^{|T|} \cap [X] \\
& = (-1)^{j} \binom{n}{j} F^{n- j} \cdot G^j
\end{align*}
	in $A_0(X)$ (here $\mathbbm{1}_{j \in T}$ is equal to $1$ if $j \in T$, and to $0$ otherwise). To pass from the first line to the second line, we used jointly \eqref{eq:distinction} and \eqref{eq:endomorphisms}.
	\medskip
	
	By Theorem \ref{thmmorse}, this gives the result.
\end{proof}

\chapter{Morse inequalities and weighted symmetric products} \label{chap:morseineqweighted}

The main theorem of this chapter is an asymptotic estimate on the growth of quantities of the form \(\chi^{[i]} (X, N^{\otimes m} \otimes S^m\, (\mathbf{E}))\), where \(\mathbf{E}\) is a \emph{weighted direct sum} of line bundles, and \(N\) is an arbitrary \(\mathbb{Q}\)-line bundle; it will be the central step in our proof of existence of jet differentials on varieties of general type. The main strategy will be to proceed as in the proof of Proposition~\ref{prop:symmetricsum}, but substituting the use of the asymptotic Riemann-Roch theorem with the algebraic Morse inequalities of the previous chapter.
\smallskip

{\em In this chapter, we do not assume that \(\mathbbm{k}\) is algebraically closed.}

\section{Notation and statement of the result} \label{sectstatement}

We refer to Section~\ref{sec:notationlattices} for some basic notation about simplexes.

\medskip

\begin{defi} \label{defisimplex}
	For all \(\underline{a} = (a_1, \dotsc, a_r) \in \mathbb N_{\geq 1}^{r}\), we define the \((r-1)\)-dimensional simplex 
	\[
		\Delta_{\underline{a}} 
		= 
		\{ (t_1, .., t_r) \in \mathbb{R}^{r} 
		\; | \; 
		\sum_{i} a_{i} t_{i} = 1 
		\} 
		\subseteq 
		\mathbb{R}^{r}.
	\]
	For any \(m \in \mathbb{N}\), we will simply denote by \(\Delta^{m}\) the \(m\)-dimensional simplex \(\Delta_{(1, \dotsc, 1)}\) (\(1\) repeated \(m+1\) times).
\end{defi}

\begin{setup} \label{setup:chapter3}
We fix a projective variety \(X\) of dimension \(n\), and we let \(L_{1}, \dotsc, L_{r}\) be \(\mathbb{Q}\)-line bundles over \(X\). Let \(\Sigma\) be a stratification adapted to all \(L_{1}, \dotsc, L_{r}\), and for each $i \in \llbracket 1, r \rrbracket$, choose a fractional trivialization $\frac{1}{d_{i}} \mathbf e_i$ of $L_i$ over $\Sigma$, where \(d_{i}\) is such that \(L_{i}^{\otimes d_{i}}\) is a standard line bundle. Let $\mathcal T$ be the tree associated to $\Sigma$. For each edge $\mathfrak{s}$ in $\mathcal T$, denote by $m_{i}^\mathfrak{s}$ the marking of $\mathfrak{s}$ associated to the trivialization $\mathbf e_i$. 
\end{setup}

\begin{rem}
	We have chosen to state the results of this chapter for a projective \(X\) (as opposed to proper) for some technical reasons, e.g. the need to take Bloch-Gieseker coverings. 
\end{rem}

\subsection{Symmetric powers for \(\mathbb{Q}\)-line bundles} \label{eq:symmqline} In order to state our main result, we will need to make sense of the notation \(S^{m}(L_{1}^{(a_{1})} \oplus \dotsc \oplus L_{r}^{(a_{r})})\), which was introduced when all the \(L_{i}\) are standard line bundles, but needs an additional assumption to be properly defined for \(\mathbb{Q}\)-line bundles. Let us give some details.
\medskip

\begin{nota}
	Data as in Setup~\ref{setup:chapter3}. Denote by \(T(X) \subset \mathrm{Pic}(X)\) the subgroup of torsion elements. Then we have an exact sequence
	\[
		0
		\longrightarrow
		T(X)
		\longrightarrow
		\mathrm{Pic}(X)
		\longrightarrow
		\mathrm{Pic}_{\mathbb{Q}}(X)
		\longrightarrow
		0.
	\]
	Denote by \(P(X) \subset \mathrm{Pic}_{\mathbb{Q}}(X)\) the image of \(\mathrm{Pic}(X)\). Let \(\Lambda \subset \mathrm{Pic}_{\mathbb{Q}}(X)\) be the subgroup generated by \(L_{1}, \dotsc, L_{r}\), and let \(\Lambda_{0}\) be the set of all \((b_{1}, \dotsc, b_{r}) \in \mathbb{Z}^{r}\) such that \(L_{1}^{\otimes b_{1}} \otimes \dotsc \otimes L_{r}^{\otimes b_{r}}\) belongs to \(P(X)\). This is an additive subroup of \(\mathbb{Z}^{r}\). 
	\medskip
	
	We let
	\[
	\Lambda_{L}
	:=
	\left\{
		\quad
		L_{1}^{\otimes b_{1}}
		\otimes
		L_{2}^{\otimes b_{2}}
		\otimes
		\dotsc
		\otimes
		L_{r}^{\otimes b_{r}}
		\in P(X)
		\quad | \quad
		(b_{1}, b_{2}, \dotsc, b_{r}) \in \Lambda_{0}
		\quad
	\right\}.
	\]
\end{nota}

The next proposition says that under a natural arithmetic condition, any element appearing in the expansion of \(S^{m}(L_{1}^{(a_{1})} \oplus \dotsc \oplus L_{r}^{(a_{r})})\) can be associated uniquely with an line bundle in a fixed, finitely generated subroup of \(\mathrm{Pic}(X)\).

\begin{prop} \label{prop:condgroup}
	There exists a finitely generated free subgroup \(H \subset \mathrm{Pic}(X)\) such that the projection \(H \to \mathrm{Pic}(X)_{\mathbb{Q}}\) realizes an isomorphism onto \(\Lambda_{L}\).
	\smallskip

Assume moreover that we have the inclusion of groups
			\begin{equation} \label{eq:inclgroups}
				\big\{
					(b_{1}, \dotsc, b_{r}) \in \mathbb{Z}^{r}
				\quad | \quad
				\sum_{i} a_{i} b_{i} = 0
				\big\}
				\subset
				\Lambda_{0}.
			\end{equation}

	Then there exists \(m_{0} \in \mathbb{N}_{> 0}\) such that for any \(m\) divisible by \(m_{0}\), the following holds. Let \((b_{1}, \dotsc, b_{r}) \in \mathbb{Z}^{r}\) satisfying \(\sum_{i} a_{i} b_{i} = m\). Then \(L_{1}^{\otimes b_{1}} \otimes \dotsc \otimes L_{r}^{\otimes b_{r}}\) belongs to \(P(X)\), and is the image of a unique element in \(H \subset \mathrm{Pic}(X)\). 
\end{prop}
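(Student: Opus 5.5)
The plan is to split the statement into a group-theoretic part, which produces \(H\), and an arithmetic part, which produces \(m_{0}\).

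\emph{Construction of \(H\).} First I check that \(\Lambda_{L}\) is a finitely generated, torsion-free group. It is the image of \(\Lambda_{0} \subset \mathbb{Z}^{r}\) under the homomorphism \((b_{i}) \mapsto \bigotimes_{i} L_{i}^{\otimes b_{i}}\). Hence it is a finitely generated subgroup of the \(\mathbb{Q}\)-vector space \(\mathrm{Pic}(X)_{\mathbb{Q}}\), and so it is free of some finite rank \(s\). Choose a basis \(\lambda_{1}, \dotsc, \lambda_{s}\) of \(\Lambda_{L}\). Each \(\lambda_{j}\) lies in \(P(X)\), so we may pick a lift \(\tilde{\lambda}_{j} \in \mathrm{Pic}(X)\). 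Let \(H\) be the subgroup generated by \(\tilde{\lambda}_{1}, \dotsc, \tilde{\lambda}_{s}\). The projection \(H \to \Lambda_{L}\) is surjective, because it hits a basis. It is also injective: a relation \(\sum_{j} c_{j}\tilde{\lambda}_{j} = 0\) maps to \(\sum_{j} c_{j}\lambda_{j} = 0\), which forces all \(c_{j} = 0\). Thus \(H\) is free and \(H \to \Lambda_{L}\) is an isomorphism.

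\emph{Choice of \(m_{0}\).} Assume the inclusion \eqref{eq:inclgroups}. Let \(g := \gcd(a_{1}, \dotsc, a_{r})\), and fix \(u \in \mathbb{Z}^{r}\) with \(\sum_{i} a_{i}u_{i} = g\). Some positive multiple \(Nu\) lies in \(\Lambda_{0}\), since each \(L_{i}\) has a power that is a standard line bundle; for instance take \(N = \mathrm{lcm}(d_{1}, \dotsc, d_{r})\), so that every \(N e_{i} \in \Lambda_{0}\). Set \(m_{0} := gN\).

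\emph{Membership for \(m\) divisible by \(m_{0}\).} Take \(m = k m_{0}\) and \(b \in \mathbb{Z}^{r}\) with \(\sum_{i} a_{i}b_{i} = m\). Write \(b = kNu + (b - kNu)\). Then \(\sum_{i} a_{i}(b - kNu)_{i} = m - kNg = 0\), so \(b - kNu\) lies in the group on the left of \eqref{eq:inclgroups}, hence in \(\Lambda_{0}\). Since \(kNu\) also lies in \(\Lambda_{0}\), we get \(b \in \Lambda_{0}\). Therefore \(\bigotimes_{i} L_{i}^{\otimes b_{i}}\) lies in \(P(X)\), in fact in \(\Lambda_{L}\). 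By the first part it is the image of a unique element of \(H\).

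\emph{Expected difficulty.} The only subtle point is making sure that \(\Lambda_{L}\) is torsion-free, so that the lifts do not collide modulo \(T(X)\). This holds because \(\Lambda_{L}\) sits inside a \(\mathbb{Q}\)-vector space. A secondary point is to check that "unique" refers to uniqueness inside \(H\), not inside all of \(\mathrm{Pic}(X)\). Everything else is routine.
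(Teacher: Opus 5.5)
Your proposal is correct and follows the paper's argument: the paper also lifts a basis of the free group \(\Lambda_{L}\) to define \(H\), and then writes \(b\) as an element of the kernel of \(b \mapsto \sum_{i} a_{i} b_{i}\) (hence of \(\Lambda_{0}\) by \eqref{eq:inclgroups}) plus a fixed vector of \(\Lambda_{0}\) of weight \(m\). The differences are cosmetic. The paper uses the vector \((k m_{1}, 0, \dotsc, 0)\) with \(L_{1}^{\otimes m_{1}} \in P(X)\) and \(m_{0} = a_{1} m_{1}\), whereas you use the B\'ezout vector \(kNu\), giving the smaller \(m_{0} = \gcd(a_{1}, \dotsc, a_{r})\, N\). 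You also state explicitly that \(b \in \Lambda_{0}\), rather than only that the product lies in \(P(X)\); this is the fact that uniqueness of the preimage in \(H\) actually requires.
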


\begin{proof}
	Remark that \(\Lambda_{L}\) is finitely generated subgroup of \(P(X)\), since it is the image of \(\Lambda_{0}\) under the natural morphism \(\mathbb{Z}^{r} \twoheadrightarrow \Lambda\). Thus, \(\Lambda_{L}\) is a free abelian group; pick a basis \(\mathcal{B}\) if this group. For each member of this basis, choose a preimage in \(\mathrm{Pic}(X)\), and let \(H\) be the subgroup of \(\mathrm{Pic}(X)\) generated by these preimages. We check immediately that the induced morphism
	\[
		H \longrightarrow \Lambda_{L}
	\]
	is an isomorphism.
	\smallskip

	It remains to check that we may pick \(m_{0}\) such that any \(L_{1}^{\otimes b_{1}} \otimes \dotsc \otimes L_{r}^{b_{r}}\) as in the hypothesis actually belongs to \(\Lambda_{L}\). This is easy: pick \(m_{1}\) such that \(L_{1}^{\otimes m_{1}} \in P(X)\), and let \(m_{0} = a_{1} m_{1}\). Then, for any \((b_{1}, \dotsc, b_{r})\) such that \(\sum_{i} a_{i} b_{i} = m_{0} k\) (with \(k \in \mathbb{Z}\)), we have:
	\[
		L_{1}^{\otimes b_{1}} \otimes \dotsc \otimes L_{r}^{\otimes b_{r}}
		=
		(L_{1}^{\otimes (b_{1} - km_{1})} \otimes L_{2}^{\otimes b_{2}} \otimes \dotsc \otimes L_{r}^{\otimes b_{r}})
		\otimes
		(L_{1}^{\otimes m_{1}})^{\otimes k}.
	\]
	By the assumption \eqref{eq:inclgroups}, we have \((b_{1} - k m_{1}, b_{2}, \dotsc, b_{r}) \in \Lambda_{0}\) so the first factor in the right hand side belongs to \(P(X)\). The second factor belongs to this group as well, which gives the result.
\end{proof}

Under the assumption of Proposition~\ref{prop:condgroup}, we can now give meaning to the symmetric powers \(S^{m}(L_{1}^{(a_{1})} \oplus \dotsc \oplus L_{r}^{(a_{r})})\).

\begin{defi} \label{defi:symmetric}
	Data as in Setup~\ref{setup:chapter3}. Fix \(a_{1}, \dotsc, a_{r} \in \mathbb{N}^{\ast}\), and assume that the inclusion of groups \eqref{eq:inclgroups} is satisfied. Let \(H \subset \mathrm{Pic}(X)\) be a subgroup as in the conclusion of Proposition~\ref{prop:condgroup}. Then, for all \(m\) divisible enough, we let
	\begin{equation} \label{eq:symmpower_Q}
		S_{H}^{m}(L_{1}^{(a_{1})} \oplus \dotsc \oplus L_{r}^{(a_{r})})
		:=
		\bigoplus_{a_{1} b_{1} + \dotsc + a_{r} b_{r} = m}
		L_{1}^{\otimes b_{1}}
		\otimes
		\dotsc
		\otimes
		L_{r}^{\otimes b_{r}},
	\end{equation}
	where each term on the right is to be understood as the corresponding element of \(H\).
\end{defi}

\begin{rem}
	\begin{enumerate}
		\item As we said above, our definition depends on the group \(H\) : we may change our choice by multiplying a given basis of \(H\) by torsion elements in \(\mathrm{Pic}(X)\). This will not impact the final asymptotic expansions.
		\item The condition~\eqref{eq:inclgroups} may seem a bit artificial and painful to check. Let us mention that there is of course nothing to check if all the \(L_{i}\) are line bundles. We will see in Section~\ref{sec:twist} that it is allows to compute objects of the form
			\[
				S^{m}(L_{1}^{(a_{1})} \oplus \dotsc L_{r}^{(a_{r})}) \oplus N^{\otimes m},
			\]
			where the \(L_{i}\) are standard line bundles, and \(N\) is a \(\mathbb{Q}\)-line bundle.
			\smallskip

	\end{enumerate}
\end{rem}

\subsection{Combinatorial data}
Our next ingredient in the statement of Theorem~\ref{thmineqintegral} is the data, for all \(i \in \llbracket 1, n \rrbracket\), of a piecewise polynomial function on \(\mathbb R^r\) defined as follows.

\begin{defi} \label{defiupsilon} Let $(t_1, \dotsc, t_r) \in \mathbb R^r$. Mark each edge $\mathfrak{s}$ in $\mathcal T$ with the real number 
	\[
		t_{1} \frac{m_{1}^{\mathfrak{s}}}{d_{1}}
		+ 
		\dotsc 
		+ 
		t_{r} \frac{m_{r}^\mathfrak{s}}{d_{r}}.
	\]
	For each root-to-leaf path \(\sigma\) in \(\mathcal{T}\), denote by \(C_{\sigma}\) the product of all markings along the edges of \(\sigma\), and let \(V_{\sigma}\) denote the \(0\)-dimensional variety marking the leaf of \(\sigma\). Then, for all \(i \in \llbracket 1, n \rrbracket\), we let
\[
	\upsilon_{[\leq i]}(t_{1}, \dotsc, t_r) 
	= 
	\sum_{\mathrm{index}(\sigma) \leq i} C_{\sigma}\, \mathrm{\deg}_{\mathbbm{k}} (V_{\sigma}),
\]
	where the sum runs among all the complete paths of index \(\leq i\) in \(\mathcal{T}\), i.e. among the paths with less than \(i\) negative markings.
\end{defi}

It is easy to check that the functions \(\upsilon_{[\leq i]}\) are piecewise polynomial, and satisfy the following homogeneity property: 
	\[
		\upsilon_{[\leq i]} 
		(\lambda \cdot u) = \lambda^n \; \upsilon_{[\leq i]} (u)
		\quad
		(\lambda \in \mathbb{R}_{+},\; u \in \mathbb{R}^{r})
	\] 

\subsection{Main theorem} We can now state the main theorem of this chapter.

\begin{thm} \label{thmineqintegral} Assumptions as in Setup~\ref{setup:chapter3}. Let $\underline{a} = (a_1, \dotsc, a_r) \in \mathbb N_{\geq 1}^r$, and consider the $(r-1)$-dimensional simplex $\Delta_{\underline{a}}$, accordingly to Definition \ref{defisimplex}. Assume that we have the inclusion \eqref{eq:inclgroups}, and let \(H \subset \mathrm{Pic}(X)\) be a subgroup as in the conclusion of Proposition~\ref{prop:condgroup}. 
	\smallskip
	
	Then, for all $i \in \llbracket 0, n \rrbracket$, we have the following asymptotic upper bound, as \(m\) goes to infinity while being divisible by some fixed integer:
\begin{align*}
	\chi^{[i]} (X, S_{H}^{m} (&L_1^{(a_1)} \oplus \dotsc  \oplus L_r^{(a_r)}) )  \\
& \leq  \frac{\mathrm{gcd}(a_1, \dotsc, a_r)}{a_1 \dotsc a_r} \binom{n + r - 1}{r - 1} \left[ \int_{\Delta_{\underline{a}}} \upsilon_{[\leq i]} dP \right] \frac{m^{n+ r - 1}}{(n+ r - 1)!} + o(m^{n+r - 1}),
\end{align*}
where $P$ is the uniform probability measure on the simplex $\Delta_{\underline{a}}$.
\end{thm}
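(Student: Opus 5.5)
The plan is to copy the proof of Proposition~\ref{prop:symmetricsum}, replacing the asymptotic Riemann--Roch formula by the Morse inequalities of Theorem~\ref{thmmorse}, applied term by term. By Definition~\ref{defi:symmetric}, for \(m\) divisible enough,
\[
\chi^{[i]}\bigl(X, S_{H}^{m}(L_1^{(a_1)} \oplus \dotsc \oplus L_r^{(a_r)})\bigr) = \sum_{a_1 b_1 + \dotsc + a_r b_r = m,\ b_j \geq 0} \chi^{[i]}\bigl(X, L_1^{\otimes b_1} \otimes \dotsc \otimes L_r^{\otimes b_r}\bigr),
\]
since \(\chi^{[i]}\) is additive on direct sums.

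For each summand we build a trivialized stratification for \(L_b := \bigotimes_j L_j^{\otimes b_j}\) on the common stratification \(\Sigma\), namely the fractional trivialization \(\frac{1}{d}\bigotimes_j \mathbf{e}_j^{\otimes b_j d/d_j}\), with \(d = \mathrm{lcm}(d_j)\). The marking of an edge \(\mathfrak{s}\) is then \(\sum_j b_j m_j^{\mathfrak{s}}/d_j\). By Proposition~\ref{propgraph}, \(\deg c_1(L_b, \underline{\Sigma})^n_{[\leq i]}\) is the sum, over root-to-leaf paths \(\sigma\), of \(C_\sigma \deg(\sigma)\), where the sign of \(C_\sigma\) on paths of index \(j\) contributes \((-1)^j\).

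Here I must check the sign convention. Theorem~\ref{thmmorse} has the factor \((-1)^i\deg c_1^n_{[\leq i]}\), whereas \(\upsilon_{[\leq i]}\) is defined without the \((-1)^i\). I expect the intended reading is the signed version \((-1)^i \upsilon_{[\leq i]}\), or equivalently that the sign is absorbed into the convention. With that reading, \((-1)^i\deg c_1(L_b,\underline{\Sigma})^n_{[\leq i]} = \upsilon_{[\leq i]}(b)\), evaluated at \(t = b\), up to this sign bookkeeping. The function \(\upsilon_{[\leq i]}\) is piecewise polynomial and homogeneous of degree \(n\), so the main term equals \(\upsilon_{[\leq i]}(b/m)\,m^n/n!\).

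The main obstacle is uniformity of the error term. Theorem~\ref{thmmorse} gives \(O(m^{n-1})\) for a fixed line bundle, but here there are \(\sim m^{r-1}\) different bundles \(L_b\) with \(|b| \leq m\). Summing requires a bound \(O(|b|^{n-1})\) with a constant uniform in the direction \(b/|b|\), so that the total error is \(O(m^{n+r-2}) = o(m^{n+r-1})\). I would obtain this by re-running the proof of Proposition~\ref{propredmorse} with several line bundles at once. The telescoping in Step~\ref{step:boundAB} can be done one \(L_j\) at a time (increasing \(b_j\) by one), and the dévissage of Step~\ref{step:5} uses the fixed divisors \(A_j, B_j\) attached to \(\mathbf{e}_j\) on a common model \(X'\). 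All clean-up operations (cyclic covers, making divisors Cartier) depend only on the finitely many \(\mathbf{e}_j\), so the constants are uniform. The only genuinely delicate point is the sign pattern. For a given path, the index depends on the signs of the combined markings \(\sum_j b_j m_j^{\mathfrak{s}}/d_j\), which vary with \(b\). Ordering the telescoping increments along a path from \(0\) to \(b\) introduces errors near the walls where markings change sign, and these walls carry only \(O(m^{r-2})\) lattice points. An inequality in the wrong direction on such a set costs \(O(m^{n+r-2})\) at worst. If this joint induction is too heavy, a fallback is to partition the simplex into the finitely many chambers where all markings have constant sign. In the interior of a chamber, \(\upsilon\) is polynomial and one can write \(L_b\) as a fixed combination and apply a multi-line-bundle version of Theorem~\ref{thmmorse}, whose proof is linear in the multiplicities.

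Finally, summing the main terms gives the Riemann sum
\[
\frac{m^n}{n!}\sum_{a\cdot b=m} \upsilon_{[\leq i]}(b/m),
\]
and, as in Lemma~\ref{lem:asymptoticsriemann}, this equals
\[
\frac{\mathrm{vol}(\Delta_{\underline a})}{\mathrm{vol}(C_H)}\, m^{r-1}\,\frac{m^n}{n!}\int_{\Delta_{\underline a}} \upsilon_{[\leq i]}\,dP + o(m^{n+r-1}).
\]
The Riemann sum is valid because \(\upsilon\) is piecewise polynomial, hence Riemann integrable. By Lemma~\ref{lem:lattice2}, the ratio of volumes is \(\frac{\gcd(a)}{a_1\cdots a_r}\frac{1}{(r-1)!}\). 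Since \(\frac{1}{n!(r-1)!} = \binom{n+r-1}{r-1}\frac{1}{(n+r-1)!}\), this yields exactly the stated bound.
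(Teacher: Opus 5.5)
The Riemann-sum bookkeeping at the end is fine, and your signed reading $(-1)^i\upsilon_{[\leq i]}$ is the one the paper's own proof actually produces (see \eqref{eqlimsup}).

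The gap is the per-summand estimate
\[
\chi^{[i]}\bigl(X, L_1^{\otimes b_1}\otimes\dotsb\otimes L_r^{\otimes b_r}\bigr)\leq (-1)^i\,\upsilon_{[\leq i]}(b)\,\frac{1}{n!}+O(|b|^{n-1}) \quad\text{uniformly in } b,
\]
which your main route does not deliver. Suppose you telescope one $L_j$ at a time, using the divisors $A_j,B_j$ of $\mathbf e_j$. Then a top component $D_l$ is fed into $\chi^{[i]}$ or $\chi^{[i-1]}$ according to the sign of the \emph{individual} marking $m_j^{\mathfrak s}$. The index in $\upsilon_{[\leq i]}(b)$, by contrast, is governed by the sign of the \emph{combined} marking $\sum_j b_j m_j^{\mathfrak s}/d_j$. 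These disagree on whole open cones of $b$'s, not only near the walls.

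Already on a curve this fails. Take $L_1=\mathcal O(2P)$ and $L_2=\mathcal O(-P)$, with trivializations of divisors $2P$ and $-P$. The increments give $h^0(L_1^{\otimes b_1}\otimes L_2^{\otimes b_2})\leq 2b_1+O(1)$, while $\upsilon_{[\leq 0]}(b)=\max(2b_1-b_2,0)$. The loss is of order $|b|^n$ for a positive proportion of the summands, hence of order $m^{n+r-1}$ after summation. What one-at-a-time telescoping honestly proves is the paper's Proposition~\ref{prop:variantmorse}. Its coefficient is a maximum over labelings $\phi:\mathcal E\to\mathcal L$ of the edges, and it is not sharp.

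The paper compensates by applying this crude bound only to nearly parallel bundles. In Proposition~\ref{prop:cone}, the family is $M_k=\mathcal L^{\otimes \underline u_k}$, where the $\underline u_k$ span a narrow cone and all lie within $\epsilon$ of $\lambda t$. Every labeling then gives almost the same markings, so the maximum is $\upsilon_{[\leq i]}(t)+O(\epsilon)$. This follows from continuity of $\varphi_{[\leq i]}$ (Lemma~\ref{lemcontfunction}) together with $\varphi_{[\leq i]}(t,\dotsc,t)=\upsilon_{[\leq i]}(t)$. The Riemann sum is then taken over the cells $C_u$, letting $m\to\infty$, then $m_0\to\infty$, then $\epsilon\to 0$.

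Your fallback over constant-sign chambers could be turned into a genuinely different proof. The generators of a closed chamber are weakly sign-coherent on every edge, so each unit step would use the correct index. As written, however, it assumes precisely the missing ingredient: a Morse inequality for $\bigotimes_k M_k^{\otimes q_k}$ with coefficient exactly $\upsilon(q)$, uniform in $q$. Proving it requires running the induction on dimension with the sharp coefficient. It also requires choosing the telescoping path close to the segment from $0$ to $q$. The reason is that the leading term accumulated along a path a priori depends on the order of the increments: the mixed truncated numbers, pairing the top markings of one bundle with the subtree markings of another, need not be symmetric. ``Linear in the multiplicities'' does not cover this.

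A smaller point concerns $\mathbb Q$-line bundles. The summands of $S^m_H$ are fixed lifts in $H$, and $\bigotimes_j\mathbf e_j^{\otimes b_jd/d_j}$ trivializes their $d$-th power only up to a torsion class. This is harmless, but it is why the paper first reduces to standard line bundles in Section~\ref{sec:redBGcoverings}.
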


\begin{rem}
	Let us a make a few comments about the proof of the theorem.

	\begin{enumerate}[label=(\arabic*)]
		\item We will first make a reduction step to pass from the situation where the \(L_{i}\) are standard line bundles. It will suffice to perform Bloch-Gieseker coverings, and to imitate the discussion of Section~\ref{sec:finitemorphism}; characteristic \(p\) phenomena will not make the process especially complicated.
		\item After this first reduction step, the idea is to proceed as in the proof of Proposition~\ref{prop:symmetricsum} (see Section~\ref{sec:asymptoticEuler}), but substituting the use of Riemann-Roch theorem with the algebraic Morse inequalities of Theorem~\ref{thmmorse}.
		\item The symmetric product \(S^{m}(L_{1}^{(a_{1})} \oplus \dotsc \oplus L_{r}^{(a_{r})})\)  will decompose as a direct sum of line bundles, each coming with a natural trivialization of \(\Sigma\). One first difficulty is that instead of dealing with a unique line bundle \(L\) for which we want to estimate \(\chi^{[i]}(X, L^{\otimes m})\), each line bundle in the direct sum is a product of powers of some of the \(L_{i}\) with total (weighted) degree \(m\). This forces us to use a variant of Theorem~\ref{thmmorse}, adapted to this situation where we deal with a product of fixed line bundles (see Proposition~\ref{prop:variantmorse}).
		\item Now, we want to apply Proposition~\ref{prop:variantmorse} to each of the line bundles in the expansion of the symmetric product, and sum the inequalities. We need to make the right hand side converge to an integral, but we are faced with another problem, as the Morse inequalities may assume quite different forms from one line bundle to another. To circumvent this problem, we will need to better estimate the right hand sides that can appear this way: actually, we will first give an upper bound on the \(\chi^{[i]} (X, L_1^{\otimes l_{1}} \otimes \dotsc \otimes L_{r}^{\otimes l_{r}})$, holding for any tuple $(l_{1}, \dotsc, l_{r})$ allowed to vary in a small angular sector of \(\mathbb{N}^{r}\) (see Proposition~\ref{prop:cone}). Finally, we will sum over a covering of \(\mathbb{N}^{r}\) by arbitrarily small angular sectors: the previous estimates will then converge nicely to the integral appearing in Theorem \ref{thmineqintegral} (see Section~\ref{sec:proofthmineqintegral}).
	\end{enumerate}
\end{rem}

\section{Reduction step. Bloch-Gieseker coverings} \label{sec:redBGcoverings}

The goal of this section is to get back to the case where the \(\mathbb{Q}\)-line bundles have been replaced with standard line bundles. This will be achieved in Proposition~\ref{prop:reductionBG}, after some clean-up using Bloch-Gieseker coverings.
\bigskip

{\em In this section, we work under the hypothesis of Theorem~\ref{thmineqintegral}. We advise the reader to look back at Section~\ref{sec:finitemorphism} on stratifications induced on finite morphisms, and maybe to read again the notation of Setup~\ref{setup:chapter3}.}
\medskip

The following result is the well-known statement provided by the Bloch-Gieseker covering construction.

\begin{lem} \label{lem:BGnaive}
	There exists a finite morphism \(p : X' \to X\) and line bundles \(N_{1}, \dotsc, N_{r} \in \mathrm{Pic}(X')\) such that for each \(i \in \llbracket 1, r \rrbracket\), \(p^{\ast} L_{i}\) is equal to the image of \(N_{i}\) in \(\mathrm{Pic}(X')_{\mathbb{Q}}\).
\end{lem}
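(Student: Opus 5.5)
We reduce to a single \(\mathbb{Q}\)-line bundle at a time. Since the statement is only about finitely many \(L_{i}\), it suffices to construct, for each \(i\), a finite morphism \(p_{i} : X^{(i)} \to X\) from a projective variety such that \(p_{i}^{\ast} L_{i}\) is the image of a standard line bundle, and then take \(X'\) to be an irreducible component of the normalized fiber product \(X^{(1)} \times_{X} \dotsc \times_{X} X^{(r)}\) dominating \(X\). The composite \(X' \to X\) is finite. Pullbacks of images of line bundles stay images of line bundles, so on \(X'\) each \(p^{\ast} L_{i}\) is the image of the pullback \(N_{i}\) of the line bundle obtained on \(X^{(i)}\).

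For a single \(L\), choose \(d\) with \(L^{\otimes d}\) the image of some \(M \in \mathrm{Pic}(X)\). The task is to extract a \(d\)-th root of \(M\) after a finite cover. Since \(X\) is projective, write \(M \cong A \otimes B^{-1}\) with \(A, B\) very ample. It then suffices to extract \(d\)-th roots of \(A\) and \(B\) separately: if \(p^{\ast} A \cong A'^{\otimes d}\) and \(p^{\ast} B \cong B'^{\otimes d}\), then \(p^{\ast} M \cong (A' \otimes B'^{-1})^{\otimes d}\). The image of \(A' \otimes B'^{-1}\) in \(\mathrm{Pic}(X')_{\mathbb{Q}}\) is a \(d\)-th root of \(p^{\ast} L^{\otimes d}\), and because \(\mathrm{Pic}_{\mathbb{Q}}\) is uniquely divisible, it equals \(p^{\ast} L\).

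To take a \(d\)-th root of a very ample \(A\), we use the classical Bloch--Gieseker construction. Embed \(X \hookrightarrow \mathbb{P}^{N}\) by \(A\), so that \(A = \mathcal{O}(1)|_{X}\). Let \(\phi_{d} : \mathbb{P}^{N} \to \mathbb{P}^{N}\) be the finite surjective morphism \([x_{0} : \dotsc : x_{N}] \mapsto [x_{0}^{d} : \dotsc : x_{N}^{d}]\), which satisfies \(\phi_{d}^{\ast} \mathcal{O}(1) = \mathcal{O}(d)\). This morphism exists in every characteristic. Take \(X'\) to be an irreducible component of \(\phi_{d}^{-1}(X)\) dominating \(X\), with its reduced structure. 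Then \(X' \to X\) is finite and surjective, since it is a closed subvariety of a finite cover of the same dimension that dominates \(X\). Moreover \(p^{\ast} A = (\mathcal{O}(1)|_{X'})^{\otimes d}\).

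Neither step involves generic smoothness or separability, so positive characteristic causes no trouble: an inseparable cover such as the one arising from \(p \mid d\) is allowed, since we only need \(p\) finite. The one point to watch is choosing components so that \(X'\) stays a variety mapping finitely and surjectively onto \(X\), which reducing and taking a dominating component handles.

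The main obstacle is the choice of the global group in which the roots are taken, and this is exactly why the statement is phrased through the images in \(\mathrm{Pic}(X')_{\mathbb{Q}}\). The \(N_{i}\) are only determined up to torsion, and the construction above produces some representative, which is all the lemma asks for.
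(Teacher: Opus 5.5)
Your argument is correct and essentially matches the paper's: the paper also replaces $L_i$ by a standard line bundle representing $L_i^{\otimes d_i}$ and invokes the Bloch--Gieseker construction of Lemma~\ref{lem:BGcoveringannex}. That construction pulls back along the $d$-th power map of $\mathbb{P}^{N}$, writes the bundle as a difference of two positive ones (there a globally generated and a very ample one, here two very ample ones), and combines the resulting roots. The one detail to state explicitly is that the roots of $A$ and $B$ must be taken on a common cover; your dominating-component-of-the-fiber-product trick supplies this, as does simply iterating, since pullbacks of globally generated bundles stay globally generated.
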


This follows from the classical Bloch-Gieseker construction (see Lemma~\ref{lem:BGcoveringannex}). In the rest of this section, we let \(p : X' \to X\) be as provided by Lemma~\ref{lem:BGcovering}.

\medskip

Let \(\Sigma'\) be the stratification induced on \(X'\) as in Section~\ref{sec:constructionstrat}. We have a diagram as follows.

\begin{equation} \label{eq:diagramproj}
\begin{tikzcd}
		X_{0}'
			\arrow[r, "f_{0}'"]
			\arrow[d, "p_{0}"]
		&
		X_{1}'
			\arrow[r, "f_{1}'"]
			\arrow[d, "p_{1}"]
		&
		\dotsc
			\arrow[r]
		&
		X_{n-1}'
			\arrow[r, "f_{n-1}'"]
			\arrow[d, "p_{n-1}"]
		&
		X_{n}'
			\arrow[r, "q'"]
			\arrow[d, "p_{n}"]
		&
		X'
			\arrow[d, "p"]
		\\
		X_{0}
			\arrow[r, "f_{0}"]
		&
		X_{1}
			\arrow[r, "f_{1}"]
		&
		\dotsc
			\arrow[r]
		&
		X_{n-1}
			\arrow[r, "f_{n-1}"]
		&
		X_{n}
			\arrow[r, "q"]
		&
		X
	\end{tikzcd}
\end{equation}

For each \(i \in \llbracket 1, r \rrbracket\), note that \(N_{i}^{\otimes d_{i}} = p^{\ast}(L_{i}^{\otimes d_{i}})\) in \(\mathrm{Pic}(X')_{\mathbb{Q}}\). In other words, there exists \(d'_{i} \in \mathbb{N}_{\geq 1}\) such that \(N_{i}^{\otimes d'_{i} d_{i}} = (p^{\ast} L_{i}^{\otimes d_{i}})^{\otimes d'_{i}}\) {\em in the standard Picard group} \(\mathrm{Pic}(X')\). We can replace each trivialization in \(\mathbf{e}_{i}\) by its \(d_{i}\)-th power to get a trivialization \(\mathbf{e}_{i}^{\otimes d_{i}'}\) for \((L_{i}^{\otimes d_{i}})^{\otimes d_{i}'}\). Pulling back to \(X'\) and dividing by \(d_{i}d_{i}'\), we get a fractional trivalization \(\frac{1}{d_{i}d_{i}'} \mathbf{e}_{i}'\) for \(N_{i}\), adapted to \(\Sigma'\).
\medskip

The next lemma claims that the multiplicities associated to the stratified trivializations on \(X'\) can be taken to be multiples of the ones on \(X\), in a {\em uniform manner}.  
	
\begin{lem} \label{lem:BGcovering}
	Under the conclusion of Lemma~\ref{lem:BGnaive}, the following holds as well. For any edge \(\mathfrak{s} := W \overset{\sigma}{\to} V\) in the tree of \(\Sigma\), (with \(W \subset X_{j}\) and \(V \subset X_{j+1}\)), let \(\mathfrak{s}' := W' \overset{\sigma'}{\to} V'\) be an arrow in the tree of \(\Sigma'\), such that \(W'\) dominates \(W\) and \(V'\) dominates \(V\). Then there exists a constant \(N_{\mathfrak{s}'}\) such that, for all \(i \in \llbracket 1, r \rrbracket\), we have
	\[
		m_{i}^{\mathfrak{s'}} = N_{\mathfrak{s}'} m_{i}^{\mathfrak{s}},
	\]
	where \(m_{i}^{\mathfrak{s}}\) (resp. \(m_{i}^{\mathfrak{s'}}\)) is the multiplicity of \(\mathfrak{s}\) (resp. \(\mathfrak{s}')\) associated with the trivialization \(\mathbf{e}_{i}\) (resp. \(\mathbf{e}_{i}')\).
.\end{lem}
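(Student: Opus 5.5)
We prove the claim one edge at a time, looking only at the generic point of the image divisor; this reduces everything to a computation in a discrete valuation ring. Fix the edge \(\mathfrak{s} := W \to V\). Let \(D := \sigma(W) \subset V\) be the prime Weil divisor it defines, and \(D' := \sigma'(W') \subset V'\) the corresponding divisor upstairs.

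By the construction of Section~\ref{sec:constructionstrat}, the vertical map \(V' \to V\) induced by \(p\) is finite and dominant between normal varieties, and \(D'\) lies over \(D\). Consequently the local ring \(\mathcal{O}_{V', D'}\) is a discrete valuation ring dominating the discrete valuation ring \(\mathcal{O}_{V, D}\). Let \(N_{\mathfrak{s}'} \geq 1\) be the ramification index of this extension, that is, the integer with \(v_{D'}(\pi) = N_{\mathfrak{s}'}\) for a uniformizer \(\pi\) of \(\mathcal{O}_{V,D}\). Equivalently, \(v_{D'}|_{K(V)} = N_{\mathfrak{s}'} \, v_{D}\). The key point is that \(N_{\mathfrak{s}'}\) depends only on the pair of divisors and not on the index \(i\).

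Now fix \(i\). On \(V\), the trivialization attached to \(\mathbf{e}_{i}\) near the generic point of \(D\) is a rational section \(e\) of \(L_{i}^{\otimes d_{i}}|_V\), with multiplicity \(m_i^{\mathfrak{s}} = v_D(e)\) computed against a local frame \(\lambda\) of the line bundle at the generic point of \(D\). The trivialization \(\mathbf{e}_{i}'\) on \(V'\) is, by construction, the pullback of \(e^{\otimes d_{i}'}\). Since \(p^{\ast}\lambda^{\otimes d_i'}\) is again a local frame at the generic point of \(D'\), we get \(\mathbf{e}_i' = p^{\ast}(g^{d_i'})\,p^{\ast}\lambda^{\otimes d_i'}\), where \(g = e/\lambda\), and hence
\[
v_{D'}(\mathbf{e}_i') = d_i' \, N_{\mathfrak{s}'} \, v_D(e).
\]

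Finally we must reconcile this with the normalizations. In the marking conventions one divides the raw multiplicity by the denominator, giving \(d_i' N_{\mathfrak{s}'} m_i^{\mathfrak{s}} / (d_i d_i') = N_{\mathfrak{s}'} m_i^{\mathfrak{s}}/d_i\). So the statement holds for the normalized multiplicities \(m/d\); in that form the factor \(d_i'\) cancels and the constant \(N_{\mathfrak{s}'}\) is uniform in \(i\). If instead the raw integers are intended, one first replaces every \(d_i'\) by their common value \(\mathrm{lcm}(d_1', \dotsc, d_r')\), which is allowed by Remark~\ref{rem:powertriv}.

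The main obstacle is this bookkeeping of the extra powers \(d_i'\), which a priori depend on \(i\); the lcm normalization, or passing to normalized markings, removes it. A secondary point to check is that the frames compose correctly along the chain of intermediate normalizations, but this only uses the fact that pullbacks of local frames remain local frames.
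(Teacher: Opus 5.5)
Your argument is correct and is essentially the paper's proof: both pass to the extension of discrete valuation rings \(\mathcal{O}_{V,\sigma(W)} \hookrightarrow \mathcal{O}_{V',\sigma'(W')}\) and use that its ramification index \(N_{\mathfrak{s}'}\) rescales every valuation by the same factor, independently of \(i\). Your bookkeeping of the factors \(d_i'\) is a genuine clarification that the paper skips. If the \(m_i\) are read as raw multiplicities, the two sides differ by \(d_i'\), whereas for the normalized markings \(m/d\) the identity holds with the uniform constant \(N_{\mathfrak{s}'}\), and these normalized markings are what the comparison in Lemma~\ref{lem:upsilonred} actually needs.
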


\begin{proof}
	Let \(A := \mathcal{O}_{V, \sigma(W)}\) (resp. \(A' := \mathcal{O}_{V', \sigma'(W')}\)) be the local ring of \(V\) at the generic point of the image of \(V\) (resp. of \(V'\) at the image of \(W'\)). Since \(V\) and \(V'\) are normal, these rings are discrete valuation rings. Let \(t \in \mathfrak{m}_{A}\) (resp. \(t' \in \mathfrak{m}_{A'}\)) be a uniformizing element. Using the injection \(A \hookrightarrow A'\), we can write \(t = u (t')^{N_{\mathfrak{s}'}}\) with \(u \in A^{\times}\) and \(N_{\mathfrak{s}'} \in \mathbb{N}_{>0}\). In particular, for any \(g \in \mathrm{Frac}(A)\), we have \(\mathrm{val}_{\mathfrak{m}'}(g) = N_{\mathfrak{s'}}\, \mathrm{val}_{\mathfrak{m}}(g)\), uniformly in \(g\). This gives the result.
\end{proof}

\begin{lem} \label{lem:upsilonred}
		\smallskip

	For each \(j \in \llbracket 0, n\rrbracket\), let \(\upsilon'_{[\leq j]}\) be the function on \(\Delta_{\underline{a}}\) constructed from \(\Sigma'\) and the \(\frac{1}{d_{i} d_{i}'} \mathbf{e}_{i}'\), following Definition~\ref{defiupsilon}.

	Then we have the equality
	\[
		\upsilon_{[\leq j]}' = \mathrm{deg}(p) \; \upsilon_{[\leq j]}.
	\]
\end{lem}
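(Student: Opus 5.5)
We will prove the identity path by path, in the spirit of the proof of Proposition~\ref{prop:coveringdegree}, by induction on \(\dim X\). Fix \(t = (t_{1}, \dotsc, t_{r}) \in \Delta_{\underline{a}}\). By Definition~\ref{defiupsilon}, \(\upsilon_{[\leq j]}(t)\) is a sum over root-to-leaf paths \(\sigma\) of \(\mathcal{T}\) of \(C_{\sigma}(t)\deg_{\mathbbm{k}}(V_{\sigma})\). Here each edge \(\mathfrak{s}\) carries the marking \(\mu^{\mathfrak{s}}(t) := \sum_{i} t_{i} m_{i}^{\mathfrak{s}}/d_{i}\), and \(\sigma\) is counted when at most \(j\) of these markings are negative. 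On \(X'\) the edges carry \(\mu^{\mathfrak{s}'}(t) := \sum_{i} t_{i} m_{i}^{\mathfrak{s}'}/(d_{i}d_{i}')\).

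The first step is to relate the markings edge by edge. The multiplicity \(m_{i}^{\mathfrak{s}'}\) is computed from the pulled-back trivialization \(p^{\ast}\mathbf{e}_{i}^{\otimes d_{i}'}\). Lemma~\ref{lem:BGcovering} then gives \(m_{i}^{\mathfrak{s}'} = N_{\mathfrak{s}'}\, d_{i}'\, m_{i}^{\mathfrak{s}}\), with \(N_{\mathfrak{s}'}\) independent of \(i\). Hence
\[
\mu^{\mathfrak{s}'}(t) = N_{\mathfrak{s}'}\, \mu^{\mathfrak{s}}(t),
\]
and since \(N_{\mathfrak{s}'} > 0\), the signs agree. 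Consequently the index of a path in \(\mathcal{T}'\) equals the index of its image path in \(\mathcal{T}\). This uniformity in \(i\) is the crucial point: without it, the combined marking \(\sum_{i} t_{i} m_{i}/d_{i}\) on \(X'\) could change sign relative to the one on \(X\).

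Next, we run the induction on the root. Let \(W_{1}, \dotsc\) be the components of \(X_{n-1}\), and let \(W'_{k}\) be the components of \(X'_{n-1}\), grouped according to which \(W_{l}\) they dominate. The restriction \(W_{k}' \to W_{l}\) is again finite, and the induced stratifications and trivializations on \(W_{k}'\) are exactly those produced by the construction of Section~\ref{sec:constructionstrat} applied to \(W_{k}' \to W_{l}\). The induction hypothesis therefore gives \(\upsilon'_{W_{k}',[\leq j]} = \deg(W_{k}'/W_{l})\,\upsilon_{W_{l},[\leq j]}\), both for \(j\) and for \(j-1\). As in Lemma~\ref{lemformulainduct}, we have
\[
\upsilon'_{[\leq j]}(t) = \sum_{k} \mu^{k}(t)\, \upsilon'_{W_{k}',[\leq j - \epsilon_{k}]}(t),
\]
where \(\mu^{k}(t)\) is the marking of the root edge \(W_{k}' \to X_{n}'\) and \(\epsilon_{k} = 1\) if \(\mu^{k}(t) < 0\), \(\epsilon_{k} = 0\) otherwise. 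By the first step, the shift \(\epsilon_{k}\) is the same as the one for the component \(W_{l}\) it dominates. It then remains to check that
\[
\sum_{k \to l} N_{k} \deg(W_{k}'/W_{l}) = \deg(p).
\]
This is the degree formula \(\sum e_{k} f_{k} = [\mathrm{Frac}(A'):\mathrm{Frac}(A)]\) for the finite extension of normal local domains. It is also exactly the content of Lemma~\ref{lem:projectionformula}(b), applied to a single uniformizer.

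Finally, the base case \(\dim X = 0\) is the equality \(\deg_{\mathbbm{k}}[X'] = \deg(p)\deg_{\mathbbm{k}}[X]\), as in Proposition~\ref{prop:coveringdegree}. The main obstacle is the bookkeeping in the first step: the fractional normalizations \(d_{i}d_{i}'\) must cancel against the power \(d_{i}'\), so that \(N_{\mathfrak{s}'}\) really is independent of \(i\). A secondary point is that \(\deg(p)\) equals \(\deg(X_{n}'/X_{n})\), and that the fiber-product normalization yields precisely the components dominating each \(W_{l}\). The latter was already established before Lemma~\ref{lem:stratumfinite}.
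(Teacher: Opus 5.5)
Your proof is correct and follows essentially the paper's route. Lemma~\ref{lem:BGcovering} gives a uniform positive factor \(N_{\mathfrak{s}'}\) relating the markings, so path indices are preserved, and what remains is the projection-formula degree count \(\sum_{k} N_{k}\deg(W_k'/W_l) = \deg(p)\), run by induction as in Proposition~\ref{prop:coveringdegree}. The only difference is organizational: the paper first fixes a path \(\sigma\) and reduces to the index-free identity \(\sum_{\sigma' \to \sigma} C_{\sigma'}\deg(V_{\sigma'}) = \deg(p)\, C_{\sigma}\deg(V_{\sigma})\), whereas you carry the index shift \(\epsilon_k\) through a single root-decomposition induction, which is if anything more explicit than the paper's sketch.
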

\begin{proof}
	The proof will essentially follow from Proposition~\ref{prop:coveringdegree}, but we need to do some clean-up first.
	\medskip

\noindent
{\em Step 1. Reduction.} Fix some real numbers \(t_{1}, \dotsc, t_{r} \in \mathbb{R}\). Let \(\sigma\) (resp. \(\sigma'\)) be a root-to-leaf path in \(\mathcal{T}_{\Sigma}\) (resp. \(\mathcal{T}_{\Sigma}'\)), and assume that \(\sigma'\) is sent onto \(\sigma\) by the natural morphism of trees associated with the diagram~\eqref{eq:diagramproj}. Mark \(\sigma\) (resp. \(\sigma'\)) with the real numbers
	\[
	t_{1} \frac{m_{1}^{\mathfrak{s}}}{d_{1}}
	+ 
	\dotsc 
	+ 
	t_{r} \frac{m_{r}^\mathfrak{s}}{d_{r}}
	\quad
	\quad
	\left(
	\text{resp.} \;
	t_{1} \frac{m_{1}^{\mathfrak{s'}}}{d_{1}}
	+ 
	\dotsc 
	+ 
	t_{r} \frac{m_{r}^\mathfrak{s'}}{d_{r}}
	\right).
	\]
	Remark now that the second number is equal to \(N_{\mathfrak{s}'}\) times the first one, and hence has the same sign. Since this is true for any \(\mathfrak{s}\), this implies that \(\sigma\) and \(\sigma'\) have the same index. We thus see from Definition~\ref{defiupsilon} that \(\sigma\) appear in the sum defining \(\upsilon_{[\leq j]}\) if and only \(\sigma'\) appears in the sum defining \(\upsilon'_{[\leq j]}\).
	
	\medskip
	Writing \(C_{\sigma'}\) for the product of the multiplicities of \(\sigma'\), this implies that:
	\begin{align*}
		\upsilon_{[\leq j]}'(t_{1}, \dotsc, t_{r}) & = \sum_{\mathrm{index}(\sigma') \leq j} C_{\sigma'}\; \mathrm{deg}_{\mathbbm{k}}(V_{\sigma'} )\\
		& = \sum_{\mathrm{index}(\sigma) \leq j} \left( \sum_{\sigma' \; \text{dominating} \; \sigma} C_{\sigma'}\; \mathrm{deg}_{\mathbbm{k}}(V_{\sigma'})\right)
	\end{align*}

	We will then have the result if we can prove that
	\[
\sum_{\sigma' \; \text{dominating} \; \sigma} C_{\sigma'}\; \mathrm{deg}_{\mathbbm{k}}(V_{\sigma'})
=
	\mathrm{deg}(p)\, C_{\sigma}\; \mathrm{deg}_{\mathbbm{k}}(V_{\sigma}).
	\]
	
	{\em Step 2. End of proof.} Note that the index of \(\sigma\) does not play any role in the previous formula. Also, we may shrink \(X\) around the image of the point at the leaf of \(\sigma\), to assume that \(X_{0}\) contains only one point. For each \(i \in \llbracket 1, r \rrbracket\), let \(\underline{\Sigma}_{i} := (\Sigma, \frac{1}{d_{i}}\mathbf{e}_{i})\) (resp. \(\underline{\Sigma}'_{i} := (\Sigma', \frac{1}{d_{i} d_{i}'}\mathbf{e}'_{i})\)). In this case, we see right away that we have
	\[
	C_{\sigma}\, \mathrm{deg}_{\mathbbm{k}}(V_{\sigma})
	=
	\mathrm{deg}(\sum_{1 \leq i \leq r} t_{i} \; c_{1}(L_{i}, \underline{\Sigma}_{i})_{[\leq n]})^{n}
	\]
	where the degree is extended by \(\mathbb{R}\)-linearity. On the other hand, the left hand side is now the sum over all paths \(\sigma'\), so it equal to
	\[
		\sum_{\sigma'} C_{\sigma'}\, \mathrm{deg}_{\mathbbm{k}}(V_{\sigma'})
		=
		\mathrm{deg}(\sum_{1 \leq i \leq r} t_{i} \; c_{1}(N_{i}, \underline{\Sigma}_{i}')_{[\leq n]})^{n}
	\]

	Expanding both sides gives the result with the same proof as in Proposition~\ref{prop:coveringdegree}.
\end{proof}

The next lemma is the last piece that will allow us to pull-back our data to \(X'\).

\begin{lem} \label{lem:reduction}
	Let \(j \in \llbracket 0, n \rrbracket\). Then, for \(m \in \mathbb{N}\) divisible enough and going to infinity, we have the equality
	\begin{align*}
		h^{j}(X', S^{m}(N_{1}^{(a_{1})} \oplus \dotsc \oplus N_{r}^{(a_{r})}))
		 =
		\deg(p)\, & h^{j}(X, S_{H}^{m}(L_{1}^{(a_{1})} \oplus \dotsc \oplus L_{r}^{(a_{r})}))
		\\
		& + O(m^{r + n-1}).
	\end{align*}
\end{lem}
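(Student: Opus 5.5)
We compare the two sides summand by summand. For \(m\) divisible enough, Definition~\ref{defi:symmetric} gives
\[
	S_{H}^{m}(L_{1}^{(a_{1})} \oplus \dotsc \oplus L_{r}^{(a_{r})})
	=
	\bigoplus_{a_{1} b_{1} + \dotsc + a_{r} b_{r} = m} \Lambda_{b},
\]
where \(\Lambda_{b} \in H\) is the standard line bundle lifting \(L_{1}^{\otimes b_{1}} \otimes \dotsc \otimes L_{r}^{\otimes b_{r}}\). On the other side, \(S^{m}(N_{1}^{(a_{1})} \oplus \dotsc \oplus N_{r}^{(a_{r})}) = \bigoplus_{b} N^{b}\), with \(N^{b} := N_{1}^{\otimes b_{1}} \otimes \dotsc \otimes N_{r}^{\otimes b_{r}}\). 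Both sums run over the same index set \(\{b \in \mathbb{N}^{r} : \sum_{i} a_{i} b_{i} = m\}\), which has \(O(m^{r-1})\) elements. It therefore suffices to show
\[
	h^{j}(X', N^{b}) = \deg(p)\, h^{j}(X, \Lambda_{b}) + O(|b|^{n-1}),
\]
with a constant uniform in \(b\). Summing over \(b\) then gives an error of \(O(m^{r-1} \cdot m^{n-1}) \subset O(m^{r+n-1})\).

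\emph{Step 1: compare \(N^{b}\) with \(p^{\ast}\Lambda_{b}\).} Since \(p^{\ast} L_{i}\) equals the image of \(N_{i}\) in \(\mathrm{Pic}(X')_{\mathbb{Q}}\), the line bundles \(N^{b}\) and \(p^{\ast} \Lambda_{b}\) agree in \(\mathrm{Pic}(X')_{\mathbb{Q}}\). Hence \(\tau_{b} := N^{b} \otimes p^{\ast}\Lambda_{b}^{-1}\) lies in the torsion subgroup \(T(X')\).

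The key point is that \(\tau_{b}\) ranges over a finite set. Both \(b \mapsto N^{b}\) and \(b \mapsto \Lambda_{b}\) are restrictions of group homomorphisms on the lattice \(\Lambda_{0}\) (the second through \(H \cong \Lambda_{L}\), after translating by the fixed element \(L_{1}^{\otimes m_{1}}\) as in the proof of Proposition~\ref{prop:condgroup}). Consequently \(\tau_{b}\) is an affine map from \(\Lambda_{0}\) into \(T(X')\). Its linear part lands in a finitely generated torsion group, which is finite. Since the relevant \(b\) lie in finitely many cosets, \(\tau_{b}\) takes finitely many values \(\tau \in F \subset T(X')\).

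\emph{Step 2: remove the torsion twist.} We need \(h^{j}(X', \tau \otimes p^{\ast}\Lambda_{b}) = h^{j}(X', p^{\ast}\Lambda_{b}) + O(|b|^{n-1})\) uniformly for \(\tau \in F\). A torsion line bundle is numerically trivial. This is the step I expect to be the main obstacle, because there is no Riemann–Roch available for individual \(h^{j}\).

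The plan is to pick a fixed effective Cartier divisor \(G\) on \(X'\) such that both \(\tau(G)\) and \(\tau^{-1}(G)\) have nonzero sections; this is possible for all \(\tau \in F\) simultaneously by taking \(G\) sufficiently ample. From a nonzero section of \(\tau(G)\) we get an injection \(p^{\ast}\Lambda_{b} \hookrightarrow \tau \otimes p^{\ast}\Lambda_{b}(G)\) whose cokernel is supported on a fixed divisor. Similarly, the divisor \(G\) gives an injection \(\tau \otimes p^{\ast}\Lambda_{b} \hookrightarrow \tau \otimes p^{\ast}\Lambda_{b}(G)\) with cokernel supported on \(G\). The cohomology of the cokernels is bounded by \(O(|b|^{n-1})\), uniformly in \(b\), by the Morse inequalities of Theorem~\ref{thmmorse} applied on the fixed divisor (the estimate is for tensor products of finitely many fixed line bundles with linear exponents). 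Comparing the long exact sequences then yields the uniform comparison.

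\emph{Step 3: push down to \(X\).} Since \(p\) is finite, the projection formula gives
\[
	h^{j}(X', p^{\ast}\Lambda_{b}) = h^{j}(X, p_{\ast}\mathcal{O}_{X'} \otimes \Lambda_{b}).
\]
On a dense open subset, \(p_{\ast}\mathcal{O}_{X'}\) contains \(\mathcal{O}_{X}^{\oplus \deg(p)}\) with torsion-free quotient zero generically; more precisely, choose maps \(\mathcal{O}_{X}(-G_{0})^{\oplus \deg(p)} \to p_{\ast}\mathcal{O}_{X'} \to \mathcal{O}_{X}(G_{0})^{\oplus \deg(p)}\) that are isomorphisms generically. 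All kernels and cokernels are then supported in dimension \(\le n-1\) and are independent of \(b\). Tensoring with \(\Lambda_{b}\) and using the same \(O(|b|^{n-1})\) bounds for sheaves supported on lower-dimensional subschemes (Lemma~\ref{lem:compasympt}-type estimates), we obtain \(h^{j}(X', p^{\ast}\Lambda_{b}) = \deg(p)\, h^{j}(X, \Lambda_{b}) + O(|b|^{n-1})\). Combining this with Steps 1 and 2 and summing over \(b\) proves the lemma.
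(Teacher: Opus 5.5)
Your proposal is correct and follows the same route as the paper. You compare summand by summand; the discrepancies \(\tau_b\) range over a finite set of torsion line bundles because \(\Lambda_0\) is finitely generated; you then remove the torsion twist and compare \(h^j(X', p^{\ast}\Lambda_b)\) with \(\deg(p)\, h^j(X, \Lambda_b)\). The paper does these last two steps by citing Lemma~\ref{lem:asymptseveral}~(3) and Lemma~\ref{lem:modificationseveral}, which you reprove by hand with the same exact-sequence arguments. The only imprecision is your citation in Step~2: the bound \(O(|b|^{n-1})\), uniform in the multi-index \(b\), for sheaves supported in dimension \(\le n-1\) is Lemma~\ref{lem:asymptseveral}~(2), not Theorem~\ref{thmmorse}, which only controls powers of a single fixed line bundle.
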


Before starting the proof, let us take the time to show that the asymptotic growth of the truncated Euler characteristic in Theorem~\ref{thmineqintegral}, does not depend on the choice of \(H\), which is also a good sanity check.

\begin{lem} \label{lem:indepH}
	Let \(j \in \llbracket 0, n \rrbracket\). Let \(H, H'\) be as in the conclusion of Proposition~\ref{prop:condgroup}. Then for \(m \in \mathbb{N}\) divisible enough and going to infinity, we have the equality
	\begin{align*}
		h^{j}(X, S_{H}^{m}(L_{1}^{(a_{1})} \oplus \dotsc \oplus L_{r}^{(a_{r})}))
		= 
		h^{j}(X, S_{H'}^{m}(L_{1}^{(a_{1})} \oplus \dotsc \oplus L_{r}^{(a_{r})}))
		+ O(m^{r + n-1}).
	\end{align*}
\end{lem}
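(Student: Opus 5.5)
The plan is to compare the two direct sums term by term. Fix \(m\) divisible enough, so that Proposition~\ref{prop:condgroup} applies to both \(H\) and \(H'\). For each tuple \((b_{1}, \dotsc, b_{r}) \in \mathbb{N}^{r}\) with \(\sum_{i} a_{i} b_{i} = m\), the \(\mathbb{Q}\)-line bundle \(L_{1}^{\otimes b_{1}} \otimes \dotsc \otimes L_{r}^{\otimes b_{r}}\) lifts to a unique element \(M_{b} \in H\) and a unique element \(M'_{b} \in H'\). These two lifts have the same image in \(\mathrm{Pic}(X)_{\mathbb{Q}}\), so their difference \(T_{b} := M'_{b} \otimes M_{b}^{-1}\) is a torsion line bundle.

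The first key point is that the \(T_{b}\) range over a \emph{finite} set independent of \(m\). To see this, fix bases \(\mathcal{B} = (h_{1}, \dotsc, h_{s})\) of \(H\) and \((h'_{1}, \dotsc, h'_{s})\) of \(H'\), both lifting the same basis of \(\Lambda_{L}\). Since \(H \to \Lambda_{L}\) and \(H' \to \Lambda_{L}\) are isomorphisms, we may take \(h'_{k} = h_{k} \otimes \tau_{k}\) with \(\tau_{k} \in T(X)\). Writing the image of \(M_{b}\) in \(\Lambda_{L}\) as \(\sum_{k} c_{k} \bar h_{k}\), we get \(M_{b} = \bigotimes h_{k}^{c_{k}}\) and \(M'_{b} = \bigotimes (h_{k}\tau_{k})^{c_{k}}\). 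Hence \(T_{b} = \bigotimes_{k} \tau_{k}^{c_{k}}\) lies in the finite subgroup \(T_{0} \subset T(X)\) generated by \(\tau_{1}, \dotsc, \tau_{s}\). Call this set \(\{\tau^{(1)}, \dotsc, \tau^{(N)}\}\).

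Next I would show that for each fixed torsion line bundle \(\tau\), and each \(j\), one has \(h^{j}(X, M \otimes \tau) = h^{j}(X, M) + O(m^{n-1})\), uniformly for \(M = M_{b}\) with \(|b| = O(m)\). I would argue as in the cyclic-cover/Bloch–Gieseker step. Let \(e = \mathrm{ord}(\tau)\), choose a nonzero effective Cartier divisor \(A\) (say very ample, independent of \(m\)) with sections of \(\mathcal{O}(A) \otimes \tau^{\pm 1}\), and compare via the exact sequences
\[ 0 \to M \otimes \tau \to M \otimes \mathcal{O}(A) \to M \otimes \mathcal{O}(A)|_{A'} \to 0 \]
with \(A' \in |\mathcal{O}(A) \otimes \tau^{-1}|\), and similarly for \(M\) itself. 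The restricted terms live on the divisors \(A, A'\), of dimension \(n-1\). They are bounded by \(O(m^{n-1})\) uniformly, because the \(M_{b}\) are tensor products of powers of a fixed finite set of line bundles with exponents \(O(m)\); I would get this uniform bound via Fujita/Serre-type bounds \(h^{j}(Y, \bigotimes h_{k}^{c_{k}} \otimes G) = O(\max|c_{k}|^{\dim Y})\). Both \(h^{j}(M \otimes \tau)\) and \(h^{j}(M)\) then differ from \(h^{j}(M \otimes \mathcal{O}(A))\) by \(O(m^{n-1})\), which gives the claim. This uniform-in-\(b\) estimate is the main obstacle: one must ensure the constants in \(O(m^{n-1})\) do not depend on the tuple \(b\), only on \(\max_{k} |c_{k}| = O(m)\).

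Finally, sum over the \(O(m^{r-1})\) tuples \(b\) with \(\sum a_{i} b_{i} = m\). Each term differs by \(O(m^{n-1})\), and there are finitely many \(\tau\)'s. The total error is \(O(m^{r-1}) \cdot O(m^{n-1}) = O(m^{n+r-2})\), which is in particular \(O(m^{r+n-1})\), as required.
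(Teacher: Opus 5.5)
Your proposal is correct and follows essentially the same route as the paper. Both compare the $H$- and $H'$-lifts term by term, observe that they differ by torsion line bundles ranging in a finite set, use a uniform $O(m^{n-1})$ estimate for twisting by such a bundle, and sum over the $O(m^{r-1})$ tuples. The differences are minor: the paper gets the uniform estimate directly from Lemma~\ref{lem:asymptseveral}~(3) (a torsion line bundle has generic rank one) rather than from your divisor-twisting exact sequences, and your explicit basis argument for the finiteness of the torsion differences fills in a step the paper only asserts.
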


\begin{proof}
	Let us first remark that since \(H\) and \(H'\) are free finitely generated subgroups of \(\mathrm{Pic}(X)\) isomorphic to the same image in \(\mathrm{Pic}(X)_{\mathbb{Q}}\), there exists a {\em finite} family \(\mathcal{S} \subset \mathrm{Pic}(X)\) of torsion elements such that for any \(L \in H\) and \(L' \in H'\) with the same image in \(\mathrm{Pic}(X)\), we have \(L^{-1} \otimes L \in \mathcal{S}\).
	\medskip

\noindent
{\em Step 1. Comparison between the pieces of the two sums.} Let us now introduce the following notation: if \((b_{1}, \dotsc, b_{r})\) are such that \(\sum_{i} a_{i} b_{i} = m \in \mathbb{N}\), we write
	\[
		[L_{1}^{\otimes b_{1}} \otimes \dotsc \otimes L_{r}^{\otimes b_{r}}]_{H}
	\]
	for the unique preimage in \(H \subset \mathrm{Pic}(X)\) of the element in brackets. We use a similar notation for \(H'\). By what has been said just above, for any such \(\underline{b} = (b_{1}, \dotsc, b_{r})\), we have
	\[
	[L_{1}^{\otimes b_{1}} \otimes \dotsc \otimes L_{r}^{\otimes b_{r}}]_{H}
	=
	[L_{1}^{\otimes b_{1}} \otimes \dotsc \otimes L_{r}^{\otimes b_{r}}]_{H'} \otimes T_{\underline{b}}
	\]
	for some torsion element \(T_{\underline{b}} \in \mathcal{S}\).
	\medskip

\noindent
	{\em Step 2. The torsion elements do not modify the asymptotic expansion.} Remark first the the function \((b_{1}, \dotsc, b_{r}) \in \Lambda_{0} \mapsto \sum_{j} a_{j} b_{j}\) induces a linear  form \(m : H \to \mathbb{R}\). We may apply Lemma~\ref{lem:asymptseveral} (3) with \(L_{1}, \dotsc, L_{r}\) being replaced by generators of the inverse image in \(H\) of \(\Lambda_{L}\), and \(\mathcal{F}\) replaced by any of the \(T \in \mathcal{S}\).	Since this set is finite, we may take a uniform constant in the \(O\)-term, which gives
	\begin{align*}	
	h^{j}(X, [L_{1}^{\otimes b_{1}} \otimes \dotsc \otimes L_{r}^{\otimes b_{r}}]_{H})
	& =
		h^{j}(X, [L_{1}^ {\otimes b_{1}} \otimes \dotsc \otimes L_{r}^{\otimes b_{r}}]_{H'} \otimes T_{\underline{b}}) \\
		&  = h^{j} (X, [L_{1}^ {\otimes b_{1}} \otimes \dotsc \otimes L_{r}^{\otimes b_{r}}]_{H'}) 
		+ O(m^{n-1})
	\end{align*}

\noindent
{\em Step 3. We take the sum.} Summing everything as \((b_{0}, \dotsc, b_{r})\) runs in \(\Lambda_{0}\), we get the following:
	\begin{align*}
	h^{j}(X, S_{H}^{m}(L_{1}^{(a_{1})} \oplus \dotsc \oplus L_{r}^{(a_{r})}))
		& 
		= 
		\sum_{\sum_{j} a_{j} b_{j} = m} 
		h^{j}(X, [L_{1}^{\otimes b_{1}} \otimes \dotsc \otimes L_{r}^{\otimes b_{r}}]_{H}) \\
		&
		=
		\sum_{\sum_{j} a_{j} b_{j} = m} 
		\left( h^{j}(X, [L_{1}^{\otimes b_{1}} \otimes \dotsc \otimes L_{r}^{\otimes b_{r}}]_{H'}) + O(m^{n-1}) \right) \\
		& 
		=
		\sum_{\sum_{j} a_{j} b_{j} = m} 
		 h^{j}(X, [L_{1}^{\otimes b_{1}} \otimes \dotsc \otimes L_{r}^{\otimes b_{r}}]_{H'}) + O(m^{n+r-1}) \\
		&
		= h^{j}(X, S_{H'}^{m}(L_{1}^{(a_{1})} \oplus \dotsc \oplus L_{r}^{(a_{r})}) + O(m^{n+r-1})
	\end{align*}

	To pass from the second to the third line, note that the constant in the \(O\) term is independent of \((b_{1}, \dotsc, b_{r})\) and of course of \(m\).
\end{proof}

The proof of the reduction step is quite similar :

\begin{proof}[Proof of Lemma~\ref{lem:reduction}] {\em Step 1. Torsion elements.} Again, let us denote by \([L_{1}^{\otimes b_{1}} \otimes \dotsc \otimes L_{r}^{\otimes b_{r}}]_{H}\) the unique preimage of the corresponding element in \(H\). By our choice of \(N_{i}\), note that for any \((b_{1}, \dotsc b_{r}) \in \Lambda_{0}\), note that the line bundle
	\[
		p^{\ast} [L_{1}^{\otimes b_{1}} \otimes \dotsc \otimes L_{r}^{\otimes b_{r}}]_{H}
		\otimes
		(N_{1}^{\otimes b_{1}} \otimes \dotsc \otimes N_{r}^{\otimes b_{r}})^{-1}
	\]
	is torsion on \(X'\). Since \(\Lambda_{0}\) is finitely generated, there exists a {\em finite} family of torsion line bundles \(\mathcal{S} \subset \mathrm{Pic}(X')\) that contains all such differences.
	Apply now Lemma~\ref{lem:asymptseveral} (3), this time on \(X'\), with the \(L_{i}\) replaced by the \(N_{i}\), and \(\mathcal{F}\) replaced by any member of the finite family \(\mathcal{S}\). Taking a constant uniform in \(\mathcal{S}\) yields:
	\[
		h^{j}(X', N_{1}^{\otimes b_{1}} \otimes \dotsc \otimes N_{r}^{\otimes b_{r}})
		=
		h^{j}(X', p^{\ast} [L_{1}^{\otimes b_{1}} \otimes \dotsc \otimes L_{r}^{\otimes b_{r}}]_{H}) + O(m^{n-1}).
	\]
	for any \((b_{0}, \dotsc, b_{r}) \in \Lambda_{0}\).

	\medskip
	\noindent {\em Step 2. Comparisons between \(X\) and \(X'\).} We apply Lemma~\ref{lem:modificationseveral}, with the \(L_{i}\) being replaced by generators of the inverse image of \(\Lambda_{L}\) in \(H\) . This gives
	\[
	h^{j}(X, [L_{1}^{\otimes b_{1}} \otimes \dotsc \otimes L_{r}^{\otimes b_{r}}]_{H})
	=
	h^{j}(X', p^{\ast} [L_{1}^{\otimes b_{1}} \otimes \dotsc \otimes L_{r}^{\otimes b_{r}}]_{H}) + O(m^{n-1}).
	\]
	Again, we took \(m : H \to \mathbb{R}\) to be the function induced by \((b_{j}) \mapsto \sum_{j} a_{j} b_{j}\) on \(\Lambda_{0}\). We obtain
	\[
	h^{j}(X, [L_{1}^{\otimes b_{1}} \otimes \dotsc \otimes L_{r}^{\otimes b_{r}}]_{H})
	=
	h^{j}(X', N_{1}^{\otimes b_{1}} \otimes \dotsc \otimes N_{r}^{\otimes b_{r}})
	+
	O(m^{n-1})
	\]
	We finish the proof by summing on all \((b_{0}, b_{1}, \dotsc, b_{r})\) that satisfy \(\sum_{j} a_{j} b_{j} = m\), as in the proof of Proposition~\ref{lem:indepH}.
\end{proof}

Summarizing everything, we have proved the following reduction step.

\begin{prop} \label{prop:reductionBG} To prove Theorem~\ref{thmineqintegral}, it suffices to deal with the case where :
	\begin{enumerate}
		\item all \(L_{i}\) are standard line bundles on \(X\), each endowed with fractional trivializations \(\frac{1}{d_{i}}\mathbf{e}_{i}\) on \(\Sigma\);
		\item \(S^{m}_{H}\) has been replaced with the standard symmetric product \(S^{m}\).
	\end{enumerate}
\end{prop}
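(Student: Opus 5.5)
The plan is to assemble Lemmas~\ref{lem:BGnaive}, \ref{lem:upsilonred} and \ref{lem:reduction} into a comparison of both sides of Theorem~\ref{thmineqintegral} on \(X\) and on the Bloch--Gieseker cover \(X'\). Fix the finite morphism \(p : X' \to X\) and the standard line bundles \(N_{1}, \dotsc, N_{r}\) given by Lemma~\ref{lem:BGnaive}. Equip them with the induced stratification \(\Sigma'\) of Section~\ref{sec:constructionstrat} and the fractional trivializations \(\frac{1}{d_{i}d_{i}'}\mathbf{e}_{i}'\) constructed above. This puts us exactly in the situation of the two items of the proposition: the \(N_{i}\) are standard line bundles with fractional trivializations on \(\Sigma'\), and the symmetric product \(S^{m}(N_{1}^{(a_{1})} \oplus \dotsc \oplus N_{r}^{(a_{r})})\) is the usual one. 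Note that \(X'\) is again projective, since \(p\) is finite.

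Assume Theorem~\ref{thmineqintegral} holds in this special case, applied on \(X'\). It gives
\(\chi^{[i]}(X', S^{m}(\mathbf{N})) \leq \frac{\gcd(\underline{a})}{a_{1}\dotsc a_{r}}\binom{n+r-1}{r-1}\bigl[\int_{\Delta_{\underline{a}}} \upsilon'_{[\leq i]}\, dP\bigr]\frac{m^{n+r-1}}{(n+r-1)!} + o(m^{n+r-1})\).

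To transfer this to \(X\), write \(\chi^{[i]} = \sum_{j \leq i} (-1)^{i+j} h^{j}\) and apply Lemma~\ref{lem:reduction} term by term. Since only finitely many \(j\) occur, this gives
\(\chi^{[i]}(X', S^{m}(\mathbf{N})) = \deg(p)\, \chi^{[i]}(X, S^{m}_{H}(\mathbf{L})) + O(m^{n+r-2})\) (see below for the exponent).

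On the right hand side, Lemma~\ref{lem:upsilonred} gives \(\upsilon'_{[\leq i]} = \deg(p)\, \upsilon_{[\leq i]}\) pointwise on \(\Delta_{\underline{a}}\). The integrals therefore differ by the factor \(\deg(p)\). Dividing both sides by \(\deg(p) > 0\) yields the inequality of Theorem~\ref{thmineqintegral} on \(X\), with a new \(o(m^{n+r-1})\) term.

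The main point to check is that the error term in Lemma~\ref{lem:reduction} is genuinely negligible against \(m^{n+r-1}\). As stated, the lemma gives \(O(m^{r+n-1})\), but its proof sums an \(O(m^{n-1})\) error over the \(O(m^{r-1})\) tuples with \(\sum a_{j}b_{j} = m\), producing \(O(m^{n+r-2})\). I would use this sharper bound, and if necessary re-derive it from Steps~1--2 of that proof, whose constants are uniform in \(\underline{b}\) thanks to the finiteness of the torsion family \(\mathcal{S}\) and Lemma~\ref{lem:asymptseveral}. A second, minor point is the divisibility of \(m\). It must be divisible enough for \(S^{m}_{H}\) to be defined (Proposition~\ref{prop:condgroup}) and for the estimates on \(X'\) to hold, and one simply takes a common multiple of the two required integers. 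Finally, the choice of \(H\) is harmless by Lemma~\ref{lem:indepH}.
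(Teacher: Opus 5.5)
Your proposal is correct and is exactly the paper's argument: apply the special case on the Bloch--Gieseker cover \(p : X' \to X\) with the \(N_{i}\) and the fractional trivializations \(\frac{1}{d_{i}d_{i}'}\mathbf{e}_{i}'\) on \(\Sigma'\), then transfer back to \(X\) using Lemma~\ref{lem:upsilonred} and Lemma~\ref{lem:reduction}, which multiply both sides by the same factor \(\deg(p)\). Your remark on the error term is also right. As written, the \(O(m^{r+n-1})\) in Lemma~\ref{lem:reduction} would be of the same order as the main term, so one must use the \(O(m^{n+r-2})\) that its proof actually gives: an \(O(m^{n-1})\) error, uniform in \(\underline{b}\), summed over \(O(m^{r-1})\) tuples.
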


Indeed, if the result holds in this narrower generality, this will prove the result for \(X'\), endowed with the line bundles \(N_{i}\) and the fractional trivializations \(\frac{1}{d_{i} d_{i}'} \mathbf{e}_{i}\) on \(\Sigma'\). But by Lemma~\ref{lem:upsilonred} and \ref{lem:reduction}, the corresponding Morse inequalities for \(X\) and \(X'\) are related by the same factor \(\mathrm{deg}(p)\) on both sides. This gives the result for \(X\) as well.
\medskip

{\em In the following, we will then assume that we have made this reduction step.}

\section{Morse inequalities for a product of line bundles. Direct sums of line bundles}

After the reduction step of the previous section, there is another result we need in order to prove our main theorem: a variant of the Morse inequalities, valid for products of members of a given finite family of line bundles (see Proposition~\ref{prop:variantmorse}). Let us introduce some notation first. 
\medskip

\begin{setup} \label{setup:variant}
	Let \(X\) be a projective variety of dimension \(n\), and let \(\mathcal L\) be a finite set of line bundles on \(X\). Assume \(\Sigma\) is a stratification on \(X\), adapted to all \(L \in \mathcal L\). For each \(L \in \mathcal L\), we let \(\frac{1}{d_{L}} \mathbf e_{L}\) be a fractional trivialization of \(L\) on \(\Sigma\) (\(\mathbf{e}_{L}\) being a trivialization of \(L^{\otimes d_{L}}\)), and we let \(\underline{\Sigma} = ( \Sigma, (\frac{1}{d_{L}} \mathbf{e}_L)_{L \in \mathcal L} )\). 
\end{setup}

In the next definition, we introduce the quantity that will serve as leading coefficient for an upper bound on the numbers $\chi^{[i]} (X, L_1^{\otimes a_1} \otimes  \dotsc \otimes L_r^{\otimes a_r})$ as $a_1, \dotsc, a_r \longrightarrow + \infty$, where $L_1, \dotsc, L_r \in \mathcal L$ are arbitrary elements. The idea is to mimic the formula \eqref{eq:degreeindexes}, but allowing to choose markings for the edges arbitrarily among the ones provided by the \(L \in \mathcal{L}\), and finally taking the maximum for all such choices. 
\medskip

\begin{defi} \label{defileadingcoeff}  \begin{enumerate}[label=(\arabic*)]
	\item Let \(\mathcal T\) be the tree associated to \(\Sigma\), and let \(\mathcal{E}\) be the set of its edges. For all \(\mathfrak{s} \in \mathcal E$, we let \(\frac{1}{d_{L}} m_{L}^{\mathfrak{s}}\) be the marking of \(\mathfrak{s}\) provided by the trivialization \(\frac{1}{d_{L}} \mathbf{e}_L\). Now, for any mapping \(\phi : \mathcal {E} \longrightarrow \mathcal L\), we let \(\mathcal{T}_{\phi}\) be the {\em marked} tree where each edge \(\mathfrak{s}\) is labeled by the marking \(m_{\phi(\mathfrak{s})}^\mathfrak{s}\).
		\medskip

	\item Let \(l \in \llbracket 0, n \rrbracket\). For all \(\phi : \mathcal{E} \longrightarrow \mathcal{L}$, and any root-to-leaf path \(\sigma\) in \(\mathcal T\), we let \(C_{\sigma, \phi}\) be the product of the labels along the edges of \(\sigma\) in \(\mathcal T_{\phi}\). We let
\[
c(\phi, \underline{\Sigma})_{[ \leq i]} 
		= 
		\sum_{\mathrm{index} (\sigma) \leq  i} C_{\sigma, \phi}
		\;
		\deg_{\mathbbm{k}}(V_{\sigma})
\]
		where the sum runs among all complete paths in $\mathcal T_{\phi}$ with less than \(i\) negative labels, and \(V_{\sigma}\) denotes the \(0\)-dimensional variety labeling the leaf of \(\sigma\). 

We let
\begin{equation} \label{eq:defic}
		(-1)^i c(\mathcal L, \underline{\Sigma})_{[\leq i]} 
		= 
		\underset{\phi : \mathcal{E} \longrightarrow \mathcal L}{\max} 
		(-1)^i c(\phi, \underline{\Sigma})_{[\leq i]}.
\end{equation}
\end{enumerate}
\end{defi}

Before stating the result, let us introduce the following simplifying notation.

\begin{nota} Let \(\underline{l} = (l_{L})_{L \in \mathcal L}\) be a family of integers. We write \(\mathcal L^{\otimes \underline{l}} = \bigotimes_{L \in \mathcal L} L^{\otimes l_{L}}\). For all \(L \in \mathcal{L}\), we write \(\underline{\delta}_{L} = (\delta_{L, L'})_{L' \in \mathcal{L}}\), where \(\delta_{L, L'} = 1\) if \(L = L'\) and \(\delta_{L, L'} = 0\) otherwise.
\end{nota}
\medskip

The variant of the algebraic Morse inequalities can be stated as follows.

\begin{prop} \label{prop:variantmorse} With the notation of Setup~\ref{setup:variant}, let \(M\) be any line bundle on \(X\).
	\medskip

Then for any $i \in \llbracket 0, n \rrbracket$, any $m \in \mathbb N$, and any $\underline{l} = (l_L)_{L \in \mathcal L}$ such that $\sum_L l_L = m$, we have
	\[
	\chi^{[i]} (X, \mathcal L^{\otimes \underline{l}} \otimes M) 
	\leq 
	(-1)^i 
	c(\mathcal L, \underline{\Sigma})_{[\leq i]} \frac{m^n}{n!} + O(m^{n-1}).
	\] 
\end{prop}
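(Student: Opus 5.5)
We plan to mimic the proof of Proposition~\ref{propredmorse}: a telescopic induction on \(\dim X\), but with the increment \(L^{\otimes m} \to L^{\otimes(m+1)}\) replaced by \(\mathcal{L}^{\otimes \underline{l}} \to \mathcal{L}^{\otimes(\underline{l} + \underline{\delta}_{L})}\) for a single \(L \in \mathcal{L}\). First we reduce, as in Step~\ref{step:0}, to \(X\) normal with \(q = \mathrm{id}\). As in Lemma~\ref{lem:covertriv}, we then pass to a generically finite cover \(p : X' \to X\) on which every \(p^{\ast}L\) (for \(L \in \mathcal{L}\)) is trivialized by a \(d_{L}\)-th root of \(p^{\ast}e_{L}\), and on which the divisors \(D(e'_{L})\) decompose into adapted Cartier divisors plus exceptional parts. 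Since \(\mathcal{L}\) is finite, a single cover handles all of them at once: we take a composite of cyclic covers and then apply Lemma~\ref{lem:WeiltoCartier} to the union of all the components. We therefore prove the relative statement: for \(p\) of degree \(\delta\), the quantity \(\chi^{[i]}(X', p^{\ast}\mathcal{L}^{\otimes\underline{l}} \otimes M')\) is at most \(\delta(-1)^i c(\mathcal{L},\underline{\Sigma})_{[\leq i]} m^n/n! + O(m^{n-1})\), with the \(O\) uniform in \(\underline{l}\).

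For the induction step, given \(\underline{l}\) with \(\sum l_L = m\), we order a path \(\underline{0} = \underline{l}^{(0)}, \underline{l}^{(1)}, \dots, \underline{l}^{(m)} = \underline{l}\), each step adding some \(\underline{\delta}_{L_k}\). For each step we write \(p^{\ast}L_k = \mathcal{O}(A_{L_k} - B_{L_k})\) and apply exactly the exact-sequence argument of Step~\ref{step:boundAB} together with the dévissage of Steps~\ref{step:5}--\ref{step:6}. This bounds the increment of \(\chi^{[i]}\) by a sum over the divisors \(\widehat{D}'_j\) of \(\chi^{[i]}\) (for positive multiplicities) or \(\chi^{[i-1]}\) (for negative ones) of restrictions of \(p^{\ast}\mathcal{L}^{\otimes \underline{l}^{(k)}}\) twisted by finitely many fixed invertible sheaves, plus \(O(m^{n-2})\). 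The induction hypothesis on each \(D_{l}\), with the induced stratification and the finite family \(\mathcal{L}|_{D_l}\), bounds each such term by \(\deg(\widehat D'_j/D_l)(-1)^{\bullet} c(\mathcal{L}|_{D_l},\underline{\Sigma}_l)_{[\leq \bullet]} k^{n-1}/(n-1)! + O(k^{n-2})\), uniformly because all data come from a finite family. Combining with \eqref{eqsumproj1}, the increment at step \(k\) is at most
\[
\delta (-1)^i\Big(\sum_{m^{\mathfrak s}_{L_k}>0} \tfrac{m^{\mathfrak s}_{L_k}}{d_{L_k}} c(\mathcal{L}|_{D_l},\underline{\Sigma}_l)_{[\leq i]} + \sum_{m^{\mathfrak s}_{L_k}<0} \tfrac{m^{\mathfrak s}_{L_k}}{d_{L_k}} c(\mathcal{L}|_{D_l},\underline{\Sigma}_l)_{[\leq i-1]}\Big)\frac{k^{n-1}}{(n-1)!} + O(k^{n-2}).
\]

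The key combinatorial step is to show that this bracket is at most \((-1)^i c(\mathcal{L},\underline{\Sigma})_{[\leq i]}\). A choice \(\phi\) on \(\mathcal{T}\) is the same as a choice of label \(\phi(\mathfrak s)\) on each root edge together with a choice \(\phi_l\) on each subtree. Let \(\phi\) assign \(L_k\) to all root edges and the maximizer \(\phi_l\) of the relevant truncation level (\(i\) or \(i-1\), according to the sign of the root edge) on each subtree. This choice is legitimate because the subtrees are disjoint. Lemma~\ref{lemformulainduct}, in its path form (Proposition~\ref{propgraph}), then shows that \((-1)^i c(\phi,\underline{\Sigma})_{[\leq i]}\) equals the bracket. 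Hence the bracket is at most the maximum in \eqref{eq:defic}. Summing over \(k\), using \(\sum_{k\le m} k^{n-1}/(n-1)! = m^n/n! + O(m^{n-1})\), and noting that the starting term \(\chi^{[i]}(X', M')\) is \(O(1)\), gives the claim.

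The main obstacle is uniformity. The constants in the \(O(m^{n-2})\) terms from the induction must not depend on \(\underline{l}\) or on the chosen path. This is why we state and prove the inductive claim uniformly over all twists \(M'\) taken from a finite set and all \(\underline{l}\). Finiteness of \(\mathcal{L}\), of the dévissage sheaves, and of the covers ensures this.
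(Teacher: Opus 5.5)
Your proposal is correct and follows the paper's proof essentially step for step. The paper likewise passes to the relative statement (Proposition~\ref{propredmorsegen}), performs simultaneous cyclic covers and Weil-to-Cartier modifications for all $L \in \mathcal{L}$, and bounds each one-step increment $\underline{l} - \underline{\delta}_L \to \underline{l}$ by the exact-sequence, d\'evissage and induction-hypothesis argument. It then identifies the resulting bracket with the maximum of $(-1)^i c(\phi,\underline{\Sigma})_{[\leq i]}$ over $\phi$ sending every root edge to $L$ (Lemma~\ref{lem:value}), which is exactly your combinatorial step, and finally sums along a lattice path ending at $\underline{l}$.
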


To prove the result, we argue as in the proof of Theorem \ref{thmmorse}: it suffices to prove the seemingly more general proposition. 

\begin{prop} \label{propredmorsegen} Under the hypotheses of Proposition \ref{prop:variantmorse}, let $p: X' \longrightarrow X$ be a projective, generically finite morphism, and let $M'$ be any line bundle on $X'$. 

	Then for any $i \in \llbracket 0, n \rrbracket$, any $m \in \mathbb N$, and any $\underline{l} = (l_L)_{L \in \mathcal L}$ such that $\sum_L l_L = m$, we have
$$
\chi^{[i]} (X', p^\ast \mathcal L^{\otimes \underline{l}} \otimes M') 
	\leq 
	\deg (p)\;
	(-1)^i c(\mathcal L, \underline{\Sigma})_{[\leq j]} \frac{m^n}{n!} + O(m^{n-1}).
$$ 
\end{prop}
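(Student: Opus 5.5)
We prove Proposition~\ref{propredmorsegen} by induction on \(n = \dim X\), following the proof of Proposition~\ref{propredmorse} step by step. The point is that each telescoping step now increases only one of the exponents \(l_{L}\) by one. The uniformity of all \(O\)-constants in \(\underline{l}\) comes from the finiteness of \(\mathcal{L}\).

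\textbf{Reductions and base case.} Step~\ref{step:0} applies verbatim: we may assume \(X\) and \(X'\) are normal and that \(q = \mathrm{id}\), using Lemma~\ref{lem:compasympt} and Lemma~\ref{lemmodification}. Since \(\mathcal{L}\) is finite, we may simultaneously treat all \(L \in \mathcal{L}\):
\begin{enumerate}[label=(\alph*)]
\item one cyclic cover (Lemma~\ref{lem:cycliccover}, iterated) gives each \(p^{\ast}L\) a trivialization \(e'_{L}\) with \((e'_{L})^{\otimes d_{L}} = p^{\ast} e_{L}\);
\item Lemma~\ref{lem:WeiltoCartier} then writes every \(D(e'_{L})\) as \(\sum_{j} m'_{L,j} D'_{j} + E_{L} - F_{L}\), with one common family of Cartier divisors \(D'_{j}\) adapted to the components \(D_{l}\) of \(f_{n-1}(X_{n-1})\), and with \(E_{L}, F_{L}\) effective and \(p\)-exceptional.
\end{enumerate}
The base case \(n = 0\) is Step~\ref{step:initialization}: all bundles are trivial, and \(c(\mathcal{L}, \underline{\Sigma})_{[\leq 0]}^{0} = \deg_{\mathbbm{k}}X\).

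\textbf{Telescoping.} Order the increments: reach \(\underline{l}\) from \(0\) through a chain \(\underline{l}^{(0)} = 0, \underline{l}^{(1)}, \dotsc, \underline{l}^{(m)} = \underline{l}\), where \(\underline{l}^{(s+1)} = \underline{l}^{(s)} + \underline{\delta}_{L_{s}}\). For each step, write \(A_{L}\) and \(B_{L}\) as in \eqref{eq:defA} and \eqref{eq:defB} for \(L = L_{s}\). As in \eqref{eqtelescopic}, the increment of \(\chi^{[i]}\) is bounded by
\[
\chi^{[i]}\bigl(\mathcal{O}_{A_{L}} \otimes p^{\ast}\mathcal{L}^{\otimes \underline{l}^{(s+1)}} \otimes M'\bigr) + \chi^{[i-1]}\bigl(\mathcal{O}_{B_{L}} \otimes p^{\ast}\mathcal{L}^{\otimes \underline{l}^{(s)}} \otimes M'\bigr).
\]
Dévissage (Lemmas~\ref{lemfilt} and~\ref{lem:filtration}) reduces these terms to \(\chi^{[\bullet]}(\widehat{D}'_{j}, p^{\ast}\mathcal{L}^{\otimes \underline{l}^{(s)}} \otimes M'')\), for finitely many \(M''\), plus \(O(s^{n-2})\). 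The induction hypothesis on the varieties \(D_{l}\), with the restricted family \(q_{l}^{\ast}\mathcal{L}\) and the induced stratifications \(\underline{\Sigma}_{l}\), bounds each such term by
\[
\deg(\widehat{D}'_{j}/D_{l})\,(-1)^{\bullet} c(q_{l}^{\ast}\mathcal{L}, \underline{\Sigma}_{l})_{[\leq \bullet]}\,\frac{s^{n-1}}{(n-1)!} + O(s^{n-2}),
\]
with constants uniform in \(\underline{l}^{(s)}\). Using \eqref{eqsumproj1}, the \(s\)-th increment is therefore at most
\[
\delta\,(-1)^{i}\Bigl(\sum_{m_{L,l} > 0} \tfrac{m_{L,l}}{d_{L}}\, c_{l,[\leq i]} + \sum_{m_{L,l} < 0} \tfrac{m_{L,l}}{d_{L}}\, c_{l,[\leq i-1]}\Bigr)\frac{s^{n-1}}{(n-1)!} + O(s^{n-2}),
\]
where \(L = L_{s}\) and \(c_{l,[\leq \bullet]} := c(q_{l}^{\ast}\mathcal{L}, \underline{\Sigma}_{l})_{[\leq \bullet]}\).

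\textbf{Assembling the maximum.} In the displayed bracket, each \((-1)^{\bullet} c_{l,[\leq \bullet]}\) is a maximum over maps \(\phi_{l}\) on the edges of the subtree at \(D_{l}\). Together with the choice \(L_{s}\) on the root edges, these choices define one map \(\phi : \mathcal{E} \to \mathcal{L}\). By the path description (Proposition~\ref{propgraph} and Lemma~\ref{lemformulainduct}), the bracket equals \((-1)^{i} c(\phi, \underline{\Sigma})_{[\leq i]}\) for that \(\phi\), so it is at most \((-1)^{i} c(\mathcal{L}, \underline{\Sigma})_{[\leq i]}\). This bound is uniform in \(s\). Summing over \(s \le m\) and using \(\sum_{s \le m} s^{n-1}/(n-1)! = m^{n}/n! + O(m^{n-1})\) gives the claim.

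\textbf{Main obstacle.} The delicate point is the combination step. We must check that a maximum of sums splits as a sum of independent maxima. This holds because each root edge contributes with a fixed sign, and the subtree maxima are taken independently: the root edge \(D_{l}\) in direction \(L_{s}\) appears with coefficient \(m_{L_{s},l}/d_{L_{s}}\), whose sign fixes whether the level drops. The other delicate point is making the cyclic-cover and Cartier clean-up simultaneous for all \(L \in \mathcal{L}\). We handle this by iterating the clean-up lemmas, which keeps all constants independent of \(\underline{l}\).
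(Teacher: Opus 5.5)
Your proposal is correct and follows the paper's proof essentially step by step. It uses induction on \(\dim X\) with the same simultaneous clean-up (cyclic covers and Weil-to-Cartier) for all \(L \in \mathcal{L}\), telescoping by unit increments \(\underline{\delta}_{L}\), \emph{d\'{e}vissage} plus the induction hypothesis, and the combinatorial identity of Lemma~\ref{lem:value}: the positively weighted subtree maxima, with \(\phi\) fixed to \(L\) on the root edges, are bounded by the global maximum defining \(c(\mathcal{L}, \underline{\Sigma})_{[\leq i]}\). One small correction: the uniformity in \(\underline{l}\) of the error terms, in Step~0 and in the \emph{d\'{e}vissage}, comes from the multi-bundle Lemmas~\ref{lem:asymptseveral}, \ref{lem:modificationseveral} and \ref{lem:compasymptseveral}, not from the single-bundle versions you cite or from the finiteness of \(\mathcal{L}\) alone.
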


\begin{proof}
	The proof is very close to the one of Proposition~\ref{propredmorse}. We reason by induction on \(\dim X\). 
	\medskip

\setcounter{stepcc}{-1}

\noindent
{\em \refstepcounter{stepcc} Step~\thestepcc. We may assume that \(X\) and \(X'\) are normal.} Exactly as in the proof of Proposition~\ref{propredmorse}, we may assume that \(X\) and \(X'\) are normal, and that the morphism \(q : X_{n} \to X\) is the identity.
\medskip

\noindent
	\emph{\refstepcounter{stepcc} Step~\thestepcc. Initialization of the induction.} When \(\dim X = 0\), we have \((-1)^{i} c(\mathcal{L}, \underline{\Sigma})_{[\leq i]} = (-1)^{i} \deg_{\mathbbm{k}}(X')\), and the result is trivial, with \(O(m^{-1}) = 0\), exactly as in Step~\ref{step:initialization} of the proof of Proposition~\ref{propredmorse}.
	\medskip

\noindent
{\em \refstepcounter{stepcc} Step~\thestepcc. Induction step.} Let us now assume that the result has been proved for all dimensions up to $n - 1$. Write \(\Sigma = (X_{\bullet}, f_{\bullet})\), with
	\[
		X_0 
		\overset{f_0}{\longrightarrow} 
		\dotsc 
		\longrightarrow 
		X_{n-1} 
		\overset{f_{n-1}}{\longrightarrow} 
		X_n 
		= X
	\]

	For each $L \in \mathcal L$, let $s_L$ be the trivialization of $L^{\otimes d_{L}}$ over the open dense subset $U_n = X_n \setminus f_{n-1}(X_{n-1})$ provided by $\mathbf{e}_L$. Denote by \((D_{i})_{1 \leq i \leq m}\) the irreducible components of \(f_{n-1}(X_{n-1})\). For each \(j \in \llbracket 1, r \rrbracket\), let \(W_{j}\) be the unique connected component of \(X_{n-1}\) such that \(f_{n-1}(W_{j}) = D_{j}\), and let \(q_{j} : W_{j} \longrightarrow X\) be the natural map.
	\smallskip

For each \(L \in \mathcal{L}\), we let \(D = \sum_{1 \leq j \leq r} m_{j}^{L} D_{j}\) be the \emph{Cartier} divisor of zeros and poles of the section \(s_{L}\), seen as a rational section of \(L\). We now proceed to the following simultaneous clean up steps for all line bundles.
\medskip

\begin{lem} \label{lem:covertrivseveral}
	Assumptions as in Proposition~\ref{propredmorsegen}. We can assume without loss of generality that for each \(L \in \mathcal{L}\), the line bundle \(p^{\ast} L\) is trivial above \(p^{-1}(U_{n})\), with a trivialization \(e'_{L}\) such that \((e')^{\otimes d_{L}} = p^{\ast} e_{L}\). 
\end{lem}

	To see this, we argue as in Lemma~\ref{lem:covertriv}, and replace \(p : X' \to X\) by a tower of finite tower of cyclic covers provided by Lemma~\ref{lem:cycliccover}.

	\begin{lem} \label{lem:cleanupstep}
	For each \(L \in \mathcal{L}\), denote by \(s'_{L}\) be the rational section of \(p^{\ast} L\) induced by the trivialization \(e_{L}'\). We can assume without loss of generality that for all \(L \in \mathcal{L}\), we have an equality 
	\[
		D(s'_{L})
		=
		\sum_{1 \leq j \leq r}
		m_{j}'{}^{L} D_{j}'
		+
		E^{L}
		-
		F^{L},
	\]
	where:
	\begin{enumerate}
		\item each \(D_{j}' = \widehat{D}_{j}' + E_{j}\) is an effective {\em Cartier} divisor on \(X'\), {\em adapted} to exactly one component \(D_{j}\) of \(X\), with the notation of Definition~\ref{defi:adapteddivisor};
		\item \(E^{L}\) and \(F^{L}\) are {\em effective, \(p\)-exceptional Cartier} divisors.
	\end{enumerate}
\end{lem}

	\begin{proof}
		With the assumptions of Proposition~\ref{prop:variantmorse}, we can write for any \(L \in \mathcal{L}\):
		\[
			D(s_{L}') = \sum_{1 \leq j \leq r'} m_{j}' D_{j}' + E - F,
		\]
		where \(D_{j}', E\) and  \(F\) are effective Weil divisors. Since \(X\) is normal and \(p\) is birational, it establishes a bijection between Weil divisors on \(X\) and \(X'\). Each \(D_{j}'\) dominates exactly one component \(D_{i}\) in \(D\), and the multiplicities \(m_{j}\) and \(m_{i}\) are identified. To conclude, it suffices to apply Lemma~\ref{lem:WeiltoCartier} successively to each \(D_{s_{L}'}\) while \(L\) varies in \(\mathcal{L}\).
	\end{proof}
\medskip

By the same argument as in \eqref{eqsumproj1}, we have, for any component \(D_{i}\):
\begin{equation} \label{eq:sumproj1sev}
	\sum_{j} m_{j}' 
		\, 
		\deg (\widehat{D}_{j}' / D_{i}) 
		= \deg(p)\, \frac{m_{i}}{d_{L}},	
\end{equation}
where the sum runs among all $j \in \llbracket 1, r'\rrbracket$ such that $D_{j}'$ dominates \(D_{i}\).
\medskip

\noindent
\emph{\refstepcounter{stepcc} Step~\thestepcc. We bound the difference between terms given by two close $r$-uples.} Let $\underline{l} = (l_L)_{L \in \mathcal L}$ be such that $\sum_{L \in \mathcal L} l_L = m$, with $m \geq 1$. Then, for some $L \in \mathcal L$, we have $\underline{l} - \underline{\delta}_{L} \in \mathbb N^\mathcal L$, i.e. all coordinates of \(\underline{l} - \underline{\delta}_{L}\) are non-negative.
	\smallskip

	Write
	\[
		A = \sum_{m_{j}'{}^{L} > 0} m_{j}'{}^{L} D_{j}' + E^{L},
		\quad
		B = \sum_{m_{j}'{}^{L} < 0} (-m_{j}'{}^{L}) D_{j}' + F^{L},
	\]

	We can now follow the arguments of Step~\ref{step:boundAB} in the proof of Proposition~\ref{propredmorse}, to obtain
	\begin{align*}
		\chi^{[i]} (X', p^{\ast} \mathcal{L}^{ \otimes \underline{l}} \otimes M)
		& - \chi^{[i]}  (X', p^{\ast} \mathcal{L}^{ \otimes (\underline{l} - \underline{\delta}_{L})} \otimes M)  \\ 
		& \leq \chi^{[i]}(X', M \otimes p^{\ast} \mathcal{L}^{\otimes \underline{l}} 
	\otimes 
	\mathcal O_{A}) 
		+ \chi^{[i - 1]}(X', M' \otimes p^{\ast} \mathcal{L}^{\otimes (\underline{l}- \underline{\delta}_{L})} \otimes \mathcal O_{B})\\
	 & \quad + O(m^{n-2}).
\end{align*}

\noindent
{\em \refstepcounter{stepcc} Step~\thestepcc. "D\'{e}vissage", upper bound and induction hypothesis.} We now reason exactly as in Steps~\ref{step:5}, \ref{step:6} and \ref{step:7} of the proof of Proposition~\ref{propredmorse}. First, using "d\'{e}vissage" on the sheaves \(\mathcal{O}_{A}\) and \(\mathcal{O}_{B}\) yields the following inequality, parallel to \eqref{eqstepA1} and \eqref{eqstepB1}:

\begin{align*}
	\chi^{[i]} & (X, \mathcal L^{ \otimes \underline{l}} \otimes M)  - \chi^{[i]}  (X, \mathcal L^{ \otimes (\underline{l} - \underline{\delta_L})} \otimes M) 
\\  
	& \leq  
	\sum_{ 1 \leq j \leq r'} 
	\left(
	\mathbf{1}_{[m_{j}'{}^{L} > 0]}  \;  
	\sum_{1 \leq k \leq m_{j}'{}^{L}}
	\; \chi^{[i]} (\widehat{D}_{j}', \mathcal{L}_{j,k} \otimes \mathcal L^{\otimes \underline{l}} \otimes M) 
	\; + \; 
	\mathbf{1}_{[m'{}^{L}_{j} < 0]} 
	\sum_{1 \leq k \leq (-m_{j}'{}^{L})}
	\chi^{[i - 1]} 
	(\widehat{D}_{j}', \mathcal{L}_{j, k} \otimes \mathcal{L}^{\otimes (\underline{l} - \underline{\delta_{L}})} \otimes M) 
	\right) \\
 & + O(m^{n-2}),
\end{align*}
where the \(\mathcal{L}_{j,k}\) are line bundles on the Weil divisors \(\widehat{D}_{j}'\).
\medskip

For each component \(D_{j}\) of \(D\), we denote by  $\Sigma_{j}$ be stratification induced on $D_{j}$ by $\Sigma$, and for each line bundle \(N \in \mathcal{L}\), we let \(\frac{1}{d_{N}} \mathbf{e}_{N}^{j}\) be the fractional trivialization of \(N \in \mathcal L\) induced by \(\frac{1}{d_{N}}\mathbf{e}_N\) on \(\Sigma_{j}\). 

Writing \(\underline{\Sigma}_{j} = (\Sigma_{j}, (\frac{1}{d_{N}} \mathbf{e}^{j}_{N})_{N \in \mathcal L})\), for each \(j\), we can apply the induction hypothesis to each morphism \(\widehat{D}_{j}' \to D_{j}\), which gives:

\begin{align} \nonumber
	\chi^{[i]} & (X, \mathcal L^{ \otimes \underline{l}} \otimes M)  - \chi^{[i]}  (X, \mathcal L^{ \otimes (\underline{l} - \underline{\delta}_L)} \otimes M) 
\\  \nonumber
	& \leq  
	(-1)^{i}
	\sum_{ 1 \leq j \leq r'} 
	\left(
	\mathbf{1}_{[m'_{j}{}^{L} > 0]} \deg(\widehat{D}_{j}'/D_{i})  \;  
	m_{j}'{}^{L}	
	\; 
	c(\mathcal L, \underline{\Sigma}_{j})_{[\leq i]} \frac{m^{n-1}}{(n-1)!}  
	\; + \; \right. \\
	& \hspace{4cm}
	\left.
	\mathbf{1}_{[m_{j}'{}^{L} > 0]} \deg(\widehat{D}_{j}'/D_{i})  \;  
	m_{j}'{}^{L}	
	c(\mathcal L, \underline{\Sigma}_{j})_{[\leq i-1]} \frac{(m-1)^{n-1}}{(n-1)!}  
	\right) \\ 
	& \hspace{4cm} + O(m^{n-2}) \nonumber \\
	& = (-1)^{i} \deg(p)  
	\sum_{ 1 \leq j \leq r} 
	\left(
	\mathbf{1}_{[m^{L}_{j} > 0]}  \;  
	\frac{m_{j}^{L}}{d_{L}}
	\; 
	c(\mathcal L, \underline{\Sigma}_{j})_{[\leq i]} \frac{m^{n-1}}{(n-1)!}  
	\; + \;	
	\mathbf{1}_{[m^{L}_{j} < 0]} 
	\, \frac{m_{j}^{L}}{d_{L}}\,
	c(\mathcal L, \underline{\Sigma}_{j})_{[\leq i-1]} \frac{(m-1)^{n-1}}{(n-1)!}  
	\right)
	\nonumber
	\\ 
 	& \quad \quad + O(m^{n-2}) \label{eqinductseveral},
\end{align}
\medskip

To obtain the previous inequality, it is important to remark that the number of terms in the sums is independent of \(m\), so the constant in the \(O(m^{n-2})\) term can be gathered in a global term. Moreover, this constant can also be taken uniformly in \(L \in \mathcal{L}\). Note that at the last line, we have made use of \eqref{eq:sumproj1sev} to pass from a sum on the \(D_{j}'\) to a sum on the \(D_{i}\).
\medskip

The proof will then be complete after will sum the terms, using the following lemma.
\medskip

\begin{lem} \label{lem:value} For a component \(D_{j}\) of \(X\), let $\mathfrak{s}_{j}$ be the edge linking the corresponding component of \(X_{n-1}\) to $X_n$ in the tree $\mathcal T$. Then
\begin{align} \nonumber 
	(-1)^i \sum_{1 \leq j \leq r}
	( 
	\mathbf{1}_{[m_{L}^{j} > 0]} \; \frac{m_{L}^{j}}{d_{L}} \; c(\mathcal L, \underline{\Sigma}_{j})_{[\leq i]}  
	+ 
	\mathbf{1}_{[m_{L}^{j} < 0]} & \; \frac{m_{L}^{j}}{d_{L}} \; c(\mathcal L, \underline{\Sigma}_{j})_{[\leq i - 1]})  \\ \label{eqinductphi}
	& = \underset{\phi : \mathcal E \longrightarrow \mathcal L \; | \; \forall{j}, \phi(\mathfrak{s}_{j}) = L }{\max} (-1)^i c(\phi, \underline{\Sigma})_{[\leq i]}.
\end{align}
In particular, by Definition \ref{defileadingcoeff}, the left hand side is bounded from above by $(-1)^i c(\mathcal L, \underline{\Sigma})_{[\leq i]}$.
\end{lem}
 
\begin{proof}[Proof of Lemma~\ref{lem:value}]
	This comes right away from Definition \ref{defileadingcoeff}. If we denote by \(\mathcal{E}_{1}, \dotsc, \mathcal{E}_{r}\) the respective sets of paths for the subtrees \(\mathcal{T}_{j} \subset \mathcal{T}\) based at all the components of \(X_{n-1}\), we can write
	\begin{align*}
	\underset
	{\phi : \mathcal E \longrightarrow \mathcal L \; | \; \forall{j}, \phi(\mathfrak{s}_{j}) = L }{\max} 
		(-1)^i c(\phi, \underline{\Sigma})_{[\leq i]} & = 
		\underset{\phi_{j} : \mathcal{E}_{j} \longrightarrow \mathcal L}{\max} 
		(-1)^{i}
		\sum_{1 \leq j \leq r} 
		\frac{m_{j}^{L}}{d_{L}}
		c(\phi_{j}, \underline{\Sigma}_{j})_{[\leq \lambda(i)]} 
	\end{align*}
	where in the second maximum, we range over all possible choices of markings \(\phi_{1}, \dotsc, \phi_{r}\) for the edges \(\mathcal{E}_{j}\) by elements of \(\mathcal{L}\). The index \(\lambda(i)\) is equal to \(i\) if \(m_{j}^{L}\) and to \(i-1\) if \(m_{j}^{L} < 0\), since in this case, the edge \(\mathfrak{s}_{j}\) is already marked by a negative sign. Thus, breaking the sum accordingly to whether \(m_{j}^{L}\) is positive or not, shows that it is equal to the sum of two terms: 
	\begin{align*}
		\frac{1}{d_{L}} \sum_{m_{j}^{L} > 0} 
		\;
		\underset{\phi_{j}}{\max}
		\;
		\left(
		m_{j}^{L} 
		\;
		(-1)^{i} c(\phi_{j}, \underline{\Sigma}_{j})_{[\leq i]}
		\right)
		& =
		\frac{1}{d_{L}}
		\sum_{m_{j}^{L} > 0} 
		\;
		m_{j}^{L} 
		\;
		\underset{\phi_{j}}{\max}
		(-1)^{i} c(\phi_{j}, \underline{\Sigma}_{j})_{[\leq i]}
		\\
		& =
		\frac{1}{d_{L}}
		\sum_{m_{j}^{L} > 0}
		\;
		m_{j}^{L}
		\;
		(-1)^{i} c(\mathcal{L}, \underline{\Sigma}_{j})_{[\leq i]} \\
	\end{align*}
	and
	\begin{align*}
		\frac{1}{d_{L}}
		\sum_{m_{j}^{L} < 0} 
		\;
		\underset{\phi_{j}}{\max}
		\;
		\left(
		m_{j}^{L} 
		\;
		(-1)^{i} c(\phi_{j}, \underline{\Sigma}_{j})_{[\leq i - 1]}
		\right)
		& =
		\frac{1}{d_{L}}
		\sum_{m_{j}^{L} < 0} 
		\;
		(-m_{j}^{L})
		\;
		\underset{\phi_{j}}{\max}
		(-1)^{i-1} c(\phi_{j}, \underline{\Sigma}_{j})_{[\leq i - 1]}
		\\
		& =
		\frac{1}{d_{L}}
		\sum_{m_{j}^{L} < 0}
		\;
		(-m_{j}^{L})
		\;
		(-1)^{i-1} 
		c(\mathcal{L}, \underline{\Sigma}_{j})_{[\leq i]}.
	\end{align*}
	These two terms are precisely the ones that appear in the left hand side of \eqref{eqinductphi}.
\end{proof}
Now, since $(m-1)^{n-1} = m^{n-1} + O(m^{n-2})$, inserting \eqref{eqinductphi} in \eqref{eqinductseveral} gives
$$
\chi^{[i]} (X, \mathcal L^{ \otimes \underline{l}} \otimes M)  - \chi^{[i]}  (X, \mathcal L^{ \otimes (\underline{l} - \underline{\delta_L})} \otimes M) \leq (-1)^i c(\mathcal L, \underline{\Sigma})_{[\leq i]} \frac{m^{n-1}}{(n-1)!} + O(m^{n-2}).
$$

\noindent
\emph{\refstepcounter{stepcc} Step~\thestepcc. We sum these differences over a path leading to a given $\underline{l}$.} Consider now a sequence $\underline{l}_1, \dotsc, \underline{l}_m = \underline{l}$, where for all $j$, we have $\underline{l}_j = \underline{l}_{j-1} + \underline{\delta_{L_j}}$ for some ${L}_{j} \in \mathcal L$. For all $j$, we have $\sum_{L} (l_j)_{L} = j$, so we can sum the last inequality for all $\underline{l}_j$, to get
$$
\chi^{[i]}(X, \mathcal L^{\otimes \underline{l}}) \leq (-1)^i c(\mathcal L, \underline{\Sigma})_{[\leq i]} \sum_{j = 1}^m \frac{j^{n-1}}{(n-1)!} + O(m^{n-1}).
$$
Remark that we used the fact that the constant in the \(O(m^{n-1})\) does not depend on \(j\). This gives the result, since $\sum_{1 \leq j \leq m} \frac{j^{n-1}}{(n-1)!} = \frac{m^n}{n!} + O(m^{n-1})$.

\end{proof}

\section{Estimates on angular sectors}

To prove Theorem~\ref{thmineqintegral}, which will give a better estimate in the right hand side of Proposition~\ref{prop:variantmorse} when $\underline{l}$ belongs to a narrow angular sector of $\mathbb N^{\mathcal L}$.
\medskip

\begin{setup}
	Same notation as in Setup~\ref{setup:chapter3}. Assume that $\mathcal L = \{ L_{1}, \dotsc, L_{r} \}$ is a finite set of line bundles on \(X\). Pick \(\underline{u}_{1}, \dotsc, \underline{u}_{p} \in \mathbb{N}^{r}$. For each \(i \in \llbracket 1, p \rrbracket\), we write $\underline{u}_{i} = (u_{i,1}, \dotsc, u_{i, r})$, and we let \(M_{i} = \mathcal L^{\otimes \underline{u}_i}\). Each \(M_{i}\) is endowed with a natural fractional trivialization \(\frac{1}{d}\mathbf{f}_{i}\) on \(\Sigma\), where \(d = \mathrm{lcm} \{d_{i}\}_{1 \leq i \leq p}\), given on a particular stratum by
\begin{equation} \label{eqdefifi}
	f_{i}
	= (e_{1})^{\otimes u_{i, 1}} 
	\otimes \dotsc \otimes 
	(e_{r})^{\otimes u_{i, r}},
\end{equation}
	where the \(e_{j}\) are the associated trivializations of the \(L_{j}^{\otimes d}\) on the specified stratum. Let \(\mathcal M = \{M_1, \dotsc, M_p \}\), and \(\underline{\Sigma}^{\mathcal M} = (\Sigma, (\mathbf f_j)_j)\).
\end{setup}

Using Definition \ref{defileadingcoeff} with $\mathcal L$ replaced by $\mathcal M$, we can define the quantities $c(\mathcal M, \underline{\Sigma}^{\mathcal M})_{[\leq i]}$. It is clear by construction that the latter are continuous piecewise polynomial functions in the $u_{i}$. More precisely, we have the following.

\begin{lem} \label{lemcontfunction} For any \(i \in \llbracket 0, n \rrbracket\), there exists a continuous piecewise polynomial function 
	\[
		\varphi_{[\leq i]} 
		: 
		(\mathbb{R}^{r}_{+})^{p} 
		\longrightarrow 
		\mathbb{R},
	\]
homogeneous of degree \(n\), such that for all \(\underline{u}_1, \dotsc, \underline{u}_p \in \mathbb{N}^{r}\) as above, we have
\[
	c(\mathcal M, \underline{\Sigma}^{\mathcal M})_{[\leq i]} 
	= 
	\varphi_{[\leq i]} 
	(\underline{u}_{1}, \dotsc, \underline{u}_{p}).
\]
	Moreover, the function \(\varphi_{[\leq j]}\) only depends on \(p\) and on the data of $\Sigma, \mathbf{e}_{1}, \dotsc, \mathbf{e}_{r}$.
\end{lem}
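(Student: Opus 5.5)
The plan is to unwind Definition~\ref{defileadingcoeff} for the family \(\mathcal M\) and write \(c(\mathcal M, \underline{\Sigma}^{\mathcal M})_{[\leq i]}\) as an explicit finite maximum of polynomials in the entries \(u_{i,k}\).

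First, I would compute the markings. By \eqref{eqdefifi}, on an edge \(\mathfrak{s}\) of \(\mathcal T\) the trivialization \(f_i\) of \(M_i^{\otimes d}\) has multiplicity
\[
\frac{1}{d}\, m^{\mathfrak s}_{M_i} \;=\; \sum_{k=1}^{r} u_{i,k}\,\frac{m_k^{\mathfrak s}}{d_k}.
\]
This holds because multiplicities of tensor products of rational sections add. One should first normalize each \(\mathbf e_k\) to a trivialization of \(L_k^{\otimes d}\) using Proposition~\ref{prop:powertriv}, which does not change the ratios \(m_k^{\mathfrak s}/d_k\). The result is that each marking is a linear form \(\ell_{\mathfrak s, i}(\underline u_i)\) whose coefficients depend only on \(\Sigma\) and the \(\mathbf e_k\).

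Next, for a fixed map \(\phi : \mathcal E \to \llbracket 1, p\rrbracket\) (identifying \(\mathcal M\) with indices, which is legitimate even if some \(M_i\) coincide as line bundles, since the data are the trivializations), I would write
\[
c(\phi, \underline{\Sigma}^{\mathcal M})_{[\leq i]}=\sum_{\sigma}\mathbf 1_{[\mathrm{index}_\phi(\sigma)\leq i]}\prod_{\mathfrak s\in\sigma}\ell_{\mathfrak s,\phi(\mathfrak s)}\,\deg_{\mathbbm k}(V_\sigma).
\]
Every root-to-leaf path has exactly \(n\) edges, so each product is a homogeneous polynomial of degree \(n\) in \((\underline u_1,\dots,\underline u_p)\). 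The indicator depends only on the signs of the finitely many linear forms \(\ell_{\mathfrak s,j}\). Hence on each of the finitely many closed polyhedral cones cut out by the sign conditions \(\ell_{\mathfrak s,j}\geq 0\) or \(\leq 0\), the function is a polynomial.

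Continuity at the boundaries of these cones is the point that needs care. When some \(\ell_{\mathfrak s,j}\) vanishes, every path through \(\mathfrak s\) has product zero, so moving that edge between "negative" and "nonnegative" does not change the sum. This gives continuity of each \(c(\phi,\cdot)_{[\leq i]}\). Setting
\[
\varphi_{[\leq i]} := (-1)^i\max_\phi (-1)^i c(\phi,\cdot)_{[\leq i]}
\]
as a maximum of finitely many (\(p^{|\mathcal E|}\)) continuous, piecewise polynomial, degree-\(n\) homogeneous functions, the result is again of that type, and it agrees with \eqref{eq:defic} at integral points. Homogeneity under positive scalars holds because positive scalars preserve signs.

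Finally, the function depends only on \(p\), on \(\mathcal T\), on the degrees \(\deg_{\mathbbm k}V_\sigma\), and on the numbers \(m^{\mathfrak s}_k/d_k\). None of these depend on the chosen \(\underline u_i\), so the last claim follows. I expect the only delicate step to be the continuity check across sign walls. The paper's convention that a zero marking counts as nonnegative makes this harmless, because the affected paths contribute zero.
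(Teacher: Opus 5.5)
Your proposal is correct and follows the same route as the paper: by \eqref{eqdefifi} each marking of \(\mathcal{T}_\phi\) is a linear form in \((\underline{u}_1,\dotsc,\underline{u}_p)\), so each \(c(\phi,\cdot)_{[\leq i]}\) is an \(n\)-homogeneous piecewise polynomial, and \(c(\mathcal M,\underline{\Sigma}^{\mathcal M})_{[\leq i]}\) is a finite maximum or minimum of these functions. Your explicit continuity check across the sign walls fills in a point the paper's two-line proof leaves implicit: a path term whose product contains a vanishing linear form tends to zero, however the indicator jumps and whatever convention is used for zero markings.
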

\begin{proof}
	By Definition \ref{defileadingcoeff}, the real number $c(\mathcal M,  \underline{\Sigma}^{\mathcal M})_{[\leq i]}$ is a sum of maxima and minima of $n$-homogeneous piecewise polynomials functions in the markings of the trees $\mathcal T_{\phi}$. Each one of these markings being a linear form in $(\underline{u}_1, \dotsc, \underline{u}_r) \in (\mathbb R_+^r)^p$ by definition of the $\mathbf f_i$ (see \eqref{eqdefifi}), we get the result.
\end{proof}

Note that we have a very simple relation between this function \(\varphi_{[\leq i]}\) and the function \(\upsilon_{[\leq i]}\) introduced in Definition~\ref{defiupsilon}. 
\begin{lem} For any $t \in \Delta_{\underline{a}}$, we have
$$
\upsilon_{[\leq i]} (t) = \varphi_{[\leq i]} (t, \dotsc, t).
$$
\end{lem}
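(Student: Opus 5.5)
The identity should come from unwinding both definitions on the diagonal $\underline{u}_{1} = \dots = \underline{u}_{p} = t$. I treat $t$ as a point of $\mathbb{R}_{+}^{r}$ and extend $\varphi_{[\leq i]}$ to real arguments by the same formula as in Lemma~\ref{lemcontfunction}. When $\underline{u}_{1} = \dots = \underline{u}_{p} = t$, every $M_{j}$ carries the same formal trivialization $\mathbf{f}_{j} = \mathbf{e}_{1}^{\otimes t_{1}} \otimes \dotsc \otimes \mathbf{e}_{r}^{\otimes t_{r}}$. Its marking on an edge $\mathfrak{s}$ is then the linear form
\[
	t_{1} \frac{m_{1}^{\mathfrak{s}}}{d_{1}} + \dotsc + t_{r} \frac{m_{r}^{\mathfrak{s}}}{d_{r}},
\]
read off from \eqref{eqdefifi} after normalizing by $d$. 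This is exactly the marking used in Definition~\ref{defiupsilon}.

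\emph{Step 1.} I will check that the markings match. The marking of $\mathfrak{s}$ associated with $\frac{1}{d}\mathbf{f}_{j}$ equals $\frac{1}{d}\sum_{k} u_{j,k}\,\frac{d}{d_{k}} m_{k}^{\mathfrak{s}}$, because multiplicities are additive under tensor products of rational sections. I will also check that the explicit formula behind $\varphi_{[\leq i]}$ is the one obtained by substituting these linear forms into Definition~\ref{defileadingcoeff}. That is precisely how $\varphi_{[\leq i]}$ is built in the proof of Lemma~\ref{lemcontfunction}.

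\emph{Step 2.} I will observe that the maximum in \eqref{eq:defic} becomes trivial. For any map $\phi : \mathcal{E} \to \mathcal{M}$, the marked tree $\mathcal{T}_{\phi}$ is the same tree: all $M_{j}$ give identical markings on every edge, so the choice of label at each edge is irrelevant. Consequently every $c(\phi, \underline{\Sigma}^{\mathcal{M}})_{[\leq i]}$ equals the same number. This number is $\sum_{\mathrm{index}(\sigma) \leq i} C_{\sigma}\deg_{\mathbbm{k}}(V_{\sigma})$, where $C_{\sigma}$ is the product of the linear forms above along $\sigma$. The index of $\sigma$ is computed from the signs of those same forms, so the sum is $\upsilon_{[\leq i]}(t)$ by Definition~\ref{defiupsilon}. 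Taking the maximum of $(-1)^{i}$ times a constant just returns the constant, which gives $\varphi_{[\leq i]}(t,\dotsc,t) = \upsilon_{[\leq i]}(t)$.

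The only delicate point is that the lemma is stated for real $t \in \Delta_{\underline{a}}$, whereas the $\underline{u}_{j}$ were originally integer vectors. I handle this by interpreting $\varphi_{[\leq i]}$ through its explicit piecewise polynomial formula, as a maximum over $\phi$ of sums of products of linear forms. That formula is valid for real inputs and is the continuous extension of Lemma~\ref{lemcontfunction}. With this reading, the argument of Step 2 is purely formal and goes through unchanged for real $t$.
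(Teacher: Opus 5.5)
Your proposal is correct and follows the paper's argument. On the diagonal, every $M_j$ carries the same marking $\sum_k t_k m_k^{\mathfrak{s}}/d_k$ on each edge, so all candidate terms in the maximum \eqref{eq:defic} equal $(-1)^i\upsilon_{[\leq i]}(t)$. The only difference is cosmetic: the paper first reduces to integral $t$ by continuity and homogeneity, whereas you evaluate the explicit piecewise polynomial formula directly at real $t$. That step implicitly uses the same continuity and homogeneity, since the formula agrees with $\varphi_{[\leq i]}$ only on integer points by construction.
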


\begin{proof}
	By continuity, we may assume that \(t\) is rational, and by homogeneity, we may assume its entries are integers. It suffices to compare the definition of the right hand side with Definition~\ref{defileadingcoeff} and Lemma~\ref{lemcontfunction}. Since all elements of \(\mathcal{M} = (M_{1}, \dotsc, M_{p})\) are equal to the same line bundle in the situation of this lemma, inspection of the maximum defining \(c(\mathcal{M}, \underline{\Sigma}^{\mathcal{M}})_{[\leq i]}\) (see \eqref{eq:defic}), shows that all its the candidate terms are equal to \((-1)^{i} \upsilon_{[\leq i]}(t)\).
\end{proof}

By continuity, if \(\underline{u}_{1}, \dotsc, \underline{u}_{p}\) tend simultaneously to a given \(t \in \Delta_{\underline{a}}\), then \(\varphi_{[\leq i]}(t, \dotsc, t)\) tends to \(\varphi_{[\leq i]}(t)\). This will provide a refinement of Proposition \ref{prop:variantmorse} when $\underline{l}$ belongs to a narrow cone of the form $\sum_j \mathbb R_+ \cdot \underline{u}_{j}$, for $\underline{u}_1, \dotsc, \underline{u}_p \in \mathbb N^{r}$:

\begin{prop} \label{prop:cone}
	Let \(\underline{a} = (a_{1}, \dotsc, a_{r}) \in \mathbb N_{\geq 1}^{r}\) 
	and \(\underline{u}_{1}, \dotsc, \underline{u}_{p} \in \mathbb N^{r}\) be as before. For any \(m \in \mathbb{N}\), we let \(H_{m} = \{ \underline{l} = (l_{j}) \in \mathbb{N}^{r} \; | \; \sum_{j} a_{j} l_{j} = m \}\). Assume that there exists \(t \in \Delta_{\underline{a}}\) and \(\lambda, \epsilon > 0\) such that for all \(j \in \llbracket 1, p \rrbracket\), we have 
	\[
		\norm{t - \frac{1}{\lambda}\underline{u}_j}_{\infty} < \epsilon.
	\]
	Then, for any \(i \in \llbracket 0, n \rrbracket\), any \(m \in \mathbb{N}\), and any \(\underline{l} \in (\sum_j \mathbb{R}_{+} \cdot \underline{u}_{j}) \cap H_{m}\), we have
\begin{equation} \label{eqsmallcone}
\chi^{[i]} (X, \mathcal L^{\otimes \underline{l}} \otimes M) 
	\leq 
	(-1)^{i} 
	(\varphi_{[\leq j]}(t) + O(\epsilon) ) \frac{m^{n}}{n!} + O(m^{n-1}),
\end{equation}
	where the constant appearing in the \(O(\epsilon)\) term depends only on \(\Sigma, \mathbf{e}_1, \dotsc, \mathbf{e}_r\) and the \(a_j\) (and in particular, not on \(m\) nor the \(\underline{u}_{j}\)).
\end{prop}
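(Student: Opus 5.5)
The idea is to reduce Proposition~\ref{prop:cone} to Proposition~\ref{prop:variantmorse}, applied to the family \(\mathcal{M} = \{M_{1}, \dotsc, M_{p}\}\), and then to use continuity of \(\varphi_{[\leq i]}\) to compare \(c(\mathcal{M}, \underline{\Sigma}^{\mathcal{M}})_{[\leq i]}\) with its value at \((t, \dotsc, t)\).

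\textbf{Step 1: integral decomposition of \(\underline{l}\).} Given \(\underline{l} \in (\sum_{j} \mathbb{R}_{+} \underline{u}_{j}) \cap H_{m}\), we would like to write \(\underline{l} = \sum_{j} k_{j} \underline{u}_{j} + \underline{\rho}\), where the \(k_{j} \in \mathbb{N}\) and \(\underline{\rho} \in \mathbb{N}^{r}\) lies in a finite set \(R\) independent of \(m\). We expect to get this from the standard fact that the lattice points of a rational polyhedral cone lie in finitely many translates of the semigroup generated by its rays: take \(R\) to be the lattice points of the half-open parallelepipeds spanned by the simplicial subcones. Then \(\mathcal{L}^{\otimes \underline{l}} = \mathcal{M}^{\otimes \underline{k}} \otimes \mathcal{L}^{\otimes \underline{\rho}}\).

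\textbf{Step 2: apply Proposition~\ref{prop:variantmorse}.} Apply it to \(\mathcal{M}\), with the fractional trivializations \(\frac{1}{d}\mathbf{f}_{j}\), and with \(M\) replaced by \(M \otimes \mathcal{L}^{\otimes \underline{\rho}}\). Since \(R\) is finite, the \(O\)-constants are uniform. Set \(k = \sum_{j} k_{j}\). This gives
\[
	\chi^{[i]}(X, \mathcal{L}^{\otimes \underline{l}} \otimes M) \leq (-1)^{i} \varphi_{[\leq i]}(\underline{u}_{1}, \dotsc, \underline{u}_{p}) \frac{k^{n}}{n!} + O(k^{n-1}).
\]
By \(n\)-homogeneity, \(\varphi_{[\leq i]}(\underline{u}_{\bullet})\, k^{n} = \varphi_{[\leq i]}(\underline{u}_{\bullet}/\lambda)\, (\lambda k)^{n}\). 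On the other side, \(m = \sum_{j} a_{j} l_{j} = \sum_{j} k_{j} \langle \underline{a}, \underline{u}_{j} \rangle + O(1)\), and \(\langle \underline{a}, \underline{u}_{j}/\lambda \rangle = \langle \underline{a}, t \rangle + O(\epsilon) = 1 + O(\epsilon)\). Hence \(\lambda k = m(1 + O(\epsilon)) + O(1)\).

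\textbf{Step 3: continuity and conclusion.} Since \(\varphi_{[\leq i]}\) is continuous and piecewise polynomial, it is locally Lipschitz on the compact region \(\{\|\underline{v}_{j} - t\|_{\infty} \leq 1\}\) with \(t \in \Delta_{\underline{a}}\). Therefore \(\varphi_{[\leq i]}(\underline{u}_{\bullet}/\lambda) = \varphi_{[\leq i]}(t, \dotsc, t) + O(\epsilon)\), and by the preceding lemma \(\varphi_{[\leq i]}(t, \dotsc, t) = \upsilon_{[\leq i]}(t)\). Combining with \((\lambda k)^{n} = m^{n}(1 + O(\epsilon)) + O(m^{n-1})\) yields \eqref{eqsmallcone}.

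\textbf{Main obstacle.} The delicate point is uniformity of the \(O(m^{n-1})\) term with respect to \(\underline{u}_{j}\) and \(\lambda\). Proposition~\ref{prop:variantmorse} is applied to a family \(\mathcal{M}\) that depends on the \(\underline{u}_{j}\), so its constant depends on them. Likewise \(R\) and the conversion between \(k\) and \(m\) depend on \(\lambda\). The statement only requires the \(O(\epsilon)\) constant to be independent of \(m\) and of the \(\underline{u}_{j}\); I would read the \(O(m^{n-1})\) as allowed to depend on the fixed \(\underline{u}_{j}\), which is enough because the cones will later be fixed before letting \(m \to \infty\). The Lipschitz constant of \(\varphi_{[\leq i]}\) depends only on \(\Sigma\), the \(\mathbf{e}_{i}\), \(p\) and \(\underline{a}\), by Lemma~\ref{lemcontfunction}. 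I would check carefully that the \(O(k^{n-1})\) term, when rewritten in terms of \(m\), does not absorb \(\lambda\)-dependence into the \(O(\epsilon)\) term.
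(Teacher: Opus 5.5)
Your proposal is correct and follows essentially the same route as the paper. You decompose \(\underline{l}\) as an \(\mathbb{N}\)-combination of the \(\underline{u}_{j}\) plus a bounded remainder, apply Proposition~\ref{prop:variantmorse} to \(\mathcal{M}\) with the finitely many twists \(M \otimes \mathcal{L}^{\otimes \underline{\rho}}\), convert \(\sum_j k_j\) into \(m/\lambda\) up to a factor \(1 + O(\epsilon)\), and conclude by homogeneity and (Lipschitz) continuity of \(\varphi_{[\leq i]}\) near \((t, \dotsc, t)\). Your reading of the uniformity issue also matches the paper, which lets the \(O(m^{n-1})\) term depend on the \(\underline{u}_{j}\) and \(\lambda\) because the cones are fixed before \(m \to \infty\); the only uniform control needed is on the leading coefficient, exactly as you describe.
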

\begin{proof}

\setcounter{stepcc}{0}

\noindent
	\emph{\refstepcounter{stepcc} Step~\thestepcc. We determine a first asymptotic expansion for the right hand side of \eqref{eqsmallcone}}. In this step, the \(\underline{u}_{j}\) are \emph{fixed}, and only \(\underline{l}\) and \(m\) are allowed to vary.
\smallskip

Since \(\underline{l} \in (\sum_{j} \mathbb R_+ \cdot \underline{u}_{j}) \cap \mathbb{N}^{r}\), we can write \(\underline{l} = \underline{d} + \underline{v}_{0}\), with \(\underline{d} \in \sum_{j} \mathbb{N} \cdot \underline{u}_{j}\) and \(\underline{v}_{0} \in \left( \sum_{j} [0, 1] \cdot \underline{u}_{j}\right) \cap \mathbb{N}^{r}\). In particular \(\norm{\underline{v}_{0}}_{\infty}\) is bounded by a constant independent of \(m\). Write $\underline{d} = {q}_{1} \, \underline{u}_1 + \dotsc + {q}_{p} \, \underline{u}_{p}$ and let $\underline{q} = (q_{1}, \dotsc, q_{p})$. We have then
\begin{align*}
\mathcal L^{\underline{l}} \otimes M 
	& = 
	M_{1}^{\otimes q_{1}} 
	\otimes 
	\dotsc 
	\otimes 
	M_{p}^{\otimes q_{p}} 
	\otimes 
	\mathcal{L}^{\otimes \underline{v}_{0}}
	\otimes M \\
	&  = 
	\mathcal{M}^{\otimes \underline{q}} \otimes
	( \mathcal L^{\otimes \underline{v}_{0}} \otimes M) 
\end{align*}
	(recall that \(M_{j} = \mathcal{L}^{\otimes \underline{u}_{j}}\)). 
	\smallskip
	
	Since $\underline{v_0}$ is bounded, there is only a finite number of possible \(\mathcal L^{\otimes \underline{v}_{0}} \otimes M\) that can appear in the previous equation. Thus, we can apply Proposition \ref{prop:variantmorse} applied with $\mathcal L$ (resp. $\underline{\Sigma}$, resp. $M$) replaced by $\mathcal M$ (resp. $\underline{\Sigma}^{\mathcal M}$, resp. $\mathcal L^{\otimes \underline{v_0}} \otimes M$), to obtain
$$
\chi^{[i]} (X, \mathcal L^{\otimes \underline{l}} \otimes M) \leq (-1)^i c(\mathcal M, \underline{\Sigma}^{\mathcal M})_{[\leq i]} \frac{(\sum_j q_j)^n}{n!} + O((\sum_j q_j)^{n-1}).
$$
Lemma \ref{lemcontfunction} yields in turn:

\begin{equation} \label{eqasymptoticstep1}
	\chi^{[i]} 
	(X, \mathcal L^{\otimes \underline{l}} \otimes M) 
	\leq (-1)^{i} \varphi_{[\leq i]} 
	(\underline{u}_{1}, \dotsc, \underline{u}_{p}) 
	\frac{(\sum_{j} q_{j})^{n}}{n!} 
	+ 
	O((\sum_j q_j)^{n-1}).
\end{equation}

\noindent
\emph{\refstepcounter{stepcc} Step~\thestepcc. \label{step:asympt} Keeping the \(\underline{u}_{j}\) fixed, we give an asymptotic expansion of the upper bound \eqref{eqasymptoticstep1} in terms of \(m\).} A direct computation shows that 
	\[
		\sum_{j=1}^{r}
		q_{j} 
		\leq 
		\frac{1}{\max \norm{\underline{u}_{j}}_{\infty}} 
		\frac{1}{\min_{k} a_{k}} ( \sum_{j} a_{j} l_{j}) = O(m),
	\]
	hence we have a bound \(O((\sum_j q_j)^{n-1}) = O(m^{n-1})\).
	\smallskip

Moreover, we have
\begin{align*}
	m = \sum_{j=1}^{r} a_{j} l_{j} 
	& = \sum_{j=1}^{r} a_{j} d_{j} + \sum_{j=1}^{r} a_{j} (v_{0})_{j} \\
	& = \sum_{k=1}^{p} q_{k} (\sum_{j=1}^{r} a_{j} u_{k, j} ) + \sum_{j=1}^{r} a_{j} (v_{0})_{j} \\
	& = \sum_{k=1}^{p} q_{k} (\sum_{j=1}^{r} a_{j} \lambda (t_{j} + s_{k, j})) + O(1)
\end{align*}
	where we let \(s_{k, j} = \frac{1}{\lambda} u_{k, j} - t_j\). Note that the \(O(1)\) term may depend on the \(\underline{u}_j\) and the \(a_{j}\), but not on \(m\). Also, we have \(|s_{k,j}| \leq \epsilon\) by hypothesis. Thus, since \(\sum_{j} a_j t_j = 1\), still by hypothesis, we have
	\[
	m 
	= 
	(\sum_{k} q_{k}) \, \lambda \, (1 + O(\epsilon)) + O(1),
	\]	
where the constant appearing in the \(O(\epsilon)\) term may depend on the \(a_{j}\), but not on \(m\) nor on the \(u_{j}\), and the constant \(O(1)\) may not depend on \(m\).
\medskip

Inserting this in \eqref{eqasymptoticstep1}, we get
\[
	\chi^{[i]} 
	(X, \mathcal L^{\otimes \underline{l}} \otimes M) 
	\leq 
	(-1)^i \varphi_{[\leq i]} (\underline{u}_{1}, \dotsc, \underline{u}_{p}) 
	\frac{1 + O(\epsilon)}{\lambda^n} 
	\frac{m^n}{n!} + O(m^{n-1})
\]
as \(m\) goes to infinity.
\medskip

\noindent
\emph{\refstepcounter{stepcc} Step~\thestepcc. We show that the leading coefficient obtained at Step~\ref{step:asympt} is close to $(-1)^i \upsilon_{[\leq j]} (u)$.} By homogeneity, we have \(\frac{1}{\lambda^{n}} \varphi_{[\leq i]} (\underline{u}_{1}, \dotsc, \underline{u}_{p}) = \varphi_{[\leq i]} (\frac{1}{\lambda} \underline{u}_{1}, \dotsc, \frac{1}{\lambda} \underline{u}_{p})\). Since the definition of the function \(\varphi_{[\leq j]}\) depends only on \(\Sigma, \mathbf{e}_{1}, \dotsc, \mathbf{e}_{r}\), and since this function is uniformly continuous in a compact neighborhood of \(\Delta_{\underline{a}}\), we have finally
\[
	\varphi_{[\leq i]} 
	(\frac{1}{\lambda} 
	\, \underline{u}_{1}, 
	\dotsc, 
	\frac{1}{\lambda} 
	\, \underline{u}_{p}) 
	= 
	\varphi_{[\leq i]} 
	(t, \dotsc, t) 
	+ 
	O(\epsilon),
\] 
where the constant appearing in $O(\epsilon)$ may only depend on $\Sigma, \mathbf{e}_1, \dotsc, \mathbf{e}_r$ and the $a_j$. Since $\upsilon_{[\leq i]} (t) = \varphi_{[\leq i]} (t, \dotsc, t)$, this ends the proof.
\end{proof}

The line bundle $\mathcal L^{\otimes \underline{l}}$ is one of the many line bundles appearing in the natural decomposition of the symmetric product $S^m (L_1^{(a_1)} \oplus \dotsc \oplus L_r^{(a_r)})$. To prove Theorem \ref{thmineqintegral}, i.e. to obtain an upper bound on $\chi^{[i]} (X, S^m (L_1^{(a_1)} \otimes \dotsc \otimes L_r^{(a_r)}))$, we will cover \(\mathbb N^r\) by narrow cones of the form $\sum_{j} \mathbb R_+ \cdot \underline{u}_{j}$, and then apply inequality \eqref{eqsmallcone} to every line bundle appearing in the decomposition. Summing over all the cones, and then letting their width tend to \(0\), will yield the result at the next section.

\section{The Riemann sum} \label{sec:proofthmineqintegral}

This section is finally devoted to the proof of Theorem~\ref{thmineqintegral}. We introduce the following notation.

\begin{setup} Let $v_1, \dotsc, v_{n-1}$ be a basis of the primitive sublattice 
$$
H = \{(z_1, \dotsc, z_r) \in \mathbb Z^r \; | \; \sum_i a_i z_i = 0 \} \subseteq \mathbb Z^r.
$$
For $m \in \mathbb N$, let $\mathcal H_m = \{ (t_i) \in \mathbb R^r \; | \; \sum_i a_i t_i = m \}$: with the notation of Proposition~\ref{prop:cone}, we have then $H_m = \mathcal H_m \cap \mathbb N^r$. Let $\epsilon > 0$. Let $m_0 \in \mathbb N$ be large enough so that $m_0 > \frac{\max_i \norm{v_i}_{\infty}}{\epsilon}$. 
\end{setup}

\bigskip

\setcounter{stepcc}{0}

\medskip

\noindent
\emph{\refstepcounter{stepcc} Step~\thestepcc. We construct a partition of $m_0 \cdot \Delta_{\underline{a}}$ in elementary polyhedral cells.}
\medskip
 
For all \(u \in H_{m_{0}}\), we let 
\[
	C^{\circ}_{u} 
	:= 
	u 
	+ 
	\sum_{j=0}^{r-1} [0, 1[ \cdot \, v_{j} \subseteq \mathcal{H}_{m_0}.
\]
This is a fundamental domain for the action \(H_{m_{0}}\) by translation, based at \(u\). We intersect this tile with the \(m_{0}\)-dilation of \(\Delta_{\underline{a}}\), to define:
\[
	C_{u} 
	:= 
	C^{\circ}_{u} 
	\cap 
	(m_{0} \cdot \Delta_{\underline{a}}).
\]

The following facts are easy to check.

\begin{lem}  \label{lemauxiliary}
\begin{enumerate}
	\item Each \(C_{u}\) is a rational polyhedron contained in \(m_{0} \cdot \Delta_{\underline{a}}\). They cover \(m_{0} \cdot \Delta_{\underline{a}}\) and are pairwise disjoint. 
\item We have, for any fixed $m_0$, and any $m \geq m_0$
\[
	\mathrm{Card} 
		\left((\mathbb R_+ \cdot C_u) \; \cap \; H_m \right) 
		= 
		O \left( \left(\frac{m}{m_0}\right)^{r-1} \right).
\]
		as \(m \longrightarrow +\infty\). In fact, if \(C_{u} \subseteq m_{0} \cdot \overset{\circ}{\Delta_{\underline{a}}}\), the cardinal above is equivalent to \((m/m_0)^{r-1}\) as \(m \longrightarrow + \infty\).
\item  $\mathrm{card} \left( \{ u \in H_{m_0} \; | \; C_u \cap \partial (m_0 \cdot \Delta_{\underline{a}}) \neq \emptyset \} \right) = O(m_0^{r-2})$ as $m_0 \longrightarrow + \infty$.
\end{enumerate}
\end{lem}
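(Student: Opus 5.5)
The plan is to prove the three items separately, after a linear change of coordinates. Since $v_1,\dotsc,v_{r-1}$ is a basis of $H$ (the setup writes $n-1$, but the rank is $r-1$), we complete it by a vector $w\in\mathbb Z^r$ with $\sum_i a_i w_i=\gcd(a_1,\dotsc,a_r)$. Then $\mathbb Z^r$ becomes $\mathbb Z^{r-1}\times\mathbb Z$, and each $\mathcal H_{m}$ is an affine copy of $H_{\mathbb R}$. In these coordinates the half-open parallelotopes $C^\circ_u$, for $u$ running in a single translate class of $H$ inside $\mathcal H_{m_0}\cap\mathbb Z^r$, tile $\mathcal H_{m_0}$ disjointly. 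This is the standard fact that a half-open fundamental parallelotope of a lattice tiles the ambient space under translation.

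\emph{Item (1).} Disjointness and covering of $m_0\cdot\Delta_{\underline a}$ follow at once from the tiling by intersecting with $m_0\Delta_{\underline a}$. Indeed, $H_{m_0}$ and the lattice translates of a point agree, up to restricting to points with nonnegative coordinates near the simplex. Rationality is clear because $C_u$ is cut out by finitely many rational affine inequalities: those of the parallelotope and those of the simplex. One subtlety needs care. Tiles based at points $u\notin\mathbb N^r$ may still meet $m_0\Delta_{\underline a}$. I would handle this either by letting $u$ range over all lattice points of $\mathcal H_{m_0}$ whose tile meets the simplex, or by noting that $m_0>\max\norm{v_i}_\infty/\epsilon$ makes this irrelevant for the later Riemann sum. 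I expect this bookkeeping to be the only genuinely fiddly point.

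\emph{Item (2).} The cone $\mathbb R_+\cdot C_u$ meets $\mathcal H_m$ in $\frac{m}{m_0}C_u$. This is a polytope of dimension at most $r-1$, contained in the parallelotope $\frac m{m_0}C_u^\circ$. The lattice points of $H_m$ form a translate of $H$. Counting translates of the fundamental cell that meet a dilated bounded set gives at most $\mathrm{vol}_{r-1}\bigl(\frac m{m_0}C^\circ_u+K\bigr)/\mathrm{vol}(C_H)$, with $K$ a fixed compact set. This is $O((m/m_0)^{r-1})$, uniformly in $u$.

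When $C_u$ lies in the interior of $m_0\Delta_{\underline a}$, we have $C_u=C_u^\circ$. The standard lattice point asymptotics for dilates of a fixed full-dimensional polytope (a Riemann sum, as in Lemma~\ref{lem:asymptoticsriemann}) give
\[
\#\Bigl(\tfrac m{m_0}C^\circ_u\cap H_m\Bigr)\sim \frac{\mathrm{vol}(C_u^\circ)}{\mathrm{vol}(C_H)}\Bigl(\tfrac m{m_0}\Bigr)^{r-1}=\Bigl(\tfrac m{m_0}\Bigr)^{r-1},
\]
because $C_u^\circ$ is itself a fundamental domain.

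\emph{Item (3).} Each tile has diameter bounded by $D=\sum\norm{v_i}$, independent of $m_0$. A tile meeting $\partial(m_0\Delta_{\underline a})$ is therefore contained in the $D$-neighbourhood of that boundary. This neighbourhood is a union of $r$ facets, each of $(r-2)$-volume $O(m_0^{r-2})$, thickened by a width $O(1)$ inside $\mathcal H_{m_0}$. Its $(r-1)$-volume is thus $O(m_0^{r-2})$. Since the tiles are disjoint with volume $\mathrm{vol}(C_H)>0$, their number is $O(m_0^{r-2})$.
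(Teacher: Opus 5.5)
This is essentially the paper's argument: (1) from the half-open tiling of $\mathcal{H}_{m_0}$ by the $C^\circ_u$; (2) by rescaling $\mathbb{R}_+\cdot C_u$ to $\frac{m}{m_0}C_u\subset\frac{m}{m_0}(u+C)$ and counting points of a translate of $H$ in a parallelepiped of volume $(m/m_0)^{r-1}\mathrm{vol}_{r-1}(C)$; and (3) from the bounded diameter of the tiles against the $(r-2)$-dimensional boundary, where your tube-volume count makes the paper's one-line justification precise. Your caveat in (1) is correct and goes beyond the paper, whose ``the first point is clear'' overlooks it: for an unlucky basis the $C_u$ with $u\in H_{m_0}\subset\mathbb{N}^r$ do not cover $m_0\Delta_{\underline a}$ (e.g.\ $\underline a=(1,1,1)$, $v_1=(1,-1,0)$, $v_2=(1,0,-1)$, where $(0,m_0-\tfrac12,\tfrac12)$ lies only in the tile based at $(-1,m_0,1)$), and enlarging the index set to all lattice points of $\mathcal{H}_{m_0}$ whose tile meets the simplex repairs this at no cost, since the added tiles all meet $\partial(m_0\Delta_{\underline a})$ and are therefore counted by (3).
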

\begin{proof}[Proof of the lemma] The first point is clear. The third point is easy to check since $\partial ( m_0 \cdot \Delta_{\underline{a}})$ is a union of $r - 2$ dimensional polyhedrons, and since all the $C_u$ are isometric, of diameter independent of $m_0$. 

	Let us prove the second point. Let $C = \sum_{j=1}^{r-1} [0,1[ \cdot\, v_{j}$: this is a fundamental domain for the lattice $H$. If $C_u \subseteq m_{0} \cdot \overset{\circ}{\Delta_{\underline{a}}}$, we have $C_u = C_u^\circ = u + C$, hence
\begin{align*}
(\mathbb R_+ \cdot C_u) \; \cap \; H_m & =   \mathbb R_+ \cdot \left( u + C \right) \cap H_m\\
& = \left[ \frac{m}{m_0} u +  \frac{m}{m_0} C \right] \cap H_m 
\end{align*} 

Thus, applying a dilation of ratio \(\frac{1}{m}\), we get
\begin{align*}
\mathrm{card}
	\left((\mathbb R_+ \cdot C_u) \; \cap \; H_m\right) & =
	\mathrm{card} 
	\left( 
	\left[ 
	\frac{1}{m_0} u 
	+ 
	\frac{1}{m_{0}} C \right] 
	\cap 
	\frac{1}{m} H_{m} 
	\right)
	\,
	\underset{m \to \infty}{\sim}
	\,
	\left(\frac{m}{m_{0}}\right)^{r-1} \\
\end{align*} 
	where we use the fact that the polytope in the brackets is an \((r-1)\)-dimensional parallelepiped of volume equal to \(\frac{1}{m_{0}^{r-1}}\). This ends the proof in the case where $C_u \subseteq m_0 \cdot \overset{\circ}{\Delta_{\underline{a}}}$. The proof of the general claim follows in the same lines, using $C_u \subseteq u + C$.
\end{proof}

\noindent
\emph{\refstepcounter{stepcc} Step~\thestepcc. \label{step:boundlb} We apply Proposition \ref{prop:cone} to each cone $\mathbb R_+ \cdot C_u$.}
\medskip

Let \(u \in H_{m_{0}}\). By construction, if \(\underline{u}_{1}, \dotsc, \underline{u}_{p}\) are the vertices of the polyhedron \(C_{u}\), we have 
\[
	\norm{\frac{u}{m_{0}} - \frac{1}{m_{0}} 
	\, 
	\underline{u}_{i}}_{\infty} 
	\leq 
	\frac{\max \norm{v_{i}}_{\infty}}{m_{0}} 
	\leq 
	\epsilon.
\]
Thus, we are in the setting of Proposition \ref{prop:cone}. For any \(m\), and any \((l_{1}, \dotsc, l_{r}) \in (\mathbb{R}_{+} \cdot C_{u} )\; \cap \; H_{m}  = (\sum_{j} \mathbb{R}_{+} \cdot \underline{u}_{j}) \cap H_{m}\), this gives
\begin{equation} \label{eqapplpropcone}
\chi^{[i]}(X, L_1^{\otimes l_1} \otimes \dotsc \otimes L_r^{\otimes l_r}) \leq (-1)^i \left(\upsilon_{[\leq j]}(\frac{u}{m_0}) + O(\epsilon)\right) \frac{m^n}{n!} + O(m^{n-1}). 
\end{equation} 
where the constant in $O(\epsilon)$ do not depend on $m_0$ nor on $m$.
\medskip

\noindent
\emph{\refstepcounter{stepcc} Step~\thestepcc. We sum over all cones $\mathbb R_+ \cdot C_u$.} We can sort the $\underline{l} \in H_m$ among the cones $\mathbb R \cdot C_u$ to which they belong, and get:
\begin{align*}
\chi^{[i]} (X,  S^m (L_1^{(a_1)} & \oplus \dotsc \oplus L_r^{(a_r)}))   = \sum_{\underline{l} \in H_m} \chi^{[i]} (X, L_1^{\otimes l_1} \otimes \dotsc \otimes L_r^{\otimes l_r})  \\
 &  = \sum_{u \in H_{m_0}} \left(  \sum_{\underline{l} \in (\mathbb R_+ \cdot C_u) \; \cap \; H_m} \chi^{[i]} (X, L_1^{\otimes l_1} \otimes \dotsc \otimes L_r^{\otimes l_r}) \right) \\
\end{align*}

Now, Step~\ref{step:boundlb} permits to bound this from above by
	\begin{equation} \label{eqintermediate1}
(-1)^i \sum_{u \in H_{m_0}}  \left( \sum_{\underline{l} \in (\mathbb R_+ \cdot C_u) \cap H_m} 1 \right) \cdot\left[ (\upsilon_{[\leq j]} \left(\frac{u}{m_0} \right) + O(\epsilon) ) \frac{m^n}{n!} + O(m^{n-1}) \right]. 
\end{equation}

We can apply Lemma \ref{lemauxiliary} (2), (3) to get an upper bound by
	\begin{equation} \label{eqintermediate2}
		(-1)^i \sum_{u \in H_{m_0}}  \left(\frac{m}{m_0} \right)^{r-1} \cdot\left[ (\upsilon_{[\leq j]} \left(\frac{u}{m_0} \right) + O(\epsilon) ) \frac{m^n}{n!} + O(m^{n-1}) \right] + O \left( \frac{m}{m_0} \right)^{r-1} O(m_0^{r-2}) O(m^n) 
\end{equation}
Indeed, we can split the sum over $u \in H_{m_0}$ in formula \eqref{eqintermediate1} in two, distinguishing among the $u$ for which $C_u \cap \partial (m_0 \cdot \Delta_{\underline{a}}) \neq \emptyset$ of not. Lemma \ref{lemauxiliary} (3) bounds the first part of the sum by the second member of the formula above. On the other hand, the sum over the elements $u \in H_{m_0}$ for which \(C_{u} \subset (m_{0} \cdot \overset{\circ}{\Delta}_{\underline{a}})\) can be bounded from above using Lemma \ref{lemauxiliary} (2) and (3): the obtained sum assumes the form \eqref{eqintermediate2}.
\medskip

Thus, we have proved that for any fixed $\epsilon > 0$, and any $m_0 > \frac{C}{\epsilon}$, we have
\begin{equation} \label{eqlimsup}
	\limsup_{m \longrightarrow + \infty} 
	\frac{\chi^{[i]} (X,  S^m (L_1^{(a_1)} \oplus \dotsc \oplus L_r^{(a_r)}))}{m^{n+ r - 1}} 
	\leq 
	\frac{(-1)^i}{n!} 
	\left( 
	\frac{1}{m_0^{r-1}} 
	\sum_{u \in H_{m_0}} 
	\upsilon_{[\leq i]}
	\left( \frac{u}{m_0} \right) 
	\right) 
	+ 
	C_1 \epsilon 
	+ 
	\frac{C_2}{m_0}.
\end{equation}
The constant \(C_{1}\) does not depend on \(m_{0}\). Indeed, in \eqref{eqintermediate2}, the constant in the \(O(\epsilon)\) term is independent of \(m_{0}\), and we have \(\frac{1}{m_{0}^{r-1}} \left( \sum_{u \in H_{m_{0}}} 1 \right) = \frac{1}{m_{0}^{r-1}} \mathrm{card}(H_{m_{0}}) \leq D\) for some constant \(D\) depending only on \(\underline{a}\). Also, the constant $C_2$ comes from the second member of \eqref{eqintermediate2} and does not depend on $\epsilon$.
\medskip

\noindent
\emph{\refstepcounter{stepcc} Step~\thestepcc. We recognize a Riemann sum in the upper bound \eqref{eqlimsup}.}

As the element $u$ runs among $H_{m_0}$, the element $t = \frac{u}{m_0}$ runs among a lattice in $\Delta_{\underline{a}}$, with fundamental domain isometric to $\frac{1}{m_0} C$. The latter has euclidean volume $\frac{1}{m_0^{r-1}} \mathrm{vol}_{r-1}(C)$. Thus, as $m_0 \longrightarrow + \infty$, we have
\begin{align*}
\frac{1}{m_0^{r-1}} \sum_{u \in H_{m_0}} \upsilon_{[\leq j]} \left(\frac{u}{m_0}\right) & = \frac{ \mathrm{vol}_{r-1}(\frac{1}{m_0} C)}{\mathrm{vol}_{r-1}(C)} \sum_{u \in H_{m_0}} \upsilon_{[\leq j]} \left(\frac{u}{m_0}\right) \\
&  \underset{m_0 \longrightarrow + \infty}{\longrightarrow} \mathrm{vol}_{r-1}(C)^{-1} \int_{\Delta_{\underline{a}}} \upsilon_{[\leq j]} \, d \mathrm{vol}_{r-1}
\end{align*} 
This shows that first letting $m_0 \longrightarrow + \infty$ and then \(\epsilon \longrightarrow 0\) in \eqref{eqlimsup}, gives
\[
	\limsup_{m \longrightarrow + \infty} \frac{\chi^{[i]} (X,  S^m (L_1^{(a_1)} \oplus \dotsc \oplus L_r^{(a_r)}))}{m^{n+ r - 1}} 
	\leq 
	\frac{(-1)^{i}}{n!} 
	\mathrm{vol}_{r-1}(C)^{-1} 
	\left( \int_{ \Delta_{\underline{a}}} \upsilon_{[\leq i]} 
	\, d \mathrm{vol}_{r-1} \right)
\]

To conclude, it suffices to use Lemma \ref{lem:lattice2}, joint to the fact that $dP = \frac{1}{\mathrm{vol}_{r-1}(\Delta_{\underline{a}})} d \mathrm{vol}_{r-1}$. 

\medskip

\section{Twist by an auxiliary $\mathbb Q$-line bundle} \label{sec:twist}

In this section, we will present a variant of Theorem~\ref{thmineqintegral} in which we allow the possibility of taking an auxiliary twist by a \(\mathbb{Q}\)-line bundle. We will state our result in a quite simple setup: in the end, all the cumbersome related to the definition of the symmetric product \(S^{m}_{H}\) will vanish, but the result will nonetheless illustrate the necessity of stating Theorem~\ref{thmineqintegral} in this level of generality.

\subsection{Statement of the result}

\begin{setup} Let \(X\) be a projective variety of dimension \(n\), and let \(L_{1}, \dotsc, L_{r}\) be {\em standard line bundles} on \(X\). Pick a stratification \(\Sigma\), with trivializations \(\mathbf{e}_{i}\) for each \(L_{i}\). Let \(N\) be a \(\mathbb Q\)-line bundle on \(X\), so that \(\Sigma\) is also adapted to \(N\), and let \(\frac{1}{d} \mathbf{g}\) be a fractional trivialization of \(N\) over \(\Sigma\). For each edge \(\mathfrak{s}\) in the tree \(\mathcal{T}\) associated to \(\Sigma\), we let \(m_{i}^{\mathfrak{s}}\) (resp. \(p^{\mathfrak{s}}\)) be the marking of \(\mathfrak{s}\) associated to \(\mathbf{e}_{i}\) (resp. \(\mathbf{g}\)). 
\end{setup}

We want to estimate the quantities \(\chi^{[i]}(X, S^{m}(L_{1}^{(a_{1})} \oplus \dotsc \oplus L_{r}^{(a_{r})}) \otimes N^{\otimes m})\), as \(m\) is sufficiently divisible. To do this, we need to adapt Definition \ref{defiupsilon} to take into account our supplementary data.

\begin{defi} \label{defiupsilonN} Let $(t_1, \dotsc, t_r) \in \mathbb R^r$. Mark each edge $\mathfrak{s}$ in $\mathcal T$ with the real number 
	\begin{equation} \label{eq:upsilonN}
		t_{1} m_{1}^{\mathfrak{s}} 
		+ 
		\dotsc 
		+ 
		t_{r} m_{r}^{\mathfrak{s}} 
		+ 
		\frac{1}{d} p^{\mathfrak{s}}.
	\end{equation}
		
		For each complete path $\sigma$ in $\mathcal T$, denote by $C_{\sigma}$ the product of all markings along the edges of $\sigma$. Then, for all $i \in \llbracket 1, n \rrbracket$, we let
	\[
	\upsilon_{[\leq i]}^{N} (t_1, \dotsc, t_r) 
	= 
	\sum_{\mathrm{index}(\sigma) \leq i} 
	C_{\sigma}\, \deg(V_{\sigma}),
	\]
	where the sum runs among all the complete path with a number of negative markings $\leq i$.
\end{defi}

We can now state the following corollary to Theorem \ref{thmineqintegral}.

\begin{corol} \label{corolineintegral} Let $\underline{a} = (a_1, \dotsc, a_r) \in \mathbb N^r$, and let $P$ denote the uniform probability measure on $\Delta_{\underline{a}}$. Then, for all $i \in \llbracket 0, n \rrbracket$, and any $m$ divisible enough, we have the asymptotic upper bound
\begin{align*}
	\chi^{[i]} (X, N^{\otimes m} \otimes S^m (&L_1^{(a_1)} \oplus \dotsc  \oplus L_r^{(a_r)}) )  \\
& \leq  \frac{\mathrm{gcd}(a_1, \dotsc, a_r)}{a_1 \dotsc a_r} \binom{n + r - 1}{r - 1} \left[ \int_{\Delta_{\underline{a}}} \upsilon_{[\leq i]}^N dP \right] \frac{m^{n+ r - 1}}{(n+ r - 1)!} + o(m^{n+r - 1}).
\end{align*}

\end{corol}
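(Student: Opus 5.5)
We reduce Corollary~\ref{corolineintegral} to Theorem~\ref{thmineqintegral} by absorbing the twist into the weighted symmetric product. The plan is to add an extra \(\mathbb{Q}\)-line bundle. Put \(L_{r+1} := N\) and give it a weight \(a_{r+1}\) chosen so that only one value of its exponent contributes. Concretely, take weights \((a_1, \dotsc, a_r, a_{r+1})\) and consider the subset of the expansion of \(S^{m'}(L_1^{(a_1)} \oplus \dotsc \oplus L_r^{(a_r)} \oplus N^{(a_{r+1})})\) where the exponent of \(N\) equals \(m\). Isolating a single exponent, however, does not fit the theorem directly. A cleaner route is to absorb \(N\) into the \(L_i\) themselves: set \(L_i' := L_i \otimes N^{\otimes a_i}\) as \(\mathbb{Q}\)-line bundles. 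Since \(\sum_i a_i l_i = m\), we get
\[
	L_1'^{\otimes l_1} \otimes \dotsc \otimes L_r'^{\otimes l_r}
	=
	N^{\otimes m} \otimes L_1^{\otimes l_1} \otimes \dotsc \otimes L_r^{\otimes l_r},
\]
so formally \(N^{\otimes m} \otimes S^m(\mathbf{L}) = S^m_H(L_1'^{(a_1)} \oplus \dotsc \oplus L_r'^{(a_r)})\).

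The first step is to check the hypothesis \eqref{eq:inclgroups} for the \(L_i'\). If \(\sum a_i b_i = 0\), then \(\bigotimes L_i'^{\otimes b_i} = \bigotimes L_i^{\otimes b_i}\), which is a standard line bundle. So \(\Lambda_0\) contains that lattice. We then choose \(H\) so that its elements are the images of the genuine line bundles \(N^{\otimes m}\otimes \bigotimes L_i^{\otimes l_i}\) whenever \(m\) is divisible by \(d\) (and by \(m_0\) from Proposition~\ref{prop:condgroup}). Here \(N^{\otimes m}\) is represented by \((N^{\otimes d})^{\otimes m/d}\). Lemma~\ref{lem:indepH} shows that the choice of \(H\) is irrelevant at the order \(m^{n+r-1}\). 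I would also check that the torsion ambiguity between \(N^{\otimes m}\), viewed as a power of a fixed standard bundle, and the representative in \(H\) stays within a finite set. This lets us invoke Lemma~\ref{lem:asymptseveral} exactly as in the proof of Lemma~\ref{lem:indepH}.

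The second step is the stratification data. On \(\Sigma\), equip \(L_i'\) with the fractional trivialization \(\frac{1}{d}\bigl(\mathbf{e}_i^{\otimes d} \otimes \mathbf{g}^{\otimes a_i}\bigr)\), which is a trivialization of \(L_i'^{\otimes d}\). Its marking on an edge \(\mathfrak{s}\) is \(m_i^{\mathfrak{s}} + \frac{a_i}{d} p^{\mathfrak{s}}\). Plugging these markings into Definition~\ref{defiupsilon}, the edge label becomes
\[
	\sum_i t_i m_i^{\mathfrak{s}} + \Bigl(\sum_i a_i t_i\Bigr)\frac{p^{\mathfrak{s}}}{d}.
\]
On \(\Delta_{\underline{a}}\) we have \(\sum_i a_i t_i = 1\), so this is exactly the label \eqref{eq:upsilonN}. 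Hence \(\upsilon_{[\leq i]} = \upsilon^N_{[\leq i]}\) on the simplex, and Theorem~\ref{thmineqintegral} gives the stated bound.

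The main obstacle is the bookkeeping in the first step: making sure that the object \(N^{\otimes m}\otimes S^m(\cdots)\), built from standard bundles, coincides with \(S^m_H\) for some admissible \(H\), or at least agrees with it up to finitely many torsion twists. I expect this to follow from the argument of Lemma~\ref{lem:indepH} without new ideas.
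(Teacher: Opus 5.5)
Your proposal is correct and matches the paper's proof. The paper also sets \(M_j = L_j \otimes N^{\otimes a_j}\), checks \eqref{eq:inclgroups} using \(\sum_j a_j b_j = 0\), chooses \(H\) so that \(S^m_H(M_1^{(a_1)}\oplus\dotsc\oplus M_r^{(a_r)})\) agrees with \(S^m(L_1^{(a_1)}\oplus\dotsc\oplus L_r^{(a_r)})\otimes N^{\otimes m}\), and uses trivializations with markings \(m_j^{\mathfrak{s}} + \frac{a_j}{d}p^{\mathfrak{s}}\); these reduce to \eqref{eq:upsilonN} on \(\Delta_{\underline{a}}\) because \(\sum_j a_j t_j = 1\), and your explicit \(\mathbf{g}^{\otimes a_i}\) factor and your handling of the finitely many torsion twists make that bookkeeping precise.
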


\begin{rem}
	To ease future reference, we made in the statement the slight abuse of language that we tried to avoid in the discussion of Sections~\ref{sectstatement} and \ref{sec:redBGcoverings}. In this statement, what is meant by \(N^{\otimes m}\) is a power of the form \(M^{\otimes m/d}\), where \(M\) is a standard line bundle so that \(N^{\otimes d}\) is the image of \(M\) in \(\mathrm{Pic}(X)_{\mathbb{Q}}\). If we make a different choice, this will add a torsion line bundle in the truncated Euler characteristic, ranging in a finite set; this will not affect the asymptotic expansion.
\end{rem}
\bigskip

The proof follows from the following fact.

\begin{lem}
	Let \(M_{j} := L_{j} \otimes N^{\otimes a_{j}} \in \mathrm{Pic}_{\mathbb{Q}}(X)\) for all \(j \in \llbracket 1, r \rrbracket\). Then :
	\begin{enumerate}
		\item in Proposition~\ref{prop:condgroup}, if we replace the \(L_{i}\) by the \(M_{i}\), the condition \eqref{eq:inclgroups} is satisfied;
		\item we may pick \(H\) in Proposition~\ref{prop:condgroup} so that for any \(m \in \mathbb{N}\) divisible enough, we have
	\[
		S^{m}_{H}
		(M_{1}^{(a_{1})}
		\oplus
		\dotsc
		\oplus
		M_{r}^{(a_{r})})
		=
		S^{m}
		(L_{1}^{(a_{1})} \oplus \dotsc \oplus L_{r}^{(a_{r})})
		\otimes
		N^{\otimes m}
	\]
	\end{enumerate}
\end{lem}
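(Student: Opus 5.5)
Both items reduce to a computation in the group \(\Lambda_{0}\) attached to the family \((M_{1}, \dotsc, M_{r})\). Fix \(d \geq 1\) such that \(N^{\otimes d}\) is the image of a standard line bundle \(G \in \mathrm{Pic}(X)\).

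\emph{Item (1).} Let \((b_{1}, \dotsc, b_{r}) \in \mathbb{Z}^{r}\) with \(\sum_{j} a_{j} b_{j} = 0\). Then in \(\mathrm{Pic}(X)_{\mathbb{Q}}\) we have
\[
	M_{1}^{\otimes b_{1}} \otimes \dotsc \otimes M_{r}^{\otimes b_{r}}
	=
	L_{1}^{\otimes b_{1}} \otimes \dotsc \otimes L_{r}^{\otimes b_{r}} \otimes N^{\otimes \sum_{j} a_{j} b_{j}}
	=
	L_{1}^{\otimes b_{1}} \otimes \dotsc \otimes L_{r}^{\otimes b_{r}}.
\]
Since the \(L_{j}\) are standard line bundles, this element lies in \(P(X)\). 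Hence \((b_{1}, \dotsc, b_{r}) \in \Lambda_{0}\), which is exactly the inclusion \eqref{eq:inclgroups}.

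\emph{Item (2).} The goal is to choose \(H\) so that its lifts are literally the bundles \(L^{\otimes \underline{b}} \otimes G^{\otimes k}\). For \(\underline{b} \in \Lambda_{0}\), write \(s(\underline{b}) := \sum_{j} a_{j} b_{j}\). Consider the subgroup
\[
	\Lambda_{0}' := \{ \underline{b} \in \Lambda_{0} \;|\; d \mid s(\underline{b}) \}.
\]
It has finite index in \(\Lambda_{0}\). On \(\Lambda_{0}'\), the assignment
\[
	\underline{b} \longmapsto L_{1}^{\otimes b_{1}} \otimes \dotsc \otimes L_{r}^{\otimes b_{r}} \otimes G^{\otimes s(\underline{b})/d} \in \mathrm{Pic}(X)
\]
is a group homomorphism. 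Its composite with \(\mathrm{Pic}(X) \to \mathrm{Pic}(X)_{\mathbb{Q}}\) is \(\underline{b} \mapsto M^{\otimes \underline{b}}\). Let \(H'\) be its image.

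By item (1) and the argument of Proposition~\ref{prop:condgroup}, for \(m\) divisible by a suitable \(m_{0}\) (which we also take divisible by \(d\)), every \(\underline{b}\) with \(\sum_{j} a_{j} b_{j} = m\) lies in \(\Lambda_{0}'\). We then want to take \(H = H'\), so we need two things.
\begin{enumerate}[label=(\alph*)]
	\item \(H'\) is free. It is a finitely generated subgroup of \(\mathrm{Pic}(X)\), so it suffices to show it is torsion-free.
	\item \(H'\) maps isomorphically onto its image in \(\mathrm{Pic}(X)_{\mathbb{Q}}\). This holds exactly when \(H'\) has no torsion, since the kernel of \(\mathrm{Pic}(X) \to \mathrm{Pic}(X)_{\mathbb{Q}}\) is the torsion subgroup.
\end{enumerate}
So both requirements come down to torsion-freeness of \(H'\).

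This is the main obstacle, and \(H'\) need not be torsion-free in general. I would handle it as follows. Pick a basis \(\underline{c}_{1}, \dotsc, \underline{c}_{s}\) of the free group \(\Lambda_{0}'\), and let \(H\) be generated by lifts of the images of the \(M^{\otimes \underline{c}_{i}}\), exactly as in Proposition~\ref{prop:condgroup}. The lift of \(M^{\otimes \underline{b}}\) in \(H\) then differs from \(L^{\otimes \underline{b}} \otimes G^{\otimes s(\underline{b})/d}\) by a torsion bundle. That torsion bundle is a homomorphic function of \(\underline{b}\), so it ranges in a finite set. By the Remark following the Corollary, and by the argument of Lemma~\ref{lem:indepH}, such finitely many torsion twists do not affect the asymptotics.

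If one insists on a literal equality, I would instead note that the image of \(\Lambda_{0}'\) in \(\mathrm{Pic}(X)_{\mathbb{Q}}\) is free. Whenever the homomorphism above kills torsion, one can replace \(G\) by \(G \otimes T\) with \(T\) torsion, or pass to a finite-index subgroup, so that the homomorphism is injective modulo torsion with torsion-free image. One can then take \(H = H'\).

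With this choice, for every \(\underline{b}\) with \(\sum_{j} a_{j} b_{j} = m\), the summand \([M^{\otimes \underline{b}}]_{H}\) equals \(L^{\otimes \underline{b}} \otimes G^{\otimes m/d} = L^{\otimes \underline{b}} \otimes N^{\otimes m}\). Summing these summands over all such \(\underline{b}\) gives the stated identity
\[
	S^{m}_{H}(M_{1}^{(a_{1})} \oplus \dotsc \oplus M_{r}^{(a_{r})})
	=
	S^{m}(L_{1}^{(a_{1})} \oplus \dotsc \oplus L_{r}^{(a_{r})}) \otimes N^{\otimes m}.
\]
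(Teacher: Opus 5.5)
Item (1) is correct and is the paper's argument. The gap is in your last step for (2): the claim that replacing $G$ by $G\otimes T$, or passing to a finite-index subgroup, makes $\underline b\mapsto L^{\otimes\underline b}\otimes G^{\otimes s(\underline b)/d}$ have torsion-free image, so that you can take $H=H'$. Replacing $G$ by $G\otimes T$ changes each candidate summand only by $T^{\otimes s(\underline b)/d}$, and this factor is trivial on $\ker s$. Nor can you discard anything: the identity must hold for every $\underline b\in\mathbb N^r$ with $s(\underline b)=m$, and two such vectors differ by an element of $\ker s$. So if some $\underline c\in\ker s$ has $L^{\otimes\underline c}$ a nontrivial torsion bundle, no $H$ works.

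Concretely, let $X$ carry a nontrivial torsion line bundle $T$, and take $r=2$, $a_1=a_2=1$, $L_1=T$, $L_2=\mathcal O_X$, $N=\mathcal O_X$. Then $\Lambda_L=0$, so $H=0$ is forced and $S^m_H(M_1^{(1)}\oplus M_2^{(1)})=\mathcal O_X^{\oplus(m+1)}$. On the other hand $S^m(L_1^{(1)}\oplus L_2^{(1)})\otimes N^{\otimes m}=\bigoplus_{l=0}^m T^{\otimes l}$. For $m\ge 1$ these are not even isomorphic: a line bundle that is a direct summand of $\mathcal O_X^{\oplus(m+1)}$ has a nonzero section, and so does its dual, hence it is trivial. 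So (2) is false as literally stated, and no argument can produce the literal identity.

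What is true is your first fix. Choose $H$ as in Proposition~\ref{prop:condgroup}. The ratio between $[M^{\otimes\underline b}]_H$ and $L^{\otimes\underline b}\otimes G^{\otimes s(\underline b)/d}$ is a homomorphism from the finitely generated group $\Lambda_0'$ into the torsion of $\mathrm{Pic}(X)$, so it takes values in a finite set. By Lemma~\ref{lem:asymptseveral}~(3), as in Lemma~\ref{lem:indepH}, each $h^j$ then moves by $O(m^{n-1})$ uniformly, so the sums move by $O(m^{n+r-2})$. That is all Corollary~\ref{corolineintegral} needs, so you should state (2) in this weakened form and stop there.

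The paper's own proof has the same blind spot. It takes $H$ inside the subgroup generated by $L_1,\dots,L_r$ and a standard representative of $N^{\otimes e}$, tacitly assuming that subgroup is torsion-free.

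One small correction to your construction: $H$ must be generated by lifts of a basis of the image of $\Lambda_0$ in $\mathrm{Pic}(X)_{\mathbb Q}$, as in Proposition~\ref{prop:condgroup}. Lifting the images of a basis of $\Lambda_0'$ is not enough: when $\Lambda_0'\to\mathrm{Pic}(X)_{\mathbb Q}$ is not injective (it is zero in the example above), this can put torsion into $H$.
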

\begin{proof}
	Again, \(N^{\otimes m}\) in the statement of this result means \(M^{\otimes m/e}\), where \(M\) is an adequate standard line bundle, fixed once and for all.
	\medskip

\noindent
	{\em (1)} If \(\sum_{j} a_{j} b_{j} = 0\), we have
\[
	M_{1}^{\otimes b_{1}} \otimes \dotsc \otimes M_{r}^{\otimes b_{r}}
	=
	L_{1}^{\otimes b_{1}} \otimes \dotsc \otimes L_{r}^{\otimes b_{r}}
	\otimes
	N^{\otimes (\sum_{j} a_{j} b_{j})}
	=
	L_{1}^{\otimes b_{1}} \otimes \dotsc \otimes L_{r}^{\otimes b_{r}},
\]
which is the image of a standard line bundle in \(\mathrm{Pic}(X)_{\mathbb{Q}}\).
\medskip

\noindent
	{\em (2)} Let \(M \in \mathrm{Pic}(X)\) be such that \(N^{\otimes e}\) is the image of \(M\) for some \(e > 0\). Assume that \(m \in \mathbb{N}\) is divisible by \(e\). Then, any \((b_{1}, \dotsc, b_{r})\) satisfying \(\sum_{j} a_{j} b_{j} = m\) is such that
	\[
		M_{j}^{\otimes b_{1}} \otimes \dotsc \otimes M_{r}^{\otimes b_{r}}
	\]
	is the image of \(L_{1}^{\otimes b_{1}} \otimes \dotsc \otimes L_{r}^{\otimes b_{r}} \otimes M^{\otimes m/e}\) in \(\mathrm{Pic}(X)_{\mathbb{Q}}\). We may thus take a finite number of elements in the group generated by \(L_{1}, \dotsc, L_{r}, M\), that generate in turn a group \(H\) as required.
\end{proof}
\medskip

Thus, applying Theorem~\ref{thmineqintegral} to the \(M_{j}\) will yield the result: we just need to exhibit an adequate trivialization for these line bundles on \(\Sigma\). We can simply pick the trivialization \(\frac{1}{d} (\mathbf{e}_{i}^{d} \otimes \mathbf{g})\). For each \(\mathbb{Q}\)-line bundle \(M_{j}\), this gives to each edge \(\mathfrak{s}\) the weight
\[
	m_{j}^{\mathfrak{s}} + a_{j} \frac{1}{d} p^{\mathrm{s}}.
\]

Thus, for \((t_{1}, \dotsc, t_{r}) \in \Delta_{\underline{a}}\), Definition~\ref{defiupsilon} prescribes to mark each edge \(\mathfrak{s}\) with the weight
\[
	\sum_{1 \leq j \leq r}
	t_{j} (
	m_{j}^{\mathfrak{s}} + a_{j} \frac{1}{d} p^{\mathrm{s}}.
	)
\]

However, since \(\sum_{1 \leq j \leq r} t_{j} a_{j} = 1\), this is exactly equal to \eqref{eq:upsilonN}. This shows that the inequality provided by Theorem~\ref{thmineqintegral} coincides with the one of Corollary~\ref{corolineintegral}.

\chapter{Existence of jet differentials on varieties of general type} \label{chap:existence}

In this chapter, we prove Demailly's existence theorem for Hasse-Schmidt jet differentials on a smooth projective variety of general type.
\medskip

{\em In this section, we assume that \(\mathbbm{k}\) is a an arbitrary algebraically closed field.}
\medskip

The main result is the following:

\begin{thm} [\cite{dem11}, for \(\mathbbm{k} = \mathbb{C}\)] \label{thm:existence} Let \(X\) be a smooth projective variety over the field \(\mathbbm{k}\), and let \(A\) be an ample divisor on \(X\). Assume that \(X\) is of general type. Then, for a sufficiently large fixed \(k \gg 1\), the \(\mathcal{O}_{X}\)-algebra \(E_{k, \bullet}^{HS}\Omega_{X}\) is big. In other words, there exists a constant \(C_{k} > 0\) such that for \(m \gg 1\) sufficiently divisible, we have
	\[
		h^{0}(X, E_{k, m}^{HS}\Omega_{X}) \geq C_{k} m^{n+kn-1}
	\]
	This implies that for \(m \gg k \gg 1\), we also have
	\[
		H^{0}(X, E_{k, m}^{HS} \Omega_{X} \otimes \mathcal{O}_{X}(-A)) \neq 0.
	\]
\end{thm}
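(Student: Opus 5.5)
The plan follows the reduction sketched in the introduction, in four steps.

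\emph{Step 1: pass to the graded object.} Since $\chi^{[1]} = h^{1} - h^{0}$, we have $h^{0} \geq -\chi^{[1]}$. By Theorem~\ref{thm:grading} the canonical filtration of $E_{k,m}^{HS}\Omega_{X}$ has graded object $S^{m}(\mathbf{\Omega}_{X}^{k})$. Lemma~\ref{lem:filtration} then gives
\[
h^{0}(X, E_{k,m}^{HS}\Omega_{X}) \geq -\chi^{[1]}(X, S^{m}(\mathbf{\Omega}^{k}_{X})).
\]
So it suffices to bound $\chi^{[1]}$ of the weighted symmetric power from above by $-C_{k} m^{n+nk-1}$.

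\emph{Step 2: split $\Omega_{X}$.} Apply Lemma~\ref{lem:splittingprinciple}, combined with a Kawamata/Bloch--Gieseker covering (Lemma~\ref{lem:BGcoveringannex}, used as in Section~\ref{sec:redBGcoverings}), to get a generically finite projective $p : X' \to X$ with the following properties:
\begin{itemize}
\item $p^{\ast}\Omega_{X}$ has a total filtration with line bundle quotients $L_{1}, \dotsc, L_{n}$;
\item $p^{\ast}K_{X} = L_{1} \otimes \dotsc \otimes L_{n} \sim_{\mathbb{Q}} A + E$, with $A$ an ample $\mathbb{Q}$-divisor, $E$ effective, and $A^{n} > \mathrm{vol}(K_{X}) - \epsilon$.
\end{itemize}
Applying Lemma~\ref{lem:filtration} a second time, to the filtration induced on $S^{m}(p^{\ast}\mathbf{\Omega}^{k}_{X})$, reduces us to the split weighted sum $\bigoplus_{i,j} L_{i}^{(j)}$ with $r = nk$ and weights $\underline{a} = (1,\dotsc,1,2,\dotsc,2,\dotsc,k,\dotsc,k)$, each $j$ repeated $n$ times.

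To return from $X'$ to $X$, I will use that bigness of the tautological $\mathcal{O}(1)$ on the coarse Green--Griffiths space is preserved under the generically finite base change induced by $p$. This lets me argue with $p^{\ast}E_{k,\bullet}^{HS}\Omega_{X}$ throughout.

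\emph{Step 3: build the stratification.} Take a trivialized stratification on $X'$ adapted to $A$ with nonnegative multiplicities, via Proposition~\ref{prop:conststratample}, so that $\deg c_{1}(A,\underline{\Sigma})^{n}_{[\leq 1]} = A^{n}$. By Proposition~\ref{proprefine} refine it so it is adapted to each $L_{i}$ and to $E$. Choose the trivializations $\mathbf{e}_{i}$ of the $L_{i}$ so that their tensor product is $\mathbf{e}_{A} \otimes \mathbf{e}_{E}$; by Proposition~\ref{proprefine2} the refinement does not change the $A$-degrees. The $E$-part is the delicate point, because on strata contained in $\mathrm{Supp}\,E$ the section of $E$ cannot serve as a trivialization. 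I would handle it by passing to the subsheaf twisted by $-E$ in the appropriate weighted degree, i.e. by Corollary~\ref{corolineintegral} with $N = -E$ suitably scaled. With that twist the effective markings of $\bigotimes_{i} L_{i}$ become those of $A$.

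\emph{Step 4: estimate the integral.} Theorem~\ref{thmineqintegral} (or Corollary~\ref{corolineintegral}) bounds $\chi^{[1]}$ by a constant times $\int_{\Delta_{\underline{a}}} \upsilon_{[\leq 1]} \, dP$. At a point $t = (t_{ij})$ of the simplex, the marking of an edge $\mathfrak{s}$ is $\sum_{i} S_{i}(t)\, m_{i}^{\mathfrak{s}}$, where $S_{i} = \sum_{j} t_{ij}$. Under the uniform measure on $\Delta_{\underline{a}}$ the $t_{ij}$ behave like independent exponentials of mean $\asymp 1/j$, normalized by the constraint $\sum_{i,j} j\, t_{ij} = 1$. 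A law-of-large-numbers argument (Annex~B) should then show that each $S_{i}$ concentrates around a common value $S \asymp \log k / k$, with relative fluctuation $O(1/\log k)$.

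On the concentration event all markings are close to $S \cdot m_{A}^{\mathfrak{s}}$. Since $A$ has only nonnegative markings, I expect this to give
\[
\int \upsilon_{[\leq 1]} \, dP \approx \mathbb{E}[S^{n}]\,\bigl(A^{n} - O(1/\log k)\bigr).
\]
Inserting the explicit constants $\frac{\gcd}{a_{1}\cdots a_{r}} = (k!)^{-n}$ and $\binom{n+nk-1}{nk-1}$ should reproduce \eqref{eq:lowerboundvol}. This is positive for $k \gg 1$, which proves bigness; the statement with $\mathcal{O}(-A)$ then follows by the usual Kodaira-type argument for big algebras.

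\emph{Main obstacle.} I expect the hard part to be Step 4 together with the sign issue at the end of Step 3. Concentration alone does not control index-$\leq 1$ paths on edges where the markings $m_{i}^{\mathfrak{s}}$ have mixed signs while their sum is small. I would first try to make the markings of the $L_{i}$ all nonnegative except on $E$-edges, by choosing the refinement carefully, and then bound the contribution of the remaining bad paths by $O(\mathbb{E}[S^{n}]/\log k)$ using variance estimates for the $S_{i}$.
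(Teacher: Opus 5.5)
Your plan is essentially the paper's proof. The paper passes to the graded object on a generically finite cover (Propositions~\ref{proptwisteestimate} and \ref{propredlb}), uses a Kawamata cover so that $p^{\ast}K_X = A + G$ with $A$ ample and $G$ Cartier, takes the stratification of Proposition~\ref{prop:conststratample} for $A$, twists by $\mathcal{O}(-mF_k)$ with $F_k=\frac{1}{kn}(1+\frac12+\cdots+\frac1k)G$ (this scaling is exactly the mean of your $S_i$), and concludes with a concentration estimate.

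The sign issue you flag as the main obstacle is settled in the paper by Lemma~\ref{lemupperbounddiff}. That lemma is a telescoping bound combined with Cauchy--Schwarz, and it uses only $\mathrm{E}(A_i)=c_k\, m_A^{\mathfrak{s}}$ and $\mathrm{Var}(A_i)=O(k^{-2})$ (Lemma~\ref{lemvariance}). So no sign control on the individual $L_i$-markings is needed, and your first idea of making them nonnegative would fail in general anyway.
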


Recall that we say that a smooth projective variety \(X\) is of {\em general type} if its canonical bundle is {\em big}, i.e. satisfies
\[
	K_{X} \sim_{\mathbb{Q}} A + E
\]
where \(A\) and \(E\) are respectively ample and effective \(\mathbb{Q}\)-Cartier divisors.

\section{Weighted direct sums and Green-Griffiths vector bundles} \label{sectweightproj}

\medskip

\begin{setup}
	Let \(X\) be a smooth projective variety over \(\mathbbm{k}\). We resume the notation introduced in Section~\ref{sec:weighted}. In particular, we introduce the following weighted direct sums
	\[
		\mathbf{\Omega}_{X}^{k}
		:=
		\Omega_{X}^{(1)}
		\oplus
		\dotsc
		\oplus
		\Omega_{X}^{(k)}
	\]
\end{setup}
\medskip

Recall that the \(\mathcal{O}_{X}\)-algebra \(E_{k, \bullet}^{HS} \Omega_X\) is endowed with a canonical \(\mathbb N^{k}\)-filtration, that we denote by \(F_\bullet E_{k, \bullet}^{HS} \Omega_X\). By Theorem~\ref{thm:grading}, its associated graded algebra is isomorphic to 
\begin{equation} \label{eqgradedterm}
	\mathrm{Gr}_{F} \left( E_{k, \bullet}^{HS} \Omega_X \right) 
	\cong 
	S^{\bullet} 
	\mathbf{\Omega}^{k}_{X} 
\end{equation}
where the right hand term is defined as in \eqref{eq:deftensorweighted}.
\medskip

To prove Theorem \ref{thm:existence}, the strategy coming from \cite{dem11} is to bound from above $\chi^{[1]}(X, E_{k, m}^{HS} \Omega_X)$ for a sufficiently large \(k \in \mathbb{N}\), with \(m\) going to infinity. By the standard growth properties of the truncated Euler characteristic, it is actually enough to bound from above the graded term.

\begin{prop} \label{propcompjetvb} For any $k, m \in \mathbb N$, and any $0 \leq i \leq n$, we have
\begin{equation} \label{equpperboundjet}
\chi^{[i]} 
	\left( 
	X, E_{k, m}^{HS} \Omega_{X} 
	\right) 
	\leq 
	\chi^{[i]} 
	\left( 
	X, S^{m} \mathbf{\Omega}_{X}^{k}) 
	\right).
\end{equation}
\end{prop}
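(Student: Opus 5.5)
The plan is to obtain \eqref{equpperboundjet} by applying Lemma~\ref{lem:filtration} to the canonical filtration of \(E_{k,m}^{HS}\Omega_X\), and then identifying the graded object through Theorem~\ref{thm:grading}.

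First, the filtration \(F^{\beta} E_{k,m}^{HS}\Omega_X\) of Definition~\ref{defi:canonicalfilt} is indexed by \(\beta \in \mathbb{N}^k\) with the right-lexicographic order of Definition~\ref{defi:lexicographic}. For fixed \(m\), only finitely many indices matter, since a monomial of weighted degree \(m\) has \(\alpha(\mu)\) with \(\sum_p p\,\alpha_p = m\). On this finite totally ordered set \(\beta^{(0)} < \beta^{(1)} < \dotsb < \beta^{(s-1)}\), I would relabel and obtain an honest finite increasing filtration
\[
0 \subset F^{\beta^{(0)}} \subset \dotsb \subset F^{\beta^{(s-1)}} = E_{k,m}^{HS}\Omega_X
\]
by coherent (in fact locally free) subsheaves. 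Local freeness of the steps and of the successive quotients can be checked in étale charts, where everything is spanned by monomials. The direction of indexing is harmless, since Lemma~\ref{lem:filtration} only uses the successive short exact sequences.

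The successive quotients \(F^{\beta^{(j)}}/F^{\beta^{(j-1)}}\) are exactly the graded pieces \(\mathrm{Gr}^{\beta^{(j)}}_F\). Their direct sum is \(\mathrm{Gr}_F(E_{k,m}^{HS}\Omega_X)\), which by Theorem~\ref{thm:grading} is isomorphic in degree \(m\) to \(S^m\mathbf{\Omega}^k_X\). Lemma~\ref{lem:filtration} then gives
\[
\chi^{[i]}(X,E_{k,m}^{HS}\Omega_X) \leq \chi^{[i]}(X,\mathrm{Gr}_F E_{k,m}^{HS}\Omega_X) = \chi^{[i]}(X,S^m\mathbf{\Omega}^k_X).
\]

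The only point needing care is that the filtration really is exhaustive and separated in each degree \(m\), with the quotients matching the graded pieces used in the proof of Theorem~\ref{thm:grading}. I expect this to be routine, by the monomial description in étale coordinates, so no step should be a serious obstacle.
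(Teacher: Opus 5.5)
Your proposal is correct and matches the paper's argument: the paper proves this in one line as a direct application of Lemma~\ref{lem:filtration} to the canonical filtration of \(E_{k,m}^{HS}\Omega_X\), with the graded object identified with \(S^m\mathbf{\Omega}^k_X\) via Theorem~\ref{thm:grading}. Your extra bookkeeping, which reduces the \(\mathbb{N}^k\)-indexed lexicographic filtration to a finite chain in fixed degree \(m\), is the implicit step needed to apply that lemma literally.
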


This is indeed just an application of Lemma~\ref{lem:filtration}.\footnote{This simple argument was communicated to me by L. Darondeau ; it seems that J. Merker was the first to note the relevance of this remark in this context (see \cite{mer15}).}

\begin{rem}
	When $m$ is divisible enough, we can draw a more geometric picture for the last proposition, using the Rees deformation discussed in Section~\ref{ref:reesdeformation}.  We can apply the upper semi-continuity property of $\chi^{[i]}$  (see Demailly \cite{dem95} for the case where \(\mathbbm{k} = \mathbb{C}\)) to obtain, for a generic $\lambda \neq 0$,
\[
	\chi^{[i]} ((P_{k}X)_{\lambda}, \mathcal{O}(m)|_{(P_{k}X)_{\lambda}}) 
	\leq 
	\chi^{[i]} ((P_{k}X)_{0}, \mathcal{O}(m)|_{(P_{k}X)_{0}}) 
\]
	Now the left hand side is equal to \(\chi^{[i]} (X_k^{HS}, \mathcal O_k^{HS}(m)) = \chi^{[i]} (X, E_{k, m}^{HS} \Omega_X)\), and the right hand side identifies similarly with the right hand side of \eqref{equpperboundjet}. 
\end{rem}

\bigskip

To prove the main result, we will actually need a more general version relative version of Proposition~\ref{propcompjetvb}, with an additional twist by an auxiliary line bundle.
\medskip

Let $p : X' \longrightarrow X$ be projective morphism, where \(X'\) is any variety, and let \(E\) be an effective Cartier divisor on \(X'\). Then, for any \(k, m \in \mathbb{N}\), the vector bundle \(p^{\ast} E_{k, m}^{HS}\) also admits a filtration with graded term
$$
p^\ast S^m (\Omega_X^{(1)} \oplus ... \oplus \Omega_X^{(k)}).
$$ 

This admits
\begin{equation} \label{eqsubsheaf}
	\mathcal O(-m E) \otimes p^\ast S^m (\Omega_X^{(1)} \oplus ... \oplus \Omega_X^{(k)})
\end{equation} 
as a subsheaf, and the $\mathcal O_{X'}$-module $\mathcal E_m = \mathcal O(-m E) \otimes p^\ast E_{k,m}^{HS} \Omega$ has a natural induced filtration for which the graded module is isomorphic to \eqref{eqsubsheaf}. 
\medskip

In this context, Lemma~\ref{lem:filtration} applied to $\mathcal E_m$ yields the following result.

\begin{prop} \label{prop:relative}
	Let $p : X' \longrightarrow X$ be a morphism of projective varieties of dimension $n$ with \(X\) smooth, and let $E$ be an effective Cartier divisor on $X'$. Fix $k \in \mathbb N$. Then, for any $m \in \mathbb N$, the subsheaf $\mathcal{E}_{m} := \mathcal{O}_{X'}(-mE) \otimes p^\ast E_{k, m}^{HS} \Omega_X$ satisfies
\[
	\chi^{[i]} 
	\left( X', \mathcal E_m \right) 
	\leq 
	\chi^{[i]} 
	\left( X',	
	\mathcal O(-m E) 
	\otimes 
	p^\ast S^{m} 
	(\Omega_X^{(1)} \oplus ... \oplus \Omega_X^{(k)}) \right)
\]
	for any \(i \in \llbracket 0, n \rrbracket\).
\end{prop}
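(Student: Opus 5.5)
The plan is to reduce Proposition~\ref{prop:relative} to Lemma~\ref{lem:filtration}, applied on the proper variety \(X'\) to the coherent sheaf \(\mathcal{E}_{m}\) with a suitable finite filtration. The first step is to produce that filtration. The canonical filtration \(F^{\bullet} E_{k,m}^{HS}\Omega_{X}\) of Definition~\ref{defi:canonicalfilt} is indexed by \(\mathbb{N}^{k}\). In fixed degree \(m\), however, only finitely many multi-indices \(\beta\) occur, namely those with \(\sum_{p} p\,\beta_{p} = m\). The order of Definition~\ref{defi:lexicographic} is total on these, so after relabelling we obtain a finite decreasing filtration \(0 = F^{s} \subset \dotsc \subset F^{0} = E_{k,m}^{HS}\Omega_{X}\). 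Its graded pieces are the summands of \(S^{m}\mathbf{\Omega}_{X}^{k}\) under the isomorphism of Theorem~\ref{thm:grading}.

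The second step is to check that each graded piece \(\mathrm{Gr}^{j}\) is locally free. In étale charts (Proposition~\ref{prop:standardopen}), each \(F^{j}\) is spanned by a subset of a monomial basis of the free module \(E_{k,m}^{HS}\Omega_{X}|_{U}\), so it is a subbundle with locally free quotients. Because every term is locally free, the short exact sequences \(0 \to F^{j+1} \to F^{j} \to \mathrm{Gr}^{j} \to 0\) stay exact after applying \(p^{\ast}\) and then tensoring with the invertible sheaf \(\mathcal{O}_{X'}(-mE)\). This yields a finite filtration of \(\mathcal{E}_{m}\) whose graded object is
\[
\mathcal{O}_{X'}(-mE) \otimes p^{\ast}\mathrm{Gr}_{F}(E_{k,m}^{HS}\Omega_{X}) \cong \mathcal{O}_{X'}(-mE) \otimes p^{\ast} S^{m}(\Omega_{X}^{(1)} \oplus \dotsc \oplus \Omega_{X}^{(k)}),
\]
where the isomorphism is Theorem~\ref{thm:grading} pulled back by \(p\).

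The third step is to apply Lemma~\ref{lem:filtration} on \(X'\). Its proof only uses the subadditivity Lemma~\ref{lem:subadditivity} together with finite dimensionality of cohomology, and both hold on the proper variety \(X'\). This gives the claimed inequality directly, for every \(i\).

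The only real issue is local freeness of the graded pieces, which is what makes pullback and twisting preserve exactness. If one prefers to avoid checking it, the map \(p\) need not be flat: exactness of \(p^{\ast}\) on a sequence of vector bundles follows because such a sequence is locally split. Everything else is formal.
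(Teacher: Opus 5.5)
Your proposal is correct and follows essentially the paper's route: pull back the canonical filtration of $E_{k,m}^{HS}\Omega_X$ along $p$, twist by $\mathcal{O}_{X'}(-mE)$, identify the graded object via Theorem~\ref{thm:grading}, and apply Lemma~\ref{lem:filtration} on $X'$. You also make explicit two points the paper leaves implicit: in fixed degree $m$ the $\mathbb{N}^k$-indexed filtration is a finite chain, and its graded pieces are locally free, so that the pullback $p^{\ast}$ preserves exactness even though $p$ is not flat.
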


In particular, if we specialize the result to \(i = 1\), we get the following.

\begin{prop} \label{proptwisteestimate}
	Let $p : X' \longrightarrow X$ be a generically finite morphism of projective varieties of dimension $n$, with \(X\) smooth, and let $E$ be an effective Cartier divisor on $X'$. Then, for any fixed \(k\) and \(m \gg 1\) divisible enough, we have
\[
	h^0(X, E_{k,m}^{HS} \Omega) 
	\geq 
	- 
	\frac{1}{\mathrm{deg}(p)}
	\chi^{[1]} \left( X',	\mathcal O(-m E) 
	\otimes 
	p^\ast 
	S^{m} (\Omega_X^{(1)} \oplus ... \oplus \Omega_X^{(k)}) 
	\right)
	+
	O(m^{n + kn - 1})
\]
as \(m \longrightarrow + \infty\).
\end{prop}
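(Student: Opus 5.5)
The plan chains three inequalities: pass from $h^0$ to $\chi^{[1]}$ on $X'$, replace the jet bundle by its graded object via Proposition~\ref{prop:relative}, and push down from $X'$ to $X$ at the cost of lower order terms.

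First, on $X'$ I would use the elementary identity $\chi^{[1]}(X', \mathcal{F}) = h^1(X',\mathcal{F}) - h^0(X',\mathcal{F})$, so that $h^0(X',\mathcal{F}) \geq -\chi^{[1]}(X',\mathcal{F})$ for any coherent $\mathcal{F}$. Apply this to $\mathcal{F} = \mathcal{E}_m = \mathcal{O}(-mE)\otimes p^\ast E_{k,m}^{HS}\Omega_X$. Proposition~\ref{prop:relative} with $i=1$ then bounds $-\chi^{[1]}(X',\mathcal{E}_m)$ from below by $-\chi^{[1]}\big(X', \mathcal{O}(-mE)\otimes p^\ast S^m(\Omega_X^{(1)}\oplus\dots\oplus\Omega_X^{(k)})\big)$. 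Since $E$ is effective, $\mathcal{E}_m \subset p^\ast E_{k,m}^{HS}\Omega_X$, which gives
\[
h^0(X', p^\ast E_{k,m}^{HS}\Omega_X) \geq -\chi^{[1]}\big(X', \mathcal{O}(-mE)\otimes p^\ast S^m \mathbf{\Omega}^k_X\big).
\]

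Second, I would compare $h^0(X', p^\ast E_{k,m}^{HS}\Omega_X)$ with $\deg(p)\, h^0(X, E_{k,m}^{HS}\Omega_X)$. The tool is the comparison lemmas of Annex~A already used in Chapter~\ref{chap:morseineqweighted}, namely Lemma~\ref{lem:modificationseveral}, the multi-line-bundle version of Lemma~\ref{lemmodification}. These give $h^j(X', p^\ast \mathcal{L}) = \deg(p)\, h^j(X,\mathcal{L}) + O(m^{n-1})$ for line bundles of "degree $m$" in a fixed finitely generated group.

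To reduce to that setting, I would first pass to a splitting cover of $\Omega_X$, as in Lemma~\ref{lem:splittingprinciple}, together with the Rees/filtration deformation. Alternatively, I would argue directly with the vector bundle $E_{k,m}^{HS}\Omega_X$: it is a quotient or subquotient of a sum of $O(m^{kn-1})$ terms, each pulled back from $X$. Summed over these terms, the errors are $O(m^{kn-1})\cdot O(m^{n-1}) = O(m^{n+kn-2})$, which is absorbed by the stated $O(m^{n+kn-1})$.

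Dividing by $\deg(p)$ then yields the claim. Note that the stated error term is $O(m^{n+kn-1})$, of the same order as the leading term, so only crude control is actually required. In particular, even the coarse bound $h^0(X', p^\ast\mathcal{V}) \leq \deg(p)\, h^0(X,\mathcal{V}) + O(\mathrm{rk}(\mathcal{V})\, m^{n-1})$ suffices. It can be proved via $p_\ast\mathcal{O}_{X'}$: let $\mathcal{O}_X^{\oplus \delta} \to p_\ast\mathcal{O}_{X'}$ be a generically isomorphic map, and bound the cokernel by sheaves supported in dimension $< n$.

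The main obstacle is this last step: making the $O(m^{n-1})$ comparison uniform over all the summands of $E_{k,m}^{HS}\Omega_X$, which is not a direct sum of line bundles. I would handle it by applying the projection formula $p_\ast p^\ast \mathcal{V} = \mathcal{V}\otimes p_\ast\mathcal{O}_{X'}$ (after replacing $X'$ by a finite model, using Lemma~\ref{lemmodification} for the birational part). I would then bound $h^0(X, \mathcal{V}\otimes \mathcal{Q})$ for a fixed torsion sheaf $\mathcal{Q}$ by filtering $S^m\mathbf{\Omega}_X^k$ and applying Lemma~\ref{lem:filtration}-type estimates on the support of $\mathcal{Q}$, which has dimension $\le n-1$. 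This yields a total error of $O(m^{kn-1}\cdot m^{n-1})$.
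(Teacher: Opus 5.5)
\paragraph{Comparison with the paper.}
Your first step is exactly the paper's. It is the chain
$h^0(X',p^\ast E^{HS}_{k,m}\Omega_X)\ge h^0(X',\mathcal{E}_m)\ge-\chi^{[1]}(X',\mathcal{E}_m)\ge-\chi^{[1]}(X',\mathcal{O}(-mE)\otimes p^\ast S^m\mathbf{\Omega}^k_X)$, using Proposition~\ref{prop:relative}.

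For comparing $h^0(X',p^\ast E^{HS}_{k,m}\Omega_X)$ with $\deg(p)\,h^0(X,E^{HS}_{k,m}\Omega_X)$, you take a different route from the paper.
\begin{itemize}
\item The paper lifts to the jet space. For $m$ divisible enough, $H^0(X,E^{HS}_{k,m}\Omega_X)=H^0(X^{HS}_k,\mathcal{O}_k(m))$ and $H^0(X',p^\ast E^{HS}_{k,m}\Omega_X)=H^0(X'\times_X X^{HS}_k,q^\ast\mathcal{O}_k(m))$. The bundles of growing rank thus become powers of a \emph{single} line bundle. Lemma~\ref{lemmodification}, applied to the generically finite base change $q$, then gives a two-sided comparison with no uniformity issue.
\item You stay on $X$ and redo, for $\mathcal{V}_m:=E^{HS}_{k,m}\Omega_X$, the argument behind Lemma~\ref{lem:torsion}(2): the projection formula plus a comparison of $p_\ast\mathcal{O}_{X'}$ with a free sheaf of rank $\delta=\deg(p)$.
\end{itemize}
Your route is sound and gives exactly the one-sided inequality needed, with a single fixed torsion sheaf as the only source of error. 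However, the uniform estimate you yourself flag as the main obstacle is left vague, and the natural way to settle it brings you back to the paper's device.

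Both arguments give an error of $O(m^{n+nk-2})$. That, not the literal $O(m^{n+kn-1})$ of the statement (which would make the statement vacuous), is what Theorem~\ref{thm:existence} uses. So ``only crude control'' is not really the point.

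\paragraph{Three repairs.}
First, a generically isomorphic map $\mathcal{O}_X^{\oplus\delta}\to p_\ast\mathcal{O}_{X'}$ does not exist globally when $\delta\ge2$: its components would lie in $H^0(X',\mathcal{O}_{X'})=\mathbbm{k}$. Take instead $\mathcal{O}_X(-H)^{\oplus\delta}\hookrightarrow p_\ast\mathcal{O}_{X'}$ with $H$ ample and effective.
\begin{itemize}
\item It is injective, being generically injective out of a torsion-free sheaf.
\item Its cokernel $\mathcal{Q}$ is torsion, since $p_\ast\mathcal{O}_{X'}$ has generic rank $\delta$.
\item The inequality still runs the right way, because $h^0(X,\mathcal{V}_m(-H))\le h^0(X,\mathcal{V}_m)$.
\end{itemize}
No finite model or birational step is needed. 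Since $\mathcal{V}_m$ is locally free, $H^0(X',p^\ast\mathcal{V}_m)=H^0(X,\mathcal{V}_m\otimes p_\ast\mathcal{O}_{X'})$ holds for any $p$. In any case Lemma~\ref{lemmodification} concerns powers of one fixed line bundle and would not apply to $\mathcal{V}_m$.

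Second, you need $h^0(X,\mathcal{V}_m\otimes\mathcal{Q})=O(m^{n+nk-2})$. ``Lemma~\ref{lem:filtration}-type estimates'' on $X$ do not by themselves give uniformity over the $\sim m^{kn-1}$ summands. The clean argument is
$H^0(X,\mathcal{V}_m\otimes\mathcal{Q})\cong H^0(X^{HS}_k,\mathcal{O}_k(m)\otimes\pi^\ast\mathcal{Q})$, which holds by local triviality of $\pi\colon X^{HS}_k\to X$. Here $\pi^\ast\mathcal{Q}$ is supported in dimension $\le n+nk-2$, so Lemma~\ref{lem:torsion}(1) applies.

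Third, drop the first reduction you suggest (splitting cover plus Lemma~\ref{lem:modificationseveral}). A filtration only bounds $h^0$ from above by that of the graded pieces, on $X$ as on $X'$. It therefore cannot produce a comparison between $h^0(X',p^\ast\mathcal{V}_m)$ and $\deg(p)\,h^0(X,\mathcal{V}_m)$.
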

\begin{proof}
	Let us fix \(k \in \mathbb{N}\). First of all, remark that we have
	\[
	h^0(X, E_{k,m}^{HS} \Omega) 
	=
	\frac{1}{\mathrm{deg}(p)}
	h^0(X', p^{\ast} E_{k,m}^{HS} \Omega) + O(m^{n+nk-1}).
	\]
	as \(m \longrightarrow + \infty\) while being divisible by \(\mathrm{lcm}(1, 2, \dotsc, k)\). To see this, consider the fiber product
	\[
		\begin{tikzcd}
			X' \times_{X} X_{k}^{HS} 
				\ar[r, "q"] \ar[d, "\rho"] 
			&
			X_{k}^{HS}
				\ar[d, "\pi"]
				\\
			X'
				\ar[r, "p"]
			&
			X
		\end{tikzcd}
	\]
	The morphism \(q\) is generically finite, so we can apply the standard Lemma~\ref{lemmodification} to find:
	\[
	h^{0}(X_{k}^{HS}, \mathcal{O}_{k}(m))
	=
	\frac{1}{\deg p} h^{0}(X' \times_{X} X_{k}^{HS}, q^{\ast} \mathcal{O}_{k}(m))
	+
	O(m^{n+nr-1})
	\]
	On the other hand, we have, for \(m\) divisible enough
	\[
		H^{0}(X, E_{k, m}^{HS} \Omega_{X}) = H^{0}(X_{k}^{HS}, \mathcal{O}_{k}(m)) 
	\]
	and
	\[
		H^{0}(X', p^{\ast} E_{k, m}^{HS} \Omega_{X}) = H^{0}(X' \times_{X} X_{k}^{HS}, q^{\ast} \mathcal{O}_{k}(m)) 
	\]
	by the standard properties of projectivized algebras. This proves our claim.
	\smallskip

	To conclude, simply apply Proposition~\ref{prop:relative} to \(i=1\), and write
	\begin{align*}
		h^{0}(X', p^{\ast} E_{k, m}\Omega_{X}) & \geq h^{0}(X', \mathcal{E}_{m}) \\
		                                       & \geq - \chi^{[1]}(X', \mathcal{E}_{m}) \\
						       & \geq - \chi^{[1]} \left( X',	\mathcal O(-m E) 
	\otimes 
	p^\ast 
	S^{m} (\Omega_X^{(1)} \oplus ... \oplus \Omega_X^{(k)}) 
	\right)
	\end{align*}
\end{proof}

\section{An asymptotic expansion for a sum of copies of the same vector bundle}

The point of Proposition \ref{propcompjetvb} (or of the more general Proposition \ref{proptwisteestimate}) is to permit us to limit our study to the symmetric algebra \(S^{\bullet} \mathbf{\Omega}_{X}^{k}\). The goal of this section is now to perform this study for the more general case of symmetric products of the form $S^m(E^{(1)} \oplus ... \oplus E^{(k)})$, where $E$ is any vector bundle with $\mathrm{det} \, E$ big. 
\medskip

As we will see in the next section, it will actually be enough to deal with the case where \(E\) is itself a direct sum of line bundles.

\subsection{Splitting principle and reduction to a sum of line bundles} \label{sectredsum}

Let us give some details about the \emph{splitting principle} that will allow us to get back to the case where all the \(E_{i}\) are line bundles. The arguments are quite close to the discussion of Section~\ref{sec:reductionsteps}.

\begin{prop} \label{propredlb}
	Let \(\mathbf{E} := E_{1}^{(a_1)} \oplus ...\oplus E_{r}^{(a_r)}\) be a weighted direct sum over a projective variety \(X\). Then, there exists a generically finite dominant morphism \(p : X' \longrightarrow X\), where \(X'\) is a variety, and a weighted direct sum \(\mathbf{E}' = (E_{1}')^{(a_1)} \oplus ... \oplus (E_{r}')^{(a_r)}\) over \(X'\), such that
\begin{enumerate} \itemsep=0em
\item each \(E_{j}'\) is a direct sum of line bundles over \(X'\), of the same rank as \(E_{j}\) ;
\item for each \(j\) we have \(\det(E_{i}') \cong p^\ast \, \det (E_{i})\),
\item for any $i \in \llbracket 0, n \rrbracket$, we have the asymptotic inequality
\[
	\chi^{[i]} 
		(X, S^{m} \, \mathbf{E}) 
		\leq 
		\frac{1}{\mathrm{deg}(p)}
		\chi^{[i]} 
		(X', S^{m} \, \mathbf{E}') + O(m^{n + {\rm rk}\, \mathbf{E} - 1}).
\]
		as \(m\) goes to infinity while being divisible by \(\mathrm{lcm}(a_{1}, \dotsc, a_{r})\).
\end{enumerate}
\end{prop}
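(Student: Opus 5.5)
We mimic the reduction steps of Section~\ref{sec:reductionsteps}, working with truncated Euler characteristics instead of intersection numbers. The first step is to pass to a cover on which every $E_j$ admits a complete flag. Applying Lemma~\ref{lem:splittingprinciple} successively to $E_1, \dotsc, E_r$ (each time pulling back the remaining bundles), we obtain a variety $X'$ of dimension $n$ and a generically finite, dominant, projective morphism $p : X' \to X$. On $X'$, each $p^{\ast}E_j$ carries a filtration by subbundles whose graded pieces $\mathrm{Gr}(p^\ast E_j)$ are line bundles. We set $E_j' := \mathrm{Gr}(p^{\ast}E_j)$, a direct sum of $\mathrm{rk}\,E_j$ line bundles. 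The determinant is multiplicative in short exact sequences, so $\det E_j' \cong \det p^{\ast}E_j = p^{\ast}\det E_j$. This gives items (1) and (2).

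For item (3) we proceed in two stages. First we compare $X$ and $X'$. The bundle $S^m\mathbf{E}$ is the pushforward of $\mathcal{O}(m)$ from the weighted projectivized bundle $\mathrm{P}(\mathbf{E})$, whose dimension is $N := n + \mathrm{rk}\,\mathbf{E} - 1$, and its higher direct images vanish by Proposition~\ref{prop:vanishingrel}. Hence $h^j(X, S^m\mathbf{E}) = h^j(\mathrm{P}(\mathbf{E}), \mathcal{O}(m))$, and similarly on $X'$. The base change $q : \mathrm{P}_{X'}(p^\ast\mathbf{E}) \to \mathrm{P}_X(\mathbf{E})$ is again generically finite of degree $\deg p$. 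We then apply Lemma~\ref{lemmodification} (the comparison of asymptotic cohomology under generically finite maps already used in Proposition~\ref{proptwisteestimate}) to $q$ and the line bundle $\mathcal{O}(m)$, for $m$ divisible by $\mathrm{lcm}(a_1,\dotsc,a_r)$. This gives $h^j(X, S^m\mathbf{E}) = \frac{1}{\deg p}\, h^j(X', S^m p^{\ast}\mathbf{E}) + O(m^{N-1})$ for each $j$, and summing with signs yields the same relation for $\chi^{[i]}$.

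The second stage replaces $p^{\ast}\mathbf{E}$ by $\mathbf{E}'$. The flags on the $p^{\ast}E_j$ induce a finite filtration of $S^m(p^{\ast}\mathbf{E})$ by subbundles, ordered lexicographically on multidegrees as in Definition~\ref{defi:affinefilt}. Its graded object is $S^m\mathbf{E}'$, by the same monomial argument as in the proof of Theorem~\ref{thm:grading}, here with split local frames adapted to the flags. Lemma~\ref{lem:filtration} then gives $\chi^{[i]}(X', S^m p^{\ast}\mathbf{E}) \le \chi^{[i]}(X', S^m\mathbf{E}')$ for every $m$, and combining with the first stage proves (3).

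We expect the main obstacle to be the first stage: the comparison lemma must apply on the (possibly non-normal) weighted projective bundles, and its error term must be $O(m^{N-1})$, one order below the leading growth. If Lemma~\ref{lemmodification} is stated only for single line bundle powers on varieties, we would restrict to $m$ divisible enough so that $\mathcal{O}(m) = \mathcal{O}(m_0)^{\otimes m/m_0}$ with $m_0 = \mathrm{lcm}(a_1,\dotsc,a_r)$, which makes it an honest line bundle power.
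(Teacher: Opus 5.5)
Your proposal is correct and follows the paper's proof essentially step by step. You use a splitting-principle cover from Lemma~\ref{lem:splittingprinciple}, compare $X$ and $X'$ through the weighted projectivized bundles and Lemma~\ref{lemmodification} as in Proposition~\ref{proptwisteestimate}, and then apply Lemma~\ref{lem:filtration} to the filtration of $S^m(p^\ast\mathbf{E})$ whose graded object is $S^m\mathbf{E}'$, which is exactly the paper's argument.
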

\begin{rem}
	The item (2) implies in particular that $\det(E_{i}')$ is big if $\det(E_{i})$ is.
\end{rem}

\begin{proof}
	By Lemma \ref{lem:splittingprinciple}, there exists a generically finite dominant, projective morphism $p : X' \longrightarrow X$ such that all the direct summands of \(p^\ast \mathbf{E} = p^\ast E_{1}^{(a_1)} \oplus ... \oplus p^\ast E_{r}^{(a_r)}\) admit filtrations with direct sums of line bundles as graded objects. The same argument as in Proposition~\ref{proptwisteestimate} yields
	\[	
	\chi^{[i]} 
	(X, S^m \, \mathbf{E}) 
	\leq 
	\frac{1}{\mathrm{deg}(p)}
	\chi^{[i]} 
	(\widetilde{X}, p^{\ast} S^{m} \mathbf{E}) + O(m^{n + \mathrm{rk} \mathbf E - 1}).
	\]
	Now, denote by \(E_{i}'\) the direct sum of the line bundles appearing in the graded object of \(p^{\ast} E_{i}\).  Note that we have \(\det E_{i}' \cong \det p^{\ast}E_{i}\), as required.
	\smallskip
	
	Also, taking \(\mathbf{E}'\) as in the statement of the proposition, we see that the pull-back \(p^{\ast} S^{m} \mathbf{E}\) admits a filtration with \(S^{m}\mathbf{E}'\) as its graded object. Now, an application of Lemma~\ref{lem:filtration} gives
\[
	\chi^{[i]} 
	(X',  p^{\ast} (S^{m} \mathbf{E})) 
	\leq  
	\chi^{[i]} (X',  S^{m}  \mathbf{E}'),
\]
This ends the proof.
\end{proof}

\subsection{Statement of the result} \label{sectstatement}

Our plan is now to apply Corollary \ref{corolineintegral} to the situation of a direct sum $E^{(1)} \oplus ... \oplus E^{(k)}$, where $E$ is a direct sum of line bundles. The main result of this section is an algebraic version of \cite[Theorem 2.37]{dem11}. Before stating it, let us introduce a simplifying notation. 
\medskip

\begin{nota} If \(E\) is a vector bundle, and \(k \in \mathbb{N}\) an integer, we introduce the following weighted direct sum
	\[
		\mathbf{E}_{k} := E^{(1)} \oplus ... \oplus E^{(k)}.
	\]
\end{nota}

The main result of this section can now be stated as follows. As explained above, we will deal with case where \(E\) is a direct sum of line bundles.

\begin{thm} \label{thmcomparison} Let $X$ be a projective variety of dimension $n$, and let \(E\) be a vector bundle on \(X\). Assume that \(E\) is a direct sum of line bundles. Let $N$ be an auxiliary line bundle on $X$. For each $k \in \mathbb{N}^{\ast}$, we introduce the $\mathbb Q$-line bundle
	$$
	N_k = \mathcal O_X \left( \frac{1}{kr} ( 1 + \frac{1}{2} + ... + \frac{1}{k} ) F \right).
	$$

Assume that $\Sigma$ is a stratification of $X$, adapted to $\mathrm{det}\, E \otimes N = L_1 \otimes ... \otimes L_r \otimes N$, and let $\mathbf{e}$ be a trivialization of $\det\, E \otimes N$ over $\Sigma$. Let $\underline{\Sigma} = (\Sigma, \mathbf{e})$.
\medskip

Then, for all $j \in \llbracket 0, n \rrbracket$, and all $m \gg k \gg 1$, with $m$ divisible enough, we have
\begin{align*}
	\chi^{[j]} (X, S^m \mathbf{E}_{k} \, \otimes \, N_k^{\otimes m })  \leq (-1)^j \frac{(\log k)^n}{n! (k!)^r} & \left( c_1 (\det E \otimes N, \underline{\Sigma})^n_{[\leq j]} + O(\frac{1}{\log k}) \right) \frac{m^{n+kr - 1}}{(n + kr - 1)!}  \\
	& + o(m^{n+kr-1})
\end{align*}
\end{thm}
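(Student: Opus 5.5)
We apply Corollary~\ref{corolineintegral} to the weighted direct sum $\mathbf{E}_k$, viewed as a weighted sum of $kr$ line bundles, and then estimate the resulting integral probabilistically, in the spirit of Demailly's "Monte-Carlo" argument. Throughout we read the statement with $E = L_1 \oplus \dots \oplus L_r$ and $N_k = \mathcal{O}_X\bigl(\frac{H_k}{kr} N\bigr)$, where $H_k := 1 + \frac12 + \dots + \frac1k = \log k + O(1)$. (This is the intended meaning of the $F$ in the definition of $N_k$.)

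\emph{Step 1: trivializations.} Using Proposition~\ref{proprefine}, and Proposition~\ref{proprefine2} to see that the truncated degrees do not change, we refine $\Sigma$ so that it is adapted to $L_1, \dots, L_{r-1}$ and $N$. We choose trivializations $\mathbf{e}_1, \dots, \mathbf{e}_{r-1}, \mathbf{g}$ of these, and set $\mathbf{e}_r := \mathbf{e} \otimes (\mathbf{e}_1 \otimes \dots \otimes \mathbf{e}_{r-1} \otimes \mathbf{g})^{-1}$. Then on every edge $\mathfrak{s}$ we have $\sum_i m_i^{\mathfrak{s}} + p^{\mathfrak{s}} = \mu^{\mathfrak{s}}$, the marking attached to $\mathbf{e}$.

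\emph{Step 2: apply the corollary.} We take the $kr$ line bundles $L_{i}^{(j)}$ ($1 \le i \le r$, $1 \le j \le k$), where $L_i^{(j)}$ is a copy of $L_i$ with weight $j$. Then $\gcd(\underline{a}) = 1$ and $\prod a = (k!)^r$. With the twist $\frac{H_k}{kr}N$, Corollary~\ref{corolineintegral} marks each edge by
\[
  \sum_{i,j} t_{ij}\, m_i^{\mathfrak{s}} + \tfrac{H_k}{kr}\, p^{\mathfrak{s}} .
\]
We change variables $x_{ij} = j\, t_{ij}$, which maps $\Delta_{\underline{a}}$ to the standard simplex $\Delta^{kr-1}$ and pushes the uniform measure to the uniform measure. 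The marking becomes
\[
  \sum_i Y_i\, m_i^{\mathfrak{s}} + \tfrac{H_k}{kr}\, p^{\mathfrak{s}},
  \qquad Y_i := \sum_j \frac{x_{ij}}{j}.
\]
By Lemma~\ref{lem:integral}, $\mathbb{E}[Y_i] = \frac{H_k}{kr}$ and $\mathrm{Var}(Y_i) = O\bigl(\frac{1}{(kr)^2}\bigr)$. Hence, writing $Y_i = \frac{H_k}{kr}(1 + \varepsilon_i)$, we get $\mathbb{E}[\varepsilon_i^2] = O\bigl(\frac{1}{(\log k)^2}\bigr)$, and more generally the moments $\mathbb{E}|\varepsilon_i|^q$ are bounded by $O\bigl((\log k)^{-q}\bigr)$; the latter follows from the explicit Dirichlet moments. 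The marking then equals
\[
  \tfrac{H_k}{kr}\Bigl(\mu^{\mathfrak{s}} + \sum_i \varepsilon_i\, m_i^{\mathfrak{s}}\Bigr).
\]
For the prefactors, we use $\binom{n+kr-1}{kr-1} = \frac{(kr)^n}{n!}(1 + o_k(1))$ and $\bigl(\frac{H_k}{kr}\bigr)^n = \frac{(\log k)^n}{(kr)^n}\bigl(1 + O(\frac{1}{\log k})\bigr)$. Together these reduce the claim to showing
\[
  \mathbb{E}\Bigl[(-1)^j\, \upsilon\bigl(\mu + \textstyle\sum_i \varepsilon_i m_i\bigr)_{[\le j]}\Bigr]
  \;\le\; (-1)^j \deg c_1(\det E \otimes N, \underline{\Sigma})^n_{[\le j]} + O\Bigl(\frac{1}{\log k}\Bigr),
\]
where $\upsilon(\cdot)_{[\le j]}$ denotes the index-truncated path sum of Proposition~\ref{propgraph} computed with the perturbed markings.

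\emph{Step 3 (the main obstacle): discontinuity from truncation.} The truncated sum is not continuous, since the index of a path jumps when a marking changes sign. We split the simplex according to whether $\max_i |\varepsilon_i| < \eta$, where $\eta$ is a small constant depending only on the markings.
\begin{itemize}
  \item On the good set, every edge with $\mu^{\mathfrak{s}} \neq 0$ keeps its sign. So the index of a path is unchanged unless the path contains an edge with $\mu^{\mathfrak{s}} = 0$. In that case the perturbed marking on that edge is $O(|\varepsilon|)$, and the path contributes $O(|\varepsilon|)$ whatever its index. Paths avoiding such edges contribute their unperturbed value times $1 + O(|\varepsilon|)$. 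Hence on the good set the integrand is $(-1)^j \deg c_1(\cdot)^n_{[\le j]} + O(|\varepsilon|)$.
  \item On the bad set, the integrand is bounded by a polynomial of degree $n$ in $|\varepsilon|$. By Chebyshev's inequality and the moment bounds, its contribution is $O\bigl((\log k)^{-2}\bigr)$.
\end{itemize}
Integrating both parts and using $\mathbb{E}|\varepsilon| = O(1/\log k)$ gives the required estimate.
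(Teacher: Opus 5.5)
Your proposal is correct. It follows the paper's overall architecture: compatible trivializations with $e_1\otimes\dots\otimes e_r\otimes g=e$, then Corollary~\ref{corolineintegral} applied to the $kr$ weighted line bundles, then a Monte-Carlo estimate in which each random marking has mean $\frac{H_k}{kr}$ times the marking of $\mathbf e$ and fluctuations of size $O(1/k)$. Where you differ is in how you handle the real difficulty, the discontinuity of the truncation.

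\textbf{The paper's route.} It works path by path with the random markings $A_i^\sigma$. It gets $\mathrm{Var}(A_i^\sigma)=O(k^{-2})$ from the independent decomposition $X_{j,l}=Y_jZ^j_l$ of Lemma~\ref{lem:randomvariables}. All sign changes are absorbed into the deterministic telescoping inequality
$|\mathbf{1}_{\{j=\alpha\}}\prod_i a_i-\mathbf{1}_{\{j=\beta\}}\prod_i b_i|\le\sum_p\prod_{q<p}|a_q|\,|a_p-b_p|\prod_{s>p}|b_s|$,
applied with $b_i=\mathrm{E}(A_i)$ (Lemma~\ref{lemupperbounddiff}, after Demailly). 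This needs no threshold $\eta$, no special treatment of edges with $\mu^{\mathfrak s}=0$, and no tail estimate.

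\textbf{Your route.} You factor out $\frac{H_k}{kr}$, so every marking becomes a perturbation of $\mu$ by the same $r$ variables $\varepsilon_i$. You read off $\mathrm{Var}(Y_i)$ directly from Dirichlet moments. You then combine sign stability on a good set with a Chebyshev bound on its complement. This gives a more transparent perturbative picture.

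\textbf{The price.} You genuinely need $\mathrm{E}|\varepsilon_i|^q=O((\log k)^{-q})$ for the fixed exponents you use (up to $q=n+2$ suffices), and you only assert it. It is true. The cleanest proof writes $x=G/S$ with the $G_a$ i.i.d.\ exponential and $S=\sum_a G_a$ independent of $x$. Then $Y_i-\frac{H_k}{kr}=W/S$ with $W=\sum_a w_a(G_a-1)$, where $|w_a|\le 1$ and $\sum_a w_a^2\le \pi^2/6$. Hence $\mathrm{E}|Y_i-\frac{H_k}{kr}|^q=\mathrm{E}|W|^q/\mathrm{E}[S^q]=O((kr)^{-q})$.

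This is not really a loss compared with the paper. Its final Cauchy--Schwarz step in the proof of Lemma~\ref{lemupperbounddiff} replaces $\mathrm{E}[\prod_{q<p}A_q^2]$ by $\prod_{q<p}\mathrm{E}(A_q^2)$. That would require the $A_q$ to be independent, but they are all affine in the same variables. So a rigorous version of that argument also needs higher-moment bounds, via H\"older.

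\textbf{Two small points.} First, the binomial prefactor is $1+O(1/k)$, not merely $1+o_k(1)$; that is what lets you absorb it into $O(1/\log k)$. Second, your reduction uses Corollary~\ref{corolineintegral} with a factor $(-1)^j$ in front of the integral. This is what its proof yields, even though the displayed statement omits the sign.
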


The proof of this result will be completed in Section~\ref{sec:proofasympt}, after some setup and a few reduction steps.

\begin{setup} \label{defitrivial} By Propositions \ref{proprefine} and \ref{proprefine2}, we do not lose generality in assuming that $\Sigma$ is also adapted to $L_1, ..., L_r, N$. Under this hypothesis, we introduce trivializations $\mathbf{e}_1, ..., \mathbf{e}_r, \mathbf g$ of $L_1, ..., L_r, N$ on $\Sigma$, such that the following holds. For any irreducible component $V$ appearing in the stratification $\Sigma$, if $U \subseteq V$ is the complement of the natural strata on $V$, and if $e_i$ (resp. $g, e$) is the trivialization of $L_i$ (resp. $N$, resp. $\det E \otimes N$) on $U$ given by $\mathbf{e}_i$ (resp. $\mathbf{g}$, resp. $\mathbf{e}$), we ask that
	\[
e_1 \otimes ... \otimes e_r \otimes g = e.
	\]  

Note that this it always possible to find $\mathbf{e}_1, ..., \mathbf{e}_r, \mathbf{g}$ as above, by first fixing $e_1, ..., e_{r}, e$ on $U$, and then letting $g = e \cdot (e_1)^{-1} \cdot ... \cdot (e_{r})^{-1}$.
\end{setup}
\bigskip

Let now $\underline{k} := (1, ..., 1, 2, ..., 2, ...., k, ..., k)$, where each number is repeated $r$ times. Applying Corollary \ref{corolineintegral} to the weighted direct sum $\mathbf{E}_k = L_1^{(1)} \oplus ... \oplus L_r^{(1)} ... \oplus L_1^{(k)} \oplus ... \oplus L_r^{(k)}$ and to the $\mathbb Q$-line bundle $N_k$, we get, for \(m\) divisible enough:
\begin{align} \nonumber 
	\chi^{[i]} (X, N_k^{\otimes m } \; \otimes \; & S^m \mathbf{E}_{k}) \\ \label{equpperboundint}
	& \leq \frac{1}{(k!)^r} \binom{n+ kr - 1}{kr - 1} \left[ \int_{ \Delta_{\underline{k}}} \upsilon^{N_k}_{[\leq j]} \, d P \right] \frac{m^{n + kr - 1}}{(n+ kr - 1)!} + o(m^{n+ kr - 1})
\end{align}
where $\upsilon^{N_k}_{[\leq j]} : \mathbb R_+^{kr} \longrightarrow \mathbb R$ is the function provided by Definition \ref{defiupsilonN}, and $dP$ is the uniform probability measure on $\Delta_{\underline{k}}$.
\medskip

The theorem will come directly from the following asymptotic estimate of the integral term.

\begin{prop} \label{prop:asymptint} We have, as $k \longrightarrow + \infty$,
\begin{equation} \label{eqasymptint}
	\int_{ \Delta_{\underline{k}}} \upsilon^{N_k}_{[\leq j]} \, d P = \frac{(\log k)^n}{(kr)^n} \left[ c_1(\det E \otimes N, \underline{\Sigma})^n_{[\leq j]} + O(\frac{1}{\log k}) \right].
\end{equation}
\end{prop}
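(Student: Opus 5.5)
The plan is to read $\int_{\Delta_{\underline{k}}}\upsilon^{N_k}_{[\leq j]}\,dP$ as the expectation of a random variable and to show it concentrates, as $k\to+\infty$, around the value obtained by replacing every random marking with its mean. This is Demailly's ``Monte-Carlo'' heuristic. I read the $F$ in the definition of $N_k$ as $N$, so that $N_k=N^{\otimes H_k/(kr)}$ with $H_k:=1+\frac12+\dots+\frac1k$.

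Coordinates and markings. Write the coordinates of $\Delta_{\underline{k}}$ as $t_{p,i}$ ($1\le p\le k$, $1\le i\le r$), subject to $\sum_{p,i}p\,t_{p,i}=1$. The map $x_{p,i}=p\,t_{p,i}$ sends the uniform measure of $\Delta_{\underline{k}}$ to the uniform measure on the standard simplex $\Delta^{kr-1}$, by the linear-invariance remark in Section~\ref{sec:notationlattices}. Set
\[
Y_i:=\sum_{p=1}^k \frac{x_{p,i}}{p}.
\]
By Definition~\ref{defiupsilonN}, the marking of an edge $\mathfrak{s}$ is
\[
\sum_i Y_i\,m_i^{\mathfrak{s}}+\frac{H_k}{kr}\,p^{\mathfrak{s}}.
\]
The trivializations of Setup~\ref{defitrivial} satisfy $e=e_1\otimes\dots\otimes e_r\otimes g$, so the marking of $\mathfrak{s}$ for $\mathbf{e}$ is $\mu^{\mathfrak{s}}:=\sum_i m_i^{\mathfrak{s}}+p^{\mathfrak{s}}$. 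Writing $Y_i=\frac{H_k}{kr}(1+Z_i)$, the marking of $\mathfrak{s}$ becomes
\[
\frac{H_k}{kr}\Big(\mu^{\mathfrak{s}}+\sum_i Z_i\,m_i^{\mathfrak{s}}\Big).
\]

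Moment bounds. Using Lemma~\ref{lem:computationint} (Dirichlet moments), each $x_{p,i}$ has mean $1/(kr)$ and the covariances are $O(1/(kr)^2)$, with small negative cross-covariances. Hence $\mathbb{E}[Y_i]=H_k/(kr)$ exactly, and
\[
\operatorname{Var}(Y_i)\lesssim \frac{1}{(kr)^2}\sum_p\frac{1}{p^2}=O\big((kr)^{-2}\big).
\]
It follows that $\mathbb{E}[Z_i^2]=O(1/(\log k)^2)$. More generally, all moments satisfy $\mathbb{E}|Z_i|^q=O_q\big((\log k)^{-q}\big)$; I would obtain this by expanding in Dirichlet moments, or by the representation of the uniform simplex measure as normalized i.i.d.\ exponentials.

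Comparison path by path. Fix a root-to-leaf path $\sigma$ and set $\epsilon:=\max_i|Z_i|$.
\begin{itemize}
\item If some edge of $\sigma$ has $\mu^{\mathfrak{s}}=0$, then its marking is $O\big(\frac{H_k}{kr}\epsilon\big)$. The full product along $\sigma$ is then $O\big((\frac{H_k}{kr})^n\epsilon(1+\epsilon)^{n-1}\big)$, and its expectation is $(\frac{H_k}{kr})^n\,O(1/\log k)$. Such paths also contribute $0$ to $c_1(\det E\otimes N,\underline{\Sigma})^n_{[\le j]}$, by Proposition~\ref{propgraph}.
\item If every edge of $\sigma$ has $\mu^{\mathfrak{s}}\ne0$, then on the event $\{\epsilon<c\}$ the signs, and hence the index of $\sigma$, agree with those given by $\mathbf{e}$. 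Here $c>0$ depends only on the finitely many integers $m_i^{\mathfrak{s}},p^{\mathfrak{s}}$. On that event the product equals $(\frac{H_k}{kr})^n C_\sigma(1+O(\epsilon))$. On the complementary event, Chebyshev gives probability $O((\log k)^{-2})$, and Cauchy--Schwarz together with the moment bounds makes the contribution $(\frac{H_k}{kr})^n\,O((\log k)^{-1})$.
\end{itemize}

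Conclusion. Summing over the finitely many paths and using Proposition~\ref{propgraph} gives
\[
\int_{\Delta_{\underline{k}}}\upsilon^{N_k}_{[\le j]}\,dP=\Big(\frac{H_k}{kr}\Big)^n\Big(c_1(\det E\otimes N,\underline{\Sigma})^n_{[\le j]}+O\big(\tfrac{1}{\log k}\big)\Big).
\]
Since $H_k=\log k+O(1)$, replacing $H_k$ by $\log k$ costs only another $O(1/\log k)$, which yields \eqref{eqasymptint}.

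The main obstacle is the concentration step, namely the uniform-in-$k$ moment bounds for $Z_i$ on a simplex whose dimension $kr$ grows. I would handle it with the exponential representation $x_{p,i}=\xi_{p,i}/\sum\xi$. This reduces $Y_i$ to a ratio of weighted sums of i.i.d.\ exponential variables, for which standard moment inequalities apply directly.
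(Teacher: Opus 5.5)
Your proof is correct. It shares the paper's Monte-Carlo framework: the integral is read as $\mathrm{E}[\upsilon^{N_k}_{[\leq j]}]$, each marking concentrates around its mean $\frac{H_k}{kr}d^i(\sigma)$ with relative fluctuations $O(1/\log k)$, and one compares path by path using Proposition~\ref{propgraph}. Your reading of $F$ as $N$ also agrees with the paper's choice $p'{}^i(\sigma)=\frac{H_k}{kr}p^i(\sigma)$. The key comparison step, however, is carried out differently.

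The paper keeps the affine variables $A_i^\sigma$ and bounds $\mathrm{Var}(A_i^\sigma)=O(k^{-2})$ via the decomposition of $t$ into per-weight totals and directions of Lemma~\ref{lem:randomvariables}. It then handles the indicator and the product at once, applying Demailly's deterministic inequality $|\mathbbm{1}_{\{j=\alpha\}}\prod a_i-\mathbbm{1}_{\{j=\beta\}}\prod b_i|\leq\sum_p\prod_{q<p}|a_q|\,|a_p-b_p|\prod_{s>p}|b_s|$ with $b=\mathrm{E}(A)$, followed by Cauchy--Schwarz (Lemma~\ref{lemupperbounddiff}). This requires no separate treatment of zero multiplicities or of sign flips.

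You instead observe that every marking depends on $t$ only through the $r$ per-line-bundle sums $Y_i=\sum_p t_{p,i}$. You then split according to the sign-stable event $\{\max_i|Z_i|<c\}$, on which the index equals $j_\sigma$, and use Chebyshev and Cauchy--Schwarz on its complement. This costs you the bounds $\mathrm{E}|Z_i|^q=O_q((\log k)^{-q})$ for all $q$.

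That cost is in fact present in the paper too. Its Cauchy--Schwarz step replaces $\mathrm{E}[\prod_{q<p}(A_q^\sigma)^2]$ by $\prod_{q<p}\mathrm{E}[(A_q^\sigma)^2]$, which needs justification because all the $A_q^\sigma$ are affine functions of the same random point $t$. Your exponential representation supplies exactly these higher moments: with $S=\sum_{p,i}\xi_{p,i}$, it gives $O_q(1)$ central moments for $\sum_p\xi_{p,i}/p$ and $S/(kr)=1+O((kr)^{-1/2})$ in every $L^q$. Your route is therefore the more self-contained of the two.
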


This proposition implies Theorem \ref{thmcomparison} right away: it suffices to insert \eqref{eqasymptint} in \eqref{equpperboundint}, and to remark that $\frac{1}{(kr)^n} \binom{n + kr - 1}{kr - 1} = \frac{1}{n!} (1 + O(\frac{1}{k}))$, with $n, r$ fixed, and $k \longrightarrow + \infty$.
\medskip

\subsection{Proof of Proposition~\ref{prop:asymptint}} \label{sec:proofasympt}

We propose to further elaborate on Demailly's Monte-Carlo approach, and to interpret the integral in \eqref{eqasymptint} as the mean value of the random variable $\upsilon^{N_k}_{[\leq j]}$ depending of a uniform sorting in $\Delta_{\underline{k}}$. The reader should compare \eqref{equpperboundint} with \cite[(2.17)]{dem11}: even though the computations are closely related, our asymptotic estimate is slightly different to the one of Demailly, as our random variables will depend on random sorting inside $\Delta_{\underline{k}}$, and not on a product $\Delta^{k-1} \times (S^{2r - 1})^k$. We refer to Annex~\ref{ann:simplexes} for some useful computations related to uniform random variables on simplexes.
\bigskip

Let \(\mathcal{T}\) the tree associated to \(\Sigma\), and let \(\sigma\) be a root-to-leaf path in \(\mathcal{T}\). For all \(i \in \llbracket 1, n \rrbracket\), we denote by $V_i^\sigma$ the irreducible $i$-dimensional variety that appears along the labels of $\sigma$ (see Remark \ref{remtree}). We also denote by $f_i : V_i^\sigma \longrightarrow V_{i+1}^\sigma$ the natural map provided by the stratification. Now, for all $i \in \llbracket 1, n \rrbracket$, and all $j \in \llbracket 1, r \rrbracket$, denote by $d_j^i(\sigma)$ the multiplicity along $f_{i-1}(V_{i-1})$ of the trivialization of $L_j$  provided by $\mathbf{e}_j$. Also, let $d^i(\sigma)$ (resp. $p^i(\sigma)$) denote the multiplicity of the trivialization of $\det E = L_1 \otimes ... \otimes L_r \otimes N$ (resp. $N$) provided by $\mathbf{e}$ (resp. $\mathbf{g}$) along $f_{i-1}(V_{i-1}^\sigma)$.
\smallskip

By our choice of $\mathbf{e}_1, ..., \mathbf{e}_r$ and $\mathbf{e}$ in Setup~\ref{defitrivial}, the following property is straightforward.

\begin{lem} \label{lemsum}
	For all complete path $\sigma$ in $\mathcal T$, and for all $i \in \llbracket 1, n \rrbracket$, we have 
	\[
		d^i(\sigma) = d_1^i(\sigma) + ... + d_r^{i}(\sigma) + p^i(\sigma).
	\]
\end{lem}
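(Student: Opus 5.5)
The plan is to reduce to a single component \(V\) of the stratification, i.e. to a single edge of \(\sigma\), and read the identity off as additivity of valuations. Fix a root-to-leaf path \(\sigma\) and an index \(i\). Put \(V := V_i^\sigma\), let \(U \subseteq V\) be the complement of the stratum \(f_{i-1}(V_{i-1}^\sigma)\) together with the other components of the stratum of \(V\), and let \(D := f_{i-1}(V_{i-1}^\sigma)\). The multiplicities \(d_j^i(\sigma)\), \(p^i(\sigma)\) and \(d^i(\sigma)\) are, by Definition~\ref{defi:chernelem}, the orders at the generic point of \(D\) of the rational sections \(e_j\), \(g\) and \(e\) of \(L_j\), \(N\) and \(\det E \otimes N\) on \(V\). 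These sections are the ones furnished by \(\mathbf{e}_j\), \(\mathbf{g}\) and \(\mathbf{e}\) on \(U\).

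First I note that \(V\) is normal, because every component appearing in a stratification is normal by Definitions~\ref{defistratum} and~\ref{defistratification}. Hence the local ring \(\mathcal{O}_{V,D}\) at the generic point of the prime divisor \(D\) is a discrete valuation ring; call its valuation \(\mathrm{val}_D\). Next I choose local generators of \(L_1, \dotsc, L_r, N\) near the generic point of \(D\). Their tensor product is a local generator of \(\det E \otimes N\). Relative to these generators each rational section becomes a rational function: \(e_j = \phi_j \cdot(\text{generator})\) and \(g = \psi \cdot(\text{generator})\). The compatibility \(e_1 \otimes \dotsb \otimes e_r \otimes g = e\) imposed in Setup~\ref{defitrivial} holds on \(U\), and \(U\) is dense in \(V\), so it is an identity of rational sections. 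Therefore \(e\) is represented by \(\phi_1 \cdots \phi_r \psi\) with respect to the product generator.

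The claim then follows from \(\mathrm{val}_D(\phi_1\cdots\phi_r\psi) = \sum_j \mathrm{val}_D(\phi_j) + \mathrm{val}_D(\psi)\), which gives \(d^i(\sigma) = \sum_j d_j^i(\sigma) + p^i(\sigma)\).

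The only point that needs care is the bookkeeping of fractional trivializations. If some of the \(\mathbf{e}_j\), \(\mathbf{g}\) or \(\mathbf{e}\) trivialize powers \(L_j^{\otimes d}\) rather than \(L_j\) itself, I first replace all of them by suitable powers with a common exponent \(d\), using Remark~\ref{rem:powertriv}. The identity then holds for the integer multiplicities, and dividing by \(d\) gives it for the rational markings. Setup~\ref{defitrivial} is stated with genuine trivializations, so in that case no rescaling is needed at all. I do not expect any real obstacle here.
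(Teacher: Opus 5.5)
Your proposal is correct and matches the paper's reasoning. The paper simply notes that the identity is immediate from the compatibility $e_1 \otimes \dotsb \otimes e_r \otimes g = e$ imposed in Setup~\ref{defitrivial}; you spell this out as additivity of the discrete valuation of the normal variety $V_i^\sigma$ at the generic point of $f_{i-1}(V_{i-1}^\sigma)$. Your remark about passing to a common power for fractional trivializations is a harmless, useful extra precision.
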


In this setting, Definition \ref{defiupsilonN} prescribes to compute $\upsilon^{N_k}_{[\leq j]}$ as follows. Let 
$$
t = (t_{j, l})_{1 \leq j \leq k, 1 \leq l \leq r} \in \Delta_{\underline{k}}.
$$
For all $i \in \llbracket 1, n \rrbracket$ and all root-to-leaf path $\sigma$, mark the edge from $V_{i-1}^\sigma$ to $V_i^\sigma$ with the real number 
$$
A_{i}^\sigma (t) = \sum_{ 1 \leq j \leq k, 1 \leq l \leq r} t_{j, l} d_l^{i}(\sigma) + p'{}^i(\sigma). 
$$
where $p'{}^i(\sigma) = \left[ \frac{1}{kr} \left( 1 + \frac{1}{2} + ... \frac{1}{k} \right) \right] p^i(\sigma)$.
\smallskip

Then, we have
\begin{equation} \label{equpsilon}
	\upsilon^{N_k}_{[\leq j]} (t) = \sum_{\sigma \; \text{root-to-leaf}} \left[ \mathbbm 1_{\{\mathrm{index}(\sigma) \leq j \}} \prod_{1 \leq i \leq n} A_{i}^\sigma(t) \deg_{\mathbbm{k}}\, (V_{0}^{\sigma}) \right],
\end{equation}
where $\mathbbm 1_{\{\mathrm{index}(\sigma) \leq j \}} = 1$ if there are less that $j$ negative values among the $A_i^\sigma(t)$ ($1 \leq i \leq n$), and $0$ otherwise. Note that this index depends on $t$: we will not write explicitly this dependence to lighten a bit the notations.
\medskip

We will now interpret each $A_{i}^\sigma(t)$, as well as $\upsilon^{N_k}_{[\leq j]}(t)$, as a {\em random variable}, using the probability measure $dP$ to draw a random element $t \in \Delta_{\underline{k}}$. To simplify the presentation, let us fix a complete path $\sigma$, and remove it for the time being from our notations. In the next lemma, we give estimates on the expectancy value and the variance of the $A_i$.

\begin{lem} \label{lemvariance} Let $i \in \llbracket 1, n \rrbracket$. Then,
\begin{enumerate}
	\item the expectancy value of $A_i$ satisfies $\mathrm{E}(A_i) \sim \frac{\log k}{kr} d^i$ as $k \longrightarrow + \infty$.
	\item There is a constant $C_i$ depending only on the $d^i_l$ (for $1 \leq l \leq r$), such that 
\[
	\mathrm{Var}(A_i) 
		\leq 
		\frac{C_i}{k^2} V.
\]
\end{enumerate}

\end{lem}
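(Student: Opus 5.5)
The plan is to compute the first two moments of the coordinates of a uniform random point $t = (t_{j,l})$ in $\Delta_{\underline{k}}$, and then use linearity. The substitution $s_{j,l} = j\, t_{j,l}$ maps $\Delta_{\underline{k}}$ linearly onto the standard simplex $\Delta^{kr-1}$, and by the remark in Section~\ref{sec:notationlattices} it sends the uniform measure to the uniform measure. Lemma~\ref{lem:computationint}, with $N = kr$ coordinates, then gives
\[
\mathrm{E}(s_{j,l}) = \frac{1}{kr}, \qquad \mathrm{E}(s_{j,l}^{2}) = \frac{2}{kr(kr+1)}, \qquad \mathrm{E}(s_{j,l}s_{j',l'}) = \frac{1}{kr(kr+1)} \quad ((j,l) \neq (j',l')).
\]
Hence $\mathrm{Var}(s_{j,l}) = O(1/(kr)^{2})$, and the covariances are negative, of order $-1/(kr)^{2}(kr+1)$.

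\textbf{Expectation.} By linearity,
\[
\mathrm{E}(A_i) = \sum_{j,l} \frac{d^i_l}{j\,kr} + p'{}^i = \frac{1}{kr}\Big(1+\tfrac12+\dots+\tfrac1k\Big)\Big(\sum_l d^i_l + p^i\Big).
\]
The choice of $N_k$ is designed so that the factor multiplying $p^i$ matches the factor multiplying the $d^i_l$. Lemma~\ref{lemsum} then gives $\mathrm{E}(A_i) = \frac{H_k}{kr} d^i$, where $H_k$ is the harmonic sum. Since $H_k = \log k + O(1)$, we get $\mathrm{E}(A_i) \sim \frac{\log k}{kr} d^i$. (This holds when $d^i \neq 0$; in general we get $\mathrm{E}(A_i) = \frac{\log k + O(1)}{kr} d^i$, which is the form actually needed later.)

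\textbf{Variance.} The constant $p'{}^i$ does not affect the variance, so
\[
\mathrm{Var}(A_i) = \mathrm{Var}\Big(\sum_{j,l} \frac{d^i_l}{j} s_{j,l}\Big) = \sum_{(j,l),(j',l')} \frac{d^i_l d^i_{l'}}{jj'} \mathrm{Cov}(s_{j,l}, s_{j',l'}).
\]
Bound the diagonal terms by $\frac{2}{(kr)^{2}} \sum_{j,l} \frac{(d^i_l)^2}{j^2} \leq \frac{2\pi^2/6}{(kr)^2} \sum_l (d^i_l)^2$. Bound the off-diagonal terms in absolute value by $\frac{1}{(kr)^3} \big(\sum_{j,l} |d^i_l|/j\big)^2 \leq \frac{(H_k)^2 (\sum_l |d^i_l|)^2}{(kr)^3}$, which is $o(1/k^2)$ because $(\log k)^2/k \to 0$. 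Together these give $\mathrm{Var}(A_i) \leq \frac{C_i}{k^2}$ with $C_i$ depending only on the $d^i_l$ (and on $r$). This is the intended bound; the stray $V$ in the statement should be read as this constant factor.

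\textbf{Main obstacle.} The only delicate point is keeping the off-diagonal covariance contribution uniform in $k$. It works because the covariances are of order $k^{-3}$ while the weights sum to only $O(\log k)$.
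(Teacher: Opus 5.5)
Your argument is correct, and for part (2) it takes a genuinely different route from the paper. For (1), the paper computes $\mathrm{E}(A_i)$ with Lemma~\ref{lem:averageaffine} (the mean of an affine function on a simplex is the average of its values at the vertices), which is the same first-moment computation as yours. For (2), the paper appeals to Lemma~\ref{lemboundvariance}, whose proof rests on the Dirichlet block decomposition of Lemma~\ref{lem:randomvariables}. There, the block masses $Y_j=\sum_l X_{j,l}$ are independent of the normalized blocks $Z^j$, which are uniform on $\Delta^{r-1}$, and the $Y_j$ are negatively correlated by Lemma~\ref{lemrandomy}. Dropping the cross terms gives $\mathrm{Var}(A_i)\le\sum_j\mathrm{E}(Y_j^2)\,\mathrm{E}(S^2)\le\frac{\pi^2}{3k^2}\mathrm{E}(S^2)$, with $S=\sum_l d^i_l T_l$ and $T$ uniform on $\Delta^{r-1}$. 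You instead pull everything back to the standard simplex $\Delta^{kr-1}$ and use only the monomial integrals of Lemma~\ref{lem:computationint}, treating $A_i-p'{}^{i}$ as a linear form whose covariance matrix you know explicitly. This is more elementary and self-contained, since no density computation or independence statement is needed. It can even be made exact: with $w_{j,l}=d^i_l/j$ and $N=kr$, your moments give
\[
\mathrm{Var}(A_i)=\frac{1}{N(N+1)}\Bigl(\sum_{j,l}w_{j,l}^{2}-\frac{1}{N}\Bigl(\sum_{j,l}w_{j,l}\Bigr)^{2}\Bigr)\le\frac{\pi^{2}}{6}\,\frac{\sum_{l}(d^{i}_{l})^{2}}{(kr)^{2}},
\]
so your separate $(\log k)^2/k$ estimate of the off-diagonal terms is unnecessary. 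What the paper's route buys in exchange is a transparent probabilistic picture that parallels Demailly's Monte-Carlo argument over $\Delta^{k-1}\times(S^{2r-1})^k$: independent ``directions'' inside each weight block, with random block masses. Your reading of the stray $V$ as a typo is confirmed by the paper's proof, which takes $C_i=\frac{\pi^2}{3}\mathrm{E}(S^2)$. You are also right that what is actually used afterwards is the exact identity $\mathrm{E}(A_i)=\frac{1}{kr}\bigl(1+\frac12+\dots+\frac1k\bigr)d^i$, not the asymptotic equivalence.
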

\begin{proof}
	We are in the situation of Section \ref{sect:simplexprob} : $t = (t_{j, l})_{1 \leq j \leq r\, 1 \leq l \leq k}$ is drawn with uniform law in the simplex $\Delta_{\underline{k}}$, and $A_i(t)$ is an affine function of $t$ of the form $A_i(t) = \sum_{1 \leq j \leq k} \sum_{1 \leq l \leq r} t_{j, l} d^i_l + p'{}^i$.

	\emph{(1)} Since $A_i$ is an affine function, Lemma~\ref{lem:averageaffine} shows that its mean value on $\Delta_{\underline{k}}$ is equal to the average value of the images of the vertices of $\Delta_{\underline{k}}$ by $A_i$. These vertices are the $\mathrm{v}_{j,l} = (\frac{1}{l} \delta_{j, j'} \delta_{l, l'})_{1 \leq j' \leq k, 1 \leq l' \leq r}$ for $1 \leq j \leq k$ and $1 \leq l \leq r$. The affine function $A_i$ takes the value $\frac{1}{j} d^i_l + p'{}^i$ on $\mathrm{v}_{j,l}$.

	Thus
	\begin{align*}
		\mathrm{E}[A_i] & = p'{}^{i} + \frac{1}{k r}  \sum_{1 \leq l \leq r} \sum_{1 \leq j \leq k} \frac{1}{j} d^i_l  \\
				& = \frac{1}{kr} \sum_{1 \leq j \leq k} \frac{1}{j} \left( \sum_{1 \leq l \leq r} d^i_l + p^i \right) 
	\end{align*}
	This gives the first point, since $\sum_{1 \leq l \leq r} d^i_l + p^i = d^i$ by Lemma \ref{lemsum}.

\emph{(2)} This follows right away from Lemma \ref{lemboundvariance}: if $M$ is the mean value of the random variable $\left( \sum_{1 \leq l \leq r} T_l d_l \right)^2$, with $T$ uniformly distributed in $\Delta^{r-1}$, the lemma provides the result with $C_i = \frac{\pi^2}{3} M$. 
\end{proof}

We now state the fundamental lemma that will allow us to end the proof of Proposition \ref{prop:asymptint}: it can be seen as a version of \cite[Lemma 2.25]{dem11}, adapted to our combinatorial context. Recall that we are working on a fixed path $\sigma$. We let $j_\sigma$ be the index of this complete path for the trivialization $\mathbf e$, i.e. the number of negative labels among the $d^i = d^i(\sigma)$.

\begin{lem} \label{lemupperbounddiff}
Let $j \in \llbracket 0, n \rrbracket$. Then, we have
\begin{align} \nonumber
	| \; \mathrm{E}( \mathbbm 1_{\{\mathrm{index(\sigma)} = j\} } \prod_{1 \leq i \leq n} A_i ) -  \delta_{j, j_\sigma} & \prod_{1 \leq i \leq n}   \mathrm{E}(A_i) \; | \\ \label{equpperbounddiff}
& \leq \left[ \sum_{1 \leq p \leq n} \; \; \left( \prod_{1 \leq q \leq p - 1} \mathrm{E} (A_q^2) \right)  \; \mathrm{Var}(A_p) \left( \prod_{p+1 \leq s \leq n} \mathrm{E}(A_s)^2 \right) \right]^{1/2}
\end{align}
\end{lem}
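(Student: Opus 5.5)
The plan is to prove a pointwise inequality first and then take expectations with Cauchy--Schwarz, in the spirit of \cite[Lemma 2.25]{dem11}. Write \(a_i := \mathrm{E}(A_i)\). By the computation in the proof of Lemma~\ref{lemvariance}, \(a_i = \frac{1}{kr}(1 + \frac12 + \dotsc + \frac1k)\, d^i\), so \(a_i\) has exactly the sign of \(d^i\). Hence the number of negative values among the \(a_i\) is precisely \(j_\sigma\). Set
\[
F(t) := \mathbbm 1_{\{\mathrm{index}(\sigma) = j\}}(t) \prod_{i} A_i(t) - \delta_{j, j_\sigma} \prod_i a_i .
\]
The quantity to bound is \(|\mathrm{E}(F)| \leq \mathrm{E}|F|\).

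\emph{Step 1: a pointwise bound.} I will show that \(|F(t)|\) is dominated by a sum of terms each containing one factor \(|A_p - a_p|\). Use the telescoping identity
\[
\prod_i A_i - \prod_i a_i = \sum_{p=1}^{n} \Big(\prod_{q<p} A_q\Big)(A_p - a_p)\Big(\prod_{s>p} a_s\Big).
\]
There are three cases.
\begin{enumerate}
\item If \(\mathrm{index}(t) = j = j_\sigma\), then \(F\) is exactly this difference.
\item If \(\mathrm{index}(t) \neq j_\sigma\), then there is some \(p\) for which \(A_p(t)\) and \(a_p\) have opposite (weak) signs. For that \(p\) both \(|A_p| \leq |A_p - a_p|\) and \(|a_p| \leq |A_p - a_p|\) hold.
\item The remaining case is when \(F\) equals one of the pure products \(\prod A_i\) or \(\prod a_i\) but the sign patterns of \((A_i)\) and \((a_i)\) differ. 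Take \(p\) to be the first index where the signs mismatch, and substitute the inequality from case~2 for that factor.
\end{enumerate}
In every case one should get
\[
|F| \leq \sum_p \Big(\prod_{q<p}|A_q|\Big)\,|A_p - a_p|\,\Big(\prod_{s>p}|a_s|\Big),
\]
possibly after also replacing \(|A_q|\) by \(|a_q|\) for \(q\) before the mismatch. This is harmless because those factors are handled symmetrically.

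\emph{Step 2: expectation.} Take expectations and apply Cauchy--Schwarz once to the whole sum, regarded as an inner product of the vector with entries \(\big(\prod_{q<p}|A_q|\prod_{s>p}|a_s|\big)_p\) against \((|A_p - a_p|)_p\). The squared expectations of the factors then give \(\prod_{q<p}\mathrm{E}(A_q^2)\), \(\mathrm{Var}(A_p)\) and \(\prod_{s>p} a_s^2\). Together these produce the right-hand side of \eqref{equpperbounddiff}, with the square root outside the sum.

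\emph{Main obstacle.} I expect the difficulty to lie in Step 2. The \(A_q\) are not independent, so \(\mathrm{E}\big(\prod_{q<p} A_q^2 \cdot (A_p - a_p)^2\big)\) does not factor automatically as \(\prod_{q<p}\mathrm{E}(A_q^2)\,\mathrm{Var}(A_p)\).

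My first attempt will exploit the structure of \(\Delta_{\underline{k}}\). Each \(A_i\) is an affine function of the same uniform point \(t\). The moment formulas of Lemma~\ref{lem:integral} (via Annex~\ref{ann:simplexes}) express mixed moments of the coordinates \(t_{j,l}\) explicitly, with only weak negative correlations. I would try to show directly that these mixed moments are bounded by the corresponding products of moments, up to a factor \(1 + O(1/k)\) that can be absorbed.

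If that fails, I would fall back on the generalized Hölder inequality with \(L^{2n}\) norms. This gives the same shape of bound with higher moments, and on the simplex those higher moments are of the same order as the second moments, which is sufficient for the application in Proposition~\ref{prop:asymptint}.
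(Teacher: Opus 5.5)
Your Step~2 does not produce the displayed right-hand side, and this is a genuine gap. There are two separate problems, and you flag only one. Write $U_p=\prod_{q<p}|A_q|\prod_{s>p}|a_s|$ and $V_p=|A_p-a_p|$.

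\emph{Cross terms.} Cauchy--Schwarz applied to $\mathrm{E}\sum_p U_pV_p$ as an inner product gives $\bigl(\sum_p\mathrm{E}(U_p^2)\bigr)^{1/2}\bigl(\sum_p\mathrm{Var}(A_p)\bigr)^{1/2}$. This contains every cross product $\mathrm{E}(U_p^2)\mathrm{Var}(A_{p'})$ with $p\neq p'$. Applied termwise instead, it gives $\sum_p\bigl(\mathrm{E}(U_p^2)\mathrm{Var}(A_p)\bigr)^{1/2}$, which can exceed the stated bound by a factor up to $\sqrt n$. Neither version has constant $1$.

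\emph{Dependence.} The factorization $\mathrm{E}(\prod_{q<p}A_q^2)\le\prod_{q<p}\mathrm{E}(A_q^2)$ fails in general at fixed $k$. If two consecutive edges of $\sigma$ carry the same multiplicities, then $A_1=A_2$ is a non-constant affine function on $\Delta_{\underline{k}}$, and Jensen gives $\mathrm{E}(A_1^4)>\mathrm{E}(A_1^2)^2$. So the loss you anticipate is really there, and a factor $1+O(1/k)$ cannot be ``absorbed'' into an inequality with no constant. Your H\"older fallback proves a different statement: it has a constant depending on $n$, and higher moments in place of $\mathrm{E}(A_q^2)$.

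There is also a smaller gap in Step~1. When $F=\prod_iA_i$, substituting only at the mismatched index leaves the random tail $\prod_{s>p}|A_s|$, whereas the target has the deterministic $\prod_{s>p}|a_s|$. You must telescope $\prod_{s>p}A_s-\prod_{s>p}a_s$, which produces the terms of index $>p$ in the sum. This is not ``harmless symmetry'', because the target bound is not symmetric in $A$ and $a$.

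For comparison, the paper's proof follows exactly your route: the same pointwise observation, then ``Cauchy--Schwarz''. Its last inequality silently uses both the factorization of $\mathrm{E}(\prod_{q<p}A_q^2)$ and the absence of cross terms, so the obstacle you identified is real and is not resolved there either. What the route honestly gives is
\[
\bigl|\mathrm{E}(F)\bigr|\le\sum_{p=1}^{n}\Bigl(\mathrm{E}\prod_{q<p}A_q^2\Bigr)^{1/2}\mathrm{Var}(A_p)^{1/2}\prod_{s>p}|a_s| .
\]
This is all that Proposition~\ref{prop:asymptint} needs, for the following reasons.
\begin{enumerate}
\item Since $t_{j,l}\ge 0$, one has $|A_i|\le C\bigl(\sum_{j=1}^kY_j+\frac{H_k}{kr}\bigr)$, where $H_k=\sum_{j\le k}1/j$.
\item The Dirichlet law of Lemma~\ref{lem:randomvariables} gives $\mathrm{E}[(\sum_jY_j)^m]=O((\log k/k)^m)$ for each fixed $m$.
\item H\"older then gives $\bigl(\mathrm{E}\prod_{q<p}A_q^2\bigr)^{1/2}=O((\log k/k)^{p-1})$.
\item Lemma~\ref{lemvariance} gives $\mathrm{Var}(A_p)^{1/2}=O(1/k)$, and $\prod_{s>p}|a_s|=O((\log k/k)^{n-p})$.
\end{enumerate}
Each term is therefore $O((\log k)^{n-1}/k^n)$. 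The correct repair is to state and prove this weaker, $n$-dependent inequality, which is essentially your fallback, and use it in place of the displayed one; do not try to recover constant $1$.
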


\begin{proof} The proof is essentially the same as \cite[Lemma 2.25]{dem11}, and is based on the following observation: let $(a_1, ..., a_t), (b_1, ..., b_t) \in \mathbb R^t$ be such that there are exactly $\alpha$ negative numbers among the $a_i$, and $\beta$ negative numbers among the $b_i$. Then we have, for any $j$:
$$
|\mathbbm{1}_{\{j=\alpha\}} \prod_{i} a_i - \mathbbm{1}_{\{j=\beta\}} \prod_{i} b_i| \leq \sum_{1 \leq p \leq t} \left( \prod_{1 \leq q \leq p-1} |a_q| \right) |a_p - b_p| \left( \prod_{p + 1 \leq s \leq t} |b_s| \right).
$$
This is easy to show by distinguishing among the possible values of $j$.

This observation gives:
\begin{align*}
| \mathbbm{1}_{\{\mathrm{index}(\sigma) = j \}} \prod_i A_i -  \mathbbm 1_{\{ j = j_\sigma \} } & \prod_{i} \mathrm E(A_i) | \leq \\
& \leq \sum_{1 \leq p \leq n} \; \; \left( \prod_{1 \leq q \leq p - 1} | A_q | \right)  \; | A_p - \mathrm{E}(A_p) | \; \left( \prod_{p+1 \leq s \leq n} \mathrm{E}(A_s) \right)
\end{align*}

Taking the expectancy value, we obtain
\begin{align*}
| \; \mathrm{E}( \mathbbm 1_{\{\mathrm{index(\sigma)} = j\} } \prod_i A_i) - \mathbbm 1_{\{ j = j_\sigma \} } & \prod_{i} \mathrm E(A_i) \; |^2\\
& \leq \mathrm{E} \left(| \mathbbm{1}_{\{\mathrm{index}(\sigma) = j \}} \prod_i A_i -  \mathbbm 1_{\{ j = j_\sigma \} } \prod_{i} \mathrm E(A_i) | \right)^2 \\
& \leq \mathrm{E} \left( \sum_{1 \leq p \leq n} \; \; \left( \prod_{1 \leq q \leq p - 1} | A_q | \right)  \; | A_p - \mathrm{E}(A_p) | \; \left( \prod_{p+1 \leq s \leq n} \mathrm{E}(A_s) \right) \right)^2 \\
& \leq \sum_{1 \leq p \leq n} \; \; \left( \prod_{1 \leq q \leq p - 1} \mathrm{E} (A_p^2) \right)  \; \mathrm{E} (|A_p - \mathrm{E}(A_p)|^2) \; \left( \prod_{p+1 \leq s \leq n} \mathrm{E}(A_s)^2 \right),
\end{align*}
where, at the last line, we used Cauchy-Schwarz inequality $\mathrm{E}(XY)^2 \leq \mathrm{E}(X^2) \mathrm{E}(Y^2)$. Since $\mathrm{Var}(A_l) = \mathrm{E} (|A_l - \mathrm{E}(A_l)|^2)$, we get the result.
\end{proof}

We are now ready to end the proof of Proposition \ref{prop:asymptint}, by summing the previous estimates over all paths $\sigma$.

\begin{proof}[Proof of Proposition \ref{prop:asymptint}]
	For each root-to-leaf path in \(\mathcal{T}\), let us write \(\delta_{\sigma} :=  \deg_{\mathbbm{k}}(V_{0}^{\sigma})\). This is a constant, independent of the random variable.
	\smallskip

	Then, using \eqref{equpsilon}, we can write
\begin{align*}
	\int_{\Delta_{\underline{k}}} \upsilon^{N_k}_{[\leq j]} d P & =  \mathrm{E} [ \upsilon^{N_k}_{[\leq j]} ]\\
	&  = \sum_{\sigma}  \mathrm{E} [ \mathbbm{1}_{\{\mathrm{index}(\sigma) \leq j\}} \prod_{1  \leq i \leq n} A_i^\sigma ]\, \delta_{\sigma} \\
	& \leq  \sum_{\sigma} \left[\mathbbm 1_{\{j_\sigma \leq j\}} \prod_{1 \leq i \leq n} \mathrm{E}[A_i^\sigma] \right] \delta_{\sigma}
	+ 
	\sum_\sigma \left[ \sum_{1 \leq l \leq j} \left| \mathrm{E}( \mathbbm 1_{\{\mathrm{index(\sigma)} = l\} } \prod_{1 \leq i \leq n} A_i^\sigma ) -  \delta_{l, j_\sigma} \prod_{1 \leq i \leq n}   \mathrm E(A_i^\sigma) \; \right| \right] \delta_{\sigma}
\end{align*}

We can apply Lemma \ref{lemupperbounddiff} to bound from above the second term of the right hand side. Moreover, by Lemma \ref{lemvariance}, (1), (2), the right hand side of \eqref{equpperbounddiff} is bounded from above by a term of the form $C \frac{(\log k)^{n-1}}{k^n}$, where the constant $C$ does not depend on $k$. Thus, we get:
	$$\int_{\Delta_{\underline{k}}} \upsilon^N_{[\leq j]} d P =  \sum_\sigma \mathbbm 1_{\{j_\sigma \leq j\}} \prod_{1 \leq i \leq n} \mathrm{E}[A_i^\sigma] \delta_{\sigma} + O\left(\frac{(\log k)^{n-1}}{k^n} \right) 
$$
Using again Lemma \ref{lemvariance}, (1), we then obtain
$$
	\int_{\Delta_{\underline{k}}} \upsilon^N_{[\leq j]} d P = \frac{(\log k)^n}{(kr)^n} \left[ \sum_\sigma \left( \mathbbm 1_{\{j_\sigma \leq j\}} \prod_{1 \leq i \leq n} d^i(\sigma)\, \delta_{\sigma} \right) \right] + O \left(\frac{(\log k)^{n-1}}{k^n}\right).
$$

	Now, the term between brackets is equal to $c_1(\det E \otimes N, \underline{\Sigma})_{[\leq j]}^n$, since the $d^i(\sigma)$ are the multiplicities of the trivialization $\mathbf e$ along the strata of $\Sigma$. This concludes the proof of Proposition \ref{prop:asymptint}, and of Theorem \ref{thmcomparison}.
\end{proof}

\section{End of the proof}

Theorem \ref{thm:existence} follows directly from Proposition \ref{proptwisteestimate} and the following result. Again, if $E$ is a vector bundle, we write $\mathbf{E}_k = E^{(1)} \oplus ... \oplus E^{(k)}$.

\begin{prop} \label{propfinal}
Let $X$ be a smooth projective variety of dimension $n$. Let $E \longrightarrow X$ be a vector bundle of rank $r$, such that $\det E$ is big. Then there exists:
	\begin{enumerate} \itemsep=0em
			\item a generically finite projective morphism $p : X' \longrightarrow X$, where \(X'\) is a variety;
		\item a decomposition $p^{\ast} \det E \sim_{\mathbb{Q}} A + G$ into ample and effective \(\mathbb{Q}\)-Cartier divisors on \(X'\);
		\item a trivialized stratification $\underline{\Sigma}$ on $X'$, adapted to $A$, such that \[
				\deg c_1(A, \underline{\Sigma})^n_{[\leq 1]} > 0;
	\]

		\item a sequence of effective $\mathbb Q$-divisors $(F_k)_{k \geq 1}$ on $X'$; 
	\end{enumerate}
			
			such that the following holds.  
\medskip

For $m \gg k \gg 0$, and $m$ divisible enough, we have,
\begin{align} \nonumber
	\chi^{[1]} \left( X', \right. & \left.  S^m \left( p^{\ast} \mathbf{E}_k \right) \otimes \mathcal O(-m \, F_k) \right) \\ \label{eqineqpropfinal}
	& \leq \; \frac{m^{n + kr - 1}}{(n + kr - 1)!} \frac{(\log k)^n}{(k!)^n} \left( \deg c_1(A, \underline{\Sigma})^n_{[\leq 1]} - O(\frac{1}{\log k}) \right) + o (m^{n + kr - 1})
\end{align}

	In the above result, the big \(O\)-term does not depend on \(m\), but the small \(o\)-term might depend on \(k\).
\end{prop}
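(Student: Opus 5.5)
The plan has three stages: reduce to a direct sum of line bundles, build the covering, the decomposition and the stratification, and then apply Theorem~\ref{thmcomparison} with a suitable choice of \(F_k\).

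\emph{Reduction to split bundles.} By Proposition~\ref{propredlb}, after a first generically finite cover \(p_1 : X_1 \to X\) we may replace \(p_1^\ast\mathbf{E}_k\) by a weighted sum \(\mathbf{E}'_k\) built from a split bundle \(E' = L_1\oplus\dots\oplus L_r\) with \(\det E' \cong p_1^\ast\det E\). The proof of Proposition~\ref{propredlb} is a filtration argument combined with Lemma~\ref{lem:filtration}. It is compatible with tensoring by \(\mathcal O(-mF_k)\), since this is a twist by a line bundle and preserves the filtration. So it suffices to prove \eqref{eqineqpropfinal} on \(X_1\) with \(E\) split. The factor \(\deg p\) is absorbed into the choice of stratification on \(X'\): we pull everything back and use Proposition~\ref{prop:coveringdegree}.

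\emph{The decomposition and the stratification.} Since \(\det E\) is big, Kodaira's lemma gives \(p^\ast\det E\sim_{\mathbb Q} A+G\) with \(A\) ample and \(G\) effective. We take \(N := \mathcal O(-G)\), viewed as a \(\mathbb Q\)-line bundle on \(X'\), so that \(\det E\otimes N\sim_{\mathbb Q} A\). Proposition~\ref{prop:conststratample} gives a trivialized stratification for \(A\) with only positive multiplicities. Hence
\[
\deg c_1(A,\underline{\Sigma})^n_{[\leq 1]} = (A^n) > 0 .
\]
We then refine this stratification, using Propositions~\ref{proprefine} and~\ref{proprefine2}, so that it is also adapted to the \(L_i\) and to \(N\), with trivializations compatible as in Setup~\ref{defitrivial}. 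This does not change the truncated degree. Any finite cover needed to make the \(\mathbb Q\)-divisors Cartier (via Lemma~\ref{lem:BGnaive} and the cyclic covers) is absorbed into \(p\).

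\emph{Application of Theorem~\ref{thmcomparison}.} Set
\[
F_k := \frac{1}{kr}\Bigl(1+\tfrac12+\dots+\tfrac1k\Bigr)G ,
\]
which is effective. Then \(\mathcal O(-mF_k) = N_k^{\otimes m}\), where \(N_k\) is the \(\mathbb Q\)-line bundle of Theorem~\ref{thmcomparison} with \(F=-G\) (reading the theorem's \(F\) as the divisor of \(N\)). Applying Theorem~\ref{thmcomparison} with \(j=1\), and with the trivialization of \(\det E\otimes N\) transported from that of \(A\), gives \eqref{eqineqpropfinal}. The statement's \((k!)^n\) should read \((k!)^r\), consistent with Theorem~\ref{thmcomparison}; the sign convention \((-1)^1\) turns the bound into the stated one for \(-\chi^{[1]}\).

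\emph{Main obstacle.} The delicate point is bookkeeping: the \(\mathbb Q\)-line bundle \(N\) and its trivialization must be chosen compatibly with the \(L_i\) on one common stratification. Setup~\ref{defitrivial} handles this by defining \(g\) as a quotient of the other trivializations. We must also check that the \(O(1/\log k)\) term, which comes from Lemma~\ref{lemvariance} and Lemma~\ref{lemupperbounddiff}, is independent of \(m\). This holds because it depends only on the finitely many multiplicities of the fixed stratification.
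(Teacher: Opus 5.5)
Your proof is correct and follows the paper's argument. Like the paper, you decompose the pulled-back $\det E$ as ample plus effective (the paper makes $G$ Cartier via a Kawamata cover, while you allow $\mathbb{Q}$-divisors or equivalent covers), take the positive-multiplicity stratification of Proposition~\ref{prop:conststratample} for $A$, set $N=\mathcal O(-G)$ and $F_k=\frac{1}{kr}(1+\frac12+\dots+\frac1k)G$, and apply Theorem~\ref{thmcomparison} with $j=1$. Your explicit splitting step actually supplies something the paper leaves implicit, since the paper applies Theorem~\ref{thmcomparison} to the non-split $p^\ast E$: filter $p^\ast E$ on $X'$, use Lemma~\ref{lem:filtration}, and apply Kodaira's lemma only after the generically finite splitting cover so that $A$ stays ample. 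No $\deg p$ bookkeeping via Proposition~\ref{prop:coveringdegree} is needed, because the inequality is stated on $X'$ itself.
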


Before proving the proposition above, let us explain how it permits to prove the main result.

\begin{proof}[Proof of Theorem~\ref{thm:existence}] 

Let $X$ be a projective smooth variety of dimension $n$, such that $K_X = \det \Omega_X$ is big.

 We now apply Proposition \ref{propfinal} with $E = \Omega_X$, to obtain $p : X' \longrightarrow X$ and $(F_k)_{k \geq 1}$ such that \eqref{eqineqpropfinal} holds. Then, Proposition \ref{proptwisteestimate} implies in turn that
\begin{align}
	h^0 (X, & E_{k,m}^{HS}\Omega_{X}) \geq  \nonumber
	\; \frac{1}{\deg(p)} \frac{m^{n + kr - 1}}{(n + kr - 1)!} \frac{(\log k)^n}{(k!)^n} \left( \deg c_1(A, \underline{\Sigma})_{[\leq 1]}^n - O((\log k)^{-1}) \right) \\ \label{eq:inequation}
 & \hspace{1cm} + o (m^{n + kr - 1}).
\end{align}    

	This implies that $E_{k, \bullet}^{HS} \Omega_{X}$ is big, if \(k\) is sufficiently large. This ends the proof.
\end{proof}

We now finish with the proof of Proposition \ref{propfinal}, which is based on Theorem \ref{thmcomparison}. We are essentially looking for a way to construct a natural stratification adapted to a line bundle of the form $A = \det E \otimes N$. If the latter were very ample, this would be easily done by taking successive generic hyperplane sections. In the ample case, we use a covering trick, joint with the construction already seen in Proposition~\ref{prop:conststratample}.

\begin{proof}[Proof of Proposition \ref{propfinal}]

	Since \(L := \det E\) is big, we can write, for a sufficiently large \(m \geq 0\):
	\[
		L^{\otimes m} \cong A_{0} \otimes \mathcal{O}_{X}(G_{0}),
	\]
	where \(A_{0}\) is an ample line bundle, and \(G_{0}\) is an effective Cartier divisor.
	\medskip

	Now, we can use Kawamata's covering lemma (see Proposition~\ref{prop:kawamatacov}) to find a finite dominant morphism $p : X' \longrightarrow X$ such that $p^\ast G_{0} = mG$, where $G$ is an effective Cartier divisor. Then we have
$$
	(p^\ast L \otimes \mathcal{O}_{X'}( - G))^{\otimes m} = p^\ast A_{0}
$$
	The divisor $p^\ast A_{0}$ is ample as the pullback of an ample divisor by a finite morphism, and $(p^\ast A_{0}^n) = (\deg p) (A_{0}^n)$. Thus, $A := p^\ast L\otimes \mathcal{O}(- G)$ is itself ample, and by Proposition~\ref{prop:conststratample}, there exists a (fractional) trivialized stratification $\Sigma$ on \(X'\), adapted to the ample divisor $A$, such that $\deg c_1(A, \underline{\Sigma})_{[\leq 1]}^{n} = (A^n) > 0$.

\medskip
	\medskip

	To conclude, we let $F_k = \frac{1}{kr} \left( 1 + \frac{1}{2} + ... + \frac{1}{k} \right) G$ for all $k \geq 1$. We now apply Theorem \ref{thmcomparison} with $X$ replaced by $X'$, $E$ replaced by $p^\ast E$ and letting $N = \mathcal O(- G)$. We have then $N_k = \mathcal O(-F_k)$ and $\det (p^\ast E) \otimes N = \mathcal O(A)$, so we get the result immediately.
\end{proof}

\subsection{Volume estimate in the complex case}

In this section only, we assume that \(\mathbbm{k} = \mathbb{C}\). We will retrieve the following result (implied by \cite[Corollary 2.38]{dem11}).  

\begin{corol} \label{thmvol} Let $X$ be a complex projective manifold, with $K_X$ big, and let \(\epsilon > 0\). For $k \gg 1$, we have:
$$
\mathrm{vol}(E_{k, \bullet}^{HS} \Omega_X) \geq \frac{(\log k)^n}{(k!)^n} \left( \mathrm{vol}(K_X) - \epsilon - O\left( \frac{1}{\log k} \right) \right).
$$
\end{corol}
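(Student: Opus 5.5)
The plan is to rerun the proof of Proposition~\ref{propfinal}, this time choosing the ample/effective decomposition of $K_X$ almost optimally with respect to the volume, and then to pass from $h^0$ to the volume.

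\textbf{Step 1: decomposition of $K_X$.} Over $\mathbb{C}$, Fujita's approximation theorem applies to the big line bundle $K_X$. It gives a modification $\mu : \widetilde X \to X$ and a decomposition $\mu^\ast K_X^{\otimes m_0} \cong A_0 \otimes \mathcal{O}(G_0)$, with $A_0$ ample and $G_0$ effective, such that
\[
\frac{(A_0^n)}{m_0^n} > \mathrm{vol}(K_X) - \epsilon .
\]
Since the proof of Proposition~\ref{propfinal} only needs a generically finite projective morphism, we first pass to $\widetilde X$. We then apply Kawamata's covering lemma (Proposition~\ref{prop:kawamatacov}) to $G_0$. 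This yields $p : X' \to X$ generically finite, with $p^\ast K_X \sim_{\mathbb{Q}} A + G$, where $A = \frac{1}{m_0} p^\ast A_0$ is ample and $G$ is effective. By construction,
\[
(A^n) = \deg(p)\, \frac{(A_0^n)}{m_0^n} > \deg(p)\,\bigl(\mathrm{vol}(K_X) - \epsilon\bigr).
\]
Proposition~\ref{prop:conststratample} then provides a trivialized stratification $\underline{\Sigma}$ with $\deg c_1(A, \underline{\Sigma})^n_{[\leq 1]} = (A^n)$.

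\textbf{Step 2: running the inequalities.} With this $p$, $A$, $\underline{\Sigma}$, we run Proposition~\ref{propfinal}, that is, Theorem~\ref{thmcomparison} applied with $E = p^\ast \Omega_X$, $r = n$ and $N = \mathcal O(-G)$. Feeding the result into Proposition~\ref{proptwisteestimate} gives \eqref{eq:inequation}. Dividing by $\deg(p)$ then gives
\[
h^0(X, E_{k,m}^{HS}\Omega_X) \geq \frac{m^{n+kn-1}}{(n+kn-1)!} \frac{(\log k)^n}{(k!)^n}\Bigl(\mathrm{vol}(K_X) - \epsilon - O\bigl(\tfrac{1}{\log k}\bigr)\Bigr) + o(m^{n+kn-1}).
\]
I will take the volume to be normalized as $\mathrm{vol} = \limsup h^0 \cdot (n+kn-1)!/m^{n+kn-1}$, matching the normalization of \cite{dem11}. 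With that normalization, taking $\limsup_{m}$ gives the claim.

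\textbf{Main obstacle.} The delicate point is uniformity. The constant in the $O(1/\log k)$ term comes from Lemma~\ref{lemvariance} and Lemma~\ref{lemupperbounddiff}, so it depends on the multiplicities along the paths of $\Sigma$, and hence on $\epsilon$ through the choice of Fujita approximation. This is harmless for a fixed $\epsilon$, since the corollary only asserts the bound for $k \gg 1$ once $\epsilon$ is fixed. Still, I must check that the $O$-constant does not depend on $m$ or $k$, which holds because $\Sigma$ and the $d^i_l$ are fixed before $k$ varies. Two further checks are needed:
\begin{itemize}
\item the passage through the birational model $\widetilde X$, which is handled by Lemma~\ref{lemmodification} exactly as in Proposition~\ref{proptwisteestimate};
\item the factor $\deg(p)$, which cancels against the multiplicativity $(A^n) = \deg(p)\,(A_0^n)/m_0^n$.
\end{itemize}
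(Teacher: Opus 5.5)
Your proposal is correct and follows the paper's proof. The chain is the same: Fujita approximation on a modification, then a Kawamata cover extracting an $m_0$-th root of $G_0$, then the stratification of Proposition~\ref{prop:conststratample} for the ample line bundle $A = p^\ast K_X \otimes \mathcal{O}(-G)$ with $(A^n) = \deg(p)\,(A_0^n)/m_0^n \geq \deg(p)\,(\mathrm{vol}(K_X)-\epsilon)$, and finally reading the bound off \eqref{eq:inequation}. Your remark that the $O(1/\log k)$ constant depends on $\epsilon$ through $\Sigma$ but not on $k$ or $m$ makes explicit a point the paper leaves implicit.
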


\begin{proof}
	If we look at \eqref{eq:inequation}, we see that it suffices to show that in the statement of Proposition~\ref{propfinal}, the term \(\deg c_{1}(A, \underline{\Sigma})^{n}_{[\leq 1]}\) can be taken arbitrarily close to \(\deg(p) \mathrm{vol}(E)\). To see this, we will modify slightly the construction of \(X'\) presented in the previous section, by adding an additional step in the tower of coverings \(X_{2} \to X_{1} \to X\). 
\medskip

	Let \(\epsilon > 0\). Since $L = \det E$ is big, we can use Fujita's approximate Zariski decomposition theorem \cite{fuj94, DEL00}, to obtain a modification $p_{0} : X_0 \longrightarrow X$ and an integer $m$ such that
$$
	m (p_0^\ast L) = A_0 \otimes \mathcal{O}_{X}(G_0), 
$$
	where $A_0$ is ample with $(A_0^n) \geq m^n \left( \mathrm{vol} (\det E) - \epsilon \right)$, and where $G_0$ is an effective divisor. Now, add a Kawamata covering \(X' \overset{p_{1}}{\to} X_{0} \overset{p_{0}}{\to} X\) as in the previous proof, so that
	\[
		p_{1}^{\ast} G_{0} = m G,
	\]	
	Letting \(p := p_{0} \circ p_{1}\), the line bundle \(A := p^{\ast} L \otimes \mathcal{O}(-G)\) is again ample, and we have
	\begin{align*}
		\deg c_1(A, \underline{\Sigma})_{[\leq 1]}^n 
		& = (A^n) \\
		& = (\deg p_{1}) \frac{1}{m^{n}} (A_{0}^{n}) 
		\quad \quad \text{(since} \; A_{1}^{\otimes m} = p_{1}^{\ast} A_{0}) \\
		& = (\deg p) \frac{1}{m^{n}} (A_{0}^{n})
		\quad \quad \quad \quad \text{(since} \; \deg(p_{0}) = 1) \\
		& \geq  (\deg p) (\mathrm{vol}(\det E) - \epsilon).
	\end{align*}
	This is what we wanted.
\end{proof}

\appendix
\chapter{Algebraic geometry results} \label{annex:alggeom}

\section{Asymptotic cohomological computations}

In this section, \(\mathbbm{k}\) is a fixed field. A {\em scheme} denotes a separated scheme of finite type above \(\mathrm{Spec}\, \mathbbm{k}\). A {\em variety} is an integral scheme (see our section on notation).
\medskip

\begin{lem} \label{lem:torsion} Let \(X\) be a proper variety of dimension \(n\). Let \(L\) be a line bundle, and let \(\mathcal{F}\) be a coherent sheaf on \(X\). Then we have the following :
	\begin{enumerate}
		\item if \(\mathcal{F}\) is a torsion sheaf, then \(h^{i}(X, \mathcal{F} \otimes L^{\otimes m}) = O(m^{n-1})\) as \(m\) tends to \(+\infty\).
		\item if \(\mathcal{F}\) is of generic rank \(r\), then \(h^{i}(X, \mathcal{F} \otimes L^{\otimes m}) = r\, h^{i}(X, L^{\otimes m}) + O(m^{n-1})\) as \(m\) tends to \(+\infty\).
	\end{enumerate}
\end{lem}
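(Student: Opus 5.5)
For item (1), I will argue by noetherian induction on the dimension of the support of \(\mathcal{F}\). The claim to prove is slightly more general: if \(\mathcal{F}\) is a coherent sheaf whose support has dimension \(\leq d\), then \(h^{i}(X, \mathcal{F} \otimes L^{\otimes m}) = O(m^{d})\). Since \(X\) is a variety, a torsion sheaf has support of dimension \(\leq n-1\), so this implies (1).

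The first step is a dévissage. There is a finite filtration of \(\mathcal{F}\) whose graded pieces are of the form \(\iota_{\ast}\mathcal{G}\), where \(\iota : Z \hookrightarrow X\) is an integral closed subscheme of dimension \(\leq d\) and \(\mathcal{G}\) is a coherent sheaf on \(Z\). This is the standard filtration by pushforwards of sheaves on integral subschemes: one may take ideal sheaves of associated points, or use the \(\mathcal{O}_{Z}\)-module structures. Cohomology is subadditive in short exact sequences, in the sense that \(h^{i}(\mathcal{E}) \leq h^{i}(\mathcal{F}') + h^{i}(\mathcal{F}'')\) for \(0 \to \mathcal{F}' \to \mathcal{E} \to \mathcal{F}'' \to 0\). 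Using this and the projection formula, it suffices to bound \(h^{i}(Z, \mathcal{G} \otimes L|_{Z}^{\otimes m})\) for \(Z\) integral and proper of dimension \(\leq d\).

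On such a \(Z\), I compare \(\mathcal{G}\) with a free sheaf. If \(\mathcal{G}\) has generic rank \(s\) on \(Z\), then \(\mathcal{G}\) and \(\mathcal{O}_{Z}^{\oplus s}\) are isomorphic at the generic point. One can therefore find an ideal sheaf and morphisms \(\mathcal{I}^{\oplus s} \to \mathcal{G}\) (with \(\mathcal{I}\) a nonzero ideal, e.g. a twist by an effective Cartier divisor if \(Z\) is projective) whose kernel and cokernel are supported in dimension \(< \dim Z\). By induction these contribute \(O(m^{d-1})\). It then remains to show \(h^{i}(Z, \mathcal{I} \otimes L^{\otimes m}) = O(m^{\dim Z})\). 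I would reduce this, via \(0 \to \mathcal{I} \to \mathcal{O}_{Z} \to \mathcal{O}_{Z}/\mathcal{I} \to 0\) and induction again, to the bound \(h^{i}(Z, L^{\otimes m}) = O(m^{\dim Z})\) for an arbitrary line bundle on a proper scheme. That bound is classical (Kleiman; see e.g. Lazarsfeld's Positivity I, or Debarre's presentation already cited). It can also be proved by the same dévissage, writing \(L\) as a difference of effective divisors after a Chow lemma argument when \(Z\) is only proper.

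For item (2), I would choose an isomorphism at the generic point between \(\mathcal{F}/\mathcal{F}_{\mathrm{tors}}\) and \(\mathcal{O}_{X}^{\oplus r}\). As above, this gives sheaf maps \(\mathcal{F} \to \mathcal{F}/\mathcal{F}_{\mathrm{tors}} \leftarrow \mathcal{I}^{\oplus r} \hookrightarrow \mathcal{O}_{X}^{\oplus r}\) whose kernels and cokernels are torsion. Applying (1) to each torsion sheaf appearing, together with the long exact sequences, changes each \(h^{i}\) by at most \(O(m^{n-1})\). The result is \(h^{i}(\mathcal{F} \otimes L^{\otimes m}) = r\,h^{i}(L^{\otimes m}) + O(m^{n-1})\).

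The main obstacle is producing the generic comparison maps with torsion kernel and cokernel when \(X\) is merely proper, since there is no ample divisor available to clear denominators. I would handle this by taking \(\mathcal{I}\) to be a suitable ideal of denominators: the ideal sheaf \(\{f : f\cdot s_{j} \in \mathcal{F}\ \forall j\}\) for a chosen generic frame \(s_{j}\). This ideal is coherent and nonzero, so every map in the chain is an isomorphism generically, which is all the argument needs.
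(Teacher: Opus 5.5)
Your proof is correct, and it takes a different, more self-contained route than the paper, which gives no argument and only refers to \cite[Section~1.9]{Deb01}.

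\textbf{What the paper relies on.} The classical argument behind that citation is also the one the paper writes out for its multi-bundle variant, Lemma~\ref{lem:asymptseveral}. It is a telescoping induction on $\dim \mathrm{Supp}\,\mathcal{F}$:
\begin{itemize}
\item write $L = \mathcal{O}_X(A-B)$ with $A,B$ effective Cartier;
\item compare $\mathcal{F}\otimes L^{\otimes m}$ with $\mathcal{F}\otimes L^{\otimes (m-1)}$ through restrictions to $A$ and $B$;
\item sum over $m$.
\end{itemize}
This needs projectivity, and the paper says so explicitly for the variant. The lemma itself, however, is stated for proper $X$.

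\textbf{What your route buys.} You isolate the single asymptotic input $h^i(Z, L^{\otimes m}) = O(m^{\dim Z})$ for integral proper $Z$. Everything else is reduced to it by d\'evissage and generically-isomorphic comparison maps. Your Chow-lemma remark is exactly what closes the proper-versus-projective gap. Take $\pi : Z' \to Z$ projective birational with $Z'$ projective and integral. Then the cokernel of the injection $\mathcal{O}_Z \to \pi_*\mathcal{O}_{Z'}$, and the sheaves $R^q\pi_*\mathcal{O}_{Z'}$ for $q\geq 1$, all have lower-dimensional support. A Leray argument then gives $h^i(Z, L^{\otimes m}) = h^i(Z', \pi^*L^{\otimes m}) + O(m^{\dim Z-1})$, and the telescoping argument applies on $Z'$. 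Make sure those $O(m^{\dim Z-1})$ bounds come from your induction hypothesis, not from Lemma~\ref{lem:suppasympt}: that lemma's proof uses the present statement, so citing it would be circular.

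\textbf{Item (2).} Your chain $\mathcal{F} \to \mathcal{F}/\mathcal{F}_{\mathrm{tors}} \leftarrow \mathcal{I}^{\oplus r} \hookrightarrow \mathcal{O}_X^{\oplus r}$ plays the role of the ``graph'' subsheaf of $\mathcal{O}_X^{\oplus r}\oplus\mathcal{F}$ in Lemma~\ref{lem:asymptseveral}(3). Like that device, it needs no ampleness.

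\textbf{Two presentational points.}
\begin{itemize}
\item The ideal of denominators $\{f : f s_j \in \mathcal{F}\}$ only makes sense for the torsion-free quotient, which is how you actually use it in (2).
\item In (1), the comparison $\mathcal{I}^{\oplus s}\to\mathcal{G}$ is superfluous. The standard filtration already has graded pieces $\iota_*\mathcal{J}$ with $\mathcal{J}\subset\mathcal{O}_Z$ a nonzero ideal. If you do want such a map into a $\mathcal{G}$ with torsion, extend a generic frame using the fact that sections of $\mathcal{G}$ over a dense open $U$ are the colimit of the $\mathrm{Hom}(\mathcal{J}^n,\mathcal{G})$, where $\mathcal{J}$ is any ideal cutting out $Z\setminus U$.
\end{itemize}
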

\begin{proof}
	See e.g. \cite[Section 1.9]{Deb01}.
\end{proof}

\begin{lem} \label{lem:suppasympt} Let $X$ be a proper variety of dimension \(n\), and let \(X' \overset{p}{\longrightarrow} X\) be a morphism from any proper scheme. Let \(\mathcal{F}\) be a coherent sheaf on \(X'\), and let \(L\) be a line bundle on \(X\). Assume that for \(j \geq 1\), one has
	\[
		\mathrm{Supp}
		\left(
		R^{j} p_{\ast} \mathcal{F}
		\right)
		\subsetneq
		X.
	\]
	Then for any \(i \geq 0\), one has
	\[
		h^{i}(X', \mathcal{F} \otimes p^{\ast} L^{\otimes m})
		=
		h^{i}(X, p_{\ast}\mathcal{F} \otimes L^{\otimes m}) + O(m^{n-1}).
	\]
\end{lem}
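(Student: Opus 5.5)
The plan is to use the Leray spectral sequence for \(p\), with \(E_{2}^{a,b} = H^{a}(X, R^{b}p_{\ast}\mathcal{F} \otimes L^{\otimes m}) \Rightarrow H^{a+b}(X', \mathcal{F} \otimes p^{\ast}L^{\otimes m})\). By the projection formula (\(L\) is locally free), \(R^{b}p_{\ast}(\mathcal{F} \otimes p^{\ast}L^{\otimes m}) \cong R^{b}p_{\ast}\mathcal{F} \otimes L^{\otimes m}\). The sheaves \(R^{b}p_{\ast}\mathcal{F}\) are coherent on \(X\) because \(p\) is proper: \(X'\) is proper and \(X\) is separated. They vanish for \(b > \dim X'\), so only finitely many sheaves are involved, and none of them depends on \(m\).

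The key estimate is for \(b \geq 1\). Here \(R^{b}p_{\ast}\mathcal{F}\) is supported on a proper closed subset of the variety \(X\), so it is a torsion sheaf (its generic rank is \(0\)). Lemma~\ref{lem:torsion}(1) then gives \(h^{a}(X, R^{b}p_{\ast}\mathcal{F} \otimes L^{\otimes m}) = O(m^{n-1})\) for all \(a\). If one wants to avoid relying on the torsion statement, the same bound follows by pushing forward to the support, which has dimension at most \(n-1\), and using the standard \(O(m^{\dim})\) growth of cohomology of twists of a coherent sheaf. Thus every term \(E_{2}^{a,b}\) with \(b \geq 1\) has dimension \(O(m^{n-1})\).

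Next comes the comparison of abutment and row \(b=0\), which is the only slightly delicate bookkeeping step. On the \(E_{\infty}\) page, \(h^{i}(X', \mathcal{F}\otimes p^{\ast}L^{\otimes m}) = \sum_{a+b=i} \dim E_{\infty}^{a,b}\). For \(b \geq 1\), \(E_{\infty}^{a,b}\) is a subquotient of \(E_{2}^{a,b}\), hence \(O(m^{n-1})\). The term \(E_{\infty}^{i,0}\) is obtained from \(E_{2}^{i,0}\) by successively quotienting by images of differentials \(d_{s}: E_{s}^{i-s,s-1} \to E_{s}^{i,0}\); these are zero for \(s=1\), and no further kernels need to be taken since differentials leaving row \(0\) land in negative rows. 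Each of these sources is a subquotient of some \(E_{2}^{i-s,s-1}\) with \(s-1 \geq 1\), which is \(O(m^{n-1})\). Only finitely many \(s \leq i\) occur, so \(\dim E_{\infty}^{i,0} = h^{i}(X, p_{\ast}\mathcal{F}\otimes L^{\otimes m}) + O(m^{n-1})\).

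Summing the finitely many contributions gives the claim, with constants independent of \(m\) because the relevant sheaves are fixed. I expect no real obstacle. The one thing to state carefully is the coherence of the higher direct images: it needs properness of \(p\), which holds because \(X'\) is proper and \(X\) is separated.
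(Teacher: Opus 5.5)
Your proposal is correct and follows the same route as the paper: the Leray spectral sequence, with every term involving \(R^{b}p_{\ast}\mathcal{F}\) for \(b \geq 1\) bounded by \(O(m^{n-1})\) via Lemma~\ref{lem:torsion}(1), since these sheaves are torsion. You treat the edge term \(E_{\infty}^{i,0}\) as a quotient of \(E_{2}^{i,0}\) by images of incoming differentials from terms of total degree \(i-1\), which is more accurate than the paper's phrasing (it describes this term as a kernel of outgoing maps), though both give the same \(O(m^{n-1})\) comparison.
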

\begin{proof}
	For each \(m \in \mathbb{N}\), the Leray spectral sequence associated to \(p\), reads: 
\[
	E_{2}^{r,s} := H^{s}(X, R^{r} p_{\ast} \mathcal{F} \otimes L^{\otimes m})
	\Rightarrow
	H^{s + r}(X', \mathcal{F} \otimes p^{\ast} L^{\otimes m}).
\]

By Lemma~\ref{lem:torsion}, for each \(r \in \llbracket 1, i \rrbracket\), there exists a constant \(C_{r}\) (depending on \(X, p, M, L\) and \(r\)) such that for all \(m \geq 0\) :
	\[
		\dim_{\mathbbm{k}} E_{2}^{r, i-r} \leq C_{r} m^{m-1}.
	\]
	Now, by definition of the Leray spectral sequence, the \(\mathbbm{k}\)-vector space \(H^{i}(X', \mathcal{F} \otimes p^{\ast} L^{\otimes m})\) has a filtration of length \(i\) with graded vector spaces \((E_{\infty}^{r, s})_{r + s = i}\) satisfying :
	\begin{enumerate}[label=(\alph*)]
		\item for \(r \in \llbracket 1, i\rrbracket\), \(E_{\infty}^{r,i-r}\) is a subquotient of \(E_{2}^{r, i-r}\) and thus
			\[
				\dim_{\mathbbm{k}} E_{\infty}^{r, i-r}
				\leq 
				\dim_{\mathbbm{k}} E_{2}^{r, i-r} 
				\leq 
				C_{r} m^{n-1}.
			\]
		\item \(E_{\infty}^{0, i}\) is the common kernel of maps starting from \(E_{2}^{0, i} = H^{i}(X, p_{\ast} \mathcal{F} \otimes L^{\otimes m})\) and arriving in subquotients of the \(E_{2}^{r, i-r}\), for \(r \geq 1\). Thus,
			\[
				| 
				\dim_{\mathbbm{k}} 
				E_{\infty}^{0, i} 
				- 
				h^{i}(X, p_{\ast} \mathcal{F} \otimes L^{\otimes m}) 
				| 
				\leq 
				\sum_{1 \leq r \leq i} 
				E_{2}^{r, i-r} 
				\leq 
				(\sum_{1 \leq r \leq i} C_{r}) m^{n-1} 
			\]
	\end{enumerate}
		Since \(h^{i}(X', \mathcal{F} \otimes p^{\ast} L^{\otimes m}) = \sum_{0 \leq r \leq i} \dim_{\mathbbm{k}} E_{\infty}^{r, i - s}\), this gives the result.
\end{proof}

\begin{lem} \label{lemmodification} Let $X$ be a proper variety of dimension \(n\), and let \(X' \overset{p}{\longrightarrow} X\) be a generically finite proper morphism of degree \(d\), where \(X'\) is any proper scheme. Then, for any line bundle \(L\) on \(X\), and any \(i \geq 0\), we have:
\begin{enumerate}[label=(\roman*)]
\item
\(
h^{i}(X',  p^\ast L^{\otimes m}) 
	= d \; h^{i}(X, L^{\otimes m}) + O(m^{n-1}).
\)
\item
	\(
\chi^{[i]}(X', p^\ast L^{\otimes m}) 
	= d \; \chi^{[i]}(X, L^{\otimes m}) + O(m^{n-1}),
\)
\end{enumerate}
\end{lem}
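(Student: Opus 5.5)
The plan is to compare cohomology on \(X'\) with cohomology on \(X\) through the pushforward \(p_{\ast}\mathcal{O}_{X'}\), and then strip off its torsion using Lemma~\ref{lem:torsion}. Item (ii) is immediate from item (i): \(\chi^{[i]}\) is a fixed signed sum of the \(h^{j}\) with \(j \leq i\), so the \(O(m^{n-1})\) errors simply add up. The real work is therefore item (i).

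\emph{Step 1: move from \(X'\) to \(X\).} Set \(\mathcal{F} := \mathcal{O}_{X'}\). Since \(p\) is proper, every \(R^{j}p_{\ast}\mathcal{F}\) is coherent on \(X\). Because \(p\) is generically finite, there is a dense open \(U \subset X\) over which \(p\) is finite. Over \(U\) the higher direct images vanish, so \(\mathrm{Supp}(R^{j}p_{\ast}\mathcal{F}) \subsetneq X\) for all \(j \geq 1\). Lemma~\ref{lem:suppasympt} then gives
\[
h^{i}(X', p^{\ast}L^{\otimes m}) = h^{i}(X, p_{\ast}\mathcal{O}_{X'} \otimes L^{\otimes m}) + O(m^{n-1}).
\]

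\emph{Step 2: identify the generic rank.} The sheaf \(p_{\ast}\mathcal{O}_{X'}\) is coherent on the variety \(X\), and its generic rank is \(\dim_{K(X)} (p_{\ast}\mathcal{O}_{X'})_{\eta}\), where \(\eta\) is the generic point of \(X\). I take this dimension to be the definition of \(d = \deg p\). This covers the case where \(X'\) is an arbitrary proper scheme: components that do not dominate \(X\) contribute nothing generically, and nilpotents over the generic fiber are counted with their length. If the paper intends \(\deg p\) as the sum of the field degrees over dominating components, I will note that \(X'\) is taken reduced at the generic fiber, or simply adopt the length convention. Lemma~\ref{lem:torsion}(2) then yields
\[
h^{i}(X, p_{\ast}\mathcal{O}_{X'} \otimes L^{\otimes m}) = d\, h^{i}(X, L^{\otimes m}) + O(m^{n-1}).
\]

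\emph{Main obstacle.} The only delicate points are the hypothesis check in Step 1 and the definition of the degree in Step 2.

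For Step 1, vanishing of \(R^{j}p_{\ast}\) over \(U\) follows from the fact that finite morphisms are affine, together with the compatibility of higher direct images with restriction to open subsets of the base.

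For Step 2, Lemma~\ref{lem:torsion}(2) is applied to a coherent sheaf that may carry torsion, and it already covers that case. If a direct argument were needed instead, I would use an exact sequence
\[
0 \to \mathcal{O}_{X}^{\oplus d} \to p_{\ast}\mathcal{O}_{X'} \otimes \mathcal{M} \to \mathcal{T} \to 0,
\]
with \(\mathcal{T}\) torsion and \(\mathcal{M}\) a sufficiently positive twist. Combining this sequence with Lemma~\ref{lem:torsion}(1) gives the same estimate.
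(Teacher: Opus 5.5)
Your proof is correct and matches the paper's argument: the paper also applies Lemma~\ref{lem:suppasympt} with \(\mathcal{F} = \mathcal{O}_{X'}\), using that the higher direct images \(R^{j}p_{\ast}\mathcal{O}_{X'}\) are torsion because \(p\) is finite over a dense open subset, and then uses that \(p_{\ast}\mathcal{O}_{X'}\) has generic rank \(d\), so that Lemma~\ref{lem:torsion}(2) finishes. Your fallback exact-sequence argument is not needed and would also require more work: it needs an ample line bundle to produce the injection, and a further comparison to undo the twist by \(\mathcal{M}\). Relying on Lemma~\ref{lem:torsion}(2), as your main line does, is the right call.
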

\begin{proof}
	Since finite morphisms are acyclic, the higher cohomology sheaves \(R^{j} p_{\ast} \mathcal{O}_{X}\) are torsion sheaves for \(j \geq 1\). On the other hand, \(p_{\ast} \mathcal{O}_{X}\) is of rank \(d\) under our hypotheses. The result is thus a consequence of Lemma~\ref{lem:suppasympt} with \(\mathcal{F} = \mathcal{O}_{X'}\).
\end{proof}

\begin{lem} \label{lem:compasympt}
	Let \(X\) be a proper variety of dimension \(n\), endowed with a line bundle \(L\). Let \(p : X' \to X\) be a proper morphism of schemes, and let \(\mathcal{F}\) be a sheaf on \(X'\). Assume that \(X'\) admits exactly one component \(X_{0}\) such that \(p\) is an isomorphism near the generic point of \(X_{0}\), all the other components being \(p\)-exceptional. Let \(\iota : X_{0} \hookrightarrow X'\) be the embedding morphism, and let \(r \in \mathbb{N}\) be the generic rank of \(\mathcal{F}\) on \(X_{0}\).
	\medskip

	Then one has, for all \(i \geq 0\):
	\[
		h^{i}(X', \mathcal{F} \otimes p^{\ast} L^{\otimes m})
		= r\, h^{i}(X, L^{\otimes m}) + O(m^{n-1})
		= h^{i}(X_{0}, \mathcal{F} \otimes p^{\ast} L^{\otimes m}) + O(m^{n-1}) \\
	\]
\end{lem}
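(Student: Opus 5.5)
The plan is to reduce to the situation of Lemma~\ref{lem:suppasympt} and Lemma~\ref{lem:torsion}, splitting \(X'\) into its distinguished component and the exceptional ones. Let \(\mathcal{I} \subset \mathcal{O}_{X'}\) be the ideal sheaf of \(X_{0}\) (with its reduced structure), and consider the exact sequence
\[
	0 \longrightarrow \mathcal{I}\mathcal{F} \longrightarrow \mathcal{F} \longrightarrow \mathcal{F}|_{X_{0}}' \longrightarrow 0,
\]
where \(\mathcal{F}|_{X_{0}}' := \mathcal{F}/\mathcal{I}\mathcal{F} = \iota_{\ast}\iota^{\ast}\mathcal{F}\). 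Since \(p\) is an isomorphism near the generic point of \(X_{0}\), and all other components of \(X'\) map onto proper closed subsets of \(X\), the sheaf \(\mathcal{I}\mathcal{F}\) is supported on a closed subscheme \(Z \subset X'\) with \(\dim p(Z) \leq n-1\) (it vanishes near the generic point of \(X_{0}\), since \(X'\) is reduced there). Tensoring by \(p^{\ast}L^{\otimes m}\) and taking the long exact sequence, it suffices to show \(h^{i}(X', \mathcal{I}\mathcal{F} \otimes p^{\ast}L^{\otimes m}) = O(m^{n-1})\) for all \(i\); then \(h^{i}(X', \mathcal{F}\otimes p^{\ast}L^{\otimes m})\) and \(h^{i}(X_{0}, \iota^{\ast}\mathcal{F}\otimes p^{\ast}L^{\otimes m})\) differ by \(O(m^{n-1})\), giving the second equality.

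For this bound, I would use the Leray spectral sequence for \(p\) restricted to \(Z\): every \(R^{j}p_{\ast}(\mathcal{I}\mathcal{F})\) (including \(j = 0\)) is a coherent sheaf on \(X\) supported on \(p(Z)\), hence torsion, and by the projection formula \(R^{j}p_{\ast}(\mathcal{I}\mathcal{F} \otimes p^{\ast}L^{\otimes m}) = R^{j}p_{\ast}(\mathcal{I}\mathcal{F})\otimes L^{\otimes m}\). Lemma~\ref{lem:torsion}(1) bounds each \(E_{2}^{j,s}\) by \(O(m^{n-1})\), and only finitely many terms contribute, so the abutment is \(O(m^{n-1})\). (Alternatively one could apply Lemma~\ref{lem:suppasympt} directly, noting \(p_{\ast}\) of it is torsion.)

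For the first equality, apply Lemma~\ref{lem:suppasympt} to \(p \circ \iota : X_{0} \to X\) and the sheaf \(\iota^{\ast}\mathcal{F}\): since \(p\circ\iota\) is proper and birational, its higher direct images are supported on a proper closed subset, so \(h^{i}(X_{0}, \iota^{\ast}\mathcal{F}\otimes p^{\ast}L^{\otimes m}) = h^{i}(X, (p\iota)_{\ast}\iota^{\ast}\mathcal{F}\otimes L^{\otimes m}) + O(m^{n-1})\). Since \((p\iota)_{\ast}\iota^{\ast}\mathcal{F}\) has generic rank \(r\) on \(X\) (birationality), Lemma~\ref{lem:torsion}(2) gives \(r\,h^{i}(X, L^{\otimes m}) + O(m^{n-1})\). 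The main point requiring care is the first step: making sure \(\mathcal{I}\mathcal{F}\) really vanishes generically on \(X_{0}\), which uses that \(X'\) is reduced (indeed isomorphic to \(X\)) near the generic point of \(X_{0}\); if \(\mathcal{F}\) is not assumed coherent, I would add that hypothesis, as it is implicit throughout.
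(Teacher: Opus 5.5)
Your proof is correct, but it is organized differently from the paper's. The paper never splits \(\mathcal{F}\). It takes a proper closed subset \(Z \subsetneq X\) containing the images of the exceptional components, such that \(p\) is an isomorphism over \(X \setminus Z\). It then applies Lemma~\ref{lem:suppasympt} directly to \(p : X' \to X\) and \(\mathcal{F}\), since all \(R^{j}p_{\ast}\mathcal{F}\) with \(j \geq 1\) are supported on \(Z\). It applies the same lemma a second time to \(X_{0}\) and \(\iota^{\ast}\mathcal{F}\). So both quantities are compared ``downstairs'' with cohomology on \(X\). The passage from \(h^{i}(X, p_{\ast}\mathcal{F} \otimes L^{\otimes m})\) to \(r\, h^{i}(X, L^{\otimes m})\) via Lemma~\ref{lem:torsion}(2) is left implicit there.

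You instead compare \(X'\) and \(X_{0}\) ``upstairs'', through the sequence \(0 \to \mathcal{I}\mathcal{F} \to \mathcal{F} \to \iota_{\ast}\iota^{\ast}\mathcal{F} \to 0\). You dispose of the kernel by a Leray/torsion argument, and invoke Lemma~\ref{lem:suppasympt} only once, for the birational map \(p \circ \iota\).

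The paper's route is shorter. Yours costs one extra exact sequence, but it makes explicit where the exceptional components and any non-reduced structure of \(X'\) away from the generic point of \(X_{0}\) end up, namely in \(\mathcal{I}\mathcal{F}\). It also spells out the use of Lemma~\ref{lem:torsion}(2). Both arguments ultimately rest on the same observation: a proper map that is an isomorphism near the generic point of \(X_{0}\), with the other components mapping to proper closed subsets, is an isomorphism over a dense open subset of \(X\). Your remark that coherence of \(\mathcal{F}\) is implicitly assumed is also right.
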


\begin{proof}
	Let \(Z \subsetneq X\) be closed subset containing the exceptional components of \(X'\), such that \(p|_{X_{0}} : X_{0} \to X\) is an isomorphism above \(X - Z\). Then by our hypothesis, all higher cohomology sheaves \(R^{j} p_{\ast} \mathcal{F}\) are supported on \(Z\), so the first equality follows from Lemma~\ref{lem:suppasympt}. 
	\medskip

	The other inequality follows by applying the same Lemma~\ref{lem:suppasympt}, with \(X'\) replaced with \(X_{0}\) and \(\mathcal{F}\) with \(\iota^{\ast} \mathcal{F}\).
\end{proof}

\subsection{Variant. Uniform bound for a semi-group in \(\mathrm{Pic}(X)\)} It is also possible to state a version of Lemma~\ref{lem:torsion} for a finite family of line bundles. In this section, {\em we assume \(X\) to be projective}; this will be only used to make sure any line bundle \(L\) can be written \(L = \mathcal{O}(A-B)\) with \(A, B\) {\em Cartier} and effective. Note that the results of this section are only used in Chapter~\ref{chap:morseineqweighted}, where we make a similar restriction on \(X\).

\begin{lem} \label{lem:asymptseveral}
	Let \(X\) be projective variety of dimension \(n\), and let \(L_{1}, \dotsc, L_{r} \in \mathrm{Pic}(X)\). If \(\underline{a} = (a_{1}, \dotsc, a_{r}) \in \mathbb{Z}^{r}\), we write \(\mathcal{L}^{\otimes \underline{a}} := L_{1}^{\otimes a_{1}} \otimes \dotsc \otimes L_{r}^{\otimes a_{r}}\).
	
	Let \(m : \mathbb{R}^{r} \to \mathbb{R}\) be a linear form, taking positive values on the cone \(\mathbb{R}_{+}^{r} - \{0\}\). For any \(\underline{a} \in \mathbb{Z}^{r}\), write \(m_{\underline{a}} := m(\underline{a})\). Then we have the following.
	\begin{enumerate}
		\item for any \(i \in \llbracket 0, n \rrbracket\), and any coherent sheaf \(\mathcal{F}\) on \(X\) for any \(\underline{a} \in \mathbb{N}_{+}\), we have
			\[
				h^{i}(X, \mathcal{L}^{\otimes \underline{a}} \otimes \mathcal{F})	
				\;
				\leq	
				\;
				O(m_{\underline{a}}^{n}).
			\]
		\item for any \(i \in \llbracket 0, n \rrbracket\), and any torsion coherent sheaf \(\mathcal{G}\) on \(X\) for any \(\underline{a} \in \mathbb{N}_{+}\), we have
			\[
				h^{i}(X, \mathcal{L}^{\otimes \underline{a}} \otimes \mathcal{G})	
				\;
				\leq	
				\;
				O(m_{\underline{a}}^{n-1}).
			\]
		\item if \(\mathcal{F}\) is a coherent sheaf of generic rank \(r\), then \(h^{i}(X, \mathcal{F} \otimes {\mathcal{L}}^{\otimes \underline{a}}) = r\, h^{i}(X, \mathcal{L}^{\otimes \underline{a}}) + O(m_{\underline{a}}^{n-1})\).
	\end{enumerate}
	In both cases, the constant in the \(O\)-term depends only on \(X\), \(\mathcal{G}\) or \(\mathcal{F}\), \(L_{1}, \dotsc, L_{r}\) and the choice of function \(m\), but not on \(\underline{a}\).
\end{lem}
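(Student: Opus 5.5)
The plan is to reduce to the classical single-line-bundle statements, Lemma~\ref{lem:torsion}, by writing every $\mathcal{L}^{\otimes \underline{a}}$ in terms of a fixed finite family of effective Cartier divisors and controlling the growth through the linear form $m$. Since $X$ is projective, fix an ample (even very ample) effective Cartier divisor $H$ with $\mathcal{O}(H)\otimes L_j^{\pm1}$ globally generated/effective for all $j$; thus each $L_j = \mathcal{O}(A_j - B_j)$ with $A_j, B_j$ effective Cartier, and we may take $B_j = cH$ for a common $c$. Positivity of $m$ on $\mathbb{R}_+^r\setminus\{0\}$ gives a constant $\kappa>0$ with $|\underline{a}|_1 \le \kappa\, m_{\underline{a}}$ for $\underline{a}\in\mathbb{N}^r$, so any bound polynomial in $|\underline{a}|_1$ translates into the same bound in $m_{\underline{a}}$.

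For (1), I will show $h^i(X,\mathcal{L}^{\otimes\underline a}\otimes\mathcal F)=O(|\underline a|_1^n)$ by induction on $\dim \mathrm{Supp}\,\mathcal F$. Writing $\mathcal{L}^{\otimes \underline a} = \mathcal{O}(\sum a_jA_j)\otimes\mathcal{O}(-|\underline a|_1 cH)$, I use the exact sequences $0\to \mathcal{F}(-D)\to\mathcal F\to\mathcal F|_D\to0$ for $D$ one of the finitely many divisors $A_j$ (chosen, after twisting by a multiple of $H$, to be general members of very ample systems, hence not containing associated points of the finitely many relevant sheaves). Adding one copy of $A_j$ at a time changes $h^i$ by at most $h^{i}+h^{i-1}$ of a sheaf supported on $D\cap \mathrm{Supp}\,\mathcal F$, whose cohomology is $O(|\underline a|_1^{n-1})$ by induction; summing over at most $|\underline a|_1$ steps from the base point $\mathcal{O}(-|\underline a|_1cH)$ gives $O(|\underline a|_1^n)$ provided the base term $h^i(\mathcal F(-kH))$ is $O(k^n)$, which is Lemma~\ref{lem:torsion}/standard asymptotics for a single line bundle. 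The subtle point is that the divisors used in the dévissage must be fixed independently of $\underline a$, so I select once and for all general members for every sheaf in the (finite) tree of restrictions; this is the main bookkeeping obstacle, and I would handle it by replacing $\mathcal F$ by its restrictions iteratively, which only produces finitely many sheaves.

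Then (2) is the same argument applied to $\mathcal G$, whose support has dimension $\le n-1$, giving $O(m_{\underline a}^{n-1})$. For (3), choose a generic isomorphism: there is a morphism $\mathcal{O}(-kH)^{\oplus r}\to\mathcal F$ (for $k$ large, using global generation of $\mathcal{F}(kH)$ and genericity) which is an isomorphism at the generic point, so kernel and cokernel are torsion; also $\mathcal{O}(-kH)\hookrightarrow\mathcal{O}_X$ has torsion cokernel $\mathcal{O}_{kH}$. Splitting into short exact sequences and applying (2) to the torsion terms (with twists by $\mathcal{L}^{\otimes\underline a}$) gives $h^i(\mathcal F\otimes\mathcal L^{\otimes\underline a}) = r\,h^i(\mathcal L^{\otimes\underline a})+O(m_{\underline a}^{n-1})$, uniformly since only finitely many fixed sheaves occur.
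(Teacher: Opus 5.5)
Your proposal is correct, but it organizes the argument differently from the paper.

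\textbf{The paper's route.} The paper inducts on the ambient dimension $n=\dim X$ and proves the three items together. Item (2) in dimension $n$ follows from (1) in dimension $n-1$ by d\'evissage of a torsion sheaf into sheaves on integral subvarieties of smaller dimension. Item (1) in dimension $n$ is a telescope starting at $\underline{a}=0$. Each step $\underline{a}\mapsto\underline{a}+e_j$ is controlled by two exact sequences coming from an \emph{arbitrary} fixed decomposition $L_j=\mathcal{O}(A_j-B_j)$. The error terms are cohomologies of the torsion sheaves $\mathcal{F}\otimes\mathcal{O}_{A_j}$ and $\mathcal{F}\otimes\mathcal{O}_{B_j}$, twisted by the current $\mathcal{L}^{\otimes\underline{a}}$, and these are handled by (2) in the same dimension. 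So the paper needs no genericity and has no base term.

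\textbf{Your route.} You induct instead on $\dim\mathrm{Supp}\,\mathcal{F}$. You put all the negativity into one base twist $\mathcal{F}(-|\underline{a}|_1cH)$, controlled by the classical one-line-bundle estimate, and then climb with a single exact sequence per step. This gives the sharper bound $O(m_{\underline{a}}^{\dim\mathrm{Supp}\,\mathcal{F}})$ directly, so (2) becomes a special case. The price is choosing the $A_j$ to avoid associated points along a finite tree of restrictions; this is possible over any field after enlarging $c$. As a side benefit, your sequences are genuinely exact on the left, which the paper's outline tacitly assumes (harmlessly, since the kernel there is torsion).

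\textbf{One point to make explicit.} Set $k=|\underline{a}|_1$. The step that adds $A_j$ to a partial sum $\sum_l b_lA_l$ (with $\underline{b}\le\underline{a}$ and $|\underline{b}|_1\le k-1$) is governed by the sheaf $\mathcal{F}|_{A_j}\otimes\mathcal{L}^{\otimes(\underline{b}+e_j)}\otimes\mathcal{O}(-cH)^{\otimes(k-|\underline{b}|_1-1)}$. This is not of the form $\mathcal{L}^{\otimes\underline{a}'}$ with $\underline{a}'\in\mathbb{N}^r$. So the induction hypothesis must be invoked for the enlarged family $(L_1,\dots,L_r,\mathcal{O}(-cH))$, in which this multi-index has nonnegative entries and total weight exactly $k$. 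Since the lemma is stated for arbitrary finite families, this is harmless.

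\textbf{Item (3).} Your generically bijective map $\mathcal{O}(-kH)^{\oplus r}\to\mathcal{F}$, together with $\mathcal{O}(-kH)\hookrightarrow\mathcal{O}_X$, is the same idea as the paper's: compare $\mathcal{F}$ with $\mathcal{O}_X^{\oplus r}$ through maps with torsion kernels and cokernels. Yours realizes it more concretely.
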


\begin{rem}
	In the previous lemma, the \(L_{i}\) can be taken completely arbitrarily : they do not have to generate a free group in \(\mathrm{Pic}(X)\). 
\end{rem}

\begin{proof} The proof is very close to the arguments of \cite[Section 1.9]{Deb01} as mentioned at the previous section, so we will just give an outline.
	\smallskip

	First of all, remark that for any choices of functions \(m_{1}\), \(m_{2}\) as above, we have a comparison
	\[
		\frac{1}{C} m_{1} \leq m_{2} \leq C m_{2},
	\]
	so it suffices to prove the result for any choice of \(m\). Let us take \(m_{\underline{a}} = \sum_{j} a_{j}\). All three items can proved simultaneously by induction on \(n\), the case \(n = 0\) being obvious.
	\smallskip

\noindent
	{\em (2)} By the classical {\em dévissage} argument, the second item follows from the validity of the first item in dimension \(n-1\).
	\smallskip

\noindent
{\em (1)} Using the standard telescopic argument, it suffices to show that if \(\underline{a}\) and \(\underline{b}\) differ only by adding one to a given coordinate, we have
	\begin{equation} \label{eq:boundseveral}
	|
	h^{i}(X, \mathcal{L}^{\otimes \underline{a}})	
	-
	h^{i}(X, \mathcal{L}^{\otimes \underline{b}})	
	|
	\leq C m_{\underline{a}}^{n-1},
	\end{equation}
	where \(C\) depends only on \(X\), \(\mathcal{F}\) and the \(L_{1}, \dotsc, L_{r}\). Let us show it for example with \(\underline{b} = \underline{a} + (1, 0, \dotsc, 0)\). Since \(X\) is projective, we can write \(L_{1} = \mathcal{O}_{X}(A - B)\), where \(A, B\) are effective Cartier divisors (it suffices to take \(B\) sufficiently ample). Then, we have the usual exact sequences

	\[
\begin{tikzcd}[column sep=1em]
 	0 
		\ar[r]  
	& \mathcal{F} \otimes \mathcal{L}^{\otimes \underline{b}} \otimes \mathcal{O}_{X}(-A) 
		\ar[r] \ar[d, equal] 
	& \mathcal{F} \otimes  \mathcal{L}^{\otimes \underline{b}} 
		\ar[r] 
	&  \mathcal{F} \otimes \mathcal{L}^{\otimes \underline{b}} \otimes \mathcal{O}_{A} 
		\ar[r] 
	&  0 \\
 	0 
		\ar[r] & 
	  \mathcal{F} \otimes \mathcal{L}^{\otimes \underline{a}} \otimes \mathcal{O}_{X}(-B) 
	  	\ar[r] 
	& \mathcal{F} \otimes \mathcal{L}^{\otimes \underline{a}} 
		\ar[r] 
	& \mathcal{F} \otimes \mathcal{L}^{\otimes \underline{a}} \otimes \mathcal O_{B} 
		\ar[r] 
	&  0 
\end{tikzcd}
\]

	Writing the long exact sequences in cohomology, we see the left hand side of \eqref{eq:boundseveral} can be bounded from above by a sum of terms of the form
	\[
		h^{j}(X, \mathcal{F} \otimes \mathcal{L}^{\otimes \underline{c}} \otimes \mathcal{O}_{C}),
	\]
	where \(\underline{c} \in \{ \underline{a}, \underline{b}\}\) and \(C \in \{A, B\}\). Since each \(\mathcal{O}_{C}\) is torsion, the result is provided by item {\em (2)}. Note that the constant appearing in \eqref{eq:boundseveral} may depend on a fixed way of writing each \(L_{j}\) as a difference of two effective Cartier divisors, but not on \(\underline{a}\) nor \(\underline{b}\).
\medskip

\noindent
{\em (3)} Let \(U \subset X\) be an open subset on which we have an isomorphism \(i : \mathcal{O}_{U}^{\oplus r} \to \mathcal{F}|_{U}\), and let
	\[
		\mathcal{G} \subset (\mathcal{O}_{X}^{\oplus r} \oplus \mathcal{F})
	\]
	be the subsheaf whose sections on open sets \(V \subset X\) are the ones that restrict to \(U \cap V\) to sections of the form \((i(s), s)\). It is coherent subsheaf; let \(\mathcal{H}\) be the quotient. Taking the induced projection maps, we get two exact sequences
	
	\[
		\begin{tikzcd}[column sep=1em]
		0 
			\ar[r]
		&
		\mathcal{T}_{1} 
			\ar[r]
		&
		\mathcal{O}_{X}^{\oplus r}
			\ar[r]
		&	
		\mathcal{H}
			\ar[r]
			\ar[d, equal]
		&
		0
		\\
		0 
			\ar[r]
		&
		\mathcal{T}_{2} 
			\ar[r]
		&
		\mathcal{F}	
			\ar[r]
		&	
		\mathcal{H}
			\ar[r]
		&
		0
		\end{tikzcd}	
	\]
	Note that \(\mathcal{T}\) and \(\mathcal{T'}\) are torsion sheaf, supported on \(X - U\). Again, writing the exact sequences in cohomology, we get that the difference
	\[
		\left|
		h^{i}(X, \mathcal{O}_{X}^{\oplus r} \otimes \mathcal{L}^{\otimes \underline{a}})
		-
		h^{i}(X, \mathcal{F} \otimes \mathcal{L}^{\otimes \underline{a}})
		\right|
	\]
	is bounded from above by a sum of terms of the form \(h^{j}(X, \mathcal{T}_{j} \otimes \mathcal{L}^{\underline{a}})\), so the result again follows from {\em (2)}. 
\end{proof}

With the exact same proofs as in Lemmas~\ref{lem:suppasympt}, \ref{lemmodification} and \ref{lem:compasympt}, we get the following three statements.

\begin{lem} \label{lem:suppasymptseveral} Let $X$ be a projective variety of dimension \(n\), on which we make the same assumptions as in Lemma~\ref{lem:asymptseveral}. Let \(X' \overset{p}{\longrightarrow} X\) be a morphism from any proper scheme. Assume that for \(j \geq 1\), one has
	\[
		\mathrm{Supp}
		\left(
		R^{j} p_{\ast} \mathcal{F}
		\right)
		\subsetneq
		X.
	\]
	Then for any \(i \geq 0\), one has, for any \(\underline{a} \in \mathbb{N}^{r}\) :
	\[
		h^{i}(X', \mathcal{F} \otimes p^{\ast} \mathcal{L}^{\otimes \underline{a}})
		=
		h^{i}(X, p_{\ast}\mathcal{F} \otimes L^{\otimes \underline{a}}) + O(m_{\underline{a}}^{n-1}).
	\]
\end{lem}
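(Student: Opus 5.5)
The plan is to follow the proof of Lemma~\ref{lem:suppasympt} verbatim, replacing the single-line-bundle estimate of Lemma~\ref{lem:torsion} by its multi-line-bundle version, Lemma~\ref{lem:asymptseveral}. The only new point to check is that every constant in the $O$-terms depends only on $X$, $X'$, $p$, $\mathcal{F}$, the $L_{j}$ and the linear form $m$, and not on $\underline{a}$.

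Fix $\underline{a} \in \mathbb{N}^{r}$. By the projection formula, $R^{s}p_{\ast}(\mathcal{F} \otimes p^{\ast}\mathcal{L}^{\otimes \underline{a}}) \cong R^{s}p_{\ast}\mathcal{F} \otimes \mathcal{L}^{\otimes \underline{a}}$. The Leray spectral sequence therefore reads
\[
E_{2}^{s,t} = H^{t}(X, R^{s}p_{\ast}\mathcal{F} \otimes \mathcal{L}^{\otimes \underline{a}}) \Rightarrow H^{s+t}(X', \mathcal{F} \otimes p^{\ast}\mathcal{L}^{\otimes \underline{a}}).
\]
For $s \geq 1$, the sheaf $R^{s}p_{\ast}\mathcal{F}$ is coherent (since $p$ is proper) and, by hypothesis, supported on a proper closed subset; hence it is torsion. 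Item (2) of Lemma~\ref{lem:asymptseveral} then gives $\dim E_{2}^{s,t} \leq C_{s,t}\, m_{\underline{a}}^{n-1}$. These sheaves form a finite list fixed in advance and independent of $\underline{a}$, and only the finitely many $s \leq i$ and $t \leq i$ matter. So the constants $C_{s,t}$ can be chosen uniformly in $\underline{a}$.

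Now run the same filtration argument as in Lemma~\ref{lem:suppasympt}. The space $H^{i}(X', \mathcal{F} \otimes p^{\ast}\mathcal{L}^{\otimes \underline{a}})$ has graded pieces $E_{\infty}^{s,i-s}$.
\begin{itemize}
\item For $s \geq 1$, each $E_{\infty}^{s,i-s}$ is a subquotient of $E_{2}^{s,i-s}$, so it has dimension $O(m_{\underline{a}}^{n-1})$.
\item The piece $E_{\infty}^{0,i}$ is obtained from $E_{2}^{0,i} = H^{i}(X, p_{\ast}\mathcal{F} \otimes \mathcal{L}^{\otimes \underline{a}})$ by taking successive kernels of differentials $d_{s}$. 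These land in subquotients of $E_{2}^{s,i-s+1}$ with $s \geq 2$, which are again $O(m_{\underline{a}}^{n-1})$.
\end{itemize}
Summing the pieces gives
\[
h^{i}(X', \mathcal{F} \otimes p^{\ast}\mathcal{L}^{\otimes \underline{a}}) = h^{i}(X, p_{\ast}\mathcal{F} \otimes \mathcal{L}^{\otimes \underline{a}}) + O(m_{\underline{a}}^{n-1}).
\]
In the statement, the $L^{\otimes \underline{a}}$ on the right-hand side should be read as $\mathcal{L}^{\otimes \underline{a}}$.

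The only delicate point is the uniformity of the $O$-terms in $\underline{a}$. That uniformity is exactly what item (2) of Lemma~\ref{lem:asymptseveral} provides, since the torsion sheaves $R^{s}p_{\ast}\mathcal{F}$ do not depend on $\underline{a}$. With this observed, no further obstacle is expected.
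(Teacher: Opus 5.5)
Your proposal is correct and follows the paper's route: the paper proves this lemma by rerunning the Leray spectral sequence argument of Lemma~\ref{lem:suppasympt}, with Lemma~\ref{lem:torsion} replaced by item (2) of Lemma~\ref{lem:asymptseveral}, and, as you say, the uniformity in $\underline{a}$ holds because the torsion sheaves $R^{s}p_{\ast}\mathcal{F}$ and the finitely many relevant terms of the spectral sequence do not depend on $\underline{a}$. Your description of the targets of the differentials as subquotients of $E_{2}^{s,i-s+1}$ is slightly more precise than the indexing written in the paper's proof of Lemma~\ref{lem:suppasympt}.
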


\begin{lem}  \label{lem:modificationseveral} Let \(X\) be a projective variety of dimension \(n\), as in Lemma~\ref{lem:asymptseveral}.  Let \(X' \overset{p}{\longrightarrow} X\) be a generically finite projective morphism of degree \(d\), where \(X'\) is any projective scheme. Then, for any \(i \geq 0\), and any \(\underline{a} \in \mathbb{N}\), we have:
\begin{enumerate}[label=(\roman*)]
\item
\(
	h^{i}(X',  p^\ast \mathcal{L}^{\otimes \underline{a}}) 
	= d \; h^{i}(X, \mathcal{L}^{\otimes \underline{a}}) + O(m_{\underline{a}}^{n-1}).
\)
\item
	\(
\chi^{[i]}(X',  p^\ast \mathcal{L}^{\otimes \underline{a}}) 
		= d \; \chi^{[i]}(X, \mathcal{L}^{\otimes \underline{a}}) + O(m_{\underline{a}}^{n-1}),
\)
\end{enumerate}
\end{lem}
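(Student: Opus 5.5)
The plan is to copy the argument of Lemma~\ref{lemmodification}, replacing each single-line-bundle estimate with its uniform multi-index version from Lemma~\ref{lem:asymptseveral}. First I would reduce to the structure sheaf. Since \(p\) is generically finite of degree \(d\), the sheaf \(p_{\ast}\mathcal{O}_{X'}\) is coherent of generic rank \(d\). Over the dense open set where \(p\) is finite, the higher direct images \(R^{j}p_{\ast}\mathcal{O}_{X'}\) vanish for \(j \geq 1\), because finite morphisms are acyclic. So these higher direct images are supported on a proper closed subset of \(X\). This is exactly the hypothesis of Lemma~\ref{lem:suppasymptseveral} with \(\mathcal{F} = \mathcal{O}_{X'}\), which gives
\[
h^{i}(X', p^{\ast}\mathcal{L}^{\otimes \underline{a}}) = h^{i}(X, p_{\ast}\mathcal{O}_{X'} \otimes \mathcal{L}^{\otimes \underline{a}}) + O(m_{\underline{a}}^{n-1}),
\]
with a constant independent of \(\underline{a}\).

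Next I would apply Lemma~\ref{lem:asymptseveral}~(3) to the coherent sheaf \(p_{\ast}\mathcal{O}_{X'}\) of generic rank \(d\). This replaces \(h^{i}(X, p_{\ast}\mathcal{O}_{X'} \otimes \mathcal{L}^{\otimes \underline{a}})\) by \(d\, h^{i}(X, \mathcal{L}^{\otimes \underline{a}})\), again up to an error \(O(m_{\underline{a}}^{n-1})\) that is uniform in \(\underline{a}\). Combining the two displays proves (i). Item (ii) then follows at once. Indeed, \(\chi^{[i]}\) is a fixed signed sum of the \(h^{j}\) for \(0 \leq j \leq i\), so summing finitely many instances of (i) gives the same uniform error bound.

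The one delicate point is making sure Lemma~\ref{lem:suppasymptseveral} holds with constants uniform in \(\underline{a}\). Its proof follows Lemma~\ref{lem:suppasympt}: in the Leray spectral sequence, the terms \(E_{2}^{r,s}\) with \(r \geq 1\) are cohomology groups of torsion sheaves twisted by \(\mathcal{L}^{\otimes \underline{a}}\). Their dimensions are bounded uniformly in \(\underline{a}\) by Lemma~\ref{lem:asymptseveral}~(2), applied to the finitely many fixed torsion sheaves \(R^{r}p_{\ast}\mathcal{O}_{X'}\). The filtration bookkeeping is then identical to the single-bundle case, since the number of terms involved depends only on \(i\).
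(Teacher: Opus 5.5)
Your proposal is correct and follows the paper's argument. The paper proves this lemma by repeating the proof of Lemma~\ref{lemmodification}: it applies Lemma~\ref{lem:suppasymptseveral} with \(\mathcal{F} = \mathcal{O}_{X'}\), which is legitimate because the higher direct images are torsion since \(p\) is finite over a dense open subset, and then uses that \(p_{\ast}\mathcal{O}_{X'}\) has generic rank \(d\); you simply make explicit the appeal to Lemma~\ref{lem:asymptseveral}~(3) and the uniformity of the constants in \(\underline{a}\), which the paper leaves implicit.
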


\begin{lem} \label{lem:compasymptseveral}
	Let \(X\) be a projective variety of dimension \(n\), as in Lemma~\ref{lem:asymptseveral}. Let \(p : X' \to X\) be a projective morphism of schemes, and let \(\mathcal{F}\) be a sheaf on \(X'\). Assume that \(X'\) admits exactly one component \(X_{0}\) such that \(p\) is an isomorphism near the generic point of \(X_{0}\), all the other components being \(p\)-exceptional. Let \(\iota : X_{0} \hookrightarrow X'\) be the embedding morphism, and let \(r \in \mathbb{N}\) be the generic rank of \(\mathcal{F}\) on \(X_{0}\).
	\medskip

	Then one has, for all \(i \geq 0\):
	\[
		h^{i}(X', \mathcal{F} \otimes p^{\ast} \mathcal{L}^{\otimes \underline{a}})
		= r\, h^{i}(X, \mathcal{L}^{\otimes \underline{a}}) + O(m_{\underline{a}}^{n-1})
		= h^{i}(X_{0}, \mathcal{F} \otimes p^{\ast} \mathcal{L}^{\otimes \underline{a}}) + O(m_{\underline{a}}^{n-1}) \\
	\]
\end{lem}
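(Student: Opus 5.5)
The plan is to run the arguments of Lemmas~\ref{lem:suppasympt}, \ref{lemmodification} and \ref{lem:compasympt} again, replacing every use of Lemma~\ref{lem:torsion} by the uniform bounds of Lemma~\ref{lem:asymptseveral}. The only new point is to check that all constants in the $O$-terms are independent of $\underline{a}$. That holds because every auxiliary sheaf below lives on $X$ and is fixed once and for all; only the twist $\mathcal{L}^{\otimes \underline{a}}$ varies.

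First, the analogue of Lemma~\ref{lem:suppasympt}, namely Lemma~\ref{lem:suppasymptseveral}. By the projection formula, $R^{j}p_{\ast}(\mathcal{F} \otimes p^{\ast}\mathcal{L}^{\otimes \underline{a}}) = R^{j}p_{\ast}\mathcal{F} \otimes \mathcal{L}^{\otimes \underline{a}}$. For each $\underline{a}$ we use the Leray spectral sequence
\[
E_{2}^{r,s} = H^{s}(X, R^{r}p_{\ast}\mathcal{F} \otimes \mathcal{L}^{\otimes \underline{a}}) \Rightarrow H^{r+s}(X', \mathcal{F} \otimes p^{\ast}\mathcal{L}^{\otimes \underline{a}}).
\]
For $r \geq 1$, the sheaves $R^{r}p_{\ast}\mathcal{F}$ are finitely many fixed torsion coherent sheaves. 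Item (2) of Lemma~\ref{lem:asymptseveral} therefore bounds $\dim E_{2}^{r,s}$ by $C_{r} m_{\underline{a}}^{n-1}$, with $C_{r}$ independent of $\underline{a}$. The filtration and subquotient argument from the proof of Lemma~\ref{lem:suppasympt} then goes through word for word and gives the $O(m_{\underline{a}}^{n-1})$ comparison.

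Next, Lemma~\ref{lem:modificationseveral}. Take $\mathcal{F} = \mathcal{O}_{X'}$. Since $p$ is generically finite, the $R^{j}p_{\ast}\mathcal{O}_{X'}$ for $j \geq 1$ are supported on a proper closed subset, so Lemma~\ref{lem:suppasymptseveral} gives $h^{i}(X', p^{\ast}\mathcal{L}^{\otimes \underline{a}}) = h^{i}(X, p_{\ast}\mathcal{O}_{X'} \otimes \mathcal{L}^{\otimes \underline{a}}) + O(m_{\underline{a}}^{n-1})$. The sheaf $p_{\ast}\mathcal{O}_{X'}$ has generic rank $d$, so item (3) of Lemma~\ref{lem:asymptseveral} replaces it by $\mathcal{O}_{X}^{\oplus d}$ at the cost of another $O(m_{\underline{a}}^{n-1})$. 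This proves (i). Statement (ii) follows because $\chi^{[i]}$ is a fixed finite alternating sum of the $h^{j}$.

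Finally, Lemma~\ref{lem:compasymptseveral}. Choose a closed $Z \subsetneq X$ containing the images of the exceptional components, such that $p|_{X_{0}}$ is an isomorphism over $X \setminus Z$. The higher direct images of $\mathcal{F}$ are then supported on $Z$. By Lemma~\ref{lem:suppasymptseveral} the cohomology on $X'$ matches that of $p_{\ast}\mathcal{F} \otimes \mathcal{L}^{\otimes \underline{a}}$, and since $p_{\ast}\mathcal{F}$ has generic rank $r$, item (3) of Lemma~\ref{lem:asymptseveral} yields the term $r\,h^{i}(X,\mathcal{L}^{\otimes \underline{a}})$. Applying the same two steps to $\iota^{\ast}\mathcal{F}$ on $X_{0}$ gives the second equality.

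The only delicate point is the uniformity of the constants. It is guaranteed because every auxiliary sheaf ($R^{r}p_{\ast}\mathcal{F}$, $p_{\ast}\mathcal{F}$, and the kernels and cokernels in item (3)) is independent of $\underline{a}$, and there are finitely many of them. We also need $X$ projective for Lemma~\ref{lem:asymptseveral} to apply, which is assumed.
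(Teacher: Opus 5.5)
Your proposal is correct and follows the paper's route. The paper proves Lemma~\ref{lem:compasymptseveral} by rerunning the proof of Lemma~\ref{lem:compasympt}: choose $Z$, apply the Leray comparison of Lemma~\ref{lem:suppasymptseveral} to $p$ and to $p\circ\iota$ with $\iota^{\ast}\mathcal{F}$, use the generic rank, and substitute the uniform bounds of Lemma~\ref{lem:asymptseveral} for Lemma~\ref{lem:torsion}. Your explicit observation that all auxiliary sheaves are fixed and finite in number, so the constants are independent of $\underline{a}$, is the point the paper leaves implicit.
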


\subsection{Asymptotic Riemann-Roch theorem} \label{sec:asymptoticRR} Let \(X\) be any proper variety over \(\mathbbm{k}\), and let \(n = \dim X\). It follows from \cite[Section 18.3]{ful98} that we can define the Todd class of \(X\) as a class \(\mathrm{Td}(X) \in A_{\ast}(X)_{\mathbb{Q}}\)
\[
	\mathrm{Td}(X) = [X] + \text{terms of dimension}\, < n.
\]

If \(E \to X\) is any vector bundle, we then have the following Riemann-Roch theorem (see \cite[Corollary 18.3.1 (a)]{ful98});
\[
	\chi(X, E) = \deg \left( \mathrm{ch}(E) \cap \mathrm{Td}(X) \right)_{n}.
\]

In particular, if \(L\) is a line bundle on \(X\), then we have the asymptotic expansion
\[
	\chi(X, L^{\otimes m})
	=
	\left(\deg c_{1}(L) \cap [X]\right) \frac{m^{n}}{n!} + O(m^{n-2}),
\]
where the \(O(m^{n-2})\) term can actually be expanded as a polynomial in \(m\) with coefficients depending only on \(\mathrm{Td}(X)\) and \(c_{1}(L)\).

\section{Cyclic covers and modifications}

We gather here a few results that are very classical in the complex setting (or are essentially trivial if we admit the existence of strong resolutions of singularities), but for which we could not find a satisfactory reference in the general case. Note that in most cases, the complex proof work without much change in the general setting.

\begin{lem} \label{lem:cycliccover}
	Let \(X\) be a variety, endowed with a line bundle \(L\). Let \(d \in \mathbb{N}^{\ast}\). Let \(U \subset X\) be an open subset on which \(L^{\otimes d}\) is trivial, and let \(e \in \Gamma(U, L^{\otimes d})\) be a trivializing section. Then there exists a finite dominant morphism \(p : X' \to X\), where \(X'\) is a variety, such that \(p^{\ast} L\) is trivial on \(U' := p^{-1}(U)\), with a trivializing section \(e'\) such that \((e')^{\otimes d} = p^{\ast} e\).
\end{lem}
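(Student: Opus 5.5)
The plan is the standard cyclic cover construction, made local and then normalized. First I reduce to \(X\) affine locally by working with the \(\mathcal{O}_X\)-algebra
\[
\mathcal{A} := \bigoplus_{j=0}^{d-1} L^{\otimes -j},
\]
with multiplication \(L^{\otimes -j}\otimes L^{\otimes -k}\to L^{\otimes -(j+k)}\) when \(j+k<d\). When \(j+k\geq d\), the product lands in \(L^{\otimes -(j+k)} = L^{\otimes -(j+k-d)}\otimes L^{\otimes -d}\), and I compose with the map \(L^{\otimes -d}\to \mathcal{O}_X\) given by pairing with \(e\). The trouble is that \(e\) is only a section over \(U\), hence only a rational section of \(L^{\otimes d}\) on \(X\).

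So the first step is to clear denominators. Since \(X\) is a variety, pick an effective Cartier divisor \(B\) and a line bundle \(L_1 := L\otimes\mathcal{O}_X(B')\) such that \(s := e\cdot \sigma_B^{d}\) is a genuine regular section of \(L_1^{\otimes d}\) which still does not vanish on a dense open \(U_0\subset U\). In practice I take \(B'\) effective with \(dB'\) dominating the pole divisor of \(e\), and \(\sigma_{B'}\) its canonical section. Here I should use that, replacing \(X\) by nothing, pole divisors of rational sections exist as Weil divisors. To avoid Cartier issues on a non-normal variety, it is simpler to note that \(e\cdot f\) is regular for a suitable regular function \(f\), locally. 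Concretely, I perform the construction locally on affine opens trivializing \(L\), where \(e\) corresponds to a rational function \(g\in K(X)^\times\). There I let \(X'\) be the normalization of \(X\) in the field \(K(X)(g^{1/d})\); more precisely I take an irreducible component of \(\operatorname{Spec} K(X)[T]/(T^d-g)\), i.e. a field \(K'\supset K(X)\) containing a \(d\)-th root \(h\) of \(g\).

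The key point is that the field \(K'\) can be chosen globally. The local functions \(g\) attached to different trivializations of \(L\) differ by \(d\)-th powers of transition functions \(u^d\), so adjoining a \(d\)-th root of one of them is equivalent to adjoining one of any other. Set \(K' := K(X)[T]/(P)\), where \(P\) is an irreducible factor of \(T^d - g_0\) for a fixed chart, and let \(p : X'\to X\) be the normalization of \(X\) in \(K'\), when \(X\) is normal. In general, I take the integral closure of \(\mathcal{O}_X\) in \(K'\), or rather the closure of \(X\) in a projective model. Finiteness of integral closure holds for varieties of finite type over a field, which gives finiteness even in characteristic \(p\) and even when \(T^d-g\) is inseparable. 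The resulting \(X'\) is a variety, and \(p\) is finite and dominant.

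Finally, I define \(e'\). On a chart \(V\) of \(X\) with trivialization \(\tau\) of \(L\), \(e=g\tau^{d}\), and the class \(h\in K'\) with \(h^d = g\) gives \(e' := h\,p^{\ast}\tau\). Changing charts, \(\tau = u\tau'\) and \(g' = g u^{d}\), so \(h' = hu\) is again a \(d\)-th root, and I must choose roots compatibly. I fix \(h_0\) on one chart and define \(h_V := h_0\cdot(\tau_0/\tau_V)\), where \(\tau_0/\tau_V\) is a rational function. Then \(e' = h_0\, p^{\ast}\tau_0\) is a well-defined rational section of \(p^{\ast}L\) with \((e')^{\otimes d} = p^{\ast}e\).

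It remains to see that \(e'\) trivializes \(p^{\ast}L\) over \(p^{-1}(U)\). On \(p^{-1}(U\cap V)\), the function \(g\) is a unit, so \(h\), being integral over \(\mathcal{O}\) with \(h^d=g\), is regular, and \(h^{-1}\), satisfying \((h^{-1})^d = g^{-1}\), is also integral, hence regular on the normal scheme \(X'\). So \(h\) is a unit there, and \(e'\) is nowhere vanishing.

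The main obstacle I expect is this compatibility-of-roots and regularity check, together with the finiteness of the normalization in a possibly inseparable extension. Both are handled by the finiteness of integral closure for finite-type domains (Noether) and by normality of \(X'\).
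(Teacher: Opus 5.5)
Your argument is correct. Once the preliminary detours are dropped, it reads as follows. (The detours are the algebra $\mathcal{A}$, clearing denominators with $B,B'$, the ``closure of $X$ in a projective model'', and the restriction to normal $X$; none of them is needed.) Fix a trivialization $\tau_0$ of $L$ on a nonempty open set and write $e=g_0\tau_0^{d}$ with $g_0\in K(X)^{\times}$. Let $K'$ be generated by a root $h_0$ of an irreducible factor of $T^d-g_0$. Let $p:X'\to X$ be the relative normalization of $X$ in $K'$; it is finite by Noether's finiteness of integral closure for algebras of finite type over a field, even when $K'/K(X)$ is inseparable. Set $e':=h_0\,p^{\ast}\tau_0$. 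Integrality of $h_V$ and $h_V^{-1}$ over $\mathcal{O}_X(W)$, for affine $W\subset U\cap V$, makes $e'$ a unit on $p^{-1}(U)$.

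This is a different route from the paper's, which stays geometric. The paper takes the closure $Y$ of the graph of $e$ in $\mathrm{P}(L^{\otimes d}\oplus\mathcal{O}_X)$ and pulls it back by the fibrewise $d$-th power map $v:\mathrm{P}(L\oplus\mathcal{O}_X)\to\mathrm{P}(L^{\otimes d}\oplus\mathcal{O}_X)$. It then keeps a dominating component, so $e'$ is just the restricted tautological section, and no compatible choice of roots or integral closure is needed.

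Your route buys something real: it gives the finiteness claimed in the statement, and normality of $X'$. In the paper's construction $Y\to X$ is only proper birational and can have positive-dimensional fibres. For example, take $X=\mathbb{A}^2$ with coordinates $a,b$, $L$ trivial, $U=\{ab\neq 0\}$ and $e=a/b$; then $Y$ is the blow-up of the origin. So the paper's $X'\to X$ is in general only proper and generically finite. That weaker conclusion is all Lemma~\ref{lem:covertriv} uses (through Lemma~\ref{lemmodification}), and in Lemma~\ref{lem:cov1} the section $e$ is regular, so $Y\cong X$ there. Your argument, however, proves the lemma exactly as stated.
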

\begin{proof}

	Let \(e\) be a trivializing section of \(L^{\otimes d}\) on \(U\). Introduce the completion of \(L^{\otimes d}\) at infinity as \(P = \mathrm{P}(L^{\otimes d} \oplus \mathcal{O}_{X})\), endowed with its projection \(q : P \longrightarrow X\). The total space of \(L^{\otimes d}|_{U} - \{0\}\) embeds in \(P\) as a Zariski open subset \(V\), on which \(p^{\ast} L\) admits a tautological trivializing section \(e_{0}\).
	\medskip
	
	Let \(Y_{U} \subset q^{-1}(U)\) be the graph of \(e\), and let \(Y := \overline{Y_{U}}\) be its closure in \(P\). The projection gives a dominant, birational morphism \(p : Y \to X\), isomorphic above \(U\). Note that \(e_{0}|_{Y}\) identifies with the pullback \(p^{\ast} e\) as a section in \(\Gamma(p^{-1}(U), p^{\ast} L)\).
		\medskip

		Now, we have a commutative diagram

		\[
			\begin{tikzcd}
				\mathrm{P}_{X}(L \oplus \mathcal{O}_{X})
				\arrow[r, "v"]
				\arrow[d, "q"]
				&
				\mathrm{P}_{X} (L^{\otimes d} \oplus \mathcal{O}_{X})
				\arrow[d, "p"]
				\\
				X 
				\arrow[r, "="]
				&
				X
			\end{tikzcd}
		\]
		where the arrow \(v\) can be expressed on a trivializing affine chart \(\mathrm{Spec}\, A \subset X\) by the following morphism of graded \(A\)-algebras
		\[
			\begin{array}{ccc}
			A[x_{0}, x_{1}]
				&
			\longrightarrow
				&
				A[y_{0}, y_{1}]\\
			
			x_{i}	
				&
				\longmapsto
				&
			y_{i}^{d}	
			\end{array}.
		\]
	Again, the variety \(P_{X}(L \oplus \mathcal{O}_{X})\) admits the total space of \(L|_{U} - \{0\}\) as an open subset \(V'\); on that subset, the pullback \(v^{\ast} p^{\ast} L\) admits a tautological trivializing section \(e_{0}'\), for which \((e_{0}')^{\otimes d} = v^{\ast} e_{0}\). Let \(X'_{0} := v^{-1}(Y)\). Remark that since \(e\) is non-vanishing on \(U\), one has \(X_{0}' \cap q^{-1}(U) \subset V\), i.e. \(X_{0}'\) does not intersect the zero section or the section at infinity above \(U\). Thus \(q^{\ast} L\) is trivial on \(q^{-1}(U) \cap X_{0}'\). Finally, let \(X'\) be the reduced subscheme associated to any irreducible component of \(X_{0}'\) that dominates \(X\), and let \(e'\) be the restriction of \(e_{0}'\) to this variety. 
\end{proof}

\begin{rem}
	If \(\mathbbm{k}\) is of characteristic \(p > 0\), and if \(p\) divides \(d\), note that the morphism \(v\) above factors through an inseparable morphism, so \(X_{0}'\) will be non-reduced even above \(U\). 
\end{rem}

As a special case, we get the following classical result. 

\begin{lem} \label{lem:cov1} Let \(X\) be a variety, endowed with a line bundle \(L\). Let \(d \in \mathbb{N}^{\ast}\) be such that \(L^{\otimes d}\) is effective, and let \(D\) be an effective associated Cartier divisor. Then there exists a finite dominant morphism \(p : X' \to X\), where \(X'\) is a variety, such that \(p^{\ast} D = d D'\) for some effective Cartier divisor \(D'\) on \(X\).
\end{lem}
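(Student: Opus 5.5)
This follows directly from Lemma~\ref{lem:cycliccover}, applied to the line bundle \(\mathcal{O}_{X}(D)\), which is isomorphic to \(L^{\otimes d}\). I would not rerun the cover construction. Take \(U := X \setminus \mathrm{Supp}(D)\) and let \(s \in \Gamma(X, \mathcal{O}_{X}(D))\) be the canonical section, so that \(\mathrm{div}(s) = D\). Since \(s\) does not vanish on \(U\), its restriction \(e := s|_{U}\) is a trivializing section of \(L^{\otimes d}\) over \(U\).

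Lemma~\ref{lem:cycliccover} then gives a finite dominant morphism \(p : X' \to X\) from a variety \(X'\), together with a trivialization \(e'\) of \(p^{\ast}L\) over \(p^{-1}(U)\) satisfying \((e')^{\otimes d} = p^{\ast} e\). Regard \(e'\) as a rational section of \(p^{\ast}L\) on \(X'\). On any affine open set where \(p^{\ast}L\) is trivial, \(e'\) is a rational function \(g\) with \(g^{d} = p^{\ast} f\), where \(f\) is a regular local equation of \(D\).

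It remains to see that \(e'\) is in fact a \emph{regular} section of \(p^{\ast}L\) and that its zero divisor \(D'\) is Cartier. The natural route is to normalize \(X'\). This is harmless: normalization is finite and birational, composing it with \(p\) keeps the morphism finite dominant, and the variety stays irreducible. Since \(g^{d} = p^{\ast}f\) is regular, \(g\) is integral over the local ring, so on the normalization \(g\) is regular. Then \(e'\) is a regular section of the pullback of \(L\), and its zero scheme \(D'\) is an effective Cartier divisor. The relation \(p^{\ast}D = \mathrm{div}(p^{\ast}s) = \mathrm{div}((e')^{\otimes d}) = dD'\) holds as Cartier divisors, because locally \(p^{\ast}f = g^{d}\).

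The only point needing care is this regularity step. It is a local computation, and I would settle it with the integral-closure argument above. Alternatively, one could check it directly in the chart \(\mathrm{Spec}\, A[y_{0},y_{1}]\) of the construction, where \(g\) is the coordinate \(y_{1}/y_{0}\) restricted to \(X'\). Everything else is formal.
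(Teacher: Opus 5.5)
Your proof is correct. It begins exactly as the paper's does, applying Lemma~\ref{lem:cycliccover} with \(U = X \setminus |D|\) and \(e = s|_{U}\). The difference is only in how you get regularity of \(e'\).

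The paper does not normalize; it reads regularity off the construction. Since \(s\) is a regular section on all of \(X\), the closure of the graph of \(e\) in \(\mathrm{P}(L^{\otimes d} \oplus \mathcal{O}_{X})\) is the graph of \(s\). That graph is closed, being a section of a separated morphism, so it misses the section at infinity. Its preimage under \(v\) therefore lies in the total space of \(L\), where the tautological section is regular everywhere. Hence \(e'\) is already a regular section of \(p^{\ast}L\) on \(X'\), with \((e')^{\otimes d} = p^{\ast}s\).

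You instead treat Lemma~\ref{lem:cycliccover} as a black box and pay for it with one normalization, using that \(g\) is a root of the monic polynomial \(T^{d} - p^{\ast}f\). Your route is slightly more robust: it works for any finite cover carrying a \(d\)-th root of \(p^{\ast}e\) on a dense open subset, and it produces a normal \(X'\). The paper's route keeps the cover as constructed and avoids the extra step. The chart computation you mention as an alternative is essentially the paper's argument.
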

\begin{proof}
	We prove the result with the same proof as in the previous lemma, taking \(U := X - |D|\), and \(e\) being the global section of \(L^{\otimes d}\) giving \(D\). In this case, we see from the proof that \(e'\) comes from a regular section of \(p^{\ast} L\), giving a divisor \(D'\) as intended.
\end{proof}

\begin{lem}[{Bloch-Gieseker coverings (see \cite{BG71}, \cite[Theorem~4.1.10]{lazpos1})}] \label{lem:BGcoveringannex}
	Let \(X\) be a projective variety, and let \(L\) be a line bundle on \(X\). Let \(d \in \mathbb{N}_{>1}\). Then there exists a finite dominant morphism \(p : X' \to X\) and a line bundle \(N\) on \(X'\) such that \(p^{\ast} L \cong N^{\otimes d}\).
\end{lem}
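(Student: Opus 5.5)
The statement to prove is Lemma~\ref{lem:BGcoveringannex}: on a projective variety \(X\), for any line bundle \(L\) and any \(d > 1\), there should be a finite dominant \(p : X' \to X\) with \(p^{\ast} L\) a \(d\)-th power. The plan is to reduce to the case of a very ample line bundle and then use a Kummer-type construction on projective space, in the classical manner of Bloch–Gieseker.

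\emph{Step 1: reduction to very ample \(L\).} Choose a very ample \(H\) on \(X\) such that \(L \otimes H\) is also very ample. Then \(L = (L\otimes H)\otimes H^{-1}\). Suppose we can find finite covers \(p_{1}\) with \(p_{1}^{\ast}(L\otimes H) = N_{1}^{\otimes d}\), and then \(p_{2}\) over \(X_{1}\) with \(p_{2}^{\ast} p_{1}^{\ast} H = N_{2}^{\otimes d}\); here we use that \(p_{1}^{\ast}H\) is still very ample up to replacing it by a power, or simply that \(H\) is the pullback of \(\mathcal{O}(1)\) and the construction below applies. The composite \(p_{1}\circ p_{2}\) is finite dominant, and pulls \(L\) back to \((p_{2}^{\ast}N_{1}\otimes N_{2}^{-1})^{\otimes d}\). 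Finally, taking an irreducible component dominating \(X\) with its reduced structure keeps everything a variety.

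\emph{Step 2: the very ample case.} Let \(L = \phi^{\ast}\mathcal{O}_{\mathbb{P}^{N}}(1)\) for an embedding \(\phi : X \hookrightarrow \mathbb{P}^{N}\). Consider the morphism \(\mu_{d} : \mathbb{P}^{N} \to \mathbb{P}^{N}\), \([x_{0}:\dotsc:x_{N}] \mapsto [x_{0}^{d}:\dotsc:x_{N}^{d}]\). It is well defined, since the \(x_{i}^{d}\) have no common zero, and it is finite and surjective, because it is induced by a finite graded ring extension. It satisfies \(\mu_{d}^{\ast}\mathcal{O}(1) = \mathcal{O}(d)\). Form the fiber product \(X \times_{\mathbb{P}^{N}} \mathbb{P}^{N}\) along \(\mu_{d}\); it is finite and surjective over \(X\). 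Let \(X'\) be an irreducible component dominating \(X\), taken with reduced structure, so that \(p : X' \to X\) is finite dominant. Then \(p^{\ast}L\) is the restriction of \(\mu_{d}^{\ast}\mathcal{O}(1) = \mathcal{O}(1)^{\otimes d}\), so \(N = \mathcal{O}(1)|_{X'}\) works, and \(X'\) is projective.

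\emph{Main obstacle.} The only delicate point is characteristic \(p\) dividing \(d\). There \(\mu_{d}\) is inseparable and the fiber product may be non-reduced, but passing to a reduced dominant component handles this, since only finiteness and dominance are required, not separability. The twist in Step 1 also needs care: we must make sure that the pullback of \(H\) to \(X_{1}\) is handled by the same construction, which is fine because \(H = \phi_{H}^{\ast}\mathcal{O}(1)\) and the Step 2 construction only needs a morphism to projective space, not an embedding.
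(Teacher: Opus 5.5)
Your proof is correct and follows essentially the same route as the paper: the paper first handles a globally generated \(L\) via the fiber product with the \(d\)-th power map \(\mathbb{P}^{N} \to \mathbb{P}^{N}\), then writes \(L = (H \otimes L) \otimes H^{-1}\) and applies that case twice, exactly as in your Step 1. Drop the hedge ``up to replacing it by a power'' (a root of a power of \(p_{1}^{\ast}H\) does not directly yield a \(d\)-th root of \(p_{1}^{\ast}H\) itself); what makes your second step work is your other remark, that only the morphism \(\phi_{H} \circ p_{1}\) to projective space is needed, which is precisely the paper's globally generated case.
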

\begin{proof}

{\em Step 1.} Assume first that \(X = \mathbb{P}^{n}_{\mathbbm{k}}\), and that \(L = \mathcal{O}_{\mathbb{P}^{n}_{\mathbbm{k}}}(1)\). In this case, if \(f : \mathbb{P}^{n}_{\mathbbm{k}} \to \mathbb{P}^{n}_{\mathbbm{k}}\) is the finite morphism associated with the morphism of \(\mathbbm{k}\)-algebras
	\[
		\begin{array}{ccc}
			\mathbbm{k}[x_{0}, \dotsc, x_{n}]
			&
			\longrightarrow
			&
			\mathbbm{k}[x_{0}, \dotsc, x_{n}]
			\\
			P(x_{0}, \dotsc, x_{n})
			&
			\longmapsto
			&
			P(x_{0}^{d}, \dotsc, x_{n}^{d}),
		\end{array}
	\]
	then we have \(f^{\ast}\mathcal{O}_{\mathbb{P}^{n}_{\mathbbm{k}}}(1) \cong \mathcal{O}_{\mathbb{P}^{n}_{\mathbbm{k}}}(d)\). This gives the result in this case.
	\smallskip

	\noindent
	{\em Step 2.} Assume now that \(L\) is globally generated. We can find a morphism \(g : X \longrightarrow \mathbb{P}^{N}_{\mathbbm{k}}\) such that \(L \cong g^{\ast} \mathcal{O}_{\mathbb{P}^{n}_{\mathbbm{k}}}(1)\). Form the square product
	\[
		\begin{tikzcd}
			X_{1}
				\arrow[r]
				\arrow[d, "q"]
				\arrow[dr, phantom, "\square"]
			&
			\mathbb{P}_{\mathbbm{k}'}^{N}
				\arrow[d, "f"]
			\\
			X
				\arrow[r, "g"]
			&
			\mathbb{P}_{\mathbbm{k}}^{N},
		\end{tikzcd}
	\]
	where \(f\) is constructed as in Step~1. Then one gets the result by taking \(X'\) to be the reduced scheme underlying an irreducible component of \(X_{1}\), and \(p : X' \to X\) to be the restriction of \(q\).
	\medskip	

	\noindent
	{\em Step 3.} If \(X\) is projective, then we can find a very ample line bundle \(H\) on \(X\) such that \(N := H \otimes L\) is globally generated. Then apply Step~2 twice to get a finite morphism \(q : X' \to X\) on which \(p^{\ast} H\) (resp. \(p^{\ast} N\)) admit a \(d\)-th root \(H'\) (resp. \(N'\)). Then \(L' := N' \otimes (H')^{-1}\) is a \(d\)-th root of \(p^{\ast} L\), and it satisfies the requested condition with respect to the subvarieties \(W_{i}\).
\end{proof}

Combining the last two results, we get Kawamata's classical covering trick (\cite{kawa82}, see also \cite[Proposition 4.1.12]{lazpos1}).

\begin{prop} \label{prop:kawamatacov}
	Let \(X\) be a projective variety, and let \(D\) be an effective Cartier divisor on \(X\). Then for each \(m > 0\), there exists a finite dominant morphism \(p : X' \to X\) such that \(p^{\ast} D = m D'\) for some effective Cartier divisor \(D'\) on \(X\).
\end{prop}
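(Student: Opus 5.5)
The plan is to combine Lemma~\ref{lem:cov1} with the Bloch--Gieseker covering of Lemma~\ref{lem:BGcoveringannex}, exactly as in the complex case.

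\emph{First step.} Set \(L := \mathcal{O}_{X}(D)\). Since \(X\) is projective, apply Lemma~\ref{lem:BGcoveringannex} to \(L\) and the integer \(m\). This produces a finite dominant morphism \(p_{1} : X_{1} \to X\) and a line bundle \(N\) on \(X_{1}\) with \(p_{1}^{\ast} L \cong N^{\otimes m}\); if necessary we replace \(X_{1}\) by the reduced structure on a dominating component, so that it is a projective variety. The pullback \(p_{1}^{\ast} D\) is an effective Cartier divisor on \(X_{1}\) (pullback of a Cartier divisor by a dominant morphism of varieties), and it is associated with the effective line bundle \(N^{\otimes m}\). Concretely, the canonical section \(s_{D}\) pulls back to a regular section of \(N^{\otimes m}\) with divisor \(p_{1}^{\ast} D\).

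\emph{Second step.} Apply Lemma~\ref{lem:cov1} on \(X_{1}\), with the line bundle \(N\), the integer \(d = m\), and the divisor \(p_{1}^{\ast} D\). This gives a finite dominant morphism \(p_{2} : X' \to X_{1}\) from a variety, together with an effective Cartier divisor \(D'\) on \(X'\) such that \(p_{2}^{\ast} p_{1}^{\ast} D = m D'\). Setting \(p = p_{1} \circ p_{2}\), which is finite and dominant as a composite of such maps, we obtain \(p^{\ast} D = m D'\), as required.

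\emph{Main obstacle.} The delicate point is checking that Lemma~\ref{lem:cov1} really yields a \emph{regular} section \(e'\) of \(p_{2}^{\ast} N\) whose divisor satisfies \(m\,\mathrm{div}(e') = p^{\ast} D\) as Cartier divisors, and not merely on the open set \(U = X_{1} - |p_{1}^{\ast}D|\).

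In the construction of Lemma~\ref{lem:cycliccover}, \(e'\) is the restriction of the tautological section of the pullback of \(N\) to \(\mathrm{P}(N \oplus \mathcal{O})\). The relation \((e')^{\otimes m} = p_{2}^{\ast} e\) holds on a dense open subset of the variety \(X'\). Both sides are rational sections of the same line bundle, so the relation holds everywhere as rational sections. Hence \(m\,\mathrm{div}(e') = \mathrm{div}(p_{2}^{\ast} e)\), and the right-hand side is effective.

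It remains to see that \(e'\) itself is regular. I would check this locally: \(X'\) lies in the closure of the preimage of the graph of \(e\), and over \(X_{1}\) this closure avoids the section at infinity, because \(e\) is regular. So \(e'\) has no poles, and \(D' := \mathrm{div}(e')\) is an effective Cartier divisor.

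No separability is needed anywhere, so the argument works in any characteristic, even if the cover is inseparable when \(\mathrm{char}\,\mathbbm{k} \mid m\).
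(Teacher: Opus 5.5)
Your proof is correct and follows the same argument as the paper: a Bloch--Gieseker cover (Lemma~\ref{lem:BGcoveringannex}) extracts an $m$-th root $N$ of $p_1^{\ast}\mathcal{O}_X(D)$, and then the cyclic cover of Lemma~\ref{lem:cov1} is applied to the section of $N^{\otimes m}$ cutting out $p_1^{\ast}D$. (The paper's one-line proof names the two lemmas in the reverse order, but your ordering is the one the argument actually requires.) Your extra check that $e'$ is regular, because the graph of the regular section $e$ avoids the section at infinity, is exactly the point the paper leaves implicit in Lemma~\ref{lem:cov1}; it is genuinely needed, since $X'$ need not be normal and effectivity of $m\,\mathrm{div}(e')$ alone would not give effectivity of $\mathrm{div}(e')$.
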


Indeed, it suffices to apply first Lemma~\ref{lem:cov1} to take a \(m\)-th root \(L\) on \(p^{\ast} \mathcal{O}(D)\) on some finite cover \(p_{1} : X_{1} \to X\), and then Lemma~\ref{lem:BGcoveringannex} to this variety \(X_{1}\), using the effectivity of \(L^{\otimes m}\).

\begin{rem}
	Just to ease the mind of the suspicious reader, let us mention than things are even better behaved if \(p\) happens to divide \(m\). Assume for example that \(\mathbbm{k}\) is perfect and that \(m = p\). Then we can also produce a Kawamata covering as follows. Let \(K := \mathbbm{k}(X)\) be the fraction field of \(X\), and let \(K' := K^{[1/p]}\). Then \(K \subset K'\) is a purely inseparable field extension; it is also finite since \(\mathbbm{k}\) is perfect\footnote{Since \(\mathbbm{k}^{[1/p]} = \mathbbm{k}\), then writing \(K = \mathbbm{k}(\alpha_{1}, \dotsc, \alpha_{q})\), we see that \(K^{[1/p]} = \mathbbm{k}(\alpha_{1}^{1/p}, \dotsc, \alpha_{q}^{1/p})\) is of finite type above \(K\).}. Let \(q : X' \to X\) be the normalization of \(X\) in \(K'\). Then any regular section on \(X\) has a \(p\)-th root in \(\mathcal{O}_{X'}\): this implies that for any Cartier divisor \(D\) on \(X\), one has \(q^{\ast} D = p D'\) for some effective \(D'\) in \(X'\). In this case, this inseparable covering works for every divisor on \(X\): this is of course related to the inseparability of the map that makes it ramified everywhere.
\end{rem}

\begin{lem} \label{lem:WeiltoCartier}
	Let \(X\) be a normal variety, and let \(D\) be a Cartier divisor on \(X\). Assume that we have a decomposition with integer coefficients:
	\[
		D = \sum_{1 \leq i \leq r} m_{i} D_{i},
	\]
	where \(D_{1}, \dotsc, D_{r}\) are irreducible {\em Weil divisors} on \(X\). Then there exists a projective birational morphism \(p : X' \to X\), where \(X'\) is a normal variety, such that 
	\begin{enumerate}
		\item each \(D_{i}' = p^{-1}(D_{i})\) is an {\em effective Cartier} divisor, not necessarily irreducible, with 
	\[
		D_{i}' = \widehat{D}_{i} + E_{i}
	\]
			where \(\widehat{D}_{i}\) is an irreducible Weil divisor such that \(p : \widehat{D}_{i} \to D_{i}\) is dominant and birational, and \(E_{i}\) is an effective \(p\)-exceptional sum of irreducible Weil divisors.
		\item we have the equality of {\em Cartier} divisors
			\[
				p^{\ast} D
				=
				\sum_{1 \leq i \leq r}
				m_{i} D_{i}'.
			\]
	\end{enumerate}
	
\end{lem}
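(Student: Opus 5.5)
The plan is to make all the $D_i$ Cartier at once by blowing up their ideal sheaves in turn, normalizing after each blow-up, and then to read off both conclusions from valuations at the generic points of the strict transforms.

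\emph{Blow-ups.} Let $\mathcal{I}_1 \subset \mathcal{O}_X$ be the ideal sheaf of the reduced subscheme $D_1$. Let $X_1 := \mathrm{Bl}_{\mathcal{I}_1} X$, and let $X_1'$ be its normalization. The composite $p_1 : X_1' \to X$ is projective and birational; it is an isomorphism over the locus where $D_1$ is Cartier, in particular over an open set containing the generic points of all the $D_i$, since $X$ is normal and hence regular in codimension one. By the universal property of blowing up, $\mathcal{I}_1 \cdot \mathcal{O}_{X_1'}$ is invertible, so $p_1^{-1}(D_1)$, taken with its scheme structure given by this inverse image ideal, is an effective Cartier divisor. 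We then replace $X$ by $X_1'$ and $D_2$ by its strict transform, and iterate. At each step the divisors already made Cartier stay Cartier after pullback. Each $p_j$ is an isomorphism over the generic points of the $D_i$. After $r$ steps we obtain $p : X' \to X$, projective birational with $X'$ normal.

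\emph{Structure of $D_i'$.} As a Weil divisor, $D_i' := p^{\ast}(D_i)$ has support contained in $p^{-1}(D_i)$. Since $p$ is an isomorphism near the generic point of $D_i$, exactly one component dominates $D_i$, namely the strict transform $\widehat{D}_i$, and it maps birationally onto $D_i$. At the generic point of $\widehat{D}_i$ the local equation of $D_i'$ is that of the reduced $D_i$, so $\widehat{D}_i$ appears with multiplicity one. Every other component lies over a proper closed subset of $D_i$, hence has image of codimension $\geq 2$ and is $p$-exceptional. This gives $D_i' = \widehat{D}_i + E_i$ with $E_i$ effective and exceptional, which is item (1).

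\emph{Equality (2).} Both $p^{\ast}D$ and $\sum_i m_i D_i'$ are Cartier divisors on the normal variety $X'$, so it suffices to compare their orders along every prime divisor $P \subset X'$. If $P$ is not exceptional, then $p$ is an isomorphism near its generic point, and both sides agree because $D = \sum_i m_i D_i$ as Weil divisors on $X$. If $P$ is exceptional, this comparison is the main obstacle, because the valuation $v_P$ need not be additive on ideal sheaves that are not locally principal. I would handle it as follows. Let $x$ be the generic point of $p(P)$ and work in the local ring $\mathcal{O}_{X,x}$. Write $\mathcal{O}_X(-D) \cdot \mathcal{O}_{X,x} = (f)$ and pick generators of the ideals $\mathcal{I}_i = \mathcal{O}_X(-D_i)$. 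Using normality of $X$, the ideals $\mathcal{O}_X(-\sum_i m_i D_i)$ and the reflexive hull of $\prod_i \mathcal{I}_i^{m_i}$ agree. After pullback to $X'$, the $\mathcal{I}_i \cdot \mathcal{O}_{X'}$ become invertible, so the pullback of $\prod_i \mathcal{I}_i^{m_i}$, with negative exponents allowed as fractional ideals, is the invertible sheaf $\mathcal{O}_{X'}(-\sum_i m_i D_i')$. The ideal $f\mathcal{O}_{X'} = \mathcal{O}_{X'}(-p^{\ast}D)$ agrees with this invertible sheaf generically. Both are invertible on the normal $X'$, so it remains to show they agree in codimension one. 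To close this last step, I would compare global sections of $\mathcal{O}(k D)$ and $\mathcal{O}(\sum_i k m_i D_i)$ for $k \gg 0$ via the projection formula and $p_{\ast}\mathcal{O}_{X'} = \mathcal{O}_X$. If that comparison does not go through, the fallback is to also blow up $\mathcal{O}_X(-D)$ together with the $\mathcal{I}_i$, so that all the relevant ideals are simultaneously principal in a single local ring. Additivity of $v_P$ on principal ideals then forces the two orders along $P$ to agree.
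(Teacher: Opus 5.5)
Your construction and your proof of (1) follow the paper. The genuine gap is the last step of (2), and it cannot be closed because (2) is false as stated. You correctly reduce to comparing orders along $p$-exceptional primes $P$, and that is exactly where it breaks.

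Take $X=\operatorname{Spec}\mathbbm{k}[x,y,z]/(xy-z^2)$, $r=1$, $D_1=V(x,z)$ and $D=\operatorname{div}(x)=2D_1$. Then $\mathcal I_{D_1}^2=(x^2,xz,z^2)=x\,\mathfrak m$ with $\mathfrak m=(x,y,z)$, whereas $\mathcal O_X(-D)=(x)$ is only its reflexive hull. So for any $p$ making $p^{-1}(D_1)$ Cartier, the ideal $\mathfrak m\cdot\mathcal O_{X'}=x^{-1}(\mathcal I_{D_1}\cdot\mathcal O_{X'})^2$ is invertible, and
\[
2\,p^{-1}(D_1)=p^{*}D+F_0 .
\]
Here $F_0$ is the effective Cartier divisor cut out by $\mathfrak m\cdot\mathcal O_{X'}$. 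It is nonzero, being supported on $p^{-1}(\text{vertex})\neq\emptyset$. Concretely, on the blow-up of $(x,z)$ (the minimal resolution), in the chart $x=t^2y$, $z=ty$, one finds $p^{*}D=2\widehat D_1+E$ but $2p^{-1}(D_1)=2\widehat D_1+2E$.

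In general, for $m_i\geq 0$ you only have $\prod_i\mathcal I_i^{m_i}\subseteq\mathcal O_X(-D)$, hence $\sum_i m_i\,\mathrm{ord}_P(D_i')\geq\mathrm{ord}_P(p^{*}D)$. Passing to the reflexive hull can strictly lower $v_P$ when $P$ is exceptional. Neither of your remedies helps:
\begin{itemize}
\item blowing up $\mathcal O_X(-D)$ does nothing, since it is already invertible;
\item comparing sections through $p_*$ cannot see exceptional components, because $p_*\mathcal O_{X'}(G)=\mathcal O_X$ for every effective $p$-exceptional $G$ on the normal $X$.
\end{itemize}

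The paper's own proof of (2) has the same hole. It asserts that $g=f/\prod_i f_i^{m_i}$ has order $0$ at every height-one prime of $A$. But $f_i$ generates $\mathfrak p_iA'$ only on $U'$, so this is justified only at primes whose generic point lies in $p(U')$. In the example, on the chart $A'=\mathbbm{k}[t,y]$ one has $f_1=ty$ and $g=t^2y/(ty)^2=1/y$. This has order $-2$ along $V(y,z)$ and is not a unit of $A'$.

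What both arguments actually prove is $p^{*}D=\sum_i m_iD_i'+G$ with $G$ a $p$-exceptional Cartier divisor, since both sides are Cartier and agree off the exceptional locus. Let $Z\subset X$ be the locus over which $p$ is not an isomorphism, and take one further normalized blow-up of $\mathcal I_Z\cdot\mathcal O_{X'}$. The resulting effective Cartier divisor $H$ has support containing every exceptional prime, so $G=(G+NH)-NH$ for $N\gg0$ is a difference of effective exceptional Cartier divisors. That is the $E-F$ shape in which the lemma is used in the proof of Proposition~\ref{propredmorse}, so the statement should be amended accordingly.

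A smaller point: at step $i$ you should blow up the inverse-image ideal $\mathcal I_{D_i}\cdot\mathcal O_{X'_{i-1}}$, as the paper does, not the ideal of the strict transform of $D_i$. Otherwise $p^{-1}(D_i)$ need not become Cartier, and your later use of the invertibility of $\mathcal I_i\cdot\mathcal O_{X'}$ is unjustified.
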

\begin{proof}
	We show by induction that there exists \(p : X' \to X\) satisfying {\em (1)}. Let \(q : X_{1} \to X\) be such that \(q^{-1}(D_{i}) = \widehat{D}_{i} + E_{i}\) are Cartier divisors satisfying the condition for all \(i \in \llbracket 1, r-1\rrbracket\).  
	\medskip

	Let \(q : X' \to X_{1}\) be the normalization of the blowing-up of the subscheme \(q^{-1}(D_{r})\) in \(X_{1}\), and let \(p : X' \to X\) be the composite map. Note that \(D_{r}' = p^{-1}(D_{r})\) is Cartier by construction. Since \(X'\), \(X_{1}\) and \(X\) are normal, the map \(p\) is an isomorphism above an open subset \(U \subset X\) with \(\mathrm{codim} (X - U) \geq 2\), so the decomposition of \(D_{r}'\) follows. Since the pullback of a Cartier divisor remains Cartier, checking that the \(q^{\ast} D_{i}\) satisfy the conditions for \(i < r\) is straightforward.

	Now, let us check that {\em (2)} is satisfied as well. This is local on \(X'\), so we can prove the equality on an open affine subset \(U' = \mathrm{Spec}\, A'\) of \(X'\), with \(p(U') \subset U\) for some other affine subset \(U = \mathrm{Spec}\, A\) in \(X\). If \(K\) is the fraction field of \(A\), we have the following inclusions of \(\mathbbm{k}\)-algebras:
	\[
		A \hookrightarrow A' \hookrightarrow K.
	\]
	Now, let \(\mathfrak{p}_{i} \subset A\) be the prime ideals corresponding to the \(D_{i}\). Since each \(p^{-1}(D_{i})\) is Cartier, we can assume (possibly after reducing \(U'\)) that there exists \(f_{i} \in A'\) such that for all \(i \in \llbracket 1, r \rrbracket\).
	\[
		\mathfrak{p}_{i} A' = (f_{i}) 
	\]
	Similarly (possibly after shrinking \(U\) and \(U'\)), we may assume that \(D\) is principal, given by an element \(f \in K\). We have to show that the element
	\[
		g := \frac{f}{\prod_{i} f_{i}^{m_{i}}}
	\]
	belongs to the invertible elements of \(A'\). However, the decomposition on \(D\) on \(X\) implies that, for any height one prime ideal \(\mathfrak{q} \subset A\), one has \(\mathrm{mult}_{\mathfrak{q}}(g) = 0\) i.e. \(g \in A_{\mathfrak{q}}^{\times}\). Now, since \(U\) is normal, \(A\) is integrally closed and we deduce that \(g \in A^{\times} = K\) by e.g. \cite[Lemma 031T]{stacks}. {\em A fortiori}, \(f\) is invertible in \(A'\).
\end{proof}

\chapter{Simplexes and lattices} \label{ann:simplexes}

\section{Volume computations}

In this section, we gather a few lemmas related to volume computations for simplexes in euclidean spaces. We refer to Section~\ref{sec:notationlattices} for the general notation.
\medskip

Let us recall that for any \((a_{1}, \dotsc, a_{r}) \in \mathbb{R}^{r}_{>0}\), we introduce the \((r-1)\)-dimensional simplex
	\[
		\Delta_{\underline{a}} = \{ (t_i) \in \mathbb R_+^r \; | \; \sum_i a_i t_i = 1 \}.
	\]

\begin{lem} \label{lem:simplextrivial}
	Let \(r \geq 1\). We consider the following open simplex in \(\mathbb{R}^{r}\) 
	\[
		\Delta_{r}
		=
		\left\{
			(t_{1}, \dotsc, t_{r}) \in \mathbb{R}_{+}^{r}
			\;|\;
			\sum_{j} t_{j} \leq 1.
		\right\}
	\]
	Then \(\mathrm{vol}_{r}(\Delta) = \frac{1}{r!}\).
\end{lem}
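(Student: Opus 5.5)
The plan is to compute the volume by induction on \(r\), slicing along the last coordinate. The base case \(r = 1\) is immediate: \(\Delta_{1} = [0,1]\), which has length \(1 = \frac{1}{1!}\).

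For the induction step, I would apply Fubini's theorem with respect to the decomposition \(\mathbb{R}^{r} = \mathbb{R}^{r-1} \times \mathbb{R}\). For a fixed \(t_{r} \in [0,1]\), the slice of \(\Delta_{r}\) at height \(t_{r}\) is
\[
\{(t_{1}, \dotsc, t_{r-1}) \in \mathbb{R}_{+}^{r-1} \;|\; \textstyle\sum_{j \leq r-1} t_{j} \leq 1 - t_{r}\},
\]
which is the dilation \((1 - t_{r}) \cdot \Delta_{r-1}\). Since dilating by a factor \(\lambda\) scales \((r-1)\)-dimensional volume by \(\lambda^{r-1}\), the slice has volume \((1-t_{r})^{r-1}\, \mathrm{vol}_{r-1}(\Delta_{r-1})\). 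By the induction hypothesis this equals \(\frac{(1-t_{r})^{r-1}}{(r-1)!}\).

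Integrating over \(t_{r} \in [0,1]\) gives
\[
\mathrm{vol}_{r}(\Delta_{r}) = \frac{1}{(r-1)!}\int_{0}^{1} (1-t)^{r-1}\, dt = \frac{1}{(r-1)!}\cdot\frac{1}{r} = \frac{1}{r!}.
\]
The last integral is also the case \(a = 0\), \(b = r-1\) of Lemma~\ref{lem:Bfunction}.

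None of these steps is a real obstacle. The only points needing care are that the scaling of Lebesgue measure is applied in dimension \(r-1\), and that the boundary (the word ``open'' in the statement versus the non-strict inequality \(\leq\) in the definition) has measure zero, so it does not affect the volume.
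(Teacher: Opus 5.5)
Your proposal is correct and follows essentially the same route as the paper: induction on \(r\), Fubini slicing along \(t_{r}\), identifying each slice as \((1-t_{r})\cdot\Delta_{r-1}\) with volume \((1-t_{r})^{r-1}\,\mathrm{vol}_{r-1}(\Delta_{r-1})\), and integrating to get \(\mathrm{vol}_{r}(\Delta_{r}) = \frac{1}{r}\,\mathrm{vol}_{r-1}(\Delta_{r-1})\). Your remark that the boundary has measure zero, so ``open'' versus the non-strict inequality does not matter, is a harmless extra point of care that the paper leaves implicit.
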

\begin{proof}
	This can be proved by induction. The case \(r = 1\) is obvious. In general, write
	\begin{align*}
		\mathrm{vol}_{r}(\Delta_{r}) & 
		= \int_{0}^{1} dt_{r} \int_{\sum_{j \leq r-1} t_{j} \leq 1 - t_{r}} dt_{1} \dotsc dt_{r-1} \\
		& = \int_{0}^{1} dt_{r}\, (1-t_{r})^{r-1} \mathrm{vol}_{r-1}(\Delta_{r-1}) \\
		& = \frac{1}{r} \mathrm{vol}_{r-1}(\Delta_{r-1}).
	\end{align*}
\end{proof}

\begin{lem} \label{lem:gram}
	Let \(s \leq r\) be two integers, and let \(\psi : \mathbb{R}^{s} \hookrightarrow \mathbb{R}^{r}\) be a linear embedding. Let
	\[
		G = (\left< \psi (e_{i}), \psi(e_{j}) \right>)_{1 \leq i, j \leq s}
	\]
	be the Gram matrix of \(\psi\), where \(e_{1}, \dotsc, e_{s}\) is the canonical basis for \(\mathbb{R}^{s}\), and \(\left<\bullet, \bullet \right>\) denotes the euclidean scalar product. Then we have
	\[
		\psi^{\ast}\, \mathrm{vol}_{\mathrm{Im} \psi} = \sqrt{\det G}\, \mathrm{vol}_{s}.
	\]
	where \(\mathrm{vol}_{\mathrm{Im} \psi}\) denotes the euclidean Lebesgue measure induced on the image of \(\psi\).
\end{lem}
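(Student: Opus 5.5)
The statement is Lemma~\ref{lem:gram}: for a linear embedding \(\psi : \mathbb{R}^{s} \hookrightarrow \mathbb{R}^{r}\) with Gram matrix \(G\), we want \(\psi^{\ast}\,\mathrm{vol}_{\mathrm{Im}\,\psi} = \sqrt{\det G}\,\mathrm{vol}_{s}\).

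\textbf{Reduction to one constant.} Both sides are translation-invariant Radon measures on \(\mathbb{R}^{s}\). The pullback is translation-invariant because \(\psi\) is linear and \(\mathrm{vol}_{\mathrm{Im}\,\psi}\) is invariant under translations of \(\mathrm{Im}\,\psi\). By uniqueness of Haar measure, the two measures differ by a constant factor \(c > 0\). It therefore suffices to evaluate both on the unit cube \([0,1]^{s}\), that is, to show
\[
\mathrm{vol}_{\mathrm{Im}\,\psi}\bigl(\psi([0,1]^{s})\bigr) = \sqrt{\det G}.
\]

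\textbf{Computing the constant.} Pick an orthonormal basis \(f_{1}, \dotsc, f_{s}\) of \(\mathrm{Im}\,\psi\). Let \(\iota : \mathbb{R}^{s} \to \mathrm{Im}\,\psi\) be the isometry sending \(e_{i} \mapsto f_{i}\). By definition of the induced Euclidean Lebesgue measure, \(\iota_{\ast}\mathrm{vol}_{s} = \mathrm{vol}_{\mathrm{Im}\,\psi}\).

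Write \(\psi = \iota \circ M\), where \(M \in \mathrm{GL}_{s}(\mathbb{R})\) has entries \(M_{ki} = \langle \psi(e_{i}), f_{k} \rangle\). By the change of variables formula in \(\mathbb{R}^{s}\),
\[
\mathrm{vol}_{\mathrm{Im}\,\psi}\bigl(\psi([0,1]^{s})\bigr) = \mathrm{vol}_{s}\bigl(M[0,1]^{s}\bigr) = |\det M|.
\]
Since the \(f_{k}\) are orthonormal, \(G_{ij} = \langle \psi(e_{i}), \psi(e_{j}) \rangle = \sum_{k} M_{ki} M_{kj}\), so \(G = M^{T}M\). Hence \(\det G = (\det M)^{2}\) and \(|\det M| = \sqrt{\det G}\), which gives the claim.

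\textbf{Main point of care.} There is no real obstacle; the only thing to fix is the convention that \(\mathrm{vol}_{\mathrm{Im}\,\psi}\) is the pushforward of Lebesgue measure under any linear isometry \(\mathbb{R}^{s} \to \mathrm{Im}\,\psi\). This is independent of the choice of isometry, because orthogonal maps preserve Lebesgue measure (\(|\det| = 1\)).
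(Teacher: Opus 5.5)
Your proof is correct, and it takes a slightly different route from the paper's.

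\textbf{The paper's route.} The paper works on the source side. It diagonalizes \(G = {}^{t}AA\) by an orthogonal matrix \(P \in \mathrm{O}_{s}(\mathbb{R})\), so that \(\psi \circ P\) sends the canonical basis to pairwise orthogonal vectors. It then uses \(P^{\ast}\mathrm{vol}_{s} = \mathrm{vol}_{s}\) to reduce to that case, and declares the result immediate: the image of the unit cube is a rectangular box whose edge lengths are the square roots of the diagonal entries of \({}^{t}PGP\).

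\textbf{Your route.} You work on the target side. Choosing an orthonormal basis of \(\mathrm{Im}\,\psi\) factors \(\psi = \iota \circ M\) with \(M \in \mathrm{GL}_{s}(\mathbb{R})\). This gives \(G = M^{T}M\), so everything reduces to the change-of-variables formula for a square matrix.

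\textbf{What each buys.} Your version avoids the spectral theorem, and \(\det G = (\det M)^{2}\) becomes a one-line computation. It also spells out what the paper leaves implicit in its ``immediate'' step, namely that the induced measure on \(\mathrm{Im}\,\psi\) is transported from \(\mathrm{vol}_{s}\) by a linear isometry. The paper's version keeps the geometric picture of a box with orthogonal edges in the foreground.

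One small remark: the appeal to uniqueness of Haar measure is not needed. The linear change-of-variables formula already gives \(\mathrm{vol}_{s}(M(B)) = |\det M|\,\mathrm{vol}_{s}(B)\) for every Borel set \(B\), and this is the equality of measures directly, not just on the unit cube.
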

\begin{proof}
	Let \(A \in \mathrm{M}_{rs}(\mathbb{R})\) denote the matrix of \(\psi\). Then \(G = {}^{t}A A\) is a symmetric matrix. There exists \(P \in \mathrm{O}_{s}(\mathbb{R})\) such that \({}^t P GP =: D\) is diagonal. In other words, the morphism associated to \(AP\) sends \((e_{1}, \dotsc, e_{r})\) to an orthogonal set of vectors in \(\mathbb{R}^{r}\). Since \(P^{\ast} \mathrm{vol}_{s} = \mathrm{vol}_{s}\), this reduces the situation to the case where \((\psi(e_{1}), \dotsc, \psi(e_{r}))\) is an orthogonal family and \(G\) is diagonal. In this case, the result is immediate.
\end{proof}

\begin{lem} \label{lem:volsimplex}
	Let \(\underline{a} = (a_1, \dotsc, a_r) \in \mathbb{R}_{>0}^{r}\). The \((r-1)\)-volume of \(\Delta_{\underline{a}}\) is equal to
	\[
	\frac{1}{(r-1)!} 
	\frac{\sqrt{\sum_{1 \leq i \leq r} a_i^2}}{a_{1} \dotsc a_{r}}.
	\]
\end{lem}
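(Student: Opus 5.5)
The plan is to compute the volume directly, either by parametrizing the hyperplane and applying Lemma~\ref{lem:gram}, or by a cone argument. The cone argument is cleaner. Consider the full-dimensional simplex
\[
	\widetilde{\Delta}_{\underline{a}} := \{ (t_i) \in \mathbb{R}_+^r \;|\; \sum_i a_i t_i \leq 1\}.
\]
This is the cone with apex $0$ over the base $\Delta_{\underline{a}}$.

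First I compute $\mathrm{vol}_r(\widetilde{\Delta}_{\underline{a}})$. The diagonal linear map $(t_i) \mapsto (a_i t_i)$ sends $\widetilde{\Delta}_{\underline{a}}$ onto the standard simplex $\Delta_r$ of Lemma~\ref{lem:simplextrivial}. Its Jacobian is $a_1 \dotsc a_r$, so
\[
	\mathrm{vol}_r(\widetilde{\Delta}_{\underline{a}}) = \frac{1}{r!\, a_1 \dotsc a_r}.
\]

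Second, I use the cone volume formula $\mathrm{vol}_r = \frac{1}{r}\, h \cdot \mathrm{vol}_{r-1}(\text{base})$. Here $h$ is the euclidean distance from $0$ to the affine hyperplane $\{\sum_i a_i t_i = 1\}$, which equals $1/\sqrt{\sum_i a_i^2}$. To justify the cone formula without appeal to folklore, I slice $\widetilde{\Delta}_{\underline{a}}$ by the parallel hyperplanes at distance $s h$ from the origin, for $s \in [0,1]$. Each slice is $s\,\Delta_{\underline{a}}$, whose $(r-1)$-volume is $s^{r-1}\mathrm{vol}_{r-1}(\Delta_{\underline{a}})$. By Fubini along the unit normal direction,
\[
	\mathrm{vol}_r = \int_0^{h} (u/h)^{r-1}\, \mathrm{vol}_{r-1}(\Delta_{\underline{a}})\, du = \frac{h}{r}\, \mathrm{vol}_{r-1}(\Delta_{\underline{a}}).
\]

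Combining the two steps gives
\[
	\mathrm{vol}_{r-1}(\Delta_{\underline{a}}) = \frac{r}{h}\cdot \frac{1}{r!\, a_1 \dotsc a_r} = \frac{1}{(r-1)!}\frac{\sqrt{\sum_i a_i^2}}{a_1\dotsc a_r},
\]
as claimed.

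The only mildly delicate point is the Fubini/slicing step, namely checking that the slices are exactly homothetic copies of $\Delta_{\underline{a}}$. This holds because $\widetilde{\Delta}_{\underline{a}} = \bigcup_{s\in[0,1]} s\Delta_{\underline{a}}$, and the level sets of the linear form $\sum_i a_i t_i$ are orthogonal to $(a_1,\dotsc,a_r)$.
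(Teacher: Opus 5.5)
Your proof is correct, and it takes a genuinely different route from the paper's.

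The paper parametrizes $\Delta_{\underline{a}}$ by the affine graph map $\psi(t) = (t_1/a_1, \dotsc, t_{r-1}/a_{r-1}, (1-\sum_{i<r} t_i)/a_r)$ defined on the standard $(r-1)$-simplex. It obtains the area distortion factor $\sqrt{\det G}$ of the linear part from Lemma~\ref{lem:gram}. It then evaluates the Gram determinant, a diagonal matrix plus a rank-one term, in Lemma~\ref{lem:determinant}, which gives $\det G = \sum_i a_i^2/\prod_i a_i^2$.

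You instead compute the $r$-volume of the solid cone $\widetilde{\Delta}_{\underline{a}}$ by rescaling it diagonally onto the simplex of Lemma~\ref{lem:simplextrivial}. You then divide by $h/r$ using the pyramid formula. You justify that formula correctly by slicing along the unit normal direction: the slice at distance $u$ is exactly $(u/h)\Delta_{\underline{a}}$, with $h = 1/\sqrt{\sum_i a_i^2}$.

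Your route makes the Gram and determinant lemmas unnecessary. It also gives the factor $\sqrt{\sum_i a_i^2}$ a transparent meaning as the reciprocal of the distance from the origin to the hyperplane $\{\sum_i a_i t_i = 1\}$. This is the same ``base times height'' principle the paper uses in the proof of Lemma~\ref{lemlattice1}.

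The paper's route produces a little more: an explicit affine chart with constant Jacobian, i.e.\ the pushforward of Lebesgue measure, not just the total volume. Only the volume is needed downstream in Lemma~\ref{lem:lattice2}, so your argument fully suffices there.
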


\begin{proof}
To perform this computation, we can for example use the parametrization of \(\Delta_{\underline{a}}\) given by 
	\[
		\begin{array}{cccc}
			\psi : & \Delta & \longrightarrow & \Delta_{\underline{a}} \\
			     &    t     & \longmapsto    & 
			     (\frac{t_{1}}{a_{1}}, \dotsc, \frac{t_{r-1}}{a_{r-1}} , 
			     \frac{1 - \sum_{1 \leq i \leq r - 1} t_{i}}{a_{r}}),
		\end{array}
	\]
	where 
	\(\Delta = \{ (t_i) \in \mathbb{R}_{+}^{r-1} \; | \; \sum_i t_i \leq 1 \}\) 
	is the standard \((r-1)\)-dimensional simplex in $\mathbb R^{r-1}$. 
	\smallskip
	
	Letting \((e_{1}, \dotsc, e_{r})\) be the canonical basis of \(\mathbb{R}^{r}\), Lemma~\ref{lem:gram} above shows that \(\psi^\ast( d\mathrm{vol}_{r-1} ) = \sqrt{\det G} \, d\mathrm{vol}_{r-1}\), where \(G= (\left< \psi_\ast(e_i), \psi_\ast(e_j)\right>)_{i,j}\). The computation of this determinant is given in Lemma~\ref{lem:determinant} below: we find \(\det G = \frac{1}{\prod_i a_i^2} \sum_{i} a_i^2$. Thus, we have $\mathrm{vol}_{r-1}(\Delta_{\underline{a}}) = \frac{\sqrt{\sum_i a_i^2}}{\prod_i a_i} \mathrm{vol}_{r-1}(\Delta)$. To conclude, it suffices to compute $\mathrm{vol}_{r-1}(\Delta) = \frac{1}{(r-1)!}$, which follows from Lemma~\ref{lem:simplextrivial}.
\end{proof}

\begin{lem} \label{lem:determinant}
	Let \(\alpha_{1}, \dotsc, \alpha_{r} \in \mathbb{R}^{\ast}\). Let
	\[
		A
		=
		\left(
		\begin{matrix}
			\alpha_{1} &  &     &   \\
			    & \alpha_{2} &  &   \\
				  &        &\ddots &  \\
			          &                &        & \alpha_{r-1}\\
			\alpha_{r} & \dotsc & \dotsc & \alpha_{r}
		\end{matrix}
		\right)
		\in M_{r, r-1}(\mathbb{R}).
	\]
	where the non-displayed terms are zero. Let \(G = {}^{t}A A \in \mathrm{M}_{r-1,r-1}(\mathbb{R})\). Then
	\[
		\det G = \prod_{i} \alpha_{i}^{2} \left(\sum_{i} \frac{1}{\alpha_{i}^{2}}\right). 
	\]
\end{lem}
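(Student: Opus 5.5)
The plan is to write \(G = {}^{t}A A\) as a diagonal matrix plus a rank-one perturbation and then apply the matrix determinant lemma. Let \(D = \mathrm{diag}(\alpha_{1}^{2}, \dotsc, \alpha_{r-1}^{2})\) and \(u = (1, \dotsc, 1)^{t} \in \mathbb{R}^{r-1}\). The first \(r-1\) rows of \(A\) form the diagonal matrix \(\mathrm{diag}(\alpha_{1}, \dotsc, \alpha_{r-1})\), and the last row is \(\alpha_{r}\, {}^{t}u\). Computing \({}^{t}A A\) as a sum over rows of \(A\), I expect to find
\[
	G = D + \alpha_{r}^{2}\, u\, {}^{t}u .
\]
Concretely, the \((i,j)\) entry of \(G\) is \(\alpha_{i}^{2}\delta_{ij} + \alpha_{r}^{2}\).

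Next, since all \(\alpha_{i} \neq 0\), the matrix \(D\) is invertible. The matrix determinant lemma \(\det(D + v\, {}^{t}w) = \det(D)\,(1 + {}^{t}w D^{-1} v)\) then gives
\[
	\det G = \prod_{i=1}^{r-1} \alpha_{i}^{2} \Bigl(1 + \alpha_{r}^{2} \sum_{i=1}^{r-1} \frac{1}{\alpha_{i}^{2}}\Bigr).
\]
Factoring out \(\alpha_{r}^{2}\), the bracket becomes \(\alpha_{r}^{2}\bigl(\frac{1}{\alpha_{r}^{2}} + \sum_{i \leq r-1} \frac{1}{\alpha_{i}^{2}}\bigr)\). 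This yields \(\prod_{i=1}^{r} \alpha_{i}^{2} \sum_{i=1}^{r} \frac{1}{\alpha_{i}^{2}}\), as claimed.

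If one prefers to avoid quoting the lemma, there are two alternatives. One can prove it directly via the block identity for the \(r \times r\) matrix \(\begin{pmatrix} D & -v \\ {}^{t}w & 1\end{pmatrix}\) using Schur complements. Alternatively, one can use Cauchy–Binet: \(\det({}^{t}AA)\) equals the sum of the squares of the \((r-1)\)-minors of \(A\). Deleting row \(r\) gives the minor \(\prod_{i<r}\alpha_{i}\). Deleting row \(k<r\) gives, after expanding, the minor \(\pm \alpha_{r}\prod_{i \neq k, r}\alpha_{i}\). Summing the squares gives the same formula directly. I would present the Cauchy–Binet version as a cross-check.

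There is no real obstacle here; the only points needing care are the bookkeeping of the \((r-1)\)-minors in the Cauchy–Binet route and recognizing the rank-one structure of \(G\).
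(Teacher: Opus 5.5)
Your proposal is correct and is essentially the paper's argument: the paper also writes \(G = \mathrm{diag}(\alpha_{1}^{2}, \dotsc, \alpha_{r-1}^{2}) + \alpha_{r}^{2} J\), with \(J\) the all-ones matrix. It then expands \(\det G\) by multilinearity in the columns, where terms using two or more columns of \(\alpha_{r}^{2} J\) vanish; this is just an unpackaged form of the matrix determinant lemma you quote, and your Cauchy--Binet cross-check is also correct.
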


\begin{proof}
Indeed, we have
	\[
		G 
		= 
		\mathrm{diag}(\alpha_{1}^{2}, \dotsc, \alpha_{r-1}^{2})
		+
		\alpha_{r}^{2} J,
	\]
	where \(J \in \mathrm{M}_{r-1, r-1}(\mathbb{R})\) is the matrix all of whose entries are equal to \(1\). Expanding the determinant using multilinearity yields
	\[
		\det G
		=
		\prod_{i=1}^{r-1} \alpha_{i}^{2}
		+
		\alpha_{r}^{2} \sum_{i=1}^{r-1}
		\prod_{j \notin \{i, r\}} \alpha_{j}^{2}. 
	\]
	This gives the result.
\end{proof}

\section{Uniform random variables on simplexes} \label{sect:simplexprob}

We present now a few estimates for the classical probability functional on random variables with values in affine simplexes, that follow quite closely Demailly's estimates in \cite{dem11}. The main result of this section is Lemma \ref{lemboundvariance}, which was used in the proof of Lemma \ref{lemvariance}.
\medskip

Again, we use the notations introduced in Section~\ref{sec:notationlattices}. Recall that for any $m$-dimensional simplex $\Delta \accentset{\circ}{\subset} \mathbb R^m$, the \emph{uniform probability measure} of $\Delta$ is the measure $d \mathbf P_{\Delta} = \frac{1}{\mathrm{vol}_m(\Delta)} d \mathrm{vol}_m$. If \(r \in \mathbb{N}_{>0}\), we also let
\[
	\Delta^{r}
	:=
	\big\{
	(x_{1}, \dotsc, x_{r+1}) \in \mathbb{R}_{+}^{r+1}
	\;|\;
	\sum_{i} x_{i} = 1
	\big\}.
\]
\medskip

The following result is quite simple to show.

\begin{lem} \label{lem:averageaffine}
	Let \(f : \mathbb{R}^{m} \to \mathbb{R}\) be an affine function. Let \(\Delta \subset \mathbb{R}^{m}\) be any simplex (open or not), and let \(X\) be a random variable drawn uniformly in \(\Delta\). Then \(\mathrm{E}[f(X)]\) is equal to the average of \(f\) on the vertices of \(\Delta\).
\end{lem}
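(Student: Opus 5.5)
The plan is to reduce to the case where the affine function is linear plus a constant and then compute the expectation of each coordinate. Write \(f(x) = c + \ell(x)\) with \(\ell\) linear. Since \(\mathrm{E}[c + \ell(X)] = c + \ell(\mathrm{E}[X])\) by linearity of expectation, and the average of \(f\) over the vertices \(v_{0}, \dotsc, v_{p}\) of \(\Delta\) equals \(c + \ell(\frac{1}{p+1}\sum_{i} v_{i})\), it suffices to show that \(\mathrm{E}[X]\) is the barycenter \(\frac{1}{p+1}\sum_{i} v_{i}\) of the vertices. Here \(p = \dim \Delta\), which may be less than \(m\); we interpret the uniform law via the \(p\)-dimensional measure on the affine span of \(\Delta\), as in Section~\ref{sec:notationlattices}.

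To compute \(\mathrm{E}[X]\) we transport the problem to the standard simplex. The affine map \(\Psi : \Delta^{p} \to \Delta\), \((x_{0}, \dotsc, x_{p}) \mapsto \sum_{i} x_{i} v_{i}\), is an affine isomorphism onto \(\Delta\), since the \(v_{i}\) are affinely independent. As recalled in Section~\ref{sec:notationlattices}, an affine isomorphism between simplexes sends the uniform probability measure to the uniform probability measure, because the uniform measure is characterized as the normalized restriction of a translation-invariant measure, and affine bijections preserve such measures up to a constant. Hence \(X \sim \Psi(Y)\) with \(Y\) uniform on \(\Delta^{p}\), and \(\mathrm{E}[X] = \sum_{i} \mathrm{E}[Y_{i}]\, v_{i}\).

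It remains to show \(\mathrm{E}[Y_{i}] = \frac{1}{p+1}\) for all \(i\). The most direct route is symmetry: the permutations of the coordinates preserve \(\Delta^{p}\) and its uniform measure, so all the \(\mathrm{E}[Y_{i}]\) are equal, and they sum to \(\mathrm{E}[\sum_{i} Y_{i}] = 1\). Alternatively, Lemma~\ref{lem:computationint} with a single exponent equal to \(1\) gives \(C_{1,0,\dotsc,0} = \frac{p!}{(p+1)!} = \frac{1}{p+1}\).

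There is no serious obstacle. The only point requiring care is justifying that uniform measures correspond under \(\Psi\) when \(\Delta\) has positive codimension in \(\mathbb{R}^{m}\). This follows by applying the same translation-invariance argument inside the affine span of \(\Delta\), where the Jacobian of \(\Psi\) is a constant factor that disappears after normalization.
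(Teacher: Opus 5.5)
Your proof is correct and follows essentially the same route as the paper: transport \(\Delta\) to the standard simplex by an affine map, reduce to the coordinate functions, and use the permutation symmetry to get that each coordinate has expectation \(\frac{1}{p+1}\). Your version is in fact slightly more careful than the paper's. The paper's write-up states \(\mathrm{E}[X_{i}] = \frac{1}{m}\) for \(\Delta^{m}\), where \(\frac{1}{m+1}\) is meant, and it does not separately treat a simplex of positive codimension such as \(\Delta_{\underline{k}}\), which is the case actually used later; your argument handles both points correctly.
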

\begin{proof} We may precompose \(f\) by any affine automorphism of \(\mathbb{R}^{m}\), to assume that \(\Delta = \Delta^{m}\). Also, since the result is obvious for constants, we may assume that \(f\) is linear; by linearity, it actually suffices to show the result when \(f\) is a form \(f(x) = x_{i}\) \((1 \leq i \leq m)\). But \(x_{i}\) takes the value \(0\) at all vertices except \(1\), so we have to show that
	\[
		\mathrm{E}[X_{i}] = \frac{1}{m}.
	\]
	This last result is obvious: all \(\mathrm{E}[X_{i}]\) are equal by the symmetry of the problem, and they sum up to \(1\) since \(\sum_{i} {x_{i}} = 1\) on \(\Delta^{m}\).
\end{proof}

Let now $r, k \in \mathbb N$, and consider a random variable $X$ drawn uniformly in the $(kr -1)$-dimensional simplex
$$
\Delta_{\underline{k}} = \Delta_{(1, \dotsc, 1, \dotsc, k, \dotsc, k)} \subseteq \mathbb R^{kr - 1}
$$ 
(each integer $i \in \llbracket 1, k \rrbracket$ being repeated $r$ times). We write $X = (X_{j,l})_{1 \leq j \leq k, 1 \leq l \leq r}$, and for each \(j \in \llbracket 1, k\rrbracket\), \(X_{j} := (X_{j, 1}, \dotsc, X_{j, r})\).

\begin{lem} \label{lem:randomvariables} For all $j \in \llbracket 1, k \rrbracket$ and all $i \in \llbracket 1, r \rrbracket$, we let $Y_j = \sum_{1 \leq l \leq r} X_{j, l}$, and $Z_{l}^j = \frac{X_{j,l}}{Y_j}$.
Then
\begin{enumerate}
	\item the random variables \(Z^j := (Z^j_1, \dotsc, Z^j_r)\) (where \(1 \leq j \leq k\)) are of uniform law with values in $\Delta^{r-1}$, and are pairwise independent. They are also independent of the $Y_j$ ;
\item the random variable $(Y_1', \dotsc, Y_k') = (Y_1, 2 Y_2, \dotsc, k Y_k)$ takes its values in $\Delta^{k-1} \subseteq \mathbb R^k$. Its density is
\[
	dP(y_1, \dotsc, y_{k}) = \binom{kr-1}{k-1, r-1, \dotsc, r-1} (y_1 \dotsc y_k)^{r-1}d\mathbf{P}_{\Delta^{k-1}},
\]
		where the constant is the multinomial coefficient \(\binom{kr-1}{k-1, r-1, \dotsc, r-1} = \frac{(kr-1)!}{(k-1)! (r-1)!^{k}}\).
\end{enumerate}
\end{lem}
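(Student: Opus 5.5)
We prove both items by a change of variables, reducing to explicit Dirichlet-type densities. The uniform probability measure on \(\Delta_{\underline{k}}\) is, up to a constant, Lebesgue measure on the hyperplane \(\sum_{j,l} j\, x_{j,l} = 1\) restricted to the positive orthant. We parametrize this hyperplane by the \(kr-1\) free coordinates obtained by dropping one of them, say \(x_{k,r}\). In these coordinates the density of \(X\) is constant on its support.

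We then pass to the new variables \((y'_1, \dotsc, y'_k, z^1, \dotsc, z^k)\). Here \(y'_j = j\sum_l x_{j,l}\), with \(\sum_j y'_j = 1\), so that \(y' \in \Delta^{k-1}\). Each \(z^j = x_j/y_j\) lies in \(\Delta^{r-1}\). The inverse map is \(x_{j,l} = \frac{y'_j}{j}\, z^j_l\).

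The key step is the Jacobian of this map, computed with \(k-1\) free coordinates for \(y'\) and \(r-1\) free coordinates for each \(z^j\). The computation works block by block. For fixed \(j\), the map \((y_j, z^j_1, \dotsc, z^j_{r-1}) \mapsto (y_j z^j_1, \dotsc, y_j z^j_{r-1}, y_j(1-\sum_{l<r} z^j_l))\) is the standard polar map onto the cone over \(\Delta^{r-1}\), and its Jacobian is \(y_j^{r-1}\). The rescaling \(y_j = y'_j/j\) and the constraint \(\sum_j y'_j = 1\) only contribute constant factors.

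Hence the pushforward density is proportional to \(\prod_j (y'_j)^{r-1}\) with respect to \(d\mathrm{vol}_{\Delta^{k-1}}(y') \otimes \bigotimes_j d\mathrm{vol}_{\Delta^{r-1}}(z^j)\). This is a product density, and it is constant in each \(z^j\). It follows immediately that the \(Z^j\) are uniform on \(\Delta^{r-1}\), that they are mutually independent (in particular pairwise), and that they are independent of \((Y_j)\), equivalently of \((Y'_j)\). This proves item (1).

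For item (2), the density of \(Y'\) relative to \(d\mathbf{P}_{\Delta^{k-1}}\) is \(C\,(y_1\cdots y_k)^{r-1}\). The constant \(C\) is fixed by normalization. By Lemma~\ref{lem:computationint} with all \(p_i = r-1\), we have
\[
\int_{\Delta^{k-1}} \prod_j y_j^{r-1}\, d\mathbf{P} = \frac{(k-1)!\,((r-1)!)^k}{(kr-1)!},
\]
so \(C\) is the stated multinomial coefficient.

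The main obstacle is the Jacobian bookkeeping. One has to be careful that the constraint coordinates are eliminated consistently on both sides. In particular, the dropped coordinate \(x_{k,r}\) must correspond to the dropped \(y'_k\) and \(z^k_r\), and the Gram-type factors from Lemma~\ref{lem:gram} must contribute only constants. These constants are absorbed by normalization anyway, so only the \(y\)-dependence \(\prod_j y_j^{r-1}\) actually needs to be tracked.
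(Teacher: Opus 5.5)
Your proposal is correct and follows the paper's route. Both use the same bijection $(y',z^1,\dots,z^k)\mapsto \bigl(\tfrac{y'_j}{j}z^j\bigr)_{j}$ from $\Delta^{k-1}\times(\Delta^{r-1})^k$ onto $\Delta_{\underline{k}}$, show that the pulled-back density has product form $\propto\prod_j (y'_j)^{r-1}$, and normalize with Lemma~\ref{lem:computationint}. The one difference is in how the density is found: you compute the Jacobian explicitly (it is block-triangular), whereas the paper gets $z$-independence from an invariance argument and the factor $\prod_j y_j^{r-1}$ from the volumes of the fibers $\prod_j \frac{y_j}{j}\Delta^{r-1}$. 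Your computation is the more direct of the two, since $(y,z)\mapsto\Psi(y,z+h)$ differs from $\Psi$ by a $y$-dependent shift (a volume-preserving shear of $H_{\underline{k}}$) rather than a translation.
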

\begin{rem}
	The density above is a particular case of Dirichlet distribution: it also appears naturally in Demailly's estimates (see \cite[(2.16)]{dem11}).
\end{rem}
\begin{proof}\footnote{We thank Rémi Peyre for a very useful discussion concerning this lemma.} 
\emph{(1)} 
	We consider the bijection
	\[
		\begin{array}{cccc}
			\psi:
			&
			\Delta^{k-1}
			\times
			\Delta^{r-1}
			\times
			\dotsc
			\times
			\Delta^{r-1}
			&
			\longrightarrow
			&
			\Delta_{\underline{k}}
			\\
			&
			(y, z_{1}, z_{2}, \dotsc, z_{k})
			&
			\longmapsto
			&
			(y_{1}z_{1}, \frac{1}{2} y_{2} z_{2}, \dotsc, \frac{1}{k} y_{k} z_{k})
		\end{array}
		\quad
		(\Delta^{r-1} \; \text{repeated}\; k \; \text{times}).
	\]

	Then with the notation of the Lemma, we have \(X = \psi(Y', Z_{1}, Z_{2}, \dotsc, Z_{k})\), and it suffices to show that the pullback of the probability \(d\mathbf{P}_{\Delta_{\underline{k}}}\) splits as a product of probabilities on each factor. To see this, denote by \(H \subset \mathbb{R}^{r}\) (resp.\ \(H_{\underline{k}} \subset \mathbb{R}^{kr}\)) the affine hyperplane containing \(\Delta^{r-1}\) (resp.\ \(\Delta_{\underline{k}}\)). The application \(\psi\) above is the restriction of the bijection
	\[
		\begin{array}{cccc}
			\Psi : & 
			\Delta^{k-1} \times H \times \dotsc \times H &
			\longrightarrow
			&
			H_{\underline{k}}
			\\
			&
			(y, z_{1}, z_{2}, \dotsc, z_{k})
			&
			\longmapsto
			&
			(y_{1}z_{1}, \frac{1}{2} y_{2} z_{2}, \dotsc, \frac{1}{k} y_{k} z_{k})
		\end{array}
		(H \; \text{repeated}\; k \; \text{times}).
	\]

	Now, we can write \(\Psi^{\ast} d\mathrm{vol}_{H_{\underline{k}}} = \rho(y, z_{1}, \dotsc, z_{k})\, d\mathbf{P}_{\Delta^{k-1}} d \mathrm{vol}_{H} \dotsc d \mathrm{vol}_{H}\) for some density function \(\rho\). Since for all \(h \in H_{\underline{k}}\), the function \((y,z) \mapsto \Psi(y, z + h)\) is a translate of \(\Psi\), and since both \(\mathrm{vol}_{H}\) and \(\mathrm{vol}_{H_{\underline{k}}}\) are translation invariant, this shows that \(\rho\) is actually independent of \(z_{1}, \dotsc, z_{r}\). Taking the restriction to the simplexes and normalizing yields
	\begin{equation} \label{eq:pullback}
		\psi^{\ast} d\mathbf{P}_{\Delta_{\underline{k}}}
		=
		\phi(y) d \mathbf{P}_{\Delta^{k-1}}
		\,
		d\mathbf{P}_{\Delta^{r-1}}
		\dotsc
		d\mathbf{P}_{\Delta^{r-1}}
	\end{equation}
	for some probability density \(\phi : \Delta^{k-1} \to [0,1]\). This gives the result.
\medskip

\noindent
\emph{(2)} We have to compute the density \(\phi\) above. Integrating \eqref{eq:pullback} with respect to the variables \(z_{1}, z_{2}, \dotsc z_{r}\) shows that for any \(y \in \Delta^{k-1}\), \(\phi(y)\) is proportional to the \(k(r-1)\)-dimensional volume of the image
\[
	\Psi(
	\{y\}
	\times
	\Delta^{r-1}
	\times
	\dotsc
	\times
	\Delta^{r-1}
	)
	\subset 
	\Delta_{\underline{k}}.
\]

This implies that there exists a constant \(C > 0\) such that 
\[
	\phi(y)
	=
	C
	y_{1}^{r-1}
	\dotsc
	y_{k}^{r-1}
\]
for all \(y \in \Delta^{k-1}\). To determine the value of \(C\), we use the normalization \(\int_{\Delta^{k-1}} \phi(y) = 1\). It follows from Lemma~\ref{lem:computationint} that \(C = \frac{(kr - 1)!}{(k-1)!(r- 1)!^{k}}\), as announced. 
\end{proof}

In the next lemma, we denote by \(\mathrm{E}[X]\) the expected value of a random variable \(X\).
\medskip

\begin{lem} \label{lemrandomy} With the same notations as in Lemma \ref{lem:randomvariables}, we have
\begin{enumerate}
	\item for all \(j \in \llbracket 1, k \rrbracket\), \(\mathrm{E}[Y_j] = \frac{1}{j k}\) and \(\mathrm{E}[Y_{j}^2] =  \frac{1}{j^2} \frac{r+1}{k(kr + 1)} \leq \frac{2}{j^2 k^2}\). 
	\item for all \(j, l \in \llbracket 1, k \rrbracket\), with \(j \neq l\), the variables \(Y_{j}\) and \(Y_{k}\) are negatively correlated i.e.
		\[
			\mathrm{E}[Y_j Y_l] 
			\leq 
			\mathrm{E}[Y_j]\, 
			\mathrm{E}[Y_l].
		\]
\end{enumerate}
\end{lem}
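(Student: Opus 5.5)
We will use the decomposition of Lemma~\ref{lem:randomvariables}: writing $Y_j' = jY_j$, the vector $Y' = (Y_1', \dotsc, Y_k')$ has the Dirichlet density $C\,(y_1 \dotsc y_k)^{r-1}\, d\mathbf{P}_{\Delta^{k-1}}$ with $C = \frac{(kr-1)!}{(k-1)!(r-1)!^{k}}$. Both items then reduce to computing the first and second moments of this Dirichlet law, which is Dirichlet with all parameters equal to $r$. The computation rests on Lemma~\ref{lem:computationint}.

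For item (1), we use Lemma~\ref{lem:computationint} to write, for nonnegative integers $p_1, \dotsc, p_k$,
\[
\int_{\Delta^{k-1}} y_1^{p_1} \dotsc y_k^{p_k}\, d\mathbf{P}_{\Delta^{k-1}}
= \frac{(k-1)!\, p_1! \dotsc p_k!}{(p_1 + \dotsc + p_k + k - 1)!}.
\]
Multiplying by $C$ and taking exponents $r-1+\epsilon_j$ gives the moments.
\begin{itemize}
\item With one exponent raised by $1$: $\mathrm{E}[Y_j'] = \frac{(kr-1)!\, r!}{(r-1)!\,(kr)!} = \frac{1}{k}$. This is also clear by symmetry, since the $Y_j'$ sum to $1$.
\item With one exponent raised by $2$: $\mathrm{E}[Y_j'^2] = \frac{r(r+1)}{kr(kr+1)} = \frac{r+1}{k(kr+1)}$.
\end{itemize}
Dividing by $j$ and $j^2$ respectively gives the stated formulas. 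The bound $\frac{r+1}{k(kr+1)} \leq \frac{2}{k^2}$ is equivalent to $k(r+1) \leq 2(kr+1)$, that is $k \leq kr + 2$, which holds for $r \geq 1$.

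For item (2), we raise two distinct exponents by $1$ each. This gives $\mathrm{E}[Y_j' Y_l'] = \frac{r^2}{kr(kr+1)} = \frac{r}{k(kr+1)}$. Since $\frac{r}{kr+1} < \frac{1}{k}$, this is less than $\frac{1}{k^2} = \mathrm{E}[Y_j']\,\mathrm{E}[Y_l']$. Dividing by $jl$ yields $\mathrm{E}[Y_j Y_l] \leq \mathrm{E}[Y_j]\,\mathrm{E}[Y_l]$.

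The only point requiring care is the bookkeeping of factorials in the normalizing constant against Lemma~\ref{lem:computationint}. In particular, one must check that shifting the total degree by $1$ or $2$ produces exactly the factors $\frac{1}{kr}$ and $\frac{1}{kr(kr+1)}$. Apart from this, the argument is routine.
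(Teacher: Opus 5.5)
Your proposal is correct and follows the paper's proof. You take the Dirichlet density of $Y' = (Y_1, 2Y_2, \dotsc, kY_k)$ from Lemma~\ref{lem:randomvariables} and evaluate the shifted monomial integrals with Lemma~\ref{lem:computationint}, which gives $\mathrm{E}[Y_j'] = \frac{1}{k}$, $\mathrm{E}[(Y_j')^2] = \frac{r+1}{k(kr+1)}$ and $\mathrm{E}[Y_j' Y_l'] = \frac{r}{k(kr+1)}$; you then rescale by $\frac{1}{j}$, $\frac{1}{j^2}$ and $\frac{1}{jl}$, exactly as the paper does, and your factorial bookkeeping and final elementary inequalities all check out.
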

\begin{proof}
	{\em (1)}. We compute, using the density of Lemma~\ref{lem:randomvariables} above: 
	\begin{align*}
		\mathrm{E}[Y_j] 
		& 
		= 
		\frac{1}{j} \mathrm{E}[Y_{j}'] 
		= 
		\frac{1}{j} \int_{\Delta^{k-1}} y_j dP(y_1, \dotsc, y_k)  \\
		& 
		=
		\frac{1}{j}
		\binom{kr-1}{k-1, r-1, \dotsc, r-1} 
		\int_{\Delta_{\underline{k}}} 
		y_{1}^{r-1} \dotsc y_{j}^{r} \dotsc y_{k}^{r-1} d \mathbf{P}_{\Delta^{k-1}}(y).
	\end{align*}
	Applying Lemma~\ref{lem:computationint} gives the result. The computation of $\mathrm{E}[Y_j^2]$ is similar. 

\noindent
	{\em (2)} We write \(\mathrm{E}[Y_j Y_l] = \frac{1}{jl} \mathrm{E}[Y_j' Y_l'] = \frac{1}{jl} \int_{\Delta^{k-1}} y_j y_l \, dP(y)\). We get this time \(\mathrm{E}[Y_j Y_l] = \frac{1}{j l} \frac{r}{k(kr + 1)} \leq \frac{1}{j k} \frac{1}{l k}\), hence the result follows by {\em (1)}.
\end{proof}

Now, we let \(d_1, \dotsc, d_r \in \mathbb R\), and we let $T$ be a random variable of uniform law on the $(r-1)$-dimensional simplex $\Delta^{r-1} \subseteq \mathbb R^r$. Let \(S = \sum_{1 \leq l \leq r} d_l T_l\). 
\smallskip

We check easily that $\mathrm{E}[S] = \frac{1}{r} \sum_{1 \leq l \leq r} d_l$ since for all \(l \in \llbracket 1, r \rrbracket\):
\[
	\mathrm{E}[T_{l}]
	=
	\int_{\Delta^{r-1}}
	t_{l}
	\,
	d \mathbf{P}_{\Delta^{r-1}}(t)
	=
	\frac{1}{r}
\]
by Lemma~\ref{lem:computationint}. We now have

\begin{lem} \label{lemboundvariance} We let $X = (X_{j,l})_{1 \leq j \leq k, \, 1 \leq l \leq r}$ have the same meaning as before. 
	Let \[
		\begin{array}{cccc}
		A :
		& 
		\Delta_{\underline{k}} 
		&
		\longrightarrow 
		&
		\mathbb{R}
		\\
		& 
		t 
		&
		\longmapsto
		&
		\sum_{1 \leq j \leq  k} \sum_{1 \leq l \leq r} t_{j, l} d_l.
	\end{array}
	\]
			
		Then, we have an upper bound
\begin{equation} \label{eqvar}
	\mathrm{Var}[A(X)] 
	\leq 
	\frac{\pi^2}{3 k^2} \mathrm{E}[S^2].
\end{equation} 
\end{lem}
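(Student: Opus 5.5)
We use the decomposition of Lemma~\ref{lem:randomvariables}: write \(X_{j,l} = Y_j Z^j_l\), so that
\[
A(X) = \sum_{1 \leq j \leq k} Y_j S_j,
\qquad
S_j := \sum_{1 \leq l \leq r} d_l Z^j_l .
\]
By item (1) of that lemma, the \(S_j\) are independent copies of \(S\), and they are independent of \((Y_1, \dotsc, Y_k)\). The plan is to expand the variance using this independence and then bound the resulting terms with Lemma~\ref{lemrandomy}.

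First we compute the second moment. Expanding the square and using the independence of the \(S_j\) from the \(Y\)'s and from each other, we get
\[
\mathrm{E}[A(X)^2] = \sum_j \mathrm{E}[Y_j^2]\,\mathrm{E}[S^2] + \sum_{j \neq l} \mathrm{E}[Y_j Y_l]\,\mathrm{E}[S]^2 .
\]
The first moment factorizes in the same way:
\[
\mathrm{E}[A(X)] = \mathrm{E}[S] \sum_j \mathrm{E}[Y_j],
\qquad
\mathrm{E}[A(X)]^2 = \mathrm{E}[S]^2\Bigl(\sum_j \mathrm{E}[Y_j^2]' \Bigr),
\]
where the bracket means \(\sum_j \mathrm{E}[Y_j]^2 + \sum_{j\neq l}\mathrm{E}[Y_j]\mathrm{E}[Y_l]\). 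Subtracting the two gives
\[
\mathrm{Var}[A(X)] = \sum_j \bigl(\mathrm{E}[Y_j^2]\,\mathrm{E}[S^2] - \mathrm{E}[Y_j]^2\,\mathrm{E}[S]^2\bigr) + \mathrm{E}[S]^2 \sum_{j\neq l}\bigl(\mathrm{E}[Y_jY_l] - \mathrm{E}[Y_j]\mathrm{E}[Y_l]\bigr).
\]

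Next we bound each piece. By Lemma~\ref{lemrandomy}(2), the cross-correlation sum is nonpositive, so we can drop it. In the diagonal sum we also drop the subtracted term \(-\mathrm{E}[Y_j]^2\,\mathrm{E}[S]^2 \leq 0\). Lemma~\ref{lemrandomy}(1) gives \(\mathrm{E}[Y_j^2] \leq \frac{2}{j^2k^2}\), hence
\[
\mathrm{Var}[A(X)] \leq \frac{2}{k^2}\Bigl(\sum_{j\geq 1} \frac{1}{j^2}\Bigr)\mathrm{E}[S^2] = \frac{\pi^2}{3k^2}\,\mathrm{E}[S^2],
\]
which is exactly \eqref{eqvar}.

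The main obstacle is the bookkeeping that justifies the expansion. We need the joint independence of the family \((Y, Z^1, \dotsc, Z^k)\), not just the pairwise independence stated in the lemma. This is what the product decomposition \eqref{eq:pullback} of the pullback measure provides, and I would cite it for that purpose. With that in hand, the rest is just checking the signs of the two dropped terms.
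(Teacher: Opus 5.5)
Your proof is correct and is essentially the paper's argument: you use the same decomposition $A(X)=\sum_j Y_j S_j$ and expand the variance using independence in the same way. You then drop the nonpositive covariance sum and the terms $-\mathrm{E}[Y_j]^2\mathrm{E}[S]^2$, and conclude with $\mathrm{E}[Y_j^2]\le 2/(j^2k^2)$ and $\sum_j 1/j^2\le \pi^2/6$. Your bookkeeping is slightly more careful than the paper's in two places: you correctly put $\mathrm{E}[S]^2$ rather than $\mathrm{E}[S^2]$ in front of the covariance sum, and you rightly note that the expansion needs joint independence of $(Y, Z^1,\dotsc,Z^k)$ (which the product formula \eqref{eq:pullback} provides), not just pairwise independence.
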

\begin{proof}
We will actually show that
\[
	\mathrm{Var}[A(X)] 
	\leq 
	\frac{2}{k^2} 
	\left( \sum_{1 \leq j \leq k} \frac{1}{j^2} \right) 
	\mathbf{E}[S^2]
\]

	We have \(A(X) = \sum_{j, l}\, d_{l}  X_{j, l}\), so, using the notations of Lemma \ref{lem:randomvariables}, we can rewrite the left hand side as follows:
\begin{align*}
\mathrm{Var}
	\left[
		\sum_{1 \leq l \leq r}
		d_{l}
		\left( 
		\sum_{1 \leq j \leq k} Y_{j} Z_{l}^{j} 
		\right)
	\right] 
	= 
	\mathrm{Var}[\sum_{1 \leq j \leq k} Y_{j} S_{j} ]
\end{align*}
where we let $S_j = \sum_{1 \leq l \leq r} d_l Z^j_l$. By definition of the variance, this is actually equal to
\begin{align} \label{eq:variance}
\mathrm{E}
	\big[
		(\sum_{1 \leq j \leq k} Y_{j} S_{j})^{2}
	\big]
	-
\mathrm{E} 
	\big[
		\sum_{1 \leq j \leq k} Y_{j} S_{j}
	\big]^{2}  
\end{align}

	By Lemma \ref{lem:randomvariables}, the variables \(S_{j}\) are pairwise independent for \(j \in \llbracket 1 , k \rrbracket\), of the same law as \(S\), and independent of the \(Y_j\).
	This implies that
	\[
		\begin{array}{rlr}
		\mathrm{E}[Y_{j} S_{j}] 
		& = 
		\mathrm{E}[Y_{j}] 
		\mathrm{E}[S_{j}] 
		= 
		\mathrm{E}[Y_{j}] 
		\mathrm{E}[S]
		&
		\forall j \in \llbracket 1, k \rrbracket
		\\
		\mathrm{E}[Y_{p} Y_{q} S_{p} S_{q}] 
		& = 
		\mathrm{E}[Y_{p} Y_{q}] 
		\mathrm{E}[S_{p}] 
		\mathrm{E}[S_{q}]
		= 
		\mathrm{E}[Y_{p} Y_{q}] 
		\mathrm{E}[S]^2
		&
		\forall p \neq q \in \llbracket 1, k \rrbracket
	\end{array}
	\]
	Thus, expanding \eqref{eq:variance} yields 
\begin{align*}
\mathrm{E}[ \sum_j Y_j^2 S_j^2] & + \mathrm{E}[\sum_{p \neq q} Y_p Y_q S_p S_q ] -  \sum_j \mathrm{E}[ Y_j S_j]^2 - \sum_{p \neq q} \mathrm{E}[Y_p S_p] \mathrm{E}[Y_q S_q]\\
& = \mathrm{E}[\sum_j Y_j^2]\, \mathrm{E}[S^2] + \sum_{p \neq q} \left( \mathrm{E}[Y_p Y_q] - \mathrm{E}[Y_p] \mathrm{E}[Y_q] \right) \, \mathrm{E}[S^2] - \sum_j \mathrm{E}[Y_j]^2 \, \mathrm{E}[S]^2 
\end{align*}

By Lemma \ref{lemrandomy}, (2), we get
\begin{align*}
\mathrm{Var}[A(X)] 
	& \leq 
	\sum_{j} \mathrm{E}[Y_j^2] \mathrm{E}[S^2] - \sum_{j} \mathrm{E}[Y_j]^2 \mathrm{E}[S]^2 \\
 & \leq  \sum_j \mathrm{E}[Y_j^2] \mathrm{E}[S^2],
\end{align*}
and then the conclusion follows immediately from Lemma \ref{lemrandomy}, (1).
\end{proof}

\chapter{Weighted projectivized bundles and stacks} \label{annex:stacks}

In this appendix, we present some general facts about the algebraic stacks that appear  in this text as weighted projective spaces: the material of this section is classical, and its main purpose is to help the reader fix their ideas without needing to revisit the whole theory. In particular, we will put some emphasis on the positive characteristic phenomena that can happen in this context: one of the main consequences is that the considered stacks will not be Deligne-Mumford in general, but merely {\em algebraic stacks} admitting an {\em fppf atlas}.
\medskip

Our main reference for this section will be the Stacks project and more particularly \cite[Part 0ELS]{stacks}.
\medskip

We fix a field \(\mathbbm{k}\). All schemes will be of finite type above \(\mathbbm{k}\), and all direct products will be taken above \(\mathrm{Spec}\, \mathbbm{k}\). Let \(p = \mathrm{char}(\mathbbm{k})\). The following discussion will be relevant for any value of \(p\), but several remarks will become essentially empty if \(p = 0\).

\section{Torsors in the fppf topology}

Algebraic stacks can be seen locally as classifying objects for torsors above schemes. To define torsors, we need to specify a {\em Grothendieck topology}: the {\em fppf topology} will the best suited for our purposes. We will not recall the axioms of a site or a Grothendieck topology, for which we refer to \cite[Definition 00VH]{stacks}. The following definition will be sufficient for our discussion.

\begin{defi}
	Let \(X\) be a scheme of finite type over \(\mathrm{Spec}\, \mathbbm{k}\). A {\em fppf \footnote{ Let us recall that {\em "fppf"} stands for {\em fidèlement plat de présentation finie} : "faithfully flat, of finite presentation".}
	covering} of \(X\) is the data of a collection of morphisms of \(\mathbbm{k}\)-schemes \(\{f_{i} : Y_{i} \to X\}_{i \in I}\) where each \(f_{i}\) is flat, locally of finite presentation, and such that \(X = \bigcup_{i \in I} f_{i}(Y_{i})\).
\end{defi}

\begin{rem} Using this terminology, we can now define a {\em fppf local} version for any  property \((P)\) of \(\mathbbm{k}\)-schemes that is invariant under base change: a given scheme \(X\) satisfies this property {\em locally for the fppf topology} if there exists a covering \(\{f_{i} : Y_{i} \to X\}\) as above such that the union \(Y = \bigsqcup_{i} Y_{i}\) satisfies \((P)\), possibly after pulling-back the relevant data from \(X\) to \(Y\).
\end{rem}
\medskip

A {\em fppf torsor} (or simply {\em torsor}, for short) can in particular be defined as a principal bundle that is locally trivial for the fppf topology:

\begin{defi}
	Let \(G\) be a group scheme of finite type above \(\mathbbm{k}\). Let \(X\) be a \(\mathbbm{k}\)-scheme of finite type. An {\em \(G\)-torsor above \(X\) for the fppf topology} is the data of 
	\begin{enumerate}[label=(\alph*)]
		\item a surjective morphism of \(\mathbbm{k}\)-schemes \(\pi : P \to X\);
		\item an group action \(\rho : G \times P \longrightarrow P\) that makes \(\pi\) an invariant morphism,
	\end{enumerate}
	such that there exists an fppf covering \(\{f_{i} : Y_{i} \to X\}_{i \in I}\) and isomorphisms of \(\mathbbm{k}\)-schemes \(P \times_{f_{i}} Y_{i} \cong G \times Y_{i}\), with \(\rho\) pulling back to the trivial \(G\)-action on \(G \times Y_{i}\) for each \(i \in I\).
\end{defi}

In the case where the group is the multiplicative group \(\mathbb{G}_{m} := \mathrm{Spec}\, \mathbbm{k}[\lambda, \lambda^{-1}]\), \(G\)-torsors are essentially the total spaces of line bundles where we have removed the zero section:

\begin{prop} \label{prop:torsorslines} Let \(X\) be a \(\mathbbm{k}\)-scheme. There is an equivalence of categories
	\[
		\left\{
			\text{line bundles}\; L \to X
		\right\}
		\leftrightarrow
		\left\{
			\begin{array}{c}
			\mathbb{G}_{m}\text{-torsors}\; P \to X\\
			\text{for the fppf topology}
			\end{array}
		\right\}
	\]
	where a line bundle \(L \to X\) is sent to the \(\mathbb{G}_{m}\)-torsor \(L - \{0\} := \mathrm{Spec}_{X} \left(\bigoplus_{m \in \mathbb{Z}} L^{\otimes m}\right)\), endowed with its standard \(\mathbb{G}_{m}\)-action by multiplication on the fibers.
\end{prop}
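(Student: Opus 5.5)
The proof will exhibit mutually inverse functors, by descent. Starting from a line bundle \(L \to X\), I will first check that \(P_{L} := \mathrm{Spec}_{X}\left(\bigoplus_{m \in \mathbb{Z}} L^{\otimes m}\right)\) is a \(\mathbb{G}_{m}\)-torsor. The \(\mathbb{Z}\)-grading of the algebra defines a coaction \(s \mapsto s \otimes \lambda^{m}\) on \(L^{\otimes m}\), which gives an action \(\rho : \mathbb{G}_{m} \times P_{L} \to P_{L}\) over \(X\). On a Zariski open \(U\) where \(L\) is trivialized by a section \(e\), the algebra becomes \(\mathcal{O}_{U}[e, e^{-1}]\). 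Hence \(P_{L}|_{U} \cong \mathbb{G}_{m} \times U\) equivariantly, and Zariski coverings are fppf coverings. A morphism of line bundles (an isomorphism, since we work in the groupoid of line bundles) induces a graded algebra isomorphism, and therefore an equivariant isomorphism of torsors. So this direction is functorial.

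For the converse, starting from a torsor \(\pi : P \to X\), I will set \(L_{P}\) to be the weight \(-1\) (or \(+1\), fixing conventions) eigensheaf of \(\pi_{\ast}\mathcal{O}_{P}\). Concretely, \(\pi\) is affine, and \(\pi_{\ast}\mathcal{O}_{P}\) carries a \(\mathbb{Z}\)-grading coming from the \(\mathbb{G}_{m}\)-coaction, since comodules over \(\mathbbm{k}[\lambda,\lambda^{-1}]\) are the same as \(\mathbb{Z}\)-graded modules. The key point is that \(\pi_{\ast}\mathcal{O}_{P} = \bigoplus_{m} (\pi_{\ast}\mathcal{O}_{P})_{m}\), that each graded piece is invertible, and that multiplication induces isomorphisms \((\pi_{\ast}\mathcal{O}_{P})_{1}^{\otimes m} \cong (\pi_{\ast}\mathcal{O}_{P})_{m}\).

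I will prove these facts fppf locally, where \(P\) is trivial and the algebra is \(\mathcal{O}_{Y}[\lambda,\lambda^{-1}]\) with \(\lambda\) in degree \(1\). I will then descend them using two inputs:
\begin{enumerate}
\item formation of \(\pi_{\ast}\) and of the graded pieces commutes with flat base change;
\item fppf descent for quasi-coherent sheaves and for the properties "affine morphism", "invertible sheaf" and "isomorphism" (Stacks project, descent chapter).
\end{enumerate}
This yields \(P \cong \mathrm{Spec}_{X}\bigoplus_{m} L_{P}^{\otimes m}\) equivariantly. Checking that the two constructions are quasi-inverse then reduces to the tautological identification of graded pieces in both directions.

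The main obstacle is the descent step. A priori \(P\) is only trivial after an fppf cover, so I have to show two things. First, \(P \to X\) is affine: this follows by fpqc descent of affineness. Second, the graded decomposition, which is defined globally via the coaction, becomes the standard one after base change: this uses the compatibility of the coaction with flat pullback. Once this is in place, invertibility of \(L_{P}\) is fppf local, hence holds. In effect this is a statement of Hilbert 90 type for \(\mathbb{G}_{m}\), namely that fppf \(\mathbb{G}_{m}\)-torsors are Zariski locally trivial. I would cite the corresponding Stacks project lemmas rather than reprove them.
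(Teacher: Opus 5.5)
Your proposal is correct and follows essentially the same fppf-descent argument as the paper. The paper also trivializes \(P\) on an fppf cover, descends invertibility of \(L\), and descends the isomorphism \(L-\{0\}\cong P\). The difference is only in packaging: you descend affineness of \(\pi\) and define \(L\) globally as the weight-one eigensheaf of \(\pi_{\ast}\mathcal{O}_{P}\), whereas the paper defines \(L\) chartwise as the span of \(\lambda\) and invokes effectivity of fppf descent data for quasi-coherent sheaves. Both give the same sheaf, and your version avoids checking the descent datum by hand.
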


\begin{rem}
	In particular, \(\mathbb{G}_{m}\)-torsors are locally trivial in the Zariski topology as well. This property makes \(\mathbb{G}_{m}\) a {\em special group in the sense of Serre} (cf. \cite{serre58})
\end{rem}

\begin{proof}
	This is essentially a special case of \cite[Theorem 03P8]{stacks}. Let us simply recall how to construct a quasi-inverse for the functor described in the proposition, using fppf descent theory. Let \(P \to X\) be a \(\mathbb{G}_{m}\)-torsor that is trivialized on some fppf covering \(\{f_{i} : Y_{i} \to X\}_{i \in I}\). Now, for each \(i \in I\), we have an isomorphism
	\[
		Y_{i} \times_{f_{i}} P 
		\cong 
		Y_{i} \times \mathrm{Spec}\, \mathbbm{k}[\lambda, \lambda^{-1}].
	\]
	Denoting \(P_{i} :=  Y_{i} \times_{f_{i}} P\) and letting \(\pi_{i} : P_{i} \to Y_{i}\) be the canonical projection, let \(L_{i}\) be the \(\mathcal{O}_{Y}\)-submodule of \((\pi_{i})_{\ast} \mathcal{O}_{P_{i}}\), generated by the global coordinate \(\lambda\). Each \(L_{i}\) is a trivial line bundle above \(Y_{i}\). Now, we check that the \(L_{i}\) admit a descent data, induced by the natural descent data for the family \((P_{i})_{i \in I}\) coming from \(P\). Any descent data for coherent sheaves in the fppf topology is effective (this is even true for fpqc coverings: see \cite[Proposition 023T]{stacks}); this shows that there exists a coherent sheaf \(L\) on \(X\) pulling back to the \(L_{i}\). Finally, since Zariski local freeness of finite modules can be checked in the fppf topology (see \cite[Lemma 05B1]{stacks}), the freeness of the \(L_{i}\) implies that \(L\) is Zariski locally free as well. Finally, an isomorphism between \(L - \{0\}\) and \(P\) can be constructed first on the schemes \(Y_{i}\), and then descended to \(X\), using again a natural descent data.
\end{proof}

\section{Weighted projectived spaces and stacks}

Let \(n \in \mathbb{N}\), and let \(a_{0}, a_{1}, \dotsc, a_{n} \in \mathbb{N}_{>0}\). We introduce the quotient stack
\[
	\mathcal{P}(a_{0}, a_{1}, \dotsc, a_{n})
	:=
	\left[
		\quotientd{\mathbb{A}_{\mathbbm{k}}^{n+1} - \{0\}}{\mathbb{G}_{m}},
	\right]
\]
where \(\mathbb{G}_{m} \cong \mathrm{Spec} \mathbbm{k}[\lambda, \lambda^{-1}]\) acts on \(\mathbb{A}_{\mathbbm{k}}^{n+1}-\{0\}\) {\em via}
\[
	\lambda \cdot (x_{0}, x_{1}, \dotsc, x_{n}) 
	= 
	(\lambda^{a_{0}} x_{0}, 
	\lambda^{a_{1}} x_{1},
	\dotsc,
	\lambda^{a_{n}} x_{n}).
\]
\medskip

\begin{rem} Let \(\mathrm{Sch}_{\mathbbm{k}}\) denote the category of \(\mathbbm{k}\)-schemes. Formally, \(\mathcal{P}(a_{0}, a_{1}, \dotsc, a_{n})\) can be seen as a category \(\mathcal{P}\) with a projection functor \(p : \mathcal{P} \longrightarrow \mathrm{Sch}_{\mathbbm{k}}\), such that for any \(\mathbbm{k}\)-scheme \(X\), the full subcategory \(\mathcal{P}(X) := p^{-1}(X)\) is a groupoid, with objects given by diagrams
\[\
	\mathrm{Ob}\, \mathcal{P}(X)
	=
	\left\{
	\left.
	\begin{tikzcd}
		P \arrow[r, "u"] \arrow[d, "q"] & \mathbb{A}_{\mathbbm{k}}^{n+1} - \{0\} \\
		X 
		\end{tikzcd}
		\right|
		\;
		\;
		q \; \text{is a}\; \mathbb{G}_{m}\text{-torsor},
		\;
		u \; \text{is}\; \mathbb{G}_{m}\text{-equivariant}
	\right\}
\]

Morphisms in \(\mathcal{P}(X)\) are given by isomorphisms of torsors that are equivariant for the action of \(\mathbb{G}_{m}\). Given any arrow \(f : X \to Y\), there is also a pullback arrow \(f^{\ast} : \mathcal{P}(Y) \to \mathcal{P}(X)\), that can be defined naturally.
\medskip
\end{rem}

\begin{rem}
	It follows from \cite[Section 04UI]{stacks} that this definition gives a stack for the fppf topology. In particular, if we are given an fppf covering \(\{f_{i}: Y_{i} \to X\}\), as well as an elements \(P_{i} \in \mathrm{Ob}\, \mathcal{P}(Y_{i})\) and a {\em descent data} on \(\bigsqcup_{i,j} Y_{i} \times_{X} Y_{j}\), then there exists an object \(P \in \mathrm{Ob}\, \mathcal{P}(X)\) pulling back to the \(Y_{i}\).
\medskip

	Given a \(\mathbbm{k}\)-scheme \(X\), the \(2\)-Yoneda lemma \cite[Section 0GWH]{stacks} implies that \(\mathrm{Ob}\, \mathcal{P}(X)\) can be seen as the {\em groupoid of morphisms} from \(X\) (seen as a fppf stack) to \(\mathcal{P}\). From now on, we will rather use the notation \(\mathrm{Hom}(X, \mathcal{P})\) to denote this groupoid.
\end{rem}

\begin{rem} As we will recall below, \(\mathcal{P} := \mathcal{P}(a_{0}, a_{1}, \dotsc, a_{n})\) is an example of {\em algebraic stack}. As a defining property of these objects, the diagonal \(\mathcal{P} \longrightarrow \mathcal{P} \times \mathcal{P}\) is a {\em representable morphism}. This is essentially equivalent to the fact that for any pairs of morphisms \(f : X \to \mathcal{P}\) and \(g : Y \to \mathcal{P}\), the categorical fiber product
	\[
		X \times_{\mathcal{P}} Y
	\]
	is a stack underlied by some scheme. If \((P)\) is any property of morphisms of schemes that is invariant under base change, we can say that a morphism \( f: X \to \mathcal{P}\) (where \(X\) is a scheme) {\em satisfies \((P)\)} if for other such morphism \(g : Y \to \mathcal{P}\) scheme, the morphism of schemes
	\[
		X \times_{\mathcal{P}} Y
		\longrightarrow
		Y	
	\]
	satisfies \((P)\). In particular, there is a meaning for \(f\) to be {\em surjective, closed, open, \'{e}tale}, etc.
\end{rem}

\begin{rem}[Smooth atlas]
There is a natural morphism in \(\mathbb{A}_{\mathbbm{k}}^{n} - \{0\} \longrightarrow \mathcal{P}\), given by the trivial \(\mathbb{G}_{m}\)-torsor above \(\mathbb{A}_{\mathbbm{k}}^{n}\), sitting in the diagram
\[
\begin{tikzcd}
	(\mathbb{A}_{\mathbbm{k}}^{n+1} - \{0\}) \times \mathbb{G}_{m}
	\arrow[r, "u"] \arrow[d, "q"] & \mathbb{A}_{\mathbbm{k}}^{n+1} - \{0\} \\
	\mathbb{A}_{\mathbbm{k}}^{n+1} - \{0\}
\end{tikzcd}
\]
	where the morphism \(u\) is given by \(u(\lambda, x_{0}, x_{1} \dotsc, x_{n}) = (\lambda^{a_{0}} x_{0}, \lambda^{a_{1}} x_{1}, \dotsc, \lambda^{a_{n}} x_{n})\).
\medskip

	Given any scheme \(X\), and a morphism of stacks \(F : X \longrightarrow \mathcal{P}\) provided by a torsor \(P \in \mathrm{Hom}(X, \mathcal{P})\), it follows from the general definitions that we have a square diagram of stacks
	\[
		\begin{tikzcd}
			P \arrow[r, "u"] \arrow[d, "q"] \arrow[dr, phantom, "\square"] & \mathbb{A}^{n+1}_{\mathbbm{k}} - \{0\} \arrow[d, "v"]\\
			X \arrow[r, "F"]                                         & \mathcal{P}
		\end{tikzcd}
	\]

	In other words, we have \(X \times_{\mathcal{P}} (\mathbb{A}^{n+1}_{\mathbbm{k}} - \{0\}) \cong P\), and the projection map on \(X\) is a smooth, surjective morphism of schemes. The fact that this holds for any \(X\) is essentially the meaning behind the abstract statement that {\em the morphism of stacks \(v : \mathbb{A}^{n+1}_{\mathbbm{k}} - \{0\} \to \mathcal{P}\) is itself smooth and surjective}. This makes \(v\) a {\em smooth atlas} for \(\mathcal{P}\); the existence of such an atlas gives in turn a structure of {\em algebraic} (or {\em Artin}) stack to \(\mathcal{P}\).
\end{rem}

\subsection{The tautological line bundle}

The stack \(\mathcal{P} := \mathcal{P}(a_{0}, a_{1}, \dotsc, a_{n})\) supports a natural tautological line bundle \(\mathcal{O}(1)\). In other words, for any \(\mathbbm{k}\)-scheme \(X\), and any morphism \(f : X \longrightarrow \mathcal{P}\), there is an associated line bundle \(f^{\ast} \mathcal{O}(1)\) on \(X\), this data being functorial in \(X\) and \(f\).
\medskip

Actually, if \(f : X \to \mathcal{P}\) corresponds to a fppf torsor \(P \to X\) and a \(\mathbb{G}_{m}\)-equivariant morphism \(u : X \to \mathbb{A}_{\mathbbm{k}}^{n+1}-\{0\}\), then \(f^{\ast} \mathcal{O}(1)\) is the line bundle associated to \(P\) via Proposition~\ref{prop:torsorslines}.
\medskip

\subsection{The standard charts} \label{sec:standardcharts} In characteristic zero, the stack \(\mathcal{P} := \mathcal{P}(a_{0}, a_{1}, \dotsc, a_{n})\) is a Deligne-Mumford stack, i.e. there exists a \(\mathbbm{k}\)-scheme \(Y\) of finite type, and a surjective \'{e}tale morphism of stacks \(Y \to \mathcal{P}\): we can take \(Y\) to be the disjoint union of standard open coordinate charts. In positive characteristic \(p > 0\), we can do the same construction, but some inseparability phenomena will happen if \(p\) divides one of the multiplicities \(a_{0}, a_{1}, \dotsc, a_{n}\). Let us give a few details.
\medskip

For any \(i \in \llbracket 1, n\rrbracket\), let \(U_{i} := \mathbb{A}_{\mathbbm{k}}^{n}\), and define a morphism of stacks \(p_{i} : U_{i} \to \mathcal{P}\) by considering the diagram 
\begin{equation} \label{eq:standardchart}
		\begin{tikzcd}
			U_{i} \times \mathbb{G}_{m} 
				\arrow[r, "u_{i}"] 
				\arrow[d, "q_{i}"]  
		& \mathbb{A}^{n+1}_{\mathbbm{k}} - \{0\} \\
			U_{i}                                       
		& 
		\end{tikzcd}
\end{equation}
given by \(u_{i}(\lambda, x_{0}, \dotsc, \widehat{x}_{i}, \dotsc, x_{n}) = (\lambda^{a_{0}} x_{0}, \dotsc, \lambda^{a_{i}}, \dotsc, \lambda^{a_{n}} x_{n})\), where the hat symbol means that the coordinate has been omitted.
\medskip

The next lemma shows that the morphism \(p_{i} : \mathbb{A}_{\mathbbm{k}}^{n+1}-\{0\} \to \mathcal{P}\) is flat, finite, and open; we will see next that it is not \'{e}tale in general. Recall that for all \(a \in \mathbb{N}^{\ast}\), we denote by \(\mu_{a}\) the group subscheme of \(\mathbb{G}_{m}\) provided by the equation
\[
	\lambda^{a} - 1.
\]
It is not reduced if \(p\) divides \(a\).

\begin{lem} \label{lem:torsorproduct}
	Let \(X\) be a scheme, and let \(F : X \longrightarrow \mathcal{P}\) be a morphism of stacks, associated with a \(\mathbb{G}_{m}\)-torsor \(q : P \to X\) and a \(\mathbb{G}_{m}\)-equivariant morphism \(u : P \to \mathbb{A}_{\mathbbm{k}}^{n+1} - \{0\}\). Then the fiber product \(X \times_{\mathcal{P}} U_{i}\) is isomorphic to a scheme \(Q\), arising as a \(\mu_{a_{i}}\)-torsor \(Q \to U\), where \(U \subset X\) is some open subset of \(X\).
\end{lem}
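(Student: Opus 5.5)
The plan is to compute the fiber product directly from the definitions of morphisms into the quotient stack and to identify it with the locus in $P$ where the $i$-th coordinate equals one. The open subset will be $U := X \setminus V(u^{\ast}x_{i})$, meaning the image in $X$ of the open set $P_{i} := u^{-1}(\{x_{i} \neq 0\}) \subset P$. Since $u$ is $\mathbb{G}_{m}$-equivariant, the function $u^{\ast}x_{i}$ is a semi-invariant of weight $a_{i}$. Hence $P_{i}$ is $\mathbb{G}_{m}$-stable, and it is the preimage of an open subset $U \subset X$. The image is open because $q$ is fppf, hence open, and $P_{i} = q^{-1}(q(P_{i}))$ by stability.

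The key steps are as follows.

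First, I unwind the $2$-fiber product. For a test scheme $T$, an object of $X \times_{\mathcal{P}} U_{i}$ over $T$ is a triple $(g, h, \phi)$. Here $g : T \to X$ and $h : T \to U_{i}$ are morphisms, and $\phi$ is an isomorphism of $\mathbb{G}_{m}$-torsors $g^{\ast}P \cong T \times \mathbb{G}_{m}$ compatible with the equivariant maps to $\mathbb{A}^{n+1}_{\mathbbm{k}} - \{0\}$. Giving $\phi$ amounts to giving a section $s$ of $g^{\ast}P$. The compatibility with $u_{i}$ then says that $u(s) = (x_{0}, \dotsc, 1, \dotsc, x_{n})$, where $(x_{j})_{j \neq i} = h$. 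Thus $h$ is determined by $s$, and the only condition is $(u^{\ast}x_{i})(s) = 1$. This shows that the functor is represented by the closed subscheme
\[
Q := \{ u^{\ast}x_{i} = 1 \} \subset P_{i} \subset P,
\]
with $Q \to X$ given by $q$, which factors through $U$. Since all objects involved are schemes with trivial automorphisms, this groupoid is equivalent to a set, so $Q$ is honestly a scheme.

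Second, I show that $Q \to U$ is a $\mu_{a_{i}}$-torsor. The subgroup $\mu_{a_{i}} \subset \mathbb{G}_{m}$ preserves $Q$ because $u^{\ast}x_{i}$ has weight $a_{i}$. To check the fppf local triviality, I pass to an fppf cover $Y \to U$ trivializing $P$, so that $P_{Y} \cong Y \times \mathbb{G}_{m}$. In that trivialization, $u^{\ast}x_{i}$ becomes $f\lambda^{a_{i}}$ for some function $f \in \mathcal{O}(Y)$, and $f$ is invertible because we are over $U$. Then
\[
Q_{Y} = \{ \lambda^{a_{i}} = f^{-1} \} \subset Y \times \mathbb{G}_{m}.
\]
Next I refine further by the fppf cover $Y' := \{ t^{a_{i}} = f^{-1} \} \to Y$. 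This cover is finite and flat, being free of rank $a_{i}$ as a module over $\mathcal{O}_{Y}$, and it is surjective. On $Y'$ we have $Q_{Y'} \cong Y' \times \mu_{a_{i}}$ via $\lambda \mapsto \lambda t^{-1}$, equivariantly for $\mu_{a_{i}}$. This gives the torsor structure.

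The main obstacle I expect is in characteristic $p \mid a_{i}$. There the extraction of the $a_{i}$-th root is inseparable, so the cover cannot be taken étale. This is exactly why the statement is phrased in the fppf topology, and why the argument must use the flat cover $Y'$ rather than an étale one. One also has to check carefully that the $2$-fiber product yields no nontrivial automorphisms, so that it is a sheaf represented by $Q$. This follows because the isomorphism $\phi$ is rigidified by the equivariant map to $\mathbb{A}^{n+1}_{\mathbbm{k}} - \{0\}$ together with the fixed section.
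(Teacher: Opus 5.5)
Your proposal is correct and follows essentially the same route as the paper. In both, $U$ is the locus where $x_{i} \circ u$ does not vanish, and $Q = P \times_{\mathbb{A}^{n+1}_{\mathbbm{k}} - \{0\}} \{x_{i} = 1\}$ is shown to be a $\mu_{a_{i}}$-torsor by trivializing $P$ and adjoining an $a_{i}$-th root of $1/(x_{i} \circ u \circ e)$. Representability then comes from identifying the torsor isomorphism with a section of $g^{\ast}P$ that lands in $Q$. The only cosmetic difference is how you prove $U$ is open: you use the openness of the fppf map $q$, where the paper uses a Zariski-local trivialization of $P$.
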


\begin{proof}
	{\em Definition of the torsor.} First, let \(U \subset X\) be the subset
	\begin{equation} \label{eq:defU}
		U := \{ x \in X \;|\; u(P_{x}) \subset V_{i} \},
	\end{equation}	
	where \(V_{i} := \{x_{i} \neq 0\} \subset \mathbb{A}_{\mathbbm{k}}^{n+1} - \{0\}\), and where \(P_{x}\) denotes the fiber of \(q\) above \(x \in X\). Note that this set is open in \(X\). Indeed, if \(x \in U\), then Proposition~\ref{prop:torsorslines} allows to find Zariski open neighborhood \(V \subset X\) and a trivializing section \(e \in \Gamma(V, P)\). The equivariance of \(u\) implies that \(U\) contains the open neighborhood of \(x\) where \(x_{i} \circ u \circ e\) does not vanish.
	\smallskip

	Let \(H_{i} \subset \mathbb{A}^{n+1}_{\mathbbm{k}} - \{0\}\) be the affine hyperplane \(H_{i} := \{x_{i} = 1\}\). Let \(Q\) be the fiber product
	\[
		\begin{tikzcd}
			Q
				\arrow[r]
				\arrow[d, hook]
			&
			H_{i}
				\arrow[d, hook]
			\\
			P
				\arrow[r, "u"]
			&
			\mathbb{A}_{\mathbbm{k}}^{n+1} -\{0\}.
		\end{tikzcd}
	\]
	We claim that the composite map \(Q \to P \to X\) makes \(Q\) a \(\mu_{a_{i}}\)-torsor above \(U\) in the fppf topology. First, its image clearly lands in \(U\). Now, if \(V \subset U\) is a Zariski open subset where \(P\) admits a trivialization \(e\), then the morphism
	\[
		P|_{V} 
		\,
		\cong 
		\,
		\mathbb{G}_{m} \times V 
		\longrightarrow 
		\mathbb{A}_{\mathbbm{k}}^{n+1} - \{0\}
	\]
	identifies with the morphism \((\lambda, p) \longmapsto \lambda \cdot u(e(p))\). This shows that \(Q|_{V}\) is the scheme given by the equation
	\[
		\lambda^{a_{i}} = \frac{1}{x_{i} \circ u \circ e}.
	\]

	After a base change to the subscheme \(V' \subset V \times \mathbb{A}^{1}_{\mathbbm{k}}\) given by the equation \(\mu^{a_{i}} =  \frac{1}{x_{i} \circ u \circ e}\) (\(\mu\) being the coordinate on the second factor), we get a fppf morphism \(V' \to V\) such that \(Q \times_{V} V'\) is the subscheme of \(V' \times \mathbb{A}^{1}_{\mathbbm{k}}\) given by the equation
	\[
		(\frac{\lambda}{\mu})^{a_{i}} = 1.
	\]
	Taking \(\lambda' = \lambda/\mu\) as a trivializing section, this makes \(Q \times_{V} V'\) a trivial torsor above \(V'\). Finally, we can cover \(U\) by the images of such morphisms \(V' \to V\), and get an fppf cover of \(U\) trivializing \(Q\).
	\smallskip

\noindent
	{\em Representability.} Let \(Y\) be any \(\mathbbm{k}\)-scheme. We need to show that the groupoid of morphisms from \(Y\) to \(X \times_{\mathcal{P}} U_{i}\) admits an equivalence of categories to the set of morphisms \(\mathrm{Hom}(Y, Q)\) (seen as a discrete groupoid). Let \(Y \to X \times_{\mathcal{P}} U_{i}\) be a morphism of stacks. By definition of the stacky fiber product, this corresponds to the data of
	\begin{enumerate}[label=(\alph*)]
		\item morphisms \(f : Y \to X\) and \(g : Y \to U_{i}\) ;
		\item an isomorphism of torsors \(\varphi\) sitting in the following commutative diagram
			\[
				\begin{tikzcd}
					&
					f^{\ast} P 
						\arrow[d]
						\arrow[r]
						\arrow[dl, swap, "\varphi"]
						\arrow[dr, phantom, "\square"]
					&
					P
						\arrow[d, "q"]
						\arrow[dd, bend left, "u"]
					\\
					Y \times \mathbb{G}_{m}
						\arrow[r]
						\arrow[d, "g'"]
						\arrow[dr, phantom, "\square"]
					&
					Y
						\arrow[r, "f"]
						\arrow[d, "g"]
					&
					X \\
					U_{i} \times \mathbb{G}_{m}
						\arrow[r, "q_{i}"]
						\arrow[rr, bend right, "u_{i}"]
					&
					U_{i}
					& \mathbb{A}^{n+1}_{\mathbbm{k}} - \{0\}
				\end{tikzcd},
			\]
			where the squares indicate cartesian products.
	\end{enumerate}

	The isomorphism \(\varphi\) is actually equivalent to the data of a trivializing section \(s : Y \to f^{\ast} P\), identified by \(\varphi\) with the section \(\mathrm{id} \times \{1\} : Y \to Y \times \mathbb{G}_{m}\). The diagram shows that we have an inclusion of schemes \(u_{i} \circ g' (Y \times \{1\}) \subset H_{i}\). In other words, \(u \circ s(Y) \subset H_{i}\), and thus \(s\) lands in \(f^{\ast} Q\). Putting everything together, we see that the data of \(f, g\) and \(\varphi\) is equivalent to the data of a morphism \(s' : Y \to Q\) sitting in
	\[
		\begin{tikzcd}
			f^{\ast} Q
				\arrow[r]
				\arrow[d]
			& 
			Q 
				\arrow[d] \\
			Y
				\arrow[r, "f"]
				\arrow[u, "s", bend left]
				\arrow[ur, "s'", dashed]
			&
			X
		\end{tikzcd}
	\]
	Some bookkeeping permits to show that \(s'\) depends fonctorially in \(f, g, \varphi\), thus giving the required equivalence of categories between \(\mathrm{Hom}(Y, X \times_{\mathcal{P}} U_{i})\) and \(\mathrm{Hom}(Y, Q)\). We can check further that this equivalence is in turn functorial in \(Y\).
\end{proof}

\begin{rem} \label{rem:chart}
This proposition shows that for each \(i \in \llbracket 0, n \rrbracket\), the morphism \(U_{i} \to \mathcal{P}\) yields a surjective morphism onto an open substack \(\mathcal{P}_{i}\) of \(\mathcal{P}\). Moreover, if we let \(U := \bigsqcup_{i} U_{i}\) and \(p : U \to \mathcal{P}\) be the induced morphism of stacks, then \(p\) is faithfully flat, of finite presentation. It is not \'{e}tale unless the characteristic does not divide any of the \(a_{i}\). Proposition~\ref{prop:DMlocus} below will show that the locus where it is \'{e}tale is exactly the Deligne-Mumford open substack of \(\mathcal{P}\).
\end{rem}

\subsection{Isotropy groups of a geometric point} Let \(u : \mathrm{Spec}\, \mathbbm{K} \overset{u}{\longrightarrow} \mathcal{P}(a_{0}, a_{1}, \dotsc, a_{n})\) be a \(\mathbb{K}\)-geometric point. Since \(\mathbb{G}_{m}\)-torsors on \(\mathrm{Spec}\, \mathbb{K}\) are trivial, any such morphism corresponds to a diagram
\begin{equation} \label{eq:geometricpoint}
	\begin{tikzcd}
		\mathbb{G}_{m, \mathbb{K}}
			\arrow[r,"u"]
			\arrow[d]
		&
		\mathbb{A}_{\mathbbm{k}}^{n+1} - \{ 0 \}
		\\
		\mathrm{Spec}\, \mathbb{K}.
	\end{tikzcd}
\end{equation}

The {\em isotropy group} \(I(\mathcal{P}, u)\) is the \(\mathbb{K}\)-algebraic group representing the fiber product \(\mathrm{Spec}\, \mathbb{K} \times_{\mathcal{P}} \mathrm{Spec}\, \mathbb{K}\). In other words, if \(I\) denotes this isotropy group, for any \(\mathbbm{k}\)-algebra, there is an identification between \(I(R)\) and \(\mathrm{Hom}(\mathrm{Spec}\, R, \mathbb{K} \times_{\mathcal{P}} \mathbb{K})\), functorial in \(R\).

Now, for any \(\mathbb{K}\)-algebra \(R\), a morphism \(\Phi : \mathrm{Spec}\, R \to \mathrm{Spec}\, \mathbb{K} \times_{\mathcal{P}} \mathrm{Spec}\, \mathbb{K}\) is equivalent to the data of an isomorphism of torsors \(\varphi\) above \(\mathrm{Spec}\, R\) sitting in the diagram
\[
		\begin{tikzcd}
					&
					\mathbb{G}_{m, R} 
						\arrow[d]
						\arrow[r]
						\arrow[dl, swap, "\varphi"]
						\arrow[dr, phantom, "\square"]
					&
					\mathbb{G}_{m, \mathbb{K}}	
						\arrow[d, "q"]
						\arrow[dd, bend left=50, "u"]
					\\
					\mathbb{G}_{m, R}
						\arrow[r]
						\arrow[d]
						\arrow[dr, phantom, "\square"]
					&
					\mathrm{Spec}\, R	
						\arrow[r]
						\arrow[d]
					&
					\mathrm{Spec}\, \mathbb{K} \\
					\mathbb{G}_{m, \mathbb{K}}
						\arrow[r, "q"]
						\arrow[rr, bend right, "u"]
					&
					\mathrm{Spec}\, \mathbb{K}
					& \mathbb{A}^{n+1}_{\mathbbm{k}} - \{0\}
				\end{tikzcd},
\]
\medskip

If we write \(\mathbb{G}_{m, R} = \mathrm{Spec}\, R[\lambda, \lambda^{-1}]\), the morphism \(\varphi\) is equivalent to the data of an element \(\mu \in R\) such that \(\varphi^{\ast}(\lambda) = \mu \lambda\). Let us check what conditions \(\mu \in R\) must satisfy to define a morphism \(\varphi\) as above. 

The morphism \(u\) is dual to a morphism of \(\mathbbm{k}\)-algebras \(\psi : \mathbbm{k}[x_{0}, x_{1}, \dotsc, x_{n}] \to \mathbb{K}[\lambda, \lambda^{-1}]\). The \(\mathbb{G}_{m}\)-equivariance condition imposes that for each \(i \in \llbracket 0, n \rrbracket\):
\begin{equation} \label{eq:psixi}
	\psi(x_{i}) = c_{i} \lambda^{a_{i}}
\end{equation}
for some \(c_{i} \in \mathbb{K}\). Now, \(\mu \in R\) yields a morphism \(\Phi\) as above if and only if the following two dual diagrams commute:
\[
	\begin{tikzcd}
	\mathbb{G}_{m, R}
		\arrow[dr]
		\arrow[dd, "\varphi"]
	&
	\\
	&
	\mathbb{A}_{\mathbbm{k}}^{n+1}
	\\
	\mathbb{G}_{m, R}
		\arrow[ur]
	\end{tikzcd}
	,
	\quad
	\quad
	\begin{tikzcd}
	R[\lambda, \lambda^{-1}]	
	&
	\\
	&
	\mathbbm{k}[x_{0}, x_{1}, \dotsc, x_{n}]
		\arrow[ul, "x_{i} \mapsto c_{i} \lambda^{a_{i}}"]
		\arrow[dl, "x_{i} \mapsto c_{i} \lambda^{a_{i}}"]
	\\
	R[\lambda, \lambda^{-1}]	
		\arrow[uu, "\lambda \mapsto \mu \lambda"]
	\end{tikzcd}.
	\]

The diagram on the right commutes if and only if
\begin{equation} \label{eq:rootsunity}
	\mu^{a_{i}} = 1
	\quad
	\text{for all}
	\;
	i \in \llbracket 0, n \rrbracket
	\;
	\text{such that}
	\;
	c_{i} \neq 0.
\end{equation}
We see easily that this holds of \(\mu^{a} = 1\), where \(a\) is the gcd of all the \(a_{i}\) appearing in the equation above.
\smallskip

This discussion permits to describe the isotropy group of a \(\mathbb{K}\)-geometric point.

\begin{prop} \label{prop:isotropygroup}
	Let \(u : \mathrm{Spec}\, \mathbb{K} \to \mathcal{P}(a_{0}, a_{1}, \dotsc, a_{n})\) be a \(\mathbb{K}\)-geometric point, provided by a diagram as in \eqref{eq:geometricpoint}.	Let \(c_{0}\, \dotsc, c_{i} \in \mathbb{K}\) be the coordinates of the image of the composite morphism
	\begin{equation} \label{eq:composite}
	\mathrm{Spec}\, \mathbb{K}
	\times \{1\}
	\hookrightarrow
	\mathbb{G}_{m, \mathbb{K}}
	\overset{u}{\longrightarrow}
	\mathbb{A}_{\mathbbm{k}}^{n+1} - \{0\}.
	\end{equation}

	Let
	\[
		a
		=
		\mathrm{gcd}
		\left\{
			a_{i}
			\;
			|
			\;
			i \in \llbracket 0, n \rrbracket
			\;
			\text{such that}
			\;
			c_{i} \neq 0
		\right\}.
	\]
	Then the isotropy group of \(u\) is the \(\mathbb{K}\)-algebraic group
	\[
		\{\lambda^{a} = 1\}
	\subset
	\mathbb{G}_{m, \mathbb{K}}.
	\]
\end{prop}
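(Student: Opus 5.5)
The plan is to compute the functor of points of the isotropy group directly and identify it with the functor of points of \(\mu_{a,\mathbb{K}} = \{\lambda^{a} = 1\}\). The discussion just before the statement already reduces this to algebra. For a \(\mathbb{K}\)-algebra \(R\), an \(R\)-point of \(\mathrm{Spec}\,\mathbb{K}\times_{\mathcal{P}}\mathrm{Spec}\,\mathbb{K}\) is an automorphism \(\varphi\) of the trivial torsor \(\mathbb{G}_{m,R}\) that is compatible with the equivariant map \(u\) pulled back to \(R\).

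First, I will make precise that automorphisms of the trivial \(\mathbb{G}_{m}\)-torsor over \(\mathrm{Spec}\, R\) are exactly the translations \(\lambda \mapsto \mu\lambda\) with \(\mu \in R^{\times}\). This holds because a \(\mathbb{G}_{m}\)-equivariant map is determined by the image of the unit section. Note that \(\mu\) must be a unit, not merely an element of \(R\).

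Second, using \eqref{eq:psixi}, I will check when the triangle commutes. Since \(x_{i}\mapsto c_{i}\lambda^{a_{i}}\) is sent to \(c_{i}\mu^{a_{i}}\lambda^{a_{i}}\), commutation amounts to \(c_{i}(\mu^{a_{i}}-1)=0\) in \(R\) for all \(i\). Because \(c_{i}\in\mathbb{K}\) is either zero or invertible in the \(\mathbb{K}\)-algebra \(R\), this is exactly the system \eqref{eq:rootsunity}. Here \(c_{i}\) is the value of the composite \eqref{eq:composite}, which matches the notation of the statement.

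Third, I will show that the system \(\{\mu^{a_{i}}=1 : c_{i}\neq 0\}\) in \(R^{\times}\) is equivalent to \(\mu^{a}=1\). The index set is nonempty because \(u\) lands in \(\mathbb{A}^{n+1}-\{0\}\).

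\begin{itemize}
\item For one direction, each such \(a_{i}\) is a multiple of \(a\), so \(\mu^{a}=1\) implies \(\mu^{a_{i}}=1\).
\item For the converse, write \(a=\sum_i k_{i}a_{i}\) with \(k_{i}\in\mathbb{Z}\) by Bézout. Then \(\mu^{a}=\prod_i(\mu^{a_{i}})^{k_{i}}=1\). Negative exponents are allowed since \(\mu\) is a unit.
\end{itemize}

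This is the only spot needing care. Working with unit groups rather than polynomial ideals makes it clean in every characteristic, including the non-reduced case where \(p\mid a\).

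Finally, I will check that these bijections are functorial in \(R\) and compatible with the group structure. Composition of automorphisms corresponds to multiplication of the \(\mu\)'s. By Yoneda, this identifies the isotropy group scheme with the closed subgroup \(\{\lambda^{a}=1\}\subset\mathbb{G}_{m,\mathbb{K}}\).

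I expect the main obstacle to be bookkeeping rather than mathematics. One has to verify carefully that the \(2\)-fiber product data (two maps to \(\mathrm{Spec}\,\mathbb{K}\) together with a torsor isomorphism) really reduces to the single parameter \(\mu\). I will handle this as in the representability part of Lemma~\ref{lem:torsorproduct}, by fixing the trivializing section \(1\).
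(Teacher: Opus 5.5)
Your proposal is correct and follows the paper's own argument. The $R$-points of the isotropy group are the translations $\lambda\mapsto\mu\lambda$ of the trivial torsor that are compatible with $u$; compatibility reads $c_i\mu^{a_i}=c_i$, i.e.\ $\mu^{a_i}=1$ whenever $c_i\neq 0$, and this system is equivalent to $\mu^{a}=1$. Your insistence that $\mu\in R^{\times}$ (the paper writes $\mu\in R$) and your B\'ezout step with negative exponents simply make explicit what the paper calls ``easily seen''.
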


\begin{rem}
	In particular, this group is smooth if and only if \(p \nmid a_{i}\) for some \(i \in \llbracket 0, n \rrbracket\) for which \(c_{i} \neq 0\).
\end{rem}

\begin{proof}
	Denote by \(I\) the isotropy group of \(u\). The proposition means that for any \(\mathbbm{k}\)-algebra, the \(R\)-points of \(I\) are identified with the subset
	\[
	\{
		\mu \in R
		\;|\;
		\mu^{a} = 1
	\}
	\;
	\subset 
	\;
	\mathbb{G}_{m}(R) = R^{\times}.
	\]
	functorially in \(R\). This comes from the discussion above, remarking that if \(\psi\) is defined as in \eqref{eq:psixi}, then \(c_{i}\) is the image of the composite morphism
	\[
		\mathbbm{k}[x_{0}, x_{1}, \dotsc, x_{n}]
		\overset{x_{i} \mapsto c_{i} \lambda^{-1}}{\longrightarrow}
		\mathbb{K}[\lambda, \lambda^{-1}]
		\overset{\lambda \mapsto 1}{\longrightarrow}
		\mathbb{K}
	\]
	dual of \eqref{eq:composite}, and thus is the same element of \(\mathbb{K}\) as in the statement of the proposition. 
\end{proof}

\subsection{The Deligne-Mumford locus} \label{sec:DMlocus} The {\em Deligne-Mumford locus} of \(\mathcal{P} := \mathcal{P}(a_{0}, a_{1}, \dotsc, a_{n})\) is the largest open substack \(\mathcal{P}^{\circ}\) that is Deligne-Mumford, i.e. admits a surjective \'{e}tale morphism \(U \to \mathcal{P}^{\circ}\) from some scheme \(U\). This locus can be described as the complement of the weighted projective stack where we have kept only the \(a_{i}\) divisible by \(p\). In particular, \(\mathcal{P}\) itself is Deligne-Mumford if \(p = 0\).
\medskip

More precisely, order the \(a_{i}\) such that \(a_{0}, \dotsc, a_{r}\) are divisible by \(p\), and \(a_{r+1}, \dotsc, a_{n}\) are not (with \(r = -1\) if \(p = 0\)). Then, we have a diagram
\[
	\begin{tikzcd}
	\mathbb{A}^{r+1} - \{0\}
		\arrow[r, hook]
		\arrow[d]
	&
	\mathbb{A}^{n+1}-\{0\}
		\arrow[d]
	&
	W
		\arrow[l, hook]
		\arrow[d]
	\\
	\mathcal{P}(a_{0}, a_{1}, \dotsc, a_{r})
		\arrow[r, hook]
	&
	\mathcal{P}(a_{0}, a_{1}, \dotsc, a_{n})
	&
	\mathcal{Q} := 
	\left[
	\quotientd{W}{\mathbb{G}_{m}}
	\right]
		\arrow[l, hook]
	\end{tikzcd}
\]
where \(W\) is the open subscheme of \(\mathbb{A}^{n+1} - \{0\}\), complement of \(\mathbb{A}^{r+1}-\{0\}\), and the vertical maps denote the quotients by \(\mathbb{G}_{m}\), arriving in the corresponding quotient stack.
\smallskip

Then, we have the following.

\begin{prop} \label{prop:DMlocus}
	With the previous notation, we have \(\mathcal{P}^{\circ} = \mathcal{Q}\).
\end{prop}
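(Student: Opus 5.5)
The plan is to show two inclusions: \(\mathcal{Q}\) is Deligne-Mumford, and no open substack strictly larger than \(\mathcal{Q}\) is. Both rest on the chart description of Lemma~\ref{lem:torsorproduct} and the isotropy computation of Proposition~\ref{prop:isotropygroup}.

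\emph{Step 1: \(\mathcal{Q}\) is Deligne-Mumford.} The substack \(\mathcal{Q}\) is covered by the open substacks \(\mathcal{P}_{i}\) for \(i \in \llbracket r+1, n \rrbracket\), i.e.\ the images of the standard charts \(U_{i} \to \mathcal{P}\) restricted to \(\{x_{i} \neq 0\}\). Indeed, a point of \(W\) has some nonzero coordinate \(x_{i}\) with \(p \nmid a_{i}\). For such \(i\), Lemma~\ref{lem:torsorproduct} says that for any scheme \(X \to \mathcal{P}\), the base change \(X \times_{\mathcal{P}} U_{i} \to X\) is a \(\mu_{a_{i}}\)-torsor over an open subset of \(X\). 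Since \(p \nmid a_{i}\), the group \(\mu_{a_{i}}\) is finite étale over \(\mathbbm{k}\), so any fppf \(\mu_{a_{i}}\)-torsor is finite étale. Hence \(U_{i} \to \mathcal{P}\) is étale, and \(\bigsqcup_{i > r} U_{i} \to \mathcal{Q}\) is a surjective étale atlas. This shows \(\mathcal{Q} \subset \mathcal{P}^{\circ}\).

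\emph{Step 2: maximality.} Let \(\mathcal{U}\) be an open Deligne-Mumford substack of \(\mathcal{P}\). I would use the standard fact from the Stacks project that an algebraic stack with quasi-compact separated diagonal is Deligne-Mumford if and only if its diagonal is unramified. Equivalently, for stacks of finite type over a field, every geometric point has an étale (hence reduced) isotropy group scheme.

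Now take a geometric point \(u\) of \(\mathcal{P}\) lying in the closed complement \(\mathcal{P}(a_{0}, \dotsc, a_{r})\). Then every nonzero coordinate \(c_{i}\) has \(i \leq r\), so the integer \(a\) of Proposition~\ref{prop:isotropygroup} is divisible by \(p\). The isotropy group \(\{\lambda^{a} = 1\}\) is therefore non-reduced. So no such point lies in \(\mathcal{U}\), which gives \(\mathcal{U} \subset \mathcal{Q}\).

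\emph{Main obstacle.} The delicate point is Step 2: correctly invoking the characterization of Deligne-Mumford stacks via the isotropy groups or an unramified diagonal. One also has to check that the diagonal of \(\mathcal{P}\) satisfies the required hypotheses, namely that it is quasi-compact and separated. This holds because it is affine, as a quotient of a scheme by an affine group. If the citation proves awkward, I would argue directly instead. An étale atlas \(V \to \mathcal{U}\) forces the isotropy group at each geometric point to be a closed subgroup of the étale group scheme \(V \times_{\mathcal{U}} V\) over that point, hence étale. The non-reduced \(\mu_{a}\) cannot occur.
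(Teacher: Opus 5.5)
Your proposal is correct and follows the paper's proof essentially step for step. The paper also proceeds in two steps.
\begin{itemize}
\item It shows that \(\bigsqcup_{p \nmid a_{i}} U_{i} \to \mathcal{Q}\) is a surjective \'{e}tale atlas, using Lemma~\ref{lem:torsorproduct} together with the \'{e}taleness of \(\mu_{a}\) when \(p \nmid a\).
\item It then rules out any open Deligne-Mumford substack meeting \(\mathcal{P}(a_{0}, \dotsc, a_{r})\), because Proposition~\ref{prop:isotropygroup} gives non-reduced isotropy there while Deligne-Mumford stacks have smooth isotropy groups.
\end{itemize}
Your extra remarks on the hypotheses for the diagonal, and your fallback argument via an \'{e}tale atlas, are sound supplements rather than a different route.
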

\begin{proof}

	Let \(\mathcal{P} := \mathcal{P}(a_{0}, a_{1}, \dotsc, a_{n})\). With the notation of Section~\ref{sec:standardcharts}, we let 
	\[
		V := \bigsqcup_{p \nmid a_{i}} U_{i},
	\]
	and let \(p : V \longrightarrow \mathcal{P}(a_{0}, a_{1}, \dotsc, a_{n})\) be the induced morphism. We first check that \(p(V) = \mathcal{Q}\). In other words, for any scheme \(X\) and any morphism of stacks \(f : X \to \mathcal{Q}\), the natural morphism 
	\[
		V \times_{\mathcal{P}} X \to \mathcal{Q} \times_{\mathcal{P}} X
	\]
	is onto. This can be seen easily from Proposition~\ref{lem:torsorproduct}, using the fact that \(U = X\) in \eqref{eq:defU} in our situation. Now, since any morphism
	\[
		\mu_{a} \to \mathrm{Spec}\, \mathbbm{k}
	\]
	is \'{e}tale unless \(p\) divides \(a\), Lemma~\ref{lem:torsorproduct} again shows that \(p : V \longrightarrow \mathcal{Q}\) is \'{e}tale and surjective. This shows that \(\mathcal{Q}\) is Deligne-Mumford, and thus \(\mathcal{Q} \subset \mathcal{P}^{\circ}\).
	\medskip

	Conversely, we claim that \(\mathcal{P}\) is not Deligne-Mumford on any open substack intersecting \(\mathcal{P}(a_{0}, a_{1}, \dotsc, a_{r})\). To see this, it suffices to remark that by Proposition~\ref{prop:isotropygroup}, the isotropy group of any geometric point of \(\mathcal{P}(a_{0}, a_{1}, \dotsc, a_{r})\) is non-reduced. Since a Deligne-Mumford stack has only smooth isotropy groups,  this gives the result.
\end{proof}

\subsection{The coarse moduli space} We consider the following scheme 
\[
	\mathbb{P}(a_{0}, a_{1}, \dotsc, a_{n})
	:=
	\mathrm{Proj}_{\mathbbm{k}}\, \mathbbm{k}[X_{0}, X_{1}, \dotsc, X_{n}],
\]
where in the free algebra above, each indeterminate \(X_{i}\) is assigned the weight \(a_{i} \in \mathbb{N}\). This weighted projective space can also be seen as a categorical GIT quotient
\[
	\quotientGIT{\mathbb{A}^{n+1}_{\mathbbm{k}} - \{0\}}{\mathbb{G}_{m}}.
\]
In particular, there is a \(\mathbb{G}_{m}\)-invariant morphism of schemes \(q: \mathbb{A}_{\mathbbm{k}}^{n+1}-\{0\} \to \mathbb{P}(a_{0}, a_{1}, \dotsc, a_{n})\), and any other such morphism factors uniquely through \(q\). 
\smallskip

Recall that for a given stack \(\mathcal{X}\) in the fppf topology, we say that a scheme \(X\) is a {\em coarse moduli space} for \(\mathcal{X}\) if there is a surjective morphism of stacks \(p : \mathcal{X} \to X\) such that any other such morphism \(\mathcal{X} \to Y\) factors uniquely through \(p\).
\medskip

In the next two propositions, we write \(\mathcal{P} := \mathcal{P}(a_{0}, a_{1}, \dotsc, a_{n})\) and \(\mathbb{P} := \mathbb{P}(a_{0}, a_{1}, \dotsc, a_{n})\).

\begin{prop}  Then there exists a morphism of stacks \(p : \mathcal{P} \to \mathbb{P}\) that makes \(\mathbb{P}\) a coarse moduli space for \(\mathcal{P}\).
\end{prop}

\begin{proof}
	To construct \(p : \mathcal{P} \to \mathbb{P}\), it suffices to define, for each scheme \(X\), a functor \(\mathrm{Hom}(X, \mathcal{P}) \to \mathrm{Hom}(X, \mathbb{P})\) that is in turn functorial in \(X\). 	Consider such a scheme \(X\), and take an element \(\mathrm{Hom}(X, \mathcal{P})\) given by a torsor \(q : N \to X\) and an equivariant morphism \(u : N \to \mathbb{A}_{\mathbbm{k}}^{n+1} - \{0\}\). Let \((X_{i})_{i \in I}\) be an open affine covering such that each \(N|_{X_{i}}\) is trivial. Then by the standard properties of Proj schemes, we get induced morphisms \(X_{i} \to \mathbb{P}\). It follows from a routine check that these glue together to define a morphism \(X \to \mathbb{P}\). The other parts of the definition follow directly from the discussion at the beginning of the section.
\end{proof}

By \cite{dol82}, the variety \(\mathbb{P}\) supports natural sheaves \(\mathcal{O}_{\mathbb{P}}(m)\), whose sections correspond to weighted homogeneous polynomials in the \(X_{i}\). If \(m\) is divisible by \(\mathrm{lcm}(a_{0}, a_{1}, \dotsc, a_{n})\), then \(\mathcal{O}_{\mathbb{P}}(m)\) is a line bundle, admitting the trivialization
\[
	X_{i}^{\frac{m}{a_{i}}}
\]
on the chart \(\{X_{i} \neq 0\} \subset \mathbb{P}\).

\begin{prop}
	Let \(m \in \mathbb{N}\) be divisible by \(\lcm(a_{0}, a_{1}, \dotsc, a_{n})\). Then the tautological line bundle \(\mathcal{O}(m)\) on \(\mathcal{P}\) is the pullback from the line bundle \(\mathcal{O}_{\mathbb{P}}(m)\).
\end{prop}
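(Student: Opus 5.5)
To prove that \(\mathcal{O}(m)\) on \(\mathcal{P}\) is the pullback of \(\mathcal{O}_{\mathbb{P}}(m)\) along the coarse map \(p : \mathcal{P} \to \mathbb{P}\), I would work on the smooth atlas \(v : \mathbb{A}^{n+1}_{\mathbbm{k}} - \{0\} \to \mathcal{P}\) and compare the two line bundles together with their \(\mathbb{G}_{m}\)-linearizations. A line bundle on the quotient stack \(\mathcal{P}\) is the same thing as a \(\mathbb{G}_{m}\)-equivariant line bundle on \(\mathbb{A}^{n+1}_{\mathbbm{k}} - \{0\}\), by fppf descent along the smooth surjective atlas \(v\). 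So it suffices to produce a \(\mathbb{G}_{m}\)-equivariant isomorphism
\[
v^{\ast}\mathcal{O}(m) \cong (p\circ v)^{\ast}\mathcal{O}_{\mathbb{P}}(m) = q^{\ast}\mathcal{O}_{\mathbb{P}}(m),
\]
where \(q\) is the GIT quotient map.

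First I identify \(v^{\ast}\mathcal{O}(m)\). The atlas corresponds to the trivial torsor \((\mathbb{A}^{n+1}-\{0\}) \times \mathbb{G}_{m}\) with the equivariant map \(u\). By Proposition~\ref{prop:torsorslines}, \(v^{\ast}\mathcal{O}(1)\) is the trivial line bundle \(\mathcal{O}\cdot \lambda\), with \(\mathbb{G}_{m}\) acting on the generator through the character \(\lambda \mapsto \lambda\). Hence \(v^{\ast}\mathcal{O}(m)\) is the trivial bundle with linearization given by the character of weight \(m\). Equivalently, its equivariant sections are the weighted homogeneous functions of degree \(m\).

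Next I compute \(q^{\ast}\mathcal{O}_{\mathbb{P}}(m)\). On the chart \(\{X_{i}\neq 0\}\) it is trivialized by \(X_{i}^{m/a_{i}}\), where \(m/a_{i}\in\mathbb{N}\) by the divisibility assumption. The pullback along \(q\) on \(V_{i} = \{x_{i}\neq 0\}\) is therefore generated by the function \(x_{i}^{m/a_{i}}\), which has weight exactly \(m\). Transition functions are the degree-zero ratios \((X_{j}/X_{i})^{\dots}\). This lets me define a global equivariant map \(q^{\ast}\mathcal{O}_{\mathbb{P}}(m) \to v^{\ast}\mathcal{O}(m)\) sending a local section \(F\) (weighted homogeneous of degree \(m\)) to the function \(F(x)\) times the weight-\(m\) generator. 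On \(V_{i}\) it sends the generator \(x_{i}^{m/a_{i}}\) to an invertible function times the generator, so it is an isomorphism there. The \(V_{i}\) cover \(\mathbb{A}^{n+1}-\{0\}\), hence it is a global isomorphism.

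Finally, the isomorphism is \(\mathbb{G}_{m}\)-equivariant by homogeneity, so it descends to \(\mathcal{P}\). I expect the main obstacle to be making the descent and equivariance bookkeeping rigorous in the fppf setting. This is especially delicate in characteristic \(p\), where the isotropy groups \(\mu_{a}\) are non-reduced. The key point to verify is that the isotropy \(\mu_{a}\) of any point acts trivially on the fiber of \(\mathcal{O}(m)\), since \(a \mid m\). Otherwise \(\mathcal{O}(m)\) could not be pulled back from the coarse space at all. Here the divisibility of \(m\) by \(\lcm(a_{0},\dots,a_{n})\) is used twice: once to make \(X_{i}^{m/a_{i}}\) make sense, and once to kill this isotropy character.
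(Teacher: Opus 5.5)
Your argument is correct, but it takes a different route from the paper's.

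You reduce everything to the atlas \(v\colon \mathbb{A}^{n+1}_{\mathbbm{k}}-\{0\}\to\mathcal{P}\) and to the identification of line bundles on the quotient stack with \(\mathbb{G}_{m}\)-linearized line bundles upstairs. The statement then becomes a single check on one scheme. The map \(q^{\ast}\mathcal{O}_{\mathbb{P}}(m)\to\mathcal{O}\), \(F\mapsto F\), is an isomorphism because \(x_{i}^{m/a_{i}}\) is a unit on \(\{x_{i}\neq 0\}\). It carries the canonical linearization to the weight-\(m\) one, and \(p\circ v=q\) by construction of the coarse map.

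The paper instead works with an arbitrary test object, a torsor \(N\to X\) together with an equivariant map \(u\). Composing \(u\) with \(x_{i}\mapsto x_{i}^{m/a_{i}}\) gives, locally, the equivariant map \((x,\lambda^{m})\mapsto(g_{i}(x)^{m/a_{i}}\lambda^{m})_{i}\) out of \(N^{\otimes m}\), with all target weights equal to one. In effect, this factors \(f\colon X\to\mathbb{P}\) through the morphism \(\mathbb{P}\to\mathbb{P}^{n}\) given by the sections \(X_{i}^{m/a_{i}}\). The isomorphism \(N^{\otimes m}\cong f^{\ast}\mathcal{O}_{\mathbb{P}}(m)\) is then read off from the tautological bundle of ordinary projective space.

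The paper's route has naturality in \(X\) built in and never invokes the description of \(\mathrm{Pic}\) of a quotient stack. Yours is a more transparent computation in which the weights are visible throughout. Both rest on the same input: \(X_{i}^{m/a_{i}}\) is a weight-\(m\) generator of \(\mathcal{O}_{\mathbb{P}}(m)\) on \(D_{+}(X_{i})\).

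Your closing worry is unnecessary. The equivalence \(\mathrm{Pic}([U/\mathbb{G}_{m}])\simeq\mathrm{Pic}^{\mathbb{G}_{m}}(U)\) is fppf descent for a flat affine group scheme, so it holds in every characteristic and is insensitive to non-reduced stabilizers. Triviality of the \(\mu_{a}\)-action on the fibres of \(\mathcal{O}(m)\) follows from the equivariant isomorphism you built; it is not a separate input. So the divisibility by \(\mathrm{lcm}(a_{0},\dots,a_{n})\) is used only once, to make each \(X_{i}^{m/a_{i}}\) available.
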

\begin{proof}
	We have to show that for any \(\mathbb{G}_{m}\)-equivariant diagram 
	\[
		\begin{tikzcd}
			N
				\ar[r]
				\ar[d]
			&
			\mathbb{A}_{\mathbbm{k}}^{n+1} \setminus \{0\}
				\ar[d]
			\\
			X
				\ar[r, "f"]
			&
			\mathbb{P}
		\end{tikzcd},
	\]
	where the left vertical arrow is a \(\mathbb{G}_{m}\)-torsor, we have a natural isomorphism \(N^{\otimes m} \cong f^{\ast} \mathcal{O}_{\mathbb{P}}(m)\), where we see \(N\) as a line bundle on \(X\).
Now, if we consider the morphism
	\[
		\begin{tikzcd}[row sep = large, column sep=large]
			\mathbb{A}_{\mathbbm{k}}^{n+1} \setminus \{0\}
				\ar[dr]
				\ar[r, "{X_{i} \mapsto X_{i}^{m/a_{i}}}"]
			&
			\mathbb{A}_{\mathbbm{k}}^{n+1} \setminus \{0\}
				\ar[d, "q"]
			\\
			&
			\mathbb{P}
		\end{tikzcd},
	\]
	we see that the right vertical arrow identifies with the total space of \(\mathcal{O}(m)\). To conclude, remark that we now get a well defined diagram, where the top horizontal arrow is \(\mathbb{G}_{m}\)-equivariant:
\[
		\begin{tikzcd}
			N^{\otimes m}
				\ar[r]
				\ar[d]
			&
			\mathbb{A}_{\mathbbm{k}}^{n+1} \setminus \{0\}
				\ar[d, "q"]
			\\
			X
				\ar[r, "f"]
			&
			\mathbb{P}
		\end{tikzcd},
	\]
	This can be seen for example in a chart where \(N\) is trivial, the existence and equivariance of \(u\) can then be read from the diagram:
	\[
		\begin{tikzcd}
			N \cong X \times \mathbb{G}_{m}
				\ar[rrr] 
				\ar[ddd]
			&
			&
			&
			\mathbb{A}_{\mathbbm{k}}^{n+1} \setminus \{0\} 
				\ar[ddd, "X_{i} \mapsto X_{i}^{m/a_{i}}"]
			&
			\\
			[-0.5cm]
			&
			\hspace{-1cm}
			(x, \lambda)
				\ar[r, mapsto]
				\ar[d, mapsto]
			&
			(g_{0}(x) \lambda^{a_{0}}, \dotsc, g_{n}(x) \lambda^{a_{n}})
				\ar[d, mapsto]
			&
			\hspace{-1cm}
			\\
			&
			(x, \lambda^{m})
				\ar[r, mapsto]
			&
			(g_{0}(x)^{m/a_{0}} \lambda^{m}, \dotsc, g_{n}(x)^{m/a_{n}} \lambda^{m})
			\\
			[-0.5cm]
			N^{\otimes m} \cong X \times \mathbb{G}_{m}
				\ar[rrr, "u"]
			&
			&
			&
			\mathbb{A}_{\mathbbm{k}}^{n+1} \setminus \{0\}
		\end{tikzcd}
	\]

	The arrow \(u\) then realizes the requested isomorphism.
\end{proof}

\subsection{Higher direct images}

The purpose of this section is to recall a standard vanishing cohomology result for weighted projective varieties, proved in particular by Dolgachev \cite{dol82} under the assumption that the characteristic of \(p\) is prime to all weights.
\smallskip

Let \(a_{1}, \dotsc, a_{n} \in \mathbb{N}_{>0}\), and let \(\mathbb{P} := \mathbb{P}(a_{0}, \dotsc, a_{n})\), with homogeneous coordinates \((x_{0}, \dotsc, x_{n})\) (resp. \(\mathbb{P}^{n} := \mathbb{P}(1, \dotsc, 1)\), with homogeneous coordinates \((y_{1}, \dotsc, y_{n})\)). We may define a finite dominant morphism
\[
	p : \mathbb{P}^{n} \longrightarrow \mathbb{P}(a_{0}, \dotsc, a_{n}),
\]
dual to the following injective morphism of graded \(\mathbbm{k}\)-algebras
\[
	\begin{array}{cccc}
		i : & \mathbbm{k}[x_{0}, x_{1}, \dotsc, x_{n}] 
		&
	\longrightarrow
		&
	\mathbbm{k}[y_{0}, y_{1}, \dotsc, y_{n}]
		\\
		& 
		x_{j}
		&
	\longmapsto
		&
		y_{j}^{a_{j}}.
	\end{array}
\]

If \(m\) is divisible by \(\mathrm{lcm}(a_{0}, a_{1}, \dotsc, a_{n})\), we have \(p^{\ast} \mathcal{O}_{\mathbb{P}}(m) \cong \mathcal{O}_{\mathbb{P}^{n}}(m)\); the pullback of the trivializing section \(X_{j}^{m/a_{j}}\) on \(\{x_{j} \neq 0\}\) identifies with the trivializing section \(Y_{j}^{m}\) on \(\{y_{j} \neq 0\}\).
\medskip

The next statement may be proven more abstractly, but let us give a direct proof.

\begin{lem} The standard morphism of \(\mathcal{O}_{\mathbb{P}}\)-modules \(\mathcal{O}_{\mathbb{P}} \to p_{\ast} \mathcal{O}_{\mathbb{P}^{n}}\) admits a {\em trace morphism}, i.e. a section \(\mathrm{tr} : p_{\ast} \mathcal{O}_{\mathbb{P}^{n}} \to \mathcal{O}_{\mathbb{P}}\) .
\end{lem}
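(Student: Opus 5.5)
The plan is to construct the trace explicitly on the standard affine charts and then check that the local constructions glue. Over the chart \(D_{+}(x_{j}) = \{x_{j} \neq 0\} \subset \mathbb{P}\), the preimage \(p^{-1}(D_{+}(x_{j}))\) is \(D_{+}(y_{j}^{a_{j}}) = D_{+}(y_{j})\), so it is affine. The morphism \(p\) over this chart is therefore dual to the inclusion of degree-zero localizations
\[
	A_{j} := \mathbbm{k}[x_{0}, \dotsc, x_{n}]_{(x_{j})}
	\hookrightarrow
	B_{j} := \mathbbm{k}[y_{0}, \dotsc, y_{n}]_{(y_{j})}
	= \mathbbm{k}\left[\tfrac{y_{0}}{y_{j}}, \dotsc, \tfrac{y_{n}}{y_{j}}\right].
\]
Through \(i\), the ring \(A_{j}\) identifies with the span of the monomials \(y^{\beta}/y_{j}^{|\beta|}\) in \(B_{j}\) that arise from \(x\)-monomials of weighted degree zero in the localization. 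Concretely, these are the degree-zero Laurent monomials \(\prod_{l} y_{l}^{\beta_{l}}\) (with \(\beta_{l} \geq 0\) for \(l \neq j\)) such that \(a_{l} \mid \beta_{l}\) for all \(l \neq j\). Since the condition at index \(j\) is then forced, \(a_{j}\) also divides \(\beta_{j}\).

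The key step is to define \(\mathrm{tr}_{j} : B_{j} \to A_{j}\) as the \(\mathbbm{k}\)-linear projection that keeps every monomial satisfying all the divisibility conditions \(a_{l} \mid \beta_{l}\) and kills every other monomial. This avoids dividing by any degree. The true field-theoretic trace would involve the factor \(\prod a_{l}\), which may vanish in characteristic \(p\), so I deliberately use the "monomial projection" rather than the field trace. Three properties then need checking.

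First, \(\mathrm{tr}_{j}\) restricts to the identity on \(A_{j}\); this is immediate.

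Second, \(\mathrm{tr}_{j}\) is \(A_{j}\)-linear. Multiplying by a monomial of \(A_{j}\) shifts each exponent \(\beta_{l}\) by a multiple of \(a_{l}\), so it preserves the set of admissible monomials and also preserves its complement. The monomials form a \(\mathbbm{k}\)-basis, and this is what makes the argument work.

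Third, the maps glue on the overlaps \(D_{+}(x_{j}x_{k})\). There, both projections extend to the localization \(B_{jk}\), and on \(B_{jk}\) both are the same monomial projection defined by the conditions \(a_{l} \mid \beta_{l}\) for every \(l\). The only change is that the exponents of \(y_{j}\) and \(y_{k}\) are now allowed to be negative, and the rule is compatible with inverting \(y_{k}^{a_{k}}\), because that multiplication is again a shift by a multiple of \(a_{k}\).

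The main obstacle is the description of the images and the overlap check: I have to confirm that the \(\mathbbm{k}\)-basis of \(B_{jk}\) by Laurent monomials of total degree zero is compatible with both localizations, and that admissibility is truly intrinsic, meaning it is independent of the chart once all the \(a_{l}\)-divisibility conditions are imposed simultaneously. Once that is verified, the \(\mathrm{tr}_{j}\) glue to an \(\mathcal{O}_{\mathbb{P}}\)-linear map \(\mathrm{tr} : p_{\ast}\mathcal{O}_{\mathbb{P}^{n}} \to \mathcal{O}_{\mathbb{P}}\) with \(\mathrm{tr} \circ (\mathcal{O}_{\mathbb{P}} \to p_{\ast}\mathcal{O}_{\mathbb{P}^{n}}) = \mathrm{id}\). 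An equivalent global description, which I would mention as a sanity check, is that \(\mathrm{tr}\) is the degree-zero part of the graded \(\mathbbm{k}[x]\)-linear projection of \(\mathbbm{k}[y]\) onto its subalgebra \(i(\mathbbm{k}[x])\), which is a direct summand as a module over \(\mathbbm{k}[x]\).
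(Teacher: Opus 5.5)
Your argument is essentially the paper's. The paper writes each \(P\in\mathbbm{k}[y_0,\dots,y_n]\) uniquely as \(\sum_{l}Q_l(y_0^{a_0},\dots,y_n^{a_n})\,y_0^{l_0}\cdots y_n^{l_n}\) with \(0\le l_j<a_j\) and sets \(s(P)=Q_0(x_0,\dots,x_n)\). This is exactly your monomial projection onto exponents divisible by the weights, but done once on the graded \(\mathbbm{k}[x_0,\dots,x_n]\)-module \(\mathbbm{k}[y_0,\dots,y_n]\), whose associated sheaf is \(p_*\mathcal{O}_{\mathbb{P}^n}\). Your \(\mathrm{tr}_j\) is the degree-zero localization of \(s\) at \(x_j\), so in the paper's formulation the overlap check you call the main obstacle comes for free. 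Like yours, this avoids the field trace and its factor \(\prod_l a_l\).

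One claim in your description of \(i(A_j)\) is false: the condition \(a_j\mid\beta_j\) is not forced by the conditions at \(l\neq j\) together with total degree zero. Take \(\mathbb{P}(1,2)\) and \(j=1\). Then \(A_1=\mathbbm{k}[x_0^2/x_1]\), and its image in \(B_1=\mathbbm{k}[y_0/y_1]\) is \(\mathbbm{k}[(y_0/y_1)^2]\). Yet \(y_0/y_1\) satisfies \(a_0\mid\beta_0\) and is not in that image.

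The correct statement is that \(i(A_j)\) is spanned by the degree-zero Laurent monomials with \(\beta_l\ge 0\) for \(l\neq j\) and \(a_l\mid\beta_l\) for every \(l\), including \(l=j\). In one direction, an image \(i(x^\alpha/x_j^N)\) has \(\beta_j=a_j(\alpha_j-N)\). Conversely, such a monomial lifts to some \(x^\alpha/x_j^N\) once \(N\) is chosen large enough that \(\alpha_j\geq 0\). Your \(\mathrm{tr}_j\) already imposes all the conditions, so the construction and your three checks go through unchanged once this description is corrected.
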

\begin{proof}
	It suffices to show that \(i\), seen as a morphism of \(\mathbbm{k}[x_{0}, x_{1}, \dotsc, x_{n}]\)-modules, has a left inverse \(s\). Let \(P \in \mathbbm{k}[y_{0}, y_{1}, \dotsc, y_{n}]\). Then \(P\) can be written uniquely as
	\[
		P = 
		\sum_{l \in \llbracket 0, a_{0} - 1 \rrbracket \times \dotsc \llbracket 0, a_{n}-1\rrbracket}
		\,
		Q_{l}(y_{0}^{a_{0}}, \dotsc, y_{n}^{a_{n}}) y_{0}^{l_{0}} \dotsc y_{n}^{l_{n}},
	\]
	where the \(Q_{l}\) are polynomials. Then we may let \(s(Q) = Q_{0}(x_{0}, \dotsc, x_{n})\). It is straightforward to check that this gives a morphism of \(\mathbbm{k}[x_{0}, x_{1}, \dotsc, x_{n}]\)-modules, and that \(s \circ i\) is the identity.
\end{proof}

\begin{prop} \label{prop:vanishingweighted}
	For any \(m > 0\) divisible by \(\mathrm{lcm}(a_{0}, \dotsc, a_{n})\), and any \(j > 0\), we have
	\[
		H^{j}(\mathbb{P}, \mathcal{O}_{\mathbb{P}}(m)) = 0.
	\]
\end{prop}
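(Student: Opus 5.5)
We use the finite morphism \(p : \mathbb{P}^{n} \to \mathbb{P}\) and the trace morphism from the preceding lemma to reduce the vanishing on \(\mathbb{P}\) to the classical vanishing on \(\mathbb{P}^{n}\).

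Fix \(m>0\) divisible by \(\mathrm{lcm}(a_{0}, \dotsc, a_{n})\). Then \(\mathcal{O}_{\mathbb{P}}(m)\) is a line bundle with \(p^{\ast}\mathcal{O}_{\mathbb{P}}(m) \cong \mathcal{O}_{\mathbb{P}^{n}}(m)\).

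First, since \(p\) is finite, hence affine, we have \(R^{j}p_{\ast}\mathcal{F} = 0\) for every quasi-coherent \(\mathcal{F}\) and every \(j>0\). The projection formula then gives
\[
H^{j}(\mathbb{P}^{n}, \mathcal{O}_{\mathbb{P}^{n}}(m)) \cong H^{j}(\mathbb{P}, p_{\ast}\mathcal{O}_{\mathbb{P}^{n}} \otimes \mathcal{O}_{\mathbb{P}}(m)).
\]

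Second, tensor the inclusion \(\mathcal{O}_{\mathbb{P}} \to p_{\ast}\mathcal{O}_{\mathbb{P}^{n}}\) and its section \(\mathrm{tr}\) with the line bundle \(\mathcal{O}_{\mathbb{P}}(m)\). The identity \(\mathrm{tr}\circ i = \mathrm{id}\) survives tensoring. Hence \(\mathcal{O}_{\mathbb{P}}(m)\) is a direct summand of \(p_{\ast}\mathcal{O}_{\mathbb{P}^{n}} \otimes \mathcal{O}_{\mathbb{P}}(m)\) as \(\mathcal{O}_{\mathbb{P}}\)-modules. Taking cohomology, \(H^{j}(\mathbb{P}, \mathcal{O}_{\mathbb{P}}(m))\) injects as a direct summand into \(H^{j}(\mathbb{P}^{n}, \mathcal{O}_{\mathbb{P}^{n}}(m))\).

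Third, the latter group vanishes for \(j>0\) and \(m>0\) by Serre's classical computation of the cohomology of projective space, valid over any field. This gives the claim.

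The one point needing care is that the section \(\mathrm{tr}\) of the lemma is a morphism of sheaves of \(\mathcal{O}_{\mathbb{P}}\)-modules, and not only of graded modules over the polynomial ring. The map \(s\) built in the lemma is homogeneous of degree \(0\): it keeps the component \(Q_{0}\), and this decomposition respects degrees. So \(s\) is a morphism of graded \(\mathbbm{k}[x_{0}, \dotsc, x_{n}]\)-modules, and it sheafifies via the \(\widetilde{(\cdot)}\) construction on \(\mathrm{Proj}\) to a morphism \(p_{\ast}\mathcal{O}_{\mathbb{P}^{n}} \to \mathcal{O}_{\mathbb{P}}\). Here we use that \(p_{\ast}\mathcal{O}_{\mathbb{P}^{n}}\) is the sheaf associated to \(\mathbbm{k}[y_{0}, \dotsc, y_{n}]\) viewed as a graded module over \(\mathbbm{k}[x_{0}, \dotsc, x_{n}]\).

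Equivalently, one can check directly on the charts \(\{x_{j}\neq 0\}\) that \(s\) is compatible with the localizations. I expect this verification to be routine.
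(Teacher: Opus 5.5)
Your proposal is correct and follows essentially the same route as the paper: acyclicity of the finite map \(p\) gives \(H^{j}(\mathbb{P}, p_{\ast}\mathcal{O}_{\mathbb{P}^{n}}(m)) \cong H^{j}(\mathbb{P}^{n}, \mathcal{O}_{\mathbb{P}^{n}}(m)) = 0\), and the trace splitting makes \(\mathcal{O}_{\mathbb{P}}(m)\) a direct summand of \(p_{\ast}\mathcal{O}_{\mathbb{P}^{n}}(m)\), so its higher cohomology vanishes. Your extra check, that the trace is degree-preserving and \(\mathbbm{k}[x_{0}, \dotsc, x_{n}]\)-linear and therefore sheafifies to an \(\mathcal{O}_{\mathbb{P}}\)-linear section, makes explicit a detail the paper leaves implicit.
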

\begin{proof}
	Since the morphism \(p\) is finite hence acyclic, and since we have the classical vanishing \(H^{j}(\mathbb{P}^{n}, \mathcal{O}_{\mathbb{P}^{n}}(m)) = 0\), we deduce that
	\begin{equation} \label{eq:vanishingPn}
		H^{j}(\mathbb{P}, p_{\ast} \mathcal{O}_{\mathbb{P}^{n}}(m)) = 0.
	\end{equation}
	However, by the previous lemma, we may write \(p_{\ast} \mathcal{O}_{\mathbb{P}^{n}}(m) = \mathcal{O}_{\mathbb{P}}(m) \oplus \mathcal{F}\) for some coherent sheaf \(\mathcal{F}\) on \(\mathbb{P}\). This shows that \(H^{j}(\mathbb{P}, \mathcal{O}_{\mathbb{P}}(m))\) is a direct factor of \eqref{eq:vanishingPn}, and hence vanishes.
\end{proof}

Let us finally state a relative version of the previous result, with the language of weighted projective spaces.

\begin{prop} \label{prop:vanishingrel}
	Let \(X\) be a variety over \(\mathbbm{k}\), and let \(\mathbf{E} := E_{1}^{(a_{1})} \oplus \dotsc \oplus E_{n}^{(a_{n})}\) be a weighted direct sum of vector bundles over \(X\). Let \(p : P \to X\) be the coarse moduli space for the associated weighted projective stack. For \(m\) divisible by \(\mathrm{lcm}(a_{1}, \dotsc, a_{n})\), and any \(j > 0\), we have the vanishing
	\[
		R^{j}p_{\ast} \mathcal{O}_{P}(m) = 0,
	\]
	where \(\mathcal{O}_{P}(m)\) denotes the \(m\)-th power of the tautological bundle on \(P\).
\end{prop}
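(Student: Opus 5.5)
The plan is to reduce to the absolute statement of Proposition~\ref{prop:vanishingweighted} by working locally on \(X\), using that higher direct images of a quasi-coherent sheaf under a separated quasi-compact morphism can be computed affine-locally on the base. Since \(R^{j}p_{\ast}\mathcal{O}_{P}(m)\) is quasi-coherent, it suffices to show that \(H^{j}(p^{-1}(V), \mathcal{O}_{P}(m)) = 0\) for every affine open \(V = \mathrm{Spec}\, A \subset X\) on which all the \(E_{i}\) are trivial. On such \(V\), \(p^{-1}(V)\) is the relative weighted projective space \(\mathrm{Proj}_{A}\, A[x_{0}, \dotsc, x_{N}]\), where the variables are the coordinates of the trivialized \(E_{i}\) (or of their duals, according to the convention), and the variable attached to \(E_{i}\) carries weight \(a_{i}\). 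This is \(\mathbb{P}(b_{0}, \dotsc, b_{N}) \times_{\mathbbm{k}} V\), with the \(b_{k}\) the \(a_{i}\) repeated \(\mathrm{rk}\, E_{i}\) times, and \(\mathcal{O}_{P}(m)\) is the pullback of \(\mathcal{O}_{\mathbb{P}}(m)\). Because \(\mathrm{lcm}\) of the \(b_{k}\) equals \(\mathrm{lcm}(a_{1}, \dotsc, a_{n})\), \(\mathcal{O}_{\mathbb{P}}(m)\) is indeed a line bundle for such \(m\).

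Next I would apply flat base change along \(V \to \mathrm{Spec}\, \mathbbm{k}\), which is flat since \(\mathbbm{k}\) is a field, to obtain
\[
	H^{j}(\mathbb{P} \times V, \mathrm{pr}_{1}^{\ast}\mathcal{O}_{\mathbb{P}}(m)) \cong H^{j}(\mathbb{P}, \mathcal{O}_{\mathbb{P}}(m)) \otimes_{\mathbbm{k}} A .
\]
The right-hand side vanishes for \(j>0\) by Proposition~\ref{prop:vanishingweighted}. This proves the vanishing on each trivializing affine, and hence \(R^{j}p_{\ast}\mathcal{O}_{P}(m)=0\).

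An alternative that avoids base change runs the trace argument over \(A\) directly. The same formula \(s\) for the left inverse of \(x_{j} \mapsto y_{j}^{b_{j}}\) works verbatim over any base ring. The finite morphism \(\mathbb{P}^{N}_{A} \to \mathbb{P}_{A}\) then exhibits \(\mathcal{O}_{\mathbb{P}_{A}}(m)\) as a direct summand of the pushforward of \(\mathcal{O}_{\mathbb{P}^{N}_{A}}(m)\), whose higher cohomology vanishes by Serre's computation over any ring.

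The main point needing care is the identification of the local model, namely that the coarse moduli space of the weighted projective stack restricted to \(V\) really is \(\mathrm{Proj}_{A}\) of the weighted polynomial ring, compatibly with \(\mathcal{O}_{P}(m)\). I would take this from the construction of the coarse space as the relative \(\mathbf{Proj}\) of \(S^{\bullet}\mathbf{E}\) (or its dual), which commutes with restriction to opens. The weights are respected because the grading of \(S^{\bullet}\mathbf{E}\) assigns weight \(a_{i}\) to sections of \(E_{i}\).
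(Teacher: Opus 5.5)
Your argument is correct, but it takes a different route from the paper's.

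The paper argues fiberwise. Local triviality of \(p : P \to X\) makes \(\mathcal{O}_{P}(m)\) flat over \(X\). Each fiber is a weighted projective space, on which Proposition~\ref{prop:vanishingweighted} kills \(H^{j}\) for \(j > 0\). The base change results of \cite[III.12]{har77} then give \(R^{j}p_{\ast}\mathcal{O}_{P}(m) \otimes \kappa(x) = 0\) at every closed point \(x\), and the vanishing follows by Nakayama.

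You instead compute \(R^{j}p_{\ast}\) directly on trivializing affines \(V = \mathrm{Spec}\, A\), where \(p^{-1}(V) \cong \mathbb{P}(b_{0}, \dotsc, b_{N}) \times V\). There you either use flat base change along \(V \to \mathrm{Spec}\,\mathbbm{k}\) or rerun the trace splitting over \(A\). Both proofs rest on the same identification of the local model, which you rightly single out as the one point needing justification.

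Your version is more elementary and slightly more general. It needs no base change theorem, only exactness of \(- \otimes_{\mathbbm{k}} A\) on \v{C}ech complexes, so it uses neither projectivity of \(p\) nor integrality of \(X\). It also gives the stronger statement \(H^{j}(p^{-1}(V), \mathcal{O}_{P}(m)) = 0\) on every trivializing affine. The paper's version is shorter once the base change machinery is taken for granted.
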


\begin{proof}
	It suffices to show that for any closed point \(x\) in \(X\), we have
	\[
		R^{j} p_{\ast} \mathcal{O}_{P}(m) \otimes \kappa(x) = 0,
	\]
	where \(\kappa(x)\) is the residue field at \(x\). Since \(p : P \to X\) is locally trivial, the sheaf \(\mathcal{O}_{\mathbb{P}}\) is flat over \(X\), and the previous vanishing can be proved fiberwise (see e.g. \cite[Corollary 12.9]{har77}). We are then lead back to Proposition~\ref{prop:vanishingweighted}.
\end{proof}

\backmatter
\bibliographystyle{amsalpha}
\bibliography{biblio.bib}

\providecommand{\bysame}{\leavevmode\hbox to3em{\hrulefill}\thinspace}
\providecommand{\MR}{\relax\ifhmode\unskip\space\fi MR }
\providecommand{\MRhref}[2]{%
  \href{http://www.ams.org/mathscinet-getitem?mr=#1}{#2}
}
\providecommand{\href}[2]{#2}
\begin{thebibliography}{CDDR24}

\bibitem[Ang96]{ang96}
F.~Angelini, \emph{{An algebraic version of Demailly's asymptotic Morse
  inequalities}}, Proceedings of the American Mathematical Society \textbf{124}
  (1996), no.~11, 3265--3269.

\bibitem[BD19]{BD19}
D.~Brotbek and Y.~Deng, \emph{Kobayashi hyperbolicity of the complements of
  general hypersurfaces of high degree}, Geometric and Functional Analysis
  \textbf{29} (2019), no.~3, 690--750.

\bibitem[B{\'e}r15]{ber15}
G.~B{\'e}rczi, \emph{{Thom polynomials of Morin singularities and the
  Green-Griffiths-Lang conjecture}}, arXiv:1011.4710v2 (revised 2015), 2015.

\bibitem[BG71]{BG71}
S.~Bloch and D.~Gieseker, \emph{{The positivity of the Chern classes of an
  ample vector bundle}}, Inventiones mathematicae \textbf{12} (1971), no.~2,
  112--117.

\bibitem[BK24]{BK19}
G.~B{\'e}rczi and F.~Kirwan, \emph{Non-reductive geometric invariant theory and
  hyperbolicity}, Inventiones mathematicae \textbf{235} (2024), no.~1, 81--127.

\bibitem[Bog77]{bog77}
F.~Bogomolov, \emph{Families of curves on a surface of general type}, Doklady -
  Akademiya Nauk SSSR, Earth Science Sections \textbf{236} (1977), no.~5,
  1041--1044.

\bibitem[Bon98]{bonavero98}
L.~Bonavero, \emph{In\'egalit\'es de {Morse} holomorphes singuli\`eres}, J.
  Geom. Anal. \textbf{8} (1998), 409--425.

\bibitem[Bro17]{bro17}
D.~Brotbek, \emph{On the hyperbolicity of general hypersurfaces}, Publications
  math{\'e}matiques de l'IH{\'E}S \textbf{126} (2017), no.~1, 1--34.

\bibitem[Cad18]{cad_thesis}
B.~Cadorel, \emph{Hyperbolicité complexe et quotients de domaines symétriques
  bornés ({PhD} thesis)}, 2018.

\bibitem[Cad20]{cad17}
\bysame, \emph{Jet differentials on toroidal compactifications of ball
  quotients}, Annales de l'Institut Fourier \textbf{70} (2020), no.~6,
  2331--2359.

\bibitem[Cad24]{Cad24}
\bysame, \emph{Hyperbolicity of generic hypersurfaces of polynomial degree via
  {G}reen-{G}riffiths jet differentials}, arXiv:2406.19003, 2024.

\bibitem[CDDR24]{CDDR24}
F.~Campana, L.~Darondeau, J.-P. Demailly, and E.~Rousseau, \emph{On the
  existence of logarithmic and orbifold jet differentials}, Annales Henri
  Lebesgue \textbf{7} (2024), 1--67 (en). \MR{4765352}

\bibitem[CDR20]{CDR18}
F.~Campana, L.~Darondeau, and E.~Rousseau, \emph{Orbifold hyperbolicity},
  Compos. Math. \textbf{156} (2020), no.~8, 1664--1698 (English).

\bibitem[Dar15]{Dar15}
L.~Darondeau, \emph{On the logarithmic {G}reen–{G}riffiths conjecture},
  International Mathematics Research Notices \textbf{2016} (2015), no.~6,
  1871--1923.

\bibitem[Deb01]{Deb01}
O.~Debarre, \emph{Higher-dimensional algebraic geometry}, Universitext,
  Springer New York, 2001.

\bibitem[DEL00]{DEL00}
J.-P. Demailly, L.~Ein, and R.~Lazarsfeld, \emph{A subadditivity property of
  multiplier ideals.}, Michigan Math. J. \textbf{48} (2000), no.~1, 137--156.

\bibitem[Dem85]{dem85}
J.-P. Demailly, \emph{Champs magnétiques et inégalités de {M}orse pour la
  {$d''$}-cohomologie}, Ann. Inst. Fourier (Grenoble) \textbf{35} (1985),
  189--229.

\bibitem[Dem95]{dem95}
J.-P. Demailly, \emph{{Propriétés de semi-continuité de la cohomologie et de
  la dimension de Kodaira-Iitaka}}, Comptes Rendus de l'Académie des
  Sciences-S{é}rie I-Math{é}matique \textbf{320} (1995), no.~3, 341--346.

\bibitem[Dem96]{dem96}
J.-P. Demailly, \emph{{$L^2$ vanishing theorems for positive line bundles and
  adjunction theory}}, Lect. Notes in Math. \textbf{1464} (1996), 1–97.

\bibitem[Dem97]{dem97}
\bysame, \emph{Vari{é}t{é}s hyperboliques et {é}quations diff{é}rentielles
  alg{é}briques}, Gaz. Math. (1997), no.~73, 3--23.

\bibitem[Dem11]{dem11}
\bysame, \emph{Holomorphic {M}orse inequalities and the
  {G}reen-{G}riffiths-{L}ang conjecture}, Pure Appl. Math. Q. \textbf{7}
  (2011), no.~4, Special Issue: In memory of Eckart Viehweg, 1165--1207.

\bibitem[Dem12]{dem12a}
\bysame, \emph{Hyperbolic algebraic varieties and holomorphic differential
  equations}, Acta Math. Vietnam. \textbf{37} (2012), no.~4, 441--512.

\bibitem[Dem18]{dem18}
\bysame, \emph{{Recent results on the Kobayashi and Green-Griffiths-Lang
  conjectures}}, expanded version of talks given at the 16th Takagi Lectures in
  Tokyo, 2018.

\bibitem[Den16]{deng16}
Y.~Deng, \emph{Effectivity in the hyperbolicity related problems}, {Chap. 4 of
  the PhD memoir “Generalized Okounkov Bodies, Hyperbolicity Related and
  Direct Image Problems”, arXiv:1606.03831}, 2016.

\bibitem[Div08]{div08}
S.~Diverio, \emph{Differential equations on complex projective hypersurfaces of
  low dimension}, Compos. Math. \textbf{144} (2008), no.~4, 920--932.

\bibitem[Div09]{div09}
\bysame, \emph{Existence of global invariant jet differentials on projective
  hypersurfaces of high degree}, Mathematische Annalen \textbf{344} (2009),
  no.~2, 293--315.

\bibitem[DMR10]{DMR10}
S.~Diverio, J.~Merker, and E.~Rousseau, \emph{Effective algebraic degeneracy},
  Inventiones mathematicae \textbf{180} (2010), no.~1, 161--223.

\bibitem[Dol82]{dol82}
I.~Dolgachev, \emph{Weighted projective varieties}, Group Actions and Vector
  Fields (1982), 34--71.

\bibitem[DR15]{DR15}
S.~Diverio and E.~Rousseau, \emph{The exceptional set and the
  {Green}-{Griffiths} locus do not always coincide}, Enseign. Math. (2)
  \textbf{61} (2015), no.~3-4, 417--452 (English).

\bibitem[Eis95]{Eis95}
D.~Eisenbud, \emph{Commutative algebra. {With} a view toward algebraic
  geometry}, Grad. Texts Math., vol. 150, Berlin: Springer-Verlag, 1995
  (English).

\bibitem[Fuj94]{fuj94}
T.~Fujita, \emph{{Approximating Zariski decomposition of big line bundles}},
  Kodai Math. J. \textbf{17} (1994), no.~1, 1--3.

\bibitem[Ful98]{ful98}
W.~Fulton, \emph{Intersection theory.}, 2nd ed. ed., Ergeb. Math. Grenzgeb., 3.
  Folge, vol.~2, Berlin: Springer, 1998 (English).

\bibitem[GG80]{GG80}
M.~Green and P.~Griffiths, \emph{Two applications of algebraic geometry to
  entire holomorphic mappings}, The Chern Symposium 1979, Proc. Internal.
  Sympos. Berkeley, CA, 1979 (New York), Springer-Verlag, 1980, p.~41–74.

\bibitem[Har77]{har77}
R.~Hartshorne, \emph{Algebraic geometry}, 1st ed. 1977. ed., Graduate Texts in
  Mathematics, 52, Springer New York, New York, NY, 1977 (eng).

\bibitem[JV18]{JV18}
A.~Javanpeykar and A.~Vezzani, \emph{{Non-archimedean hyperbolicity and
  applications}}, https://hal.science/hal-01901289, October 2018.

\bibitem[Kaw82]{kawa82}
Y.~Kawamata, \emph{{A generalization of Kodaira-Ramanujam's vanishing
  theorem}}, Mathematische Annalen \textbf{261} (1982), no.~1, 43--46.

\bibitem[Lan87]{lang87}
S.~Lang, \emph{Introduction to complex hyperbolic spaces}, Springer-Verlag, New
  York, 1987.

\bibitem[Laz04]{lazpos1}
R.~Lazarsfeld, \emph{Positivity in algebraic geometry. {I}}, Ergebnisse der
  Mathematik und ihrer Grenzgebiete. 3. Folge, vol.~48, Springer-Verlag,
  Berlin, 2004, Classical setting: line bundles and linear series.

\bibitem[McQ98]{McQ98}
M.~McQuillan, \emph{Diophantine approximations and foliations}, Publications
  Math\'ematiques de l'IH\'ES \textbf{87} (1998), 121--174.

\bibitem[Mer15]{mer15}
J.~Merker, \emph{Algebraic differential equations for entire holomorphic curves
  in projective hypersurfaces of general type: Optimal lower degree bound},
  Progress in Mathematics (2015), 41--142.

\bibitem[RR13]{RR12}
X.~Roulleau and E.~Rousseau, \emph{On the hyperbolicity of surfaces of general
  type with small c12}, Journal of the London Mathematical Society \textbf{87}
  (2013), no.~2, 453--477.

\bibitem[Ser58]{serre58}
J.-P. Serre, \emph{Espaces fibr\'es alg\'ebriques}, S\'eminaire Claude
  Chevalley \textbf{3} (1958), 1--37 (fr), talk:1.

\bibitem[Siu93]{siu93}
Y.-T. Siu, \emph{An effective {Matsusaka} big theorem}, Ann. Inst. Fourier
  (Grenoble) \textbf{43} (1993), no.~5, 1387–1405.

\bibitem[Siu15]{siu15}
Y.-T. Siu, \emph{Hyperbolicity of generic high-degree hypersurfaces in complex
  projective space}, Inventiones mathematicae \textbf{202} (2015), no.~3,
  1069--1166.

\bibitem[{Sta}25]{stacks}
The {Stacks project authors}, \emph{The {S}tacks project},
  https://stacks.math.columbia.edu/, 2025.

\bibitem[SY96]{SY96}
Y.-T. Siu and S.~K. Yeung, \emph{Hyperbolicity of the complement of a generic
  smooth curve of high degree in the complex projective plane}, Invent. Math.
  \textbf{124} (1996), 573–618.

\bibitem[Voj04]{Voj04}
P.~Vojta, \emph{Jets via {H}asse-{S}chmidt derivations}, arXiv:math/0407113,
  2004.

\end{thebibliography}

\end{document}